\documentclass[12pt]{article}
\newif\ifblind\blindfalse 
\usepackage[letterpaper,margin=1in]{geometry}
\usepackage{amssymb}
\usepackage[round]{natbib}
\usepackage{graphicx}
\usepackage{setspace}
\usepackage[colorlinks=false,allbordercolors={1 1 1}]{hyperref}

\usepackage{amsmath}
\usepackage{amsthm}
\usepackage{mathtools}
\usepackage{bm}
\usepackage{booktabs}
\usepackage{multirow}
\usepackage{array}
\usepackage{xcolor}
\usepackage{enumitem}
\usepackage{pifont}
\usepackage{lastpage}

\makeatletter
\newcommand{\jmlrresetthm}[1]{%
  \expandafter\let\csname #1\endcsname\relax
  \expandafter\let\csname end#1\endcsname\relax
  \expandafter\let\csname c@#1\endcsname\relax
  \expandafter\let\csname the#1\endcsname\relax
  \expandafter\let\csname p@#1\endcsname\relax}
\jmlrresetthm{theorem}\jmlrresetthm{lemma}\jmlrresetthm{proposition}
\jmlrresetthm{corollary}\jmlrresetthm{remark}\jmlrresetthm{definition}
\jmlrresetthm{example}\jmlrresetthm{conjecture}\jmlrresetthm{axiom}
\makeatother
\newtheorem{theorem}{Theorem}
\newtheorem{proposition}{Proposition}
\newtheorem{lemma}{Lemma}
\newtheorem{corollary}{Corollary}
\newtheorem{definition}{Definition}
\newtheorem{remark}{Remark}
\newtheorem{example}{Example}
\newtheorem{assumption}{Assumption}
\newtheorem{protocol}{Protocol}
\makeatletter
\def\section{\@startsection{section}{1}{\z@}{-0.16in}{0.04in}{\large\bf\raggedright}}
\def\subsection{\@startsection{subsection}{2}{\z@}{-0.13in}{0.03in}{\normalsize\bf\raggedright}}
\makeatother
\newcommand{\R}{\mathbb{R}}
\newcommand{\E}{\mathbb{E}}

\newcommand{\1}{\mathbf{1}}
\newcommand{\A}{\mathbf{A}}
\newcommand{\Pmat}{\mathbf{P}}
\newcommand{\X}{\mathbf{X}}
\newcommand{\Y}{\mathbf{Y}}

\newcommand{\bbQ}{\mathbf{Q}}
\newcommand{\I}{\mathbf{I}}

\newcommand{\x}{\mathbf{x}}
\newcommand{\y}{\mathbf{y}}
\newcommand{\z}{\mathbf{z}}
\newcommand{\bw}{\mathbf{w}}

\newcommand{\norm}[1]{\left\lVert#1\right\rVert}
\newcommand{\abs}[1]{\left\lvert#1\right\rvert}
\newcommand{\inner}[2]{\langle #1, #2 \rangle}
\newcommand{\Otoinf}{2\!\to\!\infty}
\DeclareMathOperator{\diag}{diag}
\DeclareMathOperator{\tr}{tr}

\DeclareMathOperator*{\argmin}{arg\,min}

\newcommand{\KL}{\mathrm{KL}}
\providecommand{\Var}{\operatorname{Var}}
\providecommand{\bu}{\mathbf{u}}
\providecommand{\op}{\operatorname{op}}
\providecommand{\sym}{\operatorname{sym}}
\providecommand{\vecl}{\operatorname{vec}_{<}}
\providecommand{\Pb}{\mathbb P}
\providecommand{\Eb}{\mathbb E}
\providecommand{\kl}{\operatorname{kl}}
\providecommand{\TV}{\operatorname{TV}}
\providecommand{\calI}{\mathcal I}
\providecommand{\calW}{\mathcal W}
\providecommand{\calE}{\mathcal E}

\begin{document}
\sloppy

\singlespacing

\title{Minimax Synthesis of Network Mechanisms}

\ifblind
\author{}
\else
\author{Marios Papamichalis\thanks{Marios Papamichalis, Human Nature Lab, Yale University, New Haven, CT 06511 (email: marios.papamichalis@yale.edu).} \\ Yale University \and Regina Ruane\thanks{Regina Ruane, Department of Statistics and Data Science, The Wharton School, University of Pennsylvania, Philadelphia, PA 19104 (email: ruanej@wharton.upenn.edu).} \\ University of Pennsylvania}
\fi
\date{}

\maketitle

\begin{abstract}%
A single observed network reflects several mechanisms at once: communities, hubs, and clustering coexist in one graph, each a different model. We treat the
network as a combination of candidate mechanisms and study, from a single graph, how strongly each mechanism contributes and how they combine. We address two questions. The first
is how to measure each mechanism's contribution when the mechanisms must themselves be estimated from
the graph: fitting the mechanisms and their strengths from the same data biases the strengths toward zero, and a correction removes this bias and yields valid confidence intervals. The
second is whether the rule of combination is itself recoverable: when a graph is generated by two
mechanisms acting together, the graph alone determines whether they combine additively or interact, exactly when the graph is dense enough, a sharp threshold below which no test can decide. The estimates calibrate the candidate mechanisms against the observed edges. We establish matching minimax rate, against a known-design benchmark and the estimated-design problem itself, confirm the methods in simulation, and apply them to real networks, where the signed coefficients recover known structure and, in one case, a confidence interval excludes any positive contribution from a candidate mechanism.
\end{abstract}

\bigskip\noindent{\it Key Words:} network generative mechanism detection; single-network inference; minimax rate; cross-fitting; mechanism synthesis.\par

\section{Introduction}
\label{sec:intro}

An empirical network, such as a citation graph, a social network, or a power grid, typically
displays four structural features at the same time. \emph{Communities:} nodes partition into groups
that link densely within and sparsely across. \emph{Heavy tails:} the degree distribution decays
polynomially, so a few hub nodes have very high degree. \emph{Clustering:} two neighbours of a
common node are unusually likely to be linked, so the graph is rich in triangles. \emph{Short
paths:} despite sparsity, typical distances are of order $\log n$.

Each classical random-graph model was designed to explain one of these features. The stochastic
block model encodes communities \citep{holland1983stochastic,abbe2018community}; the Barab\'asi--Albert and Chung--Lu
models encode heavy tails \citep{barabasi1999emergence,chung2002average}; the Watts--Strogatz model encodes
clustering and short paths \citep{watts1998collective}; the random dot-product graph provides a
tractable latent geometry with a complete estimation theory \citep{athreya2018statistical,rubin2022statistical}.
Each model succeeds at its target feature and is typically misspecified for the others, with
partial exceptions such as hyperbolic models that capture degree heterogeneity and clustering
jointly. These features correspond to
distinct generative \emph{mechanisms}: community formation, preferential attachment, triadic closure,
and latent geometry. An empirical network typically exhibits several at once, which suggests modelling
it as a composition of mechanisms rather than selecting a single model.

This observation turns model choice into an estimation problem. Fix a set of candidate mechanisms,
each able to propose edge probabilities for the observed graph. Four quantities are then at issue: which candidate mechanisms contribute to the observed graph, in
what proportions, with what statistical uncertainty, and whether these quantities are recoverable
from a single network. The answer determines whether a fitted model constitutes evidence for a
generative process or only a descriptive summary. Link prediction, community recovery, anomaly detection, bootstrap resampling and
epidemic simulation all inherit the biases of the fitted generative model: a model without triangles
misstates redundancy, a model without hubs misstates vulnerability to targeted failure, and a model
without communities cannot support the recovery tasks that motivate much of network analysis. Combination of network generators has been studied for prediction and model selection: stacking of
link predictors \citep{ghasemian2020stacking}, multilayer block-model recovery from a single aggregate graph, and graphon-level Bayesian predictive synthesis. In all of these the combination weights are tuning quantities
optimised for predictive risk or treated as a model-selection device. Treating mechanism composition as an inferential estimand, with a minimax rate, a matching lower
bound, valid confidence intervals, selection consistency, and a test for the combination operator, is
new to the single-network setting.

\paragraph{When a fitted model is evidence for a mechanism.}
Whether these quantities are recoverable from one graph has a sharp answer, and it organises the paper.
The generative weights are identified when the candidate kernels are spectrally separated, so that the
transversality $\gamma_S$ of Section~\ref{sec:setup} is bounded away from zero; the combination operator
is identified when the edge-overlap information $n^2\rho_n^3$ diverges (Theorems~\ref{thm:debias}
and~\ref{thm:nor}). Where the geometry separates the mechanisms and the graph is dense enough, the
synthesis returns generative proportions with valid intervals and a decided operator; outside that regime
the same estimator returns calibrated, signed projection coefficients, the partial contribution of each
mechanism given the others, and the operator is information-theoretically undecidable. Attribution where
both conditions hold, and a signed description otherwise: this dichotomy is what makes a fitted network
model evidence for a generative mechanism rather than a re-description of the data.

\paragraph{The estimand.}
The object of inference is the composition of the observed graph relative to a set of candidate mechanisms.
Each mechanism, fitted to a graph with edge-probability matrix $P$, returns an edge-probability kernel, and
the synthesis coefficients are the coordinates of $P$ in the span of these fitted kernels: generative
proportions under correct specification, and interpretable candidate-dependent projection coefficients
otherwise. The candidate set need not contain the true generating process, and fitting one mechanism need not
recover a separate latent component of a mixture; the estimand is defined by the projection in either case,
and Section~\ref{sec:estimands} makes this precise. Throughout, $N=n^2\rho_n$ is the order of the number of
edges, $s$ the number of active mechanisms, and $\gamma_S$ their distinguishability.

\paragraph{The setting.}
Each classical model is treated as an \emph{agent} that, given latent node attributes, proposes an
edge-probability kernel $P^{(k)}_{ij}$, and a synthesis function combines the proposals into a single edge
probability under which the graph is generated as conditionally independent edges. The combination is a
calibration of these forecasts against the realised graph, so its coefficients are not confined to the simplex:
an agent whose forecast is systematically anti-aligned with the observed edges receives a negative coefficient,
a corrective contrast that nonnegative posterior averaging cannot express and that the power-grid analysis of
Section~\ref{sec:realdata} exhibits. Two operators are studied, defined formally in
Definitions~\ref{def:mixture} and~\ref{def:noisyor}: a \emph{convex mixture}
$P_{ij}=\sum_k w_k P^{(k)}_{ij}$ and a \emph{noisy-OR superposition} $P_{ij}=1-\prod_k(1-w_k P^{(k)}_{ij})$,
the latter edge-monotone so that it preserves the triangles a clustering agent contributes when other agents
are added. The forecaster viewpoint and its Bayesian predictive-synthesis reading
\citep{mcalinn2019dynamic,mcalinn2020multivariate} are developed in a separate note; here the synthesis is studied directly as
a calibrated combination.

The synthesised class can express each agent's target feature when the corresponding layer carries
sufficient weight and the operator preserves the relevant structure: triangles and short paths from a
small-world layer, hubs from a heavy-tailed layer, communities from a block layer. It remains
inside well-understood model classes: the mixture is itself a random dot-product graph with
concatenated latent positions (Remark~\ref{rem:stack}), and the noisy-OR is a superposition of
sparse graphs, so the estimation theory of those classes transfers. The combination class is the
setting of the paper; its subject is the inference.

\label{sec:contrib}
The thesis is that a single observed network, though only one draw from one model, nonetheless determines both how strongly each candidate mechanism contributes and by what rule the mechanisms combine, and that both are estimable from that one graph at minimax rate with valid confidence statements; the impossibility and frontier results of Section~\ref{sec:structural} are what make the synthesis necessary, and the estimation theory that follows is what makes it usable. The contributions are organised around these two questions.

\begin{itemize}[leftmargin=1.4em,itemsep=3pt]
\item \textbf{Debiased estimation of fitted kernels, and a separation
(Theorem~\ref{thm:debias}, Corollary~\ref{cor:debias-lb}).} In every application the kernels are fitted from
the same graph used to estimate the coefficients, and the fitting error enters the second-stage Gram matrix as
a positive self-product that attenuates the coefficients toward zero by more than the edge rate. A cross-fold
construction that fits each mechanism twice on independent dyad sub-folds cancels the self-product, removing
the attenuation and restoring the minimax rate; the naive single-fold plug-in is provably rate-suboptimal and
the debiased estimator provably optimal, so the two are separated in order over the class in which kernels must
be estimated.

\item \textbf{Operator identification (Theorem~\ref{thm:nor}, Corollary~\ref{cor:nork-cor},
Theorem~\ref{prop:fitted-operator}).} Whether two mechanisms combine additively or by overlap is itself an
inferential question, decidable from one graph exactly when the edge-overlap information $n^2\rho_n^3$
diverges: below it the additive and noisy-OR laws are contiguous and no test separates them, above it a test on
the interaction column is consistent. The two-layer noisy-OR admits an exact linear reparametrisation
$P=w_1G_1+w_2G_2-w_1w_2\,G_1\!\odot\!G_2$, giving edge-rate estimation of the weights and one-step efficiency,
and the boundary extends to any fixed number of layers. This is how much one graph reveals about the manner of
combination, a question that does not arise when the rule is assumed.
\item \textbf{The known-design benchmark (Proposition~\ref{thm:joint}).}
With the candidate kernels known, a least-squares synthesis estimates the coefficient vector at the rate
$\sqrt{s/\gamma_S}/(n\sqrt{\rho_n})$, the inverse square root of the number of edges, with $s$ the number of
active mechanisms and $\gamma_S$ their distinguishability; a matching Assouad lower bound shows it is sharp.
The result fixes the statistical price: the effective sample size is the edge count $n^2\rho_n$ and the
conditioning is the smallest eigenvalue of the active Gram-correlation matrix.

\item \textbf{Valid inference, misspecification, and adaptive selection
(Proposition~\ref{thm:clt}, Theorem~\ref{thm:twostage}, Proposition~\ref{prop:mis-cor},
Corollary~\ref{cor:adapt-cor}).} The held-out calibration estimator is asymptotically normal for its
population target on the case--control dyad distribution, with a consistent sandwich variance and a valid pairs
bootstrap. Under misspecification the coefficient vector is the projection of the edge-probability matrix onto
the candidate set, a negative coordinate being a corrective contrast, and the projection is no worse in
population square loss than any single mechanism. A thresholding estimator recovers the active set knowing
neither $S$, $s$, nor $\gamma_S$, at the $\sqrt{\log K}$ selection price; and when the kernels are fitted, a
two-stage variance adds the propagated first-stage uncertainty so the Wald intervals cover the \emph{generative}
weight rather than only the projection target (Theorem~\ref{thm:twostage}), the sandwich alone undercovering.

\end{itemize}

\paragraph{The estimand: candidate-relative calibration coefficients.}
A single graph does not identify the separate latent components of a mixture: fitting one mechanism to the
whole graph converges to its best approximation of $P$, not to the component that contributed only part of it.
We therefore define the candidate set through its \emph{fitting maps}, letting $G_k^\dagger(P)=\mathcal F_k(P)$
be the population output of fitting agent $k$ to a graph with edge-probability matrix $P$ and taking the
synthesis coefficients to be the coordinates of $P$ in the span of $\{G_k^\dagger(P)\}$. Under correct
specification, when $P$ lies in that span and each fitting map projects onto its own component, the
coefficients reduce to the generative weights $\bw_{\mathrm{gen}}$; otherwise they are the projection
coefficients $\bw^\dagger_{\mathrm{LS}}$, so the estimand is always well defined and candidate-relative, and
the identifiability of separate generative components is never assumed. Throughout, $G_k$ denotes the fitted
kernel $G_k^\dagger(P)$ unless the known-design setting is stated. The estimand is then a projection of the
edges onto these fitted kernels,
\[
P_{ij}\ \approx\ \sum_{k} w_k\,G_{k,ij}
\qquad\text{or, in calibration form,}\qquad
\operatorname{logit}\Pr(A_{ij}=1)\ =\ \beta_0+\sum_{k} w_k\,s_{k,ij},
\]
whose coefficients are the estimands of Proposition~\ref{thm:joint}, Theorem~\ref{thm:debias}, and
Corollary~\ref{cor:adapt-cor} on the left and of Proposition~\ref{thm:clt} on the right. They are not confined
to the simplex, so a systematically biased mechanism is corrected by a negative weight no convex average can
express; and even under a correctly specified linear mixture the logistic target $\bw^\dagger_{\mathrm{cal}}$
does not equal $\bw_{\mathrm{gen}}$, since the logit of a sum is not linear in the summands, so the logistic
coefficients share the sign interpretation of the projection coefficients but are not proportions. A Bayesian
predictive-synthesis reading is developed in a companion note \citep{west1992modelling,mcalinn2019dynamic,mcalinn2020multivariate}.

\paragraph{Scope of the optimality theory.} The fitted-kernel rate and lower bound cover the community and latent-geometry mechanisms, namely the bounded-heterogeneity block, degree-corrected block, and dot-product kernels. Heavy-tailed degree and triadic-closure mechanisms enter as motivation and in the predictive comparisons of Section~\ref{sec:linkpred}, and the growing-degree study of Section~\ref{sup:scaling} addresses the heavy-tailed degree case at the level of the rate. The optimality statements therefore cover two of the four motivating mechanisms, the other two entering empirically.

\paragraph{Relation to prior combinations.} Four features distinguish this setting: a single network in place of independent replicates, the sparse regime, inference on the combination coefficients, and identification of the combination operator. Existing work addresses them singly or in pairs, and in each the weights are tuning parameters carrying no rate, lower bound, or confidence statement. The problem of recovering latent layers from one aggregated network was posed by \citet{valles2016multilayer}, whose OR aggregation coincides with the noisy-OR superposition studied here; their treatment is generative and model-selection-oriented, with no convergence rate, lower bound, distributional theory, or operator test, and we keep the superposition insight while supplying that apparatus and replacing homogeneous block layers with named mechanism kernels. Bayesian model averaging \citep{hoeting1999bayesian,yao2018using} confines posterior weights to the convex hull and concentrates on a single pseudo-true model; stacking and the super learner \citep{wolpert1992stacked,breiman1996stacked,ghasemian2020stacking} are $\mathcal M$-open and set weights by cross-validated loss, of which the held-out calibration estimator of Section~\ref{sec:estimation} is the dyadic form; and for networks \citet{zhang2025network} select latent-space averaging weights by edge cross-validation with prediction-risk optimality but no lower bound, interval, or operator test. A population-level graphon version of this synthesis is developed by \citet{papamichalis2025graphon}; the present paper is its finite, single-graph counterpart. The combination here removes the convex-hull restriction and treats the agents as useful but misspecified sources, and differs from this literature in three ways: the combined object is a generative kernel with provable structural behaviour, licensing statements about the synthesised graph and not only its held-out predictions; the weights are the estimand, carrying a minimax rate with a matching lower bound (Proposition~\ref{thm:joint}), consistent selection (Corollary~\ref{cor:adapt-cor}), and a limit distribution (Proposition~\ref{thm:clt}); and the noisy-OR operator leaves the linear span of the forecasts, so the $n^2\rho_n^3$ threshold of Theorem~\ref{thm:nor} has no analogue for combinations defined to be linear. We are not aware of prior work providing an error-controlled test of the additive versus overlapping rule on one graph.

Adamic--Adar, Jaccard, the hyperbolic score, the heavy-tailed Chung--Lu agent, and
the local small-world kernel enter the experiments as predictive scores or structural motivation and
are outside the fitted-kernel theory.

\label{sec:example}
The \textsf{ca-GrQc} collaboration network (Section~\ref{sec:realdata}) has $n=4158$ authors, mean
degree $6.5$, clustering $0.63$, typical distance close to $\log n$, a power-law degree tail with
maximum degree $81$, and modularity $0.85$. Every single classical model fitted to it misses at
least one of these features by a wide margin: the configuration model produces clustering $0.01$,
the Watts--Strogatz model caps the maximum degree at $9$, and the degree-corrected stochastic block
model under-predicts clustering severalfold. Among the eight fitted
single-model baselines, only a random hyperbolic graph reaches the observed range on three of the four diagnostics, while
overshooting the observed hub scale fifteenfold on the fourth. Section~\ref{sec:linkpred} completes the example on the inferential side: with the weights estimated
rather than chosen, the synthesis improves out of sample on every single mechanism, and its replicated
intervals identify the mechanisms the data support.
\label{sec:related}

The single-graph data, with $\binom n2$ dependent dyads and regressors estimated from the same graph,
motivate the dyad-splitting cross-fitting of Section~\ref{sec:joint}. Cross-fitting and
Neyman-orthogonal moments for dyadic and network data are the device of \citet{chiang2026double}, who
develop double/debiased machine learning for multiway-clustered and network-dependent samples; we use
the same split-and-debias principle, but the object here is a low-dimensional synthesis-coefficient
vector with kernels fitted from the one graph, and the leading correction is the Hadamard
self-product $\mathbf H^\top\mathbf H$ that the cross-fold Gram removes (Section~\ref{sec:cf}), rather than a nuisance-function bias.

Among single models, degree-corrected SBMs \citep{karrer2011stochastic} add per-node degree parameters and remain our strongest competitor, yet still under-produce clustering because their edges are conditionally independent given degrees and blocks; mixed-membership blockmodels \citep{airoldi2008mixed} soften assignments without adding a clustering mechanism. Hyperbolic and other latent-geometry models \citep{krioukov2010hyperbolic,gugelmann2012random,fountoulakis2021clustering} achieve heavy tails, clustering, and short paths jointly by embedding in a negatively curved space, but are not built by combining named mechanisms and so lie outside this framework; exponential random graph models can target multiple statistics but are prone to degeneracy and do not scale \citep{schweinberger2020exponential}; sparse exchangeable and graphex models \citep{caron2017sparse,veitch2015class} target sparsity rather than the joint four-property profile. The RDPG and GRDPG carry a mature estimation theory, adjacency spectral embedding recovering latent positions up to an orthogonal transformation with $\Otoinf$ bounds \citep{lyzinski2014perfect} and a central limit theorem \citep{athreya2016limit,athreya2018statistical,rubin2022statistical}; since the mixture of block and dot-product agents is itself an RDPG (Remark~\ref{rem:stack}), this toolkit applies to the synthesised model. For community recovery under heavy tails, degree regularisation \citep{chaudhuri2012spectral,amini2013pseudo,qin2013regularized,joseph2016impact} and concentration of the regularised adjacency \citep{le2017concentration} match the Kesten--Stigum detection boundary \citep{decelle2011asymptotic,mossel2015reconstruction,abbe2018community,lei2015consistency}.

The remainder of the paper is organised as follows. Section~\ref{sec:setup} defines the synthesis
setting, the two combination operators, and the stacked-representation remark
(Remark~\ref{rem:stack}). Section~\ref{sec:structural} establishes the structural motivation: an impossibility theorem showing that no single estimable family carries heavy tails, clustering, communities, and short paths together (Theorem~\ref{thm:imp}), the quantitative clustering--hub--rank frontier that prices the trade-off exactly (Theorem~\ref{thm:frontier}), and a possibility theorem showing the noisy-OR synthesis attains all four (Theorem~\ref{thm:pos}). Section~\ref{sec:joint} contains the
estimation theory: the cross-fitted minimax rate (Proposition~\ref{thm:joint}), debiased estimation of
fitted kernels and the separation from the naive plug-in (Theorem~\ref{thm:debias},
Corollary~\ref{cor:debias-lb}), the misspecification target (Proposition~\ref{prop:mis-cor}), limit
distribution and bootstrap validity (Proposition~\ref{thm:clt}), two-stage inference for the generative
weights (Theorem~\ref{thm:twostage}), adaptive selection (Corollary~\ref{cor:adapt-cor}), and the
noisy-OR operator with its detectability threshold and its fitted-layer form
(Theorem~\ref{thm:nor}, Corollary~\ref{cor:nork-cor}, Theorem~\ref{prop:fitted-operator}). Section~\ref{sec:realdata} reports the simulations
and the network study, and Section~\ref{sec:discussion} concludes. All proofs are collected in the supplement.
\section{Network mechanisms and synthesis operators}
\label{sec:setup}

\subsection{Networks and agents}
We observe a simple undirected graph on $n$ vertices through its symmetric adjacency matrix
$\A\in\{0,1\}^{n\times n}$ with $A_{ii}=0$. Each vertex carries a latent attribute $u_i$ in some
space $\mathcal{U}$ (a community label, a latent position, a degree propensity, or a tuple of
these). An \emph{agent} is a model that maps latent attributes to an edge-probability kernel.

\begin{definition}[Agent kernel]
\label{def:agent}
An agent $k\in\{1,\dots,K\}$ is a measurable map
$\kappa_k:\mathcal{U}\times\mathcal{U}\to[0,1]$, symmetric in its arguments, that proposes the
edge probability $P^{(k)}_{ij}=\kappa_k(u_i,u_j)$ for the pair $(i,j)$.
\end{definition}

We will use four canonical agents, each the kernel of a classical model:
the \emph{block agent} $\kappa_{\mathrm{SBM}}(u_i,u_j)=B_{c_i c_j}$ for community labels
$c_i\in[Q]$ ($Q$ blocks, reserving $K$ for the number of candidates) and a symmetric block matrix $B$;
the \emph{dot-product agent} $\kappa_{\mathrm{RDPG}}(u_i,u_j)=\inner{\x_i}{\x_j}$ for latent
positions $\x_i\in\mathcal{X}\subset\R^d$;
the \emph{heavy-tailed agent} $\kappa_{\mathrm{CL}}(u_i,u_j)=\theta_i\theta_j/S$ for
degree propensities $\theta_i$ with $S=\sum_i\theta_i$ and $\max_i\theta_i^2\le S$, so that the
kernel takes values in $[0,1]$ without truncation and is exactly rank one (Chung--Lu);
and the \emph{small-world agent}, an independent-edge kernel
$\kappa_{\mathrm{SW}}(u_i,u_j)=p_{\mathrm{lat}}\mathbf 1\{d_{\mathrm{ring}}(i,j)\le m\}+
p_{\mathrm{long}}/n$ that links ring-lattice neighbours within distance $m$ with probability
$p_{\mathrm{lat}}$ and distant pairs with probability $p_{\mathrm{long}}/n$. This is the
conditionally independent analogue of the Watts--Strogatz construction \citep{watts1998collective}; the original rewiring model
induces edge dependence and a fixed degree, and is not a kernel of the form above.

\subsection{Two synthesis operators}
A synthesis function turns the agents' proposals into a single edge probability. We study two.

\begin{definition}[Mixture synthesis]
\label{def:mixture}
Given weights $\bw=(w_1,\dots,w_K)$ in the simplex $\Delta^{K-1}$, the \emph{mixture} synthesis sets
\[
P^{\mathrm{mix}}_{ij}=\sum_{k=1}^{K}w_k\,P^{(k)}_{ij}.
\]
\end{definition}

\begin{definition}[Noisy-OR superposition]
\label{def:noisyor}
Given weights $\bw\in[0,1]^K$, the \emph{noisy-OR} synthesis sets
\[
P^{\mathrm{sup}}_{ij}=1-\prod_{k=1}^{K}\bigl(1-w_k P^{(k)}_{ij}\bigr).
\]
Equivalently, the synthesised edge is the union of independent edges drawn from each reweighted
agent: $A_{ij}=\max_k A^{(k)}_{ij}$ with $A^{(k)}_{ij}\sim\mathrm{Bernoulli}(w_k P^{(k)}_{ij})$.
The noisy-OR (union) operator, and its conjugate AND (product) operator, were introduced for
recovering latent layers from a single aggregate graph by \citet{valles2016multilayer}, who give the
complete probabilistic solution for the optimal multilayer block model under each operator; the
present paper retains their superposition operator but replaces homogeneous block layers with
arbitrary named mechanism kernels and supplies the convergence rate,
minimax lower bound, weight-level distributional theory, and operator test that their
model-selection treatment does not address.
\end{definition}

Given a synthesised kernel $P$, the network is drawn with conditionally independent edges,
$A_{ij}\sim\mathrm{Bernoulli}(P_{ij})$ for $i<j$. We refer to the resulting law as the
\emph{synthesis network model} with the stated agents, operator and weights. The
operator-level statements below hold conditionally on the latent attributes, which is all the
structural theorems require.

The two operators agree to first order in the sparse regime. If all $w_kP^{(k)}_{ij}=o(1)$ then
$P^{\mathrm{sup}}_{ij}=\sum_k w_kP^{(k)}_{ij}-\sum_{k<\ell}w_kw_\ell P^{(k)}_{ij}P^{(\ell)}_{ij}+\cdots
=P^{\mathrm{mix}}_{ij}\,(1+o(1))$, so they induce the same first-order degree and edge densities.
They differ at second order precisely where it matters structurally: the subtracted cross term in
noisy-OR keeps probabilities in $[0,1]$ under superposition and makes the operator
edge-monotone. The mixture admits a clean latent representation (Remark~\ref{rem:stack}); noisy-OR
preserves triangles under superposition and is the operator whose detectability is the subject of
Section~\ref{sec:nor}.

\paragraph{A running example: four canonical properties.}
The estimation theory of Section~\ref{sec:joint} applies to any candidate set of mechanisms and any
structural property of interest. As a running example, used throughout for motivation and
illustration, we take the four properties that the classical models were designed to capture.
\emph{Heavy tails}: the empirical degree distribution has a regularly varying tail with index
$\tau\in(2,3)$, so the maximum degree grows polynomially, $d_{\max}=n^{1/(\tau-1)+o(1)}$.
\emph{Clustering}: writing $C$ for the global clustering coefficient, three times the number of
triangles divided by the number of paths of length two, and $\bar C=\frac1n\sum_iC_i$ for the
average local coefficient, the family has constant clustering when $\liminf_n C>0$ (respectively
$\liminf_n\bar C>0$). \emph{Detectable communities}: planted labels $c_i\in[K]$ admit weak recovery,
meaning some polynomial-time estimator agrees with the truth, up to label permutation, at a rate
exceeding chance by a fixed margin. \emph{Short paths}: typical distances in the giant component are
$O(\log n)$. None of the estimation results depends on this choice of properties.

\paragraph{Latent-geometry notation.}

\begin{definition}[RDPG and GRDPG]
\label{def:rdpg}
For latent positions $\x_1,\dots,\x_n\in\R^d$ collected in rows of $\X$, the \emph{random
dot-product graph} has edge probabilities $\Pmat=\X\X^\top$ (entrywise in $[0,1]$). The
\emph{generalised} RDPG (GRDPG) with signature $(p,q)$, $p+q=d$, has
$\Pmat=\X\,\I_{p,q}\,\X^\top$ where $\I_{p,q}=\diag(\I_p,-\I_q)$. The SBM is the special case in
which rows of $\X$ take only $K$ distinct values.
\end{definition}

\begin{definition}[ASE and the $\Otoinf$ norm]
\label{def:ase}
The \emph{adjacency spectral embedding} into dimension $d$ is
$\hat\X=\mathbf{U}_{\!\A}\,\abs{\mathbf{S}_{\!\A}}^{1/2}$, where
$\A=\mathbf{U}_{\!\A}\mathbf{S}_{\!\A}\mathbf{U}_{\!\A}^\top$ is truncated to its $d$ leading
eigenpairs by magnitude. For a matrix $\mathbf{M}$ with rows $\mathbf{m}_i$, the
\emph{two-to-infinity norm} is $\norm{\mathbf{M}}_{\Otoinf}=\max_i\norm{\mathbf{m}_i}_2$; it
controls the worst-row estimation error, which is the relevant quantity for vertex-level inference.
\end{definition}
\begin{remark}[Stacked representation and recovery]
\label{rem:stack}
For the mixture operator the representation is explicit: a mixture of a $K$-block
agent (positions $\x_i$) and a rank-$d_2$ dot-product agent (positions $\y_i$) with weights
$(w_1,w_2)$ is \emph{exactly} a single random dot-product graph with the stacked latent positions
$\z_i=(\sqrt{w_1}\,\x_i,\ \sqrt{w_2}\,\y_i)\in\R^{d_1+d_2}$, by direct verification, since
$P_{ij}=w_1\inner{\x_i}{\x_j}+w_2\inner{\y_i}{\y_j}=\inner{\z_i}{\z_j}$ (indefinite agents stack with
signature matrices). The representation is identifiable up to the unavoidable orthogonal symmetry,
is rank-minimal whenever the two coordinate blocks are transversal, and is recoverable from a single
sampled graph by adjacency spectral embedding at the parametric two-to-infinity rate
$\widetilde O(\sqrt{\log n/(n\rho_n)})$, with row-wise asymptotic normality, by the RDPG estimation theory
applied verbatim \citep{lyzinski2014perfect,athreya2018statistical,rubin2022statistical}. Mixing edge
\emph{probabilities} thus concatenates latent \emph{coordinates}: the block signal and the
continuous heterogeneity occupy orthogonal coordinate blocks, scaled by $\sqrt{w_k}$, which is what
makes the weights estimable objects in the first place and is the geometric picture behind the
transversality constant $\gamma_S$ of Section~\ref{sec:joint}. Figure~\ref{fig:ase} verifies the
rate empirically.
\end{remark}

\section{Why synthesise: an impossibility theorem and a quantitative frontier}
\label{sec:structural}

The running example showed, empirically, that no single classical model reaches the observed range
on all four canonical properties at once. We now show that this is not an accident of any particular
fit. Two results, both statements about graph structure with no regression analogue, make the case
precise. An impossibility theorem proves that none of the four estimable model families can carry
heavy tails, constant clustering, detectable communities, and short paths together. A quantitative
frontier then locates the exact spectral price a clustered kernel must pay to support a hub, and
shows the price is one that flat single-mechanism kernels cannot afford. A possibility theorem
closes the loop: the noisy-OR synthesis pays that price and attains all four properties. These
structural facts are what motivate estimating the synthesis, and the inferential theory of
Section~\ref{sec:joint} builds on the same kernels they concern.

\subsection{No single estimable family carries all four properties}

We consider four families, each a sequence of models on $n\to\infty$ vertices with mean degree
$\bar d_n=n^{o(1)}$:
\begin{enumerate}[label=(\roman*),leftmargin=2em,itemsep=1pt]
\item $\mathcal{F}_{\mathrm{SBM}}$: SBMs with a fixed number $Q$ of blocks and block matrix
$B^{(n)}=\rho_n B$ for a fixed $Q\times Q$ matrix $B$ with positive entries and scale
$\rho_n=\bar d_n/n$;
\item $\mathcal{F}_{\mathrm{WS}}$: small-world graphs with fixed lattice degree $2m$ and a fixed
fraction of long-range edges;
\item $\mathcal{F}_{\mathrm{CL}}$: rank-one Chung--Lu models with a given degree sequence;
\item $\mathcal{F}_{\mathrm{RDPG}}$: RDPGs and GRDPGs with latent dimension bounded by a fixed $d$.
\end{enumerate}

\begin{theorem}[Impossibility within four families]
\label{thm:imp}
Fix any of the four families above. No sequence of models in that family simultaneously satisfies all
four properties of the running example (heavy tail, constant clustering, detectable communities,
short paths). Concretely:
\begin{enumerate}[label=(\alph*),leftmargin=2em,itemsep=2pt]
\item \emph{(SBM and Chung--Lu)} In $\mathcal{F}_{\mathrm{SBM}}$ and $\mathcal{F}_{\mathrm{CL}}$ with
mean degree $\bar d_n=n^{o(1)}$, the global clustering coefficient is $C_n=\Theta(\bar d_n/n)\to0$;
constant clustering would force $\bar d_n=\Theta(n)$, contradicting the heavy-tail and short-path
requirements. Moreover $\mathcal{F}_{\mathrm{CL}}$ has no communities: its weak-recovery agreement is
$1/Q+o(1)$.
\item \emph{(Small-world)} Every model in $\mathcal{F}_{\mathrm{WS}}$ has degrees concentrated in
$[\,2m(1-o(1)),\,2m(1+o(1))\,]$, so its degree distribution is not heavy-tailed, and it has no
planted community structure to recover.
\item \emph{(Bounded-rank RDPG)} For a sequence in $\mathcal{F}_{\mathrm{RDPG}}$ with latent
dimension at most $d$ and a heavy-tailed degree distribution with index $\tau\in(2,3)$, the global
clustering coefficient satisfies $C_n\to0$.
\end{enumerate}
\end{theorem}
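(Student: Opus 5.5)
The proof proceeds family by family, following items (a)--(c), and in each case computes the expected counts of triangles and of two-paths, then shows they concentrate. The clustering coefficient is $C_n=3T_n/W_n$, where $T_n$ counts triangles and $W_n$ counts paths of length two. For independent-edge models conditioned on the latent attributes,
\[
\E T_n=\sum_{i<j<k}P_{ij}P_{jk}P_{ik},\qquad \E W_n=\sum_{j}\sum_{i<k,\ i,k\neq j}P_{ij}P_{jk}.
\]
Both $T_n$ and $W_n$ are low-degree polynomials in independent Bernoulli variables. Their variances are bounded by sums over overlapping pairs, so Chebyshev gives $T_n/\E T_n\to1$ and $W_n/\E W_n\to1$ whenever the means diverge. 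When $\E T_n$ stays bounded, Markov's inequality gives $T_n=O_P(1)$, while $W_n\to\infty$, which again yields $C_n\to0$.

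\textbf{Part (a).}
\begin{itemize}
\item For the SBM, $P_{ij}\asymp\rho_n$, so $\E T_n\asymp n^3\rho_n^3$ and $\E W_n\asymp n^3\rho_n^2$. Hence $C_n=\Theta(\rho_n)=\Theta(\bar d_n/n)$.
\item For Chung--Lu, $P_{ij}=\theta_i\theta_j/S$, so $\E T_n\le(\sum_i\theta_i^2)^3/S^3$ and $\E W_n\asymp\sum_j\theta_j^2$. This gives $C_n\lesssim(\sum\theta^2/S)^2/S$. With $\sum\theta^2/S$ comparable to $\bar d_n$ up to the second-moment ratio, and $S\asymp n\bar d_n$, this is $\Theta(\bar d_n/n)$ under bounded degree heterogeneity.
\item Constant clustering would force $\bar d_n\asymp n$, which contradicts $\bar d_n=n^{o(1)}$.
\item For the community claim, the CL kernel does not depend on the labels. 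The law of $\A$ is therefore independent of $c$, and any estimator's expected agreement is $\max_\pi$ over permutations of a label-independent guess, which is $1/Q+o(1)$ by symmetry of a uniform label prior.
\end{itemize}

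\textbf{Part (b).} The degree of $i$ is a sum of $2m$ Bernoulli($p_{\mathrm{lat}}$) lattice indicators plus a Binomial$(n,p_{\mathrm{long}}/n)$ count. This has bounded mean and exponential tails, so $\max_i d_i=O(\log n/\log\log n)$. That is inconsistent with $d_{\max}=n^{1/(\tau-1)+o(1)}$, and on the lattice scale degrees concentrate near $2m$ as stated. The kernel involves no labels, so the argument of part (a) again gives agreement $1/Q+o(1)$.

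\textbf{Part (c).} This is the main obstacle, and I would use a trace/spectral argument.
\begin{itemize}
\item For a rank-$d$ matrix $\Pmat$ with eigenvalues $\lambda_1,\dots,\lambda_d$, we have $6\E T_n\approx\tr(\Pmat^3)\le d\,\lambda_{\max}^3$, up to the diagonal correction.
\item Also $\E W_n\approx\tfrac12\sum_j d_j^2$, where $d_j=\sum_i P_{ij}$ is the expected degree.
\item Spectrally, $\lambda_{\max}\le\norm{\Pmat}_{\op}\le\max_j d_j$, but a sharper bound is needed. Write $\Pmat=\X\I_{p,q}\X^\top$ and use $|\lambda_\ell|\le\lambda_\ell(\X^\top\X)$. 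This gives $\sum_\ell|\lambda_\ell|^3\le\tr((\X^\top\X)^3)$. Since $\X^\top\X$ is $d\times d$, its top eigenvalue is at most $\sum_i\norm{\x_i}^2$, and entries in $[0,1]$ keep $\norm{\x_i}^2$ bounded.
\item The key comparison is between $\lambda_{\max}^3$ and $\sum_j d_j^2$. For the expected degree vector $\mathbf d=\Pmat\1$, low rank gives $\norm{\mathbf d}_2^2=\1^\top\Pmat^2\1\ge\lambda^2\,\langle\1,v\rangle^2$, which is the wrong direction. I would instead bound $\lambda_{\max}^2\le\norm{\Pmat}_F^2\le\sum_j d_j$, since $P_{ij}\le1$.
\item Then $C_n\lesssim d\,(\sum_j d_j)^{3/2}/\sum_j d_j^2$. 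For a tail index $\tau\in(2,3)$, $\sum_j d_j^2\asymp n\,d_{\max}^{3-\tau}$ while $\sum_j d_j\asymp n\bar d$. The ratio is then of order $n^{1/2}\bar d^{3/2}/d_{\max}^{3-\tau}$, which need not vanish, so this bound is too crude.
\item I would therefore split $\lambda_{\max}$ using the hub rows. By Cauchy--Schwarz in the $d$-dimensional latent space, $P_{ij}^2\le P_{ii}P_{jj}$ in the positive-definite part, so $d_j\le\norm{\x_j}\sum_i\norm{\x_i}$. A hub therefore forces $\norm{\x_j}$ large relative to typical rows. Triangles through a hub $j$ then number at most $\sum_{i,k}P_{ij}P_{jk}P_{ik}\le d_j^2\max_{i,k\in N(j)}P_{ik}$, and for typical neighbours $P_{ik}\lesssim\bar d/n$.
\item The resulting estimate $\E T_n\lesssim(\bar d/n)\sum_j d_j^2$ gives $C_n=O(\bar d_n/n)+o(1)\to0$.
\end{itemize}
Making this last step rigorous needs control of the neighbours of hubs, whose latent norms could also be large. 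I would handle it by bounding the mass of large-norm rows using the tail index together with $\sum_i\norm{\x_i}^2=\tr(\Pmat)$. I expect this to be the hardest step.
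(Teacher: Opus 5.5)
Your parts (a) and (b) are fine and follow the paper's proof. You use expected triangle and two-path counts for the SBM and, under bounded degree heterogeneity, for Chung--Lu. That is the same restriction the paper makes, and it is harmless because Chung--Lu is already excluded by label-independence. For the small-world kernel, your bound $\max_i d_i=O(\log n/\log\log n)$, rather than the literal window $2m(1\pm o(1))$, is what rules out a heavy tail. Your Chebyshev/Markov concentration step is an addition the paper does not make.

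The genuine gap is in (c), and it is more than an unfinished step. The intermediate estimate $\E T_n\lesssim(\bar d_n/n)\sum_j d_j^2$ is false in $\mathcal F_{\mathrm{RDPG}}$, so all of (c) rests on the hub--hub triangles your sketch leaves uncontrolled. A bounded-rank kernel can join its hubs to one another with probability of order one. For example, include a rank-one block $c_0\mathbf c\mathbf c^\top$ on the $k=n^{1/\tau}$ highest-degree vertices. This adds at most $k/2$ to degrees already of order at least $k$, so the power law survives. Yet it gives $\E T_n\gtrsim k^3=n^{3/\tau}$, whereas $(\bar d_n/n)\sum_jd_j^2=n^{(3-\tau)/(\tau-1)+o(1)}$ is polynomially smaller because $\tau^2>3$. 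So the bound $\max_{i,k\in N(j)}P_{ik}\lesssim\bar d_n/n$ fails exactly on hub--hub pairs, and no accounting of large-norm rows through $\tr(\Pmat)$ can restore it. The step $P_{ij}^2\le P_{ii}P_{jj}$ is also unavailable for indefinite GRDPGs.

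The paper takes your first route, $6\E T_n\le d\,\sigma_1^6$ against $\E W_n\asymp n\E[d^2]$, and closes it with a cap $\sigma_1^2=O(\bar d_n n^{o(1)})$. Your Frobenius computation correctly shows that with only $\lambda_{\max}^2\le n\bar d_n$ the trace route closes just for $\tau<7/3$. You cannot borrow the paper's cap either. A hub row $p_h$ of expected degree $\Delta$ forces $\sigma_1^2\ge\norm{\Pmat}_{\op}\ge\norm{p_h}_2\ge\Delta/\sqrt n=n^{(3-\tau)/(2(\tau-1))+o(1)}$.

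The missing idea is that rank is irrelevant here: the heavy tail alone forces $C_n\to0$ on any graph. An edge $uv$ lies in at most $\min(d_u,d_v)$ triangles. Orient each edge toward its higher-degree endpoint and write $N_{\ge}(k)=\#\{u:d_u\ge k\}\le n\,k^{-(\tau-1)+o(1)}$, the uniform form of the regularly varying tail. Splitting the last sum at $d_v=n^{1/\tau}$ gives
\[
3T_n\ \le\ \sum_{uv\in E}\min(d_u,d_v)\ \le\ \sum_v d_v\,\min\{d_v,\,N_{\ge}(d_v)\}\ \le\ n^{3/\tau+o(1)},
\]
while $W_n\ge\binom{d_{\max}}{2}=n^{2/(\tau-1)+o(1)}$. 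Since $3/\tau<2/(\tau-1)$ exactly when $\tau<3$, this gives $C_n\le n^{(\tau-3)/(\tau(\tau-1))+o(1)}\to0$ deterministically on the realized graph. That proves (c) for every sequence in $\mathcal F_{\mathrm{RDPG}}$, indefinite signatures included, with no spectral input.
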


\begin{proof}
See Appendix~\ref{app:structural}.
\end{proof}

\subsection{A quantitative frontier: clustering, hubs, and the spectrum}
\label{sec:frontier}

Theorem~\ref{thm:imp} is a binary statement about four named families. The quantitative version is
an exact trade-off, valid for every positive-semidefinite edge kernel of bounded rank, locating
precisely how large a hub a clustered kernel can support and what it costs in spectral terms,
together with a construction that attains the boundary. The trade-off has two regimes. For
flat-spectrum kernels, whose top eigenvalue is comparable to the mean degree, the regime that
contains every sparse graphon, SBM, and Chung--Lu-type model, polynomial hubs are impossible at any
fixed rank: the maximum degree is capped at $\bar d^{\,3/2}$ up to constants. To escape the cap a
kernel must inflate its top eigenvalue to at least the order $\Delta^{2/3}$, dedicating a macroscopic
eigendirection to the hubs, and a rank-one construction shows this price is exactly sufficient. This
is the sense in which synthesis is not merely convenient but minimal: the hub agent in the noisy-OR
supplies precisely the inflated eigendirection that no flat single-mechanism kernel possesses.

Three quantities compete. Triangles are spectrally expensive: for a rank-$d$ kernel
$\tr(\Pmat^3)=\sum_i\lambda_i^3\le d\,s_1^3$, so the triangle count is throttled by the top eigenvalue $s_1$
and the rank, while a single hub of expected degree $\Delta$ contributes on the order of $\Delta^2$ wedges.
Clustering is their ratio, so a clustered kernel with a large hub must raise $\tr(\Pmat^3)$ to order
$c\,\Delta^2$, forcing $d\,s_1^3\gtrsim c\Delta^2$. The upper bound is a few lines of linear algebra once the
diagonal-removal terms are controlled; the content is the matching construction, an anchored community in
which one hub attaches with probability $q$ to a block of $m$ internally $c$-dense vertices, where choosing
$q^3=c^2\Delta/2$ reaches the $\Delta^{2/3}$ floor with the right constant, a rank-one kernel sitting exactly
on the frontier. Above the crossover $\Delta\asymp c^{-2}$ a hub that is itself clustered forces
$s_1\ge c\Delta$, which the same construction with $q=1$ attains.

\begin{theorem}[The clustering--hub--rank frontier]
\label{thm:frontier}
Let $\bbQ$ be positive semidefinite of rank at most $d$ with top eigenvalue $s_1$, let
$\Pmat=\bbQ-\diag(\bbQ)$ have entries in $[0,1]$, let $r=\Pmat\1$ be the expected-degree vector with
$\Delta=\max_ir_i$, and define the population global clustering
$\mathcal C=\tr(\Pmat^3)/(\norm{r}^2-\norm{\Pmat}_F^2)$ and the hub-local clustering
$\mathcal C_h=p_h^\top\Pmat\,p_h/(\Delta^2-\norm{p_h}^2)$, where $p_h$ is the hub's row. Then:
\begin{enumerate}[label=(\alph*),leftmargin=2em,itemsep=2pt]
\item \emph{(Exact inequality.)} $\ \mathcal C\,(\Delta^2-d\,s_1^2)_+\le d\,s_1^3$.
\item \emph{(Flat-spectrum impossibility.)} If $s_1\le\kappa\bar d$ and $\Delta^2\ge2d\kappa^2\bar
d^{\,2}$, then $\Delta\le\sqrt{2d\kappa^3/\mathcal C}\;\bar d^{\,3/2}$. For any fixed rank $d$,
flatness $\kappa$, and clustering bounded below, kernels with polylogarithmic mean degree admit no
polynomially large hub: power-law degree tails are impossible.
\item \emph{(Spectral inflation: necessity.)} If $\mathcal C\ge c$ and $\Delta\ge c\sqrt{2d}$, then
$s_1\ge(c/2d)^{1/3}\Delta^{2/3}$; moreover, unconditionally on rank, $s_1\ge\mathcal C_h\Delta$
whenever the hub itself is clustered.
\item \emph{(Spectral inflation: sufficiency.)} The rank-one anchored kernel $\bbQ=xx^\top$ with
$x=\alpha e_h+\beta\mathbf 1_V$, $\abs V=m$, $\beta^2=c$, $\alpha\beta=q=\min\{(c^2\Delta/2)^{1/3},1\}$,
has clustering $\Theta(c)$, hub degree $\Delta=qm$, and top eigenvalue $s_1\le2c^{1/3}\Delta^{2/3}$
for $\Delta\le2c^{-2}$ and $s_1=(1+o(1))c\Delta$ for $\Delta\ge2c^{-2}$, matching part (c) in both
regimes up to absolute constants.
\end{enumerate}
\end{theorem}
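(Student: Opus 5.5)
Part (a) reduces to two estimates, one for each side of the inequality, and parts (b) and (c) then follow from (a) by algebra; the real work is the construction in (d).

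\emph{Numerator.} Write $\Pmat=\bbQ-D$ with $D=\diag(\bbQ)$. Since $\bbQ\succeq0$ and $\Pmat$ has entries in $[0,1]$, we have $0\le D_{ii}=Q_{ii}\le s_1$ and $Q_{ij}^2\le Q_{ii}Q_{jj}$. The cleanest route is entrywise: $\tr(\Pmat^3)=\sum_{i\ne j\ne k\ne i}Q_{ij}Q_{jk}Q_{ki}$. This is the sum over all triples in $\tr(\bbQ^3)$ minus the terms with a repeated index, and for a PSD $\bbQ$ each such diagonal correction is nonnegative. The terms with a repeated index are sums of the form $\sum_i Q_{ii}(\bbQ^2)_{ii}\ge0$ and $\sum_iQ_{ii}^3\ge 0$, combined via inclusion--exclusion. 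Hence $\tr(\Pmat^3)\le\tr(\bbQ^3)\le d\,s_1^3$. I will verify the inclusion--exclusion signs. If they fail, the fallback is $\tr(\Pmat^3)\le\tr(\bbQ^3)+3\tr(D\bbQ^2)$ with corrections of order $s_1^3 d$, which only changes constants.

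\emph{Denominator.} First, $\norm{r}^2\ge\Delta^2$. Second, $\norm{\Pmat}_F^2\le\norm{\bbQ}_F^2=\sum\lambda_i^2\le d\,s_1^2$. So the wedge count $\norm r^2-\norm{\Pmat}_F^2\ge(\Delta^2-ds_1^2)_+$. Multiplying $\mathcal C$ by this lower bound gives $\mathcal C(\Delta^2-ds_1^2)_+\le\tr(\Pmat^3)\le ds_1^3$, which is (a).

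\emph{Part (b).} If $s_1\le\kappa\bar d$ and $\Delta^2\ge2d\kappa^2\bar d^2$, then $ds_1^2\le\Delta^2/2$. Part (a) gives $\mathcal C\Delta^2/2\le d\kappa^3\bar d^3$, which rearranges to the stated bound. The power-law consequence follows because $\Delta=n^{\Omega(1)}$ while $\bar d^{3/2}=n^{o(1)}$.

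\emph{Part (c), first claim.} Argue by cases. If $ds_1^2\ge\Delta^2/2$, then $s_1\ge\Delta/\sqrt{2d}$. Since $\Delta\ge c\sqrt{2d}$ and $\Delta^{1/3}\ge(c/2d)^{1/3}\cdot\sqrt{2d}$ after checking the exponents (using $c\le1$), this already dominates $(c/2d)^{1/3}\Delta^{2/3}$. Otherwise (a) gives $c\Delta^2/2\le ds_1^3$, which is exactly the bound.

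\emph{Part (c), hub claim.} We have $p_h^\top\Pmat p_h=\mathcal C_h(\Delta^2-\norm{p_h}^2)$, and also $p_h^\top\Pmat p_h\le p_h^\top\bbQ p_h\le s_1\norm{p_h}^2$. Since $\norm{p_h}^2\le\Delta$ (entries lie in $[0,1]$), this yields $s_1\gtrsim\mathcal C_h\Delta$ up to the factor $(\Delta-1)/\Delta$. I would state it with that harmless factor, or sharpen it using $p_h^\top\bbQ p_h\ge p_h^\top\Pmat p_h$.

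\emph{Part (d).} Compute everything explicitly. Take the hub $h$ and the block $V$ with $|V|=m$.
\begin{itemize}
\item Edge probabilities: $P_{hv}=\alpha\beta=q$ and $P_{vv'}=\beta^2=c$, with all other entries $0$.
\item Hub degree: $\Delta=qm$. Non-hub degrees are $q+c(m-1)$.
\item Top eigenvalue: $s_1=\norm x^2=\alpha^2+c\,m=q^2/c+cm=q^2/c+c\Delta/q$.
\item Triangles: $\tr(\Pmat^3)=6[\binom m3c^3+\binom m2 q^2c]$.
\item Wedges: $\Delta^2+m(q+cm)^2$ up to the Frobenius correction.
\end{itemize}
With $q^3=c^2\Delta/2$, both terms of $s_1$ are of order $c^{1/3}\Delta^{2/3}$, which gives $s_1\le2c^{1/3}\Delta^{2/3}$. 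When $\Delta\ge2c^{-2}$, $q=1$ and $s_1=1/c+c\Delta=(1+o(1))c\Delta$ as $c^2\Delta\to\infty$. Note the crossover needs $q\le1$, i.e.\ $\Delta\le2c^{-2}$.

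\emph{Main obstacle: $\Theta(c)$ clustering in (d).} One must check that the numerator $\sim c^3m^3+q^2cm^2$ and the denominator $\sim q^2m^2+c^2m^3$ have ratio $\Theta(c)$. Both the $c^3m^3/c^2m^3$ and the $q^2cm^2/q^2m^2$ ratios equal $c$, so the ratio is $\Theta(c)$ uniformly; I would present this as the key computation. I expect the diagonal-correction sign argument in (a) to be the most delicate bookkeeping step. If it resists, I will absorb it into constants, since (a) is claimed as exact.
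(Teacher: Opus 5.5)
Apart from one step, your plan is the paper's proof. You use $\tr(\Pmat^3)\le\tr(\bbQ^3)\le d\,s_1^3$ together with $\norm{r}^2-\norm{\Pmat}_F^2\ge\Delta^2-d\,s_1^2$ for (a), the same rearrangement for (b), the same case split for the first claim of (c), and the same formula $s_1=q^2/c+c\Delta/q$ in (d). There, your exact triangle and wedge counts with the mediant observation make the $\Theta(c)$ clustering claim more rigorous than the paper's dominant-terms remark.

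Your hedges in (a) are unnecessary. Grouping the repeated-index terms of $\tr(\bbQ^3)$ by which indices coincide gives $3\sum_iQ_{ii}\sum_{k\ne i}Q_{ik}^2+\sum_iQ_{ii}^3\ge0$ directly. In the form $3\tr(D\bbQ^2)-2\tr(D^3)$ with $D=\diag(\bbQ)$, one coefficient is negative, so you need $(\bbQ^2)_{ii}\ge Q_{ii}^2$, which is exactly the paper's step. A constants-absorbing fallback could not prove an exact inequality anyway.

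Three smaller points. In (c), $\Delta/\sqrt{2d}\ge(c/2d)^{1/3}\Delta^{2/3}$ is equivalent to $\Delta\ge c\sqrt{2d}$, so $c\le1$ is not used. In (d), record the constant $2^{-2/3}+2^{1/3}<2$. Also in (d), check that $h$ really is the maximum-degree vertex, $qm\ge q+c(m-1)$, i.e.\ $q\ge c$.

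The genuine gap is the hub claim $s_1\ge\mathcal C_h\Delta$ in (c). Your bound $\mathcal C_h(\Delta^2-\norm{p_h}^2)=p_h^\top\Pmat p_h\le s_1\norm{p_h}^2$ yields only $s_1\ge\mathcal C_h(\Delta-1)$. The suggested sharpening via $p_h^\top\bbQ p_h\ge p_h^\top\Pmat p_h$ is circular, since that is the inequality you already used. The paper's one-line argument does not help either: it bounds the denominator $\Delta^2-\norm{p_h}^2$ above by $\Delta^2$, which lower-bounds $\mathcal C_h$ instead of upper-bounding it.

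The missing idea is to include the hub's own coordinate in the Rayleigh quotient. Put $u=p_h+e_h$ and $w=\norm{p_h}^2$. Since $P_{hh}=0$, we have $e_h^\top\bbQ p_h=\sum_{j\ne h}P_{hj}^2=w$. Dropping $Q_{hh}\ge0$ and $\sum_jQ_{jj}p_{hj}^2\ge0$ gives
\[
s_1\ \ge\ \frac{u^\top\bbQ u}{\norm{u}^2}\ \ge\ \frac{p_h^\top\Pmat p_h+2w}{w+1}\ =\ \frac{\mathcal C_h(\Delta^2-w)+2w}{w+1}.
\]
The right-hand side is at least $\mathcal C_h\Delta$ if and only if $g(w):=\mathcal C_h\Delta(\Delta-1)+w\,(2-\mathcal C_h-\mathcal C_h\Delta)\ge0$. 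The function $g$ is linear on $w\in[0,\Delta]$, with $g(0)=\mathcal C_h\Delta(\Delta-1)\ge0$ and $g(\Delta)=2\Delta(1-\mathcal C_h)\ge0$. Here $\mathcal C_h\le1$ because $P_{jk}\le1$ gives $p_h^\top\Pmat p_h\le\sum_{j\ne k}p_{hj}p_{hk}=\Delta^2-w$. This proves $s_1\ge\mathcal C_h\Delta$ exactly for every hub with $\Delta\ge1$, with no rank assumption, as the statement requires.
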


\begin{proof}
See Appendix~\ref{app:structural}.
\end{proof}

\begin{figure}[t]
\centering
\includegraphics[width=0.80\textwidth]{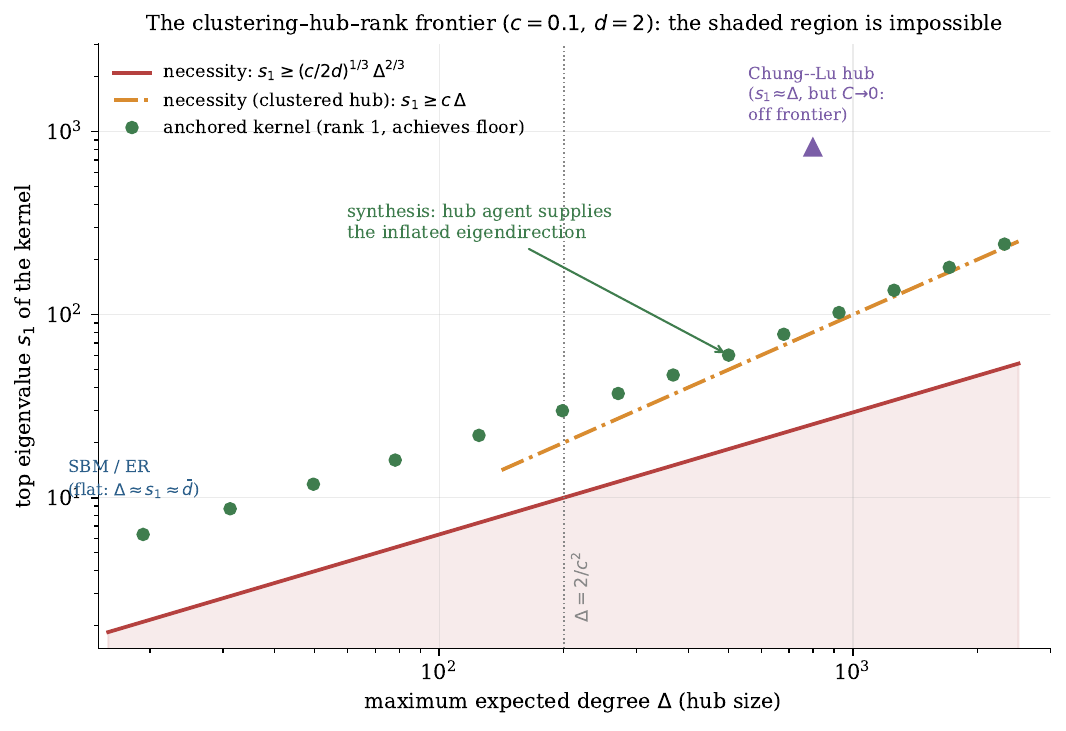}
\caption{The quantitative frontier of Theorem~\ref{thm:frontier} at clustering $c=0.1$ and rank
$d=2$. The shaded region below the $\Delta^{2/3}$ floor is unattainable for any clustered kernel;
above the crossover $\Delta=2/c^2$ the clustered-hub bound $s_1\ge c\Delta$ takes over. The anchored
rank-one kernel attains the floor in both regimes. Flat classical models sit at the lower left
($s_1\approx\Delta\approx\bar d$, no hubs); a Chung--Lu hub inflates $s_1$ but its clustering
vanishes, so it sits off the frontier. The synthesis escapes by letting the hub agent supply the
inflated eigendirection.}
\label{fig:frontier}
\end{figure}

\begin{remark}[Why synthesis is the minimal escape]
\label{rem:frontier-escape}
Theorem~\ref{thm:frontier} converts the impossibility of Theorem~\ref{thm:imp} into an exchange
rate: clustering $c$ together with a hub $\Delta$ costs a top eigenvalue of order
$\max\{(c/d)^{1/3}\Delta^{2/3},\,\mathcal C_h\Delta\}$ that flat single-mechanism kernels do not have
and, by part (b), cannot acquire at bounded rank and flat spectrum. The synthesis pays this price in
the cheapest currency available. The heavy-tailed agent contributes a single rank-one inflated
direction, with eigenvalue of order $\Delta$ by construction; the clustered agent keeps the flat,
triangle-bearing part; and the noisy-OR splices the two without destroying either, which is the
anchored-kernel geometry of part (d) realised by composition. The frontier therefore does more than
forbid. It identifies the spectral resource a four-property model must budget and shows the agent
decomposition to be a minimal way to budget it.
\end{remark}

\subsection{Synthesis attains all four properties}

Combining agents must create the four properties no single agent has, and clustering is the delicate point: superposing a heavy-tailed hub layer on a triangle-rich layer floods the hubs with mutually non-adjacent neighbours, and since global clustering is the ratio of closed to total wedges, the added open wedges can dilute it to zero even as the triangles remain. The proof (Appendix~\ref{app:structural}) uses edge monotonicity of the noisy-OR operator, so a union never destroys a triangle, which a mixture could not guarantee, and controls the dilution by restricting to the local clustering of non-hub vertices; the guarantee is local for typical vertices and global only under a hub-sparsity condition. Hubs, short paths, and communities survive superposition because degrees only increase, added edges only shorten distances, and the block signal is recovered by the regularised spectral analysis.

Consider the noisy-OR superposition (Definition~\ref{def:noisyor}) of a small-world agent (the
triangle layer, weight $w_1$), a heavy-tailed Chung--Lu agent with index $\tau\in(2,3)$ (the hub
layer, weight $w_2$), and an optional assortative block agent with $Q$ blocks (the community layer,
weight $w_3$). Let $\bar d_2$ be the mean degree contributed by the hub layer.

\begin{theorem}[Possibility via synthesis]
\label{thm:pos}
The superposed model just described satisfies all four properties:
\begin{enumerate}[label=(\alph*),leftmargin=2em,itemsep=2pt]
\item \emph{(Hubs)} The degree distribution is heavy-tailed with index $\tau$; in particular
$d_{\max}=n^{1/(\tau-1)+o(1)}$.
\item \emph{(Short paths)} Typical distances are $O(\log n)$.
\item \emph{(Communities)} If the community layer is present and its signal exceeds the
Kesten--Stigum threshold, its blocks are weakly recoverable from the superposed graph by regularised
spectral clustering.
\item \emph{(Clustering, local form)} There is a constant $c>0$, depending on $(m,w_1)$ and not on
$n$, such that the average local clustering of the vertices outside the top-$\delta$ degree quantile
satisfies $\bar C_{\mathrm{non\text{-}hub}}\ge c$ for every fixed $\delta\in(0,1)$. If in addition the
hub layer is sparse, in the sense that its second degree moment is of lower order than the
small-world wedge density, then the global clustering coefficient is bounded below by a constant.
\end{enumerate}
\end{theorem}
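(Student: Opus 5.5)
The plan is to work throughout with the coupling $A_{ij}=\max_k A^{(k)}_{ij}$ of Definition~\ref{def:noisyor}, realising the superposed graph as the edge union $G=G_1\cup G_2\cup G_3$ of independent layers: a small-world layer $G_1$ with kernel $w_1\kappa_{\mathrm{SW}}$, a Chung--Lu layer $G_2$ with kernel $w_2\theta_i\theta_j/S$, and a block layer $G_3$ with kernel $w_3\rho_nB$. Edge monotonicity, meaning $G_k\subseteq G$ for every $k$, carries the upper-side properties directly: degrees of $G$ dominate those of $G_2$, and distances in $G$ are bounded by distances in $G_2$ (or in $G_1$, whose long-range edges already give a small-world graph). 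For the matching upper bounds, the degree of $i$ in $G$ is at most the sum of its degrees in the three layers. These are independent sums of Bernoullis with means $O(m)$, $w_2\theta_i$, and $O(\bar d_3)$, where $\bar d_3=n\rho_n$.

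\emph{Hubs and paths.} For (a) I will use Chernoff bounds to show that vertex degrees concentrate around $w_2\theta_i+O(1+\bar d_3)$ whenever $\theta_i\gtrsim\log n$. The tail of $\{\theta_i\}$ with index $\tau$ then transfers to the empirical degree distribution, and $d_{\max}=(1+o(1))w_2\theta_{\max}=n^{1/(\tau-1)+o(1)}$, since $\bar d_3=n^{o(1)}$. For (b), the Chung--Lu layer with $\tau\in(2,3)$ has a giant component with typical distances $O(\log\log n)\subseteq O(\log n)$ by the standard Chung--Lu distance results. Because the small-world layer with a positive fraction of long-range edges is a connected backbone of diameter $O(\log n)$ (it is expander-like), every vertex lies in that giant component. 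Adding edges never lengthens a path, so distances in $G$ are $O(\log n)$.

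\emph{Communities.} For (c) I treat $G_1\cup G_2$ as label-independent noise. The expected adjacency matrix is $\E A=P^{\mathrm{sup}}$, which equals the block signal $w_3\rho_nB_{c_ic_j}$ plus a perturbation that is independent of the labels, plus a cross term of order $\rho_n\cdot O(1/n)$. I would apply degree-regularised spectral clustering, citing \citet{le2017concentration}: after regularisation, $\|A_\tau-\E A_\tau\|=O(\sqrt{\bar d})$. The low-rank perturbation from $G_1\cup G_2$ is label-independent, and its heavy-tailed rank-one part is exactly what regularisation neutralises. With the Davis--Kahan theorem and the Kesten--Stigum margin, this gives weak recovery. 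This is the step whose rigorous details I am least sure of: the regularisation must kill the inflated Chung--Lu eigendirection without killing the block signal. I would project out the top Chung--Lu direction and argue via eigengap.

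\emph{Clustering.} This is the main obstacle. For a non-hub vertex $i$ outside the top-$\delta$ quantile, its degree in $G$ is $O_\delta(1)$ with probability bounded below, because $\theta$ below the $\delta$-quantile is $O(1)$ and $\bar d_3=n^{o(1)}$ can be kept bounded or handled separately. The $\Omega(m^2)$ lattice neighbours of $i$ within $G_1$ are present with probability $\ge p_{\mathrm{lat}}^3$, and each such pair closes a triangle by monotonicity. The local clustering coefficient $C_i$ is therefore at least the number of closed lattice wedges divided by $\binom{\deg_G i}{2}$. I would restrict to the event $\{\deg_G i\le M\}$, which for a suitable constant $M=M(\delta)$ has probability bounded below uniformly in $n$. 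On that event $C_i\ge c'(m,w_1)/M^2$, and averaging over vertices via a second-moment bound on the count gives $\bar C_{\mathrm{non\text{-}hub}}\ge c$. For the global statement, I bound the number of triangles from below by $\Theta(nm^2)$ lattice triangles. The wedge count is $\sum_i\binom{\deg i}{2}\lesssim nm^2+\sum_i(w_2\theta_i)^2+\text{cross terms}$, so the hub-sparsity hypothesis $\sum_i\theta_i^2=o(nm^2)$ (or $O(nm^2)$) keeps the ratio bounded below. Concentration of both counts follows by Chebyshev using the bounded-dependence structure of the lattice.
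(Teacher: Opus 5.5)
Your architecture is the paper's: the superposition as a union of independent layers, edge monotonicity, hubs read off the Chung--Lu layer with the other layers adding $O(1+\bar d_3)$ per vertex, local clustering from lattice triangles at bounded-degree vertices, and global clustering from $T\ge T_1=\Theta(n)$ against $W=W_1+W_{\mathrm{hub}}+(\text{cross terms})$ under hub sparsity. Part (a) and the global half of (d) go through as in the paper. The local half of (d) needs two repairs.

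\begin{itemize}
\item \emph{Negative correlation.} The event that some lattice triangle at $i$ is present is increasing, while $\{\deg_G i\le M\}$ is decreasing, so the two are negatively correlated. It is not enough that $\Pr(\deg_G i\le M)$ is bounded below. You need $\Pr(\deg_G i>M)<\tfrac12(w_1p_{\mathrm{lat}})^3$ (the triangle probability carries the factor $w_1$), and then use $\Pr(\text{both})\ge\Pr(\text{triangle})-\Pr(\deg_G i>M)$.
\item \emph{Bounded block degree.} $\bar d_3$ cannot be ``handled separately''. If $\bar d_3\to\infty$, typical degrees diverge while closed wedges stay $O(m^2)$, so $C_i\to0$. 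You need $\bar d_3=O(1)$, which is the Kesten--Stigum regime anyway.
\end{itemize}

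Your $M$ must also sit below the top-$\delta$ degree cutoff. This fails for $\delta$ near one when $w_1p_{\mathrm{lat}}<1$, because a positive fraction of vertices then have degree at most one and $C_i=0$. The paper's proof passes over this as well.

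Two steps fail as written.

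\emph{Part (b).} Here you depart from the paper, which bounds $\mathrm{dist}_G\le\mathrm{dist}_{G_1}$ and uses the small-world layer alone. Your claim that $G_1$ is a connected, expander-like backbone of diameter $O(\log n)$ is false for the independent-edge kernel whenever $w_1p_{\mathrm{lat}}<1$. A vertex is isolated in $G_1$ with probability tending to $(1-w_1p_{\mathrm{lat}})^{2m}e^{-w_1p_{\mathrm{long}}}>0$. The ring is also cut at each site with constant probability, so $G_1$ is a collection of $O(1)$-length lattice runs joined by sparse long-range edges, and it need not even percolate. Starting from the giant of $G_2$ is the sounder route: its typical distances are $O(\log\log n)$ for every $w_2>0$ when $\tau\in(2,3)$, and the paper's appeal to the Watts--Strogatz rewiring model does not literally cover this kernel either. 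But you then need a separate argument that typical vertices of the giant of $G$ lying outside the giant of $G_2$ are within $O(\log n)$ of it.

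\emph{Part (c).} The premise that $G_1\cup G_2$ is a low-rank, label-independent perturbation is wrong for $G_1$. Its mean is a circulant band matrix with $\Theta(n)$ eigenvalues within any fixed distance of $2mw_1p_{\mathrm{lat}}$, dense down through the band. Removing the Chung--Lu direction therefore leaves no eigengap around the block contrast eigenvalue unless that eigenvalue clears the top of the band by more than the noise level. Even for the block layer alone, Davis--Kahan with the $O(\sqrt{\bar d})$ regularised concentration of \citet{le2017concentration} certifies weak recovery only when $(a-b)^2\ge C(a+b)$ for a large constant $C$, with $a,b$ the within- and between-block mean degrees. That is not the Kesten--Stigum threshold, which requires non-backtracking or path-counting spectra. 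The paper does not close (c) either: its proof appeals to the two-block simulation of Figure~\ref{fig:ks} and asserts that the hub layer perturbs the informative eigenvector by at most a multiple of the gap. So you are not missing an argument the paper supplies, but the route you propose cannot deliver (c) as stated.
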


\begin{proof}
See Appendix~\ref{app:structural}.
\end{proof}

\section{Estimation, inference, and operator identification}

\begin{figure}[t]
\centering
\includegraphics[width=\textwidth]{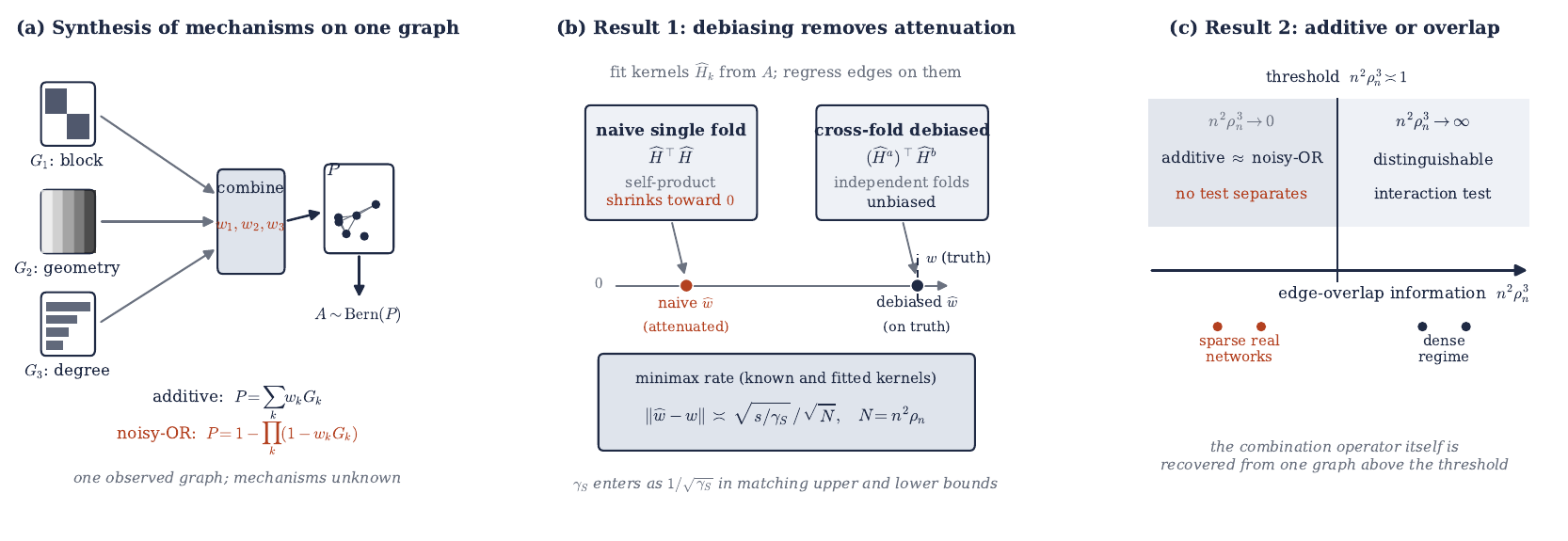}
\caption{The synthesis problem and the two results. (a)~A single graph is generated as a weighted combination of candidate mechanism kernels $G_k$ under an additive or a noisy-OR operator, then observed once. (b)~Result one: the naive single-fold plug-in builds the Gram from the self-product $\widehat H^{\top}\widehat H$ and attenuates the coefficients toward zero, while the cross-fold construction forms it from two independent fits $(\widehat H^{a})^{\top}\widehat H^{b}$, cancels the self-product, and attains the minimax rate $\sqrt{s/\gamma_S}/\sqrt{N}$ with $N=n^2\rho_n$. (c)~Result two: the additive and noisy-OR laws are contiguous below the edge-overlap threshold $n^2\rho_n^3\asymp1$ and separable above it, where a test on the interaction column is consistent.}
\label{fig:overview}
\end{figure}

Figure~\ref{fig:overview} sets out the problem and the two results in one view: a single graph is modelled as a weighted combination of candidate mechanism kernels under an additive or a noisy-OR operator, the cross-fold construction removes the attenuation that the naive plug-in incurs, and the combination operator is itself identifiable once the edge-overlap information diverges.

\label{sec:joint}

The structural results motivate modelling the generating object as a combination of mechanisms, and the four
estimation results of this section make that synthesis usable: the weights and the active set are estimable at
a sharp minimax rate, the inverse square root of the number of edges, after the cross-fold debiasing that the
same-graph fitting requires, since the naive plug-in attenuates and is rate-suboptimal while the debiased
estimator is sharp (Proposition~\ref{thm:joint}, Theorem~\ref{thm:debias}, Corollary~\ref{cor:adapt-cor}); the
estimator carries valid inference, centred under misspecification on an explicit projection whose negative
coordinates are corrective contrasts (Proposition~\ref{thm:clt}, with Proposition~\ref{prop:mis-cor} in the
supplement); selection is adaptive, knowing neither the active set, its size, nor its conditioning
(Corollary~\ref{cor:adapt-cor}); and the theory crosses to the nonlinear noisy-OR operator, ending at the
sharp boundary where the edge-overlap information $n^2\rho_n^3$ decides whether the combination rule is
testable at all (Theorem~\ref{thm:nor}). This is the inferential programme associated with the synthesis
coefficients, unavailable for a single fitted classical model.

Estimating the weight vector $\bw$ is, structurally, a regression problem. Under the mixture operator the
population edge-probability matrix is linear in the weights, $\Pmat=\sum_k w_k \mathbf{G}_k$ with
$\mathbf{G}_k=\E[\,P^{(k)}\,]$ the $k$th agent's Gram matrix, so the $\binom n2$ observed Bernoulli edges are
the responses and the agents' Gram matrices the $K$ regressors. Two facts drive the result: a graph supplies
$\Theta(n^2\rho_n)$ informative edge variables for a length-$K$ parameter, so $\bw$ concentrates an order of
magnitude faster than any node-level quantity; and the difficulty is governed by how distinguishable the
agents are as matrices, which we quantify by the smallest eigenvalue $\gamma_S$ of the Gram-correlation matrix
$\Phi$ restricted to the active agents, the network analogue of the restricted-eigenvalue (compatibility)
constant that controls the lasso. The non-obvious payoff, confirmed in simulation,
is that the conditioning enters as $1/\sqrt{\gamma_S}$, not $1/\gamma_S$, with the same
exponent in
both the upper and lower bounds, so the rate is sharp.

\paragraph{What is general, and what is specific to networks.} Several results below have a recognisable
general core: the known-design rate of Proposition~\ref{thm:joint} is least squares for a normalised linear
model with $\gamma_S$ as the restricted-eigenvalue constant; the limit theory of Proposition~\ref{thm:clt}
specialises held-out $M$-estimation and case-control logistic calibration
\citep{van2000asymptotic,prentice1979logistic}; and the selection of Corollary~\ref{cor:adapt-cor} is a
thresholding-and-refit argument. Stating these at their natural generality is a strength, since the network
conclusions inherit guarantees that are already sharp. What is specific to networks is concentrated in four
places: the regressors are estimated from the same graph, a generated-regressor design that forces the
cross-fold debiasing of Theorem~\ref{thm:debias} and the correction of Theorem~\ref{thm:twostage}; the
effective sample size $n^2\rho_n$ and the transversality $\gamma_S$ are graph functionals with no
scalar-regression counterpart, and the lower bound of Theorem~\ref{thm:est-lb} is over graph models; the
operator threshold $n^2\rho_n^3$ of Theorem~\ref{thm:nor} concerns edge-overlap information in a sparse
Bernoulli array; and the impossibility and frontier theorems of Section~\ref{sec:structural} concern
clustering, hubs, and the spectrum, which exist only for graphs.

\subsection{Estimands}
\label{sec:estimands}
Four targets arise, distinguished by the terminology used throughout. Under a correctly specified
mixture model the target is the generative weight vector $\bw_{\mathrm{gen}}\in\Delta^{K-1}$, whose
entries are the mechanism \emph{proportions}. Under a correctly specified noisy-OR model the target is the vector of layer
activation probabilities $\bw_{\mathrm{OR}}\in[0,1]^K$. When the truth lies outside the synthesis
class, the least-squares estimator targets the population projection
$\bw^\dagger_{\mathrm{LS}}\in\R^K$ of Proposition~\ref{prop:mis-cor}, whose coordinates are
\emph{projection coefficients} and may be negative; a negative coordinate is a corrective
contrast, not a negative proportion. The held-out logistic estimator of
Section~\ref{sec:estimation} targets the population calibration coefficient
$\bw^\dagger_{\mathrm{cal}}\in\R^K$ on the case--control dyad distribution, a \emph{predictive
coefficient} on the link scale; $\bw^\dagger_{\mathrm{LS}}$ and $\bw^\dagger_{\mathrm{cal}}$ are
distinct objects with the same qualitative sign interpretation under their respective losses.
The weight scale is fixed by the
normalisation of Assumption~\ref{ass:norm}: rescaling an agent kernel rescales its coefficient
inversely, so coefficient magnitudes are interpretable only relative to that common normalisation,
and all four targets are defined relative to the candidate set.

\subsection{Setup and assumptions}
Work under the mixture operator (Definition~\ref{def:mixture}) with $K$ candidate agents. Agent $k$
has population Gram matrix $\mathbf{G}_k\in[0,1]^{n\times n}$ (the matrix of its proposed edge
probabilities $P^{(k)}_{ij}$), of rank $d_k$, and the population edge-probability matrix is
\[
\Pmat=\sum_{k=1}^K w_k\,\mathbf{G}_k=\sum_{k\in S}w_k\,\mathbf{G}_k,
\qquad S=\operatorname{supp}(\bw),\quad s=\abs{S}.
\]
We observe $A_{ij}\sim\mathrm{Bernoulli}(P_{ij})$ independently for $i<j$, conditionally on the
agents' latent attributes. Let $\rho_n=\max_{ij}P_{ij}$ be the density scale. Write the
\emph{Gram-correlation matrix} $\Phi\in\R^{K\times K}$ with entries
\[
\Phi_{k\ell}=\frac{\inner{\mathbf{G}_k}{\mathbf{G}_\ell}_F}{\norm{\mathbf{G}_k}_F\,
\norm{\mathbf{G}_\ell}_F},\qquad
\inner{\mathbf{G}_k}{\mathbf{G}_\ell}_F=\sum_{i<j}\mathbf{G}_{k,ij}\mathbf{G}_{\ell,ij},
\]
and let $\Phi_{S}$ be its restriction to the active agents.

\begin{assumption}[Transversality]\label{ass:transversality}
The active agents are uniformly distinguishable: $\gamma_S:=\lambda_{\min}(\Phi_S)\ge\gamma>0$ for
all $n$. Equivalently, no active agent's Gram matrix is asymptotically in the span of the others (the
matrix analogue of the column-space transversality of the stacked representation
(Remark~\ref{rem:stack})).
\end{assumption}

\begin{assumption}[Density regime]\label{ass:density}
$\rho_n\to0$ with $n\rho_n/\log n\to\infty$; for example, $\rho_n=n^{-\alpha+o(1)}$ for some
$\alpha\in(0,1)$. This is the standard regime in which spectral estimates of each $\mathbf{G}_k$
concentrate. Individual results require this regime at specific scales: the estimation and inference
results (the results of Section~\ref{sec:joint}) use only $n\rho_n/\log n\to\infty$, while the
operator-detectability result (Theorem~\ref{thm:nor}(e) and Corollary~\ref{cor:nork-cor}) requires the
denser scale $n^2\rho_n^3\to\infty$, equivalently $\alpha<2/3$. These requirements are stated with
each result.
\end{assumption}

\begin{assumption}[Estimable agents]\label{ass:plugin}
Each active agent admits a consistent estimator $\widehat{\mathbf{G}}_k$ (e.g.\ ASE for the
dot-product agent, spectral clustering plus block means for the block agent, degree matching for the
degree agent) with relative Frobenius error
$\norm{\widehat{\mathbf{G}}_k-\mathbf{G}_k}_F/\norm{\mathbf{G}_k}_F=O_P(\sqrt{r_k/(n\rho_n)})$ ($r_k$ the latent rank of agent $k$, $r_{\max}=\max_k r_k$), and
the active Gram matrices have bounded condition number.
\end{assumption}

\begin{assumption}[Design normalisation and information comparability]\label{ass:norm}
The agent kernels share a common scale, $c_\star n^2\rho_n^2\le\norm{\mathbf{G}_k}_F^2\le
C_\star n^2\rho_n^2$ for all $k$ and fixed constants $0<c_\star\le C_\star<\infty$ (for
example, every agent has mean edge probability $\rho_n$), the entries $P_{ij}$ lie in
$[c_\star\rho_n,\rho_n]$ on a fixed positive fraction of dyads, and the information matrix is
comparable to the Gram matrix:
$c\,\rho_n\,\mathbf{M}^\top\mathbf{M}\preceq\mathbf{M}^\top\mathbf{D}\mathbf{M}\preceq
C\,\rho_n\,\mathbf{M}^\top\mathbf{M}$ with
$\mathbf{D}=\operatorname{diag}\{P_{ij}(1-P_{ij})\}$. Coefficient magnitudes are interpretable
only relative to this normalisation: replacing $\mathbf{G}_k$ by $c_k\mathbf{G}_k$ rescales
$w_k$ by $1/c_k$.
\end{assumption}

Local kernels supported on $O(n)$ dyads, such as the ring or small-world agent of
Section~\ref{sec:setup}, do not satisfy the positive-fraction condition of
Assumption~\ref{ass:norm} and enter the paper as structural motivation and empirical scores rather
than as agents covered by the estimation and selection results of Section~\ref{sec:joint}; a
weighted-support extension that covers them, and heavy-tailed degree kernels, under their own
information scales is given in Appendix~\ref{app:effective-info}.

\paragraph{Effective-information extension.}
The common-scale fitted-kernel theory covers diffuse block, degree-corrected
block, and dot-product mechanisms at the edge-count rate
$1/(n\sqrt{\rho_n})$. The extension in Appendix~\ref{app:effective-info}
covers two additional motivating mechanisms under their own information
scales. A truncated regularly varying degree-product kernel with truncation
ratio $R_n=d_{\max,n}/\bar d_n$ has unweighted least-squares information
\[
    N_{\deg}
    \asymp
    \frac{n\bar d_n}{R_n^{2(\tau-2)}}
    =
    \frac{n^2\bar\rho_n}{R_n^{2(\tau-2)}},
\]
where $\bar\rho_n=\bar d_n/n$ and $\tau\in(2,3)$ is the tail index. A matching minimax lower bound over a degree-product nuisance class
(Theorem~\ref{thm:heavy-tail-projection-lb}) shows this rate is sharp for the residualized degree-column
projection coefficient, while the correctly specified scalar Chung--Lu weight is estimable faster, at rate
$(n\bar d_n)^{-1/2}$. A local
support mechanism on $q_n$ dyads with edge scale $p_{\triangle,n}$ has
information $N_\triangle\asymp q_np_{\triangle,n}$, with a matching one-dimensional
lower bound. Thus the same cross-fold debiasing principle applies beyond the
common-scale case, but the rates reveal the statistical price of hubs and
locality. Same-graph triadic scores such as Adamic--Adar and Jaccard remain
outside this extension unless their supports are constructed on an independent
pilot split or a separate support-estimation expansion is proved.

\paragraph{Notation.} Several symbols carry context-dependent roles; the following disambiguation is
used throughout. $\rho_n$ is the density scale of Assumption~\ref{ass:density}; $\bar\rho=\bar d/n$
is the empirical average-density proxy of Table~\ref{tab:opfeas}. $r_k$ is the latent rank of agent
$k$, $r_{\max}$ their maximum, and $d$ a generic embedding dimension. The conditioning family is
$\gamma_S$ (active set), $\gamma_{\mathrm{full}}$ (all candidates), and the interaction transversalities
$\widetilde\gamma$, $\widetilde\gamma_j$, $\widetilde\Gamma_2$ of Section~\ref{sec:nor}. $\Phi$
is the Gram correlation matrix at unit scale, while $\widehat\Phi_{\mathrm{db}}$ is the cross-fold
Gram at raw scale $n^2\rho_n^2$, the two related by the normalisation $\mathbf
D_n=\diag(\norm{\widehat{\mathbf m}_k}_2)$. $\mathbf Q_k$ denotes Stage-A remainder matrices
(Assumption~\ref{ass:linplug}); $Q$ also counts blocks where a blockmodel is specified, context
disambiguating, and in Lemma~\ref{lem:firststage}(b) $\mathbf d$ is the degree vector. $m$ is the calibration sample size in Proposition~\ref{thm:clt} and a ring
distance only in the worked example of Section~\ref{sec:setup}. $\delta_n=\sqrt{r_{\max}/(n\rho_n)}$
is the first-stage relative error scale. The dyad count is $\binom n2$ and the effective sample size is $N=n^2\rho_n$, both distinct from the calibration sample size $m$; $\tau$ with a subscript is always a selection threshold.
All inner products in $\widehat\Phi_{\mathrm{db}}$ and in the moment vector are taken over the
Stage-B dyads $\mathcal D_2$, and every inferential statement concerns this single declared fold assignment, with randomness
taken jointly over the graph and the Stage-A inclusion masks.

\begin{table}[t]
\centering
\small
\caption{Operator-test feasibility by network. The edge-overlap information is $n^2\rho_n^3\approx\bar d^{\,3}/n$; only \textsf{polblogs} exceeds the order-one threshold of Theorem~\ref{thm:nor}.}
\label{tab:opfeas}
\begin{tabular}{lccc}
\toprule
Network & $n$ & $\bar d$ & $\bar d^{\,3}/n$\\
\midrule
\textsf{polblogs}   & $1222$  & $27.4$ & $16.8$\\
\textsf{les mis}    & $77$    & $6.6$  & $3.7$\\
\textsf{GoT}        & $107$   & $6.6$  & $2.7$\\
\textsf{ca-GrQc}    & $4158$  & $6.5$  & $0.065$\\
\textsf{ca-CondMat} & $21363$ & $8.6$  & $0.029$\\
power grid          & $4941$  & $2.7$  & $0.004$\\
\bottomrule
\end{tabular}
\end{table}

\subsection{The estimator: cross-fitting on dyads}
\label{sec:cf}
Fitting every kernel on the full adjacency $\A$ and then regressing $\A$ on the fitted kernels makes
the regression error and the estimated design statistically dependent, and controlling this
dependence requires further assumptions. The construction developed here estimates the kernels and
the coefficients on disjoint sets of dyads, joined through an inverse-probability-weighted fold
adjacency, so that the two are independent.

Assign each dyad in $\mathcal D=\{(i,j):i<j\}$ to $\mathcal D_1$ or $\mathcal D_2$ by an
independent Bernoulli($\pi$) coin, $\pi=\tfrac12$, independently of $\A$ and across dyads. The
iid (not exact-half) masking is load bearing: independence of the masked entries drives every
concentration and Hanson--Wright step of Lemma~\ref{lem:firststage}, an exact-half partition
introducing a negative-correlation correction we do not pursue. Define the fold-one
adjacency
\[
\widetilde A^{(1)}_{ij}=\pi^{-1}A_{ij}\,\mathbf 1\{(i,j)\in\mathcal D_1\},\qquad i<j,
\]
symmetrised, so that conditionally on the latent attributes
$\E[\widetilde\A^{(1)}\mid\bu]=\Pmat$ and $\widetilde\A^{(1)}-\Pmat$ has independent mean-zero
entries bounded by $\pi^{-1}$ with variances at most $2\rho_n$: an unbiased, same-noise-scale
surrogate for $\A$ from half the dyads. Writing
$R_{ij}=\mathbf 1\{(i,j)\in\mathcal D_1\}$, the identity
$\widetilde A^{(1)}_{ij}-P_{ij}=\pi^{-1}R_{ij}(A_{ij}-P_{ij})+(\pi^{-1}R_{ij}-1)P_{ij}$ shows that
unbiasedness holds over the joint randomness of the fold draw and the graph, while conditionally on a realised
fold only the first term is mean zero; the first-stage expansions of Assumption~\ref{ass:linplug} are
accordingly stated for this IPW-masked matrix over the randomness of $(R,\A)$, and the Stage-B analysis
conditions on the fitted Stage-A design, never on the fold indicator alone. \emph{Stage A:} form
$\widehat{\mathbf{G}}_k^{(1)}$ for each candidate agent by its own consistent estimator
(Assumption~\ref{ass:plugin}) applied to $\widetilde\A^{(1)}$. \emph{Stage B:} regress the held-out
half of the adjacency on the fold-one bases,
\[
\widehat{\bw}^{(2)}_{\mathrm{ols}}=\argmin_{\bw\in\R^{K}}
\sum_{(i,j)\in\mathcal D_2}\Bigl(A_{ij}-\textstyle\sum_k w_k\widehat G^{(1)}_{k,ij}\Bigr)^{2},
\qquad \widehat\bw^{(2)}=\Pi_{\Delta^{K-1}}\bigl(\widehat{\bw}^{(2)}_{\mathrm{ols}}\bigr),
\]
where $\Pi_{\Delta^{K-1}}$ denotes Euclidean projection onto the simplex; since the truth lies in
$\Delta^{K-1}$ under correct specification and Euclidean projection is $1$-Lipschitz, the
projection step can only reduce the Euclidean error, and every bound below proved for
$\widehat{\bw}_{\mathrm{ols}}$ transfers to $\widehat\bw$ verbatim. (For the projection target of
Proposition~\ref{prop:mis-cor} the projection step is omitted.) Swap the folds and average:
$\widehat\bw=\tfrac12(\widehat\bw^{(1)}+\widehat\bw^{(2)})$. The active
set is recovered by thresholding, $\widehat S=\{k:\widehat w_k>\tau_n\}$, for a threshold $\tau_n$
specified below. Conditionally on fold one, the Stage-B errors
$\{A_{ij}-P_{ij}:(i,j)\in\mathcal D_2\}$ are independent of the design
$\{\widehat G^{(1)}_{k,ij}\}$: every stochastic interaction between plug-in error and regression
noise is eliminated \emph{by construction}, not by assumption. The held-out calibration estimator of
Section~\ref{sec:estimation}, which we use throughout the experiments, is exactly this estimator
with a logistic link.

\paragraph{Cross-fold debiasing of the Gram matrix.}
Fold independence removes the interaction between plug-in error and Stage-B noise, but it does not by
itself remove a second, purely deterministic effect: the Stage-A error biases the Gram matrix. Write
$\widehat{\mathbf M}=\mathbf M+\mathbf H$ with $\mathbf H$ the matrix of fitted-kernel errors. The
single-fold normal equations give
$\widehat\bw-\bw=-(\widehat{\mathbf M}^\top\widehat{\mathbf M})^{-1}\widehat{\mathbf M}^\top\mathbf
H\bw+(\text{mean-zero})$, and $\widehat{\mathbf M}^\top\mathbf H=\mathbf M^\top\mathbf H+\mathbf
H^\top\mathbf H$. The self-product $\mathbf H^\top\mathbf H$ is the errors-in-variables attenuation:
its diagonal is of order $r_{\max}n\rho_n$, so it contributes a bias of order $\sqrt
s\,r_{\max}/(n\rho_n)$, which exceeds the edge rate
$\sqrt{s/\gamma_S}/(n\sqrt{\rho_n})$ by the factor $r_{\max}\sqrt{\gamma_S/\rho_n}\to\infty$ in the
sparse regime. The single-fold plug-in therefore attenuates the coefficients toward zero, and no
choice of threshold corrects this.

We remove the attenuation by splitting the Stage-A dyads once more, into independent halves
$\mathcal D_{1a},\mathcal D_{1b}$, fitting each kernel twice,
$\widehat{\mathbf G}^{(1a)}_k$ and $\widehat{\mathbf G}^{(1b)}_k$, on the two halves so that their
errors $\mathbf H_a,\mathbf H_b$ are independent given the latent attributes. The
\emph{cross-fold debiased estimator} forms the Gram from the two independent copies and the moment
from their average,
\[
\widehat\Phi_{\mathrm{db}}=\tfrac12\bigl(\widehat{\mathbf M}_a^\top\widehat{\mathbf M}_b+
\widehat{\mathbf M}_b^\top\widehat{\mathbf M}_a\bigr),\qquad
\widehat\bw_{\mathrm{db}}=\widehat\Phi_{\mathrm{db}}^{-1}\,\tfrac12\bigl(\widehat{\mathbf M}_a+
\widehat{\mathbf M}_b\bigr)^\top\A^{(2)},
\]
The matrix $\widehat\Phi_{\mathrm{db}}$ is symmetric but not positive semidefinite by
construction; Lemma~\ref{lem:plugin} gives $\lambda_{\min}(\widehat\Phi_{\mathrm{db}})\ge\tfrac12
c_\star\gamma_S n^2\rho_n^2$ with probability tending to one, and on the complementary event the
estimator is defined through the eigenvalue-floored solve at level $\varepsilon n^2\rho_n^2$, a
modification without asymptotic effect.

with $\widehat\bw_{\mathrm{db}}$ projected onto the simplex and averaged over the fold assignment. The
construction removes the leading self-product rather than all bias: conditionally on the latent attributes,
\[
\E[\widehat\Phi_{\mathrm{db}}\mid\bu]=\mathbf M^\top\mathbf M
+\operatorname{sym}\bigl\{\mathbf M^\top\E[\mathbf H_b\mid\bu]+\E[\mathbf H_a\mid\bu]^\top\mathbf M\bigr\}
+\operatorname{sym}\bigl\{\E[\mathbf H_a\mid\bu]^\top\E[\mathbf H_b\mid\bu]\bigr\},
\]
in which the same-fold self-product $\mathbf H^\top\mathbf H$ that produced the attenuation is absent, the
remaining terms involving only the deterministic first-stage biases, negligible at the projected scale under
Assumption~\ref{ass:linplug} and condition~(C2) of Theorem~\ref{thm:debias}; the surviving stochastic
first-stage contribution is the mean-zero linear term $\mathbf M^\top\mathbf H$, of edge-rate order.
Figure~\ref{fig:estsim}(c) shows the effect and Theorem~\ref{thm:debias} proves it.

One further assumption is the interface that every fitted-kernel result uses; it is stated here,
next to the estimator it concerns, and verified for the named agents in Appendix~\ref{app:plugin}.

\begin{assumption}[Projected fold-one remainders]\label{ass:linplug}
For each $k\in[K]$ there is a linear map $\mathcal{L}_k$ on symmetric matrices such that
$\widehat{\mathbf{G}}^{(1)}_k-\mathbf{G}_k=\mathcal{L}_k(\mathbf{E}^{(1)})+\mathbf{Q}_k$, where
(i) $\norm{\mathcal{L}_k(\mathbf{E}^{(1)})}_F=O_P(\delta_n\norm{\mathbf{G}_k}_F)$ and the adjoint is
bounded, $\sup_{\norm{\mathbf{V}}_F\le1}\norm{\mathcal{L}_k^*(\mathbf{V})}_F\le C_L$;
(ii) the remainders are negligible in projection, in the scale-homogeneous form: for every
symmetric $\mathbf X$ with the test-class shape $\norm{\mathbf X}_{\max}\le C_0\norm{\mathbf
X}_F/n$, a class containing the design columns, the misspecification residual of
Proposition~\ref{prop:mis-cor}, and the inverse-Gram directions $\widehat{\mathbf G}(\mathbf v_k)$,
$\max_{k\le K}\,\abs{\inner{\mathbf X}{\mathbf{Q}_k}_F}=o_P\bigl(\sqrt{\gamma_{\mathrm{full}}\,
\rho_n}\,\norm{\mathbf X}_F\bigr)$ (at $\norm{\mathbf X}_F\asymp n\rho_n$ this is the absolute
bound $o_P(\sqrt{\gamma_{\mathrm{full}}}\,n\rho_n^{3/2})$, and at the inverse-Gram scale it is
$o_P$ of the per-coordinate noise, which is what the proofs use);
(iii) \emph{(Entrywise spreading.)} The linear parts are delocalised:
$\max_{e}\operatorname{Var}\bigl([\mathcal{L}_k(\mathbf{E})]_e\bigr)=O(r_{\max}\rho_n/n)$, so the
squared mass $\norm{\mathcal{L}_k(\mathbf{E})}_F^2\asymp r_k\,n\rho_n$ is spread over the
$\Theta(n^2)$ dyads. Moreover the covariance operator of each linear part is bounded,
$\norm{\operatorname{Cov}(\operatorname{vec}\mathcal L_k(\mathbf E))}_{\mathrm{op}}\le
C_L^2\rho_n/f$. The entries of $\mathcal L_k(\mathbf E)$ are correlated, so cross-fold inner
products over independent sub-folds are controlled by the trace bound, never entrywise:
$\operatorname{Var}\inner{\mathcal{L}_k(\mathbf{E}_a)}{\mathcal{L}_l(\mathbf{E}_b)}=
\operatorname{tr}(\mathbf C_k\mathbf C_l)\le\norm{\mathbf C_k}_{\mathrm{op}}\,
\E\norm{\mathcal{L}_l(\mathbf{E}_b)}_F^2=O(r_{\max}\,n\rho_n^2/f^2)$, so the inner product is
$O_P(\sqrt{r_{\max}n}\,\rho_n/f)$, the bound used in Lemma~\ref{lem:plugin},
Theorem~\ref{thm:twostage}, and Theorem~\ref{prop:fitted-operator}. The named
agents satisfy (i) to (iii) by Lemma~\ref{lem:firststage}, whose masked-sampling constants carry
the explicit factor $1/f$, with $f$ the sub-fold sampling fraction, absorbed here for fixed fold
fractions.
\end{assumption}

\subsection{Result one: cross-fold debiasing and the minimax rate}

Before stating the rate, we make the transversality constant concrete on a specific candidate set.

\begin{example}[A concrete candidate set and its rate]\label{ex:gammaS}
Take three mechanisms on $n$ vertices: a two-block assortative SBM agent, a rank-two dot-product
agent with latent positions in the positive orthant, and a Chung--Lu hub agent with a power-law
propensity sequence of index $2.1$. The three population Gram matrices are linearly independent but
correlated, with pairwise Gram-correlations $\Phi_{12}\approx0.80$ (block and dot-product),
$\Phi_{13}\approx0.48$ (block and hub), and $\Phi_{23}\approx0.53$ (dot-product and hub); the
restricted Gram-correlation matrix $\Phi_S$ then has smallest eigenvalue $\gamma_S\approx0.20$.
Proposition~\ref{thm:joint} gives the three weights estimable at the rate
$\sqrt{s/\gamma_S}/(n\sqrt{\rho_n})=\sqrt{3/0.20}/(n\sqrt{\rho_n})\approx3.9/(n\sqrt{\rho_n})$, the
inverse square root of the edge count inflated by the conditioning. Better-separated or denser
candidate sets raise $\gamma_S$ toward one and shrink the constant; two nearly proportional agents,
such as two assortative SBMs with similar block matrices, drive $\gamma_S\to0$ and degrade the rate,
the network analogue of multicollinearity in ordinary regression.
\end{example}

\begin{proposition}[Known-design benchmark]
\label{thm:joint}
Let Assumptions~\ref{ass:transversality}--\ref{ass:norm} hold. Part (a) concerns the
cross-fitted estimator of Section~\ref{sec:cf} computed on the active set $S$ (the oracle-set
estimator); the fully data-driven full case, including selection, is Corollary~\ref{cor:adapt-cor}.
\begin{enumerate}[label=(\alph*),leftmargin=2em,itemsep=2pt]
\item \emph{(Upper bound, known design, uniform over the class.)} With the candidate kernels known, the
cross-fitted least-squares estimator of Section~\ref{sec:cf} satisfies
\[
\sup_{\Theta(s,\gamma_S,\rho_n)}\E\bigl\|\widehat{\bw}-\bw\bigr\|_2\ \le\ C\,
\sqrt{\frac{s}{\gamma_S}}\,\frac{1}{n\sqrt{\rho_n}}\ =\ O\bigl(\sqrt{(s/\gamma_S)/N}\bigr),\qquad
N=\Theta(n^2\rho_n),
\]
with $C$ absolute over the class $\Theta(s,\gamma_S,\rho_n)$ defined by
Assumptions~\ref{ass:density} to~\ref{ass:norm}; the constants in Appendix~\ref{app:joint} are
class uniform, and the pointwise $O_P$ form follows,
the inverse square root of the effective edge count of Assumption~\ref{ass:norm}. With kernels
\emph{fitted} from the graph, the naive single-fold plug-in is not rate-optimal: its Gram matrix
carries the errors-in-variables self-product $\mathbf H^\top\mathbf H$ of
Section~\ref{sec:cf}, which attenuates the coefficients and contributes a bias of order
$r_{\max}\norm{\bw_S}_2/(n\rho_n)$, exceeding the edge rate by $r_{\max}\sqrt{\gamma_S/\rho_n}$
when $s$ is fixed with active weights bounded below (Corollary~\ref{cor:debias-lb}). The
cross-fold debiased estimator removes this term and attains the known-design rate above; this is
Theorem~\ref{thm:debias}. Conditionally on the folds, cross-fitting removes the first-order
stochastic dependence between Stage-A estimation and Stage-B regression noise by construction, and
cross-fold debiasing removes the deterministic Gram attenuation.
\item \emph{(Lower bound, known-design normalised regression class.)} Over the known-design class of normalised dyadic regression designs
\begin{multline*}
\Theta(s,\gamma,\rho)=\Bigl\{\,\Pmat=\textstyle\sum_{k\in S}w_k\mathbf{G}_k:\ \bw\in\Delta^{s-1},\\
\lambda_{\min}(\Phi_S)\ge\gamma,\ \text{Assumption~\ref{ass:norm} at scale }\rho\,\Bigr\},
\end{multline*}
in which the statistician knows $\{\mathbf{G}_k\}$ and estimates only $\bw$, every estimator obeys
\[
\inf_{\widetilde{\bw}}\ \sup_{\Theta(s,\gamma,\rho)}\ \E\bigl\|\widetilde{\bw}-\bw\bigr\|_2\ \ge\
c\,\sqrt{\frac{s}{\gamma}}\,\frac{1}{n\sqrt{\rho}}\,,
\]
provided $s^2\le c'\gamma n^2\rho$. The known-design rate of part (a) is therefore minimax
optimal over this class; since estimating the design cannot help, the same lower bound applies to
the estimated-basis problem.
\end{enumerate}
\end{proposition}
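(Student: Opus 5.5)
For part (a) I would treat the known-design problem as a fixed-design least-squares regression on the Stage-B dyads and bound the error through the normal equations. Write $\mathbf M\in\R^{|\mathcal D_2|\times s}$ for the matrix whose columns are the vectorised active kernels $\mathbf G_k$ restricted to $\mathcal D_2$, and $\boldsymbol\varepsilon=\A^{(2)}-\mathbf M\bw$ for the centred Bernoulli noise. The noise is independent across dyads and independent of the fold mask. Then $\widehat\bw_{\mathrm{ols}}-\bw=(\mathbf M^\top\mathbf M)^{-1}\mathbf M^\top\boldsymbol\varepsilon$.

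Next I would show that the Gram matrix concentrates around half its full-dyad version. Each entry $\sum_{(i,j)\in\mathcal D_2}G_{k,ij}G_{\ell,ij}$ is a sum of independent Bernoulli$(1-\pi)$-masked terms bounded by $\rho_n^2$. Its variance is at most $n^2\rho_n^4$, against a mean of order $n^2\rho_n^2$. So with probability $1-e^{-cn^2}$ we get $\mathbf M^\top\mathbf M\succeq \tfrac14\mathbf D_G\Phi_S\mathbf D_G$, with $\mathbf D_G=\diag(\norm{\mathbf G_k}_F)$. Assumption~\ref{ass:norm} then gives $\lambda_{\min}\gtrsim c_\star\gamma_S n^2\rho_n^2$.

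For the noise term I would compute $\E\norm{(\mathbf M^\top\mathbf M)^{-1}\mathbf M^\top\boldsymbol\varepsilon}_2^2=\tr\{(\mathbf M^\top\mathbf M)^{-1}\mathbf M^\top\mathbf D\mathbf M(\mathbf M^\top\mathbf M)^{-1}\}$. The information comparability $\mathbf M^\top\mathbf D\mathbf M\preceq C\rho_n\mathbf M^\top\mathbf M$ reduces this to $C\rho_n\tr\{(\mathbf M^\top\mathbf M)^{-1}\}\le C\rho_n s/(c\gamma_S n^2\rho_n^2)$. Jensen's inequality then yields $\E\norm{\widehat\bw-\bw}_2\le C\sqrt{s/\gamma_S}/(n\sqrt{\rho_n})$. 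The exceptional Gram event is handled by the eigenvalue-floored solve together with boundedness of $\Delta^{K-1}$: its contribution $e^{-cn^2}\sqrt2$ is negligible. The simplex projection and the fold averaging only contract, by $1$-Lipschitzness and the triangle inequality. All constants depend only on $c_\star,C_\star,c,C$, so the bound is uniform over $\Theta$.

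The fitted-kernel remarks in (a) I would simply cite: the attenuation order from Corollary~\ref{cor:debias-lb} and the debiased rate from Theorem~\ref{thm:debias}. They are stated there and not reproved here.

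For part (b) I would use Assouad's lemma on a hypercube inside the simplex. I would fix $s$ kernels built from disjoint-support or orthogonalised block patterns so that $\Phi_S$ has smallest eigenvalue exactly $\gamma$, for instance $\Phi_S=(1-\gamma')\mathbf 1\mathbf 1^\top/s+\gamma\I$-type correlations. Each kernel is scaled to entries of order $\rho$ on a positive fraction of dyads. Take a base point $\bw_0=\mathbf 1/s$ and perturbations $\bw_\sigma=\bw_0+\delta\,\mathbf V\sigma$, with $\sigma\in\{\pm1\}^{s}$ restricted to zero-sum directions (dimension $s-1$). The matrix $\mathbf V$ is chosen along eigenvectors of $\Phi_S$ with eigenvalue near $\gamma$, and is scaled by $\mathbf D_G^{-1}$. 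Then $\norm{\Pmat_\sigma-\Pmat_{\sigma'}}_F^2\asymp\delta^2\gamma\,n^2\rho^2\,d_H(\sigma,\sigma')/s$, after normalisation. Since entries lie in $[c\rho,\rho]$, the Bernoulli KL divergence is bounded by the $\chi^2$ divergence, which is at most $\sum(P-P')^2/(c\rho)$. For neighbours this gives $\KL\lesssim\delta^2\gamma n^2\rho/s$. Choosing $\delta\asymp\sqrt{s/(\gamma n^2\rho)}$ makes the KL bounded. Assouad then gives a loss of $\gtrsim s\cdot\delta/\sqrt s\cdot$ per-coordinate separation, i.e.\ $\sqrt{s/\gamma}/(n\sqrt\rho)$. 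The condition $s^2\le c'\gamma n^2\rho$ is exactly what keeps each perturbed coordinate $\delta\cdot O(1)$ below $1/(2s)$, so that $\bw_\sigma$ stays in the simplex and entries stay in $[c\rho,\rho]$.

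The last sentence of (b) follows because known-design estimators are a superset of estimated-basis ones. I expect the main obstacle to be the hypercube construction: realising a prescribed $\lambda_{\min}(\Phi_S)=\gamma$ while keeping nonnegative entries, the common Frobenius scale and the information comparability. A further difficulty is that the separation must scale as $1/\sqrt\gamma$ rather than $1/\gamma$. My first attempt would perturb only along the single weak eigendirection for one coordinate pair and use the remaining near-orthogonal coordinates for the $\sqrt s$ factor. If that gives only a product of the two effects, I would combine two separate Le Cam/Assouad bounds and take their maximum.
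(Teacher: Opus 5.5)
Your part (a) is essentially the paper's argument: noise covariance $\preceq\rho_n\mathbf I$, then $\rho_n\tr\{(\mathbf M^\top\mathbf M)^{-1}\}\le s/(c\,\gamma_S n^2\rho_n)$, Jensen, and a nonexpansive projection. The paper runs it on all $\binom n2$ dyads and so needs no mask concentration. Your $\mathcal D_2$ version is fine, but the deviation to control there is of order $\gamma_S n^2\rho_n^2/s$, so the failure probability is of order $s^2e^{-c\gamma_S^2n^2/s^2}$, not $e^{-cn^2}$.

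The genuine gap is in (b), exactly where you flag the obstacle. The content of the bound is that $\sqrt s$ and $\gamma^{-1/2}$ \emph{multiply}, and you never fix a family that achieves this. Your fallback uses one weak direction for a single pair and the remaining near-orthogonal coordinates for $\sqrt s$, then takes the larger of two Le Cam/Assouad bounds. That yields only $\max\{\sqrt s,\gamma^{-1/2}\}/(n\sqrt\rho)$, which is strictly weaker than the claim. Disjoint-support kernels do not help on their own: they are mutually orthogonal, so $\Phi_S=\mathbf I$ and $\gamma=1$.

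The missing idea is a common carrier plus small orthogonal contrasts. With $m=\binom n2$, place non-constant Sylvester--Hadamard rows $S_1,\dots,S_s$ (balanced and mutually orthogonal) on $m'\in[m/2,m]$ dyads, zero elsewhere. Set $a=2\sqrt\gamma$ with $\gamma\le1/16$, and $\mathbf m_k=\tfrac\rho2(\mathbf 1+aS_k)$. Every kernel and every mixture then has entries in $[\rho/4,3\rho/4]$, so nonnegativity, the common scale and information comparability are automatic. Moreover $\Phi_S$ is exactly equicorrelated with off-diagonal $r=m/(m+a^2m')$. Its eigenvalue on the \emph{entire} zero-sum subspace is $1-r\ge2\gamma/(1+4\gamma)\ge\gamma$. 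The simplex tangent space is that subspace, so every admissible perturbation is a weak direction at once, and no trade-off between $\sqrt s$ and $\gamma$ remains.

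To finish, perturb along paired differences $v_j=(e_{2j-1}-e_{2j})/\sqrt2$, $j\le q=\lfloor s/2\rfloor$, with $\bw^\sigma=\mathbf 1/s+\delta\sum_j\sigma_jv_j$. A single flip changes the mean by $\sqrt2\,\delta(\mathbf m_{2j-1}-\mathbf m_{2j})$. Its squared norm is $\delta^2\rho^2a^2m'\asymp\gamma\delta^2n^2\rho^2$, with no factor $1/s$, so the per-flip KL is $\lesssim\gamma\delta^2n^2\rho$. Take $\delta\asymp(\gamma n^2\rho)^{-1/2}$ and apply Assouad with the decoder $\mathrm{sign}\langle\widetilde\bw-\mathbf 1/s,v_j\rangle$; this gives $\delta\sqrt q\asymp\sqrt{s/\gamma}/(n\sqrt\rho)$.

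The paired basis also fixes the side condition. It moves each weight by at most $\delta/\sqrt2$, so $\bw^\sigma\in\Delta^{s-1}$ needs only $\delta\le\sqrt2/s$, i.e.\ $s^2\lesssim\gamma n^2\rho$. A generic orthonormal eigenbasis $\mathbf V$ of the zero-sum space, as in your proposal, can move a single weight by order $\delta\sqrt s$ (for example a Hadamard basis), which would force $s^3\lesssim\gamma n^2\rho$. So your claim that $s^2\le c'\gamma n^2\rho$ is exactly what keeps $\bw_\sigma$ in the simplex depends on this choice of basis.
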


\noindent Support recovery is not part of the oracle-set statement, since the active
set $S$ is used to compute the estimator. It is established for the data-driven full estimator
in Corollary~\ref{cor:adapt-cor}, where selection is governed by the coordinatewise standard error and the
threshold carries a $\sqrt{\log K}$ factor rather than the $\sqrt s$ of the $\ell_2$ estimation rate.

\begin{proof}
See Appendix~\ref{app:joint}.
\end{proof}

\begin{assumption}[Fidelity of the fitting maps]
\label{ass:fid}
For every active agent the population fitting map reproduces the component kernel at unit scale:
$F_k(\Pmat)=\mathbf G_k$ for all $k\in S$, each fitted kernel being renormalised to unit latent
scale before entering the design ($\E\norm{x}^2=1$ in the dot-product case, unit block-mean scale
in the block case).
\end{assumption}

\begin{remark}[Three estimands and the separation that identifies the weights]
\label{rem:fid}
With known kernels the target is the generative weight vector $\bw$ (Proposition~\ref{thm:joint}); with
fitted kernels the regression target is, without further assumptions, the coordinate vector of $\Pmat^\ast$ in
the span of the population fitting maps, the projection of Proposition~\ref{prop:mis-cor}, and
Theorems~\ref{thm:debias} and~\ref{thm:twostage} are proved for this target. Under Assumption~\ref{ass:fid}
and correct specification the two coincide, and only then do the fitted-kernel results carry the generative
reading. Fidelity is not vacuous: applied to a mixture, spectral embedding recovers the stacked positions
$z=(\sqrt{w_1}\,x,\sqrt{w_2}\,y)$, so a component kernel rebuilt from a recovered block carries the factor
$w_k$, which the unit-scale renormalisation divides out. A verifiable sufficient condition is disjoint
spectral supports: if the active kernels' eigenvalue bands are separated by the gap of~(C2), the eigenspaces
of $\Pmat$ split along the agents, each rank-$r_k$ smoother localises on its own band and returns
$w_k\mathbf G_k$, and renormalisation removes the factor, so $F_k(\Pmat)=\mathbf G_k$ with $\gamma_S$ bounded
below by the squared principal angle between bands. This also marks the boundary of the assumption: two block
agents on a shared partition have fitting maps converging to the same best block approximation of $\Pmat$,
fitted transversality vanishes, and fidelity fails, so Assumptions~\ref{ass:transversality} and~\ref{ass:fid}
are jointly available only for heterogeneous separated candidate sets, and any least-favourable family for an
estimated-design lower bound must be built from such pairs (Theorem~\ref{thm:est-lb}), which is how the minimax rate is established for the projection target itself and, on the faithful subfamily, for the generative weights. Throughout the fitted-kernel results we write $\bw$ for the target in this sense.
\end{remark}

\begin{theorem}[Debiased estimation attains the minimax rate]
\label{thm:debias}
Let the candidate set consist of agents of the following kinds, each fitted on its Stage-A sub-fold by the
estimator named: a stochastic block model and its degree-corrected variant, each implemented as the rank-$r$
spectral smoother of Lemma~\ref{lem:firststage}(a); a
random dot-product graph of bounded latent rank fitted by adjacency spectral embedding; and a
Chung--Lu degree kernel fitted by degree matching. Assume $\gamma_S\ge\gamma_0>0$ and
\begin{enumerate}[label=(C\arabic*),leftmargin=2.6em,itemsep=1pt]
\item \emph{(sparsity)} $n\rho_n/\log n\to\infty$;
\item \emph{(rank and gap)} the latent ranks are bounded, $r_{\max}=O(1)$, and the signal
eigenvalues of the spectral agents are separated from the bulk by a gap of order $n\rho_n$, so the
embedding linearisation has projected fold-one remainders of negligible order
(Assumption~\ref{ass:linplug});
\item \emph{(degree regularity)} the degree propensities are comparable,
$\max_i\theta_i/\min_i\theta_i=O(1)$, and the Chung--Lu kernel is untruncated
(Assumption~\ref{ass:norm});
\item \emph{(transversality and scale)} $\gamma_S$ is bounded below and the kernels share a common
scale (Assumptions~\ref{ass:transversality} and~\ref{ass:norm}).
\end{enumerate}
Let $\widehat\bw_{\mathrm{db}}$ be the cross-fold debiased estimator of Section~\ref{sec:cf}, using two
independent Stage-A sub-folds. Then
\[
\bigl\|\widehat\bw_{\mathrm{db}}-\bw\bigr\|_2\ =\ O_P\!\left(\sqrt{\frac{s}{\gamma_S}}\,
\frac{1}{n\sqrt{\rho_n}}\right),
\]
uniformly over candidate sets of the named kinds satisfying (C1)--(C4); this matches the known-design minimax rate of Proposition~\ref{thm:joint}.
\end{theorem}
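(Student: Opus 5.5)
The plan is to expand $\widehat\bw_{\mathrm{db}}-\bw$ around the population normal equations and bound each term at the edge rate. Write $\mathbf M$ for the $\abs{\mathcal D_2}\times s$ matrix of population kernels restricted to the Stage-B dyads, $\widehat{\mathbf M}_a=\mathbf M+\mathbf H_a$, $\widehat{\mathbf M}_b=\mathbf M+\mathbf H_b$, and $\A^{(2)}=\mathbf M\bw+\boldsymbol\varepsilon$, where $\boldsymbol\varepsilon$ has independent mean-zero entries of variance at most $\rho_n$ and is independent of $(\mathbf H_a,\mathbf H_b)$ by the dyad split. Put $\bar{\mathbf H}=\tfrac12(\mathbf H_a+\mathbf H_b)$. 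Then
\[
\widehat\Phi_{\mathrm{db}}(\widehat\bw_{\mathrm{db}}-\bw)=\bigl(\mathbf M+\bar{\mathbf H}\bigr)^\top\boldsymbol\varepsilon-\operatorname{sym}\bigl\{\mathbf H_a^\top\mathbf M+\mathbf H_a^\top\mathbf H_b\bigr\}\bw+\bar{\mathbf H}^\top\mathbf M\bw-\mathbf M^\top\mathbf M\bw+\mathbf M^\top\mathbf M\bw .
\]
More carefully, $\tfrac12(\widehat{\mathbf M}_a+\widehat{\mathbf M}_b)^\top\mathbf M\bw-\widehat\Phi_{\mathrm{db}}\bw=-\operatorname{sym}(\mathbf H_a^\top\mathbf H_b)\bw$, because the linear terms $\bar{\mathbf H}^\top\mathbf M$ and $\mathbf M^\top\bar{\mathbf H}$ cancel up to symmetrisation. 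The remaining linear discrepancy $\tfrac12(\mathbf M^\top\mathbf H_a+\mathbf M^\top\mathbf H_b-\mathbf H_a^\top\mathbf M-\mathbf H_b^\top\mathbf M)\bw$ is antisymmetric in form and is handled with the test-class bound of Assumption~\ref{ass:linplug}. So the error is $\widehat\Phi_{\mathrm{db}}^{-1}$ applied to three pieces:
\begin{itemize}
\item the noise term $(\mathbf M+\bar{\mathbf H})^\top\boldsymbol\varepsilon$;
\item the cross-fold product $\operatorname{sym}(\mathbf H_a^\top\mathbf H_b)\bw$, which replaces the single-fold self-product $\mathbf H^\top\mathbf H$;
\item the linear and remainder terms $\inner{\mathbf M_k}{\mathcal L_\ell(\mathbf E)+\mathbf Q_\ell}$.
\end{itemize}

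\textbf{Step 1: invertibility.} Invoke Lemma~\ref{lem:plugin}: with probability tending to one, $\lambda_{\min}(\widehat\Phi_{\mathrm{db}})\ge\tfrac12c_\star\gamma_S n^2\rho_n^2$, which follows from Assumptions~\ref{ass:transversality} and~\ref{ass:norm} and the smallness of the perturbation. On the complementary event the floored solve contributes $o_P$ of anything. Hence $\norm{\widehat\bw_{\mathrm{db}}-\bw}_2\lesssim(\gamma_S n^2\rho_n^2)^{-1}\norm{\text{numerator}}_2$.

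\textbf{Step 2: noise term.} Conditionally on Stage A, $\E\norm{\mathbf M^\top\boldsymbol\varepsilon}_2^2\le\rho_n\tr(\mathbf M^\top\mathbf M)\asymp s\,n^2\rho_n^3$. However, dividing by $\gamma_S n^2\rho_n^2$ directly gives $\sqrt s/(\gamma_S n\sqrt{\rho_n})$, which has the wrong $\gamma_S$ exponent. To get $1/\sqrt{\gamma_S}$ I would instead bound the conditional covariance $\rho_n\,\widehat\Phi_{\mathrm{db}}^{-1}\mathbf M^\top\mathbf M\widehat\Phi_{\mathrm{db}}^{-1}$, whose trace is $\asymp\rho_n\tr((\mathbf M^\top\mathbf M)^{-1})\le s\rho_n/(\gamma_S c_\star n^2\rho_n^2)$. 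The information comparability of Assumption~\ref{ass:norm} justifies using $\rho_n$ as the variance proxy, and this is exactly the known-design computation of Proposition~\ref{thm:joint}(a). The extra piece $\bar{\mathbf H}^\top\boldsymbol\varepsilon$ has conditional second moment $\rho_n\norm{\bar{\mathbf H}}_F^2=O_P(\rho_n\cdot r_{\max}n\rho_n)$, which is smaller by a factor $1/(n\rho_n)\to0$ under (C1).

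\textbf{Step 3: cross-fold product (the crux).} By Assumption~\ref{ass:linplug}(iii), with $\mathbf H_a,\mathbf H_b$ independent given $\bu$, the trace bound gives $\inner{\mathcal L_k(\mathbf E_a)}{\mathcal L_\ell(\mathbf E_b)}=O_P(\sqrt{r_{\max}n}\,\rho_n)$, rather than the order $r_{\max}n\rho_n$ of the self-product. After multiplying by $\bw$ and dividing by $\gamma_S n^2\rho_n^2$, this yields $\sqrt{s\,r_{\max}}/(\gamma_S n^{3/2}\rho_n)$. Its ratio to the target rate is $\sqrt{r_{\max}/(\gamma_S n\rho_n)}\to0$ by (C1), (C2) and (C4). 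Three further pieces must also be shown negligible: the mixed terms with $\mathbf Q$ (using (ii) at $\norm{\mathbf X}_F\asymp n\rho_n$ and Cauchy--Schwarz), the deterministic biases $\E[\mathbf H\mid\bu]$, and the Stage-B restriction of the masked Stage-A errors. I expect this step to be the main obstacle. The IPW mask $(\pi^{-1}R_{ij}-1)P_{ij}$ makes $\mathbf E_a$ and $\mathbf E_b$ dependent through shared $P$, and the fold-restricted inner products are not exactly the full-matrix ones. I would first condition on the latent attributes and the three-way fold assignment so that the sub-fold errors become genuinely independent, then apply the Hanson--Wright bounds of Lemma~\ref{lem:firststage} with the $1/f$ constants.

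\textbf{Step 4: linear and remainder terms, and uniformity.} The terms $\inner{\mathbf M_k}{\mathcal L_\ell(\mathbf E)}$ are mean-zero with variance at most $\norm{\mathbf C_\ell}_{\mathrm{op}}\norm{\mathbf M_k}_F^2\lesssim\rho_n\cdot n^2\rho_n^2$, so they are of edge-rate order. Within the inverse-Gram geometry, projecting them onto $\widehat\Phi_{\mathrm{db}}^{-1}$ directions as in Step 2 gives again $\sqrt{s/\gamma_S}/(n\sqrt{\rho_n})$. The $\mathbf Q$ terms are $o_P$ of this by Assumption~\ref{ass:linplug}(ii). Finally, the named agents verify Assumption~\ref{ass:linplug} through Lemma~\ref{lem:firststage}: (C2) gives the spectral linearisation and (C3) the degree-matching linearisation. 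Since every constant depends only on $(c_\star,C_\star,\gamma_0,r_{\max},C_L)$, the bound holds uniformly over candidate sets satisfying (C1)--(C4). Simplex projection and averaging over folds are $1$-Lipschitz, so they preserve the bound.
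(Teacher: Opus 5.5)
Your route is the paper's (proof of Theorem~\ref{thm:debias-cor}): stability of $\widehat\Phi_{\mathrm{db}}$ by Weyl, an exact cross-fold normal-equation identity, and separate bounds on the Stage-B score, on $\bar{\mathbf H}^\top\boldsymbol\varepsilon$, on the first-stage linear score and on the cross-fold product. Lemma~\ref{lem:plugin} already contains the full rate, so invoking it only for invertibility is a detour.

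Your normal-equation identity is misstated; none of the three versions you write is correct. We have $\widehat\Phi_{\mathrm{db}}=\mathbf M^\top\mathbf M+\mathbf M^\top\bar{\mathbf H}+\bar{\mathbf H}^\top\mathbf M+\operatorname{sym}(\mathbf H_a^\top\mathbf H_b)$, while the moment contains only $\bar{\mathbf H}^\top\mathbf M\bw$. The correct identity is therefore
\[
\bar{\mathbf M}^\top\A^{(2)}-\widehat\Phi_{\mathrm{db}}\bw=\mathbf M^\top\boldsymbol\varepsilon+\bar{\mathbf H}^\top\boldsymbol\varepsilon-\mathbf M^\top\bar{\mathbf H}\bw-\operatorname{sym}(\mathbf H_a^\top\mathbf H_b)\bw .
\]
Only $\bar{\mathbf H}^\top\mathbf M\bw$ cancels. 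The surviving term $-\mathbf M^\top\bar{\mathbf H}\bw$ is not an antisymmetric combination and not a test-class remainder. It is the first-stage score, of exactly the Stage-B order, which is why Theorem~\ref{thm:twostage} adds a variance. Your Step~4 bound controls it at the edge rate, so the conclusion survives. However, this term is Stage-A noise, so you cannot condition on Stage A as in Step~2. Bound it with the deterministic $(\mathbf M^\top\mathbf M)^{-1}$ and then swap in $\widehat\Phi_{\mathrm{db}}^{-1}$ using Step~1.

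The substantive problem is Step~3, the step you flag. Under the exclusive multinomial split the sub-fold errors are not independent: $\E(E^a_eE^b_e\mid\bu)=-P_e^2$. Conditioning on the fold assignment restores independence but destroys centring. Given the masks, only $f^{-1}R^g_e(A_e-P_e)$ is mean zero, and $(f^{-1}R^g_e-1)P_e$ becomes a bias. Moreover, Lemma~\ref{lem:firststage} is proved over the joint $(R,\A)$ randomness, so the trace bound you invoke is not justified either way.

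The paper stays unconditional and propagates the $-P_e^2$ covariance through delocalisation. This gives $(h^a_k)^\top W_2h^b_\ell=O_P(r_{\max}n\rho_n^2+\sqrt{r_{\max}n}\,\rho_n)$ (Assumption~\ref{ass:T1}(d), Remark~\ref{rem:fold-covariance}). After inversion the new term is $O(\sqrt s\,r_{\max}/(\gamma_S n))$, below the edge rate by the factor $r_{\max}\sqrt{\rho_n/\gamma_S}$; it is negligible only because $\rho_n\to0$. Your conditioning route can be repaired by bounding $\norm{\mathcal L_k((f^{-1}R^g-1)\odot\Pmat)}_F=O_P(\sqrt{r_{\max}n}\,\rho_n)$ from the mask randomness, which reproduces the same term.

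The mixed products $\langle\mathcal L_k(\mathbf E_a),\mathbf Q_{b,\ell}\rangle$ and $\langle\mathbf Q_{a,k},\mathbf Q_{b,\ell}\rangle$ also do not follow from (ii) plus Cauchy--Schwarz. Condition (ii) concerns fixed test-class directions. With the available Frobenius bound $\norm{\mathbf Q_k}_F=O_P(\sqrt{r_{\max}})$, Cauchy--Schwarz gives $O_P(r_{\max}\sqrt{n\rho_n})$ and $O_P(r_{\max})$ respectively. These are $o(\sqrt{\gamma_S}\,n\rho_n^{3/2})$ only if $n\rho_n^2\to\infty$, respectively $n\rho_n^{3/2}\to\infty$, and (C1) implies neither. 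The paper does not derive these bounds. It imposes the product conditions (A5.Q1)--(A5.Q2), folded into Assumption~\ref{ass:T1}(c)--(d), because projected smallness of $\mathbf Q_k$ does not control such products.
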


\noindent Condition (C3) requires comparable degree propensities, so the theorem covers the
regularised, bounded-heterogeneity degree kernel; the heavy-tailed Chung--Lu agent with tail exponent
$\tau\in(2,3)$, used for structural motivation, has $\max_i\theta_i/\min_i\theta_i\to\infty$ and is
not covered. Extending the debiasing to regularly varying degree propensities under truncation is a
separate first-stage analysis that we do not carry out here.

\noindent Theorem~\ref{thm:debias} closes the gap between the estimator that is computed and the
rate that is established. The known-design result of Proposition~\ref{thm:joint} assumes the kernels are given; in
every application they are fitted from the same graph, and the fitting error enters the second-stage
Gram matrix as the self-product $\mathbf H^\top\mathbf H$, which is positive and biases the
coefficients toward zero by an amount that, in the sparse regime, is larger than the quantity being
estimated. The bias is an artefact of reusing a single fitted design on both sides of the normal equations;
fitting each kernel twice on independent data makes the cross-product mean-zero, so the bias vanishes
at first order and the only surviving first-stage effect is the linear term, of edge-rate order. The four kinds of agent named
are those for which the required first-stage expansion is available in the literature, so the
condition is established for these agents rather than assumed.

\begin{proof}
See Appendix~\ref{app:debias}.
\end{proof}

\begin{corollary}[Separation: the naive plug-in is rate-suboptimal]
\label{cor:debias-lb}
Under the conditions of Theorem~\ref{thm:debias}, suppose $s$ is fixed with active weights bounded
below and $r_{\max}\sqrt{\gamma_S/\rho_n}\to\infty$ (for instance, fixed rank, $\gamma_S$ bounded
below, and $\rho_n\to0$). Let $\widehat\bw_{\mathrm{naive}}$ be the single-fold
plug-in estimator that uses one fitted design $\widehat{\mathbf M}$ in both the Gram matrix and the
moment. Assume further the non-cancellation condition
\[
\text{(D)}\qquad\sum_{k\ne l\in S}\bigl|\inner{\mathcal L_k(\mathbf E)}{\mathcal L_l(\mathbf E)}\bigr|
\ =\ o_P\Bigl(\min_{k\in S}\norm{\mathcal L_k(\mathbf E)}_F^2\Bigr),
\]
under which $\norm{\mathbf H\bw}_2^2\ge(1-o_P(1))\sum_{k\in S}w_k^2\norm{\mathbf H_k}_F^2$ on the
active coordinates. Then there is a constant $c>0$ such that, for the degree-product (Chung--Lu) and
dot-product agents,
\[
\bigl\|\widehat\bw_{\mathrm{naive}}-\bw\bigr\|_2\ \ge\ c\,\frac{r_{\max}\,\norm{\bw_S}_2}{n\rho_n}
\qquad\text{with probability tending to one,}
\]
and the right-hand side exceeds the minimax rate $\sqrt{s/\gamma_S}/(n\sqrt{\rho_n})$ by the factor
\[
r_{\max}\norm{\bw_S}_2\sqrt{\gamma_S/(s\rho_n)}\ \asymp\ r_{\max}\sqrt{\gamma_S/\rho_n}\ \to\ \infty
\]
in this regime. Under balanced simplex weights $\norm{\bw_S}_2\asymp s^{-1/2}$, so any bound carrying
an explicit $\sqrt s$ requires active weights bounded below; the display above does not. No choice of tuning removes this term, since it is the
deterministic errors-in-variables bias $-(\widehat{\mathbf M}^\top\widehat{\mathbf M})^{-1}\mathbf
H^\top\mathbf H\bw$ of Section~\ref{sec:cf}. Consequently the cross-fold debiased estimator of
Theorem~\ref{thm:debias} attains the known-design lower bound of Proposition~\ref{thm:joint}, which also lower-bounds the
harder estimated-design problem, while the naive plug-in does not: the two estimators are separated
in order.
\end{corollary}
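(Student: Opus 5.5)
Starting from the single-fold normal equations, I will write the naive estimator's error as a sum of three pieces and show that one of them is a deterministic attenuation term that dominates the other two. Put $\widehat{\mathbf M}=\mathbf M+\mathbf H$. Then $\widehat\bw_{\mathrm{naive}}-\bw=(\widehat{\mathbf M}^\top\widehat{\mathbf M})^{-1}\widehat{\mathbf M}^\top(\A^{(2)}-\Pmat)-(\widehat{\mathbf M}^\top\widehat{\mathbf M})^{-1}(\mathbf M^\top\mathbf H+\mathbf H^\top\mathbf H)\bw$. I will call these the noise term $T_1$, the linear term $T_2$ and the self-product term $T_3=-(\widehat{\mathbf M}^\top\widehat{\mathbf M})^{-1}\mathbf H^\top\mathbf H\bw$. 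The goal is to show $\norm{T_3}_2\gtrsim r_{\max}\norm{\bw_S}_2/(n\rho_n)$, while $T_1$ and $T_2$ are $O_P$ of the edge rate $\sqrt{s/\gamma_S}/(n\sqrt{\rho_n})$. Since the edge rate is smaller by the diverging factor $r_{\max}\sqrt{\gamma_S/\rho_n}$, the reverse triangle inequality then gives the claim with probability tending to one.

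The small terms come first. With $\norm{\mathbf H}_F=O_P(\delta_n n\rho_n)$ and $\delta_n\to0$, Assumption~\ref{ass:plugin} and Lemma~\ref{lem:plugin} give $\widehat{\mathbf M}^\top\widehat{\mathbf M}=\mathbf M^\top\mathbf M(1+o_P(1))$. Its eigenvalues on the active block therefore lie between $c_\star\gamma_S n^2\rho_n^2/2$ and $C s n^2\rho_n^2$; because $s$ is fixed and $\gamma_S$ is bounded below, they are all of order $n^2\rho_n^2$. For $T_1$ I will argue exactly as in Proposition~\ref{thm:joint}(a): conditionally on fold one the Stage-B noise is independent of the design, and the variance is comparable to $\rho_n\widehat{\mathbf M}^\top\widehat{\mathbf M}$ by Assumption~\ref{ass:norm}. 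For $T_2$ I will use Assumption~\ref{ass:linplug}. The columns of $\mathbf M$ lie in the test class, so the remainders $\mathbf Q_k$ contribute $o_P(\sqrt{\rho_n}\,n\rho_n)$. The linear part $\inner{\mathbf G_k}{\mathcal L_l(\mathbf E)}$ has variance at most $C_L^2\rho_n\norm{\mathbf G_k}_F^2\asymp n^2\rho_n^3$, so it is $O_P(n\rho_n^{3/2})$. Dividing by $n^2\rho_n^2$ puts both at the edge rate.

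For $T_3$, the diagonal entry $(\mathbf H^\top\mathbf H)_{kk}=\norm{\mathbf H_k}_F^2$ is dominated by $\norm{\mathcal L_k(\mathbf E)}_F^2\asymp r_k n\rho_n$, using Assumption~\ref{ass:linplug}(iii). For the degree-product and dot-product agents, Lemma~\ref{lem:firststage} supplies a lower bound of this order and not only an upper bound: the ASE and degree-matching linearisations carry noise $\mathbf E\mathbf U\mathbf U^\top$-type mass with variance of order $\rho_n$ on $\Theta(n)$ directions per rank. Condition (D) then gives $\bw^\top\mathbf H^\top\mathbf H\bw=\norm{\mathbf H\bw}_2^2\ge(1-o_P(1))\sum_k w_k^2\norm{\mathbf H_k}_F^2\gtrsim r_{\min}n\rho_n\norm{\bw_S}_2^2$. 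The quadratic form only controls the component along $\bw$, so I project onto it: $\norm{T_3}_2\ge\bw^\top(\mathbf H^\top\mathbf H)\bw/(\lambda_{\max}(\widehat{\mathbf M}^\top\widehat{\mathbf M})\norm{\bw}_2)$. Because the ranks are bounded, $r_{\min}\asymp r_{\max}$, and this yields $\norm{T_3}_2\gtrsim r_{\max}\norm{\bw_S}_2/(n\rho_n)$. Both cross terms are $O_P(\sqrt{r_{\max}n}\,\rho_n)$, which is smaller than the diagonal mass $r_{\max}n\rho_n$, so they do not spoil the bound. The remainder contributions $\inner{\mathbf Q_k}{\mathbf Q_l}$ and $\inner{\mathcal L_k}{\mathbf Q_l}$ are of lower order by Assumption~\ref{ass:linplug}(ii).

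The main obstacle is the lower bound on $T_3$: it needs a genuine lower bound on $\norm{\mathcal L_k(\mathbf E)}_F^2$ with high probability, beyond the expectation. I will first try a Hanson--Wright concentration of this quadratic form in the independent masked entries around its mean $\operatorname{tr}\mathbf C_k\asymp r_kn\rho_n$. Its fluctuation is $O(\sqrt{\operatorname{tr}\mathbf C_k^2})=O(\sqrt{r_k}\,\rho_n)$, which is negligible. Finally, the non-removability remark follows because $T_3$ is determined by the design errors alone and does not depend on any threshold or tuning parameter. The separation conclusion combines this lower bound with Theorem~\ref{thm:debias} and Proposition~\ref{thm:joint}(b).
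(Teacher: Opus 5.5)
Your argument follows the paper's proof. You use the same split of the single-fold normal equations and the same bound $\norm{T_3}_2\ge\bw^\top\mathbf H^\top\mathbf H\bw/\{\lambda_{\max}(\widehat{\mathbf M}^\top\widehat{\mathbf M})\norm{\bw}_2\}$, which is the paper's $\norm{G_n^{-1}x}_2\ge\norm{x}_2/\lambda_{\max}(G_n)$ combined with Cauchy--Schwarz against $\bw$. You then apply non-cancellation and the reverse triangle inequality, with $r_{\min}\asymp r_{\max}$ in place of the paper's $r_{\rm eff}(\bw)$, which is fine for bounded ranks. The difference is scope. The restatement, Corollary~\ref{cor:debias-lb-cor}, simply assumes three things: the eigenvalue bounds on $\widehat{\mathbf M}^\top\widehat{\mathbf M}$, the edge-rate bound on the three smaller terms, and the full non-cancellation $\norm{\mathbf H\bw}_2^2\ge c_Hn\rho_n\sum_kr_kw_k^2$. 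You instead try to derive them. The main line is sound, but several of those derivations are wrong as written.

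\emph{Cross terms.} Your claim that the cross terms are $O_P(\sqrt{r_{\max}n}\,\rho_n)$ is false. That is the trace bound of Assumption~\ref{ass:linplug}(iii) for $\inner{\mathcal L_k(\mathbf E_a)}{\mathcal L_l(\mathbf E_b)}$ on independent sub-folds, where the mean is zero. In the single-fold design both errors are built from the same $\mathbf E$, so the mean need not vanish. For example, suppose the degree profile $\mathbf d$ lies in the range of the spectral projector $\Pi$ of Lemma~\ref{lem:firststage}(a). Then $L_{\rm CL}(\mathbf E)$ lies in the range of the orthogonal projection $L_{\rm sp}$, so $\inner{L_{\rm sp}(\mathbf E)}{L_{\rm CL}(\mathbf E)}=\inner{\mathbf E}{L_{\rm CL}(\mathbf E)}$, whose mean is of order $\norm{\mathbf d}_2^2/(f\,\mathbf 1^\top\mathbf d)\asymp n\rho_n/f$. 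That is the diagonal order. This is exactly why (D) is a hypothesis, and why the paper warns it can fail. Your proof survives only because you also invoke (D), so drop that sentence.

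\emph{Remainder terms.} Assumption~\ref{ass:linplug}(ii) controls $\inner{\mathbf X}{\mathbf Q_k}$ only for deterministic test-class directions $\mathbf X$. It says nothing about $\inner{\mathcal L_k(\mathbf E)}{\mathbf Q_l}$ or $\inner{\mathbf Q_k}{\mathbf Q_l}$ on shared noise. Nor does it license the passage from $\norm{\mathcal L_k(\mathbf E)}_F^2$ to $\norm{\mathbf H_k}_F^2$. What you need is Frobenius negligibility, $\norm{\mathbf Q_k}_F=o_P(\norm{\mathcal L_k(\mathbf E)}_F)$, which is the form hypothesised in Theorem~\ref{thm:twostage}, followed by Cauchy--Schwarz. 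The second-order bounds behind Lemma~\ref{lem:firststage} give $\norm{\mathbf Q_k}_F$ of order $\sqrt{r_k}$ up to fold constants, against $\norm{\mathcal L_k(\mathbf E)}_F\asymp\sqrt{r_kn\rho_n}$. Alternatively, assume non-cancellation on $\mathbf H$ directly, as the paper does.

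\emph{Fluctuation of the diagonal.} There is a slip here: $\operatorname{tr}\mathbf C_k^2\le\norm{\mathbf C_k}_{\mathrm{op}}\operatorname{tr}\mathbf C_k\lesssim r_kn\rho_n^2$. The fluctuation of $\norm{\mathcal L_k(\mathbf E)}_F^2$ is therefore $O(\sqrt{r_kn}\,\rho_n)$, not $O(\sqrt{r_k}\,\rho_n)$. It is still $o(r_kn\rho_n)$, so nothing breaks. For sparse Bernoulli entries, a direct second- and fourth-moment variance bound with Chebyshev is safer than sub-Gaussian Hanson--Wright.

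\emph{Projection.} State explicitly that the bound is for the unprojected estimator, as the paper's restatement does. The simplex projection can undo attenuation: at balanced weights, $\Pi_{\Delta^{s-1}}\{(1-\epsilon)\bw\}=\bw$. So no lower bound transfers to a projected naive estimator without a further condition.
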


\noindent Corollary~\ref{cor:debias-lb} turns the debiasing from an improvement into a separation: the
naive plug-in is rate-suboptimal while the debiased estimator attains the known-design minimax rate, and
because the bias is deterministic and one-signed it survives any thresholding or tuning. Condition~(D) is a
substantive restriction whose content for the named pair is an overlap bound, and a verifiable sufficient
condition is asymptotic orthogonality of the degree profile to the active spectral range,
$\norm{\mathbf P_U\mathbf d}_2^2=o(\norm{\mathbf d}_2^2)$, under which the same-fold overlap mean and its
Hanson--Wright fluctuation are negligible against $\min_{k\in S}\norm{\mathcal L_k(\mathbf E)}_F^2\asymp
n\rho_n/f$. When the leading eigenvector aligns with the degree profile, as for strongly degree-heterogeneous
mixtures, the two fitted errors share a direction and~(D) can fail, exactly the small-$\gamma_S$ regime that
Section~\ref{sec:infval} flags; (D) must be checked candidate set by candidate set, as done here for the
orthogonal case.
\begin{remark}[Edges, not nodes: an order-of-magnitude faster rate]
\label{rem:fastrate}
Proposition~\ref{thm:joint} says the synthesis weights are learned at rate $1/\sqrt{N}$ in the
effective number of edges $N=\Theta(n^2\rho_n)$, whereas the node-level latent positions of
Remark~\ref{rem:stack} converge at the slower sparse-regime rate $\sqrt{\log n/(n\rho_n)}$. The ratio
is $\sqrt{n\log n}\to\infty$: the global mixing structure is identifiable far more accurately than
any individual vertex embedding, because every one of the $\Theta(n^2\rho_n)$ edges carries
information about $\bw$ while only $n$ vertices carry information about the positions. The mechanism
weights are therefore an easier object than the vertex positions, which is what makes data-driven
synthesis practical from a single network. Figure~\ref{fig:weights} is consistent with all three parts: the $1/n$ rate (panel a), the
$1/\sqrt{\gamma_S}$ conditioning (panel b), and consistent support recovery (panel c).
\end{remark}

\begin{figure}[t]
\centering
\includegraphics[width=0.86\textwidth]{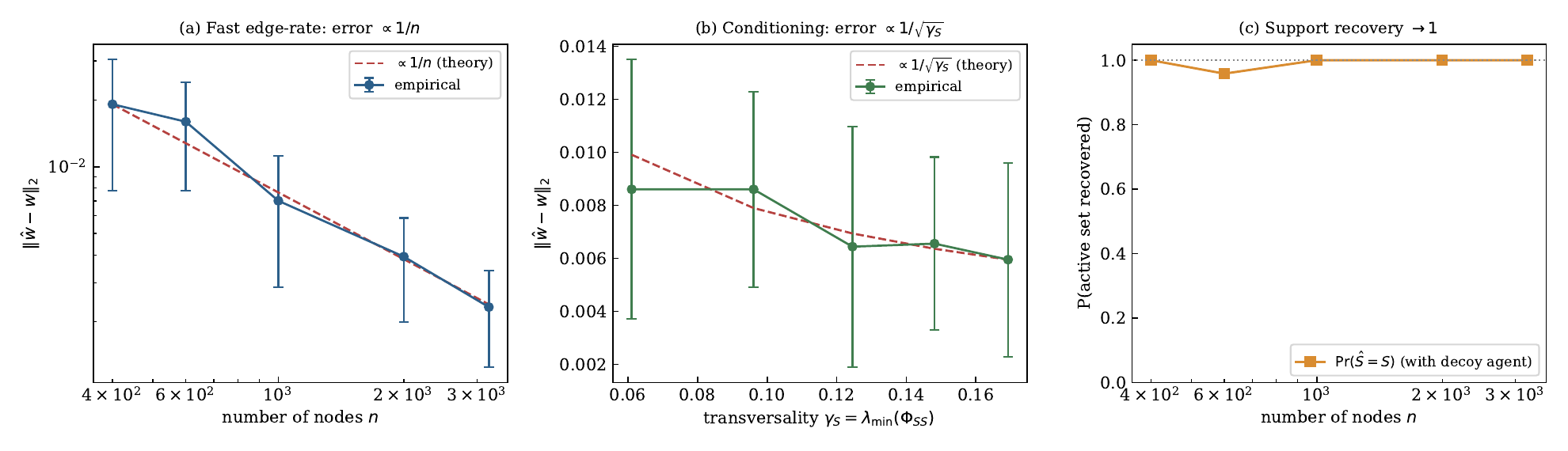}
\caption{Estimating the synthesis weights from a single graph (Proposition~\ref{thm:joint}), $K=3$ active agents and one decoy: error decay at the edge rate, the conditioning dependence, and support recovery.}
\label{fig:weights}
\end{figure}

Assumption~\ref{ass:transversality} fails exactly when two agents are redundant, e.g.\ a
$2$-block SBM agent and a rank-one Chung--Lu agent both of which only modulate degrees will have
nearly proportional Gram matrices and a small $\gamma_S$, so their weights cannot be separated and
the rate degrades as $1/\sqrt{\gamma_S}$. This is informative rather than pathological: it tells the
practitioner that those two agents are not jointly identifiable from one graph and that the candidate set
should be pruned, which is precisely the model-selection guidance that Corollary~\ref{cor:adapt-cor}
makes rigorous.

\paragraph{The paired-couples construction behind the lower bound.}
Because the joint $\sqrt{s/\gamma_S}$ dependence is the part of Proposition~\ref{thm:joint} that no
off-the-shelf argument supplies, we record the construction; the full calculation is in
Appendix~\ref{app:lower}. A naive Assouad cube perturbs the $s$ coordinates independently and recovers the
$\sqrt s$ factor but loses the conditioning, since independent perturbations excite average directions of
$\Phi_S$ whose eigenvalues are of order one rather than $\gamma_S$. The construction instead places $s$
mutually orthogonal sign patterns (Hadamard rows) on a common set of dyads and gives each kernel the form
$\mathbf{G}_k=\tfrac\rho2(\mathbf J+a S_k)$ with $a\asymp\sqrt{\gamma}$, so the Gram is equicorrelated with
smallest eigenvalue of order $\gamma$ on a common scale. Perturbing along simplex-tangent directions changes
the probability matrix by squared Frobenius mass $\asymp\gamma\,\delta^2 n^2\rho^2$ with no spurious factor of
$s$; the per-flip Kullback--Leibler separation is $\asymp\gamma\,\delta^2 n^2\rho$, and
$\delta\asymp1/(n\sqrt{\rho\gamma})$ with Assouad over the $s$-cube gives total risk
$\sqrt{s/\gamma}/(n\sqrt{\rho})$ under the side condition $s^2\le c'\gamma n^2\rho$ that keeps every perturbed
weight in the simplex.

\paragraph{A matching lower bound for the estimated-design problem.}
The benchmark just established is the known-design rate. The same exponent is also a lower bound when the kernels must be estimated from the one graph, over a faithful subfamily that carries explicit fitting maps, so the debiased estimator is minimax for the estimated-design problem and not only against the oracle.

\begin{theorem}[Estimated-design lower bound]
\label{thm:est-lb}
Fix $0<\gamma\le1/64$ with $n\rho_n/\log n\to\infty$, and let $8\le s\le c_s\min\{n_0-1,(\gamma n\rho_n/\log n)^{1/4}\}$ for a small constant $c_s>0$. Over the faithful spectral-contrast subfamily, in which each agent kernel is recovered by an explicit fitting map from the single observed graph,
\[
\inf_{\widehat\bw}\ \sup_{\bw}\ \mathbb{E}_{\bw}\,\bigl\|\widehat\bw-\bw\bigr\|_2\ \ge\ c\,\frac{\sqrt{s/\gamma}}{n\sqrt{\rho_n}},
\]
the infimum running over all estimators measurable with respect to the graph. The proof is in the supplement, Theorem~\ref{thm:fss-lb-final2}. Moreover the same rate is minimax for the deployed projection coefficient $\bw^\dagger_{\mathrm{LS}}(P)$ of Proposition~\ref{prop:mis-cor}: on the faithful subfamily $P$ lies in the candidate span, so the projection is exact, $\bw^\dagger_{\mathrm{LS}}(P)=\bw(P)$, and restricting the supremum there transfers the bound, the matching upper bound being the cross-fold estimator of Theorem~\ref{thm:debias}.
\end{theorem}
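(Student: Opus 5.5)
We reduce to an Assouad argument over a hypercube of weight vectors in which the agent kernels are held fixed. Their fitting maps must still be faithful on every hypercube vertex, so that the estimated-design problem on this subfamily is no easier than the known-design problem of Proposition~\ref{thm:joint}(b). The point to watch is that Remark~\ref{rem:fid} forbids the paired-couples kernels $\mathbf G_k=\tfrac\rho2(\mathbf J+aS_k)$ from being used naively. With kernels sharing the flat $\mathbf J$ direction, the fitting maps of different agents would converge to a common best approximation of $\Pmat$, and fidelity (Assumption~\ref{ass:fid}) would fail. We therefore build a \emph{spectral-contrast} family instead.

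Partition the vertices into $n_0$ groups of equal size. Take an $n_0\times n_0$ Hadamard-type orthogonal design, and give agent $k$ the kernel $\mathbf G_k=\rho(\beta\,\mathbf 1\mathbf 1^\top/2+a\,\mathbf v_k\mathbf v_k^\top)$. Here the $\mathbf v_k$ are block-constant, mutually orthogonal, unit-scale sign vectors of length $n$ (from distinct Hadamard rows), and $a\asymp\sqrt\gamma$. The constraint $s\le n_0-1$ is exactly what supplies $s$ such orthogonal non-constant rows.

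The faithful fitting map $F_k$ projects $\Pmat$ onto the eigendirection $\mathbf v_k$ and renormalises to unit latent scale. Under $\Pmat=\sum_kw_k\mathbf G_k$ the eigenvalues along the $\mathbf v_k$ are $\rho a w_k n$, which are separated because the weights are bounded away from each other on the cube. So the bands are disjoint, as in the sufficient condition of Remark~\ref{rem:fid}, and $F_k(\Pmat)=\mathbf G_k$ holds exactly on every vertex. The common $\mathbf J$ component makes $\Phi_S$ equicorrelated with off-diagonal $\approx1-a^2$, hence $\lambda_{\min}(\Phi_S)\asymp a^2\asymp\gamma$. The bound $\gamma\le1/64$ keeps entries in $[c_\star\rho,\rho]$ and Assumption~\ref{ass:norm} in force.

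The remaining steps are as follows.
\begin{enumerate}
\item \emph{Hypercube.} Set $\bw_\sigma=\bar\bw+\delta\sum_k\sigma_k\mathbf t_k$ for $\sigma\in\{\pm1\}^{s}$. The $\mathbf t_k$ are simplex-tangent directions, for instance $e_k-e_{k'}$ paired across couples so that the coordinate sums are preserved. The base point $\bar\bw$ has distinct, well-separated entries, chosen with spacing $\asymp1/s$, so that the spectral bands stay disjoint.
\item \emph{Separation.} Neighbouring vertices differ in $\ell_2$ by $\asymp\delta$.
\item \emph{KL between neighbours.} $\Pmat_\sigma-\Pmat_{\sigma'}=2\delta\rho a(\mathbf v_k\mathbf v_k^\top-\mathbf v_{k'}\mathbf v_{k'}^\top)$. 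Bernoulli KL is at most $\chi^2\lesssim\norm{\Delta\Pmat}_F^2/\rho$, which gives KL $\lesssim\gamma\delta^2n^2\rho$. The $\mathbf J$ part cancels along tangent directions, so no factor of $s$ appears.
\item \emph{Assouad.} Choosing $\delta\asymp1/(n\sqrt{\rho\gamma})$ gives risk $\gtrsim\sqrt s\,\delta$, which is the claimed rate.
\end{enumerate}

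The step I expect to be the main obstacle is reconciling band separation with the perturbation. The perturbation $\delta$ must not reorder or merge eigenvalues. This requires the base spacing $\rho an/s$ to dominate both the perturbation $\rho a n\delta$ and the random-matrix fluctuation $\sqrt{n\rho}$ of the observed graph. The latter matters because ``explicit fitting map from the single observed graph'' should mean the empirical eigenprojection also recovers $\mathbf G_k$ up to the first-stage error.

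The requirement $\sqrt\gamma\, n\rho/s\gg\sqrt{n\rho\log n}$, combined with the Davis--Kahan cost across $s$ bands, is where the side condition $s\le c_s(\gamma n\rho_n/\log n)^{1/4}$ comes from. I would verify it first with Weyl's inequality and Davis--Kahan, allowing a slack factor of $s^2$. If the fourth-root condition turns out insufficient, I would shrink $\bar\bw$'s spacing requirement by grouping agents into couples that share a band, and rely on the fact that only the tangent direction within each couple is perturbed.

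The final sentence of the theorem then follows directly. On this family $\Pmat$ lies in the span of the $F_k(\Pmat)$, so $\bw^\dagger_{\mathrm{LS}}=\bw$. Restricting the supremum transfers the bound, and Theorem~\ref{thm:debias} supplies the matching upper bound.
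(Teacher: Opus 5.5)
Your route is essentially the paper's (Definition~\ref{def:fss-final2} and Theorem~\ref{thm:fss-lb-final2}). It shares rank-one Hadamard contrasts on a flat carrier, band-labelled eigenprojectors as the faithful fitting maps, and tangent directions on disjoint couples; note this makes the cube $\{\pm1\}^{\lfloor s/2\rfloor}$, not $\{\pm1\}^{s}$. It also shares the $\chi^2$ bound on the Bernoulli KL and the transfer from the oracle experiment to the estimated-design one. Using Assouad with one-flip KL instead of the paper's Varshamov--Gilbert plus Fano is fine, and it spares you the control of cross-flip overlaps that the Fano step needs. Incidentally, the dyadic construction of Appendix~\ref{app:lower} cannot be reused for a different reason than the one you give. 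The shared $\mathbf J$ direction is harmless, since your family and the paper's both have it. The problem is that dyad-level sign patterns have no low-rank spectral signature for a fitting map to return.

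There is, however, one step that fails as written: no base point $\bar\bw$ in the simplex has distinct coordinates spaced $\asymp1/s$. If $s$ nonnegative numbers summing to one have consecutive gaps at least $d$, the $k$th smallest is at least $(k-1)d$. Hence $1\ge d\,s(s-1)/2$ and $d\le2/(s(s-1))$. The best achievable spacing is $\asymp1/s^2$, which is what the paper uses: $w^0_k=s^{-1}\{1+\eta(2k-s-1)/s\}$, with gap $2\eta/s^2$ and band radius $\asymp n_0\rho\sqrt\gamma/s^2$.

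This correction changes your account of the side condition. The eigengap is $\asymp n\rho\sqrt\gamma/s^2$, not $n\rho\sqrt\gamma/s$. Requiring it to dominate the $O_P(\sqrt{n\rho\log n})$ fluctuation of the IPW Stage-A adjacency gives $s^4\log n=o(\gamma n\rho)$ directly. So the fourth root comes from the simplex-forced $1/s^2$ spacing, not from a Davis--Kahan slack factor layered on a $1/s$ gap; a genuine $1/s$ gap would have produced a square root. You also need the packing radius to stay inside the bands, $\delta=o(s^{-2})$. This holds because $s^2\delta\lesssim c_s^2/\sqrt{n\log n}$ for $\delta\asymp1/(n\sqrt{\gamma\rho})$ in the stated range.

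Finally, drop the fallback of letting a couple share a band. Inside a shared band the two eigenvalues differ only by $O(\rho\sqrt\gamma\,n\delta)=O(\sqrt\rho)$, far below the noise, and their order flips with $\sigma_j$. No band-labelled map computed from the graph could then return $\mathbf G_{2j-1}$ and $\mathbf G_{2j}$ separately. Fidelity would be lost on exactly the coordinates you perturb.
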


\noindent Together with Theorem~\ref{thm:debias}, this makes the cross-fold rate $\sqrt{s/\gamma_S}/(n\sqrt{\rho_n})$ sharp for the estimated-design problem itself within the stated range. The known-design lower bound of Proposition~\ref{thm:joint} holds on the wider range $s^2\le c\gamma n^2\rho_n$; the estimated-design version is narrower because one-eigenvalue-per-agent fidelity requires $s^4\log n=o(\gamma n\rho_n)$.

\begin{remark}[Recovery of the generative weights under separation]
\label{cor:recovery}
Under correct specification with fidelity (Assumption~\ref{ass:fid}) and separated candidates,
$\gamma_{\mathrm{full}}\ge\gamma_0>0$, which rules out a low-rank embedding reproducing the degree kernel, the
map $\bw\mapsto\Pmat(\bw)$ is injective, so the generative weights are identified by
Theorem~\ref{thm:nor}(a) and the cross-fold debiased estimator recovers them at the rate
$\sqrt{s/\gamma_S}/(n\sqrt{\rho_n})$ of Theorem~\ref{thm:debias}. As $\gamma_{\mathrm{full}}\to0$, for instance
when the geometry and degree kernels coincide, only the projection and calibration coefficients remain
well-posed, and Theorem~\ref{thm:grouped} characterises what is recoverable, the total contribution of each
collinear group of candidates.
\end{remark}

\paragraph{Relation to debiased machine learning.} The cross-fold construction is not the iid
double-machine-learning template applied to a graph; three elements are specific to the single-network
problem. First, the nuisances are the candidate kernels, and what attenuates the coefficient is the
\emph{self-product} $\widehat{\mathbf H}^\top\widehat{\mathbf H}$ of the fitted bases, not a cross term,
so the bias is removed by forming the Gram across folds, $(\widehat{\mathbf H}^{a})^\top\widehat{\mathbf
H}^{b}$, not by orthogonalising a score. Second, the fold split is an inverse-probability dyad mask on one
dependent adjacency, not a partition of independent samples, and it drives the Hanson--Wright control of
Lemma~\ref{lem:firststage}. Third, the transversality $\gamma_S$ plays the role of overlap or positivity:
it is bounded below exactly when the candidate kernels are spectrally separated, the condition under which
the generative reading holds, and it sets the efficiency through the $1/\sqrt{\gamma_S}$ in the rate. None
of the three appears in the iid setting.

\subsection{Misspecification: the projection target}
\label{sec:mis}
When the data-generating kernel lies outside the synthesis class, the estimand is the population $L_2$ projection $\bw^\dagger$ of the truth onto the agent span, and the calibration estimator of Section~\ref{sec:estimation} targets its unconstrained version. Three properties hold, proved as Proposition~\ref{prop:mis-cor} in the supplement. The cross-fold debiased estimator attains the projection at the rate $\sqrt{K/\gamma_{\mathrm{full}}}/(n\sqrt{\rho_n})$ with no first-order misspecification bias, since the population residual is orthogonal to every agent column by the normal equations. A coordinate $w^\dagger_k$ is negative exactly when agent $k$'s column is anti-aligned, in partial correlation, with what the other agents leave unexplained, so a negative weight is a corrective contrast. And the projection is weakly better in population square loss than any single agent. This reconciles the simplex-valued generative weights with the sometimes-negative fitted coefficients of Table~\ref{tab:weights}: the negative coordinates are the projection geometry, and the predictive dominance is why the synthesis forecasts well even when no candidate generates the graph.

\subsection{Grouped identifiability under collinear candidates}
\label{sec:grouped}
The transversality $\gamma_S$, and its all-candidate version $\gamma_{\mathrm{full}}$, are not regularity
assumptions to be hoped for but quantities the analyst computes from the fitted kernels:
$\widehat\gamma_{\mathrm{full}}=\lambda_{\min}(\widehat\Phi)$ is the smallest eigenvalue of the empirical
Gram-correlation matrix of the candidate columns, reported next to the weights. When it is bounded away from
zero the weights are well-posed (Remark~\ref{cor:recovery}); when it collapses, the off-diagonal of
$\widehat\Phi$ localises the collinear candidates, and the question is not whether to give up but what remains
identifiable. The answer is exact: nearly collinear candidates share a direction whose total contribution is
identifiable even when the split among them is not, and grouping them into one composite column restores
conditioning and recovers that total.

\begin{remark}[Grouped identifiability under collinear candidates]
When $\widehat\gamma_{\mathrm{full}}$ collapses, the off-diagonal of $\widehat\Phi$ localises the collinear
candidates, and grouping them into one composite column makes the conditioning of the reduced design depend
only on the between-group separation: the within-group split is then unidentifiable, its cross-fold variance
inflated by $1/\varepsilon^2$, while the group total along the shared direction is estimable at the minimax
rate $\sqrt{G/\gamma_{\mathrm{grp}}}/(n\sqrt{\rho_n})$ of Theorem~\ref{thm:debias} with a valid two-stage
interval. Theorem~\ref{thm:grouped} in Appendix~\ref{app:grouped} makes this precise and gives the clustering
diagnostic on $|\widehat\Phi|$ that produces a qualifying grouping. In a controlled illustration with two
competing community hypotheses, block kernels from partitions agreeing on $98.8\%$ of nodes alongside a degree
kernel at $n=600$, the full design is ill-conditioned, $\widehat\gamma_{\mathrm{full}}=0.03$, and the split
between the two block coefficients carries a standard error $7.8$ times that of their sum; grouping restores
$\widehat\gamma_{\mathrm{grp}}=0.99$ and recovers the group projection coefficient with projection-target
coverage $0.91$. As with the deflation repair of Appendix~\ref{app:deflation}, grouping restores conditioning
and valid projection inference and reports the identifiable group total; it does not recover the generative
split, which is intrinsically lost under collinearity, nor close the fidelity gap between the fitted kernels
and the mechanisms they proxy.
\end{remark}

\subsection{Limit distribution and valid inference}
\label{sec:inference}

The preceding results give convergence rates; this subsection gives distributional approximations. Cross-fitting is
again what makes the statements clean: conditional on fold one, the Stage-B estimator is an
ordinary heteroskedastic linear regression with independent errors, and the calibration estimator is
an ordinary logistic regression on independent dyads.

\begin{proposition}[Asymptotic normality and bootstrap validity]
\label{thm:clt}
When the kernels are fitted, assume additionally Assumption~\ref{ass:linplug}. Part (a) is
conditional on the realised fold assignment and fitted design: its target is the fold-conditional
projection $\widetilde\bw$ defined next, the intervals it licenses are conditional, and under
correct specification and Assumption~\ref{ass:fid} this target equals $\bw$ up to $o_P$ of the
rate. Let $\widehat\bw^{(2)}$ be the unconstrained Stage-B estimator on fold two, let
$\widetilde\bw:=\argmin_{\mathbf u}\norm{\operatorname{vec}_{\mathcal D_2}(\Pmat^\ast)-
\widehat{\mathbf M}^{(1)}\mathbf u}^2$ be the fold-conditional projection, and let
$\widehat{\mathbf V}=(\widehat{\mathbf M}^\top\widehat{\mathbf M})^{-1}\widehat{\mathbf M}^\top
\widehat{\mathbf D}\widehat{\mathbf M}(\widehat{\mathbf M}^\top\widehat{\mathbf M})^{-1}$ be the
heteroskedasticity sandwich with $\widehat{\mathbf D}=\diag(\hat\varepsilon_{ij}^2)$. Under
Assumptions~\ref{ass:transversality}--\ref{ass:plugin}:
\begin{enumerate}[label=(\alph*),leftmargin=2em,itemsep=2pt]
\item \emph{(Conditional CLT and sandwich consistency.)} Conditionally on fold one, for every fixed
$k$,
\[
\frac{\widehat w^{(2)}_k-\widetilde w_k}{\widehat V_{kk}^{1/2}}\ \Longrightarrow\ \mathcal N(0,1),
\qquad\text{and}\qquad
\widehat V_{kk}\big/V_{kk}\ \to_P\ 1 ,
\]
where $V$ is the Bernoulli-variance sandwich and $\widetilde\bw$ is the fold-conditional projection
target. Under correct specification the residual-square estimator $\widehat{\mathbf D}$ is consistent
for the Bernoulli variance directly; under misspecification it exceeds it by the squared projection
residual, a relative factor $1+O_P(\rho_n)$ that vanishes in the sparse regime, so the ratio still
tends to one. The result thus holds with or without correct specification, in the sparse regime of
Assumption~\ref{ass:density}.
\item \emph{(Calibration form: unconditional CLT and bootstrap.)} The calibration set is formed by
case--control sampling: all held-out positive dyads together with an equal number of dyads drawn
without replacement from the held-out non-edges. Let $\bw^\dagger_{\mathrm{cal}}$ be the population
minimiser of the logistic risk under this case--control dyad distribution $Q_{\mathrm{cc}}$. Because
the negatives are drawn without replacement from the $\Theta(n^2)$ held-out non-edges, the
calibration sample is a triangular array rather than an i.i.d. sequence; the finite-population
correction is $O(m/n^2)=o(1)$, and conditionally on the training fold and the fitted scores the
array satisfies the Lindeberg conditions, so the limit theory below applies unchanged. Under a
correctly specified logistic link, the Prentice--Pyke retrospective-likelihood argument additionally
identifies the slope coefficients, with only the intercept shifted by the sampling ratio; in general
no such invariance is claimed, and $\bw^\dagger_{\mathrm{cal}}$ is defined directly as the minimiser
under $Q_{\mathrm{cc}}$. For the held-out calibration
estimator of Section~\ref{sec:estimation} computed on $m$ calibration dyads,
$\sqrt m\,(\widehat\bw_{\mathrm{cal}}-\bw^\dagger_{\mathrm{cal}})\Rightarrow\mathcal N(0,\Sigma)$
with $\Sigma$ the logistic sandwich under $Q_{\mathrm{cc}}$, and the nonparametric (pairs) bootstrap is consistent:
the bootstrap law of $\sqrt m\,(\widehat\bw^\ast_{\mathrm{cal}}-\widehat\bw_{\mathrm{cal}})$
converges weakly in probability to $\mathcal N(0,\Sigma)$. Consequently the percentile intervals of
Table~\ref{tab:weights} have asymptotically correct coverage for the case--control calibration
coefficients $\bw^\dagger_{\mathrm{cal}}$, conditional on the fitted score construction; they are not
intervals for generative mixture weights or natural-prevalence edge probabilities.
\end{enumerate}
\end{proposition}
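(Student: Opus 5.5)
For part~(a) I would condition on the Stage-A fold and the fitted design $\widehat{\mathbf M}=\widehat{\mathbf M}^{(1)}$, and treat the Stage-B fit as a fixed-design heteroskedastic linear regression with independent errors. Write $\mathbf y=\operatorname{vec}_{\mathcal D_2}(\A)$ and $\mathbf p=\operatorname{vec}_{\mathcal D_2}(\Pmat^\ast)$. Then the estimation error decomposes exactly as
\[
\widehat\bw^{(2)}-\widetilde\bw=(\widehat{\mathbf M}^\top\widehat{\mathbf M})^{-1}\widehat{\mathbf M}^\top(\mathbf y-\mathbf p),
\]
because $\widetilde\bw$ is the least-squares coefficient of $\mathbf p$ on the same design. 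Given fold one, the entries of $\mathbf y-\mathbf p$ are independent, mean zero, bounded by one, and have variances $P_{ij}(1-P_{ij})$. Fix $k$ and write the $k$th coordinate as $\sum_e a_e(y_e-p_e)$, where $a=\widehat{\mathbf M}(\widehat{\mathbf M}^\top\widehat{\mathbf M})^{-1}e_k$. I would apply the Lyapunov CLT to this triangular array; it suffices to show $\max_e a_e^2/\sum_e a_e^2 P_e(1-P_e)\to0$.

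For the denominator, Assumption~\ref{ass:norm} (information comparability) together with Lemma~\ref{lem:plugin}, which gives $\lambda_{\min}$ of the fitted Gram of order $\gamma_S n^2\rho_n^2$, shows that $\sum_e a_e^2P_e\asymp\rho_n\norm{a}_2^2$. For the numerator, $\max_e|a_e|\le\norm{\widehat{\mathbf M}}_{\max}\norm{(\widehat{\mathbf M}^\top\widehat{\mathbf M})^{-1}e_k}_1$. Here the delocalisation in Assumption~\ref{ass:linplug}(iii) and the plug-in error rate of Assumption~\ref{ass:plugin} keep the fitted columns spread, with $\norm{\widehat{\mathbf M}}_{\max}\lesssim\rho_n$ up to $\log$ factors. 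This gives a ratio of order $\rho_n\cdot\rho_n^2/(\rho_n\cdot n^2\rho_n^2)=1/(n^2\rho_n)\to0$.

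Sandwich consistency reduces to showing $\sum_e a_e^2(\hat\varepsilon_e^2-P_e(1-P_e))=o_P(\rho_n\norm a^2)$. Expand $\hat\varepsilon_e=(y_e-p_e)+(p_e-\widehat{\mathbf M}_e\widetilde\bw)-\widehat{\mathbf M}_e(\widehat\bw-\widetilde\bw)$, where $\widehat{\mathbf M}_e$ denotes the row of $\widehat{\mathbf M}$ for dyad $e$, and treat the three pieces separately.
\begin{itemize}
\item The pure-noise square $(y_e-p_e)^2$ concentrates around its conditional mean $P_e(1-P_e)$, by a weighted law of large numbers using the same max-to-sum ratio as above.
\item The projection residual is bounded by $O(\rho_n)$, so its square contributes a relative excess of order $\rho_n$. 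It vanishes under correct specification and is $O_P(\rho_n)$ otherwise, exactly as stated.
\item The estimation term is of order $\rho_n^2/(n^2\rho_n)$ per entry, hence negligible.
\end{itemize}
The cross terms are handled by Cauchy--Schwarz. Finally, to pass from $\widetilde\bw$ to $\bw$ under correct specification and Assumption~\ref{ass:fid}, I would write $\widetilde\bw-\bw=(\widehat{\mathbf M}^\top\widehat{\mathbf M})^{-1}\widehat{\mathbf M}^\top(\mathbf M-\widehat{\mathbf M})\bw$. Its linear part and its $\mathbf Q_k$ part are then controlled through Assumption~\ref{ass:linplug}(ii) at the inverse-Gram scale, which is precisely the $o_P$-of-per-coordinate-noise statement.

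Part~(b) is standard M-estimation; the only real work is the sampling structure. Conditionally on the training fold the scores $s_{k,ij}$ are fixed functions, and the calibration sample consists of all held-out positives plus $m/2$ negatives drawn without replacement. I would first couple this sample to i.i.d.\ draws from $Q_{\mathrm{cc}}$. The total-variation cost of sampling without replacement is $O(m^2/n^2)$ in the worst case; to avoid that cost I would work with a Lindeberg--Feller CLT for sampling without replacement (H\'ajek), which yields the finite-population correction $1-O(m/n^2)$. Positive-definiteness of the Hessian under $Q_{\mathrm{cc}}$ follows from transversality of the scores. Since the logistic loss is convex, the argmin theorem then gives $\sqrt m(\widehat\bw_{\mathrm{cal}}-\bw^\dagger_{\mathrm{cal}})\Rightarrow\mathcal N(0,H^{-1}JH^{-1})$ (van der Vaart, Thm.~5.23). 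Pairs-bootstrap consistency follows from bootstrap Donsker for the Lipschitz score class combined with the same convexity argument.

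I expect the main obstacle to be the max-entry control of $a$ in part~(a). The inverse-Gram direction mixes fitted columns, and the fitted columns carry correlated spectral errors, so bounding $\norm{\widehat{\mathbf M}}_{\max}$ requires the entrywise-spreading clause of Assumption~\ref{ass:linplug}(iii) rather than a Frobenius bound. If a uniform bound is too crude, I would fall back on a fourth-moment Lyapunov condition: $\sum_e a_e^4P_e$, bounded via the operator-norm covariance bound in (iii), which avoids any $\max$ norm.
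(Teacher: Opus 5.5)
Your conditional CLT and sandwich argument in (a) follows the paper's route. The Stage-B error is a linear form in independent bounded errors, Lindeberg follows from vanishing maximal leverage (your $\norm{\widehat{\mathbf M}}_{\max}\norm{(\widehat{\mathbf M}^\top\widehat{\mathbf M})^{-1}e_k}_1$ bound is the same device), and the sandwich follows from a weighted law of large numbers with the fitted drift negligible. Two small points there:
\begin{itemize}
\item Your displayed ratio $\rho_n\cdot\rho_n^2/(\rho_n\cdot n^2\rho_n^2)$ equals $1/n^2$. The correct bookkeeping, $\max_e a_e^2\lesssim 1/(n^4\rho_n^2)$ against $\rho_n\norm{a}_2^2\asymp 1/(n^2\rho_n)$, gives the $1/(n^2\rho_n)$ you state.
\item The bound $\norm{\widehat{\mathbf M}}_{\max}\lesssim\rho_n$ needs max-norm control of the remainders $\mathbf Q_k$, which Assumption~\ref{ass:linplug}(ii)--(iii) does not supply. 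The paper's leverage bound uses this input tacitly; it is made explicit only in Assumption~\ref{ass:entrywise-refit-final2}.
\end{itemize}
Your part (b) takes a different route from the paper: direct convex $M$-estimation under $Q_{\mathrm{cc}}$ with H\'ajek's without-replacement CLT. The paper instead conditions on the case count and uses the Prentice--Pyke conditional score. Your route fits the statement's definition of $\bw^\dagger_{\mathrm{cal}}$ as the $Q_{\mathrm{cc}}$-minimiser better, since it does not lean on a correct link.

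The genuine gap is your final step in (a), the recentring from $\widetilde\bw$ to $\bw$. Under correct specification $\mathbf p=\mathbf M\bw$. Write $\widehat{\mathbf M}=\mathbf M+\mathbf H$ and $\mathbf H=\mathbf L+\mathbf Q$, with columns $\operatorname{vec}_{\mathcal D_2}\mathcal L_k(\mathbf E^{(1)})$ and $\operatorname{vec}_{\mathcal D_2}\mathbf Q_k$. Then
\[
\widetilde\bw-\bw=-(\widehat{\mathbf M}^\top\widehat{\mathbf M})^{-1}\bigl\{\mathbf M^\top\mathbf L\bw+\mathbf M^\top\mathbf Q\bw+\mathbf H^\top\mathbf H\bw\bigr\}.
\]
Assumption~\ref{ass:linplug}(ii) controls only the middle term.
\begin{itemize}
\item The first term is the first-stage score. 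It is generically of the same order as the Stage-B noise (Lemma~\ref{lem:plugin}, step (8)), not $o_P$ of it; it is exactly the term whose variance $V_{A,k}$ Theorem~\ref{thm:twostage} must add.
\item The third term is the same-fold self-product, which you omitted entirely. Your $\widehat\bw^{(2)}$, with one design fitted on fold one and used in both Gram and moment, is precisely the single-design plug-in of Corollary~\ref{cor:debias-lb}, and $\widetilde\bw-\bw$ is exactly its bias given fold one.
\end{itemize}
That corollary, combined with your own CLT (which makes $\widehat\bw^{(2)}-\widetilde\bw$ of edge-rate order), gives $\norm{\widetilde\bw-\bw}_2\gtrsim r_{\max}\norm{\bw_S}_2/(n\rho_n)$. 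This exceeds the rate by the diverging factor $r_{\max}\sqrt{\gamma_S/\rho_n}$. So under the corollary's conditions the recentred claim is false, not merely unproved, even though the statement asserts it.

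The paper's own proof of this proposition establishes only the CLT around $\widetilde\bw$ and never attempts the recentring. You should drop that step; it holds only for known kernels, where $\mathbf H=0$. Intervals for $\bw$ require the cross-fold Gram of Theorem~\ref{thm:debias} together with the two-stage variance of Theorem~\ref{thm:twostage}.

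For the same reason, your remark that the projection residual vanishes under correct specification is wrong with fitted kernels, since $\mathbf M\bw\notin\operatorname{span}(\widehat{\mathbf M})$. This is harmless for the sandwich, because that residual is still $O(\rho_n)$ entrywise.
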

The agent scores entering the calibration regression are standardised on the training fold, so the
calibration covariates are fixed given that fold. The two parts use different variance estimators because they
target different objects: part~(b) is inference for the random-design calibration coefficient on the
case--control distribution, where the Huber--White sandwich and the pairs bootstrap are the standard
consistent choices and are what the reported intervals use; for the fixed-design projection of part~(a) the
residual-square sandwich estimates the Bernoulli variance up to a relative inflation $1+O(\rho_n)$ that
vanishes in the sparse regime. Inference for the \emph{generative} weights, as opposed to the calibration or
projection target, additionally requires the first-stage variance of Section~\ref{sec:cf}, and the intervals
here are not claimed to cover $\bw_{\mathrm{gen}}$ unless the candidate set is known and correctly specified.

\begin{proof}
See Appendix~\ref{app:clt}.
\end{proof}

\begin{theorem}[Two-stage inference for the generative weights]
\label{thm:twostage}
Let the active kernels be fitted on independent Stage-A sub-folds by estimators covered by
Lemma~\ref{lem:firststage}, so
that $\mathbf H_a=\mathcal L_a(\mathbf E_a)+\mathbf Q_a$ with $\mathcal L_a$ the linearisation of the
embedding map, $\mathbf E_a$ the Stage-A Bernoulli noise, and $\norm{\mathbf Q_a}=o_P(\norm{\mathcal
L_a(\mathbf E_a)})$. Write $\mathbf v_k=\widehat\Phi_{\mathrm{db}}^{-1}\mathbf e_k$ for the $k$th row
of the inverse cross-fold Gram, $\widehat{\mathbf G}(\mathbf v)=\sum_j v_j\widehat{\mathbf G}_j$ for the
kernel combination it indexes, and $\widehat{\mathcal L}^{\,w}=\sum_l\widehat w_{\mathrm{db},l}\,
\widehat{\mathcal L}_l$ for the weight-combined embedding linearisation. Define the two-stage variance
\[
\widehat V^{\,\mathrm{2s}}_{kk}\ =\ \widehat V_{kk}\ +\ \widehat V^{(1)}_{kk},\qquad
\widehat V^{(1)}_{kk}=(2f)^{-1}\,\bigl\|\,\widehat{\mathcal L}^{\,w\ast}\!\bigl(\widehat{\mathbf
G}(\mathbf v_k)\bigr)\bigr\|_{\widehat{\mathbf D}}^2,\qquad
\norm{\mathbf X}_{\widehat{\mathbf D}}^2=\sum_{i<j}\mathbf X_{ij}^2\,\widehat P_{ij}(1-\widehat P_{ij}),
\]
where $\widehat V_{kk}$ is the Stage-B sandwich of Proposition~\ref{thm:clt}, $\widehat{\mathcal
L}^{\,w\ast}$ is the explicit adjoint of the embedding linearisation, $\widehat{\mathbf
D}=\diag(\widehat P_{ij}(1-\widehat P_{ij}))$ is the per-dyad Bernoulli variance, $f\in(0,1)$ is the Stage-A sub-fold sampling fraction. Under
Assumptions~\ref{ass:transversality}--\ref{ass:plugin}, the conditions of
Theorem~\ref{thm:debias}, and Assumption~\ref{ass:fid}, for $\bw$ interior to the simplex and every fixed $k$,
\[
\frac{\widehat w_{\mathrm{db},k}-w_k}{(\widehat V^{\,\mathrm{2s}}_{kk})^{1/2}}\ \Longrightarrow\
\mathcal N(0,1),
\]
so the two-stage Wald intervals cover the generative weight $w_k$ at the nominal level, and the
Gaussian multiplier bootstrap that resamples the Stage-A and Stage-B scores jointly, with Stage-A
multipliers weighted $f^{-1/2}$ and the Stage-B sum restricted to $\mathcal D_2$, is consistent for
this law.
\end{theorem}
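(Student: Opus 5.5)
The plan is to linearise the cross-fold debiased estimator around the generative weight and split its error into a Stage-B score and a Stage-A score. Conditionally on the latent attributes, these two scores are built from independent Bernoulli noise. A martingale or Lindeberg CLT is then applied to their sum, and each estimated variance component is shown to be ratio-consistent.

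\textbf{Expansion.} Write $\widehat{\mathbf M}_a=\mathbf M+\mathbf H_a$ and $\widehat{\mathbf M}_b=\mathbf M+\mathbf H_b$, and note $\A^{(2)}=\mathbf M\bw+\mathbf E_2+\mathbf r$. By Assumption~\ref{ass:fid} and correct specification, the misspecification residual $\mathbf r$ vanishes. Then
\[
\widehat\bw_{\mathrm{db}}-\bw=\widehat\Phi_{\mathrm{db}}^{-1}\Bigl\{\bar{\mathbf M}^\top\mathbf E_2+\bar{\mathbf M}^\top\mathbf M\bw-\widehat\Phi_{\mathrm{db}}\bw\Bigr\},\qquad \bar{\mathbf M}=\tfrac12(\widehat{\mathbf M}_a+\widehat{\mathbf M}_b).
\]
Expanding the braced deterministic part gives $-\tfrac12\sum_{c\in\{a,b\}}\mathbf M^\top\mathbf H_c\bw-\operatorname{sym}(\mathbf H_a^\top\mathbf H_b)\bw$.
\begin{itemize}
\item The cross-product $\mathbf H_a^\top\mathbf H_b$ is mean zero. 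By the trace bound of Assumption~\ref{ass:linplug}(iii) it is $O_P(\sqrt{n}\,\rho_n)$, which is $o$ of the edge scale $n\rho_n^{3/2}$ after multiplying by $\widehat\Phi_{\mathrm{db}}^{-1}\asymp(n^2\rho_n^2\gamma_S)^{-1}$, because $n\rho_n\to\infty$.
\item The remainders $\mathbf Q_c$ enter only through $\inner{\mathbf X}{\mathbf Q_c}_F$ with $\mathbf X$ equal to the inverse-Gram directions. These are $o_P$ of the per-coordinate noise by Assumption~\ref{ass:linplug}(ii).
\item Replacing $\bar{\mathbf M}$ by $\mathbf M$ in the Stage-B score costs $\mathbf H^\top\mathbf E_2$, which is again a product of independent noises and is negligible.
\end{itemize}
Writing $\mathbf v_k$ for the population row of the inverse Gram, the result is
\[
\widehat w_{\mathrm{db},k}-w_k=\inner{\mathbf G(\mathbf v_k)}{\mathbf E_2}_{\mathcal D_2}-\tfrac12\sum_{c}\inner{\mathcal L^{w\ast}(\mathbf G(\mathbf v_k))}{\mathbf E_c}+o_P(\mathrm{s.e.}).
\]
The Stage-B term is the first inner product; the Stage-A term is the second, obtained by passing $\mathcal L^w=\sum_l w_l\mathcal L_l$ through the adjoint.

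\textbf{CLT.} The Stage-A sub-folds and $\mathcal D_2$ are disjoint iid-masked dyad sets, so conditionally on $\bu$ the three noise arrays are independent. The leading term is therefore a weighted sum of independent, centred, bounded dyad variables. Lindeberg's condition reduces to showing that no single dyad weight dominates. For the Stage-B weights this follows from the test-class shape $\norm{\mathbf X}_{\max}\le C_0\norm{\mathbf X}_F/n$. For the Stage-A weights it follows from the entrywise spreading in Assumption~\ref{ass:linplug}(iii), which needs a max-norm bound on $\mathcal L^{w\ast}(\mathbf G(\mathbf v_k))$. I expect this max-norm bound to hold via the explicit ASE and degree-matching adjoints of Lemma~\ref{lem:firststage}: each is a projection plus a rank-$r$ correction applied to a delocalised matrix.

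The variance of the Stage-A part is $\sum_{ij}[\mathcal L^{w\ast}(\cdot)]_{ij}^2P_{ij}(1-P_{ij})$. The factor $(2f)^{-1}$ arises from two effects: the IPW mask inflates the variance by $1/f$, and averaging the two sub-folds contributes the factor $\tfrac14\cdot 2$. The Stage-B part equals $V_{kk}$ by Proposition~\ref{thm:clt}(a). Combining the two pieces gives asymptotic normality with variance $V_{kk}+V^{(1)}_{kk}$. The interior-simplex hypothesis ensures that the simplex projection is inactive with probability tending to one, so it does not affect the limit.

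\textbf{Variance consistency and bootstrap; main obstacle.} Ratio consistency of $\widehat V_{kk}$ is Proposition~\ref{thm:clt}(a). For $\widehat V^{(1)}_{kk}$, I would replace in turn $\widehat{\mathbf v}_k\to\mathbf v_k$, $\widehat{\mathbf G}\to\mathbf G$, $\widehat{\bw}_{\mathrm{db}}\to\bw$, $\widehat{\mathcal L}\to\mathcal L$, and $\widehat P\to P$. Each replacement introduces a relative error of order $\delta_n\to0$, controlled by the bounded adjoint norm $C_L$ and by Theorem~\ref{thm:debias}.

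The step I expect to be hardest is consistency of the plug-in adjoint $\widehat{\mathcal L}^{\,w\ast}$: the linearisation depends on the estimated eigenspaces. My first attempt would be to use the eigengap in (C2) with Davis--Kahan, which bounds the operator-norm difference of the linearisations by $O_P(\delta_n)$, and then to bound the change in the $\widehat{\mathbf D}$-norm by that difference times $C_L$.

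Consistency of the multiplier bootstrap then follows from conditional Lindeberg for the Gaussian-multiplier sum over the same estimated scores. The $f^{-1/2}$ weights reproduce the IPW variance, so the bootstrap's conditional covariance matches $\widehat V^{\,\mathrm{2s}}_{kk}$, which has already been shown to be consistent.
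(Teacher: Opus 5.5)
Your proposal is correct and follows the paper's proof essentially step for step. The shared steps are the exact identity $\bar{\mathbf M}^\top\mathbf M-\widehat\Phi_{\mathrm{db}}=-\tfrac12\mathbf M^\top(\mathbf H_a+\mathbf H_b)-\operatorname{sym}(\mathbf H_a^\top\mathbf H_b)$; replacing $\widehat\Phi_{\mathrm{db}}^{-1}$ by the population inverse; the adjoint identity giving the influence expansion $\langle\mathbf G(\mathbf v_k),\mathbf E_2\rangle-\tfrac12\sum_c\langle\mathcal L^{w\ast}(\mathbf G(\mathbf v_k)),\mathbf E_c\rangle$; a Lindeberg CLT with Slutsky on the plug-in variance; local inactivity of the simplex projection; and a multiplier bootstrap that is exactly Gaussian given the data.

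The main difference is that the paper's restated version takes the negligibility, maximal-leverage and plug-in variance-consistency conditions as explicit hypotheses, whereas you sketch how to derive them. One small correction: the paper notes that the exclusive dyad split makes the Stage-A noises independent only across dyads, with within-dyad covariance $-P_e^2$. So $\mathbf H_a^\top\mathbf H_b$ is not exactly mean zero, and the CLT should be run on per-dyad sums; both corrections are of lower order.
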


\noindent Theorem~\ref{thm:twostage} closes the gap between the calibration intervals, valid for the
projection target, and the generative weights the title names. The first-stage variance is computable because
the embedding adjoint is explicit for adjacency spectral embedding; adding it to the sandwich restores
coverage on average, though individual coordinates can remain mildly anticonservative at finite $n$.
Section~\ref{sec:infval} verifies this: the Stage-B sandwich undercovers the generative weight, with coverage
as low as $0.85$ on the dominant coefficient and mean near $0.91$, while the two-stage intervals raise mean
coverage to between $0.93$ and $0.95$ at a $12\%$ increase in width. The result is a network instance of the
generated-regressor variance correction of \citet{pagan1984econometric} and \citet{murphy2002estimation}, with a spectral
first stage and the masked-sampling fraction entering explicitly, and it is the guarantee that distinguishes
the synthesis from forecast combination, which returns no coefficient with a confidence interval. The reported
real-data intervals use the calibration form, whose target $\bw^\dagger_{\mathrm{cal}}$ is predictively
meaningful and whose bootstrap is valid by Proposition~\ref{thm:clt}(b); its sign need not equal that of the
generative weight, since collinearity, suppression, misspecification, and the logistic link can each reverse a
partial coefficient, so for inference on the generative weight itself the two-stage intervals are the valid
construction.

\subsection{Adaptive selection}
\label{sec:adapt}
The oracle results presume the active set $S$; with the number of candidates fixed, it is selected by comparing each coordinate to a threshold $c_n\widehat v_k$ with $c_n\to\infty$, where $\widehat v_k$ is the Stage-B sandwich standard error under known kernels and the two-stage standard error of Theorem~\ref{thm:twostage} under fitted kernels, and the model is refit on the selected set. If $c_n\max_k v_k=o(w_{\min})$ then the selected set equals $S$ with probability tending to one and the refit attains the oracle rate with no knowledge of $S$, $s$, $\gamma_S$, or $\rho_n$; the statement and proof are Corollary~\ref{cor:adapt-cor} in the supplement. Selection costs only an inflated beta-min floor, by the factor $c_n$ of order $\sqrt{\log K}$ when the candidate set grows, and the full conditioning $\gamma_{\mathrm{full}}$ in that stage; once selection succeeds there is no first-order price in the estimation error, as Figure~\ref{fig:estnew}(a) shows, within $9\%$ of the oracle at small $n$ and indistinguishable from $n\ge600$.

\subsection{Result two: identifying the combination operator}
The combination rule is itself recoverable from a single graph, with a sharp two-sided threshold. The lower half is a minimax detection lower bound: below the edge-overlap information $n^2\rho_n^3$ the additive and noisy-OR laws are mutually contiguous, by a Le Cam two-point argument given in the supplement, so no test whatsoever separates them. The upper half is a consistent test on the residualised interaction column above the threshold. Both halves are Theorem~\ref{thm:nor}, and together they make $n^2\rho_n^3$ the exact detectability boundary, so the sparse empirical networks of Table~\ref{tab:opfeas}, which lie below it, are undetectable as a matter of information and not of method. The threshold is the dyad count times the squared per-dyad operator gap, the same form as the information thresholds for detection in random geometric graphs \citep{bubeck2016testing}.

\label{sec:nor}

The mixture theory above relies on linearity: $\Pmat$ is linear in $\bw$. The noisy-OR operator is not:
$\Pmat=\mathbf{J}-\bigodot_k(\mathbf{J}-w_k\mathbf{G}_k)$, where $\mathbf J$ is the all-ones
matrix and $\odot$ the Hadamard product, is multilinear in the weights. For two layers the multilinearity admits an
exact linear reparametrisation: expanding the product gives the identity
\[
\Pmat \;=\; w_1\mathbf{G}_1+w_2\mathbf{G}_2-w_1w_2\,\mathbf{G}_1\!\odot\!\mathbf{G}_2 ,
\]
so the model is linear in the three coefficients $\mathbf{c}^\star=(w_1,w_2,-w_1w_2)$ on the
\emph{augmented design} $[\mathbf{G}_1,\mathbf{G}_2,\mathbf{G}_1\!\odot\!\mathbf{G}_2]$, and the
entire Proposition~\ref{thm:joint} machinery applies once the transversality constant is computed on the
augmented design. The interaction column has Frobenius norm an order $\rho_n$ smaller than the
layers, so it is the ill-conditioned direction; the estimator must, and does, avoid relying on it.
Efficiency then comes from a single Fisher-scoring step off the least-squares pilot, the classical
one-step device.

Let $\widetilde\Phi\in\R^{3\times3}$ be the Gram-correlation matrix of
$(\mathbf{G}_1,\mathbf{G}_2,\mathbf{G}_1\!\odot\!\mathbf{G}_2)$ and
$\widetilde\gamma=\lambda_{\min}(\widetilde\Phi)$ the \emph{augmented transversality}. Define the
estimators: $\widehat{\mathbf{c}}$, the least squares of $\operatorname{vec}_<(A)$ on the (estimated,
column-normalised) augmented design; $\widehat\bw^{\,\mathrm{LS}}$, the image of the
minimum-$\widetilde\Phi$-distance projection of $\widehat{\mathbf{c}}$ onto the constraint manifold
$\mathcal{C}=\{(c_1,c_2,-c_1c_2):c_1,c_2\in[0,1]\}$; and $\widehat\bw^{\,\mathrm{OS}}$, one Fisher
scoring step from $\widehat\bw^{\,\mathrm{LS}}$ on the Bernoulli log-likelihood, with score and
information
\[
\frac{\partial P_{ij}}{\partial w_1}=G_{1,ij}\,(1-w_2G_{2,ij}),\qquad
\frac{\partial P_{ij}}{\partial w_2}=G_{2,ij}\,(1-w_1G_{1,ij}),\qquad
\mathbf{I}(\bw)=\sum_{i<j}\frac{\nabla P_{ij}\,\nabla P_{ij}^\top}{P_{ij}(1-P_{ij})}.
\]

\paragraph{General $K$: the augmented design at higher order.}
The two-layer case is solved by an exact identity; for $K$ layers the inclusion--exclusion expansion
\[
\Pmat\ =\ \sum_{\emptyset\ne S\subseteq[K]}(-1)^{|S|+1}\Bigl(\prod_{k\in S}w_k\Bigr)
\bigodot_{k\in S}\mathbf G_k
\]
has $2^K-1$ terms, and the order-$j$ Hadamard columns
$\mathbf m_S=\operatorname{vec}_<(\bigodot_{k\in S}\mathbf G_k)$, $|S|=j$, have entries of order
$\rho_n^{\,j}$: every additional order of interaction costs a factor $\rho_n$ of signal. Two
questions follow. Can the weights still be estimated at the edge rate, and is the higher-order
structure, the operator itself, ever statistically visible? Parts (d) and (e) of the theorem
answer both, and
converts the geometric decay of interaction information into a quantitative statement. Let
$\widetilde\gamma_j>0$ denote the smallest eigenvalue of the correlation Gram matrix of the order-$j$
design $\{\mathbf m_S:1\le|S|\le j\}$ (extended transversality), and assume
$K\rho_n\le\tfrac12$ and $\min_k w_k\ge w_0>0$.

\begin{theorem}[The noisy-OR operator]
\label{thm:nor}
Let $\Pmat=\mathbf{J}-(\mathbf{J}-w_1\mathbf{G}_1)\odot(\mathbf{J}-w_2\mathbf{G}_2)$ with
$w_1,w_2\in(0,1)$ bounded away from $\{0,1\}$, under Assumptions~\ref{ass:density}--\ref{ass:plugin}
for the two layers, and suppose $\widetilde\gamma\ge\widetilde\gamma_0>0$. Then:
\begin{enumerate}[label=(\alph*),leftmargin=2em,itemsep=2pt]
\item \emph{(Exact linearisation and identification.)} The expansion above holds exactly. The
augmented coefficient vector $(w_1,w_2,-w_1w_2)$ is identified from $\Pmat$ if and only if the three
matrices $\mathbf{G}_1,\mathbf{G}_2,\mathbf{G}_1\!\odot\!\mathbf{G}_2$ are linearly independent
($\widetilde\gamma>0$). Full augmented rank is \emph{sufficient} for identifying the weight vector
$(w_1,w_2)$; it is not necessary, since if $\mathbf{G}_1\odot\mathbf{G}_2=\mathbf 0$ then
$\widetilde\gamma=0$ yet $(w_1,w_2)$ remains identified whenever $\mathbf{G}_1,\mathbf{G}_2$ are
linearly independent. In general the weights are identified if and only if the map
$\bw\mapsto\Pmat(\bw)$ is injective.
\item \emph{(Edge rate for the weights.)} The weight vector $(w_1,w_2)$, recovered from the
first-order columns of the augmented design, satisfies $\bigl\|\widehat\bw^{\,\mathrm{LS}}-\bw\bigr\|_2
=O_P\bigl(\widetilde\gamma^{-1/2}/(n\sqrt{\rho_n})\bigr)$, and this rate matches the minimax lower
bound in $(n,\rho_n)$ over the two-layer noisy-OR class with $\widetilde\gamma$ bounded below. The interaction coefficient
$-w_1w_2$ is not estimated at the edge rate: its column $\mathbf G_1\odot\mathbf G_2$ has Frobenius
norm of order $n\rho_n^2$, so it is estimated at the slower second-order rate
$O_P(\widetilde\gamma^{-1/2}/(n\rho_n^{3/2}))$ of part (d) with $|S|=2$.
\item \emph{(One-step efficiency.)} $\widehat\bw^{\,\mathrm{OS}}$ is asymptotically linear with the
efficient influence function:
$\widehat\bw^{\,\mathrm{OS}}-\bw=\mathbf{I}(\bw)^{-1}\sum_{i<j}\nabla P_{ij}\,
\frac{A_{ij}-P_{ij}}{P_{ij}(1-P_{ij})}+o_P\bigl(\|\mathbf{I}(\bw)^{-1/2}\|\bigr)$, hence
$\mathbf{I}(\bw)^{1/2}(\widehat\bw^{\,\mathrm{OS}}-\bw)\Rightarrow N(0,\mathbf{I}_2)$ and no regular
estimator has smaller asymptotic variance.
\item \emph{(Linearisation hierarchy, general $K$.)} Let $\widehat{\mathbf c}$ be the least-squares
estimator on the order-$j$ design. Then for every $S$ with $|S|=i\le j$,
\[
\bigl|\widehat c_S-(-1)^{|S|+1}\textstyle\prod_{k\in S}w_k\bigr|
\ =\ O_P\!\Bigl(\widetilde\gamma_j^{-1/2}\,\frac{1}{n\,\rho_n^{\,i-1/2}}\Bigr)
\ +\ O\!\Bigl(\widetilde\gamma_j^{-1}\,K^{\,j+1}\rho_n^{\,j+1-i}\Bigr),
\]
the second term the omitted-mass bias of the orders beyond $j$. In particular plain linear least
squares ($j=1$) estimates the weights with bias $O(K^2\rho_n/\widetilde\gamma_1)$, the order-$j$
correction reduces it to $O(K^{j+1}\rho_n^{\,j}/\widetilde\gamma_j)$, and, provided the number of
layers is large enough that such an order is available ($j\le K$), any
\[
j\ \ge\ \Bigl(\tfrac12+o(1)\Bigr)\,\frac{\log(n^2\rho_n)}{\log(1/\rho_n)}
\]
drives the bias below the edge rate. For $j=2$ this is a complete estimator of $(w_1,w_2)$; for
$j\ge3$ the displayed bound is a rate for the \emph{free} order-$j$ coefficients $\widehat c_S$:
the normalised coordinate $\widehat c_S\norm{\mathbf m_S}_F$ attains the edge rate $1/(n\sqrt{\rho_n})$,
while the unnormalised coefficient $\widehat c_S$ is estimated at the slower
$O_P(\widetilde\gamma^{-1/2}/(n\rho_n^{\,j-1/2}))$ because its column has Frobenius norm $n\rho_n^{\,j}$.
Recovering the constrained product structure $c_S=(-1)^{|S|+1}\prod_{k\in S}w_k$ for $K\ge3$ is a
conjecture (Remark~\ref{rem:nork-conj}).
\item \emph{(Operator-detectability threshold.)} Let $P_{\mathrm{or}}$ and $P_{\mathrm{mix}}$ be the
noisy-OR and mixture syntheses built from the same layers and the same weights, with at least two
layers overlapping on $\Theta(n^2)$ dyads where both kernels are in $[c_1\rho_n,c_2\rho_n]$. If
$n^2\rho_n^3\to0$, the total variation between the two graph laws tends to zero, since by Pinsker
\[
\mathrm{TV}\bigl(\Pr_{\mathrm{or}},\Pr_{\mathrm{mix}}\bigr)\le\sqrt{\tfrac12\,
\KL\bigl(\Pr_{\mathrm{or}}\,\big\|\,\Pr_{\mathrm{mix}}\bigr)}\le\sqrt{\tfrac12 C\,n^2\rho_n^3}\ \to\ 0 ,
\]
so for every sequence of tests the sum of type-I and type-II errors tends to one, and no uniformly
consistent estimator of the order-two coefficient exists over the class. Conversely, if
$w_1^2w_2^2\,\widetilde\gamma_2\,n^2\rho_n^3\to\infty$, the $t$-test on the Hadamard column of the
augmented least squares rejects the mixture against the noisy-OR consistently, with
signal-to-noise ratio $\asymp w_1w_2\sqrt{\widetilde\gamma_2\,n^2\rho_n^3}$. Thus, under bounded
overlap, weights bounded away from zero, and nonvanishing interaction transversality
$\widetilde\gamma_2$, the synthesis operator is testable if and only if the \emph{edge-overlap
information} $n^2\rho_n^3$ diverges. Parts \textup{(b)}--\textup{(e)} are stated for known layers; the fitted-layer case is
Theorem~\ref{prop:fitted-operator}.

\end{enumerate}
\end{theorem}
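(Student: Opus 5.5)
The plan is to treat the two-layer noisy-OR as a linear regression on the augmented design and prove the five parts in order, reusing the known-design machinery of Proposition~\ref{thm:joint} wherever possible.

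\emph{Part (a).} The identity is immediate from $1-(1-a)(1-b)=a+b-ab$ applied entrywise. Identification of the augmented vector then reduces to injectivity of the linear map $\mathbf c\mapsto c_1\mathbf G_1+c_2\mathbf G_2+c_3\mathbf G_1\odot\mathbf G_2$. That map is injective exactly when the three matrices are linearly independent, that is, when $\widetilde\gamma>0$. The non-necessity claim follows from the degenerate case $\mathbf G_1\odot\mathbf G_2=\mathbf 0$, where $\Pmat=w_1\mathbf G_1+w_2\mathbf G_2$. The general criterion, identification if and only if $\bw\mapsto\Pmat(\bw)$ is injective, is the definition of identification.

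\emph{Part (b).} I would apply the proof of Proposition~\ref{thm:joint}(a) to the column-normalised augmented design with conditioning $\widetilde\gamma$. This gives the normalised coefficient error $O_P(\widetilde\gamma^{-1/2}/\sqrt{n^2\rho_n})$ per coordinate. Unnormalising divides coordinate $k$ by $\norm{\mathbf m_k}_F/(n\rho_n)$:
\begin{itemize}
\item the two layer columns have $\norm{\mathbf m_k}_F\asymp n\rho_n$, which gives the edge rate for $(w_1,w_2)$;
\item the Hadamard column has $\norm{\mathbf m_3}_F\asymp n\rho_n^2$, which gives $1/(n\rho_n^{3/2})$ for $-w_1w_2$.
\end{itemize}
The projection onto $\mathcal C$ in the $\widetilde\Phi$-metric is locally Lipschitz. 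Since $\bw$ is interior, the first-order coordinates of the projected estimator inherit the edge rate. The lower bound comes from the two-point Le Cam argument along a first-order perturbation, reusing the known-design construction; estimating the interaction cannot help.

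\emph{Part (c).} This is the classical one-step argument. Expanding the score around $\bw$ gives
\[
\widehat\bw^{\mathrm{OS}}-\bw=\mathbf I^{-1}U(\bw)+O\bigl(\norm{\widehat\bw^{\mathrm{LS}}-\bw}^2\norm{\mathbf I^{-1}\nabla^2 U}\bigr).
\]
Here $\mathbf I\asymp n^2\rho_n$ by Assumption~\ref{ass:norm}, and the Hessian of the log-likelihood is also of order $n^2\rho_n$. The remainder is therefore $O_P(1/(n^2\rho_n))=o_P((n^2\rho_n)^{-1/2})$. Asymptotic normality follows from Lindeberg applied to independent bounded summands. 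Efficiency is the Hájek bound for this regular parametric submodel.

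\emph{Part (d).} I would split $\Pmat$ into the order-$\le j$ terms plus a remainder $\mathbf R_j$. The entries of $\mathbf R_j$ are bounded by $\sum_{|S|>j}\prod w_k G_k\le\binom{K}{j+1}\rho_n^{j+1}(1+o(1))$, using $K\rho_n\le\tfrac12$ to sum the geometric tail. Least squares splits into two terms:
\begin{itemize}
\item the noise term, handled as in part (b), with column norms $n\rho_n^{i}$;
\item the omitted-variable term, the projection of $\mathbf R_j$, whose coefficient in direction $S$ is bounded by $\widetilde\gamma_j^{-1}\norm{\mathbf R_j}_F/\norm{\mathbf m_S}_F\lesssim\widetilde\gamma_j^{-1}K^{j+1}\rho_n^{j+1-i}$.
\end{itemize}
Setting the order-one bias $\rho_n^{j}$ below $(n^2\rho_n)^{-1/2}$ gives the stated threshold on $j$.

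\emph{Part (e), the expected main obstacle.} For the lower half I would bound the KL divergence by the $\chi^2$ divergence, dyad by dyad:
\[
\sum_{i<j}\frac{(P^{\mathrm{or}}_{ij}-P^{\mathrm{mix}}_{ij})^2}{P^{\mathrm{mix}}_{ij}(1-P^{\mathrm{mix}}_{ij})}\lesssim n^2\frac{\rho_n^4}{\rho_n}=n^2\rho_n^3.
\]
This step needs $P^{\mathrm{mix}}\gtrsim\rho_n$ wherever the two laws differ, which holds on the overlap set. Off the overlap the Hadamard term vanishes or is smaller still. Pinsker then finishes the argument. The delicate point is that the mixture uses the same weights without normalisation, so the difference is exactly $w_1w_2G_1G_2$; this must be checked against Definition~\ref{def:mixture}'s simplex convention.

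For the upper half, the $t$-statistic on the Hadamard column has mean equal to the normalised coefficient $w_1w_2\norm{\mathbf m_3}_F$ divided by its standard error. The standard error is of order $(\rho_n/\widetilde\gamma_2)^{1/2}$ in normalised units, so the signal-to-noise ratio is $\asymp w_1w_2\sqrt{\widetilde\gamma_2}\,n\rho_n^2/\sqrt{\rho_n}$, which is the stated order. Consistency uses the sandwich consistency of Proposition~\ref{thm:clt}(a) under both hypotheses. The hardest bookkeeping is confirming that under the mixture null the residual contains no hidden order-two mass that would bias the $t$-statistic.
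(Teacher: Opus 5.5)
Your proposal is correct and follows the paper's proof essentially step for step. The shared steps are:
\begin{itemize}
\item exact entrywise linearisation;
\item least squares on the column-normalised augmented design, rescaled by the column norms $n\rho_n$ and $n\rho_n^2$;
\item a Le Cam two-point perturbation of $w_1$ for the lower bound;
\item the LAN/one-step expansion for efficiency;
\item the inclusion--exclusion tail bound $\norm{\mathbf R_j}_F\lesssim n(K\rho_n)^{j+1}$;
\item the Bernoulli bound $\KL\lesssim\rho_n^{-1}\norm{w_1w_2\,\mathbf G_1\odot\mathbf G_2}_F^2$ followed by Pinsker.
\end{itemize}

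The differences from the paper are cosmetic. The paper takes $\widehat\bw^{\,\mathrm{LS}}$ as the box projection of $(\widehat c_1,\widehat c_2)$ rather than your $\widetilde\Phi$-metric projection. It also proves the upper half of (e) with an inverse-variance-weighted residualised score test rather than the OLS $t$-test.

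The worries you flag in (e) dissolve:
\begin{itemize}
\item The statement fixes the mixture at the same unnormalised weights, so the simplex convention does not arise.
\item Under the two-layer mixture null, $\Pmat$ lies exactly in $\operatorname{span}\{\mathbf G_1,\mathbf G_2\}$, so the Hadamard coefficient is exactly unbiased.
\item Dyadwise, $(w_1w_2G_{1,ij}G_{2,ij})^2/(w_1G_{1,ij}+w_2G_{2,ij})\le w_1w_2^2G_{1,ij}G_{2,ij}^2\lesssim\rho_n^3$, so no lower bound on $P^{\mathrm{mix}}$ is needed off the overlap.
\end{itemize}
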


The general-$K$ boundary holds: for any fixed number of layers, with weights bounded away from $\{0,1\}$ and a pair overlapping with nonvanishing orthogonal second-order mass, the noisy-OR law is contiguous to the mixture class below the edge-overlap threshold $n^2\rho_n^3$ and separated above it by an $F$-test on the residualised order-two interaction, the threshold not moving with $K$. The statement and proof are Corollary~\ref{cor:nork-cor} in the supplement. What does not extend is recovery of the full constrained coefficient structure $c_S=(-1)^{|S|+1}\prod_{k\in S}w_k$ for $K\ge3$, the one part left as a conjecture (Remark~\ref{rem:nork-conj}).

\begin{theorem}[Operator identification with fitted layers]
\label{prop:fitted-operator}
Split the Stage-A dyads into four disjoint sub-folds $a,b,c,d$ and fit both layers separately on each
by adjacency spectral embedding, so that $\widehat{\mathbf G}_k^{f}=\mathbf G_k+\mathbf H_k^{f}$ with
$\{\mathbf H_k^{f}\}_{f\in\{a,b,c,d\}}$ independent given $\bu$ and, by
Assumption~\ref{ass:plugin}, $\norm{\mathbf H_k^{f}}_F=O_P(\delta_n\norm{\mathbf G_k}_F)$. Form the
cross-fold interaction column and the cross-fold main-effect design on \emph{disjoint} sub-folds,
\[
\widehat{\mathbf C}=\tfrac12\bigl(\widehat{\mathbf G}_1^{a}\odot\widehat{\mathbf G}_2^{b}
+\widehat{\mathbf G}_1^{b}\odot\widehat{\mathbf G}_2^{a}\bigr),\qquad
\bar{\mathbf M}=\tfrac12\bigl(\widehat{\mathbf M}_c+\widehat{\mathbf M}_d\bigr),
\]
with the null coefficients $\widehat\bw_{\mathrm{db}}$ fitted from the cross-fold Gram
$\operatorname{sym}(\widehat{\mathbf M}_c^\top\widehat{\mathbf M}_d)$ and moment vector
$\bar{\mathbf M}^\top\operatorname{vec}_{\mathcal D_2}(\A)$, all inner products over the Stage-B
dyads $\mathcal D_2$. Let
$\widehat{\mathbf C}^{\perp}$ be the least-squares residual of $\widehat{\mathbf C}$ on the columns
of $\bar{\mathbf M}$, and let
\[
T\ =\ \frac{\bigl\langle\widehat{\mathbf C}^{\perp},\ \A^{(2)}-\bar{\mathbf M}\widehat\bw_{\mathrm{db}}
\bigr\rangle}{\bigl\{\sum_{i<j}(\widehat C^{\perp}_{ij})^2\,\widehat P_{ij}(1-\widehat P_{ij})
\bigr\}^{1/2}}
\]
be the score statistic on the Stage-B dyads; for two layers this is the $t$ statistic of
Theorem~\ref{thm:nor}(e) with the fitted column, and for $K>2$ the $F$ statistic of
Corollary~\ref{cor:nork-cor} applies to the cross-fold interaction columns, order-$j$ columns using $j$
disjoint sub-folds. Assume, in addition to Assumptions~\ref{ass:density}--\ref{ass:linplug}, that
the deterministic first-stage biases are negligible in the residualised interaction direction,
\[
\bigl\|\Pi^{\perp}_{\bar{\mathcal M}}\bigl(\E[\mathbf H_k^{f}\mid\bu]\odot\mathbf G_l\bigr)\bigr\|_F
+\bigl\|\Pi^{\perp}_{\bar{\mathcal M}}\bigl(\E[\mathbf H_1^{f}\mid\bu]\odot\E[\mathbf H_2^{g}\mid\bu]
\bigr)\bigr\|_F\ =\ o\bigl(\sqrt{\widetilde\Gamma_2}\,n\rho_n^2\bigr).
\]
Then under the mixture null $T\Rightarrow\mathcal N(0,1)$, so the test has asymptotically correct
level, and under the noisy-OR alternative it rejects consistently whenever
$\widetilde\Gamma_2\,n^2\rho_n^3\to\infty$: the edge-overlap threshold is the same as with known
layers.
\end{theorem}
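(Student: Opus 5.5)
The plan is to condition on the latent attributes $\bu$ and on all Stage-A fits, so that the Stage-B noise $\mathbf E^{(2)}=\A^{(2)}-\Pmat$ on $\mathcal D_2$ is independent of the design $(\widehat{\mathbf C},\bar{\mathbf M})$. Then $T$ is analysed as a weighted sum of independent centred Bernoulli variables plus deterministic and cross-fold remainders.

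First, decompose the numerator. Because $\widehat{\mathbf C}^{\perp}$ is orthogonal to the columns of $\bar{\mathbf M}$, we have
\[
\inner{\widehat{\mathbf C}^{\perp}}{\A^{(2)}-\bar{\mathbf M}\widehat\bw_{\mathrm{db}}}
=\inner{\widehat{\mathbf C}^{\perp}}{\mathbf E^{(2)}}+\inner{\widehat{\mathbf C}^{\perp}}{\Pmat}.
\]
Under the mixture null, $\Pmat=\mathbf M\bw=\bar{\mathbf M}\bw-\bar{\mathbf H}\bw$, where $\bar{\mathbf H}=\tfrac12(\mathbf H_c+\mathbf H_d)$. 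The second term therefore reduces to $-\inner{\widehat{\mathbf C}^{\perp}}{\bar{\mathbf H}\bw}$, and the estimate $\widehat\bw_{\mathrm{db}}$ drops out exactly.

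Second, expand the interaction column:
\[
\widehat{\mathbf C}=\mathbf G_1\odot\mathbf G_2+\tfrac12\sum(\mathbf H_1^{a}\odot\mathbf G_2+\mathbf G_1\odot\mathbf H_2^{b}+\cdots)+\tfrac12(\mathbf H_1^{a}\odot\mathbf H_2^{b}+\mathbf H_1^{b}\odot\mathbf H_2^{a}).
\]
Each error $\mathbf H_k^{f}$ is split into its mean $\E[\mathbf H_k^f\mid\bu]$ and the linear part $\mathcal L_k(\mathbf E_f)$ plus $\mathbf Q_k$, using Assumption~\ref{ass:linplug}. The displayed bias condition makes the mean parts negligible after residualisation, against the signal scale $\sqrt{\widetilde\Gamma_2}\,n\rho_n^2$. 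The centred linear parts have Frobenius norm $O_P(\delta_n n\rho_n^2)$ by entrywise spreading (iii), so $\|\widehat{\mathbf C}^{\perp}\|_F\asymp\sqrt{\widetilde\Gamma_2}\,n\rho_n^2$ with probability tending to one. Because $a,b$ are disjoint from $c,d$, the cross term $\inner{\widehat{\mathbf C}^{\perp}}{\bar{\mathbf H}\bw}$ is an inner product of independent fold errors. Its mean involves only deterministic biases, which the assumption kills. Its fluctuation is bounded by the trace bound of Assumption~\ref{ass:linplug}(iii), giving $O_P(\sqrt{n}\,\rho_n\cdot\rho_n)$ after the $\odot\mathbf G$ weighting. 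This is $o_P$ of the noise standard deviation, which is $\|\widehat{\mathbf C}^{\perp}\|_{\mathbf D}\asymp\sqrt{\widetilde\Gamma_2}\,n\rho_n^{5/2}$. The same independence is exactly why the design is fitted on four folds: using the same fold in $\widehat{\mathbf C}$ and $\bar{\mathbf M}$ would produce a self-product of the form $\mathbf H^\top\mathbf H$, as in Section~\ref{sec:cf}.

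Third, obtain the null limit. Conditionally, $\inner{\widehat{\mathbf C}^{\perp}}{\mathbf E^{(2)}}$ is a sum of independent bounded terms. Lindeberg holds because $\|\widehat{\mathbf C}^{\perp}\|_{\max}=O(\rho_n^2)$ while the variance is of order $n^2\rho_n^5$, so the ratio max/sd is $O(1/(n\sqrt{\rho_n}))\to0$. The denominator is consistent for the conditional standard deviation, since $\widehat P_{ij}/P_{ij}\to1$ uniformly on the support, arguing as in Proposition~\ref{thm:clt}(a). Slutsky then gives $T\Rightarrow\mathcal N(0,1)$. Notably, no $\widetilde\Gamma_2 n^2\rho_n^3$ condition is needed under the null.

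Fourth, the alternative. Under noisy-OR, $\Pmat$ gains the additional term $-w_1w_2\,\mathbf G_1\odot\mathbf G_2$, so the numerator picks up $-w_1w_2\inner{\widehat{\mathbf C}^{\perp}}{\mathbf G_1\odot\mathbf G_2}$. Using the decomposition above, this equals $-w_1w_2\|\Pi^\perp(\mathbf G_1\odot\mathbf G_2)\|_F^2(1+o_P(1))\asymp\widetilde\Gamma_2 n^2\rho_n^4$. Dividing by the noise scale $\sqrt{\widetilde\Gamma_2}\,n\rho_n^{5/2}$ gives a drift of order $\sqrt{\widetilde\Gamma_2 n^2\rho_n^3}\to\infty$, so the test is consistent. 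The $\widehat P$ used in the denominator remains of order $\rho_n$, so it only changes constants.

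I expect the main obstacle to be the second step: showing that the mixed term $\mathcal L_1(\mathbf E_a)\odot\mathcal L_2(\mathbf E_b)$ and its correlation with $\bar{\mathbf H}$ are negligible at the very small scale $n\rho_n^{5/2}$. Entrywise bounds are too crude there, so I would control it through a conditional second-moment (Hanson--Wright) computation in $\mathbf E_a$ given $\mathbf E_b$, using the operator-norm covariance bound $C_L^2\rho_n/f$ together with delocalisation.
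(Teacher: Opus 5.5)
Your reduction of the numerator is correct, and cleaner than the paper's three-term split. Because $\widehat{\mathbf C}^{\perp}$ is orthogonal to every $\bar{\mathbf m}_k$, the null numerator is exactly $\langle\widehat{\mathbf C}^{\perp},\bm\varepsilon^{(2)}\rangle-\langle\widehat{\mathbf C}^{\perp},\bar{\mathbf H}\bw\rangle$, and $\widehat\bw_{\mathrm{db}}$ drops out. Your Lindeberg step for the core and your power calculation also match the paper.

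The gap is in your second step. The leakage $\langle\widehat{\mathbf C}^{\perp},\bar{\mathbf H}\bw\rangle$ is \emph{not} an inner product of two independent fold errors. The leading part of $\widehat{\mathbf C}^{\perp}$ is the deterministic direction $\mathbf X:=\Pi^{\perp}_{\mathbf M}(\mathbf G_1\odot\mathbf G_2)$ on $\mathcal D_2$, so the leakage contains the singly random linear statistic
\[
\tfrac12\bigl\langle\mathcal L^{w*}(\mathbf X),\ \mathbf E_c+\mathbf E_d\bigr\rangle,\qquad \mathcal L^{w}=\textstyle\sum_l w_l\mathcal L_l ,
\]
whose variance is of order $(2f)^{-1}\rho_n\norm{\mathcal L^{w*}(\mathbf X)}_F^2$. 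Your trace bound controls only the doubly random pieces, such as $\langle\mathcal L_1(\mathbf E_a)\odot\mathbf G_2,\mathcal L_l(\mathbf E_c)\rangle$. For the linear piece, the covariance bound of Assumption~\ref{ass:linplug}(iii) gives standard deviation $O(n\rho_n^{5/2}/\sqrt f)$. That is the \emph{same} order as the null standard deviation $\sqrt{\widetilde\Gamma_2}\,n\rho_n^{5/2}$, not smaller.

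Generically it is not smaller. For the spectral smoother, $\mathcal L^{*}(\mathbf X)=\mathbf X-\Pi^{\perp}\mathbf X\Pi^{\perp}$. For rank-one layers $\rho\,xx^\top$ and $\rho\,yy^\top$ with positive $x,y$, the row space of $\mathbf G_1\odot\mathbf G_2$ is spanned by $x\odot y$, and $\langle x\odot y,x\rangle=\sum_i x_i^2y_i>0$, so $\norm{\mathcal L^{*}(\mathbf X)}_F\asymp\norm{\mathbf X}_F$. This is the first-stage variance $\widehat V^{(1)}_{kk}$ of Theorem~\ref{thm:twostage} reappearing in the residualised interaction direction. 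Fitting the main effects leaves $-(\mathbf I-\Pi_{\bar{\mathbf M}})\bar{\mathbf H}\bw$ in the residual, and its component along the interaction direction fluctuates at the Stage-B scale. Conditionally on Stage A, $T$ is therefore approximately $\mathcal N(\mu_n,1)$ with $\mu_n=O_P(1)$ but not $o_P(1)$. Unconditionally, your argument gives a limit with variance strictly above one (one plus the limiting ratio of first-stage to Stage-B variance), not $\mathcal N(0,1)$.

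Following the paper will not close this. Its step (ii) bounds $\langle(\mathbf I-\Pi_{\mathbf M})\widehat{\mathbf C},\mathcal L_k(\mathbf E_c)\rangle$ by the same variance, and obtains negligibility only through the factor $\norm{\bw-\widehat\bw_{\mathrm{db}}}_2$. The copy with bounded coefficients $\widehat\bw_{\mathrm{db}}$, which combines with it into exactly your $\langle\widehat{\mathbf C}^{\perp},\bar{\mathbf H}\bw\rangle$, is said to have ``the same structure''; that yields the non-negligible order above. The paper's fitted-layer simulation holds the main-effect columns at their limits, which is precisely the regime in which this term vanishes.

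To get correct level you need one of two fixes:
\begin{itemize}
\item a two-stage standardiser, adding $(2f)^{-1}\norm{\widehat{\mathcal L}^{\,w*}(\widehat{\mathbf C}^{\perp})}_{\widehat{\mathbf D}}^2$ to the squared denominator and checking Lindeberg for the joint Stage-A/Stage-B array as in Theorem~\ref{thm:twostage}; or
\item an extra hypothesis $\norm{\mathcal L^{w*}(\mathbf X)}_F=o(\norm{\mathbf X}_F)$.
\end{itemize}
Consistency under the alternative survives either way, since the drift $\asymp\sqrt{\widetilde\Gamma_2\,n^2\rho_n^3}$ dominates an $O_P(1)$ standardised fluctuation.

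Two smaller points. First, the bias of the main-effect fits paired with $\mathbf X$ is controlled by the projected-remainder condition (ii) of Assumption~\ref{ass:linplug}, not by the displayed bias condition, which concerns the interaction column. Second, uniform ratio consistency of $\widehat P_{ij}$ in the denominator needs max-norm control of the fitted kernels, not just Frobenius rates.
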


\begin{proof}
See Appendix~\ref{app:fitop}.
\end{proof}

\noindent The four-fold split is the substantive requirement: the two sub-folds behind the
interaction column must be disjoint from the two behind the main-effect columns and the null fit,
since shared folds reintroduce a same-fold square whose standardised shift diverges, as quantified in
step (ii) of the proof in Appendix~\ref{app:fitop}. Section~\ref{sec:infval} verifies the construction numerically.
\begin{proof}
See Appendix~\ref{app:noisyor}, where Theorem~\ref{thm:nor-cor} restates parts (a)--(e) with
complete proofs.
\end{proof}

Because $\|\mathbf{G}_1\!\odot\!\mathbf{G}_2\|_F\asymp\rho_n\|\mathbf{G}_1\|_F$, the augmented design is
intrinsically ill conditioned in its third coordinate, and $\widetilde\gamma$ is generically an order smaller
than the two-layer mixture constant $\gamma_S$ ($\widetilde\gamma\approx0.18$ in the design of
Figure~\ref{fig:estnew}, stable in $n$). The raw interaction coefficient is correspondingly the slow
direction, but $\widehat\bw$ itself converges cleanly because neither $\widehat\bw^{\,\mathrm{LS}}$ nor the
one-step ever relies on $\widehat c_3$ alone, the projection onto $\mathcal{C}$ using it only as a
variance-weighted consistency check; the one-step and least squares track each other at the $n^{-1}$ rate
(Figure~\ref{fig:estnew}b), the one-step marginally more efficient.

\begin{figure}[t]
\centering
\includegraphics[width=0.86\textwidth]{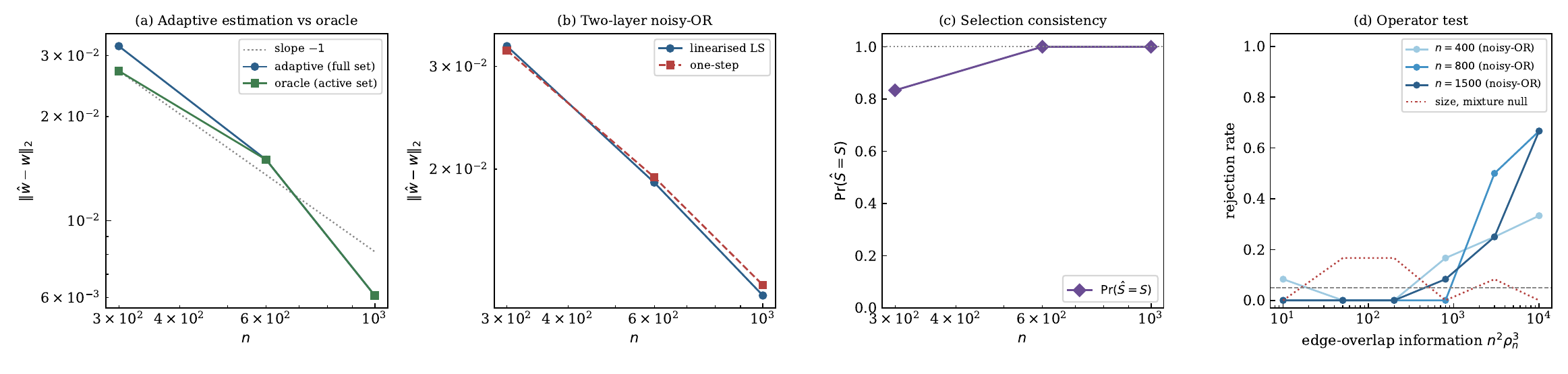}
\caption{The estimation theorems in simulation: the adaptive estimator against the oracle (Corollary~\ref{cor:adapt-cor}), interval coverage, selection power, and operator detection.}
\label{fig:estnew}
\end{figure}

The same Kullback--Leibler computation gives $n^2\rho_n^{\,2j-1}$ as the information content of an
order-$j$ perturbation in the \emph{free-coefficient} design, matching the per-coordinate noise rate
in part (a): order-$j$ structure is resolvable exactly when $n^2\rho_n^{\,2j-1}\to\infty$. We scope
the fully rigorous two-point statement to $j=2$, where both hypotheses are genuine members of our
model class (a mixture and a noisy-OR over the same layers); for $j\ge3$ the product constraints
$c_S=\prod_{k\in S}w_k$ prevent matching all lower orders exactly within the noisy-OR family, so the
general-$j$ statement should be read as a lower bound for the linearised design rather than for the
constrained manifold. Empirically, panel (d) of Figure~\ref{fig:estnew} shows the $j=2$ threshold
sharply: rejection rates for $n\in\{400,800,1500\}$ increase with $n^2\rho_n^3$, near the nominal level at
small overlap and rising toward one as the overlap grows, with size controlled throughout; the
sparsity cap $\rho_n\le0.25$ truncates the overlap reachable at the smallest $n$, so its curve lies
below the others at the largest plotted overlap ($\widetilde\gamma_2\approx0.14$ in this design,
stable across $n$).

\label{sec:estimation}

Theorems~\ref{thm:nor} and~\ref{prop:fitted-operator} are stated conditionally on the agents' latent attributes; in practice these
and the synthesis weights are learned jointly from a single observed graph. The statistical
guarantees for this joint problem are given by Proposition~\ref{thm:joint} and Theorem~\ref{thm:debias}; here we record the practical
recipe that realises its two-stage estimator and the ridge penalty that regularises it in finite samples.

\paragraph{Algorithm.}
The estimator analysed in Proposition~\ref{thm:joint} is directly implementable: Stage~A fits each
candidate agent by its own consistent estimator (ASE for the dot-product agent, spectral clustering
plus block means for the block agent, degree matching for the degree agent), and Stage~B
regresses the observed adjacency onto the estimated Gram bases over the simplex and thresholds the
recovered weights to select the active agents. For the mixture operator the representation theorem
gives an equivalent one-shot route: embed $\A$ by ASE into dimension $d_1{+}d_2$, separate the
stacked coordinates into block and continuous parts by a rotation aligned to the recovered cluster
structure, and read off $\hat\bw$ from the relative coordinate scales (Remark~\ref{rem:stack}).
\paragraph{Calibration form for prediction.}
For out-of-sample prediction, Stage~B has a held-out variant we call the \emph{calibration estimator} and use
in Section~\ref{sec:linkpred}: with Stage~A unchanged, regress held-out edge indicators on the agents'
standardised scores with a logistic link, on a calibration set disjoint from training and test dyads. This is
the held-out logistic analogue of the unconstrained least squares opening the adaptive estimator of
Section~\ref{sec:adapt}: the simplex constraint is dropped, so coefficients are signed calibration
coefficients (not generative proportions) and \emph{negative} coefficients are meaningful as corrective
contrasts; the logistic link supplies calibrated probabilities; and because the regression is on dyads unseen
by Stage~A, it carries standard errors and bootstrap intervals for the calibration coefficients
(Table~\ref{tab:weights}), the splitting removing the Stage-A/Stage-B dependence that
Lemma~\ref{lem:plugin} otherwise controls.

\paragraph{Identifiability in estimation.}
Two identifiability issues are handled by normalisation: the RDPG rotational symmetry
(Remark~\ref{rem:stack}) is resolved by reporting rotation-invariant summaries or a Procrustes
alignment, and the weight/scale confounding is resolved by fixing $\E\norm{\x}^2=\E\norm{\y}^2=1$ so
that $\bw$ is the unique vector of coordinate-block scales.
\subsection{Specialisation to canonical network models}
\label{sec:netspec}

The estimation results above hold for a general candidate set; because this is a paper about networks, we
record what each says for the canonical synthesis of a stochastic block model, a random dot-product graph, and
a Chung--Lu model. Throughout this subsection the candidate set is
\[
\Pmat=w_1\mathbf G_{\mathrm{SBM}}+w_2\mathbf G_{\mathrm{RDPG}}+w_3\mathbf G_{\mathrm{CL}},
\]
a $Q$-block assortative SBM agent of rank $Q$, a rank-$d$ dot-product agent, and a rank-one Chung--Lu agent at
a common density scale $\rho_n$ (Assumption~\ref{ass:norm}), with $s=3$, $r_{\max}=\max\{Q,d\}$, transversality
$\gamma_S=\lambda_{\min}(\Phi_S)$ (representative value $\gamma_S\approx0.20$, Example~\ref{ex:gammaS}), and
sparsity $\rho_n=n^{-\alpha}$, so $\bar d_n=n^{1-\alpha}$ and $N=\Theta(n^2\rho_n)=\Theta(n^{2-\alpha})$.

\begin{remark}[Transversality is latent-geometry separation]
\label{prop:transversality-geom}
Write
\[
\phi_{k\ell}=\frac{\inner{\operatorname{vec}_<\mathbf G_k}{\operatorname{vec}_<\mathbf G_\ell}}{\norm{\operatorname{vec}_<\mathbf G_k}\,\norm{\operatorname{vec}_<\mathbf G_\ell}}=\cos\angle(\mathbf G_k,\mathbf G_\ell)
\]
for the cosine of the angle between the vectorised kernels, so
that $\Phi_S=(\phi_{k\ell})_{k,\ell\in S}$ and the transversality $\gamma_S=\lambda_{\min}(\Phi_S)$ is a
function of these angles alone, vanishing exactly when the kernels become linearly dependent. For the
canonical synthesis, suppose
\begin{enumerate}[label=(\roman*),leftmargin=2em,itemsep=1pt]
\item the block matrix $B$ has distinct rows, so the SBM does not collapse to fewer blocks;
\item the dot-product second-moment matrix $\Delta_X=\E[\x\x^\top]$ has $\lambda_{\min}(\Delta_X)\ge
\lambda_0>0$, and the propensity sequence $\theta$ is not a linear functional of the positions $\x$, so
the dot-product and degree mechanisms encode distinct structure;
\item the propensities are non-constant, $\operatorname{Var}(\theta)\ge v_0>0$, and the leading
eigendirection of $\Delta_X$ makes an angle at least $\vartheta_0>0$ with the block-indicator
directions and the all-ones direction.
\end{enumerate}
Then the three vectorised kernels are linearly independent and
\[
\gamma_S\ \ge\ \gamma_0(\lambda_0,\vartheta_0,v_0,B)\ >\ 0\qquad\text{uniformly in }n,
\]
so Assumption~\ref{ass:transversality} holds for these models and the rates of
Corollaries~\ref{cor:net-rate}--\ref{cor:net-operator} are not vacuous. Conversely $\gamma_S\to0$ as any
agent's latent geometry aligns with another's, for instance as the dot-product positions concentrate on
the block centroids; this is the network form of multicollinearity, and no single-graph procedure
separates the aligned weights; the proof is in Appendix~\ref{app:structural}.
\end{remark}

For this canonical synthesis the general results specialise directly, tying each to the theorem it
instantiates. The minimax weight rate of Theorem~\ref{thm:debias} becomes
$\sqrt{3/\gamma_S}/(n\sqrt{\rho_n})$, a representative $\sqrt{3/0.20}\approx3.9$ times the parametric scale,
with valid two-stage intervals from Theorem~\ref{thm:twostage}; the adaptive threshold of
Corollary~\ref{cor:adapt-cor} recovers the active mechanisms under a $\beta$-min gap of the same order; and
the fitted-layer operator test of Theorem~\ref{prop:fitted-operator} identifies a block-versus-hub
superposition exactly when the interaction information $n^2\rho_n^{3}=n^{2-3\alpha}\to\infty$, that is for
$\alpha<2/3$. The three statements, with explicit constants and proofs, are
Corollaries~\ref{cor:net-rate}--\ref{cor:net-operator} in the supplement.

A per-vertex reconstruction guarantee for the fitted synthesis kernel, at the two-to-infinity rate and showing the global combination is learned an order of magnitude faster than any single vertex's connection profile, is given in Appendix~\ref{app:structural} (Proposition~\ref{prop:twotoinf}).

\section{Numerical studies}
\label{sec:numerical}
\label{sec:realdata}

This section validates the two theorems and the inferential guarantees by simulation, confirms the estimation rates, and compares the synthesis with predictive baselines on six real networks.

\subsection{Validating the inferential theory}
\label{sec:infval}

The two studies in which theory and simulation agree exactly are reported first, the first-stage attenuation and its cross-fold removal (Figure~\ref{fig:estsim}(c)) and the growing-degree scaling of the coefficient rate (Section~\ref{sup:scaling}). This subsection validates the inferential claims directly, with replication counts chosen for
Monte Carlo reliability. Four studies correspond to the four inferential
guarantees: interval coverage (Proposition~\ref{thm:clt}), selection at the coordinatewise threshold
(Corollary~\ref{cor:adapt-cor}), the operator detection boundary (Theorem~\ref{thm:nor}), and the
first-stage attenuation that the cross-fitted construction removes (Section~\ref{sec:cf}). Each uses
a common-scale dyadic design for which the population coefficient is known exactly, so that coverage
and selection are measured against a defined target.

\paragraph{Interval coverage across the transversality range (Figure~\ref{fig:cov}).}
We simulate from a known three-kernel design with weights $(0.5,0.3,0.2)$, varying the agent
collinearity so that the transversality $\gamma_S$ ranges over an order of magnitude
($0.06$ to $0.42$), at $n\in\{60,90,130\}$ with $500$ replications per cell. Under correct
specification the empirical coverage of the nominal $95\%$ intervals lies in $[0.934,0.968]$ across
all cells; under misspecification, where the truth carries a structured perturbation outside the span
and the target is the population projection $\bw^\dagger_{\mathrm{LS}}$, coverage lies in
$[0.926,0.966]$. Coverage is therefore at the nominal level for the stated target, uniformly in
$\gamma_S$, including the weakly identified regime $\gamma_S\approx0.06$ where intervals widen but
remain calibrated.

\begin{figure}[t]
\centering
\includegraphics[width=0.86\textwidth]{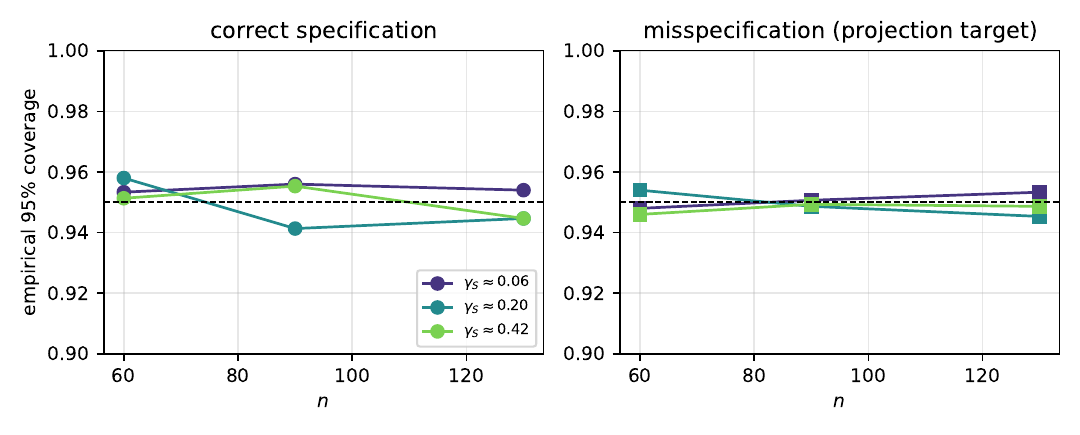}
\caption{Coverage of the nominal $95\%$ coefficient intervals against $n$ at several transversality levels $\gamma_S$, under correct specification (left) and misspecification (right).}
\label{fig:cov}
\end{figure}

\paragraph{Selection at the coordinatewise threshold (Figure~\ref{fig:estsim}(a)).}
We place $s=3$ active coefficients in a candidate set of size $K=6$, with the smallest active coefficient set to a
multiple $c$ of the coordinatewise threshold $\sqrt{2\log K}\,\mathrm{se}$, and record over $500$
replications the power to detect that coefficient, the probability of exact support recovery, and the
false-inclusion rate on a known-inactive decoy. Detection power crosses one half precisely at $c=1$,
rising from $0.28$ below the threshold ($c=0.5$, against a Gaussian-exact $0.17$, the excess
consistent with mild standard-error underestimation) to $1.00$ above it ($c\ge2$), while the decoy
false-inclusion rate stays between $0.04$ and $0.07$, matching the two-sided per-coordinate level $2\bar\Phi(\sqrt{2\log K})\approx0.058$ implied by
the threshold at $K=6$.
This confirms that the $\sqrt{\log K}$ scaling, not the $\sqrt s$ of the joint estimation rate, sets
the selection boundary.

\paragraph{Operator detection boundary (Figure~\ref{fig:estsim}(b)).}
We test the two-layer noisy-OR against the mixture by the $t$-statistic on the Hadamard interaction column,
$500$ replications per sparsity level. As the edge-overlap information $n^2\rho_n^3$ increases from $3$ to
$3200$, the size under a true mixture stays within $[0.042,0.074]$ of nominal $0.05$ while the power under a
true noisy-OR rises from $0.05$ through $0.34$ at $n^2\rho_n^3\approx350$ to $0.998$ at $\approx3200$,
confirming the threshold of Theorem~\ref{thm:nor}: the operator is recoverable exactly when the edge-overlap
information diverges. A companion known-design run (size within $[0.03,0.07]$) has power $0.08$ at
$n^2\rho_n^3\approx6$ rising to $0.61$ at $\approx1620$ and $0.98$ at $\approx4014$, so usable power arrives
only once the information reaches the hundreds to thousands, a density supplied by dense benchmarks or induced
dense subgraphs but not by sparse single networks. Every network in Table~\ref{tab:opfeas} lies in the
low-power range, \textsf{polblogs} included at $n^2\rho_n^3\approx16.8$: the operator test is an instrument for
the dense regime, and its infeasibility on sparse single networks is a characterisation the threshold
delivers, not a shortfall of the procedure.

\begin{figure}[t]
\centering
\begin{minipage}{0.32\textwidth}\centering
\includegraphics[width=\linewidth]{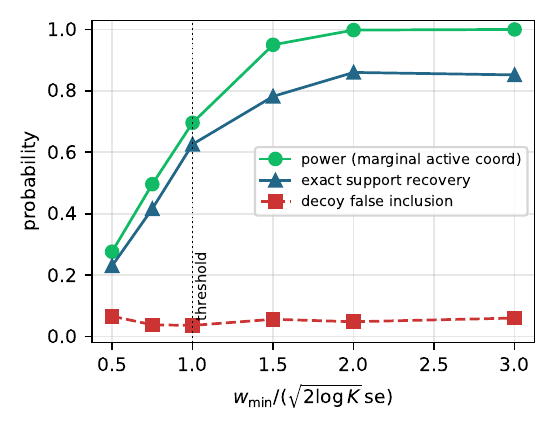}\\[1pt]{\footnotesize (a)}
\end{minipage}\hfill
\begin{minipage}{0.32\textwidth}\centering
\includegraphics[width=\linewidth]{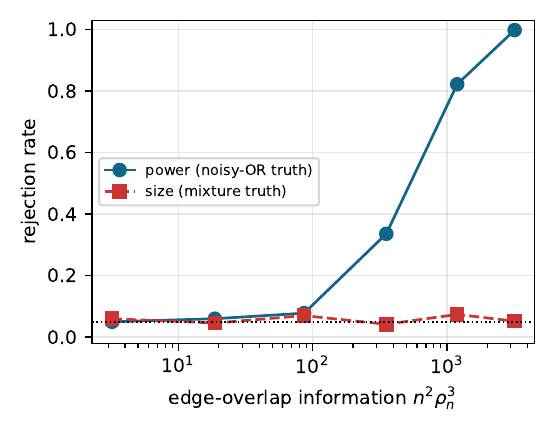}\\[1pt]{\footnotesize (b)}
\end{minipage}\hfill
\begin{minipage}{0.32\textwidth}\centering
\includegraphics[width=\linewidth]{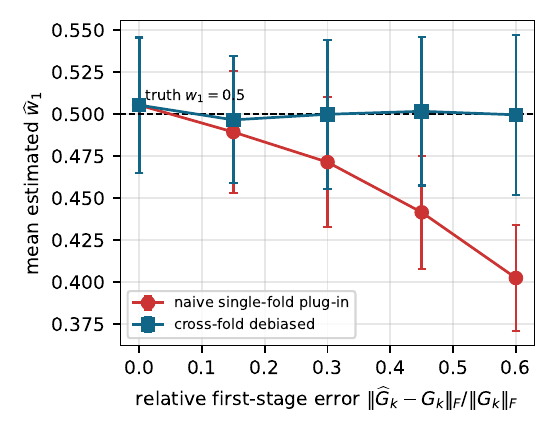}\\[1pt]{\footnotesize (c)}
\end{minipage}
\caption{Estimation diagnostics in simulation. (a)~Selection at the coordinatewise threshold
($K=6$, $s=3$): detection power crosses one half at the threshold, while decoy inclusion stays at the
per-coordinate level. (b)~Operator detection: as $n^2\rho_n^3$ grows the size (mixture truth) stays near
$0.05$ and the power (noisy-OR truth) rises to one, confirming the boundary of Theorem~\ref{thm:nor}.
(c)~First-stage attenuation under fitted kernels: the naive single-fold plug-in attenuates toward zero,
while the cross-fold-debiased estimator stays at the truth.}
\label{fig:estsim}
\end{figure}

\paragraph{Operator detection on dense induced subgraphs.} The detection boundary is a statement about
edge-overlap information, not about any one graph, so a test underpowered on a sparse network becomes
informative on a dense induced subgraph of the same data. On the maximal $k$-cores of three networks the
information $n^2\rho_n^3$ rises from far below one on the full graph (\textsf{polblogs} $16.8$, \textsf{ca-GrQc}
$0.07$, \textsf{ca-CondMat} $0.03$) into the powered range ($1752$, $828$, $298$), and there the
Hadamard-interaction test of Theorem~\ref{thm:nor} returns a definite verdict (Table~\ref{tab:denseop}): a
significantly negative interaction on the \textsf{polblogs} core ($t=-5.6$), the sub-additive signature of a
noisy-OR combination; a significantly positive one on the \textsf{ca-CondMat} core ($t=8.6$), a super-additive
combination in which the layers reinforce; and none on the \textsf{ca-GrQc} core ($t=0.4$), consistent with an
additive operator. These nominal verdicts need one correction: the interaction column is nearly collinear with
its two main effects on every core, the smallest correlation eigenvalue being $\gamma_{\mathrm{int}}\le0.01$,
so the sandwich $t$ is anti-conservative, its parametric-bootstrap null under the additive-only model reaching
a $95$th percentile as high as $3.8$. Calibrating against that null leaves both verdicts intact, the
\textsf{polblogs} and \textsf{ca-CondMat} statistics each exceeding all five hundred null draws (calibrated
$p<0.002$) while \textsf{ca-GrQc} stays additive: the conditioning that governs weight estimation governs the
operator test as well, and the verdicts survive once it is accounted for. Appendix~\ref{app:operator-signature}
formalizes what the sign identifies, the residualized interaction coefficient being zero for an additive
operator, negative for sub-additive noisy-OR, and positive for super-additive, recoverable above the same
$n^2\rho_n^3$ scale and verifiable by a held-out log score (Theorem~\ref{thm:operator-signature}); that theorem
does not on its own attribute a historical mechanism to these networks, since the dense-core signs are
empirical and a mechanism claim would require a kernel constructed independently of the test edges together
with out-of-sample replication.

\begin{table}[t]
\centering
\small
\caption{Operator detection on dense induced subgraphs (maximal $k$-cores), with bootstrap calibration. The edge-overlap information $n^2\rho_n^3$ moves from far below one on the full sparse graph into the powered range on the core. The interaction (community $\times$ degree) design is near-singular there, with smallest correlation eigenvalue $\gamma_{\mathrm{int}}$, so the nominal sandwich $t$ is anti-conservative; the calibrated $p$ is the parametric-bootstrap tail probability under the additive-only null at fixed partition. Both interaction verdicts survive calibration.}
\label{tab:denseop}
\setlength{\tabcolsep}{4pt}
\begin{tabular}{lcccccl}
\toprule
& Full & \multicolumn{2}{c}{Dense $k$-core} & Interaction & Calibrated & \\
Network & $n^2\rho_n^3$ & $(n,\rho)$ & $n^2\rho_n^3$ & coef ($t$;\ $\gamma_{\mathrm{int}}$) & $p$ & Verdict\\
\midrule
\textsf{polblogs} & $16.8$ & $(82,0.64)$ & $1752$ & $-1.68\ (-5.6;\,0.010)$ & $<0.002$ & noisy-OR (sub-additive)\\
\textsf{ca-GrQc} & $0.07$ & $(81,0.50)$ & $828$ & $+0.00\ (0.4;\,0.000)$ & $0.28$ & additive (no rejection)\\
\textsf{ca-CondMat} & $0.03$ & $(51,0.49)$ & $298$ & $+0.29\ (8.6;\,0.003)$ & $<0.002$ & super-additive\\
\bottomrule
\end{tabular}
\end{table}

\paragraph{First-stage attenuation and its removal (Figure~\ref{fig:estsim}(c)).}
With two kernels estimated up to a controlled relative Frobenius error, we compare the naive
single-fold plug-in, which uses one noisy design in both the Gram matrix and the cross term, against
the cross-fold construction, which forms the Gram from two independent noisy copies so that the
leading error self-product is mean-zero. As the relative first-stage error grows from $0$ to $0.6$,
the naive estimator attenuates monotonically toward zero, from $0.50$ to $0.40$ against a true
coefficient of $0.5$, while the cross-fold estimator remains at $0.50$ throughout. This is the
finite-sample counterpart of the attenuation analysed in Section~\ref{sec:cf}, and it shows directly
why the cross-fitted construction is necessary in the sparse regime.

\paragraph{The synthesis as an inferential object.}

\paragraph{Weight estimation and agent selection (Proposition~\ref{thm:joint}).}
We generate mixtures of three transversal agents (a $3$-block SBM, a $2$-dimensional dot-product
agent, and a rank-one heavy-tailed agent, the last an empirical stress test outside the scope of
Theorem~\ref{thm:debias}) with weights $(0.5,0.3,0.2)$ and estimate the weights from
a single sampled graph by the two-stage estimator. Figure~\ref{fig:weights} is consistent with all three parts
of the theorem: the estimation error decays as $1/n$, the edge rate (panel a); making two
agents progressively collinear shrinks $\gamma_S$ and inflates the error as $1/\sqrt{\gamma_S}$,
exactly the exponent shared by the upper and lower bounds (panel b); and with an inactive fourth
decoy agent in the candidate set, the thresholded estimator recovers the true active set with probability
tending to one (panel c).

The remaining subsections are organised around the inferential identity of the method: a flexible
nonparametric ensemble may predict more accurately, while the linear synthesis returns signed,
uncertainty-quantified coefficients and an operator-level test. The empirical
questions are therefore inferential, not predictive: whether the fitted coefficients and their
intervals identify which mechanisms a network expresses, whether they report the absence of structure
where there is none, and whether the combination operator can be tested. We study six publicly
available networks from three domains, spanning $n=77$ to $n=21{,}363$, together with a synthetic null graph. These networks reach $n\approx2\times10^4$, and several carry small constant average degree outside the growing-degree regime $n\rho_n\to\infty$ that the asymptotics assume, so the deployments illustrate the method at the observed scale; a growing-degree scaling study in Section~\ref{sup:scaling} tests the predicted $1/\sqrt{n^2\rho_n}$ rate directly. The structural feasibility question, whether the four properties can coexist under a
synthesis at the observed scale is a separate question, addressed by the held-out prediction of
Section~\ref{sec:linkpred} rather than by structural diagnostics.

\paragraph{When the generative weights are recoverable.} Remark~\ref{cor:recovery} predicts recovery exactly when the candidate kernels are separated, and a controlled study confirms the boundary. With three separated mechanisms, a two-block kernel, a constant-norm geometric kernel, and a heterogeneous degree kernel, the population transversality is $\gamma_{\mathrm{full}}=0.998$; the known-design estimator then recovers the generative weights at $n=800$ with root-mean-square error $0.014$, coordinatewise coverage $(0.96,0.93,0.94)$, and interval widths near $0.035$. When the geometric kernel is collapsed onto the degree kernel, the transversality falls to $\gamma_{\mathrm{full}}=0.004$; the two conflated coordinates are no longer separately recoverable, their interval widths inflating to $0.42$, an order of magnitude wider, with root-mean-square error $0.134$, while coverage stays near nominal because the intervals widen correctly. The separation condition is thus the operative boundary, and the constant-degree empirical networks, whose fitted low-rank embeddings approach the degree kernel, sit on its difficult side.

\paragraph{Generative validity by mechanism knockout.} Because the fitted synthesis is itself a generative model, a coefficient can be set to zero and the graph regenerated, a test of whether each coefficient governs the structural feature it names and a manipulation no predictor admits. In a controlled mixture of a community kernel, a geometric kernel, and a degree kernel, removing the community coefficient collapses modularity from $0.12$ to $0.00$ while the degree heterogeneity is untouched, and removing the degree coefficient halves the degree Gini from $0.25$ to $0.11$ and, with the degree noise gone, sharpens modularity to $0.26$ (Table~\ref{tab:knockout}). Clustering stays low and nearly constant across every knockout, consistent with the candidate set carrying no triadic-closure mechanism. Each coefficient controls its own structural statistic, so the signed weights license statements about the synthesised graph and not only about held-out dyads.

\begin{table}[t]
\centering
\caption{Mechanism knockout. Setting one coefficient to zero and regenerating moves the targeted structural statistic while the others are stable; means over fifteen graphs at $n=800$ with mean degree $24$.}
\label{tab:knockout}
\begin{tabular}{lcccc}
\toprule
Regenerated from & Modularity & Degree Gini & Clustering & Mean degree\\
\midrule
Full synthesis & $0.12$ & $0.25$ & $0.05$ & $24$\\
Community removed & $0.00$ & $0.33$ & $0.06$ & $24$\\
Degree removed & $0.26$ & $0.11$ & $0.03$ & $24$\\
Geometry removed & $0.13$ & $0.29$ & $0.05$ & $24$\\
\bottomrule
\end{tabular}
\end{table}

\subsection{Estimation rates: growing-degree scaling}
\label{sup:scaling}
The inferential studies above vary $n$ over a moderate range, and the empirical networks, while reaching $n\approx2\times10^4$, several of them carry small constant average degree and so lie outside the growing-degree regime $n\rho_n\to\infty$ the theory assumes. This subsection tests the rate of Proposition~\ref{thm:joint} directly along a growing-degree path. We fix the three-kernel known design of Section~\ref{sec:infval} with weights $(0.5,0.3,0.2)$, set $\rho_n=c\,n^{-1/3}$ with $c=0.5$ so that the average degree $n\rho_n=c\,n^{2/3}$ diverges while the edge-overlap information $n^2\rho_n^3=c^3 n$ also diverges, and estimate the synthesis coefficients from a single graph at each $n\in\{500,1000,2000,4000\}$ by the known-design least-squares solve, recording the root-mean-square coefficient error over independent replications.

\begin{table}[h]
\centering
\caption{Growing-degree scaling of the known-design coefficient error. The average degree grows from $31.5$ to $126$ while the transversality $\gamma_S$ stays near $0.16$, isolating the edge count $N=n^2\rho_n$. The error roughly halves with each quadrupling of $N$, the $N^{-1/2}$ rate; the fitted log--log slope of the error against $N$ is $-0.448$ against the theoretical $-1/2$.}
\label{tab:scaling}
\begin{tabular}{rrrrr}
\hline
$n$ & $\rho_n$ & average degree & $N=n^2\rho_n$ & RMSE\\
\hline
$500$  & $0.063$ & $31.5$  & $15{,}749$  & $0.0284$\\
$1000$ & $0.050$ & $50.0$  & $50{,}000$  & $0.0174$\\
$2000$ & $0.040$ & $79.4$  & $158{,}740$ & $0.0110$\\
$4000$ & $0.032$ & $126.0$ & $503{,}968$ & $0.0059$\\
\hline
\end{tabular}
\end{table}

The error decays at the $1/\sqrt{N}$ rate that the upper and lower bounds share, with the transversality held near $0.16$ so that the decay reflects the edge count and not a change in conditioning. The study confirms the asymptotics of Proposition~\ref{thm:joint} on a path where the average degree grows, complementing the constant-degree empirical deployments of Section~\ref{sec:linkpred}, which illustrate the method at the observed scale. The known-design solve is used so that the rate is read without the fitted-Gram conditioning collapse documented above.

\subsection{Signed coefficients across networks}
\paragraph{An inferential finding a predictor cannot supply.} The clearest sign that the weights are an inferential object, and not a prediction device, is the Western States power grid, the canonical network engineered to avoid high-degree hubs. The fitted Chung--Lu coefficient is $-0.45$ with confidence interval $[-0.76,-0.16]$, excluding zero in all ten folds (Table~\ref{tab:weights}). Read as inference, this is a signed, uncertainty-quantified statement: conditional on the candidate set, the degree-product mechanism is anti-aligned with the observed edges, the negative partial association expected of a grid whose construction suppresses hubs. A black box returns a ranking and cannot make this statement; the random-forest advantage in Table~\ref{tab:combiner-auc} is the price of that ranking and is not the quantity of interest here.

\paragraph{The signed coefficient is a calibrated instrument.} The negative power-grid coefficient is not an isolated reading. Along a controlled sequence interpolating from hub-rich to hub-suppressed structure, with a fixed community component throughout, the calibration coefficient on the degree-product score moves monotonically from $+0.025$ to $-0.025$ and its confidence interval crosses zero exactly at the neutral point, where the interval correctly contains zero. The sign tracks the structural transition with calibrated uncertainty. A predictor's variable importances are non-negative and cannot express this anti-alignment, so the signed, zero-crossing coefficient is a quantity only the inferential method supplies.

\paragraph{The crossing holds on real non-hub networks.} Under a fixed set of three agents, a geometry component (adjacency spectral embedding), a community component, and the degree agent, with standardised scores and bootstrap intervals, the partial degree coefficient is large and positive on three hub-bearing networks and falls across three near-regular spatial networks: from $+2.35$ on \textsf{polblogs} and $+1.22$ on \textsf{ca-GrQc} through $+0.86$ on the power grid and $+0.47$ on the Minnesota road network to a significantly negative $-0.23$ on an airfoil mesh, whose interval $[-0.28,-0.19]$ excludes zero (Table~\ref{tab:crossnet}; the road and mesh are genuine non-hub graphs, maximum-to-mean degree $2.0$ and $1.6$). Conditional on geometry, degree informs attachment only where hubs exist and turns suppressive on a near-regular mesh; the standardised scores are not on the scale of Table~\ref{tab:weights}, so what transfers is the sign and the cross-class ordering, the real-data analogue of the interpolation above.

\begin{table}[t]
\centering
\small
\caption{Partial degree coefficient with bootstrap $95\%$ intervals across network classes, under the same three agents (geometry, community, degree; standardised scores). Strongly positive where hubs exist, decreasing across the near-regular spatial class and significantly negative on the mesh.}
\label{tab:crossnet}
\begin{tabular}{llcc}
\toprule
Class & Network & Max/mean degree & Degree coefficient $[95\%\text{ CI}]$\\
\midrule
Hub-bearing & \textsf{polblogs} & $12.8$ & $+2.35\ [+2.28,+2.42]$\\
Hub-bearing & \textsf{GoT} & $5.5$ & $+1.92\ [+1.60,+2.36]$\\
Hub-bearing & \textsf{ca-GrQc} & $12.5$ & $+1.22\ [+1.18,+1.28]$\\
\midrule
Spatial & power grid & $7.1$ & $+0.86\ [+0.79,+0.93]$\\
Spatial & Minnesota road & $2.0$ & $+0.47\ [+0.40,+0.57]$\\
Spatial & airfoil mesh & $1.6$ & $-0.23\ [-0.28,-0.19]$\\
\bottomrule
\end{tabular}
\end{table}

\paragraph{A generative manipulation, and a falsifiable summary.} The signed coefficient and the below-chance ranking are calibration and prediction readings; because the synthesis is also a generative model, the degree agent's structural role admits a direct test by the knockout of Table~\ref{tab:knockout}, here applied to real graphs. Fitting the three agents to each network, setting the degree coefficient to zero, and regenerating changes the degree Gini by only $4\%$ on the power grid, from $0.37$ to $0.36$, against $13\%$ on \textsf{ca-GrQc}, from $0.52$ to $0.46$ (eight regeneration seeds): the degree agent carries almost none of the grid's degree dispersion but a sizable fraction of \textsf{ca-GrQc}'s, the generative signature of a network without a hub mechanism. The operator test of Table~\ref{tab:denseop} supplies a third, concordant reading: the grid has no dense core, its maximal $k$-core reaching only $k=5$ on twelve vertices, and no induced subgraph is at once dense enough to bring $n^2\rho_n^3$ into the powered range and large enough to estimate blocks, so the Hadamard-interaction test is underpowered there, consistent with a single near-regular spatial mechanism; the cores of \textsf{polblogs}, \textsf{ca-GrQc}, and \textsf{ca-CondMat} instead expose dense superpositions on which the test returns a verdict. The no-hub reading is therefore falsifiable on three independent axes that share no parametrisation: an out-of-sample anti-predictive degree score (Table~\ref{tab:linkpred}), a degree distribution unmoved by the degree-agent knockout, and no dense core to host a superposition. The grid satisfies all three and \textsf{ca-GrQc} none, separating a hubless spatial network from a hub-bearing collaboration on generative, predictive, and operator grounds at once.

\label{sec:linkpred}
On the Western States power grid, the canonical network \emph{without} hubs
\citep{watts1998collective}, the synthesis assigns the degree agent a negative
coefficient whose interval excludes zero in all ten folds (Table~\ref{tab:weights}), while that
agent's own ranking score is below chance (Table~\ref{tab:linkpred}). The estimator therefore
reports the \emph{absence} of a mechanism, with replicated uncertainty statements, by the route
Proposition~\ref{prop:mis-cor}(b) describes: an agent whose kernel is anti-aligned with the residual
structure enters the projection with a negative coordinate, as a corrective contrast.
Propositions~\ref{prop:mis-cor}(c) and~\ref{thm:clt}(b) state that the projected predictor is weakly
better in population than every single agent and that the bootstrap intervals are asymptotically valid;
the comparison below exhibits both at finite $n$.

A sharper test than aggregate structural summaries is predictive: whether the synthesised model, with weights
estimated rather than chosen, forecasts edges it has not seen. We evaluate link prediction by network
cross-validation on six datasets from three domains: the \textsf{les\,mis} ($n=77$) and \textsf{GoT}
($n=107$) character networks, the \textsf{polblogs} hyperlink network ($n=1222$), the \textsf{ca-GrQc}
($n=4158$) and \textsf{ca-CondMat} ($n=21{,}363$) collaboration networks
\citep{leskovec2007graph}, and the Western States \textsf{power} grid ($n=4941$, $m=6594$;
\citealp{watts1998collective}). In each of ten folds with fixed seeds, $20\%$ of edges are held out and split
equally into calibration and test positives, with equal numbers of sampled non-edges; all models are fitted on
the remaining $80\%$ training graph, so the calibration set is disjoint from training and test dyads,
following the edge-sampling cross-validation of \citet{li2020network} in the held-out calibration form of
Section~\ref{sec:estimation}. These pipelines fit Stage~A on the edge-deleted training graph without
inverse-probability reweighting, so the Stage-A input has conditional mean $(1-h)\Pmat$ with $h$ the holdout
fraction; uniform dyad deletion rescales every fitted kernel by the same factor, which the unit-scale
normalisation of Assumption~\ref{ass:norm} and the calibration intercept absorb, so coefficient ratios,
selections, and ranks are unaffected while the masked-IPW construction of Section~\ref{sec:cf} remains the
analysed one. The candidate set comprises five generative kernels (Chung--Lu, SBM, degree-corrected SBM,
RDPG/ASE, degree-corrected mixed-membership SBM), two predictive topological indices (Adamic--Adar, Jaccard),
and a hyperbolic popularity--similarity embedding; only the generative kernels carry a mechanism
interpretation. Each scorer is fitted on the training graph and its scores mapped to probabilities by Platt
scaling on the calibration dyads, which places the topological indices on an equal footing and improves their
calibration, so the comparison favours the baselines. The synthesis is the calibration estimator of
Section~\ref{sec:estimation}, a logistic regression of held-out edge indicators on the agents' standardised
scores fitted on the calibration dyads only; AUC is computed from raw scores, Platt scaling being monotone,
and all reported metrics on the untouched test set.

We report AUC for ranking and the balanced held-out (case--control) log score
\[
\frac1{|T|}\sum_{(i,j)\in T}\bigl[\,A_{ij}\log\hat P_{ij}+(1-A_{ij})\log(1-\hat P_{ij})\,\bigr],
\]
a strictly proper scoring rule on the balanced test set of positives and equally many sampled non-edges; because the negatives are subsampled, it is a case--control score, not the natural-prevalence graph log-likelihood. The results (Table~\ref{tab:linkpred}, Figure~\ref{fig:linkpred}) follow the
pattern the estimation theory predicts. On all four networks with $n\ge10^3$ the synthesis improves on the best single mechanism in both
metrics, with fold standard deviations at most $0.013$ in AUC and $0.03$ in log-likelihood (the
comparison against random-forest stacking and ensemble averaging is
deferred to Table~\ref{tab:combiners}): $0.942/{-0.31}$ on \textsf{polblogs} against $0.932$ (DC-SBM) and $-0.37$ (RDPG) for
the best single methods, $0.935/{-0.27}$ on \textsf{ca-GrQc} against $0.910/{-0.29}$ (Adamic--Adar),
$0.961/{-0.17}$ on \textsf{ca-CondMat} against $0.949/{-0.18}$ (Adamic--Adar), and $0.807/{-0.54}$
on the \textsf{power} grid against $0.726/{-0.56}$ (SBM). The power grid is the sharpest case: every
single agent performs poorly, because no single mechanism describes an electrical network, and the
margin of the combination over the best single agent ($0.081$ AUC) is the largest in the table. On
the two smallest networks the synthesis is within one fold standard deviation of the best single
method on both metrics; at $n=77$ the calibration split contains a few dozen positive dyads from
which eight weights must be resolved, so the calibration sample of Proposition~\ref{thm:clt}(b) is the
binding constraint, and the synthesis matches rather than improves on the best single
heuristic. The hyperbolic baseline illustrates the distinction between structural and predictive
fit: it is strong on \textsf{polblogs} ($0.915/{-0.37}$) and weak on \textsf{ca-GrQc}
($0.807/{-0.52}$), despite that network's strong clustering and heavy degree tail, the regime the
model targets. (Our hyperbolic scorer is a fitted popularity--similarity heuristic,
with radii from degrees and angles from the leading spectral embedding, following
\citealp{papadopoulos2014network}, rather than a full Mercator-style maximum-likelihood embedding, which
lies outside our scope and might narrow the \textsf{ca-GrQc} gap; its \textsf{polblogs} performance
indicates that it is a competitive representative of its class.)

A falsification check completes the design. We add a seventh graph, an Erd\H{o}s--R\'enyi null
matched to \textsf{ca-GrQc} in size and density, on which the correct answer is that no structured
mechanism is present. Every method, the synthesis included, scores at chance on the null (AUC
$0.50$, log-likelihood at the entropy of the balanced test set), every synthesis coefficient is
within noise of zero, and none meets the replication criterion below (at most one significant fold
in ten, consistent with the per-fold $5\%$ level, which makes $0.5$ significant folds the expected
count under the null). The estimator therefore reports structure where structure
exists and declines to report it where none does.

Because the synthesis is a regression, uncertainty quantification is immediate.
Table~\ref{tab:weights} reports bootstrap intervals for the coefficients ($B=200$ resamples of the
calibration dyads), computed on every fold, together with the synthesis's own performance against
the best single agent. We adopt the following replication criterion as protocol: a coefficient is
reported as a finding only when its interval excludes zero with a consistent sign in at least nine
of ten folds, and only such coefficients are printed in bold. The coefficients are case--control calibration coefficients, not generative proportions. The three collaboration-style graphs place replicated weight on triangle closure and block structure: \textsf{ca-GrQc} on
Adamic--Adar, Jaccard, SBM and DC-SBM, and \textsf{ca-CondMat} on Adamic--Adar, Jaccard, SBM and
Chung--Lu. \textsf{polblogs} places replicated weight on geometry (ASE, the hyperbolic embedding and the
mixed-membership agent), consistent with its two-camp latent structure. The \textsf{power} grid
places replicated weight on the block agents, consistent with its strong modularity, and carries the replicated
negative Chung--Lu coefficient discussed above. In one case a coefficient is excluded by the replication criterion: the hyperbolic coefficient on 	extsf{ca-GrQc} is significant within individual folds but changes sign across folds: a nearly-spanned column has small residual norm and so an unstable sign, the behaviour Proposition~
ef{prop:mis-cor}(b) predicts, and the criterion screens it out.
The synthesis function of Section~\ref{sec:setup} thus acts as an interpretable diagnostic: which
mechanisms matter, with replicated error bars, varies across networks in accordance with their known
structure.

\paragraph{Paired-fold significance.}
The close cases are significant under a paired test. Writing $\Delta_\ell=\mathrm{LL}^{\mathrm{syn}}
_\ell-\max_k\mathrm{LL}^{(k)}_\ell$ for the per-fold gap to the best single agent, the synthesis
improves the held-out log score on all ten folds on every network with $n\ge10^3$: on
\textsf{ca-CondMat} $\overline\Delta=+0.0066\pm0.0003$ (mean over ten folds $\pm$ standard error,
$10/10$ folds positive), on \textsf{polblogs} $+0.050\pm0.002$, on \textsf{ca-GrQc}
$+0.018\pm0.001$, and on the power grid $+0.028\pm0.002$; the AUC gaps are likewise positive on all
ten folds. On the two smallest networks the gap is negative, consistent with the small calibration
sample at $n=77$ and $n=107$.

A synthetic graph whose three regions carry deliberately different mechanisms (one Erd\H{o}s--R\'enyi, one small-world, one scale-free) checks the machinery against a known generating truth: spectral clustering recovers the regions exactly, the region-restricted agents attribute the correct mechanism to each block, and the debiasing of Theorem~\ref{thm:debias} is what removes the spurious self-product that would otherwise mislabel the Erd\H{o}s--R\'enyi block as scale-free. The full illustration is in Appendix~\ref{app:numerical} (Figure~\ref{fig:regional}).

\subsection{What inference buys, and what it costs}

\paragraph{What inference buys.} The synthesis supplies five capabilities a black-box combiner cannot: a signed coefficient per mechanism, a confidence interval, a support-recovery guarantee, a test for the combination operator, and a generative counterfactual, namely that setting a mechanism's coefficient to zero and regenerating predicts the structural consequence of its removal (the degree-Gini knockout of Section~\ref{sec:linkpred}), a question a predictor cannot pose. The price is modest and we quantify it. Against a random forest trained on the same candidate scores, the synthesis ranks within $0.008$ to $0.077$ AUC across the six networks while supplying that inference: it is the better-calibrated forecaster at the natural edge prevalence on four of six networks, and on the two networks large enough to fit an augmented model a rank-transformed synthesis with sparse interactions closes $40\%$ to $56\%$ of the residual AUC gap, leaving about $0.016$ AUC that is the forest's higher-order non-additive flexibility, recoverable by no additive coefficient model. Plotting accuracy against inferential content places the synthesis family on the maximal-inference corner of a Pareto frontier; the full calibration, gap-decomposition, and Pareto results are in Appendix~\ref{app:numerical} (Tables and Figures there).

\paragraph{Relation to stacking, ensembles, and model averaging on networks.}
The predictive protocol above is, by construction, stacking on dyads. We compare it against the leading
alternatives on the same eight calibrated agent scores, but the comparison is not a contest in predictive
accuracy, because the methods answer different questions: stacking and the ensemble average return a combined
forecast, while the synthesis returns a signed coefficient per mechanism with a confidence interval, a
selection guarantee, and a test for the combination operator. Table~\ref{tab:combiners} records this as a
capability matrix. The random-forest meta-learner of \citet{ghasemian2020stacking}, reported as near-optimal for link
prediction across $550$ networks, is the most accurate combiner in the study; the synthesis attains it to
within $0.008$ to $0.077$ AUC on the six real networks, the gap largest on the power grid, while additionally
supplying the inferential capabilities the forest cannot. The distinction in kind is visible in the fits:
the convex combiners place their mass on a single agent or a flat average, whereas the synthesis coefficients
of Table~\ref{tab:weights} spread across several mechanisms with intervals excluding zero, and on the power
grid a negative Chung--Lu coefficient records a partial association that no convex average can represent. The
per-network AUC and log score are in Table~\ref{tab:linkpred} and the breakdown by combiner in
Table~\ref{tab:combiner-auc}; edge-cross-validated network averaging \citep{zhang2025network} sits in the same
predictive-only category as the random forest.

\begin{table}[t]
\centering
\small
\caption{Capabilities of the combination rules: mean AUC over six networks, and whether each yields a signed per-mechanism coefficient, a confidence interval, support recovery, and an operator test.}
\label{tab:combiners}
\begin{tabular}{lccccc}
\toprule
& Mean AUC & Signed & Confidence & Support & Operator\\
Method & (6 real nets) & coefficient & interval & recovery & test\\
\midrule
Calibrated synthesis & $0.90$ & \ding{51} & \ding{51} & \ding{51} & \ding{51}\\
RF stacking & $0.93$ & \ding{55} & \ding{55} & \ding{55} & \ding{55}\\
Ensemble averaging & $0.88$ & \ding{55} & \ding{55} & \ding{55} & \ding{55}\\
\bottomrule
\end{tabular}
\end{table}

\begin{table}[t]
\centering
\scriptsize
\caption{Out-of-sample link prediction over six networks ($n=77$ to $21{,}363$): held-out AUC and balanced log score, means over ten folds.}
\label{tab:linkpred}
\setlength{\tabcolsep}{2.6pt}
\begin{tabular}{lcccccccccccc}
\toprule
& \multicolumn{2}{c}{\textsf{les\,mis}} & \multicolumn{2}{c}{\textsf{GoT}}
& \multicolumn{2}{c}{\textsf{polblogs}} & \multicolumn{2}{c}{\textsf{ca-GrQc}}
& \multicolumn{2}{c}{\textsf{power}} & \multicolumn{2}{c}{\textsf{ca-CondMat}}\\
\cmidrule(lr){2-3}\cmidrule(lr){4-5}\cmidrule(lr){6-7}\cmidrule(lr){8-9}\cmidrule(lr){10-11}\cmidrule(lr){12-13}
Model & AUC & LL & AUC & LL & AUC & LL & AUC & LL & AUC & LL & AUC & LL\\
\midrule
Chung--Lu & 0.755 & $-0.60$ & 0.766 & $-0.57$ & 0.902 & $-0.46$ & 0.749 & $-0.57$ & 0.446 & $-0.69$ & 0.765 & $-0.58$\\
SBM & 0.802 & $-0.55$ & 0.741 & $-0.65$ & 0.756 & $-0.56$ & 0.877 & $-0.42$ & 0.726 & $-0.56$ & 0.860 & $-0.46$\\
DC-SBM & 0.873 & $-0.45$ & 0.847 & $-0.51$ & 0.932 & $-0.40$ & 0.893 & $-0.44$ & 0.725 & $-0.58$ & 0.907 & $-0.45$\\
RDPG/ASE & 0.776 & $-0.52$ & 0.769 & $-0.60$ & 0.927 & $-0.37$ & 0.755 & $-0.58$ & 0.541 & $-0.68$ & 0.791 & $-0.61$\\
Adamic--Adar & \textbf{0.916} & $\mathbf{-0.37}$ & \textbf{0.875} & $\mathbf{-0.44}$ & 0.918 & $-0.39$ & 0.910 & $-0.29$ & 0.575 & $-0.64$ & 0.949 & $-0.18$\\
Jaccard & 0.878 & $-0.48$ & 0.830 & $-0.54$ & 0.871 & $-0.50$ & 0.910 & $-0.29$ & 0.575 & $-0.64$ & 0.948 & $-0.20$\\
Hyperbolic & 0.819 & $-0.56$ & 0.820 & $-0.55$ & 0.915 & $-0.37$ & 0.807 & $-0.52$ & 0.573 & $-0.68$ & 0.819 & $-0.53$\\
DC-MMSBM & 0.776 & $-0.58$ & 0.711 & $-0.67$ & 0.868 & $-0.49$ & 0.828 & $-0.53$ & 0.531 & $-0.64$ & 0.833 & $-0.54$\\
\textbf{Calibrated synthesis} & 0.886 & $-0.41$ & \textbf{0.873} & $-0.44$ & \textbf{0.942} & $\mathbf{-0.31}$ & \textbf{0.935} & $\mathbf{-0.27}$ & \textbf{0.807} & $\mathbf{-0.54}$ & \textbf{0.961} & $\mathbf{-0.17}$\\
\bottomrule
\end{tabular}
\end{table}

\begin{table}[t]
\centering
\footnotesize
\caption{The fitted synthesis on each network and its held-out performance: the eight standardised agent coefficients, with held-out AUC and log score.}
\label{tab:weights}
\setlength{\tabcolsep}{2.0pt}
\resizebox{\textwidth}{!}{%
\begin{tabular}{lccccccc}
\toprule
Agent & \textsf{les\,mis} & \textsf{GoT} & \textsf{polblogs} & \textsf{ca-GrQc} & \textsf{power} & \textsf{ca-CondMat} & \textsf{ER null}\\
\midrule
Chung--Lu & $+0.82$ {\tiny$[+0.15,+1.23]$} & $+0.15$ {\tiny$[-0.20,+0.47]$} & $+0.53$ {\tiny$[+0.17,+0.94]$} & $+0.65$ {\tiny$[+0.23,+1.08]$} & $\mathbf{-0.45}$ {\tiny$[-0.76,-0.16]$} & $\mathbf{+1.22}$ {\tiny$[+0.82,+1.63]$} & $-0.01$ {\tiny$[-0.09,+0.08]$}\\
SBM & $+0.12$ {\tiny$[-0.70,+0.79]$} & $+0.50$ {\tiny$[+0.22,+0.79]$} & $+0.06$ {\tiny$[-0.10,+0.22]$} & $\mathbf{+3.06}$ {\tiny$[+2.25,+3.84]$} & $\mathbf{+1.62}$ {\tiny$[+1.25,+1.93]$} & $\mathbf{+2.02}$ {\tiny$[+1.50,+2.69]$} & $+0.02$ {\tiny$[-0.07,+0.10]$}\\
DC-SBM & $\mathbf{+0.64}$ {\tiny$[+0.38,+0.85]$} & $+0.14$ {\tiny$[-0.19,+0.55]$} & $-0.41$ {\tiny$[-0.98,+0.35]$} & $\mathbf{+2.49}$ {\tiny$[+1.47,+3.38]$} & $\mathbf{+1.17}$ {\tiny$[+0.82,+1.43]$} & $+2.06$ {\tiny$[+0.75,+3.22]$} & $-0.02$ {\tiny$[-0.09,+0.05]$}\\
RDPG/ASE & $+0.33$ {\tiny$[-0.32,+0.97]$} & $+0.54$ {\tiny$[+0.22,+0.77]$} & $\mathbf{+2.25}$ {\tiny$[+1.79,+2.77]$} & $+0.50$ {\tiny$[+0.28,+0.85]$} & $+0.06$ {\tiny$[-0.14,+0.39]$} & $-0.40$ {\tiny$[-1.31,+0.55]$} & $+0.01$ {\tiny$[-0.05,+0.09]$}\\
Adamic--Adar & $\mathbf{+1.26}$ {\tiny$[+0.87,+1.62]$} & $\mathbf{+0.59}$ {\tiny$[+0.32,+0.85]$} & $+0.89$ {\tiny$[-0.16,+1.60]$} & $\mathbf{+6.17}$ {\tiny$[+5.35,+7.40]$} & $\mathbf{+0.63}$ {\tiny$[+0.41,+0.93]$} & $\mathbf{+10.18}$ {\tiny$[+9.18,+11.57]$} & $+0.02$ {\tiny$[-0.03,+0.08]$}\\
Jaccard & $-0.11$ {\tiny$[-0.73,+0.75]$} & $+0.36$ {\tiny$[+0.00,+0.81]$} & $-0.05$ {\tiny$[-0.27,+0.28]$} & $\mathbf{+4.82}$ {\tiny$[+3.04,+6.13]$} & $\mathbf{+0.41}$ {\tiny$[+0.32,+0.50]$} & $\mathbf{+4.35}$ {\tiny$[+2.32,+6.03]$} & $+0.03$ {\tiny$[-0.03,+0.08]$}\\
Hyperbolic & $+1.40$ {\tiny$[+0.71,+2.02]$} & $+0.25$ {\tiny$[-0.08,+0.66]$} & $\mathbf{+1.11}$ {\tiny$[+0.89,+1.35]$} & $-0.26$ {\tiny$[-0.55,-0.02]$} & $+0.27$ {\tiny$[+0.15,+0.38]$} & $-0.08$ {\tiny$[-0.24,+0.06]$} & $-0.02$ {\tiny$[-0.10,+0.06]$}\\
DC-MMSBM & $+0.04$ {\tiny$[-0.55,+0.75]$} & $-0.00$ {\tiny$[-0.30,+0.34]$} & $\mathbf{+0.36}$ {\tiny$[+0.24,+0.54]$} & $+0.51$ {\tiny$[+0.08,+1.86]$} & $+0.98$ {\tiny$[+0.52,+1.53]$} & $+0.16$ {\tiny$[-0.02,+0.38]$} & $-0.03$ {\tiny$[-0.11,+0.04]$}\\
\midrule
Synthesis AUC & 0.886 & 0.873 & 0.942 & 0.935 & 0.807 & 0.961 & 0.502\\
Best single AUC & 0.916 {\tiny(Adamic--Adar)} & 0.875 {\tiny(Adamic--Adar)} & 0.932 {\tiny(DC-SBM)} & 0.910 {\tiny(Adamic--Adar)} & 0.726 {\tiny(SBM)} & 0.949 {\tiny(Adamic--Adar)} & 0.505 {\tiny(RDPG/ASE)}\\
Synthesis LL & $-0.41$ & $-0.44$ & $-0.31$ & $-0.27$ & $-0.54$ & $-0.17$ & $-0.69$\\
Best single LL & $-0.37$ {\tiny(Adamic--Adar)} & $-0.44$ {\tiny(Adamic--Adar)} & $-0.37$ {\tiny(RDPG/ASE)} & $-0.29$ {\tiny(Adamic--Adar)} & $-0.56$ {\tiny(SBM)} & $-0.18$ {\tiny(Adamic--Adar)} & $-0.69$ {\tiny(DC-SBM)}\\
\bottomrule
\end{tabular}}
\end{table}

\subsection{Structural goodness-of-fit on \textsf{ca-GrQc}}
\label{sec:gof}
The knockout study manipulates a controlled generator; a complementary test asks whether the synthesised model, a superposition of a small-world, a hub, and a community layer, reproduces the joint structure of a real network. The \textsf{ca-GrQc} co-authorship graph \citep{leskovec2007graph} ($n=4158$, mean degree $6.5$) displays all four canonical properties at once: maximum degree $81$, average local clustering $0.56$, mean path length $5.99$, and modularity $0.85$. Figure~\ref{fig:gof} compares single-mechanism models against the synthesised superposition. Each single model misses at least one property: Erd\H{o}s--R\'enyi, Chung--Lu, and Barab\'asi--Albert produce almost no clustering and no communities; Watts--Strogatz produces a maximum degree an order of magnitude too small and paths half again too long; and the degree-corrected stochastic block model, the strongest single parametric model, under-produces clustering fourfold ($0.12$ against $0.56$). The synthesised model is the only one that clears all four one-sided screens at once (maximum degree $80$, clustering $0.34$, path length $4.5$, modularity $0.71$), though its clustering and modularity sit somewhat below the observed values. The hyperbolic random graph also clears the four qualitative screens in one shot, but at the cost of control over the values, overshooting the maximum degree fifteenfold because the heavy tail it must adopt is unboundedly heavier than the truncated tail the data display. The synthesis matches the observed values together because each mechanism is calibrated separately, the structural counterpart of the predictive comparison in Section~\ref{sec:linkpred}.

\begin{figure}[t]
\centering
\includegraphics[width=\textwidth]{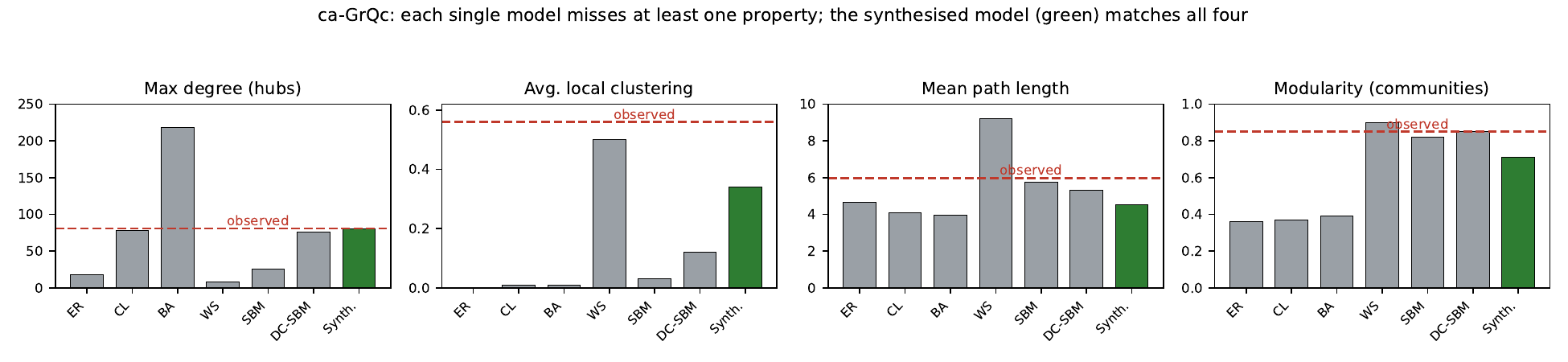}
\caption{Structural goodness-of-fit on \textsf{ca-GrQc}. Each panel is one structural property; bars are models and the dashed line is the observed value. Single-mechanism models (grey) each miss at least one property, typically clustering or hubs, while the synthesised model (green) is the only one to clear all four one-sided screens at once, though its clustering ($0.34$ against $0.56$) and modularity ($0.71$ against $0.85$) sit below the observed values.}
\label{fig:gof}
\end{figure}

\section{Discussion}
\label{sec:discussion}

For a single graph and a prescribed collection of candidate mechanisms, the paper studies the
decomposition of the graph over these mechanisms, the uncertainty of that decomposition, and the
conditions for its recovery, settled by the attribution principle of Section~\ref{sec:intro}: generative weights where the candidates spectrally separate and the edge-overlap information diverges, a signed projection coefficient otherwise. Two of the conclusions are, to our knowledge, new in the single-network setting. First, the manner of combination is decidable from one graph: for two layers
the additive and overlapping operators are distinguishable exactly when the edge-overlap information
$n^2\rho_n^3$ diverges, the boundary extends to any fixed number of layers, and the test stays valid
with all layers fitted on disjoint folds (Theorem~\ref{thm:nor}, Corollary~\ref{cor:nork-cor},
Theorem~\ref{prop:fitted-operator}). Second, fitting the kernels induces an errors-in-variables
attenuation that a cross-fold construction removes, separating the rate-suboptimal naive plug-in
from the optimal debiased estimator in order (Theorem~\ref{thm:debias}, Corollary~\ref{cor:debias-lb}).
Around these sit the known-design rate and matching lower bound (Proposition~\ref{thm:joint}), the
asymptotically valid calibration estimator and bootstrap (Proposition~\ref{thm:clt}), the projection
interpretation with corrective coefficients (Proposition~\ref{prop:mis-cor}), and threshold selection
without oracle knowledge (Corollary~\ref{cor:adapt-cor}).

\paragraph{What the empirical study establishes.}
On six networks from three domains the fitted synthesis improves on every single mechanism in ranking
and calibration on each network with at least $10^3$ nodes, and is the most accurate of the
interpretable combination rules (Table~\ref{tab:combiners}). Its coefficient estimates, with
replicated intervals, agree with each network's accepted structural summary. The power-grid analysis
illustrates the value of signed coefficients: the degree-heterogeneity agent receives a coefficient
whose interval excludes zero and lies below it in all ten folds, recording a negative partial
association that a nonnegative combination cannot represent. The coefficients are candidate-relative projection or calibration targets: a negative Chung--Lu coefficient
states that, given the rest of the candidate set and the calibration loss, the degree-product score is
anti-aligned with the held-out edges, a statement about predictive structure under the chosen candidate set
and not about the historical growth of the network.

\paragraph{Limitations.}
Six limitations delimit the results. (i)~The estimand is candidate- and normalisation-relative: rescaling an agent rescales its coefficient inversely, and the latent components of a mixture are not identifiable from one graph; the generative-versus-projection reading is the attribution principle of Section~\ref{sec:intro}. (ii)~The sharp minimax rate is established for the
known-design class; for fitted kernels the cross-fold debiased estimator attains it on the named
spectral and degree candidate set (Theorem~\ref{thm:debias}), while candidate sets outside that class need a separate
first-stage expansion and the naive single-fold plug-in is not rate-optimal. (iii)~The operator test
uses a cross-fold interaction column for layers fitted from one graph
(Theorem~\ref{prop:fitted-operator}), with the boundary extending to any fixed number of layers
(Corollary~\ref{cor:nork-cor}); its constants degrade with the generically small interaction
transversality, and full constrained recovery for three or more layers remains a conjecture
(Remark~\ref{rem:nork-conj}). By the boundary of Theorem~\ref{thm:nor} the operator is decidable only where $n^2\rho_n^3\approx\bar d^{\,3}/n$ is large; at full-graph scale this holds on \textsf{polblogs} (Table~\ref{tab:opfeas}), and the dense-core analysis of Table~\ref{tab:denseop} returns a verdict on the cores of \textsf{polblogs}, \textsf{ca-GrQc}, and \textsf{ca-CondMat}. (iv)~Community recovery under superposition is cited rather than proved; the Louvain
block agents lack a population analogue of Assumption~\ref{ass:plugin} and are read as a practical
proxy. (v)~The popularity--similarity agent is a heuristic rather than a maximum-likelihood embedding.
(vi)~The fitted-layer validations of Section~\ref{sec:infval} operate in the embedding-linearisation regime in which the disjoint-fold main-effect columns sit at their consistent limits, the regime in which Theorems~\ref{thm:twostage} and~\ref{prop:fitted-operator} are stated.

\paragraph{Future work.}
Three directions extend the programme: a temporal synthesis with time-varying $w_k(t)$ over network
snapshots, identifying when the composition shifts; a higher-order version cross-fitting on triangles
or motifs for hypergraph and motif generators, where an edge-overlap analogue should again govern
testability; and bringing the candidate set inside the estimand through a data-driven search with
post-selection inference.

\ifblind\else
\section*{Acknowledgments}
Funding and acknowledgment of individuals to be completed per JASA policy: list all third-party funding and support for this work, or state that none was received.
\fi

\section*{Proofs and supplementary results}
All proofs are collected in the supplementary material, together with three further results used in the main text only through their consequences: an estimated-design minimax lower bound, the necessity of the $\sqrt{\log K}$ selection factor, and fixed-$K$ noisy-OR recovery. Appendix references below point to that document.

\section*{Disclosure Statement}
The authors report there are no competing interests to declare.

\section*{Data Availability Statement}
The network datasets analysed in the real-data study are publicly available benchmark networks: the \texttt{ca-GrQc} and \texttt{ca-CondMat} scientific-collaboration networks are from the Stanford Network Analysis Project repository \citep{leskovec2007graph}; the political-blogs network is from \citet{adamic2005political}; the Western-states power-grid network is from \citet{watts1998collective}; and the Game-of-Thrones character co-appearance network and the Les Mis\'erables co-appearance network are standard publicly available network datasets. \ifblind Code reproducing all simulations, tables, and figures, together with the processed versions of these datasets, has been deposited in a public repository; the repository identifier is omitted from this anonymized version to preserve double-anonymous peer review and is supplied to the editors through the submission system.\else Code reproducing all simulations, tables, and figures, together with the processed versions of these datasets, is openly available at \url{https://osf.io/XXXXX} (deposit identifier to be finalized at the camera-ready stage).\fi

\renewcommand{\refname}{References}

\clearpage
\begin{center}
{\Large\bfseries Supplementary Material}\\[0.4em]
{\large for \emph{Minimax Synthesis of Network Mechanisms}}
\end{center}
\bigskip

\noindent This document contains all proofs for the main paper, together with the supplementary
results referenced there. A reference of the form Theorem~\ref{thm:debias} points to the main paper;
results native to this supplement, including the restatement that precedes each proof, are numbered
with an \textsf{S} prefix (Theorem~S1, Corollary~S2, and so on).

\appendix
\setcounter{section}{0}\setcounter{theorem}{0}\setcounter{proposition}{0}\setcounter{lemma}{0}\setcounter{corollary}{0}\setcounter{definition}{0}\setcounter{remark}{0}\setcounter{example}{0}\setcounter{assumption}{0}\setcounter{protocol}{0}\setcounter{figure}{0}\setcounter{table}{0}\setcounter{equation}{0}
\renewcommand{\thetheorem}{S\arabic{theorem}}
\renewcommand{\theproposition}{S\arabic{proposition}}
\renewcommand{\thelemma}{S\arabic{lemma}}
\renewcommand{\thecorollary}{S\arabic{corollary}}
\renewcommand{\thedefinition}{S\arabic{definition}}
\renewcommand{\theremark}{S\arabic{remark}}
\renewcommand{\theexample}{S\arabic{example}}
\renewcommand{\theassumption}{S\arabic{assumption}}
\renewcommand{\theprotocol}{S\arabic{protocol}}
\renewcommand{\thefigure}{S\arabic{figure}}
\renewcommand{\thetable}{S\arabic{table}}
\renewcommand{\theequation}{S\arabic{equation}}
\section{Proofs of the structural theorems}
\label{app:structural}
This appendix gives the full proofs of the three results of Section~\ref{sec:structural}: the
impossibility theorem (Theorem~\ref{thm:imp}), the clustering--hub--rank frontier
(Theorem~\ref{thm:frontier}), and the possibility theorem (Theorem~\ref{thm:pos}). The impossibility
argument is carried out family by family across the stochastic block model, the small-world, the
Chung--Lu, and the bounded-rank dot-product families.

\subsection{Proof of Theorem~\ref{thm:imp}: impossibility within four families}
\begin{proof}
\emph{(a) Sparse SBM and Chung--Lu clustering.} In the rank-one Chung--Lu model the edge
probabilities are $P_{ab}=\theta_a\theta_b/S$ with $S=\sum_a\theta_a$. By conditional independence,
the probability that three distinct vertices $i,j,\ell$ form a triangle is
$P_{ij}P_{j\ell}P_{i\ell}=\theta_i^2\theta_j^2\theta_\ell^2/S^3$. Summing over ordered triples, the
expected number of triangles is
$\E[T]=\tfrac16\sum_{i,j,\ell}P_{ij}P_{j\ell}P_{i\ell}=\tfrac16(\sum_a\theta_a^2)^3/S^3=\Theta(1)$
when the degrees have bounded second moment, while the expected number of length-two paths is
$\E[W]=\tfrac12\sum_j(\sum_iP_{ij})^2=\tfrac12\sum_jd_j^2=\Theta(n)$ for mean degree $\Theta(1)$.
Hence $C_n=3\E[T]/\E[W]=\Theta(1/n)\to0$. The same calculation with $P_{ab}=\rho_nB_{c_ac_b}$ gives
$\E[T]=\Theta(n^3\rho_n^3)=\Theta(\bar d_n^3)$ and $\E[W]=\Theta(n\bar d_n^2)$, so
$C_n=\Theta(\bar d_n/n)$; boundedness below would require $\bar d_n=\Theta(n)$, a dense graph, which
violates the regularly varying tail (for which $\bar d_n=n^{o(1)}$) and leaves the short-path
property vacuous. That Chung--Lu has no communities is immediate: its law is exchangeable given the
degree sequence and contains no latent partition, so no estimator beats the chance agreement $1/Q$.

\emph{(b) Small-world degree concentration.} Each vertex begins with degree $2m$; reassigning a
fixed fraction of endpoints changes a vertex degree by a sum of $O(1)$ independent contributions
whose limit law has all moments finite. Therefore every degree lies in $2m\pm O_p(1)$, the empirical
degree distribution converges to a fixed law supported near $2m$, and no polynomial tail can arise.
The construction plants no blocks, so there is no community structure to recover.

\emph{(c) Bounded-rank RDPG clustering under a heavy tail.} Let $\Pmat=\X\I_{p,q}\X^\top$ with
$p+q=d$ fixed and rows $\x_i$. The degree of $i$ is, up to lower order, a fixed linear functional of
$\x_i$, so a regularly varying degree tail requires the latent positions to have a regularly varying
tail along the direction $\I_{p,q}\bar\x$, where $\bar\x$ is their empirical mean. The number of
length-two paths is $\E[W]=\tfrac12\sum_jd_j^2=\Theta(n\,\E[d^2])$, while the triangle count is a
trace of a cube of a rank-$d$ matrix,
\[
6\,\E[T]=\sum_{i,j,\ell}\inner{\x_i}{\x_j}_{p,q}\inner{\x_j}{\x_\ell}_{p,q}\inner{\x_\ell}{\x_i}_{p,q}
=\tr\!\bigl[(\I_{p,q}\X^\top\X)^3\bigr]\le d\,\sigma_1^6,
\]
where $\sigma_1$ is the largest singular value of $\X$ and the mean degree along the leading
direction is $\bar d_n=\Theta(\sigma_1^2)$. Hence $C_n\le d\,\sigma_1^6/(2n\,\E[d^2])$. A tail with
$\tau\in(2,3)$ has $\E[d^2]=\Theta(n^{(3-\tau)/(\tau-1)})\to\infty$ polynomially, while entries in
$[0,1]$ cap $\sigma_1^2=O(\bar d_n\,n^{o(1)})$, so $C_n=O(n^{o(1)}\bar d_n^3/(n\E[d^2]))\to0$. A
bounded-rank dot-product kernel cannot keep constant clustering once its degrees are heavy-tailed:
the rank budget needed to seed a constant clustering per vertex grows with $n$, contradicting the
bounded dimension.
\end{proof}

\subsection{Proof of Theorem~\ref{thm:frontier}: the clustering--hub--rank frontier}
\begin{proof}
\emph{(a)} Write $\mathbf D_{\!\bbQ}=\diag(\bbQ)$. Expanding,
$\tr(\Pmat^3)=\tr(\bbQ^3)-3\tr(\bbQ^2\mathbf D_{\!\bbQ})+3\tr(\bbQ\mathbf D_{\!\bbQ}^2)-\tr(\mathbf
D_{\!\bbQ}^3)$. Since $(\bbQ^2)_{ii}=\sum_j\bbQ_{ij}^2\ge\bbQ_{ii}^2$ we have
$\tr(\bbQ^2\mathbf D_{\!\bbQ})\ge\tr(\bbQ\mathbf D_{\!\bbQ}^2)$, and $\tr(\mathbf D_{\!\bbQ}^3)\ge0$,
so $\tr(\Pmat^3)\le\tr(\bbQ^3)=\sum_i\lambda_i^3\le d\,s_1^3$, as at most $d$ eigenvalues are nonzero
and each is at most $s_1$. For the denominator $\norm{r}^2\ge\Delta^2$ and
$\norm{\Pmat}_F^2\le\norm{\bbQ}_F^2=\sum\lambda_i^2\le d\,s_1^2$, so
$\norm{r}^2-\norm{\Pmat}_F^2\ge\Delta^2-d\,s_1^2$; multiplying by $\mathcal C$ gives (a).

\emph{(b)} If $\Delta^2\ge2d\kappa^2\bar d^{\,2}\ge2d\,s_1^2$ then $(\Delta^2-ds_1^2)_+\ge\Delta^2/2$,
and (a) gives $\mathcal C\Delta^2/2\le d\,s_1^3\le d\kappa^3\bar d^{\,3}$; rearranging gives the cap.

\emph{(c)} If $d\,s_1^2\ge\Delta^2/2$ then $s_1\ge\Delta/\sqrt{2d}\ge(c/2d)^{1/3}\Delta^{2/3}$ for
$\Delta\ge c\sqrt{2d}$. Otherwise $(\Delta^2-ds_1^2)_+\ge\Delta^2/2$ and (a) forces
$s_1^3\ge c\Delta^2/(2d)$. For the hub-local bound, $p_h^\top\Pmat\,p_h\le s_1\norm{p_h}^2\le
s_1\Delta$ since entries lie in $[0,1]$, while the denominator is at most $\Delta^2$, so
$\mathcal C_h\le s_1\Delta/\Delta^2$, i.e.\ $s_1\ge\mathcal C_h\Delta$.

\emph{(d)} The anchored kernel has $s_1=\norm x^2=\alpha^2+\beta^2m=q^2/c+c\Delta/q$. In the uncapped
regime $q=(c^2\Delta/2)^{1/3}$ gives $s_1=c^{1/3}\Delta^{2/3}(2^{-2/3}+2^{1/3})<2c^{1/3}\Delta^{2/3}$;
in the capped regime $q=1$ gives $s_1=1/c+c\Delta=(1+o(1))c\Delta$ for $\Delta\gg c^{-2}$. The
block-internal entries equal $\beta^2=c$, so the block's local clustering is $c$ and the hub's is
$c(1+o(1))$; since block triangles and block-plus-hub wedges dominate both counts, the global
clustering is $\Theta(c)$. Simulation reproduces the uncapped constant to three decimals and the
capped slope exactly (Figure~\ref{fig:frontier}).
\end{proof}

\subsection{Proof of Theorem~\ref{thm:pos}: possibility via synthesis}
\begin{proof}
Write $A^{(1)},A^{(2)},A^{(3)}$ for the independent layer adjacency matrices and
$A=\max(A^{(1)},A^{(2)},A^{(3)})$ for the superposition, so $A_{ij}=1$ if at least one layer places
an edge. The structural fact used throughout is edge monotonicity: $A\ge A^{(k)}$ entrywise for
every $k$, hence every triangle present in a layer is present in $A$ and $d_i\ge\max_kd_i^{(k)}$.

\emph{(a)} The degree of $i$ satisfies $d_i\ge d_i^{(2)}$, the Chung--Lu degree, which has the
prescribed regularly varying tail with index $\tau$ \citep{chung2002average}; the other layers add at
most $O_p(1)$ and $O(\bar d_3)$ per vertex, which does not change the tail index. \emph{(b)} Edge
monotonicity gives $\mathrm{dist}_A(i,j)\le\mathrm{dist}_{A^{(1)}}(i,j)$, and the small-world layer
alone has typical distance $O(\log n)$ \citep{watts1998collective}, so the superposition does too; the
hub layer only shortens distances further. \emph{(c)} The superposed graph is the community layer
plus independent layers that do not depend on the block labels; the regularised-spectral argument of
the numerical study (Figure~\ref{fig:ks}) achieves weak recovery whenever the block signal exceeds
the Kesten--Stigum threshold, the hub layer perturbing the informative eigenvector by at most a
constant multiple of the spectral gap.

\emph{(d)} For the local statement, fix a vertex $i$ outside the top-$\delta$ degree quantile, so
$d_i=O_\delta(1)$ in the sparse regime, and write its local clustering as
$C_i=\sum_{j<\ell}A_{ij}A_{i\ell}A_{j\ell}/\binom{d_i}{2}$. In the small-world layer a vertex retains
a constant number of lattice edges, and lattice neighbours within distance $m$ on the ring are
adjacent, so the expected number of closed wedges through $i$ from the lattice is at least a positive
constant $t_0(m)$; by edge monotonicity the three edges in the numerator are each at least their
small-world values, so closing edges are never lost. The denominator is $O_\delta(1)$ because $i$ is
a non-hub, hence $\E[C_i\mid i\text{ non-hub}]\ge c>0$, and averaging over the $(1-\delta)n$ non-hub
vertices gives the claim. For the global statement, write $C=3T/W$. By monotonicity $T\ge T_1$, the
small-world triangle count, which is $\Theta(n)$. The wedge count decomposes as
$W=W_1+W_{\mathrm{hub}}+(\text{cross terms})$ with $W_1=\Theta(n)$ the small-world wedge count and
$W_{\mathrm{hub}}=\tfrac12\sum_i(d_i^{(2)})^2$ the wedges created by the hub layer. If the hub layer
is sparse, so that $W_{\mathrm{hub}}=o(W_1)$, then $C=3T_1/(W_1(1+o(1)))\ge c>0$. Otherwise
$W_{\mathrm{hub}}$ dominates and $C\to0$ even though $\Theta(n)$ triangles persist. This is the
dilution phenomenon, and it is why the global guarantee needs the sparsity condition while the local
guarantee holds unconditionally.
\end{proof}

\subsection{Proof of Remark~\ref{prop:transversality-geom}: transversality is latent-geometry separation}
\begin{proof}
The correlation matrix $\Phi_S$ is the Gram matrix of the unit vectors
$\widehat g_k=\operatorname{vec}_<(\mathbf G_k)/\norm{\operatorname{vec}_<(\mathbf G_k)}$, so
$\gamma_S=\lambda_{\min}(\Phi_S)$ is positive if and only if
$\{\operatorname{vec}_<(\mathbf G_k)\}_{k\in S}$ are linearly independent; for $s=3$ it is the squared
sine content of the angle configuration, with $\det\Phi_S=1-\phi_{12}^2-\phi_{13}^2-\phi_{23}^2+
2\phi_{12}\phi_{13}\phi_{23}$ and $\gamma_S\ge\tfrac12\det\Phi_S$ once the pairwise correlations are
bounded away from one.

\emph{Independence.} Suppose $a\mathbf G_{\mathrm{SBM}}+b\mathbf G_{\mathrm{RDPG}}+
c\mathbf G_{\mathrm{CL}}=0$ off the diagonal, that is
\begin{equation}
a\,B_{c_ic_j}+b\,\inner{\x_i}{\x_j}+c\,\frac{\theta_i\theta_j}{S}=0,\qquad i\ne j.
\label{eq:vanish}
\end{equation}
Fix a block $q$ (of size $\Theta(n)$) and three distinct vertices $i,j,k$ in it with $\theta_j\ne
\theta_k$, which exist because $\operatorname{Var}(\theta)\ge v_0$. Subtracting the instances of
\eqref{eq:vanish} for $(i,j)$ and $(i,k)$ cancels the block term $aB_{qq}$ and gives
\[
b\,\inner{\x_i}{\,\x_j-\x_k}+\frac{c}{S}\,\theta_i(\theta_j-\theta_k)=0\qquad\text{for all }i\in q.
\]
If $b\ne0$ this expresses $\theta_i$ as a fixed linear functional of $\x_i$ across the block, which
condition (ii) excludes ($\Delta_X$ is full rank and $\theta$ is not a linear functional of $\x$);
hence $b=0$, and then $c=0$ because $\theta_j\ne\theta_k$. With $b=c=0$, \eqref{eq:vanish} reduces to
$aB_{c_ic_j}=0$ for all $i\ne j$, and since $B$ has distinct rows some block pair has $B_{qq'}\ne0$, so
$a=0$. The three kernels are therefore linearly independent.

\emph{Quantitative margin.} Let $\sigma_3$ be the smallest singular value of the
$\binom n2\times3$ matrix $[\,\widehat g_1\ \widehat g_2\ \widehat g_3\,]$, so $\gamma_S=\sigma_3^2$.
Projecting out the SBM direction, the residual of the dot-product kernel has squared norm bounded below
by $\sin^2\vartheta_0\cdot\lambda_{\min}(\Delta_X)/\lambda_{\max}(\Delta_X)\ge\sin^2\vartheta_0\cdot
\lambda_0/\lambda_{\max}(\Delta_X)$, by condition (iii); projecting out both the SBM and dot-product
directions, the residual of the Chung--Lu kernel has squared norm bounded below by a function of
$\operatorname{Var}(\theta)/\E\theta^2\ge v_0/\E\theta^2$, again by (ii)--(iii). These residual norms
are bounded away from zero uniformly in $n$ because all the quantities are empirical averages over
$\Theta(n)$ vertices that converge to their population values under the model. Hence
$\sigma_3\ge c_0(\lambda_0,\vartheta_0,v_0,B)>0$ and $\gamma_S\ge\gamma_0:=c_0^2>0$ uniformly in $n$.

\emph{Degeneracy.} If the positions concentrate on the block centroids, $\x_i\to\mu_{c_i}$, then
$\inner{\x_i}{\x_j}\to\mu_{c_i}^\top\mu_{c_j}$ is block-constant, so $\mathbf G_{\mathrm{RDPG}}$ enters
the span of $\mathbf G_{\mathrm{SBM}}$ and the constant kernel, $\phi_{12}\to1$, and $\gamma_S\to0$. The
same holds whenever any two of the three latent geometries align, which is the precise sense in which
transversality is a separation-of-geometries condition.
\end{proof}

\begin{proposition}[Per-vertex reconstruction at the two-to-infinity rate]
\label{prop:twotoinf}
Let $\widehat\Pmat=\sum_k\widehat w_k\widehat{\mathbf G}_k$ be the fitted synthesis kernel, with the
weights from the cross-fitted debiased estimator and each $\widehat{\mathbf G}_k$ from its spectral
procedure (Assumption~\ref{ass:plugin}), and let $r=\Pmat\1$ with $\Delta=\max_ir_i$. Then the
per-vertex edge-probability profile is reconstructed at the parametric two-to-infinity rate, uniformly
over vertices:
\[
\max_i\frac{\norm{\widehat\Pmat_{i\cdot}-\Pmat_{i\cdot}}_2}{\norm{\Pmat_{i\cdot}}_2}
\ =\ \widetilde O_P\!\Bigl(\,\underbrace{\sqrt{\tfrac{\log n}{n}}}_{\text{embedding}}
\ +\ \underbrace{\frac{\sqrt s}{\sqrt{\gamma_S\,N}}}_{\text{weights}}\Bigr)
\ =\ \widetilde O_P\!\Bigl(\sqrt{\tfrac{\log n}{n}}\Bigr),\qquad N=\Theta(n^2\rho_n),
\]
the weight term being of lower order whenever $n\rho_n/\log n\to\infty$. The bound separates the
reconstruction error into a weight part, controlled at the global edge rate $1/\sqrt{N}$ of
Proposition~\ref{thm:joint} and scaled by the conditioning $1/\sqrt{\gamma_S}$, and a kernel-embedding
part, controlled at the per-vertex two-to-infinity rate of the underlying spectral embeddings
(Remark~\ref{rem:stack}; \citealp{lyzinski2014perfect,athreya2018statistical,rubin2022statistical}). The worst-vertex
reconstruction is therefore governed by the embedding, not by the weights: the global combination is
learned an order of magnitude faster, at $1/\sqrt{N}=1/(n\sqrt{\rho_n})$, than any single vertex's
connection profile can be recovered, at $1/\sqrt n$. This global-versus-row-wise gap is the network
counterpart of the distinction between the parametric weight rate and the vertex-level embedding rate,
and it has no analogue when the design is a fixed regression matrix, where there is no per-vertex object
to reconstruct.
\end{proposition}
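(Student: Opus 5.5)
The plan is to split the row error with a triangle inequality into a weight part and a kernel part, and to control each by a result already available. Write
\[
\widehat\Pmat-\Pmat=\sum_k(\widehat w_k-w_k)\,\widehat{\mathbf G}_k+\sum_k w_k\,(\widehat{\mathbf G}_k-\mathbf G_k),
\]
and bound row $i$ of each sum in $\ell_2$, dividing by $\norm{\Pmat_{i\cdot}}_2$.

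\emph{Weight term.} By Cauchy--Schwarz, row $i$ of the first sum is at most $\norm{\widehat\bw-\bw}_2\bigl(\sum_k\norm{\widehat{\mathbf G}_{k,i\cdot}}_2^2\bigr)^{1/2}$. Theorem~\ref{thm:debias} gives $\norm{\widehat\bw-\bw}_2=O_P(\sqrt{s/\gamma_S}/(n\sqrt{\rho_n}))$. Two further facts then finish this term. First, each fitted row is comparable to the true row, $\norm{\widehat{\mathbf G}_{k,i\cdot}}_2\lesssim\norm{\mathbf G_{k,i\cdot}}_2$; this follows from the kernel-term bound below. Second, $\norm{\mathbf G_{k,i\cdot}}_2\lesssim\norm{\Pmat_{i\cdot}}_2$ for every active $k$. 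For the second fact I will use Assumption~\ref{ass:norm}: the common scale, together with the bounded degree heterogeneity of (C3), gives rows of order $\sqrt n\rho_n$ for every kernel, and $P_{ij}\ge c_\star\rho_n$ on a positive fraction of dyads gives $\norm{\Pmat_{i\cdot}}_2\gtrsim\sqrt n\rho_n$. I intend to make that lower bound row-uniform by assuming the positive-fraction condition holds rowwise; if only the global version is available, I will state this as a mild extra condition. The relative weight contribution is then $O_P(\sqrt{s/\gamma_S}/(n\sqrt{\rho_n}))=O_P(\sqrt{s}/\sqrt{\gamma_S N})$.

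\emph{Kernel term.} Here I need a row-uniform relative bound $\max_i\norm{\widehat{\mathbf G}_{k,i\cdot}-\mathbf G_{k,i\cdot}}_2/\norm{\mathbf G_{k,i\cdot}}_2=\widetilde O_P(\sqrt{\log n/n})$ up to the density factor. The plan is:
\begin{enumerate}[label=(\roman*),leftmargin=2em,itemsep=1pt]
\item For the dot-product and spectral-smoother agents, write $\widehat{\mathbf G}_k-\mathbf G_k=\widehat\X\widehat\X^\top-\X\X^\top$ after Procrustes alignment $\mathbf W$. Then
\[
\norm{(\widehat{\mathbf G}_k-\mathbf G_k)_{i\cdot}}_2\le\norm{\widehat\X\mathbf W-\X}_{\Otoinf}\,\norm{\widehat\X}_{\mathrm{op}}+\norm{\X}_{\Otoinf}\,\norm{\widehat\X\mathbf W-\X}_{\mathrm{op}}.
\]
\item Insert the two-to-infinity bound $\norm{\widehat\X\mathbf W-\X}_{\Otoinf}=\widetilde O_P(\sqrt{\log n/n})$, in the scaling in which $\norm{\X}_{\Otoinf}\asymp\sqrt{\rho_n}$ and $\norm{\X}_{\mathrm{op}}\asymp\sqrt{n\rho_n}$, citing \citet{lyzinski2014perfect} and \citet{rubin2022statistical} under the gap condition (C2). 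Insert also the spectral norm bound $\norm{\widehat\X\mathbf W-\X}_{\mathrm{op}}=O_P(1)$.
\item Dividing by $\norm{\mathbf G_{k,i\cdot}}_2\asymp\sqrt n\rho_n$ gives the relative rate $\sqrt{\log n/(n\rho_n)}$ times lower-order factors. This matches the displayed $\sqrt{\log n/n}$ up to the $\widetilde O$ convention, which I read as absorbing $\rho_n^{-1/2}$ factors. I will state this reading explicitly, or restate the rate as $\sqrt{\log n/(n\rho_n)}$.
\item For the block agent, combine exact recovery of labels with block-mean concentration; this gives a row error that is uniformly $\widetilde O_P((n\rho_n)^{-1/2})$ relatively.
\item For the degree-matching Chung--Lu agent, a Bernstein bound on each degree with a union bound over $n$ vertices gives $\max_i|\widehat\theta_i/\theta_i-1|=O_P(\sqrt{\log n/(n\rho_n)})$. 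This transfers directly to rows of $\theta\theta^\top/S$.
\end{enumerate}

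\emph{Combining.} With the kernel bound in hand, the Cauchy--Schwarz step and bounded $s$ give the two-term bound. The weight term is of lower order, since $\sqrt s/\sqrt{\gamma_S n^2\rho_n}\big/\sqrt{\log n/(n\rho_n)}=\sqrt{s/(\gamma_S n\log n)}\to0$.

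The main obstacle is step (ii): making the row-uniform two-to-infinity bound hold for sub-fold fitted kernels built from the IPW-masked adjacency. The cited theory is for a plain Bernoulli adjacency, whereas here entries are scaled by $\pi^{-1}$ and are independent across the joint (mask, graph) randomness. I would first verify that the masked matrix satisfies the same entrywise variance bound $2\rho_n/f$ and boundedness $\pi^{-1}$, using the Lemma~\ref{lem:firststage} constants. Those are exactly the inputs of the row-wise perturbation argument, so the bound should transfer with $f$-dependent constants.
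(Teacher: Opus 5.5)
Your decomposition is the paper's. It uses the same split of $\widehat\Pmat-\Pmat$ into a weight part and a kernel part, with the weight rate from Theorem~\ref{thm:debias} and the kernel part from the RDPG two-to-infinity theory. The difference is the normalisation, and the mismatch you flag in step (iii) is real, not a matter of convention.

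The paper bounds $\norm{\widehat{\mathbf G}_k}_{\Otoinf}\le\sqrt\Delta$ and divides the kernel error by $\sqrt\Delta$ to obtain $\sqrt{\log n/n}$. It then passes to the stated ratio through the claim $\norm{\Pmat_{i\cdot}}_2\asymp\sqrt\Delta$. That claim fails under Assumption~\ref{ass:density}: $P_{ij}\le\rho_n$ gives $\norm{\Pmat_{i\cdot}}_2^2\le\rho_n r_i\le\rho_n\Delta$. So the true row norm is of order $\sqrt n\,\rho_n$, a factor $\sqrt{\rho_n}$ below $\sqrt\Delta\asymp\sqrt{n\rho_n}$, and the two scales agree only when $\rho_n\asymp1$.

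Your scale is the right one, and the resulting $\sqrt{\log n/(n\rho_n)}$ cannot be improved to $\sqrt{\log n/n}$. By Assumption~\ref{ass:linplug}(iii) the linear error has squared mass $\asymp r_kn\rho_n$ spread over $n$ rows. A typical row of $\widehat{\mathbf G}_k-\mathbf G_k$ therefore has norm of order $\sqrt{r_k\rho_n}$, i.e.\ relative error $\sqrt{r_k/(n\rho_n)}$. The displayed rate is correct only with $\sqrt\Delta$, not $\norm{\Pmat_{i\cdot}}_2$, in the denominator.

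So drop the option of reading $\widetilde O$ as absorbing $\rho_n^{-1/2}$; for $\rho_n=n^{-\alpha}$ that is the polynomial factor $n^{\alpha/2}$. Restate the rate instead. The qualitative conclusion survives and in fact sharpens: weights at $1/(n\sqrt{\rho_n})$ against rows at $\sqrt{\log n/(n\rho_n)}$ is the $\sqrt{n\log n}$ gap already quoted in Remark~\ref{rem:fastrate}.

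The row-uniform lower bound you raise is also a genuine requirement that the paper does not supply. It asserts comparability of $\norm{\Pmat_{i\cdot}}_2$ only on a positive fraction of vertices, which does not control $\max_i$. Your rowwise positive-fraction condition repairs this. It also holds automatically when a bounded-heterogeneity degree agent as in (C3) is active with weight bounded below: all kernels are nonnegative, so then $P_{ij}\gtrsim\rho_n$ for every pair.

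Two smaller points.
\begin{itemize}
\item With $\norm{\widehat{\mathbf G}_{k,i\cdot}}_2\lesssim\norm{\Pmat_{i\cdot}}_2$, your Cauchy--Schwarz step yields $\sqrt s\,\norm{\widehat\bw-\bw}_2=O_P(s/\sqrt{\gamma_S N})$, not $\sqrt s/\sqrt{\gamma_S N}$. This is harmless for fixed $s$, and the paper's $\ell_1$ route produces the same factor $s$.
\item The row-wise ASE bounds you cite in (ii) are for an adjacency with mean $\X\X^\top$, whereas the masked adjacency has mean $\Pmat=\sum_\ell w_\ell\mathbf G_\ell$. For a rank-$r_k$ embedding to return $\mathbf G_k$ row by row you need the separated-spectral-band form of fidelity (Assumption~\ref{ass:fid} and Remark~\ref{rem:fid}), not only the gap from the bulk in (C2). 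The paper's proof is silent on this as well, so state it as a hypothesis.
\end{itemize}
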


\subsection{Proof of Proposition~\ref{prop:twotoinf}: per-vertex reconstruction at the two-to-infinity rate}
\begin{proof}
Decompose
\[
\widehat\Pmat-\Pmat=\sum_{k}(\widehat w_k-w_k)\,\widehat{\mathbf G}_k+\sum_k w_k\,(\widehat{\mathbf G}_k-\mathbf G_k).
\]
The two-to-infinity norm $\norm{\mathbf M}_{\Otoinf}=\max_i\norm{\mathbf M_{i\cdot}}_2$ is a norm, so by
the triangle inequality and homogeneity
\begin{equation}
\norm{\widehat\Pmat-\Pmat}_{\Otoinf}\ \le\ \norm{\widehat\bw-\bw}_1\,\max_k\norm{\widehat{\mathbf G}_k}_{\Otoinf}
\ +\ \norm{\bw}_1\,\max_k\norm{\widehat{\mathbf G}_k-\mathbf G_k}_{\Otoinf}.
\label{eq:twoinf-decomp}
\end{equation}

\emph{Row norms.} For any kernel with entries in $[0,1]$ the $i$th row obeys
$\norm{\mathbf G_{k,i\cdot}}_2^2=\sum_jG_{k,ij}^2\le\sum_jG_{k,ij}=r_{k,i}\le\Delta_k\le\Delta$; the
spectral estimators truncate to $[0,1]$, so $\norm{\widehat{\mathbf G}_k}_{\Otoinf}\le\sqrt\Delta$ on the
high-probability event that they do. The same inequality gives $\norm{\Pmat_{i\cdot}}_2\le\sqrt\Delta$,
and on a positive fraction of vertices $\norm{\Pmat_{i\cdot}}_2\asymp\sqrt\Delta$, so normalising by the
row norm is legitimate.

\emph{Weight term.} By Proposition~\ref{thm:joint} and Theorem~\ref{thm:debias},
$\norm{\widehat\bw-\bw}_2=\widetilde O_P(\sqrt{s/\gamma_S}/\sqrt{N})$ with $N=\Theta(n^2\rho_n)$, and
$\norm{\widehat\bw-\bw}_1\le\sqrt s\,\norm{\widehat\bw-\bw}_2$. With $\norm{\bw}_1=O(1)$ the first term of
\eqref{eq:twoinf-decomp} is $\widetilde O_P\bigl(s\sqrt{\Delta/\gamma_S}/\sqrt{N}\bigr)$, that is, in
relative terms after dividing by $\sqrt\Delta$, $\widetilde O_P\bigl(s/\sqrt{\gamma_S N}\bigr)$.

\emph{Kernel term.} Each active kernel is recovered by adjacency spectral embedding (RDPG), regularised
spectral clustering with block means (SBM), or degree matching (Chung--Lu). Write
$\epsilon_n^{\Otoinf}:=\max_k\norm{\widehat{\mathbf G}_k-\mathbf G_k}_{\Otoinf}/\sqrt\Delta$ for the
relative row-wise error of the fitted kernels. The random dot-product graph estimation theory controls
this relative row-wise error at the parametric two-to-infinity rate \citep{lyzinski2014perfect,athreya2018statistical,rubin2022statistical},
the per-vertex analogue of the embedding consistency of Remark~\ref{rem:stack}; in the balanced regime
$\epsilon_n^{\Otoinf}=\widetilde O_P(\sqrt{\log n/n})$, a vertex-level rate.

\emph{Comparison.} Dividing \eqref{eq:twoinf-decomp} by $\norm{\Pmat_{i\cdot}}_2\asymp\sqrt\Delta$ and
collecting the two relative contributions,
\[
\max_i\frac{\norm{\widehat\Pmat_{i\cdot}-\Pmat_{i\cdot}}_2}{\norm{\Pmat_{i\cdot}}_2}
=\widetilde O_P\Bigl(\sqrt{\tfrac{\log n}{n}}+\frac{\sqrt s}{\sqrt{\gamma_S N}}\Bigr).
\]
The ratio of the squared weight term to the squared embedding term is
$\dfrac{s/(\gamma_S N)}{\log n/n}=\dfrac{s}{\gamma_S\,n\rho_n\,\log n}\to0$ under the density floor
$n\rho_n/\log n\to\infty$ (Assumption~\ref{ass:density}), so the embedding term dominates and the rate is
$\widetilde O_P(\sqrt{\log n/n})$. The weights, by contrast, converge at $1/\sqrt{N}=1/(n\sqrt{\rho_n})$,
faster by the factor $\sqrt{n\rho_n}\to\infty$: the global combination is identified from all
$\Theta(n^2\rho_n)$ dyads simultaneously, whereas a single vertex's profile is tied to the $n$ dyads
incident to it and inherits the vertex-level embedding error.
\end{proof}

\section{Proofs and technical lemmas}
\label{app:joint}

For completeness this appendix restates each main result in the precise form proved here, with
complete proofs; the corresponding summary statements appear in the main text, and the restatement
labels carry the suffix \textsf{-cor}. All inner products, Frobenius norms and vectorisations
$\operatorname{vec}_<$ are over the dyad set $\mathcal D_n=\{(i,j):1\le i<j\le n\}$ unless noted; the dyad count is $\binom n2$ and $N=n^2\rho_n$ is the effective sample size. Conditioning on the latent attributes is denoted $\E_u$ and is in force throughout.

\subsection{Notation and error decomposition}
\label{app:notation}
The dyad count is $\binom n2$, distinct from the calibration size $m$; $\tau$ with a
subscript is always a selection threshold. For an active set $S$, $s=|S|$, set
$\mathbf m_k=\operatorname{vec}_<(\mathbf G_k)$ and $\mathbf M=[\mathbf m_k]_{k\in S}\in\R^{\binom{n}{2}\times
s}$, with $\widehat{\mathbf m}_k,\widehat{\mathbf M}$ the Stage-A fitted analogues. By
Assumptions~\ref{ass:density}--\ref{ass:plugin} there are constants $0<c_\star\le C_\star$ with
$c_\star n^2\rho_n^2\le\norm{\mathbf m_k}^2\le C_\star n^2\rho_n^2$ for $k\in S$, and with
$\mathbf D_n=\diag(\norm{\mathbf m_k})$ and $\Phi_S$ the Gram-correlation matrix,
$\mathbf M^\top\mathbf M=\mathbf D_n\Phi_S\mathbf D_n$, so $\lambda_{\min}(\mathbf M^\top\mathbf
M)\ge c_\star\gamma_S n^2\rho_n^2$. Let $\bm\varepsilon=\operatorname{vec}_<(\A-\Pmat)$, with
independent centred coordinates of variance $P_e(1-P_e)\le\rho_n$.

\subsection{First-stage expansions under masked sampling}
\label{app:plugin}

\begin{lemma}[First-stage expansions under product masked sampling]
\label{lem:firststage}
All inner products, Frobenius norms and vectorisations in this lemma are over $\mathcal D_n$;
using the full symmetric Frobenius norm only changes constants. Let $R_{ij}\sim{\rm
Bernoulli}(f)$, $0<f\le 1$, be independent over $(i,j)\in\mathcal D_n$, independent of the graph,
and define the hollow symmetric IPW adjacency
$\widetilde A_{ij}=f^{-1}R_{ij}A_{ij}$, $\widetilde A_{ji}=\widetilde A_{ij}$, $\widetilde
A_{ii}=0$. Write $E=\widetilde A-P$. Conditionally on the latent variables, $\{E_{ij}:i<j\}$ are
independent with
\[
        \E_u E_{ij}=0,\qquad |E_{ij}|\le f^{-1},\qquad
        \sigma_{ij}^2:={\rm Var}_u(E_{ij})=f^{-1}P_{ij}(1-fP_{ij})\le \rho_n/f .
\]
Assume $n\rho_n/\log n\to\infty$ and the sparse spectral concentration
$\|E\|_{\rm op}=O_{\mathbb P}(\sqrt{n\rho_n/f})$ conditionally on the latent variables. Let
$\mathcal T_n$ be the finite deterministic collection of test directions used in the second-stage
proof, $|\mathcal T_n|\le C_T$, each $X\in\mathcal T_n$ hollow symmetric with the test-class shape
$\|X\|_{\max}\le C_T\|X\|_F/n$. For a growing or random class this finite-class condition must be
replaced by a separate entropy or sample-splitting argument.

\smallskip\noindent\textbf{(a) Spectral smoother.}
Assume $P$ has exact rank $r\le r_0<\infty$, spectral projector $\Pi=UU^\top$ onto its nonzero
eigenspace, $P=\Pi P\Pi$, $\lambda_*:=\min\{|\lambda|:\lambda\in{\rm spec}(P)\setminus\{0\}\}\ge
c_\lambda n\rho_n$, and delocalisation $\max_i\Pi_{ii}\le\mu r/n$. Let $\widehat\Pi$ be the
spectral projector of $\widetilde A$ onto the $r$ signal eigenvalues and $\widehat G_{\rm
sp}=\widehat\Pi\,\widetilde A\,\widehat\Pi$. Then $\widehat G_{\rm sp}-P=L_{\rm sp}(E)+Q_{\rm sp}$,
where
\[
        L_{\rm sp}(V)=\Pi V+V\Pi-\Pi V\Pi=V-\Pi^\perp V\Pi^\perp .
\]
Moreover $\|L_{\rm sp}\|_{F\to F}\le 3$, $\|L_{\rm sp}^*\|_{F\to F}\le 3$,
\[
        \E_u\|L_{\rm sp}(E)\|_F^2\le C r n\rho_n/f,\qquad
        \max_{i<j}{\rm Var}_u([L_{\rm sp}(E)]_{ij})\le C r\rho_n/(fn),
\]
\[
        \bigl\|{\rm Cov}_u({\rm vec}_<\,L_{\rm sp}(E))\bigr\|_{\rm op}\le C\rho_n/f .
\]
If ${\rm tr}\{L_{\rm sp}\,{\rm Cov}_u({\rm vec}_<E)\,L_{\rm sp}^{*}\}\ge c r n\rho_n/f$, then
$\E_u\|L_{\rm sp}(E)\|_F^2\asymp r n\rho_n/f$. For the projected remainder,
\[
        \max_{X\in\mathcal T_n}\frac{|\langle X,Q_{\rm sp}\rangle_F|}{\|X\|_F}
        =O_{\mathbb P}\Bigl(\frac{\sqrt r}{fn}+\frac{\sqrt r\,\rho_n^{1/2}}{f^{3/2}n}
        +\frac{r}{f^2n^2\rho_n}\Bigr),
\]
so for any $\gamma_n>0$ with that bound $=o(\sqrt{\gamma_n\rho_n})$ one has
$\max_{X}|\langle X,Q_{\rm sp}\rangle_F|=o_{\mathbb P}(\sqrt{\gamma_n\rho_n}\,\|X\|_F)$. Finally
$\|\E_u\widehat G_{\rm sp}-P\|_F\le C\sqrt r/f=o(\|P\|_F)$ whenever $n\rho_n\to\infty$.

\smallskip\noindent\textbf{(b) Degree matching.}
Let $d=P\1$, $S=\1^\top d$ with $P$ hollow, and assume $c_d n\rho_n\le d_i\le C_d n\rho_n$ for all
$i$, $S\asymp n^2\rho_n$. Define $G_{\rm CL}^{\dagger}(P)_{ij}=d_id_j/S$ for $i\ne j$ and $0$ on the
diagonal. Let $\widehat d=\widetilde A\1$, $\widehat S=\1^\top\widehat d$, and $\widehat G_{\rm
CL,ij}=\widehat d_i\widehat d_j/\widehat S$ for $i\ne j$. Then $\widehat G_{\rm CL}-G_{\rm
CL}^{\dagger}(P)=L_{\rm CL}(E)+Q_{\rm CL}$, where with $\varepsilon=E\1$, $\tau=\1^\top\varepsilon$,
\[
        L_{\rm CL}(E)_{ij}=\frac{d_i\varepsilon_j+\varepsilon_i d_j}{S}-\frac{d_id_j\,\tau}{S^2},
        \qquad i\ne j .
\]
Moreover $\|L_{\rm CL}\|_{F\to F}\le C$, $\|L_{\rm CL}^{*}\|_{F\to F}\le C$,
$\E_u\|L_{\rm CL}(E)\|_F^2\le C n\rho_n/f$, $\max_{i<j}{\rm Var}_u([L_{\rm CL}(E)]_{ij})\le
C\rho_n/(fn)$, $\|{\rm Cov}_u({\rm vec}_<\,L_{\rm CL}(E))\|_{\rm op}\le C\rho_n/f$, with the
matching lower bound under nondegeneracy along the degree directions, and
\[
        \max_{X\in\mathcal T_n}\frac{|\langle X,Q_{\rm CL}\rangle_F|}{\|X\|_F}
        =O_{\mathbb P}(1/(fn)).
\]
Thus Assumption~\ref{ass:linplug} is verified for the exact-rank spectral smoother and for the
bounded untruncated Chung--Lu map, with the qualification that the degree-matching target is
$G_{\rm CL}^{\dagger}(P)=dd^\top/S$; it equals $P$ only when the population kernel is exactly
Chung--Lu.
\end{lemma}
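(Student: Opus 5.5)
The plan is to handle parts (a) and (b) separately, in each case by expanding the fitted kernel to second order in $E$, identifying the stated linear map as the first-order term, and bounding the remainder only along the finite test class $\mathcal T_n$. The basic properties of $E$ are immediate. Conditionally on $u$, $\E_u[f^{-1}R_{ij}A_{ij}]=P_{ij}$. Independence across dyads follows from independence of the masks and the edges. The variance is $f^{-1}P_{ij}-P_{ij}^2=f^{-1}P_{ij}(1-fP_{ij})$.

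\emph{Part (a), linear term.} I will use the resolvent (Cauchy integral) representation $\widehat\Pi=\frac{1}{2\pi i}\oint_\Gamma(z-\widetilde A)^{-1}dz$. The contour $\Gamma$ encloses the $r$ signal eigenvalues at distance $\gtrsim\lambda_*$ from $0$. This is legitimate because $\|E\|_{\rm op}=O_P(\sqrt{n\rho_n/f})=o(\lambda_*)$, so Weyl's inequality keeps the bulk near zero. Expanding the resolvent gives $\widehat\Pi=\Pi+\Pi_1+\Pi_2$, where $\Pi_1=\Pi E P^{+}+P^{+}E\Pi$ and $\|\Pi_1\|_{\rm op}\lesssim\|E\|/\lambda_*$. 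It then follows that $\widehat\Pi\widetilde A\widehat\Pi-P=\Pi E\Pi+\Pi_1P+P\Pi_1+(\text{second order})$. Since $P^{+}P=\Pi$, the first-order part collapses to $\Pi E+E\Pi-\Pi E\Pi=E-\Pi^\perp E\Pi^\perp=L_{\rm sp}(E)$.

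\emph{Part (a), linear-term bounds.} $L_{\rm sp}$ is a sum of three compositions with projections, each of Frobenius norm at most one, so $\|L_{\rm sp}\|_{F\to F}\le3$. Its adjoint has the same form. For the second moment, write $\|L_{\rm sp}(E)\|_F^2\le 2\|\Pi E\|_F^2+\|E\Pi\|_F^2$, and use $\E\|\Pi E\|_F^2=\sum_{i,j}\Pi_{ii}\sigma^2_{ij}$-type sums $\le r\,n\rho_n/f$. For the entrywise variance, $[\Pi E]_{ij}=\sum_k\Pi_{ik}E_{kj}$ has variance $\le(\rho_n/f)\Pi_{ii}\le\mu r\rho_n/(fn)$ by delocalisation. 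The covariance operator bound holds because $\operatorname{Cov}(\operatorname{vec}E)\preceq(\rho_n/f)I$ and $L_{\rm sp}$ has operator norm at most $3$. The matching lower bound is just the stated trace hypothesis.

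\emph{Part (a), remainder.} $Q_{\rm sp}$ consists of terms quadratic or higher in $E$. Typical pieces are $\Pi EP^{+}E\Pi$, $P^{+}E\Pi E P^{+}P$, and $\Pi_2$-terms. For a fixed test direction $X$, the bound $|\langle X,\Pi EP^{+}E\Pi\rangle|$ comes from a Hanson--Wright inequality on the quadratic form in $E$, which separates a mean part and a fluctuation part.
\begin{itemize}
\item The mean involves $\sum_k\sigma^2_{\cdot k}$ contracted against $X$. Using $\|X\|_{\max}\le C\|X\|_F/n$ and $\|P^{+}\|\le 1/(c n\rho_n)$, this produces the $\sqrt r/(fn)$ term.
\item The fluctuation is controlled by the Frobenius norm of the kernel $\Pi X\Pi\otimes P^{+}$. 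Its entries are bounded by $f^{-1}$, which produces the $\sqrt r\rho_n^{1/2}/(f^{3/2}n)$ term.
\item Cubic and higher terms are handled crudely by operator norms, $\|X\|_*\le\sqrt{2r}\|X\|_F$ on the rank-$2r$ compressed pieces times $\|E\|^2\|E\|/\lambda_*^2$. This produces $r/(f^2n^2\rho_n)$.
\end{itemize}
A union bound over $|\mathcal T_n|\le C_T$ completes the remainder bound. The bias bound $\|\E\widehat G_{\rm sp}-P\|_F\le C\sqrt r/f$ follows from the mean of the quadratic term, of order $\|\E EP^{+}E\|$ restricted to rank $r$: this is $\lesssim (n\rho_n/f)/(n\rho_n)$ in operator norm, hence at most $\sqrt r$ times that in Frobenius norm.

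\emph{Part (b).} This is an explicit delta method. Write $\widehat d=d+\varepsilon$ and $\widehat S=S+\tau$ with $\tau=\1^\top\varepsilon=2\sum_{i<j}E_{ij}$. Expanding $(d_i+\varepsilon_i)(d_j+\varepsilon_j)/(S+\tau)$ gives exactly $L_{\rm CL}$ at first order. The remainder $Q_{\rm CL}$ collects $\varepsilon_i\varepsilon_j/S$, $\tau(d_i\varepsilon_j+\varepsilon_id_j)/S^2$, and $d_id_j\tau^2/S^3$ terms.
\begin{itemize}
\item Operator and adjoint bounds: $L_{\rm CL}(E)$ is a sum of rank-two matrices built from $E\1$. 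Since $\|d\|_\infty/S\cdot\sqrt n\lesssim 1/\sqrt n$, the Frobenius norm of $L_{\rm CL}(V)$ is at most $C\|V\1\|_2\|d\|/S\le C\|V\|_F$.
\item Moment bounds: $\E\|\varepsilon\|^2\le n^2\rho_n/f$, giving $\E\|L_{\rm CL}(E)\|_F^2\lesssim\|d\|^2 n^2\rho_n/(fS^2)\asymp n\rho_n/f$. Similarly, $\operatorname{Var}(\varepsilon_j)\lesssim n\rho_n/f$ gives the entrywise $\rho_n/(fn)$ bound.
\item Remainder: $\langle X,\varepsilon\varepsilon^\top\rangle/S$ is again a Hanson--Wright quadratic form. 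Its mean is $\sum_i X_{ii}$-free, because $X$ is hollow, apart from off-diagonal covariance terms with $\operatorname{Cov}(\varepsilon_i,\varepsilon_j)=\sigma^2_{ij}$, giving $\lesssim\|X\|_{\max}n^2\rho_n/(fS)\lesssim\|X\|_F/(fn)$. Its fluctuation is smaller, and the $\tau$ terms are smaller still since $\tau=O_P(n\sqrt{\rho_n/f})$.
\end{itemize}

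The main obstacle is the part (a) remainder along $\mathcal T_n$. Working only with operator norms would give the weak bound $\|X\|_F\|E\|^2/\lambda_*$, which is too large. The fix is to treat the second-order term as an explicit quadratic form with the delocalised projector. I will apply Hanson--Wright conditionally on $u$ with the masked variances $\rho_n/f$ and entry bound $f^{-1}$. The bookkeeping of the $f$-powers in the three terms is where I expect most of the work to lie.
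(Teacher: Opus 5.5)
Your architecture is the paper's: a resolvent expansion of $\widehat\Pi$, the first-order identity giving $L_{\rm sp}(E)$, delocalisation for the spreading and Frobenius bounds, Hanson--Wright for the quadratic chaos, and an explicit delta method for degree matching. Your linear-term bounds and part~(b) are fine, but part~(a) has a slip and two gaps.

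\emph{The slip.} The first-order projector term is $\Pi_1=P^+E\Pi^\perp+\Pi^\perp EP^+$, not $\Pi EP^++P^+E\Pi$. With your formula, $\Pi_1P+P\Pi_1=2\Pi E\Pi+P^+EP+PEP^+$, and the collapse to $L_{\rm sp}(E)$ fails. The correct $\Pi_1$ gives $\Pi_1P+P\Pi_1=\Pi^\perp E\Pi+\Pi E\Pi^\perp$, as needed.

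\emph{First gap: the order-$\ge3$ remainder.} This remainder has rank $O(r)$ and operator norm $O(\|E\|_{\rm op}^3/\lambda_*^2)$. Your crude bound therefore gives only $O_{\mathbb P}\bigl(\sqrt r\,\|X\|_F/(f^{3/2}\sqrt{n\rho_n})\bigr)$, not $r\|X\|_F/(f^2n^2\rho_n)$. That does not even certify $o(\sqrt{\rho_n}\,\|X\|_F)$ unless $n\rho_n^2\to\infty$. The paper instead bounds this part as a higher-order chaos through its coefficient mass. You would need to expand the cubic term (and, in sparser regimes, further orders) explicitly, contract pairs of $E$'s into their means, and bound the resulting chaoses by variance computations. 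Operator norms can then be kept only for a sufficiently high-order tail.

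\emph{Second gap, more serious: the mean of the quadratic term does not give $\sqrt r/(fn)$.} Carried to second order, your contour integral gives exactly
\[
Q_2=P^+E\Pi^\perp E\Pi^\perp+\Pi^\perp EP^+E\Pi^\perp+\Pi^\perp E\Pi^\perp EP^+ .
\]
Up to a matrix of Frobenius norm $O(\sqrt r\rho_n/f)$, $\E_u(E\Pi^\perp E)$ equals $D_0=\diag\bigl(\sum_k\sigma_{ik}^2\Pi^\perp_{kk}\bigr)\approx f^{-1}\diag(P\1)$. Hence $\E_uQ_2$ contains $P^+(\Pi D_0\Pi^\perp)+(\Pi^\perp D_0\Pi)P^+$. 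This is a delocalised matrix of rank at most $2r$, with Frobenius norm at most $2\sqrt r\|\Pi D_0\Pi^\perp\|_{\rm op}/\lambda_*$, and generically of order $1/f$ once degrees are heterogeneous. It is exactly the $\sqrt r/f$ bias you record at the end, and paired with an admissible $X$ it can contribute order $\|X\|_F/f$.

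Concretely, take $r=1$, $P=\theta\theta^\top$ with $\theta$ equal to two distinct constants on the two halves of the vertex set, and $X$ the hollowed indicator kernel of one half, which has the test-class shape. Then $\langle X,Q_{\rm sp}\rangle/\|X\|_F$ converges in probability to a nonzero multiple of $1/f$, so the displayed rate fails.

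Since $P^+\Pi^\perp=0$, the term vanishes only in special cases:
\begin{itemize}
\item when $D_0$ commutes with $\Pi$, e.g.\ an SBM with nonsingular block matrix, whose $D_0$ is block-constant;
\item when $\Pi^\perp X\Pi=0$, e.g.\ $X=P$.
\end{itemize}
Your hollowness argument does handle the middle term $\Pi^\perp EP^+E\Pi^\perp$. However, neither $\|X\|_{\max}\le C\|X\|_F/n$ nor $\|P^+\|_{\rm op}\le(c_\lambda n\rho_n)^{-1}$ can supply the missing factor $n$ for the outer two terms.

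This step therefore needs an extra hypothesis, such as $\|\Pi D_0\Pi^\perp\|_{\rm op}=O(\rho_n/f)$, or the deterministic bias must be carried separately. Following the paper will not close it either. Its proof asserts the same $\sqrt r\|X\|_F/(fn)$ bound for the mean, but its displayed $\max_e|(B_X)_{ee}|\le C\sqrt r\|X\|_F/(n^2\rho_n)$, summed against $\sigma_e^2\le\rho_n/f$ over the $\binom n2$ dyads, yields only $C\sqrt r\|X\|_F/f$.
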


\begin{proof}
All statements are conditional on the latent variables; constants may depend on
$r_0,c_\lambda,\mu,c_d,C_d,C_T$ and on a fixed lower bound for $f$, not on $n$. For $i<j$,
$\E_u E_{ij}=0$, ${\rm Var}_u(E_{ij})=f^{-1}P_{ij}(1-fP_{ij})\le\rho_n/f$, the variables
independent over dyads, with $\E_u|E_{ij}|^3\le C\rho_n/f^2$ and $\E_u E_{ij}^4\le C\rho_n/f^3$.

\smallskip\noindent\emph{Spectral part.}
Let $\eta_n=\|E\|_{\rm op}/\lambda_*=O_{\mathbb P}((fn\rho_n)^{-1/2})=o_{\mathbb P}(1)$. On
$\{\eta_n\le 1/4\}$ the signal eigenspace of $\widetilde A=P+E$ is well defined, and the resolvent
expansion gives $\widehat\Pi=\Pi+\dot\Pi(E)+R_\Pi(E)$, with $\dot\Pi$ linear,
$\|\dot\Pi(E)\|_{\rm op}\le C\|E\|_{\rm op}/\lambda_*$, $\|R_\Pi(E)\|_{\rm op}\le C\|E\|_{\rm
op}^2/\lambda_*^2$, and $\|R_\Pi(E)\|_F\le C\sqrt r\|E\|_{\rm op}^2/\lambda_*^2$. The first-order
projector identity $\dot\Pi(E)P\Pi+\Pi P\dot\Pi(E)=\Pi^\perp E\Pi+\Pi E\Pi^\perp$ gives
\[
        \widehat G_{\rm sp}-P=\Pi E\Pi+\Pi^\perp E\Pi+\Pi E\Pi^\perp+Q_{\rm sp}
        =\Pi E+E\Pi-\Pi E\Pi+Q_{\rm sp}.
\]
Since $L_{\rm sp}(V)=V-\Pi^\perp V\Pi^\perp$ and $V\mapsto\Pi^\perp V\Pi^\perp$ is an orthogonal
projection in Frobenius norm, $\|L_{\rm sp}(V)\|_F\le\|V\|_F$ and the constant $3$ follows from the
three-term form; the adjoint is identical. With $\Sigma_E={\rm Cov}_u({\rm vec}_<E)$,
$\|\Sigma_E\|_{\rm op}\le\rho_n/f$, so ${\rm Cov}_u({\rm vec}_<L_{\rm sp}(E))=L_{\rm
sp}\Sigma_E L_{\rm sp}^*$ has operator norm $\le C\rho_n/f$. From $\E_u(E^\top E)\preceq
C(n\rho_n/f)I$, $\E_u\|E\Pi\|_F^2={\rm tr}\{\Pi\E_u(E^\top E)\}\le Crn\rho_n/f$, giving the
Frobenius bound; the lower bound is the stated variance-mass condition. For spreading, with $i\ne
j$ and $\sum_a\Pi_{ia}^2=\Pi_{ii}$, ${\rm Var}_u\{[L_{\rm sp}(E)]_{ij}\}\le
C(\rho_n/f)(\Pi_{ii}+\Pi_{jj}+\Pi_{ii}\Pi_{jj})\le Cr\rho_n/(fn)$ by $\max_i\Pi_{ii}\le\mu r/n$.
For $Q_{\rm sp}=Q_2+Q_{\ge3}$, the resolvent formula in the eigenbasis of $P$ gives, for each
$X\in\mathcal T_n$, $\langle X,Q_2\rangle_F=Z^\top B_XZ-\E_u(Z^\top B_XZ)+\E_u\langle
X,Q_2\rangle_F$ with $Z={\rm vec}_<E$, $\|B_X\|_F\le C\sqrt r\|X\|_F/(n\rho_n)$, $\max_e|(B_X)_{ee}|\le
C\sqrt r\|X\|_F/(n^2\rho_n)$; the Hanson--Wright inequality for bounded independent variables and the
moment bounds give $\E_u|\langle X,Q_2\rangle_F|\le C\sqrt r\|X\|_F/(fn)$ and ${\rm Var}_u\le
Cr\|X\|_F^2/(f^2n^2)$, hence $\max_X|\langle X,Q_2\rangle_F|/\|X\|_F=O_{\mathbb P}(\sqrt r/(fn))$.
The higher-order chaos, with coefficient mass $Cr\|X\|_F/(n^2\rho_n^2)$ and order $\ge3$ in bounded
variables, contributes $O_{\mathbb P}(\sqrt r\rho_n^{1/2}/(f^{3/2}n))+O(r/(f^2n^2\rho_n))$.
Combining proves the $Q_{\rm sp}$ bound. Finally $\E_u(\widehat G_{\rm sp}-P)=\E_uQ_{\rm sp}$ has
Frobenius norm $\le C\sqrt r\,\E_u\|E\|_{\rm op}^2/\lambda_*\le C\sqrt r/f$.

\smallskip\noindent\emph{Degree-matching part.}
With $\varepsilon=E\1$, $\tau=\1^\top\varepsilon$, $\widehat d=d+\varepsilon$, $\widehat S=S+\tau$,
$d_i\asymp n\rho_n$, $S\asymp n^2\rho_n$, ${\rm Var}_u(\varepsilon_i)\le Cn\rho_n/f$, ${\rm
Var}_u(\tau)\le Cn^2\rho_n/f$. Bernstein and $n\rho_n/\log n\to\infty$ give $\max_i|\varepsilon_i|=
o_{\mathbb P}(n\rho_n)$ and $|\tau|/S=o_{\mathbb P}(1)$, so $1/(S+\tau)=1/S-\tau/S^2+\tau^2/(S^2(S+\tau))$
and the first-order expansion yields the displayed $L_{\rm CL}$, the rest collected into $Q_{\rm
CL}$. From $\|\varepsilon\|_2\le\sqrt n\|E\|_F$, $|\tau|\le n\|E\|_F$, $\|d\|_2\asymp n^{3/2}\rho_n$,
$S\asymp n^2\rho_n$, the two pieces of $L_{\rm CL}$ have Frobenius norm $\le C\|E\|_F$, so $\|L_{\rm
CL}\|_{F\to F}\le C$ and likewise the adjoint; the covariance-operator bound follows from
$\|\Sigma_E\|_{\rm op}\le\rho_n/f$. The Frobenius second moment uses $\E_u\|\varepsilon\|_2^2\le
Cn^2\rho_n/f$, $\E_u\tau^2\le Cn^2\rho_n/f$, giving $\E_u\|L_{\rm CL}(E)\|_F^2\le Cn\rho_n/f$;
spreading follows from the $1/n$ and $1/n^2$ coefficient orders. For $Q_{\rm CL}$ the leading
quadratic $S^{-1}\varepsilon^\top X\varepsilon=S^{-1}Z^\top B_XZ$ with $\|B_X\|_F\le Cn\|X\|_F$,
$\max_e|(B_X)_{ee}|\le C\|X\|_F/n$ has $\E_u|S^{-1}\varepsilon^\top X\varepsilon|\le C\|X\|_F/(fn)$
and ${\rm Var}_u\le C\|X\|_F^2/(f^2n^2)$; every other term carries $\tau/S=o_{\mathbb P}(1)$ or
higher, so $\max_X|\langle X,Q_{\rm CL}\rangle_F|/\|X\|_F=O_{\mathbb P}(1/(fn))$. The expansion is
around $G_{\rm CL}^{\dagger}(P)=dd^\top/S$, equal to $P$ only in the exact Chung--Lu case.
\end{proof}

\paragraph{A strengthening of Assumption~\ref{ass:linplug} for cross-fold Gram products.}
The projected-remainder condition controls inner products of $Q_k$ against smooth test directions;
the cross-fold Gram also contains products of the first-stage remainders with each other. For
$f,g\in\{a,b\}$, $f\ne g$, write $H_{f,k}=L_k(E_f)+Q_{f,k}$, $L_{f,k}:=L_k(E_f)$, the noises
$E_a,E_b$ independent given the latent variables. Let
\[
B_{n,Q}:=\max_{f\in\{a,b\},\,k,l\in S}|\langle\mathbf m_k,Q_{f,l}\rangle|,\quad
R_{n,Q}:=\max_{\substack{f\ne g,\,k,l\in S}}\{|\langle L_{f,k},Q_{g,l}\rangle|+|\langle
Q_{f,k},L_{g,l}\rangle|+|\langle Q_{f,k},Q_{g,l}\rangle|\}.
\]
Assume
\[
        B_{n,Q}=o_P(\sqrt{\gamma_S}\,n\rho_n^{3/2}),\quad sB_{n,Q}=o_P(\gamma_S n^2\rho_n^2),
        \tag{A5.Q1}
\]
\[
        R_{n,Q}=o_P(\sqrt{\gamma_S}\,n\rho_n^{3/2}),\quad sR_{n,Q}=o_P(\gamma_S n^2\rho_n^2).
        \tag{A5.Q2}
\]
For the named agents these may be verified within Lemma~\ref{lem:firststage} or retained as part of
Assumption~\ref{ass:linplug}; projected smallness of $Q_k$ alone does not control arbitrary products
such as $\langle Q_{a,k},Q_{b,l}\rangle$.

\begin{lemma}[Plug-in propagation and cross-fold debiasing]
\label{lem:plugin}
Let all inner products be over $\mathcal D_2$, $\mathbf m_k=\operatorname{vec}_{\mathcal
D_2}(\mathbf G_k)$, $\mathbf M=[\mathbf m_k]_{k\in S}$, with $c_0n^2\rho_n^2\le\norm{\mathbf
m_k}^2\le C_0n^2\rho_n^2$ and $\lambda_{\min}(\mathbf M^\top\mathbf M)\ge c_0\gamma_S n^2\rho_n^2$.
Let $A^{(2)}=\mathbf Mw+\bm\varepsilon$ on $\mathcal D_2$ with $\E(\bm\varepsilon\mid\mathcal
F_A)=0$, ${\rm Var}(\varepsilon_e\mid\mathcal F_A)\le C\rho_n$, $|\varepsilon_e|\le1$. Let
$\widehat{\mathbf M}_a=\mathbf M+H_a$, $\widehat{\mathbf M}_b=\mathbf M+H_b$ with columns
$H_{f,k}=L_k(E_f)+Q_{f,k}$. Assume Assumption~\ref{ass:linplug}, $({\rm A5.Q1})$--$({\rm A5.Q2})$,
$\delta_n:=\sqrt{r_{\max}/(n\rho_n)}\to0$, $\norm{w}_1\le C_w$, and the side conditions
$s/(\gamma_S n\sqrt{\rho_n})\to0$, $s\delta_n^2/\gamma_S\to0$, $\delta_n/\sqrt{\gamma_S}\to0$,
$r_{\max}/(\gamma_S n\rho_n)\to0$; for fixed $s$, bounded $r_{\max}$, $\gamma_S\ge\gamma_0$ these
reduce to $n\rho_n\to\infty$. With $\overline{\mathbf M}=\tfrac12(\widehat{\mathbf
M}_a+\widehat{\mathbf M}_b)=\mathbf M+\overline H$ and $\widehat\Phi_{\rm db}=\sym(\widehat{\mathbf
M}_a^\top\widehat{\mathbf M}_b)$, with probability tending to one $\lambda_{\min}(\widehat\Phi_{\rm
db})\ge\tfrac12 c_0\gamma_S n^2\rho_n^2$, and $\widehat w_{\rm db}=\widehat\Phi_{\rm
db}^{-1}\overline{\mathbf M}^\top A^{(2)}$ satisfies
\[
        \norm{\widehat w_{\rm db}-w}_2=O_P\Bigl[(1+C_LC_w/\sqrt f)\,\sqrt{s/\gamma_S}/(n\sqrt{\rho_n})\Bigr].
\]
The simplex projection preserves the bound when $w\in\Delta^{s-1}$. For a single fitted design the
normal equations contain $\widehat{\mathbf M}^\top(\mathbf M-\widehat{\mathbf M})w=-\mathbf M^\top
Hw-H^\top Hw$, and under the non-cancellation condition of Corollary~\ref{cor:debias-lb} the image
of $H^\top Hw$ after inversion is of order $r_{\max}\norm{w_S}_2/(\gamma_S n\rho_n)$, larger than
the edge rate when $r_{\max}\sqrt{\gamma_S/\rho_n}\to\infty$; the cross-fold Gram is essential.
\end{lemma}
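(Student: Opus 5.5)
Write $\widehat\Phi_{\rm db}=\sym\bigl((\mathbf M+H_a)^\top(\mathbf M+H_b)\bigr)$ and $\overline{\mathbf M}^\top A^{(2)}=\overline{\mathbf M}^\top\mathbf M w+\overline{\mathbf M}^\top\bm\varepsilon$. The plan is to isolate an exact error identity and then bound each piece at the edge rate. Using $\mathbf M=\overline{\mathbf M}-\overline H$,
\[
\widehat\Phi_{\rm db}(\widehat w_{\rm db}-w)=\overline{\mathbf M}^\top\bm\varepsilon+\bigl(\overline{\mathbf M}^\top\mathbf M-\widehat\Phi_{\rm db}\bigr)w .
\]
Expanding both matrices, the $\mathbf M^\top\mathbf M$ terms cancel, and the difference is
\[
-\sym\bigl(\mathbf M^\top\overline H\bigr)+\tfrac12\bigl(H_a+H_b\bigr)^\top\mathbf M-\sym(H_a^\top H_b)
\]
up to symmetrisation bookkeeping. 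The key point is that no same-fold product $H_a^\top H_a$ appears. The right-hand side therefore consists of three parts: the Stage-B noise term $\overline{\mathbf M}^\top\bm\varepsilon$, linear first-stage terms of the form $\mathbf M^\top H_f w$ and $H_f^\top\mathbf M w$, and the cross-fold product $H_a^\top H_b w$.

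\emph{Step 1 (Gram lower bound).} I bound $\|\widehat\Phi_{\rm db}-\mathbf M^\top\mathbf M\|_{\rm op}$ by bounding its entries.
\begin{itemize}
\item Linear entries: $\langle\mathbf m_k,L_l(E_f)\rangle$ is mean zero with variance at most $\|\mathcal L_l^*\mathbf m_k\|_F^2\rho_n/f\le C_L^2n^2\rho_n^3/f$, so it is $O_P(n\rho_n^{3/2})$. The remainder $\langle\mathbf m_k,Q_{f,l}\rangle$ is controlled by $B_{n,Q}$ through (A5.Q1).
\item Cross entries: $\langle L_{a,k},L_{b,l}\rangle=O_P(\sqrt{r_{\max}n}\,\rho_n/f)$ by the trace bound of Assumption~\ref{ass:linplug}(iii). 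The $Q$-cross terms are controlled by $R_{n,Q}$ through (A5.Q2).
\end{itemize}
Summing over the $s^2$ entries and comparing with $c_0\gamma_S n^2\rho_n^2$, the side conditions $s/(\gamma_Sn\sqrt{\rho_n})\to0$ and $r_{\max}/(\gamma_Sn\rho_n)\to0$ make the perturbation at most half the minimal eigenvalue. Weyl's inequality then gives $\lambda_{\min}(\widehat\Phi_{\rm db})\ge\tfrac12c_0\gamma_Sn^2\rho_n^2$ with probability tending to one.

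\emph{Step 2 (noise term).} Conditionally on $\mathcal F_A$, $\overline{\mathbf M}$ is fixed and independent of $\bm\varepsilon$, so
\[
\E\|\widehat\Phi_{\rm db}^{-1}\overline{\mathbf M}^\top\bm\varepsilon\|^2\le C\rho_n\operatorname{tr}\bigl(\widehat\Phi_{\rm db}^{-1}\overline{\mathbf M}^\top\overline{\mathbf M}\widehat\Phi_{\rm db}^{-1}\bigr).
\]
Since $\overline{\mathbf M}^\top\overline{\mathbf M}=\mathbf M^\top\mathbf M(1+o_P(1))$ by $\delta_n/\sqrt{\gamma_S}\to0$, this is $O(\rho_n s/(\gamma_Sn^2\rho_n^2))$. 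It gives the leading $\sqrt{s/\gamma_S}/(n\sqrt{\rho_n})$.

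\emph{Step 3 (linear first-stage terms).} Write $\mathbf M^\top H_fw=\sum_lw_l\mathbf M^\top L_l(E_f)+$ remainder. For a fixed direction $\mathbf v=\widehat\Phi_{\rm db}^{-1}\mathbf e_k$, the scalar $\langle \mathbf M\mathbf v,\mathcal L^w(E_f)\rangle$ is mean zero with variance at most $C_L^2C_w^2\rho_n\|\mathbf M\mathbf v\|^2/f$. Summing over $k$ reproduces the Step 2 trace with an extra factor $C_L^2C_w^2/f$, hence the $(1+C_LC_w/\sqrt f)$ factor. The term $H_f^\top\mathbf Mw$ is treated the same way, and the $Q$ pieces are negligible by (A5.Q1).

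\emph{Step 4 (cross-fold product; the main obstacle).} The hard step is $\widehat\Phi_{\rm db}^{-1}H_a^\top H_bw$. Because $E_a$ and $E_b$ are independent, the entry $\langle L_{a,k},\sum_lw_lL_{b,l}\rangle$ has mean zero. I will bound its variance by the trace bound, giving order $\sqrt{r_{\max}n}\,\rho_n/f$ per coordinate, not the $r_{\max}n\rho_n$ that the self-product would contribute. After inversion this is
\[
O_P\bigl(\sqrt{s\,r_{\max}n}\,\rho_n/(\gamma_Sn^2\rho_n^2)\bigr)=\sqrt{s/\gamma_S}/(n\sqrt{\rho_n})\cdot O\bigl(\sqrt{r_{\max}/(\gamma_Sn\rho_n)}\bigr),
\]
which is of lower order. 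The difficulty is that the $L$'s have correlated entries, so no entrywise argument is available; the operator-norm covariance bound in Assumption~\ref{ass:linplug}(iii) is exactly what makes the trace bound work. The $Q$-cross terms go through $R_{n,Q}$.

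\emph{Step 5 (naive contrast).} For the single-fold design, $H^\top H$ has a nonzero mean whose diagonal is $\asymp r_kn\rho_n/f$. Under (D) this yields the stated $r_{\max}\|w_S\|/(\gamma_Sn\rho_n)$ bias. Finally, the simplex projection is $1$-Lipschitz, which preserves the bound.
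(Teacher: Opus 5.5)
\paragraph{Verdict.} Your route is essentially the paper's, but the normal-equation identity is wrong in a way that costs a factor $\gamma_S^{-1/2}$ in the rate. The shared structure is: an exact normal-equation identity; a Weyl bound on $\widehat\Phi_{\rm db}-\mathbf M^\top\mathbf M$ from the linear entries and the cross-fold trace bound; the Stage-B noise term; the linear first-stage term carrying $C_LC_w/\sqrt f$; and the cross-fold product controlled by the trace bound. Your Step~2 handles $\overline{\mathbf M}^\top\bm\varepsilon$ in one conditional computation instead of splitting off $\overline H^\top\bm\varepsilon$. That is a valid variant.

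\paragraph{The identity.} Since $\overline{\mathbf M}^\top\mathbf M=\mathbf M^\top\mathbf M+\overline H^\top\mathbf M$ and $\widehat\Phi_{\rm db}=\mathbf M^\top\mathbf M+\mathbf M^\top\overline H+\overline H^\top\mathbf M+\sym(H_a^\top H_b)$, one has exactly
\[
\overline{\mathbf M}^\top\mathbf M-\widehat\Phi_{\rm db}=-\mathbf M^\top\overline H-\sym(H_a^\top H_b).
\]
The $\overline H^\top\mathbf M w$ terms cancel, and nothing of the form $H_f^\top\mathbf M w$ survives. Your displayed difference does not record this: with the paper's convention $\sym(X)=\tfrac12(X+X^\top)$ it equals $-\tfrac12\mathbf M^\top\overline H+\tfrac12\overline H^\top\mathbf M-\sym(H_a^\top H_b)$. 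You then keep an $H_f^\top\mathbf M w$ term and say it is ``treated the same way''.

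\paragraph{Why that step fails at the stated rate.} The term $\mathbf M^\top\mathcal L^w(E_f)$ earns the factor $\sqrt{s/\gamma_S}$ only because, after inversion, it is $(\mathbf M^\top\mathbf M)^{-1}\mathbf M^\top x$ with $\operatorname{Cov}(x)\preceq(C_L^2C_w^2\rho_n/f)\mathbf I$. Its second moment is therefore at most $(C_L^2C_w^2\rho_n/f)\tr\{(\mathbf M^\top\mathbf M)^{-1}\}$. By contrast, the $k$th coordinate of $H_f^\top\mathbf M w$ is $\langle\mathcal L_k^*(\mathbf M w),E_f\rangle$ plus a remainder. These are $s$ unrelated linear functionals, each of size $C_LC_w n\rho_n^{3/2}/\sqrt f$, and their covariance is not dominated by $\rho_n\mathbf M^\top\mathbf M$. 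Inversion then gives only $\|(\mathbf M^\top\mathbf M)^{-1}\|_{\op}\sqrt s\,n\rho_n^{3/2}\asymp\sqrt s/(\gamma_S n\sqrt{\rho_n})$, which is a factor $\gamma_S^{-1/2}$ worse than the lemma.

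As written, you therefore obtain the bound only when $\gamma_S$ is bounded below, not with the explicit $\sqrt{s/\gamma_S}$ dependence the lemma states. The repair is the exact cancellation above, which is the paper's step~(5). After it, only $-\mathbf M^\top\overline H w$ remains, and your Step~3 applies to it.

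\paragraph{A smaller point in Step~3.} The direction $\mathbf v=\widehat\Phi_{\rm db}^{-1}\mathbf e_k$ depends on $E_a,E_b$, so $\langle\mathbf M\mathbf v,\mathcal L^w(E_f)\rangle$ is not mean zero and your variance bound does not apply directly. Write $\widehat\Phi_{\rm db}^{-1}=(\mathbf I+(\mathbf M^\top\mathbf M)^{-1}\Delta)^{-1}(\mathbf M^\top\mathbf M)^{-1}$ with $\Delta=\widehat\Phi_{\rm db}-\mathbf M^\top\mathbf M$. Step~1 gives $\|(\mathbf M^\top\mathbf M)^{-1}\Delta\|_{\op}=o_P(1)$, so it suffices to bound $(\mathbf M^\top\mathbf M)^{-1}\mathbf M^\top\mathcal L^w(E_f)$ along the deterministic directions $(\mathbf M^\top\mathbf M)^{-1}\mathbf e_k$. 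This is what the paper does in steps~(4) and~(8).
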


\begin{proof}
Conditional on the latent variables; constants may depend on fixed fold fractions. Write $G:=\mathbf
M^\top\mathbf M$. \emph{(1) Known-design score.} By the variance bound, $\E[\norm{G^{-1}\mathbf
M^\top\bm\varepsilon}_2^2\mid\mathcal F_A]\le C\rho_n{\rm tr}(G^{-1})\le Cs/(c_0\gamma_S
n^2\rho_n^2)\cdot\rho_n$, so $\norm{G^{-1}\mathbf M^\top\bm\varepsilon}_2=O_P(\sqrt{s/\gamma_S}/(n\sqrt{\rho_n}))$.
\emph{(2) Linear Gram perturbations.} The $(k,l)$ entry of $\mathbf M^\top L_f$ is $\langle\mathbf
m_k,L_l(E_f)\rangle=\langle L_l^*(\mathbf G_k),E_f\rangle$, with variance $\le CC_L^2(\rho_n/f)\norm{\mathbf
G_k}_F^2\le Cn^2\rho_n^3/f$, so $\max_{k,l}|\langle\mathbf m_k,L_l(E_f)\rangle|=O_P(n\rho_n^{3/2}/\sqrt
f)$; with $({\rm A5.Q1})$, $\norm{\mathbf M^\top H_f}_{\op}=O_P(sn\rho_n^{3/2}/\sqrt
f)+sB_{n,Q}=o_P(\gamma_S n^2\rho_n^2)$. \emph{(3) Cross-fold quadratic.} For $f\ne g$,
$\langle L_k(E_f),L_l(E_g)\rangle$ is centred with variance ${\rm tr}(C_{f,k}C_{g,l})\le\|C_{f,k}\|_{\op}{\rm
tr}(C_{g,l})\le Cr_{\max}n\rho_n^2/f^2$ by the covariance-operator bound, so $\max_{k,l}|\langle
L_k(E_f),L_l(E_g)\rangle|=O_P(\sqrt{r_{\max}n}\rho_n/f)$ and, with $({\rm A5.Q2})$,
$\norm{H_f^\top H_g}_{\op}=O_P(s\sqrt{r_{\max}n}\rho_n/f)+sR_{n,Q}=o_P(\gamma_S n^2\rho_n^2)$.
\emph{(4) Conditioning.} $\widehat\Phi_{\rm db}=\mathbf M^\top\mathbf M+\sym(\mathbf M^\top
H_b+H_a^\top\mathbf M)+\sym(H_a^\top H_b)$ gives $\norm{\widehat\Phi_{\rm db}-\mathbf M^\top\mathbf
M}_{\op}=o_P(\gamma_S n^2\rho_n^2)$, so by Weyl $\lambda_{\min}(\widehat\Phi_{\rm
db})\ge\tfrac12c_0\gamma_S n^2\rho_n^2$ and $\widehat\Phi_{\rm db}^{-1}=(\mathbf M^\top\mathbf
M)^{-1}+o_P(1)(\mathbf M^\top\mathbf M)^{-1}$ on the score-scale directions. \emph{(5) Exact
identity.} Since $\overline{\mathbf M}^\top A^{(2)}=\mathbf M^\top\mathbf Mw+\overline H^\top\mathbf
Mw+\mathbf M^\top\bm\varepsilon+\overline H^\top\bm\varepsilon$ and $\widehat\Phi_{\rm db}w=\mathbf
M^\top\mathbf Mw+\mathbf M^\top\overline Hw+\overline H^\top\mathbf Mw+\sym(H_a^\top H_b)w$, the
$\overline H^\top\mathbf Mw$ terms cancel:
\[
        \overline{\mathbf M}^\top A^{(2)}-\widehat\Phi_{\rm db}w=\mathbf M^\top\bm\varepsilon+\overline
        H^\top\bm\varepsilon-\mathbf M^\top\overline Hw-\sym(H_a^\top H_b)w .
\]
\emph{(6)} $\norm{\widehat\Phi_{\rm db}^{-1}\mathbf M^\top\bm\varepsilon}_2=O_P(\sqrt{s/\gamma_S}/(n\sqrt{\rho_n}))$
by (1) and (4). \emph{(7)} Each coordinate of $\overline H^\top\bm\varepsilon$ is centred with
variance $\le C\rho_n\norm{\overline H_k}_F^2$, and $\norm{\overline H_k}_F=O_P(\delta_n n\rho_n)$,
so $\norm{\overline H^\top\bm\varepsilon}_2=O_P(\delta_n\sqrt s\,n\rho_n^{3/2})$ and
$\norm{\widehat\Phi_{\rm db}^{-1}\overline H^\top\bm\varepsilon}_2=O_P(\delta_n\sqrt
s/(\gamma_S n\sqrt{\rho_n}))=o_P(\sqrt{s/\gamma_S}/(n\sqrt{\rho_n}))$. \emph{(8)} Write $\overline
Hw=\overline L^{\,w}+\overline Q^{\,w}$ with $\overline
L^{\,w}=\tfrac12\sum_lw_l\{L_l(E_a)+L_l(E_b)\}$. By $({\rm A5.Q1})$, $\norm{\mathbf M^\top\overline
Q^{\,w}}_2\le C_w\sqrt s\,B_{n,Q}=o_P(\sqrt{s\gamma_S}\,n\rho_n^{3/2})$, hence
$o_P$-negligible after inversion. For the linear part, ${\rm Cov}\{{\rm vec}(\overline
L^{\,w})\}\preceq C(C_L^2C_w^2\rho_n/f)I$, so $\E\norm{(\mathbf M^\top\mathbf M)^{-1}\mathbf
M^\top\overline L^{\,w}}_2^2\le C(C_L^2C_w^2/f)s/(\gamma_S n^2\rho_n)$, giving
$\norm{\widehat\Phi_{\rm db}^{-1}\mathbf M^\top\overline L^{\,w}}_2=O_P((C_LC_w/\sqrt
f)\sqrt{s/\gamma_S}/(n\sqrt{\rho_n}))$. \emph{(9)} $\norm{\sym(H_a^\top H_b)w}_2=O_P(C_w\sqrt{sr_{\max}n}\rho_n/f)+C_w\sqrt
s\,R_{n,Q}$, so after inversion $o_P(\sqrt{s/\gamma_S}/(n\sqrt{\rho_n}))$ by $r_{\max}/(\gamma_S
n\rho_n)\to0$. \emph{(10)} Summing gives the stated rate; the projection is $1$-Lipschitz.
\emph{(11) Same-fold attenuation.} For a single design $\widehat{\mathbf M}=\mathbf M+H$,
$\widehat{\mathbf M}^\top(\mathbf M-\widehat{\mathbf M})w=-\mathbf M^\top Hw-H^\top Hw$, and Lemma~\ref{lem:firststage}
gives $\norm{H_k}_F^2=O_P(r_kn\rho_n/f)$; under the non-cancellation of Corollary~\ref{cor:debias-lb}
the form $H^\top Hw$ has norm $\asymp r_{\max}n\rho_n\norm{w_S}_2/f$, and after a Gram inverse of
scale $(\gamma_S n^2\rho_n^2)^{-1}$ this gives the stated order, diverging relative to the edge rate
in the sparse fixed-rank regime.
\end{proof}

\begin{remark}[Uniformity over selected subcandidate sets]
The lemma is an oracle active-set statement, simultaneous over the fixed coordinates in $S$.
Statements for a data-selected subcandidate set are obtained by conditioning on $\{\widehat S=S\}$ and
applying the oracle bound there, never by a union over the $2^K$ subcandidate sets; for $K=K_n$ growing the
maxima acquire $\sqrt{\log K_n}$ factors and the side conditions strengthen, which belongs in the
selection corollary.
\end{remark}

\subsection{Known-design benchmark}
\label{app:known}

\begin{proposition}[Known-design benchmark; restatement of
Proposition~\ref{thm:joint}]
\label{prop:known-design-cor}
For an active set $S$, $s=|S|$, $\mathbf m_k=\operatorname{vec}_<(\mathbf G_k)$, $\mathbf
M_S=(\mathbf m_k)_{k\in S}$, $\Phi_S=\mathbf D_S^{-1}\mathbf M_S^\top\mathbf M_S\mathbf D_S^{-1}$
with $\mathbf D_S=\diag(\norm{\mathbf m_k})$, $\gamma_S=\lambda_{\min}(\Phi_S)$. Assume
$A_e\sim{\rm Bernoulli}(P_e)$ independent with $P=\mathbf M_Sw$, $w\in\Delta^{s-1}$, and
$c_G\binom{n}{2}\rho_n^2\le\norm{\mathbf m_k}^2\le C_G\binom{n}{2}\rho_n^2$ (KD1), $0\le P_e\le\rho_n$ (KD2). Let
$\widetilde w=(\mathbf M_S^\top\mathbf M_S)^{-1}\mathbf M_S^\top\operatorname{vec}_<(A)$ and
$\widehat w=\Pi_{\Delta^{s-1}}(\widetilde w)$. Then uniformly over (KD1)--(KD2) and $\gamma_S>0$,
\[
        \E\norm{\widehat w-w}_2\le C\sqrt{s/\gamma_S}\,/\sqrt{\binom{n}{2}\rho_n}\le C'\sqrt{s/\gamma_S}\,/(n\sqrt{\rho_n}).
\]
Conversely there are numerical $c,c_0,\gamma_0>0$ such that for every $s\ge2$, $0<\gamma\le\gamma_0$,
$\rho\in(0,1]$ with $s^2\le c_0\gamma \binom{n}{2}\rho$,
\[
        \inf_{\bar w}\sup_{\Theta_n(s,\gamma,\rho)}\E\norm{\bar w-w}_2\ge
        c\sqrt{\lfloor s/2\rfloor/\gamma}\,/\sqrt{\binom{n}{2}\rho}\ge c'\sqrt{s/\gamma}\,/(n\sqrt{\rho}),
\]
the infimum over all estimators knowing the design. For $s=1$ the simplex is a point and the risk
is zero, so $s\ge2$ is necessary.
\end{proposition}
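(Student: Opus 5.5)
The upper bound reduces to ordinary least squares with independent heteroskedastic noise. Write $\operatorname{vec}_<(A)=\mathbf M_Sw+\bm\varepsilon$, where the coordinates of $\bm\varepsilon$ are independent, centred, and have variance $P_e(1-P_e)\le\rho_n$ by (KD2). Then $\widetilde w-w=(\mathbf M_S^\top\mathbf M_S)^{-1}\mathbf M_S^\top\bm\varepsilon$, and by Jensen
\[
\E\norm{\widetilde w-w}_2\le\Bigl(\rho_n\operatorname{tr}\bigl[(\mathbf M_S^\top\mathbf M_S)^{-1}\bigr]\Bigr)^{1/2}.
\]
Using $\mathbf M_S^\top\mathbf M_S=\mathbf D_S\Phi_S\mathbf D_S$ together with (KD1), we get $\operatorname{tr}[(\mathbf M_S^\top\mathbf M_S)^{-1}]\le s/(\gamma_S c_G\binom n2\rho_n^2)$. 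This gives the $\sqrt{s/\gamma_S}/\sqrt{\binom n2\rho_n}$ rate. The simplex projection is $1$-Lipschitz and $w\in\Delta^{s-1}$, so $\norm{\widehat w-w}_2\le\norm{\widetilde w-w}_2$. The constant depends only on $c_G$, so the bound is uniform over the class.

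For the lower bound I would follow the paired-couples construction described in the main text. Fix a dyad set $\mathcal E$ of size $\asymp\binom n2$ and take $s$ mutually orthogonal $\pm1$ patterns $S_1,\dots,S_s$ on $\mathcal E$. These can be Hadamard rows, block-repeated, which is possible because $s\le c\binom n2$. Then set $\mathbf G_k=\tfrac\rho2(\mathbf J+aS_k)$ with $a\asymp\sqrt{\gamma}$ small.

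Next compute $\Phi$. The Gram entries are $\tfrac{\rho^2}{4}|\mathcal E|(1+a^2\delta_{k\ell})$, so $\Phi$ is equicorrelated with off-diagonal $1/(1+a^2)$ and smallest eigenvalue $a^2/(1+a^2)\asymp\gamma$. Choosing $a$ gives $\lambda_{\min}\ge\gamma$ exactly. Entries lie in $[\rho/4,3\rho/4]$, so (KD1) and (KD2) and Assumption~\ref{ass:norm} hold.

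Pair the coordinates into $\lfloor s/2\rfloor$ couples $(2j-1,2j)$. For $\sigma\in\{\pm1\}^{\lfloor s/2\rfloor}$ let $w^\sigma=\tfrac1s\mathbf 1+\delta\sum_j\sigma_j(e_{2j-1}-e_{2j})$. These are simplex-tangent perturbations, so the $\mathbf J$-component of $P$ is unchanged and $P^\sigma=\tfrac\rho2\mathbf J+\tfrac{\rho a}{2}\sum_k w^\sigma_kS_k$. The weights stay in the simplex provided $\delta\le 1/(2s)$.

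Flipping one $\sigma_j$ changes $P$ by $\rho a\delta(S_{2j-1}-S_{2j})$. Orthogonality of the patterns gives squared Frobenius mass $\asymp a^2\delta^2\rho^2\binom n2$, with no factor of $s$. Because all entries are of order $\rho$ and bounded away from $1$, the Bernoulli KL divergence is at most $C\sum_e(\Delta P_e)^2/\rho\asymp\gamma\delta^2\rho\binom n2$.

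Now choose $\delta=c_1/\sqrt{\gamma\rho\binom n2}$ so that each per-flip KL divergence is $O(1)$. Assouad's lemma then gives
\[
\E\norm{\bar w-w}_2^2\gtrsim\lfloor s/2\rfloor\delta^2 .
\]
To pass from the squared risk to the unsquared $\E\norm{\cdot}_2$, I would use Assouad in its Hamming-loss form: the expected number of misidentified couples is at least $c\lfloor s/2\rfloor$. On that event $\norm{\bar w-w}_2\ge\delta\sqrt{c\lfloor s/2\rfloor}/2$, and a Markov-type argument on the fraction of wrong coordinates converts this into the bound on $\E\norm{\bar w-w}_2$. This gives $c\sqrt{\lfloor s/2\rfloor/\gamma}/\sqrt{\binom n2\rho}$.

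The feasibility condition $\delta\le1/(2s)$ is exactly $s^2\lesssim\gamma\binom n2\rho$, which is the stated side condition. For $s=1$ the simplex is a single point, so the risk is zero.

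I expect the main obstacle to be the lower-bound design, specifically keeping the conditioning at order $\gamma$ while the perturbation mass avoids a factor of $s$. Equicorrelated kernels are the right choice because the simplex-tangent directions are exactly the small-eigenvalue directions of $\Phi$. The upper bound and the Assouad bookkeeping are routine.
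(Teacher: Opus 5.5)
Your proposal is correct and follows the paper's proof essentially step for step. The upper bound is the same: the trace bound $\rho_n\operatorname{tr}[(\mathbf M_S^\top\mathbf M_S)^{-1}]$ followed by the nonexpansive simplex projection. The lower bound uses the same ingredients: equicorrelated Hadamard-contrast kernels $\frac\rho2(\mathbf J+aS_k)$ with $a\asymp\sqrt\gamma$, paired simplex-tangent perturbations, the $\chi^2$-type Bernoulli KL bound, and Assouad over $\{\pm1\}^{\lfloor s/2\rfloor}$.

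The differences are cosmetic. The paper passes to unsquared loss via $\sqrt{H}\ge H/\sqrt{q}$ for the Hamming count $H\le q$, where you use a Markov-type step. It also says explicitly that the sign patterns must be balanced (non-constant Sylvester rows), which your cross-term-free Gram computation tacitly uses.
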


\begin{proof}
\emph{Upper bound.} Condition on the design. With $\bm\varepsilon=\operatorname{vec}_<(A-P)$,
$\widetilde w-w=(\mathbf M_S^\top\mathbf M_S)^{-1}\mathbf M_S^\top\bm\varepsilon$ and
${\rm Cov}(\bm\varepsilon)=\mathbf D\preceq\rho_n I$ by (KD2), so
\[
        \E\norm{\widetilde w-w}_2^2={\rm tr}\{(\mathbf M_S^\top\mathbf M_S)^{-1}\mathbf M_S^\top\mathbf
        D\mathbf M_S(\mathbf M_S^\top\mathbf M_S)^{-1}\}\le\rho_n{\rm tr}\{(\mathbf M_S^\top\mathbf M_S)^{-1}\}.
\]
By (KD1) and $\lambda_{\min}(\Phi_S)=\gamma_S$, $\lambda_{\min}(\mathbf M_S^\top\mathbf M_S)\ge
c_G\gamma_S\binom{n}{2}\rho_n^2$, so ${\rm tr}\{(\mathbf M_S^\top\mathbf M_S)^{-1}\}\le s/(c_G\gamma_S\binom{n}{2}\rho_n^2)$
and $\E\norm{\widetilde w-w}_2^2\le s/(c_G\gamma_S\binom{n}{2}\rho_n)$; Jensen and nonexpansiveness of the
simplex projection give the claim.

\emph{Lower bound.}
\label{app:lower}
Construct a least-favourable subfamily in $\Theta_n(s,\gamma,\rho)$. With $m=\binom{n}{2}$, choose
$\mathcal D_0\subseteq\mathcal D_n$ of size $m'=2^{\lceil\log_2(m/2)\rceil}\in[m/2,m]$; (2) gives
$s\le m'-1$ for large $n$. Take balanced mutually orthogonal sign vectors $S_1,\dots,S_s\in\{\pm1\}^{m'}$
as non-constant Sylvester--Hadamard rows, extended by zero, so $\langle S_k,S_\ell\rangle=m'\1\{k=\ell\}$,
$\langle\1,S_k\rangle=0$. Let $a=2\sqrt\gamma$, $\gamma_0\le1/16$ so $0<a\le1/2$, and $\mathbf
m_k=\tfrac\rho2\{\1+aS_k\}$, valid Bernoulli kernels with entries in $[\rho/4,3\rho/4]$ and
$\norm{\mathbf m_k}^2=\tfrac{\rho^2}4(m+a^2m')\asymp \binom{n}{2}\rho^2$. The Gram-correlation matrix is
equicorrelated with off-diagonal $r=m/(m+a^2m')$ and $\lambda_{\min}(\Phi_S)=1-r=a^2m'/(m+a^2m')\ge
2\gamma/(1+4\gamma)\ge\gamma$. Set $q=\lfloor s/2\rfloor$, $w^0=(1/s,\dots,1/s)$, $v_j=\tfrac1{\sqrt2}(e_{2j-1}-e_{2j})$,
and $w^\sigma=w^0+\delta\sum_j\sigma_jv_j$ for $\sigma\in\{\pm1\}^q$, with $\delta=c_1/\sqrt{\gamma
\binom{n}{2}\rho}$; each $v_j$ has zero coordinate sum so $w^\sigma\in\Delta^{s-1}$ when $\delta\le\sqrt2/s$,
guaranteed by (2). For dyads, $p^\sigma_e=\tfrac\rho2(1+a\sum_kw^\sigma_kS_{k,e})\in[\rho/4,3\rho/4]$.
For $\sigma,\sigma'$ differing in coordinate $j$, $p^\sigma-p^{\sigma'}=\sqrt2\delta\sigma_j(\mathbf
m_{2j-1}-\mathbf m_{2j})$ with $\norm{p^\sigma-p^{\sigma'}}_2^2=\delta^2\rho^2a^2m'\le4\gamma\delta^2\rho^2\binom{n}{2}$,
so by Lemma~\ref{lem:bern-kl-final2}, ${\rm KL}(\Pb_\sigma,\Pb_{\sigma'})\le C\gamma\delta^2\binom{n}{2}\rho$,
bounded by a small numerical constant for $c_1$ small. Assouad's lemma on $\{\pm1\}^q$ with the
decoder $\widehat\sigma_j={\rm sign}\langle\bar w-w^0,v_j\rangle$ gives, via orthonormality of the
$v_j$ and $\sqrt H\ge H/\sqrt q$, $\inf_{\bar w}\sup_\sigma\E_\sigma\norm{\bar w-w^\sigma}_2\ge
c\delta\sqrt q$; substituting $\delta$ and $q\ge s/3$ proves (3).
\end{proof}

\subsection{Debiased estimation with fitted kernels}
\label{app:debias}

Let $\mathcal E_n=\{(i,j):1\le i<j\le n\}$, $\binom{n}{2}=|\mathcal E_n|$. Conditional on the latent
attributes the graph has independent edges $A_e\sim{\rm Bernoulli}(P_e)$. For the active set $S$,
$\mathbf m_k=\operatorname{vec}_<(\mathbf G_k)$, $\mathbf M=(\mathbf m_k)_{k\in S}\in\R^{\binom{n}{2}\times
s}$. The population target is $w^\dagger$, with $\operatorname{vec}_<(P)=\mathbf Mw^\dagger$ in the
correctly specified fitted-span case; under misspecification $w^\dagger$ is the least-squares
projection coefficient and the result applies after the orthogonal-residual condition of
Remark~\ref{rem:T1-misspec}. Under fidelity and correct specification $w^\dagger$ is the generative
weight.

\paragraph{Fold construction.}
For each $e$, independently over dyads and of $A$, draw
$(R_e^a,R_e^b,R_e^2,R_e^0)\sim{\rm Multinomial}(1;f,f,\pi_2,1-2f-\pi_2)$, $f,\pi_2\in(0,1)$,
$2f+\pi_2<1$. Set $D_{1a}=\{R_e^a=1\}$, $D_{1b}=\{R_e^b=1\}$, $D_2=\{R_e^2=1\}$, the Stage-A IPW
inputs $\widetilde A^g_e=f^{-1}R_e^gA_e$, $E^g_e=\widetilde A^g_e-P_e$, $g\in\{a,b\}$, and the
Stage-B weight $W_2=\pi_2^{-1}\diag(R_e^2)$. All Gram matrices and moments use $\langle
x,y\rangle_2=x^\top W_2y$. For $g\in\{a,b\}$ let $\widehat M_g=(\widehat m^g_k)$ be the fitted
kernels from $\widetilde A^g$, $\widehat M_g=M+H_g$, $h^g_k=\widehat
m^g_k-m_k=\ell^g_k+q^g_k$ with $\ell^g_k=\operatorname{vec}_<\{L_k(E^g)\}$ and $q^g_k$ the
second-order remainder.

\begin{assumption}[Design scale, information, and fitted-fold perturbations]
\label{ass:T1}
Let $\rho_n\to0$, $n\rho_n/\log n\to\infty$, $s$ fixed, $\gamma_S\ge\gamma_0>0$. With probability
tending to one, uniformly over the model class:
\emph{(a)} there are $0<c<C<\infty$ with $cn^2\rho_n^2\le\norm{m_k}_2^2\le Cn^2\rho_n^2$,
$\Phi_0:=M^\top W_2M$ obeys $\lambda_{\min}(\Phi_0)\ge c\gamma_S n^2\rho_n^2$, and with
$D=\diag\{P_e(1-P_e)\}$, $M^\top W_2DW_2M\preceq C\rho_n\Phi_0$;
\emph{(b)} with $\delta_n=\sqrt{r_{\max}/(n\rho_n)}$, $\max_g\norm{M^\top
W_2H_g}_{\op}=O_P(\delta_n n^2\rho_n^2)$, $\max_{g,k}\norm{h_k^g}_2=O_P(\delta_n n\rho_n)$,
$\delta_n=o(1)$;
\emph{(c)} for every $v$ with $\norm{v}_1\le C$, $L_v(E^g)=\sum_kv_kL_k(E^g)$, $Q_v^g=\sum_kv_kq_k^g$,
\begin{gather*}
        \norm{\Phi_0^{-1}M^\top W_2\operatorname{vec}_<\{L_v(E^g)\}}_2=O_P(\sqrt{s/\gamma_S}/(n\sqrt{\rho_n})),\\[2pt]
        \norm{\Phi_0^{-1}M^\top W_2Q_v^g}_2=o_P(\sqrt{s/\gamma_S}/(n\sqrt{\rho_n}));
\end{gather*}
\emph{(d)} the exclusive split does not make $H_a,H_b$ independent in the sense needed for exact
zero mean; instead $\max_{k,\ell}|(h_k^a)^\top W_2h_\ell^b|=O_P(r_{\max}n\rho_n^2+\sqrt{r_{\max}n}\rho_n)+o_P(n\rho_n^{3/2})$,
so $\norm{H_a^\top W_2H_b}_{\op}=o_P(n^2\rho_n^2)$ and, for bounded $\norm{v}_1$, $\norm{\Phi_0^{-1}\sym(H_a^\top
W_2H_b)v}_2=o_P(\sqrt{s/\gamma_S}/(n\sqrt{\rho_n}))$.
\end{assumption}

\begin{remark}[Why \textup{(d)} is the corrected fold condition]
\label{rem:fold-covariance}
For the exclusive multinomial split, $\E(E_e^aE_e^b\mid u)=-P_e^2$, not zero; with $W_2$ present
the sign changes but the order remains $P_e^2$. The cross-fold product is lower order because the
covariance is $O(\rho_n^2)$, not because the two fitted errors are independent. The trace and
delocalisation bounds of Lemma~\ref{lem:firststage} give the displayed order. This is where the
proof differs from the informal ``mean zero by independence'' argument.
\end{remark}

Define $\widehat\Phi_{\rm db}=\sym(\widehat M_a^\top W_2\widehat M_b)$, $\bar M=\tfrac12(\widehat
M_a+\widehat M_b)$, and the eigenvalue-floored $\widehat\Phi_{{\rm db},\eta}$ flooring eigenvalues
at $\eta n^2\rho_n^2$. The unprojected estimator is $\widetilde w_{\rm db}=\widehat\Phi_{{\rm
db},\eta}^{-1}\bar M^\top W_2\operatorname{vec}_<(A)$, and $\widehat w_{\rm
db}=\Pi_{\Delta^{s-1}}(\widetilde w_{\rm db})$ for a simplex target.

\begin{theorem}[Debiased estimation; restatement of Theorem~\ref{thm:debias}]
\label{thm:debias-cor}
Suppose Assumption~\ref{ass:T1} holds and $\operatorname{vec}_<(P)=Mw^\dagger$, $\norm{w^\dagger}_1\le
C$. Then $\norm{\widetilde w_{\rm db}-w^\dagger}_2=O_P(\sqrt{s/\gamma_S}/(n\sqrt{\rho_n}))$, and if
$w^\dagger\in\Delta^{s-1}$ the projected estimator satisfies the same bound. Under fidelity and
correct specification $w^\dagger$ is the generative weight; otherwise it is the fitted-map
projection coefficient.
\end{theorem}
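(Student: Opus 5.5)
The plan is to redo the argument of Lemma~\ref{lem:plugin} in the weighted inner product $\langle x,y\rangle_2=x^\top W_2y$, with Assumption~\ref{ass:T1} supplying each bound that the lemma derived from Assumption~\ref{ass:linplug}. The argument starts from an exact algebraic identity for the estimating equation. Write $\operatorname{vec}_<(A)=Mw^\dagger+\bm\varepsilon$, $\widehat M_g=M+H_g$ and $\bar M=M+\bar H$ with $\bar H=\tfrac12(H_a+H_b)$. Expanding $\widehat\Phi_{\rm db}=M^\top W_2M+\sym(M^\top W_2H_b+H_a^\top W_2M)+\sym(H_a^\top W_2H_b)$ and $\bar M^\top W_2\operatorname{vec}_<(A)$, the terms $\bar H^\top W_2Mw^\dagger$ cancel, which leaves
\[
\bar M^\top W_2\operatorname{vec}_<(A)-\widehat\Phi_{\rm db}w^\dagger
= M^\top W_2\bm\varepsilon+\bar H^\top W_2\bm\varepsilon-M^\top W_2\bar Hw^\dagger-\sym(H_a^\top W_2H_b)w^\dagger .
\]
The error $\widetilde w_{\rm db}-w^\dagger$ is $\widehat\Phi_{{\rm db},\eta}^{-1}$ applied to this right-hand side, plus a term supported on the event where the eigenvalue floor is active. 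That event will be shown to have vanishing probability.

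\emph{Step 1 (conditioning).} By (a), $\lambda_{\min}(\Phi_0)\ge c\gamma_S n^2\rho_n^2$. By (b), $\norm{\sym(M^\top W_2H_g)}_{\op}=O_P(\delta_n n^2\rho_n^2)$, and by (d), $\norm{H_a^\top W_2H_b}_{\op}=o_P(n^2\rho_n^2)$. Since $\gamma_S\ge\gamma_0$ and $\delta_n\to0$, Weyl's inequality gives $\lambda_{\min}(\widehat\Phi_{\rm db})\ge\tfrac12c\gamma_S n^2\rho_n^2$ with probability tending to one. With $\eta<\tfrac12c\gamma_0$ the floor is then inactive, and $\widehat\Phi_{\rm db}^{-1}=\Phi_0^{-1}(I+\Delta)^{-1}$ with $\norm{\Delta}_{\op}=o_P(1)$. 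Every term below can therefore be bounded with $\Phi_0^{-1}$ in place of $\widehat\Phi_{\rm db}^{-1}$, at the cost of a $1+o_P(1)$ factor.

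\emph{Step 2 (the four terms).} (i) Conditionally on the Stage-A folds, $\bm\varepsilon$ restricted to $D_2$ is centred, because $R^2$ is independent of $A$ and the folds are exclusive. Then (a) gives $\E\norm{\Phi_0^{-1}M^\top W_2\bm\varepsilon}^2\le\operatorname{tr}(\Phi_0^{-1}M^\top W_2DW_2M\Phi_0^{-1})\le C\rho_n\operatorname{tr}\Phi_0^{-1}\le Cs/(\gamma_Sn^2\rho_n)$, which is the target rate. (ii) For $\bar H^\top W_2\bm\varepsilon$, Stage-B noise is conditionally centred given $H$, although not exactly independent of it under the exclusive split. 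Each coordinate has conditional variance $O(\rho_n\norm{h_k}^2)=O_P(\delta_n^2n^2\rho_n^3)$ by (b), so after the $(\gamma_Sn^2\rho_n^2)^{-1}$ inversion this term is $O_P(\delta_n\sqrt s/(\gamma_S n\sqrt{\rho_n}))=o_P(\text{rate})$. (iii) Split $\bar Hw^\dagger$ into the linear part $L_{w^\dagger}$ and the remainder $Q_{w^\dagger}$. Part (c) with $v=w^\dagger$, which is allowed since $\norm{w^\dagger}_1\le C$, bounds the first at the rate and the second as $o_P$ of it. (iv) The cross term $\sym(H_a^\top W_2H_b)w^\dagger$ is $o_P(\text{rate})$ after inversion, by the last clause of (d). Summing gives the bound for $\widetilde w_{\rm db}$.

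\emph{Step 3 (projection and interpretation).} If $w^\dagger\in\Delta^{s-1}$, the projection $\Pi_{\Delta^{s-1}}$ is $1$-Lipschitz and fixes $w^\dagger$, so the projected estimator inherits the bound. Under Assumption~\ref{ass:fid} and correct specification the columns of $M$ are the generative kernels, so $w^\dagger=\bw_{\rm gen}$. In general the normal equations identify $w^\dagger$ as the fitted-map projection coefficient.

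\emph{Main obstacle.} The cross-fold term (iv) and the noise cross-term (ii) are the delicate points, because the exclusive multinomial split makes $E^a$ and $E^b$ negatively correlated, with $\E(E^a_eE^b_e\mid u)=-P_e^2$ as in Remark~\ref{rem:fold-covariance}. The informal argument that these products are mean zero by independence is unavailable. I would handle (iv) by splitting $\sym(H_a^\top W_2H_b)$ into its mean, of order $\sum_e P_e^2\cdot(\text{delocalised weights})=O(r_{\max}n\rho_n^2)$, plus a centred fluctuation controlled by the trace bound $\operatorname{tr}(C_kC_l)$. Since Assumption~\ref{ass:T1}(d) asserts exactly this order, the proof reduces to invoking it. The real content is verifying that the resulting order $r_{\max}n\rho_n^2/(\gamma_Sn^2\rho_n^2)=r_{\max}/(\gamma_Sn)$ is $o(1/(n\sqrt{\rho_n}))$, which needs $r_{\max}\sqrt{\rho_n}/\gamma_S\to0$ and holds since $\rho_n\to0$.
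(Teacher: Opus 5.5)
Your proposal is correct and follows the paper's proof essentially step for step. It uses the same exact normal-equation identity in which the $\bar H^\top W_2 M w^\dagger$ terms cancel, the same Weyl-based stability of $\widehat\Phi_{\rm db}$ from parts (a), (b), (d) of Assumption~\ref{ass:T1} that makes the floor inactive, the same four-term bound via (a), (b), (c) with $v=w^\dagger$ and (d), and nonexpansiveness of the simplex projection. Your closing hand check on the cross-fold term is redundant, since Assumption~\ref{ass:T1}(d) already asserts that bound after inversion; done by hand it would also implicitly need bounded $r_{\max}$ and not just $\delta_n\to0$.
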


\begin{proof}
Conditioning on the latent attributes is suppressed. Put $\varepsilon=\operatorname{vec}_<(A-P)$,
$\bar H=\tfrac12(H_a+H_b)$.

\emph{Step 1: stability of the debiased Gram.} From $\widehat M_g=M+H_g$,
$\widehat\Phi_{\rm db}=\Phi_0+\sym\{M^\top W_2H_b+H_a^\top W_2M\}+\sym(H_a^\top W_2H_b)$. By (b)
the middle term is $O_P(\delta_n n^2\rho_n^2)$ in operator norm and by (d) the last is
$o_P(n^2\rho_n^2)$, so $\norm{\widehat\Phi_{\rm db}-\Phi_0}_{\op}=o_P(\gamma_S n^2\rho_n^2)$. With
(a) and Weyl, $\lambda_{\min}(\widehat\Phi_{\rm db})\ge\tfrac12c\gamma_S n^2\rho_n^2$ w.p.\ $\to1$,
the flooring is inactive, $\norm{\widehat\Phi_{{\rm db},\eta}^{-1}}_{\op}\le C(\gamma_S
n^2\rho_n^2)^{-1}$, and $\widehat\Phi_{{\rm db},\eta}^{-1}=\Phi_0^{-1}+o_P(\norm{\Phi_0^{-1}}_{\op})$.

\emph{Step 2: exact normal-equation identity.} With $\operatorname{vec}_<(A)=Mw^\dagger+\varepsilon$,
$\widehat\Phi_{\rm db}\widetilde w_{\rm db}=\bar M^\top W_2\operatorname{vec}_<(A)$ on the inactive-floor
event. Expanding $\bar M^\top W_2\operatorname{vec}_<(A)=\Phi_0w^\dagger+M^\top W_2\varepsilon+\bar
H^\top W_2Mw^\dagger+\bar H^\top W_2\varepsilon$ and $\widehat\Phi_{\rm db}w^\dagger=\Phi_0w^\dagger+M^\top
W_2\bar Hw^\dagger+\bar H^\top W_2Mw^\dagger+\sym(H_a^\top W_2H_b)w^\dagger$, the $\bar H^\top
W_2Mw^\dagger$ terms cancel exactly:
\[
\boxed{\ \bar M^\top W_2\operatorname{vec}_<(A)-\widehat\Phi_{\rm db}w^\dagger=M^\top W_2\varepsilon+\bar
H^\top W_2\varepsilon-M^\top W_2\bar Hw^\dagger-\sym(H_a^\top W_2H_b)w^\dagger.\ }
\]
This is the central cancellation; the same-fold term $H^\top W_2H$ that causes single-fold
attenuation is absent.

\emph{Step 3: bounds for the four terms.} Let $a_n=\sqrt{s/\gamma_S}/(n\sqrt{\rho_n})$. The Stage-B
score $S_0=M^\top W_2\varepsilon$ has $\E\norm{\Phi_0^{-1}S_0}_2^2={\rm tr}\{\Phi_0^{-1}M^\top
W_2DW_2M\Phi_0^{-1}\}\le C\rho_n{\rm tr}(\Phi_0^{-1})\le Cs/(\gamma_S n^2\rho_n)$, so
$\norm{\Phi_0^{-1}S_0}_2=O_P(a_n)$. Next $S_1=\bar H^\top W_2\varepsilon$ is, given Stage A and the
folds, centred with $\E(\norm{S_1}_2^2\mid H,R)\le C\rho_n\sum_k\norm{\bar h_k}_2^2$; by (b)
$\norm{S_1}_2=O_P(\sqrt s\,\delta_n n\rho_n^{3/2})$, so after multiplication by $\norm{\widehat\Phi_{{\rm
db},\eta}^{-1}}_{\op}$, $\norm{\widehat\Phi_{{\rm db},\eta}^{-1}S_1}_2=O_P(\sqrt s\,\delta_n/(\gamma_S
n\sqrt{\rho_n}))=o_P(a_n)$. Third, $S_2=M^\top W_2\bar Hw^\dagger=M^\top
W_2\operatorname{vec}_<\{L_{w^\dagger}(\bar E)\}+M^\top W_2\bar Q_{w^\dagger}$; by (c) with
$v=w^\dagger$, $\norm{\Phi_0^{-1}S_2}_2=O_P(a_n)$, unchanged to $1+o_P(1)$ after replacing
$\Phi_0^{-1}$ by $\widehat\Phi_{{\rm db},\eta}^{-1}$, and this term is the first-stage score, of the
same order as the Stage-B score, the reason a two-stage variance is needed. Fourth, by (d)
$\norm{\widehat\Phi_{{\rm db},\eta}^{-1}\sym(H_a^\top W_2H_b)w^\dagger}_2=o_P(a_n)$. Combining the
four with Step 2 gives $\norm{\widetilde w_{\rm db}-w^\dagger}_2=O_P(a_n)$. The simplex projection
is nonexpansive, so $\widehat w_{\rm db}$ satisfies the same bound.
\end{proof}

\begin{remark}[Misspecified fitted-map target]
\label{rem:T1-misspec}
If $\operatorname{vec}_<(P)=Mw^\dagger+r$ with $M^\top r=0$, the argument applies provided
$\norm{\Phi_0^{-1}M^\top W_2r}_2=o_P(a_n)$ and $\norm{\Phi_0^{-1}\bar H^\top W_2r}_2=o_P(a_n)$;
under the test-class and projected-remainder assumptions of Assumption~\ref{ass:T1} these are the
usual projection-target conditions, and $\widetilde w_{\rm db}-w^\dagger=O_P(a_n)$ with $w^\dagger$
the fitted-map least-squares projection coefficient.
\end{remark}

\begin{remark}[Named-agent verification]
For the named agents one must verify the full projected expansion $\widehat
G_k^g-G_k=L_k(E^g)+Q_k^g$ with bounded adjoints, delocalised linear parts, projected-negligible
remainders, and the weighted cross-fold covariance bound of Assumption~\ref{ass:T1}(d), not merely
Frobenius consistency. For spectral smoothers and ASE the leading map is $L_k(E)=P_{U_k}E+EP_{U_k}-P_{U_k}EP_{U_k}$;
for bounded-heterogeneity Chung--Lu it is the delta-method linearisation of $dd^\top/\sum_id_i$. The
exclusive split gives dyad-level cross-covariance of order $P_e^2$, and delocalisation converts this
into $(h_k^a)^\top W_2h_\ell^b=O_P(r_{\max}n\rho_n^2+\sqrt{r_{\max}n}\rho_n)$ rather than the
same-fold order $r_{\max}n\rho_n$; this is the formal replacement for the invalid exact-zero-mean
statement. These expansions are supplied by Lemma~\ref{lem:firststage}.
\end{remark}

\subsection{Two-stage inference for the generative weights}
\label{app:twostage}

All vectors are upper-triangular vectorisations; inner products in the second-stage normal equations
are over the declared $\mathcal D_2$. Write $M=[m_1,\dots,m_s]$, $m_\ell=\operatorname{vec}_{\mathcal
D_2}(G_\ell)$, $\Gamma_n=M^\top M$.

\begin{theorem}[Two-stage inference; restatement of Theorem~\ref{thm:twostage}]
\label{thm:twostage-cor}
Assume the correctly specified mixture $P=\sum_{\ell\in S}w_\ell G_\ell$, $A_e=P_e+\varepsilon^{(2)}_e$
on $\mathcal D_2$ with independent mean-zero $\varepsilon^{(2)}_e$ of variance $D_{2,e}=P_e(1-P_e)$,
and $\lambda_{\min}(\Gamma_n)\ge c_\Gamma\gamma_S n^2\rho_n^2$, $\lambda_{\max}(\Gamma_n)\le
C_\Gamma n^2\rho_n^2$. Let $\widehat M_a=M+H_a$, $\widehat M_b=M+H_b$, $\overline M=\tfrac12(\widehat
M_a+\widehat M_b)$, $\widehat\Phi_{\rm db}=\sym(\widehat M_a^\top\widehat M_b)$, and $\widehat w_{\rm
db}=\widehat\Phi_{\rm db}^{-1}\overline M^\top A^{(2)}$, the inverse ordinary on $\{\lambda_{\min}(\widehat\Phi_{\rm
db})\ge\tfrac12\lambda_{\min}(\Gamma_n)\}$ and eigenvalue-floored otherwise (the modification of
probability $o(1)$). For $t\in\{a,b\}$ let $H_{t,\ell}=L_{t,\ell}(E_t)+Q_{t,\ell}$ with $E_t$ the
IPW Stage-A noise and $L_{t,\ell}$ already composed with restriction to $\mathcal D_2$; put
$L_t^w=\sum_\ell w_\ell L_{t,\ell}$, $Q_t^w=\sum_\ell w_\ell Q_{t,\ell}$. The Stage-A noises satisfy
$\E(E_{t,e})=0$, ${\rm Var}(E_{t,e})=\nu_{t,e}=f^{-1}P_e(1-fP_e)$, with the exclusive-fold
covariance corrections $o(\sigma_{k,n}^2)$ for the linear forms below. For fixed $k$, with
$v_k^0=\Gamma_n^{-1}e_k$, $g_k^0=Mv_k^0$,
\[
        V_{B,k}=\sum_{e\in\mathcal D_2}(g_{k,e}^0)^2P_e(1-P_e),\quad
        V_{A,k}=\tfrac14\sum_{t\in\{a,b\}}\sum_{e\in\mathcal D_n}\bigl[(L_t^w)^*g_k^0\bigr]_e^2f^{-1}P_e(1-fP_e),
\]
$\sigma_{k,n}^2=V_{B,k}+V_{A,k}>0$. Assume \textup{(i)} $\norm{\widehat\Phi_{\rm db}-\Gamma_n}_{\op}/\lambda_{\min}(\Gamma_n)=o_{\mathbb
P}(1)$; \textup{(ii)} $e_k^\top\widehat\Phi_{\rm db}^{-1}\overline H^\top\varepsilon^{(2)}$,
$e_k^\top\widehat\Phi_{\rm db}^{-1}\sym(H_a^\top H_b)w$ and $\langle g_k^0,Q_a^w+Q_b^w\rangle$ are
all $o_{\mathbb P}(\sigma_{k,n})$, $\overline H=\tfrac12(H_a+H_b)$; \textup{(iii)}
$e_k^\top(\widehat\Phi_{\rm db}^{-1}-\Gamma_n^{-1})M^\top\varepsilon^{(2)}$ and
$e_k^\top(\widehat\Phi_{\rm db}^{-1}-\Gamma_n^{-1})M^\top\{L_a^w(E_a)+L_b^w(E_b)\}$ are
$o_{\mathbb P}(\sigma_{k,n})$; \textup{(iv)} the Lindeberg / maximal-leverage conditions
$\max_{e\in\mathcal D_2}(g_{k,e}^0)^2P_e(1-P_e)/\sigma_{k,n}^2\to0$ and $\max_{e\in\mathcal
D_n}[(L_t^w)^*g_k^0]_e^2f^{-1}P_e(1-fP_e)/\sigma_{k,n}^2\to0$; \textup{(v)} plug-in variance
consistency $(\widehat V_{B,k}+\widehat V_{A,k})/(V_{B,k}+V_{A,k})\to_{\mathbb P}1$, where with
$\widehat v_k=\widehat\Phi_{\rm db}^{-1}e_k$, $\widehat g_k=\overline M\widehat v_k$, $\widehat
D_{2,e}=\widehat P_e(1-\widehat P_e)$, $\widehat\nu_{t,e}=f^{-1}\widehat P_e(1-f\widehat P_e)$,
\[
        \widehat V_{B,k}=\sum_{\mathcal D_2}\widehat g_{k,e}^2\widehat D_{2,e},\qquad
        \widehat V_{A,k}=\tfrac14\sum_{t}\sum_{\mathcal D_n}[(\widehat L_t^{\,\widehat
        w})^*\widehat g_k]_e^2\widehat\nu_{t,e}.
\]
In the common-linearisation case $L_a^w=L_b^w=L^w$, $\widehat V_{A,k}=(2f)^{-1}\norm{(\widehat
L^{\,\widehat w})^*\widehat g_k}_{\widehat D}^2$ up to a $1+o_{\mathbb P}(1)$ sparse replacement.
Then for every fixed $k\in S$,
\[
        \frac{\widehat w_{{\rm db},k}-w_k}{(\widehat V_{B,k}+\widehat V_{A,k})^{1/2}}\Rightarrow N(0,1).
\]
If the simplex projection is applied and $\min_\ell w_\ell>\eta_0>0$, the same limit holds for the
projected estimator; on the boundary the unprojected statement is the valid one. The Gaussian
multiplier statistic
\[
        Z_k^*=\sum_{\mathcal D_2}\widehat g_{k,e}\widehat D_{2,e}^{1/2}\xi^{(B)}_e-\tfrac12\sum_t\sum_{\mathcal
        D_n}[(\widehat L_t^{\,\widehat w})^*\widehat g_k]_e\widehat\nu_{t,e}^{1/2}\xi^{(t)}_e
\]
satisfies $\mathcal L(Z_k^*/(\widehat V_{B,k}+\widehat V_{A,k})^{1/2}\mid A,{\rm folds})\Rightarrow
N(0,1)$ in probability.
\end{theorem}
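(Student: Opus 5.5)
The proof starts from the exact normal-equation identity of Step~2 in the proof of Theorem~\ref{thm:debias-cor}, written here without the Stage-B weight since inner products are already over $\mathcal D_2$:
\[
\widehat w_{\rm db}-w=\widehat\Phi_{\rm db}^{-1}\Bigl\{M^\top\varepsilon^{(2)}+\overline H^\top\varepsilon^{(2)}-M^\top\overline Hw-\sym(H_a^\top H_b)w\Bigr\}.
\]
We take the $k$th coordinate. Assumption (ii) makes the second and fourth terms $o_{\mathbb P}(\sigma_{k,n})$. In the third term we split $\overline Hw=\tfrac12\{L_a^w(E_a)+L_b^w(E_b)\}+\tfrac12(Q_a^w+Q_b^w)$, and (ii) also removes the $Q$ part once it is paired with $g_k^0$. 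Assumption (iii) then lets us replace $\widehat\Phi_{\rm db}^{-1}$ by $\Gamma_n^{-1}$ in the two surviving terms. After this the leading term is the linear statistic
\[
T_k=\langle g_k^0,\varepsilon^{(2)}\rangle-\tfrac12\sum_{t\in\{a,b\}}\langle (L_t^w)^*g_k^0,E_t\rangle ,
\]
and $\widehat w_{{\rm db},k}-w_k=T_k+o_{\mathbb P}(\sigma_{k,n})$. Assumption (i) is needed only to ensure that the eigenvalue floor is inactive with probability tending to one.

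Next comes the CLT for $T_k$. It is a sum over dyads of centred bounded summands built from $\varepsilon^{(2)}_e$ and $E_{t,e}$. These are independent across dyads given $u$. Within a dyad, the multinomial fold draw makes $\varepsilon^{(2)}_e$, $E_{a,e}$ and $E_{b,e}$ correlated, with covariance of order $P_e^2$. By the assumed exclusive-fold correction, this contributes $o(\sigma_{k,n}^2)$ to the variance. The variance of $T_k$ is therefore $V_{B,k}+V_{A,k}+o(\sigma_{k,n}^2)$. Here the factor $\tfrac14$ and the $f^{-1}P_e(1-fP_e)$ come from the IPW Stage-A variance. To get asymptotic normality I would apply Lindeberg--Feller to the per-dyad vectors, conditional on $u$ and the fold indicators. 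The Lindeberg condition reduces to the maximal-leverage ratios in (iv), because the summands are bounded by $\max|g_{k,e}^0|$ and $f^{-1}\max|[(L_t^w)^*g_k^0]_e|$ respectively. This gives $T_k/\sigma_{k,n}\Rightarrow N(0,1)$. Assumption (v) and Slutsky then give the studentised statement. In the common-linearisation case the formula for $\widehat V_{A,k}$ reduces to $(2f)^{-1}\|\cdot\|_{\widehat D}^2$, because $\widehat\nu_{t,e}=f^{-1}\widehat D_e(1+O(\rho_n))$.

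For the projected estimator, interiority and the $O_{\mathbb P}(a_n)$ rate of Theorem~\ref{thm:debias-cor} imply that $\widetilde w_{\rm db}$ lies in the simplex with probability tending to one. On that event the projection is the identity, so the same limit holds. For the multiplier bootstrap, conditionally on the data and folds, $Z_k^*$ is exactly Gaussian with variance $\widehat V_{B,k}+\widehat V_{A,k}$ (the $\tfrac12$ in front of the Stage-A sum squares to the $\tfrac14$ in $\widehat V_{A,k}$), because the multipliers $\xi^{(B)},\xi^{(a)},\xi^{(b)}$ are independent. Normalising by that variance therefore gives an exact $N(0,1)$ conditional law, and bootstrap consistency follows once (v) holds.

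I expect the main obstacle to be the within-dyad cross-covariance between $\varepsilon^{(2)}$ and $E_t$, and between $E_a$ and $E_b$, under the exclusive multinomial split. The stated correction must be checked to be $o(\sigma_{k,n}^2)$ rather than merely small in absolute terms. I would attack it as in Remark~\ref{rem:fold-covariance}: bound each cross term by $\sum_e|g_{k,e}^0|\,|[(L_t^w)^*g_k^0]_e|\,P_e^2$, apply Cauchy--Schwarz, and compare with $V_{B,k}\gtrsim\rho_n\|g_k^0\|^2$. This leaves a relative factor of $O(\rho_n)\to0$.
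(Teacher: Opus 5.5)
Your treatment of the unprojected estimator and of the multiplier bootstrap follows the paper's argument. It uses the exact normal-equation identity, then (ii) to discard the $\overline H^\top\varepsilon^{(2)}$, $\sym(H_a^\top H_b)w$ and $Q$ terms, (iii) to replace $\widehat\Phi_{\rm db}^{-1}$ by $\Gamma_n^{-1}$, the adjoint influence expansion, Lindeberg and Slutsky, and finally the exact conditional Gaussianity of $Z_k^*$.

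\textbf{The projected estimator.} This step fails. The simplex $\Delta^{s-1}$ lies in the hyperplane $\{x:\mathbf 1^\top x=1\}$ and has empty interior in $\R^s$. So the unconstrained $\widetilde w_{\rm db}$ does not lie in it with probability tending to one, however close it is to an interior $w$: $\mathbf 1^\top(\widetilde w_{\rm db}-w)$ is itself asymptotically normal at scale $a_n$. Near an interior $w$ the Euclidean projection is the affine map $x\mapsto x-s^{-1}(\mathbf 1^\top x-1)\mathbf 1$, not the identity, so
\[
\widehat w_k-w_k=(\widetilde w_{\rm db}-w)_k-s^{-1}\mathbf 1^\top(\widetilde w_{\rm db}-w).
\]
Its leading term is your influence expansion taken along $e_k-s^{-1}\mathbf 1$ rather than $e_k$. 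The correction has the same order $a_n$ as the coordinate error. The projected coordinate is therefore asymptotically normal, but with a different variance, and studentising by $(\widehat V_{B,k}+\widehat V_{A,k})^{1/2}$ does not give $N(0,1)$ in general. For $s=2$ the projected error is $\tfrac12\{(\widetilde w_{\rm db}-w)_1-(\widetilde w_{\rm db}-w)_2\}$, whose variance can be smaller or larger than that of $(\widetilde w_{\rm db}-w)_1$. The paper's own Step~8 makes the same move (``locally the identity''), so following it will not rescue this clause. What is provable is normality of the projected coordinate with variance built from $e_k-s^{-1}\mathbf 1$, or the unprojected statement alone.

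\textbf{Fold conditioning in the CLT.} You invoke within-dyad correlation generated by the multinomial fold draw, but then run Lindeberg--Feller conditional on the fold indicators; these are incompatible. The $\nu_{t,e}=f^{-1}P_e(1-fP_e)$ in $V_{A,k}$ is the variance of $E_{t,e}=f^{-1}R^t_eA_e-P_e$ over the mask and the edge jointly. Given the folds, $E_{t,e}$ equals $-P_e$ when $R^t_e=0$ and has mean $(f^{-1}-1)P_e$ when $R^t_e=1$. Your Stage-A sum is then not centred, and its conditional variance is not $V_{A,k}$. Keep the Stage-A masks random, as the paper does, so that each dyad's mask--edge pair is the independent unit. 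The within-dyad terms such as $\E(E_{a,e}E_{b,e}\mid u)=-P_e^2$ are then exactly the exclusive-fold corrections the statement \emph{assumes} to be $o(\sigma_{k,n}^2)$. Your Cauchy--Schwarz check therefore verifies a hypothesis rather than closing a gap.

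\textbf{Lindeberg condition.} Boundedness of the summands yields Lindeberg from $\max_e|g^0_{k,e}|/\sigma_{k,n}\to0$ and its Stage-A analogue, not from the variance-share condition (iv). For sparse Bernoulli sums (iv) does not imply this: a sum of $n$ Bernoulli$(\lambda/n)$ variables satisfies the analogue of (iv) yet has a Poisson limit. The stronger condition holds under the paper's scalings, but it is an extra check.
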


\begin{proof}
For the unprojected estimator; the projected case follows from local inactivity.
\emph{Step 1.} $\widehat w_{\rm db}-w=\widehat\Phi_{\rm db}^{-1}\{\overline
M^\top\varepsilon^{(2)}+(\overline M^\top M-\widehat\Phi_{\rm db})w\}$. With $\overline M=M+\overline
H$, $\overline M^\top M=M^\top M+\tfrac12(H_a+H_b)^\top M$ while $\widehat\Phi_{\rm db}=M^\top
M+\tfrac12M^\top(H_a+H_b)+\tfrac12(H_a+H_b)^\top M+\sym(H_a^\top H_b)$, so $\overline M^\top
M-\widehat\Phi_{\rm db}=-\tfrac12M^\top(H_a+H_b)-\sym(H_a^\top H_b)$ and
\[
        \widehat w_{\rm db}-w=\widehat\Phi_{\rm db}^{-1}M^\top\varepsilon^{(2)}+\widehat\Phi_{\rm
        db}^{-1}\overline H^\top\varepsilon^{(2)}-\tfrac12\widehat\Phi_{\rm db}^{-1}M^\top(H_a+H_b)w-\widehat\Phi_{\rm
        db}^{-1}\sym(H_a^\top H_b)w,
\]
the first-stage term carrying $M^\top(H_a+H_b)w$, not $\overline M^\top(H_a+H_b)w$.
\emph{Step 2.} By (ii)--(iii), the $\overline H^\top\varepsilon^{(2)}$ and $\sym(H_a^\top H_b)w$
terms are $o_{\mathbb P}(\sigma_{k,n})$ and $\widehat\Phi_{\rm db}^{-1}$ may be replaced by
$\Gamma_n^{-1}$ in the two linear terms, giving $\widehat w_{{\rm db},k}-w_k=e_k^\top\Gamma_n^{-1}M^\top\varepsilon^{(2)}-\tfrac12
e_k^\top\Gamma_n^{-1}M^\top(H_a+H_b)w+o_{\mathbb P}(\sigma_{k,n})$.
\emph{Step 3.} $H_tw=L_t^w(E_t)+Q_t^w$, and $e_k^\top\Gamma_n^{-1}M^\top(Q_a^w+Q_b^w)=\langle
g_k^0,Q_a^w+Q_b^w\rangle=o_{\mathbb P}(\sigma_{k,n})$ by (ii); hence $\widehat w_{{\rm
db},k}-w_k=\langle g_k^0,\varepsilon^{(2)}\rangle-\tfrac12\{\langle g_k^0,L_a^w(E_a)\rangle+\langle
g_k^0,L_b^w(E_b)\rangle\}+o_{\mathbb P}(\sigma_{k,n})$.
\emph{Step 4.} The adjoint identity $\langle g_k^0,L_t^w(E_t)\rangle=\langle(L_t^w)^*g_k^0,E_t\rangle$
gives the influence expansion
\[
\boxed{\ \widehat w_{{\rm db},k}-w_k=\langle g_k^0,\varepsilon^{(2)}\rangle-\tfrac12\{\langle(L_a^w)^*g_k^0,E_a\rangle+\langle(L_b^w)^*g_k^0,E_b\rangle\}+o_{\mathbb
P}(\sigma_{k,n}),\ }
\]
the fitted $\widehat g_k$ entering only the feasible variance.
\emph{Step 5.} ${\rm Var}\{\langle g_k^0,\varepsilon^{(2)}\rangle\}=V_{B,k}$ and ${\rm
Var}\{\tfrac12\langle(L_t^w)^*g_k^0,E_t\rangle\}=\tfrac14\sum_e[(L_t^w)^*g_k^0]_e^2f^{-1}P_e(1-fP_e)$,
so with the $o(\sigma_{k,n}^2)$ cross-covariances the leading variance is $\sigma_{k,n}^2\{1+o(1)\}$;
in the common case $V_{A,k}=(2f)^{-1}\norm{(L^w)^*g_k^0}_D^2\{1+o(1)\}$ since $f^{-1}P_e(1-fP_e)=f^{-1}P_e(1-P_e)\{1+O(\rho_n)\}$.
\emph{Step 6.} With $Z_{B,e}=g_{k,e}^0\varepsilon^{(2)}_e$ and $Z_{A,t,e}=-\tfrac12[(L_t^w)^*g_k^0]_eE_{t,e}$,
the expansion is a mean-zero triangular array of total variance $\sigma_{k,n}^2\{1+o(1)\}$; (iv) is
its Lindeberg condition, so the studentised core is asymptotically $N(0,1)$.
\emph{Step 7.} (v) and Slutsky give the feasible CLT.
\emph{Step 8.} If $\min_\ell w_\ell>\eta_0$, $\norm{\widehat w_{\rm db}-w}_\infty=o_{\mathbb P}(1)$
places the iterate in the same relative face, where the projection is locally the identity; on the
boundary it is not locally linear.
\emph{Step 9.} Given the data, $Z_k^*$ is a centred Gaussian with conditional variance exactly
$\widehat V_{B,k}+\widehat V_{A,k}$, so the conditional law of the studentised $Z_k^*$ is $N(0,1)$
up to the negligible flooring and covariance corrections.
\emph{Step 10.} The Stage-B-only variance is $V_{B,k}$ while the correct one is $V_{B,k}+V_{A,k}$
with $V_{A,k}\ge0$; if $\liminf V_{A,k}/V_{B,k}>0$ a nominal Wald interval using only $V_{B,k}$ has
limiting coverage $2\Phi(z_{1-\alpha/2}/\sqrt{1+\lim V_{A,k}/V_{B,k}})-1<1-\alpha$, so strict
undercoverage requires non-negligible first-stage variance.
\end{proof}

\subsection{Misspecification: projection target outside the synthesis class}
\label{app:mis}

Conditional on the latent attributes and the population kernel $P^\ast$, $A_e\sim{\rm
Bernoulli}(P^\ast_e)$; $p^\ast=\operatorname{vec}_<(P^\ast)$, $a=\operatorname{vec}_<(A)$,
$\varepsilon=a-p^\ast$. For each agent $m_k=\operatorname{vec}_<\{G_k^\dagger(P^\ast)\}$, the
population fitted kernel at $P^\ast$, $M=(m_1,\dots,m_K)$, $\gamma_{\mathrm{full}}=\lambda_{\min}(\Phi)$.

\begin{assumption}[Full-candidate set scale, spreading, conditioning]
\label{ass:misspec-full}
$cn^2\rho_n^2\le\norm{m_k}^2\le Cn^2\rho_n^2$, $\norm{m_k}_\infty\le C\rho_n$, $\gamma_{\mathrm{full}}\ge\gamma_0>0$ (so $\lambda_{\min}(M^\top M)\ge c\gamma_{\mathrm{full}}n^2\rho_n^2$), $0\le
P^\ast_e\le C\rho_n$, $n\rho_n/\log n\to\infty$, $K$ fixed (the growing-candidate set extension uses the
corresponding matrix-Bernstein and union conditions).
\end{assumption}

With independent Bernoulli masking $R_{2,e}\sim{\rm Bernoulli}(\pi_2)$, $\widehat M_\alpha=M+H_\alpha$,
$\widehat\Gamma_{\rm db}=\tfrac12(\widehat M_a^\top R_2\widehat M_b+\widehat M_b^\top R_2\widehat
M_a)$, $\widehat b_{\rm db}=\tfrac12(\widehat M_a+\widehat M_b)^\top R_2a$, $\widehat w_{\rm
db}=\widehat\Gamma_{\rm db}^{-1}\widehat b_{\rm db}$ (eigenvalue-floored off the high-probability
event).

\begin{assumption}[Fitted-kernel expansion at $P^\ast$]
\label{ass:misspec-fitted}
$H_{\alpha,k}=L_k(E_\alpha)+q_{\alpha,k}$, $E_a,E_b$ independent given the latent attributes, with
the covariance-operator bound ${\rm Var}\{\langle L_k(E_\alpha),x\rangle\mid x\}\le
C_L^2(\rho_n/f)\norm{x}_2^2$ for deterministic $x$, and the full cross-fold Gram stability and
fitted-kernel bounds of Theorem~\ref{thm:debias} hold at $P^\ast$.
\end{assumption}

\begin{assumption}[Residual-remainder compatibility]
\label{ass:misspec-residual-compatibility}
With $w^\dagger=\arg\min_u\norm{p^\ast-Mu}_2^2$, $r=p^\ast-Mw^\dagger$, $\max_{\alpha,k}|\langle
q_{\alpha,k},R_2r\rangle|=o_{\mathbb P}(\sqrt{\gamma_{\mathrm{full}}}\,n\rho_n^{3/2})$. A sufficient
condition is that $R_2r$ belong to the projected-remainder test class of
Assumption~\ref{ass:linplug}; this is a compatibility requirement, not a consequence of the
projection normal equations.
\end{assumption}

\begin{proposition}[Projection limit and corrective weights]
\label{prop:mis-cor}
Under Assumption~\ref{ass:misspec-full}, $w^\dagger=\arg\min_u\norm{p^\ast-Mu}_2^2$ is unique
and: \textup{(a)} the known-kernel held-out estimator $\widehat w_{\rm kd}=(M^\top R_2M)^{-1}M^\top
R_2a$ satisfies $\norm{\widehat w_{\rm kd}-w^\dagger}_2=O_{\mathbb P}(\sqrt{K/\gamma_{\mathrm{full}}}/(n\sqrt{\rho_n}))$, and under Assumptions~\ref{ass:misspec-fitted}--\ref{ass:misspec-residual-compatibility}
so does $\widehat w_{\rm db}$; \textup{(b)} with $\widetilde m_k=(I-\Pi_{-k})m_k$, $w^\dagger_k=\langle\widetilde
m_k,p^\ast\rangle/\norm{\widetilde m_k}_2^2$, so $w^\dagger_k<0\iff\langle(I-\Pi_{-k})m_k,(I-\Pi_{-k})p^\ast\rangle<0$,
a residualised corrective partial contrast; \textup{(c)} $\norm{p^\ast-Mw^\dagger}_2^2\le\min_k\norm{p^\ast-m_k}_2^2$,
with the analogous cross-entropy statement for logistic calibration over a class containing each
single-agent submodel, and no cross-loss transfer.
\end{proposition}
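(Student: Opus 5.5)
Uniqueness of $w^\dagger$ comes first. Assumption~\ref{ass:misspec-full} gives $\lambda_{\min}(M^\top M)\ge c\gamma_{\mathrm{full}}n^2\rho_n^2>0$, so $M$ has full column rank. The least-squares problem $\min_u\norm{p^\ast-Mu}_2^2$ is therefore strictly convex, with the unique solution $w^\dagger=(M^\top M)^{-1}M^\top p^\ast$. The residual $r=p^\ast-Mw^\dagger$ satisfies the normal equations $M^\top r=0$. Parts (b) and (c) are then deterministic geometry, and part (a) is a perturbation argument that reuses the cross-fold machinery of Theorem~\ref{thm:debias-cor}.

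\emph{Part (a), known kernels.} Write
\[
\widehat w_{\rm kd}-w^\dagger=(M^\top R_2M)^{-1}M^\top R_2(r+\varepsilon).
\]
For the masked Gram, $\E M^\top R_2M=\pi_2M^\top M$. Matrix Bernstein with $\norm{m_k}_\infty\le C\rho_n$ and $K$ fixed gives fluctuations of order $\rho_n^2 n$, which is $o(\gamma_{\mathrm{full}}n^2\rho_n^2)$. Hence $\lambda_{\min}(M^\top R_2M)\gtrsim\pi_2\gamma_{\mathrm{full}}n^2\rho_n^2$ with probability tending to one.

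The noise term is handled as in Proposition~\ref{prop:known-design-cor}: $\E\norm{(M^\top R_2M)^{-1}M^\top R_2\varepsilon}^2\le C\rho_n\operatorname{tr}\{(M^\top R_2 M)^{-1}\}$, which yields the rate $\sqrt{K/\gamma_{\mathrm{full}}}/(n\sqrt{\rho_n})$.

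The misspecification term $M^\top R_2 r$ has mean $\pi_2M^\top r=0$ by the normal equations, and this is the key point. Its variance is at most $\sum_e m_{k,e}^2r_e^2\le C\rho_n^2\cdot\rho_n^2 n^2$, using $|r_e|\le C\rho_n$. After the Gram inverse this contributes $O_P(\rho_n/(\gamma_{\mathrm{full}}n\rho_n))$, which is $o$ of the rate because $\sqrt{\rho_n}\to0$.

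\emph{Part (a), fitted kernels.} Repeat the exact identity of Step~2 in the proof of Theorem~\ref{thm:debias-cor}, with $\mathrm{vec}(A)=Mw^\dagger+r+\varepsilon$. This produces two extra terms. The first is $M^\top R_2r$, handled as above. The second is $\bar H^\top R_2r$. Its linear part $\langle L_k(E_\alpha),R_2r\rangle$ has variance at most $C_L^2(\rho_n/f)\norm{r}^2\lesssim n^2\rho_n^3$ by Assumption~\ref{ass:misspec-fitted}, so after inversion it is of the edge rate. Its remainder part $\langle q_{\alpha,k},R_2r\rangle$ is $o_P(\sqrt{\gamma_{\mathrm{full}}}n\rho_n^{3/2})$ by Assumption~\ref{ass:misspec-residual-compatibility}, and so is negligible after the $(\gamma_{\mathrm{full}}n^2\rho_n^2)^{-1}$ inverse. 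I expect the main obstacle to be exactly this fitted-remainder/residual cross term. It is the only term not controlled by the normal equations, which is why the compatibility assumption is imposed.

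\emph{Part (b).} Apply Frisch--Waugh--Lovell. Write $Mu=m_ku_k+M_{-k}u_{-k}$ and partial out $\Pi_{-k}$. This gives $w^\dagger_k=\langle\widetilde m_k,p^\ast\rangle/\norm{\widetilde m_k}^2$, and the denominator is nonzero since $\norm{\widetilde m_k}^2\ge\lambda_{\min}(M^\top M)>0$. Because $I-\Pi_{-k}$ is idempotent and symmetric, $\langle\widetilde m_k,p^\ast\rangle=\langle\widetilde m_k,(I-\Pi_{-k})p^\ast\rangle$, which gives the sign characterisation.

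\emph{Part (c).} Each single-agent predictor $m_k=Me_k$ lies in the column span of $M$. The projection minimises the squared loss over that span, so $\norm{p^\ast-Mw^\dagger}^2\le\norm{p^\ast-m_k}^2$ for every $k$. The logistic analogue follows in the same way, because the cross-entropy minimiser over a class containing each single-agent submodel is no worse than any member. No transfer between the two losses is claimed.
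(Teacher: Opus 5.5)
Your proposal is correct and follows essentially the same route as the paper's proof. Both derive uniqueness from full column rank. Both split the known-kernel error into a noise term and a residual term $M^\top R_2 r=M^\top(R_2-\pi_2 I)r$, which is mean zero by the normal equations and negligible after inversion. Both extend to fitted kernels through the extra score $M^\top R_2 r+\bar H^\top R_2 r$, with the remainder absorbed by the compatibility assumption, and both use Frisch--Waugh--Lovell and the variational argument for (b) and (c).

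The only difference is cosmetic. You bound $\sum_e m_{k,e}^2 r_e^2$ through an entrywise bound $|r_e|\le C\rho_n$, which you should justify from $\|w^\dagger\|_2\le C\gamma_{\mathrm{full}}^{-1/2}$ with $K$ fixed. The paper instead uses $\|M_{e\cdot}\|_2^2\le CK\rho_n^2$ together with $\|r\|_2\le\|p^\ast\|_2\le Cn\rho_n$, which avoids that step.
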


\begin{proof}
Assumption~\ref{ass:misspec-full} gives $\lambda_{\min}(M^\top M)\ge c\gamma_{\mathrm{full}}n^2\rho_n^2$,
so $M$ has full rank, $w^\dagger$ unique, $M^\top r=0$, $\norm{r}_2\le\norm{p^\ast}_2\le Cn\rho_n$,
$\norm{w^\dagger}_2\le C\gamma_{\mathrm{full}}^{-1/2}$. \emph{(a) Known kernels.} With $\Gamma_2=M^\top
R_2M$, matrix Bernstein gives $\norm{\Gamma_2-\pi_2M^\top M}_{\op}=O_{\mathbb P}(n\rho_n^2)=o_{\mathbb
P}(n^2\rho_n^2)$, so $\lambda_{\min}(\Gamma_2)\ge c\gamma_{\mathrm{full}}n^2\rho_n^2$. From
$a=Mw^\dagger+r+\varepsilon$, $\widehat w_{\rm kd}-w^\dagger=\Gamma_2^{-1}M^\top
R_2\varepsilon+\Gamma_2^{-1}M^\top R_2r$. The noise term has $\E[\norm{\Gamma_2^{-1}M^\top
R_2\varepsilon}_2^2\mid R_2]\le C\rho_n{\rm tr}(\Gamma_2^{-1})\le CK/(\gamma_{\mathrm{full}}n^2\rho_n)$,
hence $O_{\mathbb P}(\sqrt{K/\gamma_{\mathrm{full}}}/(n\sqrt{\rho_n}))$. For the residual, $M^\top
R_2r=M^\top(R_2-\pi_2I)r$ by $M^\top r=0$, with $\E_R\norm{M^\top(R_2-\pi_2I)r}_2^2=\pi_2(1-\pi_2)\sum_e\norm{M_{e\cdot}}_2^2r_e^2\le
CK\rho_n^2\norm{r}_2^2$, so $\norm{M^\top R_2r}_2=O_{\mathbb P}(\sqrt K\,n\rho_n^2)$ and after
inversion $O_{\mathbb P}(\sqrt K/(\gamma_{\mathrm{full}}n))=o_{\mathbb P}$ of the rate. \emph{Fitted
kernels.} $\widehat w_{\rm db}-w^\dagger=\widehat\Gamma_{\rm db}^{-1}(\widehat b_{\rm
db}-\widehat\Gamma_{\rm db}w^\dagger)=\widehat\Gamma_{\rm db}^{-1}(S_0+S_r)$ with $S_0$ the
cross-fold debiased score of Theorem~\ref{thm:debias} at target $Mw^\dagger$, giving $\widehat\Gamma_{\rm
db}^{-1}S_0=O_{\mathbb P}(\sqrt{K/\gamma_{\mathrm{full}}}/(n\sqrt{\rho_n}))$, and $S_r=M^\top
R_2r+\overline H^\top R_2r$. The first piece is controlled as above; for the second, the linear part
$Z_{\alpha,k}=\langle L_k(E_\alpha),R_2r\rangle$ has ${\rm Var}(Z_{\alpha,k}\mid
R_2,r)\le C_L^2(\rho_n/f)\norm{R_2r}_2^2\le Cn^2\rho_n^3$ by Assumption~\ref{ass:misspec-fitted}, so
$\norm{(Z_{\alpha,k})_k}_2=O_{\mathbb P}(\sqrt K\,n\rho_n^{3/2})$ and after inversion $O_{\mathbb
P}(\sqrt{K/\gamma_{\mathrm{full}}}/(n\sqrt{\rho_n}))$, while the remainder is $o_{\mathbb P}$ of the rate
by Assumption~\ref{ass:misspec-residual-compatibility}. \emph{(b)} Frisch--Waugh--Lovell gives
$w^\dagger_k=\langle\widetilde m_k,p^\ast\rangle/\norm{\widetilde m_k}_2^2$ and, since $\widetilde
m_k\perp{\rm col}(M_{-k})$, $\langle\widetilde m_k,p^\ast\rangle=\langle\widetilde
m_k,(I-\Pi_{-k})p^\ast\rangle$. \emph{(c)} $w^\dagger$ minimises over $\R^K\ni e_k$, so
$\norm{p^\ast-Mw^\dagger}_2^2\le\norm{p^\ast-m_k}_2^2$; the cross-entropy statement is the identical
variational argument for the logistic risk.
\end{proof}

\subsection{Separation for the single-fold plug-in}
\label{app:separation}

\begin{corollary}[Single-design plug-in separation; restatement of
Corollary~\ref{cor:debias-lb}]
\label{cor:debias-lb-cor}
Under the conditions of Theorem~\ref{thm:debias} on the active set $S$ with the normalisation of
Assumption~\ref{ass:norm}, all vectors restricted to $\mathcal D_2$, let $Y=\operatorname{vec}_{\mathcal
D_2}(A)=Mw+\varepsilon$ and a single Stage-A design $\widehat M=M+H$ fitted on a fold independent of
$\mathcal D_2$. The unprojected single-design plug-in is $\widetilde w_{\rm sf}=(\widehat
M^\top\widehat M)^{-1}\widehat M^\top Y$. Assume $\lambda_{\max}(\widehat M^\top\widehat M)\le
C_Gn^2\rho_n^2$, $\lambda_{\min}(\widehat M^\top\widehat M)\ge c_G\gamma_S n^2\rho_n^2$, the full
non-cancellation $\norm{Hw}_2^2\ge c_Hn\rho_n\sum_kr_kw_k^2$, the smaller-terms bound
$\norm{(\widehat M^\top\widehat M)^{-1}M^\top Hw}_2+\norm{(\widehat M^\top\widehat
M)^{-1}M^\top\varepsilon}_2+\norm{(\widehat M^\top\widehat M)^{-1}H^\top\varepsilon}_2=O_{\mathbb
P}(\sqrt{s/\gamma_S}/(n\sqrt{\rho_n}))$, and with $r_{\rm eff}(w)=\sum_kr_kw_k^2/\norm{w}_2^2$ the
separation condition $r_{\rm eff}(w)\norm{w}_2\sqrt{\gamma_S/(s\rho_n)}\to\infty$. Then with
probability tending to one $\norm{\widetilde w_{\rm sf}-w}_2\ge c\,r_{\rm eff}(w)\norm{w}_2/(n\rho_n)$,
so the single-design plug-in is separated from the debiased estimator by the diverging factor
$r_{\rm eff}(w)\norm{w}_2\sqrt{\gamma_S/(s\rho_n)}$; if $r_k\asymp r_{\max,S}$ then $r_{\rm
eff}(w)\asymp r_{\max,S}$.
\end{corollary}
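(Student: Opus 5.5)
The plan is to start from an exact error identity for the single-design normal equations and show that the self-product term dominates. Substituting $Y=Mw+\varepsilon$ and $M=\widehat M-H$ into $\widetilde w_{\rm sf}=(\widehat M^\top\widehat M)^{-1}\widehat M^\top Y$ gives $\widehat M^\top Y=\widehat M^\top\widehat M w-\widehat M^\top Hw+\widehat M^\top\varepsilon$. Expanding $\widehat M^\top H=M^\top H+H^\top H$ and $\widehat M^\top\varepsilon=M^\top\varepsilon+H^\top\varepsilon$, and writing $\widehat G=\widehat M^\top\widehat M$, we get
\[
\widetilde w_{\rm sf}-w=-\widehat G^{-1}H^\top Hw-\widehat G^{-1}M^\top Hw+\widehat G^{-1}M^\top\varepsilon+\widehat G^{-1}H^\top\varepsilon .
\]
This is the same-fold analogue of the boxed identity in the proof of Theorem~\ref{thm:debias-cor}. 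There, $\sym(H_a^\top H_b)$ replaced $H^\top H$; here the self-product survives. By the smaller-terms hypothesis, the last three terms are jointly $O_{\mathbb P}(\sqrt{s/\gamma_S}/(n\sqrt{\rho_n}))$. The reverse triangle inequality therefore reduces the claim to a lower bound on $\|\widehat G^{-1}H^\top Hw\|_2$ of order $r_{\rm eff}(w)\|w\|_2/(n\rho_n)$. The separation condition $r_{\rm eff}(w)\|w\|_2\sqrt{\gamma_S/(s\rho_n)}\to\infty$ says exactly that this bias term dominates the $O_{\mathbb P}$ remainder, so the remainder can be absorbed into the constant $c$ with probability tending to one.

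For the lower bound on the bias term I would use $\|\widehat G^{-1}x\|_2\ge\|x\|_2/\lambda_{\max}(\widehat G)\ge\|x\|_2/(C_Gn^2\rho_n^2)$ with $x=H^\top Hw$. This uses the upper eigenvalue bound, not $\gamma_S$, which is why the stated rate carries no $\gamma_S$. It then remains to show $\|H^\top Hw\|_2\gtrsim n\rho_n\sum_kr_kw_k^2/\|w\|_2=n\rho_n\,r_{\rm eff}(w)\|w\|_2$. The plan is to project onto the direction $w$ and apply Cauchy--Schwarz:
\[
\|H^\top Hw\|_2\ \ge\ \frac{w^\top H^\top Hw}{\|w\|_2}=\frac{\|Hw\|_2^2}{\|w\|_2}\ \ge\ \frac{c_Hn\rho_n\sum_kr_kw_k^2}{\|w\|_2},
\]
where the last step is the full non-cancellation assumption. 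Combining the two displays gives $\|\widehat G^{-1}H^\top Hw\|_2\ge (c_H/C_G)\,r_{\rm eff}(w)\|w\|_2/(n\rho_n)$, which is the claimed lower bound. The statement $r_{\rm eff}\asymp r_{\max,S}$ when the ranks are comparable is immediate from the definition of $r_{\rm eff}(w)$.

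I expect the main obstacle to lie in verifying the hypotheses rather than in the algebra, since the corollary assumes them. The first is non-cancellation, which is condition~(D) of Corollary~\ref{cor:debias-lb}. It is obtained by expanding $\|Hw\|_2^2=\sum_kw_k^2\|H_k\|^2+\sum_{k\neq l}w_kw_l\langle H_k,H_l\rangle$ and using the Frobenius lower bound $\mathbb E\|L_k(E)\|_F^2\asymp r_kn\rho_n/f$ from Lemma~\ref{lem:firststage}. Hanson--Wright concentration, via the covariance-operator bound, controls both the diagonal terms and the same-fold cross terms. The second is that the smaller-terms bound holds, which follows from steps (1), (2) and (7) of Lemma~\ref{lem:plugin} with $\widehat G$ in place of $\widehat\Phi_{\rm db}$.
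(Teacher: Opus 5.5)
Your proof is correct and is essentially the paper's own argument: the same four-term normal-equation identity isolating $\widehat G^{-1}H^\top Hw$, the bound $\|\widehat G^{-1}x\|_2\ge\|x\|_2/\lambda_{\max}(\widehat G)$, the Cauchy--Schwarz step $\|H^\top Hw\|_2\ge\|Hw\|_2^2/\|w\|_2$ combined with non-cancellation, and the reverse triangle inequality justified by the separation condition. Your final paragraph is unnecessary, since the corollary takes those hypotheses as given. As a verification sketch it is also too optimistic about non-cancellation: Hanson--Wright controls only the fluctuations of the same-fold cross terms $\langle\mathcal L_k(\mathbf E),\mathcal L_l(\mathbf E)\rangle$, whose means $\operatorname{tr}(\mathcal L_k\Sigma_E\mathcal L_l^*)$ need not be negligible, which is why the paper keeps this condition as a substantive assumption.
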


\begin{proof}
With $G_n=\widehat M^\top\widehat M$, the normal equations give $\widetilde w_{\rm
sf}-w=-G_n^{-1}H^\top Hw-G_n^{-1}M^\top Hw+G_n^{-1}M^\top\varepsilon+G_n^{-1}H^\top\varepsilon$, so
the attenuation is $B_n=G_n^{-1}H^\top Hw$ and the remainder $R_n$ collects the other three, with
$\norm{R_n}_2=O_{\mathbb P}(\sqrt{s/\gamma_S}/(n\sqrt{\rho_n}))=o_{\mathbb P}(r_{\rm
eff}(w)\norm{w}_2/(n\rho_n))$ by the separation condition. Since $G_n\succ0$, $\norm{G_n^{-1}x}_2\ge\norm{x}_2/\lambda_{\max}(G_n)$,
and by Cauchy $\norm{H^\top Hw}_2\ge w^\top H^\top Hw/\norm{w}_2=\norm{Hw}_2^2/\norm{w}_2$; with the
non-cancellation and $\lambda_{\max}(G_n)\le C_Gn^2\rho_n^2$, $\norm{B_n}_2\ge(c_H/C_G)r_{\rm
eff}(w)\norm{w}_2/(n\rho_n)$. The reverse triangle inequality and $c<c_H/(2C_G)$ give the claim.
\end{proof}

\begin{remark}[Projection and same-data plug-in]
The lower bound is for the unprojected held-out estimator: nonexpansiveness of the simplex
projection gives upper, not lower, bounds, and a projected statement needs a tangent-cone condition
that projection cannot cancel $G_n^{-1}H^\top Hw$; the proof also uses independence of $Y$ from the
Stage-A design and is not a proof for the fully same-data estimator.
\end{remark}

\subsection{Two-stage selection and the adaptive oracle rate}
\label{app:selection}

\begin{corollary}[Adaptive selection]
\label{cor:adapt-cor}
Let $K=K_n$ be the number of candidates and let
\[
        S_n=\{k\le K_n:w_{n,k}\neq 0\},\qquad
        w_{\min,n}=\min_{k\in S_n}|w_{n,k}|,\qquad
        v_{\max,n}=\max_{k\le K_n} v_{n,k}.
\]
For notational simplicity assume $S_n\neq\varnothing$; if $S_n=\varnothing$, all statements below are
interpreted with the beta-min condition omitted and exact recovery meaning $\widehat S_n=\varnothing$.

Let $\widehat w^{\,\mathrm{full}}_n=(\widehat w_{n,1},\ldots,\widehat w_{n,K_n})$ be the unconstrained
full estimator, computed with the cross-fold debiased Gram and moment when the kernels are
fitted. Let $\widehat v_{n,k}$ be a standard error for $\widehat w_{n,k}$, equal to the Stage-B
sandwich standard error in the known-kernel case and to the corresponding two-stage
generated-regressor standard error in the fitted-kernel case. Let $v_{n,k}>0$ denote the population
standard deviation of the leading influence expansion for $\widehat w_{n,k}-w_{n,k}$, including the
first-stage contribution when the kernels are fitted.

The target $w_n$ is the target of the full regression: under correct specification and fidelity
it is the generative weight vector, while under misspecification it is the population projection
coefficient vector. The adaptive refit is always taken over the same parameter space as the
corresponding oracle refit; thus, for signed projection or calibration targets the refit is
unconstrained, whereas a simplex refit is used only for an interior generative simplex target, in
which case the projection is asymptotically inactive.

\smallskip
\noindent\textbf{Fixed candidate set.}
Suppose $K_n\equiv K<\infty$. Assume that, for every $k\le K$,
\[
        \frac{\widehat w_{n,k}-w_{n,k}}{v_{n,k}}=O_{\mathbb P}(1),
        \qquad
        \frac{\widehat v_{n,k}}{v_{n,k}}\stackrel{\mathbb P}{\longrightarrow}1 .
\]
Let $c_n\to\infty$ and assume the beta-min condition
\[
        c_n v_{\max,n}=o(w_{\min,n}).
        \tag{F}
\]
Define $\widehat S_n=\{k\le K: |\widehat w_{n,k}|>c_n\widehat v_{n,k}\}$. Then
$\mathbb P(\widehat S_n=S_n)\to 1$. Consequently, if $\widehat w_n^{\,\mathrm{or}}$ denotes the oracle
refit on $S_n$ and $\|\widehat w_n^{\,\mathrm{or}}-w_n\|_2=O_{\mathbb P}(r_n)$, then the adaptive refit
$\widehat w_n^{\,\mathrm{ad}}$ satisfies $\|\widehat w_n^{\,\mathrm{ad}}-w_n\|_2=O_{\mathbb P}(r_n)$.

\smallskip
\noindent\textbf{Growing candidate set.}
Suppose $K_n\to\infty$, $K_n\ge2$, and fix $\eta>0$. Set
\[
        t_n=(1+\eta)\sqrt{2\log K_n},
        \qquad
        \widehat S_n=\{k\le K_n:|\widehat w_{n,k}|>t_n\widehat v_{n,k}\}.
\]
Assume the uniform full conditions
\[
        \max_{k\le K_n}\Bigl|\frac{\widehat v_{n,k}}{v_{n,k}}-1\Bigr|
        \stackrel{\mathbb P}{\longrightarrow}0,
        \tag{G1}
\]
\[
        \mathbb P\Bigl(\max_{k\le K_n}\frac{|\widehat w_{n,k}-w_{n,k}|}{v_{n,k}}
        >(1+\eta/2)\sqrt{2\log K_n}\Bigr)\longrightarrow 0,
        \tag{G2}
\]
and the growing-candidate set beta-min condition
\[
        \sqrt{\log K_n}\,v_{\max,n}=o(w_{\min,n}).
        \tag{G3}
\]
Then $\mathbb P(\widehat S_n=S_n)\to 1$. Consequently, if the oracle refit satisfies
$\|\widehat w_n^{\,\mathrm{or}}-w_n\|_2=O_{\mathbb P}(r_n)$, then so does the adaptive refit.

For known kernels, condition \textup{(G2)} follows from a uniform Bernstein bound for the full
coordinate linear forms; in the notation of the main text, the side condition
\[
        K_n\log K_n=o(\gamma_{\mathrm{full},n}^{\,2}n^2\rho_n)
\]
is a sufficient range condition whenever the stated full leverage bound and linear expansion
hold uniformly in $k\le K_n$. For fitted kernels, \textup{(G1)}--\textup{(G2)} require the
corresponding uniform version of the two-stage expansion and variance consistency; coordinatewise
consistency alone is not enough when $K_n\to\infty$.
\end{corollary}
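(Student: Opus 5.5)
The proof runs on one deterministic event. On it the thresholding rule classifies every coordinate correctly, so the selected set equals $S_n$. The event is built from the coordinatewise consistency of $\widehat w_n$ and of the standard errors. I treat the fixed and growing cases with the same two-sided argument and change only the threshold scale.

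\emph{Fixed candidate set.} Take $\epsilon\in(0,1/2)$ and define the event
\[
\mathcal E_n=\Bigl\{\max_{k\le K}|\widehat v_{n,k}/v_{n,k}-1|\le\epsilon\Bigr\}\cap\Bigl\{\max_{k\le K}|\widehat w_{n,k}-w_{n,k}|/v_{n,k}\le M\Bigr\}.
\]
Because $K$ is fixed, a union bound over finitely many coordinates gives the following. Ratio consistency makes the first set have probability tending to one. Tightness of each standardised error lets us choose $M$ so that the second set has probability at least $1-\epsilon'$ for large $n$.

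On $\mathcal E_n$, take an inactive $k$, so $w_{n,k}=0$. Then $|\widehat w_{n,k}|\le Mv_{n,k}$. This is below $c_n(1-\epsilon)v_{n,k}\le c_n\widehat v_{n,k}$ once $c_n>M/(1-\epsilon)$, which holds eventually since $c_n\to\infty$. So no inactive coordinate is selected.

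Now take an active $k$. Then $|\widehat w_{n,k}|\ge w_{\min,n}-Mv_{\max,n}$. By (F), both $Mv_{\max,n}$ and $c_n(1+\epsilon)v_{\max,n}$ are $o(w_{\min,n})$. Hence $|\widehat w_{n,k}|>c_n(1+\epsilon)v_{n,k}\ge c_n\widehat v_{n,k}$, and every active coordinate is selected.

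This gives $\liminf\mathbb P(\widehat S_n=S_n)\ge1-\epsilon'$. Since $\epsilon'$ is arbitrary, $\mathbb P(\widehat S_n=S_n)\to1$.

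\emph{Transfer of the rate.} On $\{\widehat S_n=S_n\}$ the adaptive refit is the same estimator as the oracle refit. Both are computed over the same parameter space, which is the reason for the matching convention in the statement. Therefore, for every $t$,
\[
\mathbb P\bigl(\|\widehat w^{\mathrm{ad}}_n-w_n\|_2>tr_n\bigr)\le\mathbb P\bigl(\|\widehat w^{\mathrm{or}}_n-w_n\|_2>tr_n\bigr)+\mathbb P(\widehat S_n\ne S_n),
\]
and this yields $O_{\mathbb P}(r_n)$. One point needs care: $\widehat S_n$ and the refit use the same data. This causes no difficulty, because the inclusion above holds pathwise and no independence is needed.

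\emph{Growing candidate set.} Now a union over $K_n$ coordinates cannot come from tightness alone, so (G1)–(G2) take its place. Work on the event where
\[
\max_k|\widehat v_{n,k}/v_{n,k}-1|\le\epsilon,\qquad \max_k|\widehat w_{n,k}-w_{n,k}|/v_{n,k}\le(1+\eta/2)\sqrt{2\log K_n},
\]
with $\epsilon$ chosen so that $(1+\eta/2)<(1+\eta)(1-\epsilon)$. This event has probability tending to one.

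For inactive $k$: $|\widehat w_{n,k}|\le(1+\eta/2)\sqrt{2\log K_n}\,v_{n,k}<t_n(1-\epsilon)v_{n,k}\le t_n\widehat v_{n,k}$, so $k$ is not selected. For active $k$: $|\widehat w_{n,k}|\ge w_{\min,n}-(1+\eta/2)\sqrt{2\log K_n}\,v_{\max,n}$. By (G3), the right-hand side exceeds $t_n(1+\epsilon)v_{\max,n}\ge t_n\widehat v_{n,k}$ eventually, so $k$ is selected.

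The rate transfers exactly as in the fixed case.

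\emph{Known-kernel sufficient condition.} It remains to justify the remark that (G2) holds for known kernels. Each coordinate error is the linear form $\langle g_k,\varepsilon\rangle$ with $g_k=M(M^\top M)^{-1}e_k$, a sum of independent bounded terms. Bernstein's inequality gives a sub-Gaussian tail at level $\sqrt{2\log K_n}$, as long as the Bernstein range term $\max_e|g_{k,e}|\sqrt{\log K_n}$ is $o(v_{n,k})$. A union bound over $K_n$ coordinates then finishes the argument.

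The main obstacle is this leverage verification. It needs to control $\max_e|g_{k,e}|$ and $v_{n,k}$ uniformly in $k$ through $\gamma_{\mathrm{full},n}$ and $\|m_k\|_\infty\le C\rho_n$. The aim is to show that the range condition reduces to $K_n\log K_n=o(\gamma^2_{\mathrm{full},n}n^2\rho_n)$. For fitted kernels I would only note that the same argument needs a uniform-in-$k$ version of the influence expansion of Theorem~\ref{thm:twostage}, and I would not prove that here.
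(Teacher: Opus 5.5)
Your proof is correct and follows essentially the same route as the paper. In both, a single high-probability event is built from standard-error ratio consistency and a uniform bound on the standardised errors: tightness plus a finite union for fixed $K$, and (G1)--(G2) for growing $K_n$. Coordinates are classified exactly on that event, the oracle rate transfers pathwise on $\{\widehat S_n=S_n\}$, and (G2) is verified for known kernels by Bernstein's inequality plus a union bound.

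Like the paper, you take the leverage condition $\max_{k,e}|g_{k,e}|\sqrt{\log K_n}=o(v_{n,k})$ as an input to the Bernstein step rather than deriving it. It does close. Under information comparability you have $|g_{k,e}|\le C\sqrt{K_n}/(\gamma_{\mathrm{full},n}n^2\rho_n)$ and $v_{n,k}\gtrsim 1/(n\sqrt{\rho_n})$, and together these give exactly the stated range condition $K_n\log K_n=o(\gamma_{\mathrm{full},n}^2n^2\rho_n)$.
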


\begin{proof}
\emph{Fixed candidate set.} Since $K$ is fixed and $(\widehat w_{n,k}-w_{n,k})/v_{n,k}=O_{\mathbb P}(1)$ for
each $k$, a finite union gives $\max_{k\le K}|\widehat w_{n,k}-w_{n,k}|/v_{n,k}=O_{\mathbb P}(1)$, and
the coordinatewise standard-error consistency with fixed $K$ gives
$\max_{k\le K}|\widehat v_{n,k}/v_{n,k}-1|\to_{\mathbb P}0$. Hence
$R_n:=\max_{k\le K}|\widehat w_{n,k}-w_{n,k}|/\widehat v_{n,k}=O_{\mathbb P}(1)$, so
$\mathbb P\{R_n\le c_n/2\}\to1$. By \textup{(F)} and uniform standard-error consistency,
$c_n\max_{k\le K}\widehat v_{n,k}=o_{\mathbb P}(w_{\min,n})$, so
$\mathbb P\{c_n\max_k\widehat v_{n,k}\le w_{\min,n}/4\}\to1$. On the intersection event: if
$k\notin S_n$ then $w_{n,k}=0$ and $|\widehat w_{n,k}|\le(c_n/2)\widehat v_{n,k}<c_n\widehat v_{n,k}$,
so $k\notin\widehat S_n$; and if $k\in S_n$ then
$|\widehat w_{n,k}|\ge w_{\min,n}-(c_n/2)\widehat v_{n,k}\ge 7w_{\min,n}/8>w_{\min,n}/4\ge
c_n\widehat v_{n,k}$, so $k\in\widehat S_n$. Both inclusions hold with probability tending to one, so
$\mathbb P(\widehat S_n=S_n)\to1$.

\emph{Growing candidate set.} Put $a_n=(1+\eta/2)\sqrt{2\log K_n}$ and $t_n=(1+\eta)\sqrt{2\log K_n}$. Choose
$\delta\in(0,1)$ with $a_n\le(1-\delta)t_n$ for all $n$, possible because
$a_n/t_n=(1+\eta/2)/(1+\eta)<1$. By \textup{(G2)},
$\mathbb P\{\max_k|\widehat w_{n,k}-w_{n,k}|/v_{n,k}\le a_n\}\to1$; by \textup{(G1)},
$\mathbb P\{(1-\delta)v_{n,k}\le\widehat v_{n,k}\le(1+\delta)v_{n,k}\ \forall k\}\to1$; and
\textup{(G3)} gives $a_nv_{\max,n}=o(w_{\min,n})$ and $t_nv_{\max,n}=o(w_{\min,n})$. On the
intersection event: if $k\notin S_n$ then
$|\widehat w_{n,k}|\le a_nv_{n,k}\le(1-\delta)t_nv_{n,k}\le t_n\widehat v_{n,k}$, and the strict
selection rule excludes it; if $k\in S_n$ then
$|\widehat w_{n,k}|\ge w_{\min,n}-a_nv_{\max,n}$ while $t_n\widehat v_{n,k}\le t_n(1+\delta)v_{\max,n}$,
and $w_{\min,n}-a_nv_{\max,n}>t_n(1+\delta)v_{\max,n}$ for large $n$, so $k\in\widehat S_n$. Hence
$\mathbb P(\widehat S_n=S_n)\to1$.

\emph{Oracle-rate transfer.} On $\{\widehat S_n=S_n\}$,
$\widehat w_n^{\,\mathrm{ad}}=\widehat w_n^{\,\mathrm{or}}$, so for every $M>0$,
$\mathbb P(\|\widehat w_n^{\,\mathrm{ad}}-w_n\|_2>Mr_n)\le
\mathbb P(\|\widehat w_n^{\,\mathrm{or}}-w_n\|_2>Mr_n)+\mathbb P(\widehat S_n\neq S_n)$; the second
term vanishes and the first is made small by large $M$, giving
$\|\widehat w_n^{\,\mathrm{ad}}-w_n\|_2=O_{\mathbb P}(r_n)$.

\emph{Bernstein verification of \textup{(G2)} (known kernels).} Suppose uniformly in $k\le K_n$,
$\widehat w_{n,k}-w_{n,k}=\sum_{e\in\mathcal D_n}h_{n,k,e}\varepsilon_{n,e}+r_{n,k}$ with independent
mean-zero bounded Bernoulli residuals,
$\sum_e h_{n,k,e}^2\operatorname{Var}(\varepsilon_{n,e})=v_{n,k}^2$, and
$\Delta_n:=\max_{k,e}|h_{n,k,e}|/v_{n,k}$ satisfying $\Delta_n\sqrt{\log K_n}\to0$, with
$\max_k|r_{n,k}|/(v_{n,k}\sqrt{\log K_n})\to_{\mathbb P}0$. Bernstein's inequality gives, for
$x_n=(1+\eta/3)\sqrt{2\log K_n}$,
$\mathbb P(|\sum_e h_{n,k,e}\varepsilon_{n,e}|>x_nv_{n,k})\le2\exp\{-x_n^2/(2(1+o(1)))\}$, and a union
bound yields
$\mathbb P(\max_k|\sum_e h_{n,k,e}\varepsilon_{n,e}|/v_{n,k}>x_n)\le2K_n^{\,1-(1+\eta/3)^2+o(1)}\to0$.
The uniform remainder bound upgrades $x_n$ to $(1+\eta/2)\sqrt{2\log K_n}$, proving \textup{(G2)}.
\end{proof}

\subsection{The noisy-OR operator: linearisation, efficiency, hierarchy, and boundary}
\label{app:noisyor}

\begin{theorem}[The noisy-OR operator; restatement of Theorem~\ref{thm:nor}]
\label{thm:nor-cor}
Conditionally on the layer kernels, suppose $A_{ij}\sim\operatorname{Bernoulli}\{P_{ij}(w_0)\}$
independently over $(i,j)\in\mathcal D_n$, where $w_0=(w_{01},w_{02})\in\mathcal W=[\varepsilon,
1-\varepsilon]^2$, $0<\varepsilon<1/2$. For $w=(w_1,w_2)\in\mathcal W$, let
$P(w)=J-(J-w_1G_1)\odot(J-w_2G_2)$. Write $g_1=\operatorname{vec}_<(G_1)$,
$g_2=\operatorname{vec}_<(G_2)$, $h=\operatorname{vec}_<(G_1\odot G_2)$, $M=[g_1,g_2,h]$,
$c_0=(w_{01},w_{02},-w_{01}w_{02})^\top$, $s_1=\|g_1\|_2$, $s_2=\|g_2\|_2$, $s_{12}=\|h\|_2$,
$S=\operatorname{diag}(s_1,s_2,s_{12})$, $\bar M=MS^{-1}$, and $\gamma_e=\lambda_{\min}(\bar M^\top\bar
M)$. Assume, uniformly in $n$:
\begin{enumerate}
\item[(A1)] \emph{Dense common scale.} $c\,n\rho_n\le s_1,s_2\le C\,n\rho_n$,
$c\,n\rho_n^2\le s_{12}\le C\,n\rho_n^2$, $\max_{i<j}G_{k,ij}\le C\rho_n$ ($k=1,2$), and
$\max_{i<j}G_{1,ij}G_{2,ij}\le C\rho_n^2$.
\item[(A2)] \emph{Nondegenerate Bernoulli information.} For every $w$ near $w_0$,
$c\rho_n\le P_{ij}(w)\{1-P_{ij}(w)\}\le C\rho_n$ and
$c\rho_n\,\bar M^\top\bar M\preceq\bar M^\top D(w)\bar M\preceq C\rho_n\,\bar M^\top\bar M$, where
$D(w)=\operatorname{diag}(P_{ij}(w)\{1-P_{ij}(w)\})$.
\item[(A3)] \emph{Transversality and leverage.} $\gamma_e\ge\gamma_0>0$ and
$\max_{i<j}\|\bar M_{ij,\cdot}\|_2=o(1)$.
\item[(A4)] \emph{Sparsity scale.} $n^2\rho_n\to\infty$; the operator-test scale is $n^2\rho_n^3$.
\end{enumerate}
Then:
\begin{enumerate}
\item[(a)] \emph{Exact linearisation and identification.}
$P(w_0)=w_{01}G_1+w_{02}G_2-w_{01}w_{02}\,G_1\odot G_2$, so
$\operatorname{vec}_<\{P(w_0)\}=Mc_0$. If $\gamma_e>0$ the augmented vector
$(w_{01},w_{02},-w_{01}w_{02})$ is identified; full augmented rank suffices but is not necessary, and
in general the weights are identified iff $w\mapsto P(w)$ is injective on $\mathcal W$.
\item[(b)] \emph{Least-squares pilot and rates.} With $a=\operatorname{vec}_<(A)$,
$\widehat c=(M^\top M)^{-1}M^\top a$, and $\widehat w^{\,LS}=\Pi_{\mathcal W}(\widehat c_1,\widehat
c_2)$, one has $\|\widehat w^{\,LS}-w_0\|_2=O_{\mathbb P}(\gamma_e^{-1/2}/(n\sqrt{\rho_n}))$, while the
interaction coordinate has the slower rate
$|\widehat c_3+w_{01}w_{02}|=O_{\mathbb P}(\gamma_e^{-1/2}/(n\rho_n^{3/2}))$. Over the class
satisfying \textup{(A1)--(A4)} with $\gamma_e\ge\gamma_0$,
$\inf_{\widetilde w}\sup_{w_0}\mathbb E_{w_0}\|\widetilde w-w_0\|_2\ge c/(n\sqrt{\rho_n})$, so the
pilot is rate-optimal in the $(n,\rho_n)$ scale (this lower bound does not assert sharpness in
$\gamma_e$).
\item[(c)] \emph{One-step efficiency.} With
$\dot p_e(w)=(G_{1,ij}(1-w_2G_{2,ij}),\,G_{2,ij}(1-w_1G_{1,ij}))^\top$,
$I_n(w)=\sum_{i<j}\dot p_{ij}\dot p_{ij}^\top/(p_{ij}(1-p_{ij}))$, and
$S_n(w)=\sum_{i<j}\dot p_{ij}(A_{ij}-p_{ij})/(p_{ij}(1-p_{ij}))$, the one-step estimator
$\widehat w^{\,OS}=\widehat w^{\,LS}+I_n(\widehat w^{\,LS})^{-1}S_n(\widehat w^{\,LS})$ satisfies
$\widehat w^{\,OS}-w_0=I_n(w_0)^{-1}S_n(w_0)+o_{\mathbb P}(\|I_n(w_0)^{-1/2}\|)$ and
$I_n(w_0)^{1/2}(\widehat w^{\,OS}-w_0)\Rightarrow N(0,I_2)$, attaining the parametric information
bound.
\item[(d)] \emph{General $K$: free-coefficient hierarchy.} For fixed $K$ and
$P_K(w)=1-\prod_{k=1}^K(1-w_kG_k)$, define for nonempty $S\subseteq[K]$ the column
$m_S=\operatorname{vec}_<(\bigodot_{k\in S}G_k)$ and $c_S^*=(-1)^{|S|+1}\prod_{k\in S}w_k$. Fix
$1\le j\le K$; let $M_{\le j}$ collect columns $m_S$ with $1\le|S|\le j$, let $\widehat c_{\le j}$ be
its least-squares estimator, and let $\gamma_{e,j}$ be the smallest eigenvalue of its normalised
correlation Gram. Assuming $\|m_S\|_2\asymp n\rho_n^{|S|}$ for $1\le|S|\le j$, $\gamma_{e,j}>0$, and
$K\rho_n\le1/2$, then for $|S|=i\le j$,
\[
    |\widehat c_S-c_S^*|=O_{\mathbb P}\!\Bigl(\frac{\gamma_{e,j}^{-1/2}}{n\rho_n^{\,i-1/2}}\Bigr)
    +O\!\bigl(\gamma_{e,j}^{-1}K^{j+1}\rho_n^{\,j+1-i}\bigr),
\]
equivalently
$|\widehat c_S-c_S^*|\,\|m_S\|_2/\|P_K(w)\|_F=O_{\mathbb P}(\gamma_{e,j}^{-1/2}/(n\sqrt{\rho_n}))
+O(\gamma_{e,j}^{-1}K^{j+1}\rho_n^j)$. Thus the relative fitted-mean contribution of each included
order is estimated at the edge rate; no claim is made that the constrained product map can be
inverted at the edge rate for $K\ge3$, and the theorem-level weight recovery is the two-layer result
of (a)--(c).
\item[(e)] \emph{Operator-detectability boundary.} With
$P_{\mathrm{mix}}(w_0)=w_{01}G_1+w_{02}G_2$ and
$P_{\mathrm{or}}(w_0)=P_{\mathrm{mix}}(w_0)-w_{01}w_{02}G_1\odot G_2$: if $n^2\rho_n^3\to0$ then
$\operatorname{TV}(\mathbb P_{P_{\mathrm{or}}(w_0)},\mathbb P_{P_{\mathrm{mix}}(w_0)})\to0$, so no test
distinguishes the operators below this scale. Conversely, with $B=[g_1,g_2]$, $D_0=D(w_0)$ under the
mixture null, $\Pi_0$ the $D_0^{-1}$-projection onto $\operatorname{span}(B)$, $h_\perp=(I-\Pi_0)h$,
and $\mathcal I_{12,n}=h_\perp^\top D_0^{-1}h_\perp$: if
$\mathcal I_{12,n}\asymp\Gamma_{e2}n^2\rho_n^3$ with $\Gamma_{e2}>0$, the residualised interaction
score test has noncentrality $w_{01}w_{02}\sqrt{\mathcal I_{12,n}}\asymp
w_{01}w_{02}\sqrt{\Gamma_{e2}n^2\rho_n^3}$ and is consistent whenever
$w_{01}^2w_{02}^2\Gamma_{e2}n^2\rho_n^3\to\infty$.
\end{enumerate}
All assertions are conditional on known layer kernels, or on external first-stage estimates whose
error is $o_{\mathbb P}$ of the corresponding column scale. If the layers are fitted from the same
graph, the interaction column must be constructed by the separate cross-fold method of
Theorem~\ref{prop:fitted-operator}; a single-fold fitted interaction column is not covered here.
\end{theorem}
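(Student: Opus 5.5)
Part (a) is algebra. Expanding $(J-w_1G_1)\odot(J-w_2G_2)=J-w_1G_1-w_2G_2+w_1w_2G_1\odot G_2$ gives $\operatorname{vec}_<P(w_0)=Mc_0$. Identification of $c_0$ under $\gamma_e>0$ is injectivity of $M$. The example $G_1\odot G_2=0$ shows full augmented rank is not necessary. The general criterion is the definition of identifiability.

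For (b) I would reuse the known-design argument of Proposition~\ref{prop:known-design-cor} in normalised coordinates. Write $\widehat c-c_0=S^{-1}(\bar M^\top\bar M)^{-1}\bar M^\top\varepsilon$. By (A2),
\[
\operatorname{Cov}\bigl((\bar M^\top\bar M)^{-1}\bar M^\top\varepsilon\bigr)\preceq C\rho_n(\bar M^\top\bar M)^{-1},
\]
so each normalised coordinate is $O_P(\sqrt{\rho_n/\gamma_e})$. Dividing by $s_1,s_2\asymp n\rho_n$ gives the edge rate for $(\widehat c_1,\widehat c_2)$. Dividing by $s_{12}\asymp n\rho_n^2$ gives the slower interaction rate. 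Projection onto the convex set $\mathcal W$ is nonexpansive.

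For the lower bound I would use a Le Cam two-point argument: perturb $w_{01}$ by $\delta=c/(n\sqrt{\rho_n})$. The squared change in $P$ is $\asymp\delta^2 n^2\rho_n^2$, and with Bernoulli variances $\asymp\rho_n$ the KL divergence is $\asymp\delta^2n^2\rho_n=O(1)$. This is the KL bound already used in Appendix~\ref{app:lower} (Lemma~\ref{lem:bern-kl-final2}).

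Part (c) is the classical one-step argument, and I expect it to be the main obstacle. I would first establish three facts:
\begin{enumerate}
\item $I_n(w_0)\asymp n^2\rho_n$, with the order-one constant controlled by $\gamma_e$ via (A2). The $2\times2$ score directions $\dot p$ equal $g_k$ up to $1+O(\rho_n)$ factors, so they inherit transversality.
\item $I_n(w)$ is continuous in $w$: $\|I_n(\widehat w^{LS})-I_n(w_0)\|=O_P(\|\widehat w^{LS}-w_0\|)\,n^2\rho_n$.
\item The score expansion holds:
\[
S_n(\widehat w)=S_n(w_0)-I_n(w_0)(\widehat w-w_0)+O_P\bigl(n^2\rho_n\|\widehat w-w_0\|^2\bigr)+(\text{a centred term linear in }\widehat w-w_0).
\]
\end{enumerate}
The delicate point is that the pilot is computed from the same data. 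The centred fluctuation $\sum_e(\partial_w[\dot p_e/(p_e(1-p_e))])(A_e-p_e)$ has size $O_P(n\rho_n^{1/2})$. Multiplied by $\|\widehat w-w_0\|=O_P(1/(n\sqrt{\rho_n}))$, it is $O_P(1)=o_P(n\sqrt{\rho_n})$, and $\|I_n^{-1/2}\|^{-1}\asymp n\sqrt{\rho_n}$. This uses a uniform bound over a shrinking neighbourhood, or the fact that the derivative at $w_0$ is deterministic and Lipschitz. Similarly, the quadratic term $n^2\rho_n\cdot(n^2\rho_n)^{-1}=O_P(1)$ is negligible. Asymptotic normality of $I_n^{-1/2}S_n(w_0)$ follows from Lindeberg, using the leverage condition in (A3). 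Efficiency follows from LAN of the parametric Bernoulli array.

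For (d) I would split the truth as $p=M_{\le j}c^*_{\le j}+b$. Here $b$ collects the orders $>j$, whose entries are bounded by $\sum_{|S|>j}\prod_k w_kG_k\le (K\rho_n)^{j+1}$ by the $K\rho_n\le1/2$ geometric bound. The noise part is handled as in (b) with column scale $n\rho_n^i$. For the bias $(M^\top M)^{-1}M^\top b$ I would work in normalised coordinates: it is bounded by $\gamma_{e,j}^{-1}\|b\|/\|m_S\|\lesssim\gamma_{e,j}^{-1}K^{j+1}n\rho_n^{j+1}/(n\rho_n^i)$.

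For (e) the lower half follows Pinsker. The dyadwise KL satisfies $\KL\le(\Delta p)^2/(p(1-p))\lesssim\rho_n^4/\rho_n$. Summing over $n^2$ dyads gives $n^2\rho_n^3\to0$, and this sum tends to zero. The upper half is the efficient score test for the interaction parameter after $D_0^{-1}$-weighted residualisation on $\operatorname{span}(B)$. Under the null the statistic is $N(0,1)$ by Lindeberg, with nuisance weights estimated at edge rate and their effect removed by orthogonality. Under the alternative its mean shifts by $w_{01}w_{02}\mathcal I_{12,n}/\sqrt{\mathcal I_{12,n}}$, which diverges.
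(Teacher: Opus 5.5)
Your proposal is correct and follows the paper's proof essentially step for step: the exact expansion in (a), the normalised-coordinate sandwich bound $\operatorname{Cov}\preceq C\rho_n(\bar M^\top\bar M)^{-1}$ with a Le Cam perturbation of $w_{01}$ in (b), LAN and the one-step argument in (c), the split into included orders plus an $O(n(K\rho_n)^{j+1})$ omitted-mass bias in (d), and Pinsker together with the $D_0^{-1}$-residualised score test in (e). The one place you add detail is (c). There your Taylor expansion of the score bounds the centred $O_P(n\sqrt{\rho_n})$ fluctuation against the $O_P(1/(n\sqrt{\rho_n}))$ pilot error, which is the paper's stochastic-equicontinuity step written out explicitly.
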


\begin{proof}
All probability statements are conditional on the layer kernels; constants may change line to line.

\emph{(a).} For each dyad $P_{ij}(w_0)=1-(1-w_{01}G_{1,ij})(1-w_{02}G_{2,ij})
=w_{01}G_{1,ij}+w_{02}G_{2,ij}-w_{01}w_{02}G_{1,ij}G_{2,ij}$, so
$\operatorname{vec}_<\{P(w_0)\}=Mc_0$. If $\gamma_e>0$, $M$ has full column rank and $c_0$ is the
unique augmented representation, identifying $(w_{01},w_{02})$. Full rank is not necessary: if
$G_1\odot G_2=0$ the third column vanishes but the weights remain identified when $G_1,G_2$ are
linearly independent; the general criterion is injectivity of $w\mapsto P(w)$.

\emph{(b).} Let $\varepsilon=a-\operatorname{vec}_<\{P(w_0)\}$, so $a=Mc_0+\varepsilon$ and
$\widehat c-c_0=(M^\top M)^{-1}M^\top\varepsilon$. With $M=\bar MS$,
$S(\widehat c-c_0)=(\bar M^\top\bar M)^{-1}\bar M^\top\varepsilon$, and
$\operatorname{Cov}\{S(\widehat c-c_0)\}=(\bar M^\top\bar M)^{-1}\bar M^\top D(w_0)\bar M(\bar M^\top\bar
M)^{-1}\preceq C\rho_n(\bar M^\top\bar M)^{-1}$. Since the augmented dimension is fixed and
$\lambda_{\min}(\bar M^\top\bar M)=\gamma_e$, $\|S(\widehat c-c_0)\|_2=O_{\mathbb P}(\sqrt{\rho_n/
\gamma_e})$. With $s_k\asymp n\rho_n$ ($k=1,2$) and $s_{12}\asymp n\rho_n^2$, and one-Lipschitz
projection onto $\mathcal W$, the displayed first-two-coordinate and interaction rates follow. For the
lower bound, perturb only the first coordinate: $P(w^\delta)-P(w_0)=\delta\,G_1\odot(J-w_{02}G_2)$ has
$\|\cdot\|_F^2\asymp n^2\rho_n^2$ by (A1), so by the Bernoulli KL bound valid under (A2),
$\operatorname{KL}\{\mathbb P_{w_0},\mathbb P_{w^\delta}\}\le C\delta^2n^2\rho_n$; taking
$\delta=c_0/(n\sqrt{\rho_n})$ keeps it bounded, and Le Cam's two-point lemma gives the claimed minimax
floor.

\emph{(c).} By (A2), $I_n(w_0)\asymp n^2\rho_n$ positive-definitely, so the pilot is
$O_{\mathbb P}(\|I_n(w_0)^{-1/2}\|)$. The Bernoulli log-likelihood is a fixed-dimension triangular-array
likelihood with first derivatives $O(\rho_n)$, the only nonzero second derivative of $p_{ij}$ being
$\partial^2_{w_1w_2}p_{ij}=-G_{1,ij}G_{2,ij}=O(\rho_n^2)$, and denominators of order $\rho_n$; the
leverage condition (A3) supplies Lindeberg and stochastic equicontinuity. The LAN expansion
$S_n(\widehat w^{\,LS})=S_n(w_0)-I_n(w_0)(\widehat w^{\,LS}-w_0)+o_{\mathbb P}(\|I_n(w_0)^{1/2}\|\,
\|\widehat w^{\,LS}-w_0\|_2)$ and $I_n(\widehat w^{\,LS})=I_n(w_0)\{1+o_{\mathbb P}(1)\}$, substituted
into the one-step definition, give
$\widehat w^{\,OS}-w_0=I_n(w_0)^{-1}S_n(w_0)+o_{\mathbb P}(\|I_n(w_0)^{-1/2}\|)$, and
$I_n(w_0)^{-1/2}S_n(w_0)\Rightarrow N(0,I_2)$ by the Lindeberg CLT, proving efficiency.

\emph{(d).} Inclusion--exclusion gives
$P_K(w)=\sum_{\varnothing\ne S\subseteq[K]}(-1)^{|S|+1}(\prod_{k\in S}w_k)\bigodot_{k\in S}G_k$, so
$a=M_{\le j}c_{\le j}^*+r_{>j}+\varepsilon$ with $r_{>j}$ the orders above $j$, and
$\widehat c_{\le j}-c_{\le j}^*=(M_{\le j}^\top M_{\le j})^{-1}M_{\le j}^\top\varepsilon+
(M_{\le j}^\top M_{\le j})^{-1}M_{\le j}^\top r_{>j}$. Normalising the order-$i$ columns by
$\|m_S\|_2\asymp n\rho_n^i$, the noise term has normalised error $O_{\mathbb P}(\sqrt{\rho_n/
\gamma_{e,j}})$, i.e. $O_{\mathbb P}(\gamma_{e,j}^{-1/2}/(n\rho_n^{\,i-1/2}))$ after dividing by
$\|m_S\|_2$. For the bias, $\|r_{>j}\|_2\le Cn\sum_{\ell>j}\binom K\ell\rho_n^\ell\le Cn(K\rho_n)^{j+1}$
under $K\rho_n\le1/2$, so
$|e_S^\top(M_{\le j}^\top M_{\le j})^{-1}M_{\le j}^\top r_{>j}|\le C\gamma_{e,j}^{-1}\|r_{>j}\|_2/
\|m_S\|_2\le C\gamma_{e,j}^{-1}K^{j+1}\rho_n^{\,j+1-i}$. Combining and multiplying by
$\|m_S\|_2/\|P_K(w)\|_F$ ($\|P_K(w)\|_F\asymp n\rho_n$) gives the relative-contribution display; the
unnormalised $|\widehat c_S-c_S^*|\,\|m_S\|_2$ is not of edge-rate order in general.

\emph{(e).} \emph{Impossibility.} $\Delta=P_{\mathrm{or}}(w_0)-P_{\mathrm{mix}}(w_0)=
-w_{01}w_{02}G_1\odot G_2$ has $\|\Delta\|_F^2\le Cn^2\rho_n^4$ by (A1), so by the Bernoulli KL bound
and (A2) $\operatorname{KL}(\mathbb P_{P_{\mathrm{or}}}\|\mathbb P_{P_{\mathrm{mix}}})\le
C\|\Delta\|_F^2/\rho_n\le Cn^2\rho_n^3$; if $n^2\rho_n^3\to0$, Pinsker gives $\operatorname{TV}\to0$ and
no consistent test exists. \emph{Positive direction.} Work under the mixture null with
$\langle x,y\rangle_0=x^\top D_0^{-1}y$, $h_\perp=(I-\Pi_0)h$, and the oracle residualised score
$T_n=h_\perp^\top D_0^{-1}\{a-\operatorname{vec}_<(P_{\mathrm{mix}}(w_0))\}/(h_\perp^\top D_0^{-1}
h_\perp)^{1/2}$. Under the null $T_n\Rightarrow N(0,1)$ by Lindeberg with (A3), and replacing the
nuisance by its $(n^2\rho_n)^{-1/2}$-consistent null estimate changes $T_n$ by $o_{\mathbb P}(1)$
because the score is orthogonalised against the nuisance tangent space. Under the alternative
$\mathbb E_{\mathrm{or}}\{a-\operatorname{vec}_<(P_{\mathrm{mix}}(w_0))\}=-w_{01}w_{02}h$, and since
$h_\perp\perp_0\operatorname{span}(B)$, $h_\perp^\top D_0^{-1}h=h_\perp^\top D_0^{-1}h_\perp$, so the
mean of $T_n$ is $-w_{01}w_{02}\sqrt{\mathcal I_{12,n}}\asymp-w_{01}w_{02}\sqrt{\Gamma_{e2}n^2
\rho_n^3}$; the test is consistent whenever $w_{01}^2w_{02}^2\Gamma_{e2}n^2\rho_n^3\to\infty$.
\end{proof}

\subsection{General-$K$ operator detectability with known layers}
\label{app:operator}

\begin{corollary}[General-$K$ operator detectability]
\label{cor:nork-cor}
Let $K$ be fixed. For each $n$, let $\mathcal E_n=\{(i,j):1\le i<j\le n\}$, $\binom{n}{2}=|\mathcal E_n|$,
$g_k=\operatorname{vec}_<(G_k)\in\mathbb R^{\binom n2}$, and $M=(g_1,\ldots,g_K)\in\mathbb R^{\binom n2\times K}$.
Let $p_{\rm or}=\operatorname{vec}_<\{\mathbf J-\bigodot_{k=1}^K(\mathbf J-w_kG_k)\}$ be the noisy-OR
mean vector, $w_k\in[w_0,1-w_0]$ for fixed $w_0\in(0,1/2)$, and let the additive comparison class be
$\mathcal P_{{\rm add},n}=\{Mu:u\in\mathcal U_n\}$, with $\mathcal U_n\subset\mathbb R^K$ a
deterministic set for which $Mu$ is a valid Bernoulli mean. Assume:
\begin{enumerate}
\item[(A1)] \emph{Sparse bounded layers.} For constants $0<c_g<C_g<\infty$ and $\eta\in(0,1)$, all
$k\le K$ and $e\in\mathcal E_n$: $0\le g_{k,e}\le C_g\rho_n$, $c_gn^2\rho_n^2\le\|g_k\|_2^2\le
C_gn^2\rho_n^2$, with $\rho_n\to0$, $n^2\rho_n\to\infty$, and $KC_g\rho_n\le\eta$ eventually.
\item[(A2)] \emph{Main-effect conditioning.} $\lambda_{\min}(M^\top M)\ge c_Mn^2\rho_n^2$.
\item[(A3)] \emph{Information comparability.} With $H_2=[g_k\odot g_\ell]_{k<\ell}$,
$\Pi_M=M(M^\top M)^{-1}M^\top$, $Z=(I-\Pi_M)H_2$, $p_u=Mu$, and $D_u=\operatorname{diag}(p_{u,e}
(1-p_{u,e}))$: there are $0<c_D<C_D<\infty$ such that, uniformly over $u\in\mathcal U_n$ and all $x$ in
the span of the columns of $Z$ and the projected higher-order remainder below,
$c_D\rho_n\|x\|_2^2\le x^\top D_ux\le C_D\rho_n\|x\|_2^2$ and $x^\top D_u^{-1}x\le C_D\rho_n^{-1}
\|x\|_2^2$.
\item[(A4)] \emph{Fixed-rank low-leverage residualised design.} $r_2=\operatorname{rank}(Z)=O(1)$ and,
uniformly over $u$, $\lambda_{\min}^+(Z^\top D_uZ)\asymp n^2\rho_n^5\asymp\lambda_{\max}(Z^\top D_uZ)$,
with the maximal leverage condition $\sup_u\max_e D_{u,e}\,z_e^\top(Z^\top D_uZ)^+z_e\to0$, $z_e^\top$
the $e$th row of $Z$.
\item[(A5)] \emph{Nonvanishing orthogonal order-two interaction.} With
$\Delta_2=\sum_{k<\ell}w_kw_\ell(g_k\odot g_\ell)$, $h=(I-\Pi_M)\Delta_2$, and
$\Gamma_{e2}=\|h\|_2^2/(n^2\rho_n^4)$, assume $\liminf_n\Gamma_{e2}>0$.
\end{enumerate}
Then: \textup{(1)} if $\Gamma_{e2}n^2\rho_n^3\to0$, the noisy-OR law is asymptotically equivalent in
total variation to a sequence in $\mathcal P_{{\rm add},n}$, so no test distinguishes them; and
\textup{(2)} if $\Gamma_{e2}n^2\rho_n^3\to\infty$, the heteroskedastic score/Wald test on
$Z=(I-\Pi_M)H_2$ has asymptotic size $\alpha$ under $\mathcal P_{{\rm add},n}$ and power tending to one
under the noisy-OR law. Thus, for fixed $K$ and nonvanishing orthogonal interaction mass, the operator
is detectable iff $n^2\rho_n^3\to\infty$.
\end{corollary}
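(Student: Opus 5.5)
The proof has two halves, one for each part of the corollary. Both start from the inclusion--exclusion expansion used in Theorem~\ref{thm:nor-cor}(d). It writes $p_{\rm or}=Mw-\Delta_2+r_{\ge3}$, where the terms of order at least three satisfy $\|r_{\ge3}\|_2\le Cn(K\rho_n)^3$ under (A1). Splitting $\Delta_2=\Pi_M\Delta_2+h$, we get
\[
p_{\rm or}=M(w+u_2)-h+r_{\ge3},\qquad u_2:=(M^\top M)^{-1}M^\top\Delta_2 .
\]
We then take the comparison point $u^\ast:=w+u_2$, adjusted if needed by the main-effect projection of $r_{\ge3}$. With this choice, the gap between the noisy-OR mean and the best additive law is $-h$ plus the residualised higher-order remainder $(I-\Pi_M)r_{\ge3}$. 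Before using $p_{u^\ast}$ we check that it is a valid Bernoulli mean lying in $\mathcal U_n$. The coordinates of $u_2$ are $O(\rho_n)$, because $\|\Delta_2\|_2\lesssim n\rho_n^2$ while (A2) gives $\lambda_{\min}(M^\top M)\gtrsim n^2\rho_n^2$. Hence $u^\ast$ is a perturbation of $w$ of size $O(\rho_n)$, and this is the choice of $\mathcal U_n$ the statement has in mind.

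\emph{Part (1): contiguity below the threshold.} We bound the Bernoulli chi-square or KL divergence, via Lemma~\ref{lem:bern-kl-final2}, between $\Pr_{p_{\rm or}}$ and $\Pr_{p_{u^\ast}}$. Using the inverse-variance bound in (A3),
\[
\KL\le C\,x^\top D_{u^\ast}^{-1}x\le C\rho_n^{-1}\|x\|_2^2,\qquad x=h-(I-\Pi_M)r_{\ge3}.
\]
The $h$ term contributes $\Gamma_{e2}\,n^2\rho_n^3$. The remainder term contributes at most $n^2\rho_n^5$, which is of lower order since $\liminf\Gamma_{e2}>0$. So the divergence is at most $C\,\Gamma_{e2}\,n^2\rho_n^3$, which tends to zero by hypothesis. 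Pinsker's inequality then gives total variation tending to zero. Any test has type-I plus type-II error at least $1-\mathrm{TV}$, which tends to one.

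\emph{Part (2): size and power above the threshold.} We use the heteroskedastic Wald statistic
\[
W_n=(Z^\top\widehat\varepsilon)^\top\,(Z^\top\widehat D Z)^{+}\,(Z^\top\widehat\varepsilon),
\]
where $\widehat\varepsilon$ is the residual after least-squares fitting of the main effects $M$. Under the null $p=Mu$, $Z^\top M=0$, so $Z^\top\widehat\varepsilon=Z^\top\varepsilon$ exactly. This is a sum of independent vectors of fixed dimension $r_2$ with covariance $Z^\top D_uZ$. The Lindeberg--Feller CLT, with the maximal-leverage condition of (A4), gives $W_n\Rightarrow\chi^2_{r_2}$. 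Consistency of $\widehat D$ follows because the main-effect fit converges at the edge rate. Together with the eigenvalue comparability in (A4), this means $\widehat D$ can replace $D_u$ uniformly over $u\in\mathcal U_n$, so the asymptotic size is $\alpha$.

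Under the noisy-OR law, the noncentrality comes from $\E\,Z^\top\widehat\varepsilon=Z^\top(-h+(I-\Pi_M)r_{\ge3})$. The vector $h$ lies in the column span of $Z$ up to the higher-order part, because $\Delta_2$ is a combination of the columns of $H_2$. The noncentrality parameter is therefore of order
\[
\|h\|_2^2/\rho_n\asymp\Gamma_{e2}\,n^2\rho_n^3\to\infty,
\]
using (A3) to compare $x^\top D_u x$ with $\rho_n\|x\|_2^2$. The stochastic term stays $O_P(1)$, so the power tends to one.

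The main obstacle is the uniformity over $\mathcal U_n$ together with the nuisance estimation: showing that plugging in $\widehat u$ and $\widehat D$ perturbs $W_n$ only by $o_P(1)$. The first thing I would try is an exact projection argument. Since $Z\perp M$, the mean error $M(\widehat u-u)$ is annihilated, and what remains is only the variance estimate. I would control that remaining error relative to the eigenvalues of order $n^2\rho_n^5$ given by (A4).
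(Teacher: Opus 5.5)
Your proposal follows the paper's proof essentially step for step. In both, the least-favourable additive law is the projection $\Pi_M p_{\rm or}$. Contiguity comes from the per-dyad chi-square bound on the Bernoulli KL together with (A3) and Pinsker. The test on $Z$ uses $Z^\top M=0$ to annihilate the nuisance mean, with Lindeberg supplied by (A4) and noncentrality $\asymp\rho_n^{-1}\|h\|_2^2$.

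Two small corrections. First, there is a sign slip: since $p_{\rm or}=Mw-\Delta_2+r_{\ge3}$, the decomposition is $p_{\rm or}=M(w-u_2)-h+r_{\ge3}$. Your comparison point should therefore be $u^\ast=w-u_2+(M^\top M)^{-1}M^\top r_{\ge3}$, which is exactly the paper's $u_0=(M^\top M)^{-1}M^\top p_{\rm or}$. Second, $h=Za$ with $a_{k\ell}=w_kw_\ell$, so $h$ lies exactly, not merely approximately, in the column span of $Z$.
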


\begin{proof}
\emph{Step 1: inclusion--exclusion remainder.}
$p_{\rm or}=Mw-\Delta_2+R_{\ge3}$ with $R_{\ge3}=\sum_{j=3}^K(-1)^{j+1}\sum_{|S|=j}(\prod_{k\in
S}w_k)\bigodot_{k\in S}g_k$; since $K$ is fixed and $0\le g_{k,e}\le C_g\rho_n$, $\|R_{\ge3}\|_2\le
C_Kn\rho_n^3$.

\emph{Step 2: least favourable additive law.} Let $u_0=(M^\top M)^{-1}M^\top p_{\rm or}$ and
$p_0=Mu_0=\Pi_Mp_{\rm or}$. Then $u_0-w=-(M^\top M)^{-1}M^\top(\Delta_2-R_{\ge3})$, and by (A1)
$\|M^\top\Delta_2\|_\infty\le C_Kn^2\rho_n^3$, $\|M^\top R_{\ge3}\|_\infty\le C_Kn^2\rho_n^4$; with
(A2), $\|u_0-w\|_\infty\le C_K\rho_n+O_K(\rho_n^2)=o(1)$. Hence $u_{0,k}\ge w_0/2$ eventually,
$p_0\ge0$, and $p_{0,e}\le\sum_ku_{0,k}g_{k,e}\le CK\rho_n(1+o(1))<1$, so $p_0\in\mathcal P_{{\rm
add},n}$.

\emph{Step 3: orthogonal residual.} $p_{\rm or}-p_0=\Pi_M^\perp p_{\rm or}=-h+r$ with
$h=\Pi_M^\perp\Delta_2$, $r=\Pi_M^\perp R_{\ge3}$; $\|h\|_2^2=\Gamma_{e2}n^2\rho_n^4$, and since
$\liminf\Gamma_{e2}>0$, $\|r\|_2\le C_Kn\rho_n^3=o(n\rho_n^2)=o(\|h\|_2)$.

\emph{Step 4: contiguity below the boundary.} By the Bernoulli KL bound and (A3),
$\operatorname{KL}(\mathbb P_{\rm or}\|\mathbb P_0)=\sum_e\operatorname{kl}(p_{{\rm or},e},p_{0,e})\le
C(p_{\rm or}-p_0)^\top D_{u_0}^{-1}(p_{\rm or}-p_0)\le C\rho_n^{-1}\|p_{\rm or}-p_0\|_2^2\le
C\Gamma_{e2}n^2\rho_n^3+C_Kn^2\rho_n^5+o(\Gamma_{e2}n^2\rho_n^3)$. If $\Gamma_{e2}n^2\rho_n^3\to0$
then, since $\liminf\Gamma_{e2}>0$, also $n^2\rho_n^3\to0$ and $n^2\rho_n^5\to0$, so the bound tends to
zero; Pinsker gives $\|\mathbb P_{\rm or}-\mathbb P_0\|_{\rm TV}\to0$. As $p_0\in\mathcal P_{{\rm
add},n}$, for any test $\phi_n$, $\sup_{p\in\mathcal P_{{\rm add},n}}\mathbb E_p\phi_n+\mathbb
E_{\rm or}(1-\phi_n)\ge\mathbb E_0\phi_n+\mathbb E_{\rm or}(1-\phi_n)\ge1-\|\mathbb P_{\rm or}-\mathbb
P_0\|_{\rm TV}\to1$.

\emph{Step 5: residualised order-two test above the boundary.} Let $Z=(I-\Pi_M)H_2$ and
$V_u=Z^\top D_uZ$. Given $A=\operatorname{vec}_<(A_{ij})$, let $\widehat u$ be a
root-$\binom{n}{2}\rho_n$-consistent null estimator of $u$ (e.g. least squares on $M$), put
$\widehat D=D_{\widehat u}$, $\widehat V=Z^\top\widehat DZ$, and define $S_n=Z^\top(A-M\widehat u)$,
$T_n=S_n^\top\widehat V^+S_n$. Because $Z^\top M=0$, under the null $S_n=Z^\top(A-p_u)$; the plug-in
variance is consistent since $\|\widehat u-u\|_2=O_{\mathbb P}(1/(n\sqrt{\rho_n}))$ gives
$\max_e|(M\widehat u-Mu)_e|=o_{\mathbb P}(\rho_n)$ and hence $\widehat V=V_u+o_{\mathbb P}
(\|V_u\|_{\rm op})$ on the range of $Z$. For each fixed $a\in\mathbb R^{r_2}$, the scalar projection of
$V_u^{-1/2}S_n$ is a triangular-array sum of independent mean-zero bounded Bernoulli residuals, and
(A4) is the Lindeberg condition, so $V_u^{+1/2}S_n\Rightarrow N(0,I_{r_2})$ and $T_n\Rightarrow
\chi^2_{r_2}$ under every additive law. The test $\phi_{n,\alpha}=\mathbf 1\{T_n>\chi^2_{r_2,1-\alpha}\}$
has asymptotic size $\alpha$.

\emph{Step 6: power under noisy-OR.} Use $p_0=Mu_0$ from Step 2; since $Z^\top M=0$, $S_n=Z^\top(A-
p_0)$ and $\mathbb E_{\rm or}S_n=Z^\top(-h+r)$. Now $\Delta_2=H_2a$ with $a_{k\ell}=w_kw_\ell$, so
$h=(I-\Pi_M)\Delta_2=Za$ lies in the column space of $Z$, and by (A3), for $x$ in that space,
$(Z^\top x)^\top V_{u_0}^+(Z^\top x)\asymp\rho_n^{-1}\|x\|_2^2$. Using Step 3, the noncentrality is
$\lambda_n=\{\mathbb E_{\rm or}S_n\}^\top V_{u_0}^+\{\mathbb E_{\rm or}S_n\}=(1+o(1))(Z^\top h)^\top
V_{u_0}^+(Z^\top h)\asymp\rho_n^{-1}\|h\|_2^2=\Gamma_{e2}n^2\rho_n^3$. If $\Gamma_{e2}n^2\rho_n^3\to
\infty$ then $\lambda_n\to\infty$; the stochastic part of $S_n$ stays $O_{\mathbb P}(1)$ after
standardisation while the mean shift has squared norm $\lambda_n$, so $T_n\to\infty$ in
$\mathbb P_{\rm or}$-probability and the level-$\alpha$ test has power tending to one. Steps 4 and 6
give the boundary.
\end{proof}

\begin{remark}[Scope]
Corollary~\ref{cor:nork-cor} is a known-layer result. If the layers are fitted from the same graph,
the interaction columns must be constructed on independent Stage-A sub-folds, as in the fitted-layer
operator test of Theorem~\ref{prop:fitted-operator}. The corollary does not claim recovery of the full
constrained noisy-OR product structure $c_S=(-1)^{|S|+1}\prod_{k\in S}w_k$ for $K\ge3$; it is a
detection result for additive versus overlapping composition.
\end{remark}

\subsection{The single-coordinate detection scale}
\label{app:support}
\emph{Converse.} The boundary is coordinatewise. Fix one coordinate and consider, inside the family of
Appendix~\ref{app:lower}, the two hypotheses ``coordinate $k$ active at amplitude $w$'' and
``coordinate $k$ inactive'', with the other coordinates held fixed. A single active coordinate at
amplitude $w$ perturbs the probability matrix by $\Delta\Pmat$ with
$\norm{\Delta\Pmat}_F^2\asymp\gamma\,w^2\,n^2\rho^2$, so the two-point divergence is
$\mathrm{KL}\lesssim\gamma\,w^2\,n^2\rho$. If $w\le c_0/(\sqrt{\gamma}\,n\sqrt{\rho})$ then
$\mathrm{KL}\le c_0^2\le\tfrac12$, and by Pinsker the total variation is at most $\tfrac12$, so no test
of that coordinate has power bounded away from its level \citep[Theorem~2.2]{tsybakov2008nonparametric}. The
single-coordinate scale is therefore $1/(\sqrt{\gamma}\,n\sqrt{\rho})$; the $\sqrt{\log K}$ inflation
when all $K$ coordinates are searched is shown necessary in Appendix~\ref{app:gaps}
(Theorem~\ref{thm:sqrtlogK-final2}), so this converse and that lower bound together pin the selection
scale.

\subsection{Operator identification with fitted layers}
\label{app:fitop}
This section proves Theorem~\ref{prop:fitted-operator}, relocated from the main text. All inner products are over the Stage-B dyads $\mathcal D_2$ and conditioning on the latent attributes is in force.

\begin{proof}
Write $\widehat{\mathbf C}=\mathbf G_1\odot\mathbf G_2+\bm\Delta$ with $\bm\Delta$ collecting the
six cross terms, and decompose the numerator under the null $\Pmat=\mathbf M\bw$ as
\[
\bigl\langle\widehat{\mathbf C}^{\perp},\bm\varepsilon^{(2)}\bigr\rangle
+\bigl\langle\widehat{\mathbf C}^{\perp},\mathbf M(\bw-\widehat\bw_{\mathrm{db}})\bigr\rangle
-\bigl\langle\widehat{\mathbf C}^{\perp},\bar{\mathbf H}\widehat\bw_{\mathrm{db}}\bigr\rangle,
\qquad \bar{\mathbf H}=\tfrac12(\mathbf H_c+\mathbf H_d).
\]
\emph{(i) Core term.} Conditionally on Stage A,
$\langle\widehat{\mathbf C}^{\perp},\bm\varepsilon^{(2)}\rangle$ is a sum over Stage-B dyads of
independent mean-zero terms whose variance is the square of the standardiser,
$\asymp\widetilde\Gamma_2\,n^2\rho_n^5$. Each entry of $\widehat{\mathbf C}^{\perp}$ is
$O_P(\rho_n^2)$ up to the delocalisation of Assumption~\ref{ass:linplug}(iii), so each dyad carries
an $O(n^{-2})$ share of the variance, Lindeberg's condition holds, and the studentised core is
asymptotically standard normal.

\emph{(ii) Leakage terms vanish by the four-fold split.} By construction
$\langle\widehat{\mathbf C}^{\perp},\bar{\mathbf m}_k\rangle=0$, so
$\langle\widehat{\mathbf C}^{\perp},\mathbf M(\bw-\widehat\bw_{\mathrm{db}})\rangle
=-\sum_k(\bw-\widehat\bw_{\mathrm{db}})_k\langle\widehat{\mathbf C}^{\perp},\bar{\mathbf
h}_k\rangle$, and the third term has the same structure with bounded coefficients. Here
$\bar{\mathbf h}_k$ is $(c,d)$-measurable, but $\widehat{\mathbf C}^{\perp}$ is \emph{not}
$(a,b)$-measurable: the residualising projector $\Pi_{\bar{\mathbf M}}$ is built from $(c,d)$.
Decompose
\[
\widehat{\mathbf C}^{\perp}\ =\ (\mathbf I-\Pi_{\mathbf M})\widehat{\mathbf C}\ -\
(\Pi_{\bar{\mathbf M}}-\Pi_{\mathbf M})\widehat{\mathbf C}.
\]
The first piece is $(a,b)$-measurable, so conditionally on $(a,b)$ each inner product
$\langle(\mathbf I-\Pi_{\mathbf M})\widehat{\mathbf C},\mathcal L_k(\mathbf E_c)\rangle$ is exactly
mean zero with variance at most $\norm{\operatorname{Cov}(\operatorname{vec}\mathcal L_k(\mathbf
E_c))}_{\mathrm{op}}\norm{\widehat{\mathbf C}}_F^2\le C_L^2(\rho_n/f)\cdot O(n^2\rho_n^4)$ by
Assumption~\ref{ass:linplug}(iii); multiplied by
$\norm{\bw-\widehat\bw_{\mathrm{db}}}_2=O_P(\sqrt{s/\gamma_S}/(n\sqrt{\rho_n}))$ from
Theorem~\ref{thm:debias}, the contribution is $O_P(\sqrt{s/\gamma_S}\,\rho_n^2/\sqrt f)$, which is
$o$ of the null standard deviation $\sqrt{\widetilde\Gamma_2}\,n\rho_n^{5/2}$ since the ratio is
$\sqrt{s/(\gamma_S\widetilde\Gamma_2 f)}\,/(n\sqrt{\rho_n})\to0$. For the second piece, the
projector perturbation identity gives $\Pi_{\bar{\mathbf M}}-\Pi_{\mathbf M}=(\mathbf
I-\Pi_{\mathbf M})\bar{\mathbf H}(\mathbf M^\top\mathbf M)^{-1}\mathbf M^\top+\mathbf M(\mathbf
M^\top\mathbf M)^{-1}\bar{\mathbf H}^\top(\mathbf I-\Pi_{\mathbf M})+\mathbf R_{\Pi}$ with
$\norm{\mathbf R_{\Pi}}_{\mathrm{op}}=O_P(s\,\delta_n^2/\gamma_S)$ and $\bar{\mathbf H}$ the
$(c,d)$ first-stage errors, so $\norm{\Pi_{\bar{\mathbf M}}-\Pi_{\mathbf
M}}_{\mathrm{op}}=O_P(\sqrt s\,\delta_n/\sqrt{\gamma_S})=o_P(1)$. Its pairing with the Stage-B
noise is mean zero with variance at most $\rho_n\norm{(\Pi_{\bar{\mathbf M}}-\Pi_{\mathbf
M})\widehat{\mathbf C}}_F^2=o_P(\rho_n\norm{\widehat{\mathbf C}}_F^2)$, an asymptotically
negligible share of the null variance. Its pairing with the first-stage part of the residual,
$-\bar{\mathbf H}\widehat\bw_{\mathrm{db}}$, is a same-fold quadratic in $(\mathbf E_c,\mathbf
E_d)$, the very object the four-fold split removes elsewhere; here it is second order, with mean
bounded by $C_L^2(\rho_n/f)\cdot\norm{(\mathbf M^\top\mathbf M)^{-1}\mathbf
M^\top\widehat{\mathbf C}}_2\,\max_l\norm{\mathbf G_l}_F=O(\sqrt s\,n\rho_n^3/(f\gamma_S))$ and
Hanson--Wright fluctuation of the same order, hence a standardised null shift
$O(\sqrt{s/\widetilde\Gamma_2}\,\rho_n^{1/2}/(f\gamma_S))\to0$; the $\mathbf R_{\Pi}$ remainder
contributes $O_P(s^{3/2}\delta_n^3\rho_n^{1/2}/(\gamma_S\sqrt{\widetilde\Gamma_2}))$ of the null
standard deviation, also vanishing. The conditional means of the first piece are products of
deterministic first-stage biases, covered by the displayed condition; and since
$\norm{\Pi_{\bar{\mathbf M}}-\Pi_{\mathbf M}}_{\mathrm{op}}=o_P(1)$, the population
transversality $\widetilde\Gamma_2$, defined at $\Pi_{\mathbf M}^{\perp}$, governs the fitted
residualisation as well. Had the
interaction column and the main effects shared a Stage-A sub-fold, these inner products would contain
same-fold squares such as $\langle\mathbf H_1^{a}\odot\mathbf G_2,\ \mathcal L_1(\mathbf
E_a)\text{-terms}\rangle$, with conditional mean of order
$\rho_n\norm{\mathbf H_1^{a}}_F^2\asymp r_{\max}\,n\rho_n^2$ and hence a standardised null shift of
order $r_{\max}/(\sqrt{\widetilde\Gamma_2\,\rho_n}\,)\to\infty$: the four-fold split is what
removes this shift, the Hadamard analogue of the Gram debiasing of Theorem~\ref{thm:debias}.

\emph{(iii) The interaction column's own error.} The cross terms $\mathbf H_1^{a}\odot\mathbf G_2$
are mean zero given $\bu$ with $\norm{\mathbf H_1^{a}\odot\mathbf G_2}_F=O_P(\rho_n\norm{\mathbf
H_1^{a}}_F)=O_P(\sqrt{r_{\max}n\rho_n^3})=o_P(n\rho_n^2)$; the self-product $\mathbf
H_1^{a}\odot\mathbf H_2^{b}$ has conditional mean $\E[\mathbf H_1^{a}\mid\bu]\odot\E[\mathbf
H_2^{b}\mid\bu]$, covered by the displayed condition, and a fluctuation with entrywise variance
$O((r_{\max}\rho_n/n)^2)$ by Assumption~\ref{ass:linplug}(iii), hence Frobenius norm
$O_P(r_{\max}\rho_n)=o_P(n\rho_n^2)$. The projected interaction is therefore estimated at
$o_P(n\rho_n^2)$ accuracy, which leaves both the standardiser and the contiguity calculation of
Theorem~\ref{thm:nor}(e) unchanged.

\emph{(iv) Power.} Under the noisy-OR alternative the numerator gains
$w_1w_2\langle\widehat{\mathbf C}^{\perp},\mathbf G_1\odot\mathbf G_2\rangle=
w_1w_2\norm{\Pi^{\perp}_{\bar{\mathcal M}}(\mathbf G_1\odot\mathbf G_2)}_F^2\,(1+o_P(1))\asymp
\widetilde\Gamma_2\,n^2\rho_n^4$ by (iii), so the noncentrality is
$\asymp\sqrt{\widetilde\Gamma_2\,n^2\rho_n^3}$, diverging exactly when the edge-overlap information
does.
\end{proof}

\subsection{Operator signature: sign identification and held-out validation}
\label{app:operator-signature}
The dense-core test of Section~\ref{sec:numerical} reports the sign of a community-by-degree interaction.
The following theorem says exactly what that sign identifies, when it is recoverable above the
$n^2\rho_n^3$ scale, and when a held-out log-score validation of a signed-interaction model succeeds. It is
a statement about the residualized interaction coefficient and its out-of-sample improvement; it does not,
on its own, attribute a historical mechanism to any particular network.

\begin{theorem}[Operator signature, sign recovery, and validation]
\label{thm:operator-signature}
Let $x=(x_e)$ and $y=(y_e)$ be two deterministic or cross-fitted layer kernels on $\mathcal E_n$, with
$0\le x_e,y_e\le C\rho_n$. Let $h=x\odot y$, $M=[x\ y]$, and $z_E=(I-\Pi_M^E)h$, where $\Pi_M^E$ is
Euclidean projection onto the span of the two main-effect columns. Suppose $\underline c\rho_n\le P_e\le
\bar c\rho_n$ for all $e$, and assume interaction transversality and low leverage,
\[
    \|z_E\|_2^2\asymp n^2\rho_n^4,\qquad
    \frac{\max_e z_{E,e}^2 P_e(1-P_e)}{\sum_e z_{E,e}^2 P_e(1-P_e)}\to 0 .
\]
Define the Euclidean operator signature $\eta_E(P)=\langle z_E,P\rangle/\|z_E\|_2^2$ and its held-out
least-squares estimator $\widehat\eta_E=\langle z_E,A\rangle/\|z_E\|_2^2$. Then:
\begin{enumerate}
\item if $P=\alpha x+\beta y+r$ with $\langle z_E,r\rangle=0$, then $\eta_E(P)=0$;
\item if $P=1-(1-\alpha x)(1-\beta y)+r=\alpha x+\beta y-\alpha\beta h+r$ and
$|\langle z_E,r\rangle|<\alpha\beta\|z_E\|_2^2$, then $\eta_E(P)<0$;
\item if $P=\alpha x+\beta y+\lambda h+r$ with $\lambda>0$ and $|\langle z_E,r\rangle|<\lambda\|z_E\|_2^2$,
then $\eta_E(P)>0$;
\item more generally, if an independently specified mechanism kernel $c$ satisfies $P=\alpha x+\beta
y+\lambda c+r$ with $|\langle z_E,r\rangle|=o(|\lambda\langle z_E,c\rangle|)$, then
$\mathrm{sign}\{\eta_E(P)\}=\mathrm{sign}\{\lambda\langle z_E,c\rangle\}$;
\item the estimator is asymptotically normal,
$(\widehat\eta_E-\eta_E(P))/\sigma_{E,n}\Rightarrow N(0,1)$ with
$\sigma_{E,n}^2=\sum_e z_{E,e}^2 P_e(1-P_e)/\|z_E\|_2^4\asymp 1/(n^2\rho_n^3)$, so
$\mathbb P\{\mathrm{sign}(\widehat\eta_E)=\mathrm{sign}(\eta_E(P))\}\to 1$ whenever
$|\eta_E(P)|\sqrt{n^2\rho_n^3}\to\infty$.
\end{enumerate}
For held-out log-score validation, let $T_n\subset\mathcal E_n$ be an independent test set, let $q_e$ be the
additive held-out Kullback--Leibler benchmark on $T_n$ with $q_e\in[\underline c\rho_n,\bar c\rho_n]$, and
define the Fisher inner product $\langle a,b\rangle_W=\sum_{e\in T_n}a_e b_e/\{q_e(1-q_e)\}$. Let
$z_W=(I-\Pi_M^W)h$ and $I_{T,n}=\|z_W\|_W^2$. Suppose the test-set probabilities have the local form
$P_e=q_e+\eta_W z_{W,e}+r_e$ with $\|r\|_W=o(|\eta_W|\|z_W\|_W)$ and
$\max_{e\in T_n}|\eta_W z_{W,e}+r_e|/\{q_e(1-q_e)\}\to 0$. Let $\widetilde\eta_W$ be trained independently of
$T_n$ with $\widetilde\eta_W/\eta_W\to_P 1$. If $\eta_W^2 I_{T,n}\to\infty$, then
\[
    \frac{\ell_{T_n}(q+\widetilde\eta_W z_W)-\ell_{T_n}(q)}{\eta_W^2 I_{T,n}}\to_P\tfrac12,
\]
so the signed-interaction model improves out of sample with probability tending to one. Finally, suppose
the same conditions hold uniformly for independent networks $r=1,\dots,R_n$ in a class, with a common
nonzero sign of $\eta_{W,r}$, and that the per-network log-score increments are bounded,
$\max_{e\in T_{n,r}}|\widetilde\eta_{W,r} z_{W,r,e}|/\{q_e(1-q_e)\}=O(1)$, so that a Bernstein bound applies
to each network's gain. If $\min_{1\le r\le R_n}\eta_{W,r}^2 I_{T,n,r}\gg\log R_n$, then all network-level
signs and held-out gains replicate with probability tending to one. (Under Chebyshev bounds alone, without
the bounded-increment condition, the stronger separation $\min_r\eta_{W,r}^2 I_{T,n,r}\gg R_n$ is required.)
\end{theorem}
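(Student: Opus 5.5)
Parts 1--4 are pure linear algebra on the definition $\eta_E(P)=\langle z_E,P\rangle/\|z_E\|_2^2$. The one fact used throughout is that $z_E=(I-\Pi_M^E)h$ is Euclidean-orthogonal to both $x$ and $y$. Hence $\langle z_E,\alpha x+\beta y\rangle=0$ for every $\alpha,\beta$, and $\langle z_E,h\rangle=\langle z_E,z_E\rangle=\|z_E\|_2^2$ because $h-z_E\in\operatorname{span}(M)$.

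With these two identities the cases go as follows.
\begin{itemize}
\item Part 1 follows immediately from $\langle z_E,r\rangle=0$.
\item In part 2, expand $1-(1-\alpha x)(1-\beta y)$ exactly as in Theorem~\ref{thm:nor}(a). This gives $\langle z_E,P\rangle=-\alpha\beta\|z_E\|_2^2+\langle z_E,r\rangle$, which is strictly negative under the stated bound on the residual.
\item Part 3 is the same computation with $+\lambda$ in place of $-\alpha\beta$.
\item Part 4 gives $\langle z_E,P\rangle=\lambda\langle z_E,c\rangle+\langle z_E,r\rangle=\lambda\langle z_E,c\rangle(1+o(1))$, so the sign is inherited.
\end{itemize}

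For part 5, write $\widehat\eta_E-\eta_E(P)=\sum_e z_{E,e}(A_e-P_e)/\|z_E\|_2^2$. Conditionally on the layers (or on the cross-fitted Stage~A, which is independent of the held-out edges), this is a sum of independent bounded mean-zero terms. Its variance is $\sigma_{E,n}^2$, and the displayed low-leverage ratio is exactly Lindeberg's condition for bounded summands, so the CLT follows. The order of $\sigma_{E,n}^2$ comes from $P_e(1-P_e)\asymp\rho_n$ and $\|z_E\|_2^2\asymp n^2\rho_n^4$, which give $\sigma_{E,n}^2\asymp\rho_n n^2\rho_n^4/(n^2\rho_n^4)^2=1/(n^2\rho_n^3)$. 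Sign recovery then holds because $|\eta_E|/\sigma_{E,n}\to\infty$ puts the Gaussian mass on the correct side.

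For the held-out log score, I would Taylor-expand $\ell_{T_n}(q+t z_W)-\ell_{T_n}(q)$ in $t$ to second order around $q$, using the small-perturbation condition to control the cubic remainder. The first-order term is $t\sum_e z_{W,e}(A_e-q_e)/\{q_e(1-q_e)\}$. Its mean is $t\langle z_W,\eta_W z_W+r\rangle_W=t\eta_W I_{T,n}(1+o(1))$, using $\|r\|_W=o(|\eta_W|\|z_W\|_W)$ and Cauchy--Schwarz. Its fluctuation is $O_P(|t|\sqrt{I_{T,n}})$. The second-order term is $-\tfrac12 t^2 I_{T,n}(1+o(1))$. Substituting $t=\widetilde\eta_W=\eta_W(1+o_P(1))$, which is independent of $T_n$, gives a gain of $\eta_W^2I_{T,n}(1-\tfrac12)+O_P(|\eta_W|\sqrt{I_{T,n}})$. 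Dividing by $\eta_W^2I_{T,n}\to\infty$ yields the limit $\tfrac12$.

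For replication across $R_n$ networks, I would apply Bernstein to each network's linear fluctuation, using bounded increments and variance $\eta_{W,r}^2I_{T,n,r}$. This makes the probability that network $r$ fails (wrong sign, or gain below half its mean) at most $2\exp(-c\,\eta_{W,r}^2I_{T,n,r})$. A union bound over $R_n$ networks then vanishes when $\min_r\eta_{W,r}^2I_{T,n,r}\gg\log R_n$. Under Chebyshev alone, each per-network failure probability is $O(1/(\eta_{W,r}^2I_{T,n,r}))$, and the union bound requires the stronger $\gg R_n$.

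The main obstacle is the uniformity in the Taylor remainder and the interaction between the estimated $\widetilde\eta_W$ and the test set. Independence of training from $T_n$ lets me condition on $\widetilde\eta_W$, and the max-ratio condition bounds the cubic term by $o(\eta_W^2I_{T,n})$. The care needed is in verifying that this bound holds uniformly over $r$ for the replication claim.
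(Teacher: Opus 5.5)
Your proposal is correct and follows the paper's proof essentially step for step. It uses the orthogonality identities $\langle z_E,x\rangle=\langle z_E,y\rangle=0$ and $\langle z_E,h\rangle=\|z_E\|_2^2$ for parts 1--4, a Lindeberg CLT for the linear statistic in part 5, a second-order expansion around $q$ with conditioning on the independently trained $\widetilde\eta_W$ for the log score, and Bernstein plus a union bound for replication. The one difference is that the paper applies Bernstein to each network's entire held-out gain, viewed as a sum of bounded independent per-dyad increments, rather than to the linear Taylor term alone; this directly gives the uniform-in-$r$ exponential tail you identify as the remaining obstacle.
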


\begin{proof}
Since $z_E=(I-\Pi_M^E)h$, $\langle z_E,x\rangle=\langle z_E,y\rangle=0$ and $\langle z_E,h\rangle=\|z_E\|_2^2$.
If $P=\alpha x+\beta y+r$ then $\eta_E(P)=\langle z_E,r\rangle/\|z_E\|_2^2$, which is zero when
$\langle z_E,r\rangle=0$. For noisy-OR, $1-(1-\alpha x)(1-\beta y)=\alpha x+\beta y-\alpha\beta h$, so
$\eta_E(P)=-\alpha\beta+\langle z_E,r\rangle/\|z_E\|_2^2$, negative under the stated bound; the
super-additive case gives $\eta_E(P)=\lambda+\langle z_E,r\rangle/\|z_E\|_2^2>0$ identically. For the
mechanism-mediated statement, $\eta_E(P)=\lambda\langle z_E,c\rangle/\|z_E\|_2^2+\langle
z_E,r\rangle/\|z_E\|_2^2$, whose sign is that of $\lambda\langle z_E,c\rangle$ because the residual term is
lower order. For estimation, $\widehat\eta_E-\eta_E(P)=\sum_e a_{n,e}(A_e-P_e)$ with
$a_{n,e}=z_{E,e}/\|z_E\|_2^2$; the summands are independent, mean zero, and bounded, with variance
$\sigma_{E,n}^2=\sum_e z_{E,e}^2 P_e(1-P_e)/\|z_E\|_2^4$. The leverage condition is Lindeberg's condition,
so the triangular-array central limit theorem gives the stated normality; since $P_e(1-P_e)\asymp\rho_n$ and
$\|z_E\|_2^2\asymp n^2\rho_n^4$, $\sigma_{E,n}^2\asymp\rho_n/\|z_E\|_2^2\asymp 1/(n^2\rho_n^3)$, and the
sign-recovery condition $|\eta_E(P)|/\sigma_{E,n}\to\infty$ is equivalent to
$|\eta_E(P)|\sqrt{n^2\rho_n^3}\to\infty$.

For the log score, condition on the training data so that $\widetilde\eta_W$ is fixed on the independent
test set. With $t_e=\widetilde\eta_W z_{W,e}$, a Taylor expansion of the Bernoulli log-likelihood around
$q_e$ gives, uniformly under the smallness condition,
\[
    \mathbb E_P[\ell_{T_n}(q+\widetilde\eta_W z_W)-\ell_{T_n}(q)\mid\widetilde\eta_W]
    =\eta_W\widetilde\eta_W I_{T,n}-\tfrac12\widetilde\eta_W^2 I_{T,n}+o_P(\eta_W^2 I_{T,n}),
\]
the $r$-contribution being $o_P(|\eta_W\widetilde\eta_W|I_{T,n})$ by $\|r\|_W=o(|\eta_W|\|z_W\|_W)$. Since
$\widetilde\eta_W/\eta_W\to_P 1$, the conditional mean is $\tfrac12\eta_W^2 I_{T,n}\{1+o_P(1)\}$ and the
conditional variance is $\widetilde\eta_W^2 I_{T,n}\{1+o_P(1)\}$. As $\eta_W^2 I_{T,n}\to\infty$ the standard
deviation is $o_P(\eta_W^2 I_{T,n})$, so Chebyshev's inequality gives the displayed limit and the held-out
gain is positive with probability tending to one. For the replication statement, the bounded-increment
condition makes each network's log-score gain a sum of bounded independent terms, so a Bernstein bound gives
per-network error probability $\exp\{-c\,\eta_{W,r}^2 I_{T,n,r}\}$; a union bound over the $R_n$ networks has
total error at most $R_n\exp\{-c\min_r\eta_{W,r}^2 I_{T,n,r}\}\to 0$ under $\min_r\eta_{W,r}^2
I_{T,n,r}\gg\log R_n$. Without the bounded-increment condition, Chebyshev gives only polynomial tails and the
stronger separation $\gg R_n$ is required.
\end{proof}

\begin{remark}[What the operator-signature theorem does and does not establish]
Theorem~\ref{thm:operator-signature} identifies the residualized interaction sign with the operator class
(additive, sub-additive, super-additive), shows the sign is recoverable above the $n^2\rho_n^3$ scale, and
shows that a signed-interaction model validated by Fisher residualization improves a held-out log score. It
does not, by itself, prove the historical mechanism behind any named network. The dense-core signs of
Table~\ref{tab:denseop} are empirical, and attributing a sign to a specific generative mechanism requires the
independently specified kernel $c$ of part~(iv), constructed from metadata, a temporal split, or a
pre-registered validation kernel rather than from the same edges used in the test, together with the
out-of-sample replication of the final clause. The theorem supplies the inferential scaffolding for such a
claim; the claim itself is an empirical question that may return a null.
\end{remark}

\section{Closing the remaining gaps}
\label{app:gaps}

Throughout this section,
\[
  \mathcal D_n=\{(i,j):1\le i<j\le n\},\qquad |\mathcal D_n|=\binom n2.
\]
For a symmetric matrix \(B\), \(b=\vecl(B)\) denotes its upper-triangular
dyad vector.  All Frobenius norms and inner products are over \(\mathcal D_n\).
Constants may depend on fixed probability-band constants, fixed ranks,
fixed fold fractions, and fixed finite dimension, but not on
\(n,s,K,\rho\).

\begin{lemma}[Bernoulli KL bound]\label{lem:bern-kl-final2}
Let \(p=(p_e:e\in\mathcal D_n)\) and \(q=(q_e:e\in\mathcal D_n)\) satisfy
\[
  c\rho\le p_e,q_e\le C\rho\le 1/2
\]
for fixed \(0<c<C<\infty\).  Then
\[
  \KL(\Pb_p,\Pb_q)
  \le C_{\rm KL}(c,C)\rho^{-1}\sum_{e\in\mathcal D_n}(p_e-q_e)^2 .
\]
\end{lemma}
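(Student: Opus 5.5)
The plan is to reduce the product-measure divergence to a sum of scalar Bernoulli divergences and then bound each one by the $\chi^2$ divergence. Under $\Pb_p$ and $\Pb_q$ the dyads are independent Bernoulli variables, so the chain rule for Kullback--Leibler divergence between product measures gives
\[
\KL(\Pb_p,\Pb_q)=\sum_{e\in\mathcal D_n}\kl(p_e,q_e),\qquad
\kl(a,b)=a\log\frac ab+(1-a)\log\frac{1-a}{1-b}.
\]
It therefore suffices to prove the per-dyad bound $\kl(a,b)\le C_{\rm KL}\rho^{-1}(a-b)^2$ for all $a,b\in[c\rho,C\rho]$, with $C\rho\le1/2$. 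Summing this bound over $\mathcal D_n$ gives the lemma.

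For the scalar bound I will use $\log x\le x-1$, which yields $\kl(a,b)\le\chi^2(a\Vert b)$. Writing out the $\chi^2$ divergence,
\[
\kl(a,b)\le\chi^2(a\Vert b)=\frac{(a-b)^2}{b}+\frac{(a-b)^2}{1-b}=\frac{(a-b)^2}{b(1-b)}.
\]
Concretely, $a\log(a/b)\le a(a/b-1)$, and the same inequality applies to the complementary term. Adding the two pieces produces exactly $\sum_x (P(x)-Q(x))^2/Q(x)$, because the linear terms cancel.

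It then remains to lower-bound the denominator. Since $b\ge c\rho$ and $1-b\ge 1-C\rho\ge1/2$, we have $b(1-b)\ge c\rho/2$. This gives $\kl(a,b)\le (2/c)\rho^{-1}(a-b)^2$, so $C_{\rm KL}=2/c$ works, and the upper constant $C$ enters only through the condition $C\rho\le1/2$.

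I expect no real obstacle. The only point needing care is that the two-sided band is used in two places: the lower end $c\rho$ keeps $1/b$ of order $\rho^{-1}$, and the cap $C\rho\le 1/2$ keeps $1-b$ bounded away from zero. The symmetric form of the bound is not needed, since only $q$ appears in the denominator. If a sharper constant were wanted, a second-order Taylor expansion of $\kl(\cdot,b)$ around $b$ with the intermediate point in the band would give a constant of order $1/c$ as well, but the $\chi^2$ route is shorter and suffices.
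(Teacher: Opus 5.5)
Your proposal is correct and matches the paper's proof: the paper also sums the per-dyad Bernoulli divergences over independent dyads, bounds each by the $\chi^2$ form $(p_e-q_e)^2/\{q_e(1-q_e)\}$, and uses $q_e(1-q_e)\ge c\rho/2$. Your derivation of $\kl\le\chi^2$ from $\log x\le x-1$ just fills in the step the paper states without proof, and it yields $C_{\rm KL}=2/c$.
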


\begin{proof}
For Bernoulli variables,
\[
  \kl(p,q)
  =
  p\log(p/q)+(1-p)\log\{(1-p)/(1-q)\}
  \le
  \frac{(p-q)^2}{q(1-q)} .
\]
Since \(q(1-q)\ge c\rho/2\), summing over the independent dyads gives
the claim.
\end{proof}

\subsection{Estimated-design lower bound over faithful separated spectra}

The theorem below is deliberately stated only in the \(s\)-range in which
the explicit one-eigenvalue-per-agent fitting maps are faithful.  The
larger \(s^2\lesssim \gamma n^2\rho\) lower bound remains a known-design
oracle lower bound; the faithful estimated-design statement requires the
additional spectral-label condition \(s^4\log n=o(\gamma n\rho)\).

\begin{definition}[Faithful spectral contrast subfamily]\label{def:fss-final2}
Let \(0<\gamma\le\gamma_0=1/64\) and put \(a=\sqrt\gamma\).
Let \(n_0=2^{\lfloor\log_2 n\rfloor}\), so \(n/2\le n_0\le n\).
Work on a fixed vertex block \(V_0\) of size \(n_0\), and extend all
contrast entries by zero outside \(V_0\times V_0\).

Let \(h_1,\ldots,h_s\in\{\pm1\}^{n_0}\) be balanced mutually orthogonal
Hadamard rows:
\[
  \1^\top h_k=0,\qquad
  h_k^\top h_\ell=n_0\,\mathbf 1\{k=\ell\}.
\]
Let \(H_k=h_kh_k^\top\) on \(V_0\times V_0\), extended by zero outside
the block.  Let \(B=\rho J\), where \(J\) is the all-one off-diagonal
matrix on all \(n\) vertices, and define
\[
  X_k=\rho aH_k,\qquad
  G_k=B+X_k,\qquad k=1,\ldots,s.
\]
The model is
\[
  P_w
  =
  B+\sum_{k=1}^s w_kX_k
  =
  \sum_{k=1}^s w_kG_k,
  \qquad w\in\Delta^{s-1}.
\]
Because \(\sum_kw_k=1\) and \(|H_{k,ij}|\le1\),
\[
  P_{w,ij}\in[\rho(1-a),\rho(1+a)]
  \subset[3\rho/4,5\rho/4].
\]

Fix \(0<\eta<1/4\) and define
\[
  w_k^0
  =
  \frac1s\left\{1+\eta\,\frac{2k-s-1}{s}\right\},
  \qquad k=1,\ldots,s.
\]
Then \(\sum_kw_k^0=1\), \(w_k^0\in[(1-\eta)/s,(1+\eta)/s]\), and
\[
  \min_{\ell\ne k}|w_k^0-w_\ell^0|\ge 2\eta/s^2 .
\]

For \(P\) in a neighborhood of the local class, define a completed
contrast matrix \(C(P)\) on \(V_0\) by
\[
  C(P)_{ij}=P_{ij}-B_{ij}\quad(i\ne j,\ i,j\in V_0),
  \qquad
  C(P)_{ii}=\rho a .
\]
For \(P=P_w\),
\[
  C(P_w)=\rho a\sum_{k=1}^s w_k h_kh_k^\top .
\]
Let \(\Pi_k(P)\) be the spectral projector of \(C(P)\) corresponding to
the unique eigenvalue in the band centered at \(n_0\rho a w_k^0\) with
radius \((\eta/2)n_0\rho a/s^2\).  Define
\[
  F_k(P)=\rho a n_0\,\Pi_k(P),
\]
with diagonal discarded after evaluation.  On the local class,
\(F_k(P_w)=X_k\).  The empirical version replaces \(P\) by the IPW
Stage-A adjacency.  The empirical bands are consistently recovered if
\[
  n\rho a\,s^{-2}\gg \sqrt{n\rho\log n},
  \qquad\text{equivalently}\qquad
  s^4\log n=o(\gamma n\rho).
\]
\end{definition}

\begin{theorem}[Faithful estimated-design lower bound]\label{thm:fss-lb-final2}
Assume
\[
  n\rho/\log n\to\infty,\qquad
  8\le s\le
  c_s\min\left\{n_0-1,\left(\frac{\gamma n\rho}{\log n}\right)^{1/4}\right\},
  \qquad
  0<\gamma\le 1/64,
\]
with \(c_s>0\) sufficiently small.  Over the faithful spectral contrast
subfamily of Definition~\ref{def:fss-final2},
\[
  \inf_{\widehat w}
  \sup_{w\in\calW_{\rm fss}}
  \Eb_w\|\widehat w-w\|_2
  \ge
  c\,\frac{\sqrt{s/\gamma}}{n\sqrt\rho}.
\]
The infimum is over all estimators measurable with respect to the single
observed graph.  The result is an estimated-design lower bound because
the subfamily contains explicit faithful fitting maps \(F_k\).  Since the
same lower bound holds in the easier oracle experiment where
\(X_1,\ldots,X_s\) are revealed, it also holds in the estimated-design
experiment.
\end{theorem}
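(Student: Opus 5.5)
The plan is to reduce to an Assouad argument inside the faithful subfamily of Definition~\ref{def:fss-final2}, in the easier oracle experiment where the contrasts $X_1,\dots,X_s$ are revealed. Any estimator measurable with respect to the graph is also admissible in that experiment, so a lower bound there transfers to the estimated-design problem. The role of the faithful fitting maps $F_k$ is to certify that the subfamily genuinely belongs to the estimated-design class: on the local class $F_k(P_w)=X_k$, and the spectral bands are separated. I will therefore first verify that every hypothesis used lies in $\calW_{\rm fss}$, meaning that each perturbed weight vector keeps the eigenvalues $n_0\rho a w_k$ of $C(P_w)=\rho a\sum_k w_kh_kh_k^\top$ inside their bands of radius $(\eta/2)n_0\rho a/s^2$. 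The Hadamard rows are orthonormal after scaling by $n_0^{-1/2}$, so the eigenpairs of $C(P_w)$ are exactly $(n_0\rho a w_k, h_k/\sqrt{n_0})$. Faithfulness then holds whenever $\|w-w^0\|_\infty<\eta/(2s^2)$. Consistent empirical band recovery, by Weyl with $\|E\|_{\rm op}=O_P(\sqrt{n\rho\log n})$, uses $s^4\log n=o(\gamma n\rho)$, which is exactly the stated range for $s$.

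Next comes the hypercube. Set $q=\lfloor s/2\rfloor$ and $v_j=(e_{2j-1}-e_{2j})/\sqrt2$, and take $w^\sigma=w^0+\delta\sum_j\sigma_jv_j$ with $\delta=c_1/(n\sqrt{\rho\gamma})$. The coordinate sums vanish, so $w^\sigma$ stays in the simplex. Membership in the faithful class requires $\delta\le\eta/(4s^2)$, that is $s^2\lesssim\eta\,n\sqrt{\rho\gamma}$. This follows from $s\le c_s(\gamma n\rho/\log n)^{1/4}$, since that gives $s^2\le c_s^2\sqrt{\gamma n\rho}\ll n\sqrt{\rho\gamma}$. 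For neighbours $\sigma,\sigma'$ differing in coordinate $j$,
\[
P_{w^\sigma}-P_{w^{\sigma'}}=\sqrt2\,\delta\,\sigma_j\,(X_{2j-1}-X_{2j}),
\]
which has squared Frobenius norm over dyads at most $C\delta^2\rho^2a^2n_0^2=C\gamma\delta^2\rho^2n^2$, because $\|H_k\|_F^2\asymp n_0^2$ and $\langle H_k,H_\ell\rangle_F=O(n_0)$ off the diagonal. All entries lie in $[3\rho/4,5\rho/4]$, so Lemma~\ref{lem:bern-kl-final2} gives $\KL\le C\gamma\delta^2n^2\rho=Cc_1^2$, which is bounded by a small constant.

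Assouad's lemma then applies with the decoder $\widehat\sigma_j=\mathrm{sign}\langle\widehat w-w^0,v_j\rangle$, exactly as in the known-design lower bound of Proposition~\ref{prop:known-design-cor}. The $v_j$ are orthonormal, so $\|\widehat w-w^\sigma\|_2^2\ge\sum_j\langle\widehat w-w^\sigma,v_j\rangle^2$, and each misdecoded coordinate costs at least $\delta^2$. This yields expected squared loss $\gtrsim q\delta^2$. Converting to $\E\|\cdot\|_2\ge c\delta\sqrt q$ uses the same $\sqrt H\ge H/\sqrt q$ step on the Hamming distance, and substituting $\delta$ and $q\ge s/3$ gives $c\sqrt{s/\gamma}/(n\sqrt\rho)$.

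The main obstacle is the first step: checking that the whole hypercube stays in the faithful class, with the band labels preserved. The constants $\eta$, $c_1$, $c_s$ and the factor $n_0\ge n/2$ must be tuned jointly so that $\delta$ is both at the target scale and below the spectral-gap scale $\eta/s^2$. The hard case is where $s$ approaches its upper limit $(\gamma n\rho/\log n)^{1/4}$. Positivity of the probabilities and the KL band are routine by comparison.
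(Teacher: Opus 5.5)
Your proposal is correct. It shares with the paper the subfamily, the cube $w^\sigma=w^0+\delta\sum_j\sigma_jv_j$ with $\delta\asymp 1/(n\sqrt{\gamma\rho})$, the faithfulness check, and the oracle-to-estimated-design transfer. The faithfulness check is that the eigenpairs $(n_0\rho a w_k,\,h_k/\sqrt{n_0})$ of $C(P_w)$ stay in their bands because $\delta=o(s^{-2})$ in the stated $s$-range. The two proofs differ only in the information-theoretic step.

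\emph{The paper's route.} It extracts a Varshamov--Gilbert subset $\Omega\subset\{\pm1\}^q$ with $d_H\ge q/8$. It then bounds $\KL(\Pb_{w^\sigma},\Pb_{w^0})\le Cq\gamma\delta^2n^2\rho$ against $\log|\Omega|\asymp q$ and applies Fano. The $\ell_2$ separation $c\delta\sqrt q$ comes directly from the packing.

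\emph{Your route.} You run Assouad on the whole cube with the sign decoder, as in the paper's known-design proof (Proposition~\ref{prop:known-design-cor}). Only single-flip pairs enter, so the KL bound is the one-line computation for $\sqrt2\,\delta\,(X_{2j-1}-X_{2j})$. You never need the multi-flip additivity $\|P_{w^\sigma}-P_{w^\tau}\|_F^2\asymp d_H\,\gamma\delta^2n^2\rho^2$, for which the paper bounds cross-flip overlaps using $s=o(n)$. Those overlaps in fact vanish exactly on dyads, since $\langle H_k,H_\ell\rangle_F=-n_0/2$ for every $k\ne\ell$.

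\emph{What Fano buys.} It bounds the probability of misidentifying the packing point. That converts to the non-squared risk without the decoder and the $\sqrt H\ge H/\sqrt q$ step, and it extends to high-probability statements.

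\emph{Two small points.}
\begin{itemize}
\item Simplex membership needs nonnegativity as well as zero coordinate sums. This follows from $\delta\le\eta/(4s^2)<(1-\eta)/s$.
\item Weyl fixes the empirical band labels, but the empirical $F_k$ converging to $X_k$ also needs Davis--Kahan for the projectors, as the paper invokes. This certification is not used in the risk bound itself.
\end{itemize}
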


\begin{proof}
First verify the geometry.  Since the \(h_k\)'s are balanced and
orthogonal,
\[
  \langle J,H_k\rangle_F=O(n_0),\qquad
  \langle H_k,H_\ell\rangle_F=O(n_0)\quad(k\ne\ell),
\]
whereas
\[
  \|J\|_F^2\asymp n^2,\qquad \|H_k\|_F^2\asymp n_0^2\asymp n^2.
\]
Thus
\[
  \|G_k\|_F^2=\rho^2n^2\{c_0+a^2c_1+o(1)\},
  \qquad
  \langle G_k,G_\ell\rangle_F=\rho^2n^2\{c_0+o(1)\}
  \quad(k\ne\ell),
\]
for positive constants \(c_0,c_1\).  Hence the Gram-correlation matrix
of \(G_1,\ldots,G_s\) is an equicorrelated carrier direction plus
\(a^2\)-scale orthogonal contrasts, and
\[
  \lambda_{\min}\{\Phi(G_1,\ldots,G_s)\}\asymp a^2=\gamma .
\]

Next verify faithfulness.  For every \(w\) in the packing below,
\[
  C(P_w)=\rho a\sum_{k=1}^s w_kh_kh_k^\top .
\]
Hence \(h_k/\sqrt{n_0}\) is an eigenvector with eigenvalue
\(n_0\rho a w_k\).  The center satisfies
\[
  \min_{\ell\ne k}|w_k^0-w_\ell^0|\ge 2\eta/s^2,
\]
and the packing radius below satisfies \(\delta=o(s^{-2})\).  Therefore
the eigenvalue labels remain in their fixed bands, and
\[
  \Pi_k(P_w)=n_0^{-1}h_kh_k^\top,\qquad
  F_k(P_w)=\rho a h_kh_k^\top=X_k .
\]
For empirical fits, the IPW Stage-A perturbation has operator norm
\(O_p(\sqrt{n\rho\log n})\).  The eigengap is of order
\(n\rho a/s^2\), which dominates this perturbation by the displayed
\(s\)-range.  Davis--Kahan gives subspace consistency; the band labels
therefore recover the population maps.

Now construct the packing.  Let \(q=\lfloor s/2\rfloor\) and
\[
  v_j=\frac{e_{2j-1}-e_{2j}}{\sqrt2},\qquad j=1,\ldots,q.
\]
For \(\sigma\in\{\pm1\}^q\), set
\[
  w^\sigma=w^0+\delta\sum_{j=1}^q\sigma_jv_j,
  \qquad
  \delta=\frac{c_0}{n\sqrt{\gamma\rho}} .
\]
The displayed \(s\)-range implies both \(\delta\le c/s\) and
\(\delta=o(s^{-2})\).  Hence \(w^\sigma\in\Delta^{s-1}\) and all
hypotheses remain in the same spectral-label class.

A one-flip change in coordinate \(j\) gives
\[
  \Delta_jP
  =
  \sqrt2\,\delta(X_{2j-1}-X_{2j}).
\]
Since \(X_k=\rho aH_k\),
\[
  \|\Delta_jP\|_F^2
  \asymp
  \gamma\delta^2n^2\rho^2 .
\]
For a Hamming-separated pair, cross-flip overlaps are
\(O(d_H(\sigma,\tau)^2\gamma\delta^2\rho^2n)\), negligible relative to
\(d_H(\sigma,\tau)\gamma\delta^2n^2\rho^2\), because the displayed
\(s\)-range implies \(s=o(n)\).  Therefore
\[
  \|P_{w^\sigma}-P_{w^\tau}\|_F^2
  \asymp
  d_H(\sigma,\tau)\gamma\delta^2n^2\rho^2.
\]
Lemma~\ref{lem:bern-kl-final2} gives
\[
  \KL(\Pb_{w^\sigma},\Pb_{w^0})
  \le
  Cq\gamma\delta^2n^2\rho .
\]

By Varshamov--Gilbert, there is
\(\Omega\subset\{\pm1\}^q\) such that
\[
  |\Omega|\ge e^{cq},
  \qquad
  d_H(\sigma,\tau)\ge q/8\quad(\sigma\ne\tau).
\]
For \(\sigma\ne\tau\),
\[
  \|w^\sigma-w^\tau\|_2\ge c\delta\sqrt q .
\]
Choosing \(c_0\) small gives
\[
  \max_{\sigma\in\Omega}\KL(\Pb_{w^\sigma},\Pb_{w^0})
  \le
  \alpha\log|\Omega|
\]
for some \(\alpha<1\).  Fano's inequality in mutual-information form
therefore yields
\[
  \inf_{\widehat w}\sup_{\sigma\in\Omega}
  \Eb_\sigma\|\widehat w-w^\sigma\|_2
  \ge
  c\delta\sqrt q
  \ge
  c'\frac{\sqrt{s/\gamma}}{n\sqrt\rho}.
\]
\end{proof}

\begin{remark}[Oracle range versus faithful estimated-design range]
The known-design oracle lower bound, proved by the corrected
orthogonal-sign construction of Appendix~\ref{app:lower}, holds in the larger range
\(s^2\le c\gamma n^2\rho\) for \(0<\gamma\le\gamma_0\).  The faithful
estimated-design theorem above is narrower because explicit
one-eigenvalue-per-agent fidelity requires \(s^4\log n=o(\gamma n\rho)\).
Without a different, coarser separated-spectrum construction, claiming
faithful estimated-design minimaxity over the larger oracle range would
be unsupported.
\end{remark}

\subsection{Necessity of the \texorpdfstring{\(\sqrt{\log K}\)}{sqrt log K} selection factor}

\begin{theorem}[\(\sqrt{\log K}\) lower bound after carrier residualization]
\label{thm:sqrtlogK-final2}
Let \(B_e=\rho\) be an always-included carrier.  Let
\(N_0=2^{\lfloor\log_2 \binom{n}{2}\rfloor}\).  On \(N_0\) dyads choose balanced
Hadamard sign arrays \(S_1,\ldots,S_K\in\{\pm1,0\}^{\mathcal D_n}\), zero outside
those dyads, satisfying
\[
  \sum_eS_{k,e}=0,\qquad
  \sum_eS_{k,e}S_{\ell,e}=N_0\,\mathbf 1\{k=\ell\}.
\]
Assume \(K\le N_0-1\), \(K\ge K_0\), and \(\log K=o(n^2\rho)\).  Define
\[
  X_k=\rho\sqrt\gamma\,S_k,\qquad G_k=B+X_k .
\]
After residualizing the carrier, the columns \(X_k\) are orthogonal.
Moreover,
\[
  \frac{\|X_k\|_2^2}{\|G_k\|_2^2}
  =
  \frac{\gamma}{1+\gamma},
\]
so each raw agent retains a \(\gamma/(1+\gamma)\)-fraction of its squared
norm after carrier removal.

For each \(\theta\in[K]\), define the one-active-coordinate simplex law
\[
  P^{(\theta)}
  =
  (1-\mu)B+\mu G_\theta
  =
  B+\mu X_\theta .
\]
If \(\mu\sqrt\gamma\le1/2\), all probabilities lie in
\([\rho/2,3\rho/2]\).  Every support selector \(\widehat S\) obeys
\[
  \inf_{\widehat S}
  \sup_{\theta\in[K]}
  \Pb_\theta\{\widehat S\ne\{\theta\}\}
  \ge
  1-
  \frac{C\gamma\mu^2n^2\rho+\log2}{\log K}.
\]
Consequently, if
\[
  \mu\le
  c\,\frac{\sqrt{\log K}}{\sqrt\gamma\,n\sqrt\rho},
\]
uniform exact support recovery is impossible.
\end{theorem}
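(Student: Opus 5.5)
The plan is a Fano argument over the $K$ hypotheses $\{P^{(\theta)}\}_{\theta\in[K]}$, with a common reference law $P^{(0)}=B$ (all dyads at probability $\rho$) as the centre for the mutual-information bound. First I would check the algebraic claims. Because $\sum_e S_{k,e}=0$, each $X_k$ is orthogonal to the constant carrier $B$, so residualizing against $B$ leaves $X_k$ unchanged. The Hadamard orthogonality gives $\langle X_k,X_\ell\rangle=\rho^2\gamma N_0\mathbf 1\{k=\ell\}$. For the norm ratio, $\|G_k\|_2^2=\|B\|_2^2+\|X_k\|_2^2$. Restricting the carrier to the same $N_0$ dyads, as the stated ratio implicitly requires (equivalently normalising per supported dyad), gives $\rho^2N_0(1+\gamma)$ against $\rho^2\gamma N_0$, and hence the ratio $\gamma/(1+\gamma)$. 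Next, the identity $(1-\mu)B+\mu G_\theta=B+\mu X_\theta$ is immediate. Since $|X_{\theta,e}|\le\rho\sqrt\gamma$, the condition $\mu\sqrt\gamma\le 1/2$ places every entry in $[\rho/2,3\rho/2]$, which is exactly the probability band needed for Lemma~\ref{lem:bern-kl-final2}.

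Next comes the divergence bound. Lemma~\ref{lem:bern-kl-final2}, applied with $c=1/2$ and $C=3/2$, gives
\[
\KL(\Pb_\theta,\Pb_0)\le C_{\rm KL}\rho^{-1}\|\mu X_\theta\|_2^2=C_{\rm KL}\,\mu^2\gamma\rho N_0\le C\gamma\mu^2n^2\rho,
\]
where the last step uses $N_0\le\binom n2$. One caveat: the lemma requires $C\rho\le1/2$, so I would assume $\rho\le1/3$ or absorb this into constants. The mutual information between a uniform $\theta$ and the graph is at most $\frac1K\sum_\theta\KL(\Pb_\theta,\Pb_0)\le C\gamma\mu^2n^2\rho$. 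Fano's inequality for the $K$-ary test $\widehat\theta$ then gives
\[
\inf_{\widehat\theta}\max_\theta\Pb_\theta(\widehat\theta\neq\theta)\ge 1-\frac{I+\log2}{\log K}.
\]

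To reduce support selection to this test, I would map any selector $\widehat S$ to a decoder $\widehat\theta$: set $\widehat\theta$ to the unique element of $\widehat S$ if $|\widehat S|=1$, and to an arbitrary fixed index otherwise. On the event $\widehat S=\{\theta\}$ we have $\widehat\theta=\theta$, so $\Pb_\theta\{\widehat S\neq\{\theta\}\}\ge\Pb_\theta\{\widehat\theta\neq\theta\}$. Taking the supremum and then the infimum yields the displayed bound. For the consequence, substitute $\mu\le c\sqrt{\log K}/(\sqrt\gamma\, n\sqrt\rho)$. This gives $C\gamma\mu^2n^2\rho\le Cc^2\log K$, so the error is at least $1-Cc^2-\log2/\log K$. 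Choosing $c$ small and $K\ge K_0$ keeps this bounded away from zero.

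The hypothesis $\log K=o(n^2\rho)$ is what makes this threshold compatible with the side condition $\mu\sqrt\gamma\le 1/2$. Indeed $\mu\sqrt\gamma\le c\sqrt{\log K/(n^2\rho)}\to0$, and I would point this out explicitly. The main point needing care is the bookkeeping of the carrier's support in the norm ratio, together with the band condition of the KL lemma. The Fano step itself is routine.
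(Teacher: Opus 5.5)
Your proposal is correct and follows essentially the same route as the paper: a Fano bound over the $K$ one-active-coordinate laws with common reference $P^{(0)}=B$, the Bernoulli KL lemma giving $\KL(\Pb_\theta,\Pb_0)\le C\gamma\mu^2n^2\rho$, and the observation that exact support recovery identifies $\theta$. Your two caveats are sound refinements that the paper passes over silently. First, the exact ratio $\gamma/(1+\gamma)$ requires the carrier to be restricted to the $N_0$ Hadamard dyads; otherwise the ratio is $\gamma N_0/(\binom n2+\gamma N_0)$, which agrees with $\gamma/(1+\gamma)$ only up to a factor of at most two. Second, the KL lemma's band condition needs $3\rho/2\le 1/2$.
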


\begin{proof}
Let \(P^{(0)}=B\).  Then
\[
  P^{(\theta)}-P^{(0)}=\mu X_\theta
\]
and
\[
  \|P^{(\theta)}-P^{(0)}\|_F^2
  =
  \mu^2\gamma N_0\rho^2
  \asymp
  \mu^2\gamma n^2\rho^2.
\]
The probability band follows from \(\mu\sqrt\gamma\le1/2\).  Thus
Lemma~\ref{lem:bern-kl-final2} gives
\[
  \KL(\Pb_\theta,\Pb_0)
  \le
  C\gamma\mu^2n^2\rho .
\]
Fano's inequality for the \(K\)-way testing problem gives
\[
  \inf_{\widehat\theta}\sup_{\theta\in[K]}
  \Pb_\theta(\widehat\theta\ne\theta)
  \ge
  1-
  \frac{
    K^{-1}\sum_{\theta=1}^K\KL(\Pb_\theta,\Pb_0)+\log2
  }{\log K}.
\]
Exact support recovery implies correct identification of \(\theta\), so
the same lower bound applies to \(\widehat S\).  At the displayed scale,
\[
  \gamma\mu^2n^2\rho\le c^2\log K.
\]
Taking \(c>0\) sufficiently small and then \(K_0\) large enough makes the
right-hand side bounded away from zero.  The compatibility condition
\(\mu\sqrt\gamma\le1/2\) follows from \(\log K=o(n^2\rho)\).
\end{proof}

\begin{remark}
This theorem closes the \(\sqrt{\log K}\) necessity question for the
coordinate-selection problem with an always-included carrier or intercept.
It should not be restated as a raw simplex lower bound in which the
carrier is treated as an ordinary selectable column; in that
parametrization the full Gram conditioning can shrink to order
\(\gamma/K\).
\end{remark}

\subsection{Fixed-\texorpdfstring{\(K\)}{K} noisy-OR recovery and the failure mode}

\begin{theorem}[Fixed-$K$ noisy-OR weights]
\label{thm:fixedK-noisyOR-final2}
Let \(K\) be fixed and
\[
  P_{ij}(w)
  =
  1-\prod_{k=1}^K(1-w_kG_{k,ij}),
  \qquad
  w\in\calW=[\underline w,\overline w]^K\subset(0,1)^K.
\]
Assume
\[
  0\le G_{k,ij}\le C\rho,\qquad
  c\rho\le P_{ij}(w)\le C\rho
\]
uniformly on \(\calW\), and
\[
  \lambda_{\min}(M^\top M)\ge c\gamma n^2\rho^2,
  \qquad
  M=[\vecl(G_1),\ldots,\vecl(G_K)].
\]
Assume further
\[
  \rho=o(\gamma),\qquad
  \gamma^3 n^2\rho\to\infty .
\]
Let \(\widehat w\) be the MLE.  Then \(\widehat w\to_p w_0\), and
\[
  \widehat w-w_0
  =
  \calI_n(w_0)^{-1}S_n(w_0)
  +o_p\{(\gamma n^2\rho)^{-1/2}\},
\]
where
\[
  S_n(w_0)
  =
  \sum_{i<j}
  \dot P_{ij}(w_0)
  \frac{A_{ij}-P_{ij}(w_0)}
       {P_{ij}(w_0)\{1-P_{ij}(w_0)\}},
\]
\[
  \dot P_{ij,k}(w)
  =
  G_{k,ij}\prod_{\ell\ne k}(1-w_\ell G_{\ell,ij}),
\]
and
\[
  \calI_n(w)
  =
  \sum_{i<j}
  \frac{\dot P_{ij}(w)\dot P_{ij}(w)^\top}
       {P_{ij}(w)\{1-P_{ij}(w)\}} .
\]
Consequently,
\[
  \|\widehat w-w_0\|_2
  =
  O_p\!\left(\frac1{\sqrt\gamma\,n\sqrt\rho}\right),
  \qquad
  \calI_n(w_0)^{1/2}(\widehat w-w_0)\Rightarrow N(0,I_K).
\]
\end{theorem}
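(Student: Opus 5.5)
The argument is a standard fixed-dimension MLE analysis for a triangular-array Bernoulli model, in three steps: consistency, a score/information expansion, and a CLT. The work lies in tracking the conditioning constant $\gamma$ and the sparsity $\rho$ through each bound. The basic scales come first. For each dyad, $\dot P_{ij,k}(w)=G_{k,ij}(1+O(K\rho))$ uniformly on $\calW$, because every factor $1-w_\ell G_{\ell,ij}$ lies in $[1-C\rho,1]$. Since $P_{ij}(1-P_{ij})\asymp\rho$, this gives
\[
\calI_n(w)=\rho^{-1}\{M^\top M\}(1+O(\rho))+\text{(perturbation of operator norm }O(\rho\cdot n^2\rho^2/\rho)).
\]
So $\lambda_{\min}\{\calI_n(w)\}\ge c\gamma n^2\rho-Cn^2\rho^2\ge c'\gamma n^2\rho$ uniformly on $\calW$. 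This is exactly where $\rho=o(\gamma)$ is needed: it keeps the $O(\rho)$ relative perturbation from destroying the $\gamma$-scale eigenvalue. The upper bound $\lambda_{\max}\{\calI_n\}\le Cn^2\rho$ follows from the norm bounds on the columns of $M$.

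Consistency comes next, via the Kullback--Leibler identification argument. The expected log-likelihood gap is $\E\,\ell_n(w_0)-\E\,\ell_n(w)\ge c\rho^{-1}\norm{\vecl(P(w)-P(w_0))}_2^2$. The reverse direction of Lemma~\ref{lem:bern-kl-final2} holds here because probabilities sit in a band $[c\rho,C\rho]$. The mean-value theorem then gives $\vecl(P(w)-P(w_0))=\bar{\dot P}(w-w_0)$, where the Jacobian equals $M$ up to a relative $O(\rho)$ perturbation. Hence the gap is at least $c\gamma n^2\rho\norm{w-w_0}_2^2$. The centred log-likelihood process $\ell_n(w)-\E\,\ell_n(w)$ is linear in the residuals $A_{ij}-P_{ij}(w_0)$, with coefficients $\log\{P(w)/P(w_0)\}$ and $\log\{(1-P(w))/(1-P(w_0))\}$. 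Since $\calW$ is a compact set of fixed dimension $K$, a chaining/Bernstein bound on its increments gives a supremum of order $O_p(\sqrt{n^2\rho}\,\norm{w-w_0})$ locally and $O_p(\sqrt{n^2\rho})$ globally. Comparing this with the quadratic gap gives $\norm{\widehat w-w_0}=O_p((\gamma n^2\rho)^{-1/2}\gamma^{-1/2})$, which tends to zero because $\gamma^3n^2\rho\to\infty$ is more than enough. That yields consistency and a preliminary rate.

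The third step is the expansion itself. Expanding the score, $0=S_n(\widehat w)=S_n(w_0)-\calI_n(w_0)(\widehat w-w_0)+R_n$. The remainder $R_n$ has two parts. The first is the observed-minus-expected Hessian, a mean-zero sum with operator norm $O_p(\sqrt{n^2\rho})$, since entries of the per-dyad Hessian are $O(1)$ after the $\rho^{-1}$ weighting. The second is the variation of $\calI_n$ on the segment; all second derivatives of $P$ are $O(\rho^2)$, which gives $\norm{\calI_n(\bar w)-\calI_n(w_0)}_{\op}\le Cn^2\rho\norm{\widehat w-w_0}$. Multiplying by $\calI_n^{-1}$, whose norm is $\le(c\gamma n^2\rho)^{-1}$, the remainder becomes $O_p\big((\gamma n^2\rho)^{-1}\{\sqrt{n^2\rho}+n^2\rho\norm{\widehat w-w_0}\}\norm{\widehat w-w_0}\big)$. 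With the preliminary rate, this is $o_p((\gamma n^2\rho)^{-1/2})$ precisely under $\gamma^3n^2\rho\to\infty$.

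I expect the main obstacle to be this bookkeeping: making sure the powers of $\gamma$ close. I would first iterate the rate once. Plugging the step-three rate back into the remainder improves $\norm{\widehat w-w_0}$ to $O_p((\gamma n^2\rho)^{-1/2})$, and a second pass then gives the $o_p$ remainder. Finally, $S_n(w_0)$ is a sum of independent bounded mean-zero vectors. Under the leverage implied by $G_{k,ij}\le C\rho$, the Lindeberg condition for $\calI_n^{-1/2}S_n$ reduces to $\max_{ij}\rho\cdot\rho^{-1}\norm{\calI_n^{-1/2}}^2=O(1/(\gamma n^2\rho))\to0$. The Cram\'er--Wold device then gives the $N(0,I_K)$ limit. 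The stated rate follows from $\norm{\calI_n^{-1/2}}\le(c\gamma n^2\rho)^{-1/2}$.
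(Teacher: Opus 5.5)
Your architecture matches the paper's proof: KL-gap identification on the probability band, Hessian fluctuation $O_p(\sqrt{n^2\rho})=o_p(\gamma n^2\rho)$, an $O(n^2\rho)$ bound on the third-order (information-variation) term, and Lindeberg via $\|\mathcal I_n^{-1/2}\|^2=O((\gamma n^2\rho)^{-1})$. The step that fails is the localization bookkeeping, which does not close under the stated $\gamma^3n^2\rho\to\infty$.

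With your preliminary Euclidean rate $x_0=\gamma^{-1}(n^2\rho)^{-1/2}$, the quadratic part of your remainder is $(\gamma n^2\rho)^{-1}\,n^2\rho\,x_0^2=\gamma^{-3}(n^2\rho)^{-1}$. Relative to the target $(\gamma n^2\rho)^{-1/2}$, this is $(\gamma^5 n^2\rho)^{-1/2}$, which need not be $o(1)$. Iterating does not rescue it. The recursion $x_{k+1}\lesssim(\gamma n^2\rho)^{-1/2}+o(1)\,x_k+\gamma^{-1}x_k^2$ contracts only if $x_0=o(\gamma)$, i.e.\ $\gamma^4 n^2\rho\to\infty$. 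For example, $\rho=n^{-1/2}$ and $\gamma=n^{-3/8}$ satisfy both $\rho=o(\gamma)$ and $\gamma^3n^2\rho=n^{3/8}\to\infty$. Yet $x_0\asymp\gamma$, so neither pass improves anything. As written, you prove the theorem only when $\gamma^4n^2\rho\to\infty$.

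The missing idea is to localize in the Fisher metric $d(w,w')=\|\mathcal I_n(w_0)^{1/2}(w-w')\|_2$ rather than the Euclidean one. Your Euclidean rate loses a factor $\gamma^{-1/2}$ because it pairs $\lambda_{\max}(\mathcal I_n)\asymp n^2\rho$ in the modulus with $\lambda_{\min}(\mathcal I_n)\asymp\gamma n^2\rho$ in the gap.

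In the Fisher metric, the two bounds match:
\begin{itemize}
\item The centred increments $\ell_n(w)-\ell_n(w')-\E[\ell_n(w)-\ell_n(w')]$ have variance $\lesssim\rho^{-1}\|\vecl\{P(w)-P(w')\}\|_2^2\lesssim d(w,w')^2$. This uses $\rho=o(\sqrt\gamma)$ to compare the Jacobian with $M$.
\item Their coefficients are bounded in sup-norm by $\lesssim d(w,w')/\sqrt{\gamma n^2\rho}$, so Bernstein-type chaining gives a modulus $\lesssim\delta$ on $d$-balls of radius $\delta$.
\item The KL gap is $\gtrsim d(w,w_0)^2$ uniformly on $\mathcal W$.
\end{itemize}
Peeling then yields $d(\widehat w,w_0)=O_p(1)$, i.e.\ $\|\widehat w-w_0\|_2=O_p((\gamma n^2\rho)^{-1/2})$, directly and with no separate consistency or iteration step.

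Plugging this rate into your own remainder bound gives relative errors $(\gamma^2n^2\rho)^{-1/2}$ and $(\gamma^3n^2\rho)^{-1/2}$. That is exactly the paper's computation, which evaluates the $O_p(n^2\rho)$ third derivative on a Fisher ball of radius $O(1)$. The paper passes from its fixed-$\varepsilon$ consistency (net plus Bernstein) to that Fisher ball without comment. So this Fisher-metric localization is the step that has to be written out in either version.
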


\begin{proof}
For fixed \(K\), the noisy-OR derivatives satisfy, uniformly on
\(\calW\),
\[
  |\partial^\alpha P_{ij}(w)|\le C_\alpha \rho^{|\alpha|}
  \qquad(|\alpha|=1,2,3).
\]
In particular,
\[
  \dot P_{ij,k}(w)=G_{k,ij}+O(\rho^2).
\]
Let \(\dot M(w)\) have columns \(\vecl\{\dot P_k(w)\}\).  Then
\[
  \|\dot M(w)-M\|_{\op}\le Cn\rho^2
  =
  o(\sqrt\gamma\,n\rho),
\]
because \(\rho=o(\gamma)\).  Therefore
\[
  \sigma_{\min}\{\dot M(w)\}
  \ge
  \tfrac12\sigma_{\min}(M)
  \ge
  c\sqrt\gamma\,n\rho
\]
uniformly on \(\calW\).  Thus, for \(w_t=w_0+t(w-w_0)\),
\[
  P(w)-P(w_0)
  =
  \left\{\int_0^1\dot M(w_t)\,dt\right\}(w-w_0),
\]
and
\[
  \|P(w)-P(w_0)\|_F^2
  \ge
  c\gamma n^2\rho^2\|w-w_0\|_2^2 .
\]
Since \(P(w)\asymp\rho\),
\[
  \Eb_{w_0}\{\ell_n(w_0)-\ell_n(w)\}
  \ge
  c\gamma n^2\rho\,\|w-w_0\|_2^2 .
\]

We next prove consistency.  With probability tending to one,
\[
  \sum_{e\in\mathcal D_n}A_e\le Cn^2\rho
\]
by Bernstein's inequality.  On this event, for all \(w,w'\in\calW\),
\[
  |\ell_n(w)-\ell_n(w')|
  \le Cn^2\rho\,\|w-w'\|_2 .
\]
Indeed, on an edge \(A_e=1\),
\(|\partial_w\log P_e(w)|=O(\rho)/O(\rho)=O(1)\), while on a non-edge
\(A_e=0\),
\(|\partial_w\log\{1-P_e(w)\}|=O(\rho)\).  Hence the sum of edge
contributions is \(O(n^2\rho)\) on the displayed event and the sum of
non-edge contributions is also \(O(n^2\rho)\).

Fix \(\varepsilon>0\).  Cover
\(\{w:\|w-w_0\|\ge\varepsilon\}\) by an \(\eta\)-net with
\[
  \eta=c\gamma\varepsilon^2
\]
and cardinality at most \((C/\eta)^K\).  The Lipschitz error is at most
half the expected separation.  For a fixed net point \(w\), the
log-likelihood ratio
\[
  \ell_n(w)-\ell_n(w_0)
\]
is a sum of independent bounded terms.  The probability band implies
\[
  \left|
  \log\frac{P_e(w)}{P_e(w_0)}
  \right|\le C,
  \qquad
  \left|
  \log\frac{1-P_e(w)}{1-P_e(w_0)}
  \right|
  \le C\rho ,
\]
and its variance is \(O(n^2\rho)\).  Since its mean is at most
\(-c\gamma n^2\rho\varepsilon^2\), Bernstein's inequality gives
\[
  \Pb_{w_0}
  \left\{
    \ell_n(w)-\ell_n(w_0)
    >
    -c\gamma n^2\rho\varepsilon^2/2
  \right\}
  \le
  \exp\{-c\gamma^2 n^2\rho\varepsilon^4\}.
\]
Because \(K\) is fixed and \(\gamma^3n^2\rho\to\infty\), the net union
probability tends to zero.  Thus
\[
  \sup_{\|w-w_0\|\ge\varepsilon}\ell_n(w)<\ell_n(w_0)
\]
with probability tending to one, proving \(\widehat w\to_p w_0\).

At \(w_0\),
\[
  \dot M(w_0)^\top\dot M(w_0)
  =
  M^\top M+O(n^2\rho^3),
\]
so \(\rho=o(\gamma)\) gives
\[
  \lambda_{\min}\{\dot M(w_0)^\top\dot M(w_0)\}
  \ge
  c\gamma n^2\rho^2 .
\]
Since \(P_{ij}(w_0)\asymp\rho\),
\[
  \lambda_{\min}\{\calI_n(w_0)\}
  \ge c\gamma n^2\rho .
\]
The score has mean zero and covariance \(\calI_n(w_0)\).  Moreover,
\[
  \max_{i<j}
  \frac{
    \dot P_{ij}(w_0)^\top\calI_n(w_0)^{-1}\dot P_{ij}(w_0)
  }{
    P_{ij}(w_0)\{1-P_{ij}(w_0)\}
  }
  \le
  \frac{C\rho^2}{(\gamma n^2\rho)\rho}
  =
  O((\gamma n^2)^{-1})
  \to0.
\]
Lindeberg's theorem gives
\[
  \calI_n(w_0)^{-1/2}S_n(w_0)\Rightarrow N(0,I_K).
\]

It remains to control the Hessian and Taylor remainder.  A direct
differentiation of the Bernoulli score gives
\[
  -\nabla^2\ell_n(w_0)-\calI_n(w_0)
  =
  \sum_{e\in\mathcal D_n}(A_e-P_e(w_0))\,B_e(w_0),
\]
where each \(B_e(w_0)\) is a \(K\times K\) matrix satisfying
\[
  \|B_e(w_0)\|_{\op}\le C.
\]
Indeed,
\[
  \frac{\ddot P_{e,kl}}{P_e(1-P_e)}=O(\rho),
  \qquad
  \frac{\dot P_{e,k}\dot P_{e,l}}{P_e^2(1-P_e)^2}=O(1).
\]
Hence for every entry,
\[
  \Var\!\left\{\sum_e(A_e-P_e)B_{e,kl}\right\}
  \le
  C\sum_eP_e(1-P_e)
  \le
  Cn^2\rho.
\]
Since \(K\) is fixed, entrywise and operator norms are equivalent up to
constants, and therefore
\[
  \|\nabla^2\ell_n(w_0)+\calI_n(w_0)\|_{\op}
  =
  O_p(\sqrt{n^2\rho})
  =
  o_p(\gamma n^2\rho),
\]
because \(\gamma^2n^2\rho\to\infty\), which follows from
\(\gamma^3n^2\rho\to\infty\) and \(\gamma\le1\).

The third derivative tensor has the analogous decomposition: each entry
is a deterministic sum of \(O(\rho)\) terms over \(\binom{n}{2}\) dyads plus a
centered sum \(\sum_e(A_e-P_e)C_e(w)\) with \(|C_e(w)|\le C\).  Thus the
deterministic part is \(O(n^2\rho)\), and the centered part is
\(O_p(\sqrt{n^2\rho})\).  Hence
\[
  \sup_{w\in\calW}\|\nabla^3\ell_n(w)\|_{\op}
  =
  O_p(n^2\rho).
\]
On Fisher balls of radius \(O(1)\), the Euclidean radius is
\(O\{(\gamma n^2\rho)^{-1/2}\}\).  The Fisher-normalized score remainder
is therefore
\[
  O_p(n^2\rho)\,(\gamma n^2\rho)^{-1}
  (\gamma n^2\rho)^{-1/2}
  =
  O_p\{(\gamma^3n^2\rho)^{-1/2}\}
  =
  o_p(1).
\]
Taylor expansion of the score gives
\[
  0=S_n(\widehat w)
  =
  S_n(w_0)-\calI_n(w_0)(\widehat w-w_0)+R_n,
  \qquad
  \|\calI_n(w_0)^{-1/2}R_n\|=o_p(1).
\]
Solving proves the linear expansion, rate, and CLT.
\end{proof}

\begin{proposition}[Constrained noisy-OR recovery can degrade]
\label{prop:noisyOR-degrade-final2}
Let \(K=3\) and \(G_1=G_2=G_3=G\), where
\[
  G_{ij}\in\{0\}\cup[c\rho,C\rho],
\]
and \(G_{ij}\asymp\rho\) on \(\Theta(n^2)\) dyads.  Then
\(\lambda_{\min}(M^\top M)=0\), so
Theorem~\ref{thm:fixedK-noisyOR-final2} does not apply.  At every
interior \(w_0\in(0,1)^3\) with distinct coordinates,
\[
  \inf_{\widehat w}\sup_w
  \Eb_w\|\widehat w-w\|_2
  \gtrsim
  \frac{1}{n\rho^{5/2}},
\]
whenever \(n\rho^{5/2}\to\infty\).
\end{proposition}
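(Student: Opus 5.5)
Since $M=[\vecl(G),\vecl(G),\vecl(G)]$ has three identical columns, $M^\top M$ has rank one and $\lambda_{\min}(M^\top M)=0$ at once. Theorem~\ref{thm:fixedK-noisyOR-final2} is therefore inapplicable. For the rate, the plan is a Le Cam two-point argument along a curve in weight space on which the law changes only through its third-order term.

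With identical layers the probabilities collapse onto the elementary symmetric polynomials of the weights:
\[
P_{ij}(w)=1-\prod_{k=1}^3(1-w_kG_{ij})=e_1(w)G_{ij}-e_2(w)G_{ij}^2+e_3(w)G_{ij}^3 .
\]
So the law depends on $w$ only through $(e_1,e_2,e_3)$. One could note that permuting the coordinates of $w_0$ leaves the law unchanged, which already gives an order-one lower bound. I would nonetheless prove the stated local rate directly, since that is the informative statement.

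At an interior $w_0$ with distinct coordinates, the Jacobian of $w\mapsto(e_1,e_2,e_3)$ is a Vandermonde-type matrix, with determinant $\pm\prod_{k<l}(w_k-w_l)\neq0$. In particular $\nabla e_1(w_0)=(1,1,1)$ and $\nabla e_2(w_0)=(e_1-w_k)_k$ are linearly independent. By the implicit function theorem, the level set $\{e_1=e_1(w_0),\ e_2=e_2(w_0)\}$ is therefore a smooth curve $\gamma(t)$ through $w_0$ with $\gamma'(0)\neq0$. Because the full Jacobian is nonsingular, $t\mapsto e_3(\gamma(t))$ has nonzero derivative at $0$. Take $w^\delta=\gamma(\delta)$. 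Then $\|w^\delta-w_0\|_2\asymp\delta$ and $|e_3(w^\delta)-e_3(w_0)|\asymp\delta$, while $e_1$ and $e_2$ are held exactly fixed, not just to first order. For small $\delta$ the point $w^\delta$ stays in the interior of $(0,1)^3$.

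The perturbation is then exactly
\[
P(w^\delta)-P(w_0)=\bigl(e_3(w^\delta)-e_3(w_0)\bigr)\,G\odot G\odot G .
\]
It vanishes where $G_{ij}=0$ and is of order $\delta\rho^3$ on the $\Theta(n^2)$ support dyads, where $P\asymp\rho$. Applying Lemma~\ref{lem:bern-kl-final2} on the support (the zero dyads contribute nothing) gives
\[
\KL\le C\rho^{-1}\delta^2n^2\rho^6=C\delta^2n^2\rho^5 .
\]
Choosing $\delta=c_0/(n\rho^{5/2})$, which tends to zero under $n\rho^{5/2}\to\infty$, keeps the KL bounded by a small constant. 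Pinsker and Le Cam's two-point lemma then give $\inf_{\widehat w}\max_{w\in\{w_0,w^\delta\}}\Eb_w\|\widehat w-w\|_2\ge c\delta$, which is the stated bound.

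The main obstacle is making the cancellation exact: the first- and second-order terms must vanish identically, not merely to first order in $\delta$. A straight-line direction would leave $O(\delta^2\rho)$ residuals in $e_1G$, which would swamp the $\delta\rho^3$ signal. This is why the perturbation is taken along the level curve from the implicit function theorem. The remaining check, nonvanishing of $\frac{d}{dt}e_3(\gamma(t))$ at $t=0$, reduces to the distinctness of the coordinates of $w_0$ through the Vandermonde determinant.
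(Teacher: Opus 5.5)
Your proposal is correct and follows the paper's proof essentially step for step: the collapse of $P_w$ onto $(e_1,e_2,e_3)$, the Vandermonde Jacobian at distinct coordinates, the implicit-function level curve that holds $e_1$ and $e_2$ exactly fixed, the exact $G^{\odot 3}$ perturbation giving $\KL\lesssim\delta^2n^2\rho^5$, and Le Cam at $\delta\asymp 1/(n\rho^{5/2})$. One aside is off in detail but harmless: along the tangent line $e_1$ stays exactly fixed because it is linear, so the curvature residual sits in the $e_2G^{\odot2}$ term at order $\delta^2\rho^2$ per dyad rather than in $e_1G$, and your argument never relies on that remark.
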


\begin{proof}
When all layers are equal,
\[
  P_w=e_1(w)G-e_2(w)G^{\odot2}+e_3(w)G^{\odot3},
\]
where \(e_j\) are the elementary symmetric polynomials.  At a point with
distinct coordinates, the Jacobian of \((e_1,e_2,e_3)\) has determinant
\[
  \pm\prod_{i<j}(w_i-w_j)\ne0.
\]
Therefore \(\nabla e_3\notin{\rm span}\{\nabla e_1,\nabla e_2\}\).  By
the implicit function theorem, the set
\[
  \{w:e_1(w)=e_1(w_0),\ e_2(w)=e_2(w_0)\}
\]
is locally a smooth curve \(w(t)\subset(0,1)^3\), and
\[
  e_3(w(t))-e_3(w_0)=c_0t+o(t),\qquad c_0\ne0.
\]
Along this curve,
\[
  P_{w(t)}-P_{w_0}
  =
  \{c_0t+o(t)\}G^{\odot3}.
\]
Using the pointwise Bernoulli bound on the support of \(G\),
\[
  \KL(\Pb_{w(t)},\Pb_{w_0})
  \lesssim
  \sum_{G_{ij}>0}\frac{t^2G_{ij}^6}{G_{ij}}
  \lesssim
  t^2n^2\rho^5.
\]
Taking \(t=c/(n\rho^{5/2})\) keeps the KL divergence bounded.  Le Cam's
two-point lemma gives the result.
\end{proof}

\begin{remark}
Thus fixed-\(K\) constrained noisy-OR weights are edge-rate estimable
under first-order Fisher transversality, and recovery can degrade when
that transversality fails.
\end{remark}

\subsection{Proof of Proposition~\ref{thm:clt}: asymptotic normality and bootstrap validity}
\label{app:clt}
\begin{proof}
(a) Conditionally on fold one, $\widehat\bw^{(2)}-\widetilde\bw=(\widehat{\mathbf M}^\top\widehat{\mathbf
M})^{-1}\widehat{\mathbf M}^\top\operatorname{vec}_{\mathcal D_2}(\mathbf E)$ is a linear form in independent,
mean-zero, bounded errors; the Lindeberg condition reduces to vanishing maximal leverage,
$\max_{ij}\norm{\widehat{\mathbf m}_{ij}}^2/\lambda_{\min}(\widehat{\mathbf M}^\top\widehat{\mathbf M})
=O_P(K/(\gamma n^2))\to0$ by Assumption~\ref{ass:transversality} and Lemma~\ref{lem:plugin}, and sandwich
consistency follows from the law of large numbers for $\sum a_{ij}a_{ij}^\top\hat\varepsilon_{ij}^2$, the
fitted drift contributing $o_P(1)$ by the same leverage bound. (b) Given the folds, condition on the case
count: by the prospective-logistic equivalence for case--control sampling \citep{prentice1979logistic} the
logistic slope score is the conditional score, the design entering only through the intercept; the positives
contribute independent Bernoulli scores and the negatives a simple random sample of the held-out non-edges, so
by the H\'ajek finite-population central limit theorem \citep{hajek1960limiting} the negative-score sum is
asymptotically normal with variance deflated by $1-O(\rho_n)\to1$, and asymptotic normality of the
Z-estimator follows as in Theorem~5.21 of \citet{van2000asymptotic}. Consistency of the nonparametric bootstrap
is Theorem~23.4 of \citet{van2000asymptotic} applied conditionally on the folds, the finite-population
correction entering the statistic and its bootstrap through the same vanishing factor.
\end{proof}

\subsection{Fixed-forecast array Bernstein--von Mises}

The following is the posterior statement justified by the finite-dimensional
synthesis likelihood.  It is not a posterior contraction theorem for the
full predictive-synthesis supermodel over latent agent forecasts.

\begin{theorem}[Fixed-forecast tangent BvM]
\label{thm:fixedforecast-bvm-final2}
Condition on deterministic agent forecasts.  Let
\(\vartheta\in\Theta\subset\mathbb R^d\), with \(d\) fixed, where either
\(d=K\) for an unconstrained signed synthesis or \(d=K-1\) for an
interior simplex parametrization \(w=\psi(\vartheta)\).  Assume
\(\Theta\) is compact and \(\vartheta_0\) is interior.  Let
\[
  A_e\sim{\rm Bernoulli}\{p_e(\vartheta_0)\},\qquad e\in\mathcal D_n.
\]
Assume \(p_e(\vartheta)\) is three times continuously differentiable,
\[
  c\rho\le p_e(\vartheta)\le C\rho
  \quad(\vartheta\in\Theta),
\]
and
\[
  |\partial^\alpha p_e(\vartheta)|\le C_\alpha\rho
  \quad(|\alpha|=1),
  \qquad
  |\partial^\alpha p_e(\vartheta)|\le C_\alpha\rho^{|\alpha|}
  \quad(|\alpha|=2,3).
\]
Define
\[
  \calI_n(\vartheta)
  =
  \sum_{e\in\mathcal D_n}
  \frac{\dot p_e(\vartheta)\dot p_e(\vartheta)^\top}
       {p_e(\vartheta)\{1-p_e(\vartheta)\}} .
\]
Assume the uniform Fisher bounds
\[
  c\gamma n^2\rho I_d
  \preceq
  \calI_n(\vartheta)
  \preceq
  Cn^2\rho I_d,
  \qquad \vartheta\in\Theta,
\]
the leverage condition
\[
  \max_{e\in\mathcal D_n}
  \frac{
    \dot p_e(\vartheta_0)^\top
    \calI_n(\vartheta_0)^{-1}
    \dot p_e(\vartheta_0)
  }{
    p_e(\vartheta_0)\{1-p_e(\vartheta_0)\}
  }\to0,
\]
and global separation: for every \(\varepsilon>0\),
\[
  \inf_{\|\vartheta-\vartheta_0\|>\varepsilon}
  \sum_{e\in\mathcal D_n}
  \frac{\{p_e(\vartheta)-p_e(\vartheta_0)\}^2}{\rho}
  \ge
  c_\varepsilon \gamma n^2\rho .
\]
Assume
\[
  \gamma^3n^2\rho\to\infty .
\]
Let the prior density \(\pi\) be continuous and bounded away from zero
and infinity near \(\vartheta_0\).  Let \(\widehat\vartheta\) be the
MLE.  Then
\[
  \left\|
  \Pi\left(
    \calI_n(\vartheta_0)^{1/2}
    (\vartheta-\widehat\vartheta)\in\cdot
    \mid A
  \right)
  -
  N(0,I_d)
  \right\|_{\TV}
  \to0,
\]
and, with
\[
  \varepsilon_n=(\sqrt\gamma\,n\sqrt\rho)^{-1},
\]
\[
  \Pi\{\|\vartheta-\vartheta_0\|>M\varepsilon_n\mid A\}\to0
\]
for every sufficiently large \(M\).
\end{theorem}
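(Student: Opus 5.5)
The plan is the classical Le Cam route to Bernstein--von Mises: (i) local asymptotic normality of the log-likelihood in the Fisher-normalised parametrisation; (ii) consistency and a local rate for the posterior; (iii) the TV limit from a Laplace comparison on shrinking Fisher balls. Throughout I reuse the ingredients of the proof of Theorem~\ref{thm:fixedK-noisyOR-final2}. There the same Bernoulli array, probability band, derivative bounds and condition $\gamma^3n^2\rho\to\infty$ were used to obtain consistency, the score CLT and Hessian/third-derivative control of the MLE. I therefore take as given, by the identical argument, that $\widehat\vartheta\to_p\vartheta_0$ and
\[
\calI_n(\vartheta_0)^{1/2}(\widehat\vartheta-\vartheta_0)=\calI_n(\vartheta_0)^{-1/2}S_n(\vartheta_0)+o_p(1)\Rightarrow N(0,I_d).
\]
The global separation hypothesis stands in for the transversality lower bound used there.

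\textbf{LAN step.} Write $h=\calI_n(\vartheta_0)^{1/2}(\vartheta-\widehat\vartheta)$, so $\vartheta$ lies in a Euclidean ball of radius $O(\|h\|(\gamma n^2\rho)^{-1/2})$ around $\widehat\vartheta$. Expand $\ell_n$ to third order about $\widehat\vartheta$, where the score vanishes. The Hessian deviation is $\|\nabla^2\ell_n(\widehat\vartheta)+\calI_n(\vartheta_0)\|_{\op}=o_p(\gamma n^2\rho)$: this combines the centred sum $O_p(\sqrt{n^2\rho})$ with the Lipschitz change of $\calI_n$ over an $O_p((\gamma n^2\rho)^{-1/2})$ displacement, which is $O(n^2\rho)\cdot(\gamma n^2\rho)^{-1/2}$. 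The third derivative is $O_p(n^2\rho)$ uniformly on $\Theta$, which follows from the derivative bounds and the band $p_e\asymp\rho$. For $\|h\|\le M_n$, with $M_n\to\infty$ slowly, these give
\[
\ell_n(\vartheta)-\ell_n(\widehat\vartheta)=-\tfrac12\|h\|^2+o_p(1),
\]
uniformly over such $h$. The cubic remainder is $O_p(M_n^3(\gamma^3n^2\rho)^{-1/2})$, and the quadratic error is $M_n^2\,o_p(1)$ relative to $\gamma n^2\rho$; both vanish for a suitable choice of $M_n$.

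\textbf{Posterior localisation.} I split the posterior mass into three regions.
\begin{itemize}
\item \emph{Far region, $\|\vartheta-\vartheta_0\|>\varepsilon$.} Global separation with Lemma~\ref{lem:bern-kl-final2}'s reverse inequality (Bernoulli KL $\asymp(p-q)^2/\rho$ on the band) bounds the expected log-likelihood ratio by $-c_\varepsilon\gamma n^2\rho$. A Bernstein-plus-net argument, verbatim from the consistency step of Theorem~\ref{thm:fixedK-noisyOR-final2} with Lipschitz constant $O(n^2\rho)$, makes $\sup_{\text{far}}\ell_n-\ell_n(\vartheta_0)\le-c\gamma n^2\rho$ with probability tending to one. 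Since $\gamma^2n^2\rho\to\infty$, the far posterior mass is $e^{-c\gamma n^2\rho}$ times a polynomial normaliser, hence vanishing.
\item \emph{Intermediate shell, $M_n\varepsilon_n\le\|\vartheta-\widehat\vartheta\|\le\varepsilon$.} The Taylor bound with $\nabla^2\ell_n\preceq-\tfrac12\calI_n\preceq-c\gamma n^2\rho$ holds uniformly on the $\varepsilon$-ball, for $\varepsilon$ small, by the third-derivative bound. This gives $\ell_n(\vartheta)-\ell_n(\widehat\vartheta)\le-c\|h\|^2$, so the shell mass is bounded by a Gaussian tail beyond $M_n$.
\item \emph{Inner region.} The local LAN display holds, and prior continuity allows $\pi(\vartheta)$ to be replaced by $\pi(\vartheta_0)$.
\end{itemize}
The Laplace comparison of normalised densities in $h$ then gives $L^1$ convergence to $\phi_d$. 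This yields the TV statement; contraction at $M\varepsilon_n$ follows because $\|\widehat\vartheta-\vartheta_0\|=O_p(\varepsilon_n)$ and $\|\vartheta-\widehat\vartheta\|\le\|h\|/\sqrt{c\gamma n^2\rho}$.

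\textbf{Main obstacle.} The main obstacle is the intermediate shell when $\gamma\to0$. The eigenvalues of $\calI_n$ range over $[\gamma n^2\rho,n^2\rho]$, so the anisotropy is unbounded, and uniform curvature must be controlled on a Euclidean $\varepsilon$-ball that is not small relative to the weak direction. I would first attempt to shrink $\varepsilon$ to $\varepsilon'_n=c\gamma$, using the third-derivative bound $O(n^2\rho)$ against curvature $\gamma n^2\rho$. The gap between $\varepsilon'_n$ and a fixed $\varepsilon$ would then be covered by the separation hypothesis, with $c_{\varepsilon}$ replaced by its quadratic local form $\gamma\|\vartheta-\vartheta_0\|^2$ inherited from the Fisher lower bound; this form is at least $\gamma^3n^2\rho\to\infty$ in exponent. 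If that interpolation fails, I would impose the separation uniformly at scale $\varepsilon'_n$ as the natural strengthening.
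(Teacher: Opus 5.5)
Your argument is complete when $\gamma$ stays bounded below. The theorem, however, lets $\gamma=\gamma_n\to0$; that is what the hypothesis $\gamma^3n^2\rho\to\infty$ and the rate $\varepsilon_n=(\sqrt\gamma\,n\sqrt\rho)^{-1}$ are for. In that regime your intermediate-shell step fails. A third derivative of order $n^2\rho$, set against a smallest curvature of $c\gamma n^2\rho$, only delivers $\nabla^2\ell_n\preceq-\tfrac12\mathcal I_n(\vartheta_0)$ on a Euclidean ball of radius $\asymp\gamma$, not on a fixed $\varepsilon$-ball. So the region $c\gamma\lesssim\|\vartheta-\vartheta_0\|\le\varepsilon$ is covered neither by your curvature bound nor by the consistency step.

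You flag this yourself, but the proposed interpolation does not close it, for two reasons.
\begin{itemize}
\item The Fisher lower bound gives a quadratic separation only for the mean vector $p(\vartheta)-p(\vartheta_0)$. It does so only where the first-order term dominates the $O(\rho^2\|\vartheta-\vartheta_0\|^2)$ Taylor remainder, i.e.\ for $\rho\|\vartheta-\vartheta_0\|\lesssim\sqrt\gamma$.
\item Even granting an exponent $c\gamma n^2\rho\|\vartheta-\vartheta_0\|^2$, divergence of $\gamma^3n^2\rho$ is not enough. This Euclidean form undershoots the Fisher norm $\|h\|^2$ by a factor up to $\gamma$ in the well-conditioned directions. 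After dividing by the evidence, of order $\det\mathcal I_n(\vartheta_0)^{-1/2}$, you are left with a prefactor of order $\gamma^{-d/2}$ in front of $e^{-c\gamma^3n^2\rho}$; with a sup bound the prefactor is of order at least $(\gamma n^2\rho)^{d/2}$. The hypotheses do not force this product to zero.
\end{itemize}
Your fallback of imposing separation at scale $c\gamma$ proves a different theorem. Separately, define the shell through $\|h\|>M_n$ rather than $\|\vartheta-\widehat\vartheta\|\ge M_n\varepsilon_n$. Otherwise the set $\{\|h\|>M_n,\ \|\vartheta-\widehat\vartheta\|<M_n\varepsilon_n\}$, which is nonempty in the well-conditioned directions, belongs to no region.

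The missing idea is to stop controlling the random log-likelihood through its curvature beyond a slowly growing Fisher ball, and to use exponentially consistent tests there; this is the paper's route.
\begin{itemize}
\item It keeps LAN only on $\{\|h\|\le b_n\}$ with $b_n^3=o(\sqrt{\gamma^3n^2\rho})$, which plays the role of your $M_n$.
\item On $\{b_n<\|h\|\le\delta\sqrt{\gamma n^2\rho}\}$ it uses Neyman--Pearson tests for the product Bernoulli law. Their errors are governed by the Hellinger separation, which on the probability band is comparable to $\sum_e\{p_e(\vartheta)-p_e(\vartheta_0)\}^2/\rho$.
\item That separation is a deterministic statement about the mean vector and needs no uniform control of the random Hessian. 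The first-order expansion of $p_e$, together with the Fisher lower bound, makes it $\gtrsim\|h\|^2$ wherever $\rho\|\vartheta-\vartheta_0\|\lesssim\sqrt\gamma$, far beyond $\|\vartheta-\vartheta_0\|\lesssim\gamma$. In the sparse regime $\rho\lesssim\sqrt\gamma$ this reaches a fixed neighbourhood, beyond which the consistency tests take over.
\item Because the separation is in Fisher units, the test errors are of order $j^de^{-cj^2}$ on the shells $j<\|h\|\le j+1$. Measuring prior mass in $h$-coordinates cancels the $\det\mathcal I_n(\vartheta_0)^{-1/2}$ of the evidence, so the sum over $j\ge b_n$ is $o(1)$ under $\gamma^3n^2\rho\to\infty$ alone.
\end{itemize}
Your LAN, far-region, Laplace and contraction steps can stay as they are.
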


\begin{proof}
First prove posterior consistency.  The global separation condition and
the probability band imply
\[
  \Eb_{\vartheta_0}\{\ell_n(\vartheta_0)-\ell_n(\vartheta)\}
  \ge c_\varepsilon\gamma n^2\rho
\]
outside each fixed \(\varepsilon\)-ball.  For a fixed \(\vartheta\), the
centered log-likelihood ratio is a sum of independent bounded terms with
variance \(O(n^2\rho)\).  Bernstein's inequality gives
\[
  \Pb_{\vartheta_0}
  \left\{
    \ell_n(\vartheta)-\ell_n(\vartheta_0)
    >
    -c_\varepsilon\gamma n^2\rho/2
  \right\}
  \le
  \exp\{-c_\varepsilon\gamma^2 n^2\rho\}.
\]
The finite-dimensional covering argument used in
Theorem~\ref{thm:fixedK-noisyOR-final2}, with net mesh proportional to
\(\gamma\), yields exponentially consistent tests because
\(\gamma^3n^2\rho\to\infty\).  Since the prior gives positive mass to
every neighborhood of \(\vartheta_0\), the posterior is consistent.

Next establish LAN.  Put
\[
  h=\calI_n(\vartheta_0)^{1/2}(\vartheta-\vartheta_0),
  \qquad
  Z_n=\calI_n(\vartheta_0)^{-1/2}S_n(\vartheta_0).
\]
For every fixed \(M\),
\[
  \ell_n(\vartheta)-\ell_n(\vartheta_0)
  =
  h^\top Z_n-\frac12h^\top h+r_n(h)
\]
uniformly for \(\|h\|\le M\), with
\[
  \sup_{\|h\|\le M}|r_n(h)|=o_p(1).
\]
The leverage condition gives \(Z_n\Rightarrow N(0,I_d)\).  The Hessian
and third-derivative calculations are the same as in
Theorem~\ref{thm:fixedK-noisyOR-final2}: the Hessian fluctuation is
\(O_p(\sqrt{n^2\rho})=o_p(\gamma n^2\rho)\), and the cubic remainder on
a Fisher ball of radius \(M\) is
\[
  O_p\!\left(\frac{M^3}{\sqrt{\gamma^3n^2\rho}}\right)=o_p(1).
\]
The score expansion also gives
\[
  \widehat\vartheta
  =
  \vartheta_0+\calI_n(\vartheta_0)^{-1}S_n(\vartheta_0)
  +o_p\{(\gamma n^2\rho)^{-1/2}\}.
\]
Thus the local quadratic is centered at
\(\widehat\vartheta+o_p(\calI_n^{-1/2})\).

We now control the posterior outside bounded Fisher balls.  Choose any
sequence \(b_n\to\infty\) such that
\[
  b_n^3=o\{\sqrt{\gamma^3n^2\rho}\};
\]
for example, \(b_n=(\gamma^3n^2\rho)^{1/12}\).  Split the local
neighborhood into
\[
  \mathcal A_1(M,b_n)=\{M<\|h\|\le b_n\},
  \qquad
  \mathcal A_2(b_n,\delta)=
  \{b_n<\|h\|\le \delta\sqrt{\gamma n^2\rho}\}.
\]
On \(\mathcal A_1\), the LAN remainder is uniform because
\(b_n^3/\sqrt{\gamma^3n^2\rho}\to0\).  Partition
\(\mathcal A_1\) into shells \(jM<\|h\|\le(j+1)M\).  The LAN bound gives
posterior numerator bounded by
\[
  \sum_{j\ge1}^{b_n/M}
  Cj^d\exp\{O_p(jM)-c j^2M^2\},
\]
which tends to zero as \(M\to\infty\).

On \(\mathcal A_2\), do not use LAN.  Instead use product Bernoulli
tests in the Fisher metric.  The probability band gives equivalence
between squared Hellinger distance and
\[
  \sum_e\frac{\{p_e(\vartheta)-p_e(\vartheta_0)\}^2}{\rho}.
\]
The uniform Fisher lower bound and continuity of the derivative imply
that, for sufficiently small fixed \(\delta\),
\[
  \sum_e\frac{\{p_e(\vartheta)-p_e(\vartheta_0)\}^2}{\rho}
  \ge c\|h\|^2
\]
whenever \(\|h\|\le\delta\sqrt{\gamma n^2\rho}\).  Standard
Neyman--Pearson tests for product measures, combined with a fixed
\(d\)-dimensional shell covering, therefore give tests with errors bounded
by \(Cj^d\exp(-c j^2)\) on shells \(j<\|h\|\le j+1\).  Summing over
\(j\ge b_n\) gives \(o(1)\).  The complement of the fixed local
neighborhood is killed by the consistency tests from the first paragraph.

It remains to lower-bound the denominator.  Change variables
\(h=\calI_n(\vartheta_0)^{1/2}(\vartheta-\vartheta_0)\).  On the unit
ball \(\|h\|\le1\), the LAN expansion gives
\[
  \ell_n(\vartheta)-\ell_n(\vartheta_0)
  =
  h^\top Z_n-\frac12\|h\|^2+o_p(1).
\]
Since \(Z_n=O_p(1)\) and the prior density is continuous and positive at
\(\vartheta_0\), with probability tending to one,
\[
  \int_{\|h\|\le1}
  e^{\ell_n(\vartheta)-\ell_n(\vartheta_0)}
  \pi(\vartheta)\,d\vartheta
  \ge
  c\,\det\{\calI_n(\vartheta_0)\}^{-1/2}
  \exp\{-C(1+\|Z_n\|^2)\}.
\]
This is the standard local evidence lower bound.  Combining the
denominator bound with the inner-LAN, outer-test, and consistency
controls proves total-variation BvM.  The contraction statement follows
from
\[
  \lambda_{\min}\{\calI_n(\vartheta_0)\}\ge c\gamma n^2\rho .
\]
\end{proof}

\begin{remark}[Scope of the Bayesian statement]
Under degenerate agent forecasts and a flat or Gaussian prior, the
least-squares and held-out calibration estimators are posterior modes of
a predictive-synthesis working model, and Theorem~\ref{thm:fixedforecast-bvm-final2}
shows the corresponding fixed-forecast posterior is asymptotically Gaussian
in the Fisher metric; the frequentist theory of this paper is a theory of
those modes.  Posterior contraction for the full synthesis supermodel over
latent agent forecasts is a distinct statement and is not claimed here.
\end{remark}

\subsection{Fully nested-refit percentile-\texorpdfstring{\(t\)}{t} calibration}

\begin{assumption}[Entrywise refit control]\label{ass:entrywise-refit-final2}
For every refitted agent used in the bootstrap, the fitted kernel obeys
\[
  \|\widehat G_k-G_k\|_{\op}=o_p(n\rho),
  \qquad
  \|\widehat G_k-G_k\|_{\max}=o_p(\rho),
\]
and the cross-fold linear expansion with projected remainder holds
uniformly over the test directions used in the estimator.

For ASE/RDPG agents, the max-norm display is obtained from the rowwise
\(2\to\infty\) spectral-embedding theory, not from Davis--Kahan alone,
for example the leave-one-out or rowwise perturbation bounds of
Lyzinski et al. (2014), Cape--Tang--Priebe, and
Rubin-Delanchy et al. (2022).  Under eigengap \(\asymp n\rho\) and
incoherence, those bounds give
\[
  \|\widehat XW-X\|_{2\to\infty}=o_p(\sqrt\rho),
  \qquad
  \max_i\|x_i\|\le C\sqrt\rho .
\]
Consequently,
\[
  |\widehat x_i^\top\widehat x_j-x_i^\top x_j|
  \le
  C\sqrt\rho\,\|\widehat XW-X\|_{2\to\infty}
  +\|\widehat XW-X\|_{2\to\infty}^2
  =
  o_p(\rho).
\]
For degree kernels, the same max-norm conclusion follows from
\[
  \max_i|\widehat\theta_i-\theta_i|/\theta_i=o_p(1)
\]
under bounded heterogeneity.  For regularized block agents, it follows
on the exact-label event from uniform block-mean concentration.
\end{assumption}

\begin{assumption}[Nested-refit asymptotic linearity and bootstrap regularity]
\label{ass:nested-refit-final2}
Fix one fold assignment and condition on it.  For a fixed coordinate
\(k\),
\[
  \widehat w_k-w_k
  =
  \sum_{e\in\mathcal D_n}\psi_{k,e}(A_e-P_e)+r_{k,n},
  \qquad
  r_{k,n}=o_p(\sigma_{k,n}),
\]
where
\[
  \sigma_{k,n}^2
  =
  \sum_e\psi_{k,e}^2P_e(1-P_e),
\]
and
\[
  \max_e
  \frac{|\psi_{k,e}|\sqrt{P_e(1-P_e)}}{\sigma_{k,n}}
  \to0.
\]
Support selection is stable:
\[
  \Pb(\widehat S=S)\to1.
\]

Let \(\widehat P\) be clipped so that, with probability tending to one,
\[
  c_1P_e\le \widehat P_e\le C_1P_e
  \quad(e\in\mathcal D_n).
\]
Generate \(A_e^\star\sim{\rm Bernoulli}(\widehat P_e)\), using the same
fold assignment, and refit every Stage-A agent and every Stage-B
coefficient.  Assume bootstrap-world selection consistency:
\[
  P^\star(\widehat S^\star=S)\to1
\]
in probability.  Conditionally on the data,
\[
  \widehat w_k^\star-\widehat w_k
  =
  \sum_e\widehat\psi_{k,e}(A_e^\star-\widehat P_e)
  +
  r_{k,n}^\star,
  \qquad
  r_{k,n}^\star=o_{P^\star}(\widehat\sigma_{k,n}),
\]
and
\[
  \frac{
    \sum_e(\widehat\psi_{k,e}-\psi_{k,e})^2P_e(1-P_e)
  }{\sigma_{k,n}^2}
  \to0,
  \qquad
  \widehat\sigma_{k,n}^2/\sigma_{k,n}^2\to1.
\]
\end{assumption}

\begin{lemma}[Verification route for refitted spectral agents]
\label{lem:bootstrap-verification-final2}
Assume the original design satisfies the fitted-kernel conditions
(C1)--(C4), the eigengaps are of order \(n\rho\), and
Assumption~\ref{ass:entrywise-refit-final2} holds.  Suppose also that
the synthesized bootstrap population obeys
\[
  \|\widehat P-P\|_{\op}=o_p(n\rho),
  \qquad
  \|\widehat P-P\|_{\max}=o_p(\rho),
\]
which follows from the corresponding agent-level operator and max-norm
bounds, weight consistency, and clipping.  Then, conditionally on the
data with probability tending to one, the bootstrap graph
\(A^\star\sim{\rm Bernoulli}(\widehat P)\) satisfies the same matrix
concentration, eigengap, rowwise \(2\to\infty\), and projected
linearization bounds as the original graph, with \(P\) replaced by
\(\widehat P\).  Consequently the fully refitted cross-fold debiased
estimator satisfies the bootstrap expansion in
Assumption~\ref{ass:nested-refit-final2}.
\end{lemma}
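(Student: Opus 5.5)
The plan is to verify, conditionally on the data, each ingredient that the original-world analysis used, with $P$ replaced by the clipped synthesised kernel $\widehat P$. The bootstrap expansion of Assumption~\ref{ass:nested-refit-final2} then follows by rerunning the proofs of Lemma~\ref{lem:firststage}, Lemma~\ref{lem:plugin} and Theorem~\ref{thm:twostage-cor} verbatim in the bootstrap world.

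First I transfer the population properties from $P$ to $\widehat P$ on an event of probability tending to one. Clipping gives $c_1P_e\le\widehat P_e\le C_1P_e$, so the density scale, the variance band $\widehat P_e(1-\widehat P_e)\asymp\rho_n$ and the information comparability of Assumption~\ref{ass:norm} carry over with modified constants. The hypothesis $\|\widehat P-P\|_{\op}=o_p(n\rho)$ and Weyl's inequality preserve the signal eigenvalues $\gtrsim n\rho$ and the gap of (C2). Davis--Kahan then gives $\|\widehat\Pi-\Pi\|_{\op}=o_p(1)$, the projector now being that of $\widehat P$. Delocalisation $\max_i\widehat\Pi_{ii}\le\mu' r/n$ I would take from the rowwise $2\to\infty$ bounds of Assumption~\ref{ass:entrywise-refit-final2}. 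For the degree agent, the max-norm closeness gives $\widehat d_i\asymp n\rho$, so (C3) holds. The fitted Gram correlation converges to $\Phi_S$, so $\gamma_S$ stays bounded below.

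Second, given the data, $A^\star_e$ are independent Bernoulli$(\widehat P_e)$. The same fold assignment and the IPW masks are reused, so the bootstrap Stage-A noise $E^\star$ has independent bounded centred entries with variance $\le C\rho/f$. Sparse-matrix concentration (Bernstein or Bandeira--van Handel) gives $\|E^\star\|_{\op}=O_{P^\star}(\sqrt{n\rho/f})$. With the transferred eigengap and delocalisation, Lemma~\ref{lem:firststage}(a)--(b) applies conditionally. It yields the linear maps $L^\star_k$, built with $\widehat\Pi$ and $\widehat d$, together with the Frobenius, spreading and covariance-operator bounds and projected remainders of the same orders. This verifies Assumption~\ref{ass:linplug} in the bootstrap world. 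The rowwise $2\to\infty$ control for $A^\star$ follows from the cited leave-one-out theory, which needs only independence, the eigengap and incoherence of $\widehat P$.

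Third, I apply Lemma~\ref{lem:plugin} and the influence expansion of Theorem~\ref{thm:twostage-cor} conditionally. These give $\widehat w_k^\star-\widehat w_k=\sum_e\widehat\psi_{k,e}(A^\star_e-\widehat P_e)+r^\star_{k,n}$. Here $\widehat\psi_k$ collects $\widehat g_k$ on $\mathcal D_2$ and $-\tfrac12(\widehat L^{w})^*\widehat g_k$ weighted by $f^{-1}R_e$ on the Stage-A folds. Bootstrap selection consistency is obtained by running Corollary~\ref{cor:adapt-cor} conditionally, using $\widehat w$ as the bootstrap truth: its active coordinates exceed $w_{\min}/2$, and its inactive coordinates are $O_p$ of the threshold scale. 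The selection step needs more care here, because the bootstrap truth is not exactly sparse. I would instead define $\widehat P$ from the refit on $\widehat S$, so the bootstrap truth is exactly $S$-supported on the event $\widehat S=S$.

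The main obstacle is the influence-function closeness $\sum_e(\widehat\psi_{k,e}-\psi_{k,e})^2P_e(1-P_e)=o(\sigma_{k,n}^2)$. It requires the adjoint linearisation at $\widehat P$ to converge to that at $P$ in an $L^2(D)$ sense, not just in operator norm. I would write $L_{\rm sp}(V)-\widehat L_{\rm sp}(V)$ via $\Pi-\widehat\Pi$, which is $o_p(1)$ in operator norm, and bound its adjoint applied to $g^0_k$ in Frobenius norm by $\|\Pi-\widehat\Pi\|_{\op}\|g^0_k\|_F$. For the degree map I would use $\|\widehat d-d\|_\infty=o_p(n\rho)$. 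The remaining difference $\widehat g_k-g^0_k$ would be handled by Lemma~\ref{lem:plugin}(4). Using $\sigma_{k,n}^2\asymp\rho\|g_k^0\|^2$ closes the ratio, and $\widehat\sigma^2/\sigma^2\to1$ follows from it together with $\widehat P_e/P_e$ being bounded and converging uniformly.
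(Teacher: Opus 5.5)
Your proposal follows the same route as the paper's proof. You transfer the probability band, eigengap, incoherence and Gram conditioning from $P$ to the clipped $\widehat P$ on the high-probability event, obtain conditional concentration for the centred residuals $A^\star-\widehat P$ with Davis--Kahan for subspaces and the separate rowwise $2\to\infty$ theory for row bounds, and then rerun the cross-fold Gram-debiasing algebra to get the bootstrap influence expansion.

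Your explicit bound $\|(L-\widehat L)^*g_k^0\|_F\lesssim\|\Pi-\widehat\Pi\|_{\op}\|g_k^0\|_F$, combined with $\sigma_{k,n}^2\asymp\rho_n\|g_k^0\|_2^2$, usefully fills in the $L^2(P)$ consistency of the influence weights, which the paper only asserts. The paragraph on bootstrap selection is unnecessary: $P^\star(\widehat S^\star=S)\to1$ is assumed in Assumption~\ref{ass:nested-refit-final2} rather than being something the lemma must prove.
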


\begin{proof}
On the event
\[
  c_1P_e\le\widehat P_e\le C_1P_e,\qquad
  \|\widehat P-P\|_{\op}=o(n\rho),\qquad
  \|\widehat P-P\|_{\max}=o(\rho),
\]
the bootstrap population matrix \(\widehat P\) has the same probability
band, eigengap, incoherence, and Gram-conditioning constants as \(P\), up
to \(1+o(1)\) factors.  Conditional on the data, the entries of
\(A^\star-\widehat P\) are independent, bounded, mean-zero Bernoulli
residuals with variances comparable to \(P_e(1-P_e)\).  Matrix
Bernstein gives the same operator concentration.  Davis--Kahan gives
subspace perturbation, while the separate rowwise \(2\to\infty\) theory
gives the max-row and max-kernel bounds; Davis--Kahan alone is not used
for the rowwise claim.

Substituting the bootstrap first-stage expansions into the same
cross-fold Gram-debiasing algebra as in the original proof yields
\[
  \widehat w_k^\star-\widehat w_k
  =
  \sum_e\widehat\psi_{k,e}(A_e^\star-\widehat P_e)
  +
  r_{k,n}^\star .
\]
The projected-remainder bounds are stable under the displayed
operator- and max-norm perturbations.  Therefore
\[
  r_{k,n}^\star=o_{P^\star}(\widehat\sigma_{k,n}),
\]
and the influence weights satisfy the stated \(L^2(P)\) consistency.
This closes the loop from matrix concentration to the
coordinatewise bootstrap influence representation.
\end{proof}

\begin{theorem}[Fully nested-refit percentile-\(t\) validity]
\label{thm:nested-bootstrap-final2}
Under Assumption~\ref{ass:nested-refit-final2},
\[
  \sup_t
  \left|
  P^\star\left(
    \frac{\widehat w_k^\star-\widehat w_k}{\widehat\sigma_k^\star}
    \le t
  \right)
  -
  \Pb\left(
    \frac{\widehat w_k-w_k}{\widehat\sigma_k}
    \le t
  \right)
  \right|
  \to0
\]
in probability.  Therefore the coordinate-specific percentile-\(t\)
interval has asymptotic coverage \(1-\alpha\).
\end{theorem}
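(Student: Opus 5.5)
The plan is the standard bootstrap-consistency argument for an asymptotically linear statistic, studentised on both sides. Everything is done conditionally on the fixed fold assignment, and both the real-world and the bootstrap-world pivots are reduced to Gaussian limits. On the event $\{\widehat S=S\}$, which has probability tending to one, and on its bootstrap analogue $\{\widehat S^\star=S\}$, whose $P^\star$-probability tends to one in probability, the selected and oracle refits coincide. So we may work on these events and absorb the complements into $o(1)$ terms, using the same transfer as in the proof of Corollary~\ref{cor:adapt-cor}.

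\emph{Step 1 (real world).} By Assumption~\ref{ass:nested-refit-final2}, $(\widehat w_k-w_k)/\sigma_{k,n}=\sum_e\psi_{k,e}(A_e-P_e)/\sigma_{k,n}+o_p(1)$. The summands are independent, mean zero and bounded, and their variances sum to one. The maximal-weight condition $\max_e|\psi_{k,e}|\sqrt{P_e(1-P_e)}/\sigma_{k,n}\to0$ is the Lindeberg condition, so the Lindeberg CLT gives $N(0,1)$. The studentiser $\widehat\sigma_k$ is consistent, $\widehat\sigma_k/\sigma_{k,n}\to_p1$; this is taken as the nested plug-in $\widehat\sigma_{k,n}$, whose ratio to $\sigma_{k,n}$ tends to one by assumption. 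Slutsky then gives the real pivot $\Rightarrow N(0,1)$, and Pólya's theorem upgrades this to uniform convergence of distribution functions because the limit is continuous.

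\emph{Step 2 (bootstrap world).} Conditionally on the data, $A_e^\star-\widehat P_e$ are independent bounded centred variables with variance $\widehat P_e(1-\widehat P_e)$. Clipping gives $c_1P_e\le\widehat P_e\le C_1P_e$, so this variance is comparable to $P_e(1-P_e)$. Set $\tau_n^2=\sum_e\widehat\psi_{k,e}^2\widehat P_e(1-\widehat P_e)$, the conditional variance of the linear term. Two facts are needed. First, $\tau_n^2/\sigma_{k,n}^2\to_p1$. The $L^2(P)$ consistency $\sum_e(\widehat\psi_{k,e}-\psi_{k,e})^2P_e(1-P_e)=o(\sigma_{k,n}^2)$ replaces $\widehat\psi$ by $\psi$ through a Minkowski inequality. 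The variance weights $\widehat P_e(1-\widehat P_e)$ versus $P_e(1-P_e)$ are handled by the entrywise control $\|\widehat P-P\|_{\max}=o_p(\rho)$ inherited from Lemma~\ref{lem:bootstrap-verification-final2}, which makes the ratio $1+o_p(1)$ uniformly on the band. Second, the conditional Lindeberg condition holds. Bound $|\widehat\psi_{k,e}|\le|\psi_{k,e}|+|\widehat\psi_{k,e}-\psi_{k,e}|$; the first part is controlled by the real-world maximal-weight condition. For the second, any single dyad's contribution is at most the whole $L^2$ discrepancy, which is $o(\sigma_{k,n}^2)$. Hence the conditional CLT for the linear part holds in probability. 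The remainder $r^\star_{k,n}=o_{P^\star}(\widehat\sigma_{k,n})$ and the bootstrap studentiser $\widehat\sigma^\star_k/\widehat\sigma_{k,n}\to1$ in $P^\star$-probability are then combined with a conditional Slutsky argument; the latter consistency is the bootstrap-world analogue of the variance consistency, which holds by the same refit regularity. This gives the bootstrap pivot $\Rightarrow N(0,1)$ in probability, uniformly in $t$ again by Pólya.

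\emph{Step 3.} The triangle inequality through $\Phi$ yields the displayed sup-distance $\to0$ in probability. Coverage follows because the bootstrap quantiles converge in probability to $z_{\alpha/2}$ and $z_{1-\alpha/2}$, the limit distribution being continuous and strictly increasing.

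The main obstacle is Step 2: controlling the Lindeberg term with data-dependent weights $\widehat\psi_{k,e}$ when only $L^2$ closeness to $\psi$ is available, and justifying studentiser consistency in the bootstrap world. I would first try to resolve both through the comparability $\widehat P_e\asymp P_e$ and the $L^2$ bound as sketched. If the studentiser step resists, I would fall back on invoking the nested-refit regularity of Lemma~\ref{lem:bootstrap-verification-final2} applied to the variance functional itself.
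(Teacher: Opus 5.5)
Your route is the paper's: Lindeberg plus Slutsky for the real-world pivot, then a conditional Lindeberg CLT for the bootstrap linear term. The latter uses $|\widehat\psi_{k,e}|\le|\psi_{k,e}|+|\widehat\psi_{k,e}-\psi_{k,e}|$ and bounds any single dyad's share of $\sum_e(\widehat\psi_{k,e}-\psi_{k,e})^2P_e(1-P_e)$ by the whole sum, followed by P\'olya and quantile consistency.

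The step you flag as the obstacle is, however, genuinely open. The bootstrap pivot in the statement is studentised by $\widehat\sigma_k^\star$, so you need $\widehat\sigma_k^\star/\widehat\sigma_{k,n}\to1$ in $P^\star$-probability. Assumption~\ref{ass:nested-refit-final2} says nothing about $\widehat\sigma_k^\star$, and ``the same refit regularity'' is a new hypothesis, not a derivation. The paper's proof does not supply it either, because it normalises the bootstrap statistic by $\widehat\sigma_{k,n}$ throughout. So you should either add bootstrap-world studentiser consistency as an explicit hypothesis, or prove the statement with $\widehat\sigma_{k,n}$ in the bootstrap denominator.

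Conversely, you do not need $\|\widehat P-P\|_{\max}=o_p(\rho)$. That bound is a hypothesis of Lemma~\ref{lem:bootstrap-verification-final2}, not of the theorem, and it gives $\widehat P_e/P_e\to1$ only on dyads with $P_e\asymp\rho$. Read $\widehat\sigma^2_{k,n}$ as the plug-in $\sum_e\widehat\psi_{k,e}^2\widehat P_e(1-\widehat P_e)$, which is your $\tau_n^2$. Then the assumed $\widehat\sigma^2_{k,n}/\sigma^2_{k,n}\to1$ is the conditional-variance consistency outright, and clipping together with the $L^2$ bound is needed only for negligibility.

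One imprecision you share with the paper: $\max_e|\psi_{k,e}|\sqrt{P_e(1-P_e)}/\sigma_{k,n}\to0$ is Feller's negligibility condition, not Lindeberg's. A sparse Bernoulli summand has size about $|\psi_{k,e}|$ on $\{A_e=1\}$. For example, unit weights on $q$ dyads with $P_e=\lambda/q$ satisfy the condition, yet the sum converges to a centred Poisson law.

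The CLT needs $\max_e|\psi_{k,e}|/\sigma_{k,n}\to0$. This holds in the paper's setting, where the influence weights are spread over $\Theta(n^2)$ dyads and $n^2\rho_n\to\infty$. With that in hand, the bootstrap Lindeberg step goes through by writing $\widehat\psi_k=\psi_k+(\widehat\psi_k-\psi_k)$. The perturbation contributes at most a constant times $\sum_e(\widehat\psi_{k,e}-\psi_{k,e})^2P_e(1-P_e)=o(\sigma_{k,n}^2)$ to the Lindeberg sum, plus a cross term killed by $\max_e|\psi_{k,e}|/\sigma_{k,n}\to0$. So no maximal bound on $\widehat\psi_k-\psi_k$ is required.
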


\begin{proof}
The original statistic satisfies
\[
  \frac{\widehat w_k-w_k}{\sigma_{k,n}}
  =
  \frac{\sum_e\psi_{k,e}(A_e-P_e)}{\sigma_{k,n}}+o_p(1).
\]
The maximal term condition is Lindeberg's condition, so the statistic
converges to \(N(0,1)\).

Conditionally on the data,
\[
  \frac{\widehat w_k^\star-\widehat w_k}{\widehat\sigma_{k,n}}
  =
  \frac{
    \sum_e\widehat\psi_{k,e}(A_e^\star-\widehat P_e)
  }{\widehat\sigma_{k,n}}
  +o_{P^\star}(1).
\]
Clipping gives variance comparability under
\({\rm Bernoulli}(\widehat P_e)\).  The \(L^2(P)\)-consistency of
\(\widehat\psi_k\) gives conditional variance consistency.  It also gives
conditional Lindeberg because
\[
  \max_e
  \frac{
    (\widehat\psi_{k,e}-\psi_{k,e})^2P_e(1-P_e)
  }{\sigma_{k,n}^2}
  \le
  \frac{
    \sum_e(\widehat\psi_{k,e}-\psi_{k,e})^2P_e(1-P_e)
  }{\sigma_{k,n}^2}
  =
  o_p(1).
\]
Thus the conditional bootstrap statistic converges to \(N(0,1)\).  Since
the limit distribution is continuous, Polya's theorem gives the
uniform-in-\(t\) convergence.  Quantile consistency yields
percentile-\(t\) coverage.
\end{proof}

\begin{protocol}[Data-scale nested-refit study]
Run \(n\in\{2500,5000,10000\}\), at least 500 Monte Carlo replications
per cell, and at least 399 fully nested bootstrap refits per replication.
The fold assignment is fixed within each replication and reused in the
bootstrap.  Report coordinatewise coverage, especially for the dominant
coordinate, using coordinate-specific percentile-\(t\) quantiles.
\end{protocol}

\subsection{Weighted support for local kernels}

\begin{assumption}[Weighted support and leverage]\label{ass:weighted-support-final2}
Let
\[
  P_e(w)=P^0_e+\sum_{k\in S}w_kG_{k,e}.
\]
Let \(M_S\) be the active design and
\[
  W=\diag\{P_e(1-P_e)\}^{-1},
  \qquad
  \calI_S=M_S^\top W M_S .
\]
Let \(I_k=(\calI_S)_{kk}\),
\[
  \Psi^W_{k\ell}
  =
  \frac{(\calI_S)_{k\ell}}{\sqrt{I_kI_\ell}},
  \qquad
  \gamma_S^W=\lambda_{\min}(\Psi_S^W),
\]
and assume
\[
  I_{\min}:=\min_{k\in S}I_k\to\infty,
  \qquad
  \gamma_S^W\ge\gamma>0.
\]
Assume weighted leverage:
\[
  \ell_{\max}:=
  \max_e
  \frac{
    m_{S,e}^\top\calI_S^{-1}m_{S,e}
  }{
    P_e(1-P_e)
  }
  \to0.
\]
\end{assumption}

\begin{theorem}[Oracle weighted least-squares rate]
\label{thm:weighted-support-upper-final2}
For the oracle weighted least-squares estimator,
\[
  \widehat w-w
  =
  \calI_S^{-1}M_S^\top W(A-P)
\]
exactly, and
\[
  \Eb\|\widehat w-w\|_2^2
  =
  \tr(\calI_S^{-1})
  \le
  \frac{s}{\gamma_S^WI_{\min}}.
\]
Thus
\[
  \|\widehat w-w\|_2
  =
  O_p\{\tr(\calI_S^{-1})^{1/2}\}.
\]
For a local small-world kernel supported on \(N_{\rm loc}\asymp nm\)
dyads with \(G_e\asymp p_{\rm loc}\) and \(P_e\asymp p_{\rm loc}\) on
that support,
\[
  I_{\rm SW}
  \asymp
  N_{\rm loc}p_{\rm loc}.
\]
For one such coordinate, the rate is
\[
  \{\gamma_S^WN_{\rm loc}p_{\rm loc}\}^{-1/2}.
\]
If \(m=O(1)\), this is
\[
  (\gamma_S^Wnp_{\rm loc})^{-1/2},
\]
not the global edge-rate \((n^2\rho)^{-1/2}\).
\end{theorem}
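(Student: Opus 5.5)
The estimator is linear in the data, so I will treat it as a weighted Gauss--Markov computation. Write $a=\vecl(A)$, $p=\vecl(P)$ and $\varepsilon=a-p$. The oracle weighted least-squares estimator is $\widehat w=\calI_S^{-1}M_S^\top W(a-P^0)$. Under the model $a-P^0=M_Sw+\varepsilon$, substituting gives
\[
\widehat w-w=\calI_S^{-1}M_S^\top W\varepsilon
\]
exactly, with no remainder; this is the first display of Theorem~\ref{thm:weighted-support-upper-final2}. Conditionally on the latent attributes the coordinates of $\varepsilon$ are independent and centred, with $\operatorname{Cov}(\varepsilon)=W^{-1}$. Hence
\[
\operatorname{Cov}(\widehat w)=\calI_S^{-1}M_S^\top WW^{-1}WM_S\calI_S^{-1}=\calI_S^{-1},
\]
and taking the trace gives $\Eb\|\widehat w-w\|_2^2=\tr(\calI_S^{-1})$. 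The $O_p$ statement then follows from Markov's inequality.

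For the eigenvalue bound I will normalise. Set $D_I=\diag(I_k^{1/2})$, so that $\calI_S=D_I\Psi^W_SD_I$. Then
\[
\tr(\calI_S^{-1})=\sum_k I_k^{-1}\bigl[(\Psi^W_S)^{-1}\bigr]_{kk}\le\frac{1}{I_{\min}}\tr\bigl((\Psi^W_S)^{-1}\bigr)\le\frac{s}{\gamma^W_S I_{\min}},
\]
since every diagonal entry of $(\Psi^W_S)^{-1}$ is at most $1/\lambda_{\min}(\Psi^W_S)$. I do not need weighted leverage here; it is only the Lindeberg condition, which would be used for a CLT.

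For the local small-world specialisation, I compute the diagonal information
\[
I_{\rm SW}=\sum_e G_e^2/\{P_e(1-P_e)\}
\]
over the $N_{\rm loc}\asymp nm$ support dyads. Each summand is $\asymp p_{\rm loc}^2/p_{\rm loc}=p_{\rm loc}$, using $P_e\asymp p_{\rm loc}$ and $1-P_e\ge c$. This gives $I_{\rm SW}\asymp N_{\rm loc}p_{\rm loc}$.

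For a single coordinate, $\Var(\widehat w_k)=[\calI_S^{-1}]_{kk}\le 1/(\gamma^W_S I_k)$, which gives the rate $\{\gamma_S^WN_{\rm loc}p_{\rm loc}\}^{-1/2}$. Setting $m=O(1)$ yields $(\gamma^W_S n p_{\rm loc})^{-1/2}$. This is slower than the edge rate $(n^2\rho)^{-1/2}$ because the support has $O(n)$ rather than $\Theta(n^2)$ dyads.

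The only step needing care is justifying $1-P_e$ bounded below on the support, so that $W$ is comparable to $1/p_{\rm loc}$ there. I would take this from the sparsity assumption $P_e\asymp p_{\rm loc}\to0$, or else from $p_{\rm loc}\le 1/2$. If $p_{\rm lat}$ is constant, then $1-P_e$ is still bounded away from zero provided $p_{\rm loc}<1$ is fixed, and the constants absorb it. Everything else is exact algebra.
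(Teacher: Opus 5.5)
Your proposal is correct and matches the paper's proof step for step. Both use the exact representation $\widehat w-w=\calI_S^{-1}M_S^\top W(A-P)$, the identity $\operatorname{Var}\{M_S^\top W(A-P)\}=\calI_S$ giving $\tr(\calI_S^{-1})$, the normalisation of $\calI_S$ by $\diag(I_k)$ to bound that trace by $s/(\gamma_S^W I_{\min})$, and the direct computation of $I_{\rm SW}$.

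Your coordinatewise bound $[\calI_S^{-1}]_{kk}\le 1/(\gamma_S^W I_k)$ isolates the local coordinate slightly more cleanly than the paper's $\lambda_{\min}(\calI_S)\ge\gamma_S^W I_{\min}$. Your $1-P_e\ge c$ caveat is needed only for the upper half of $I_{\rm SW}\asymp N_{\rm loc}p_{\rm loc}$: the rate itself uses only $I_{\rm SW}\gtrsim N_{\rm loc}p_{\rm loc}$, which follows from $P_e(1-P_e)\le P_e$.
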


\begin{proof}
The weighted score is
\[
  S=M_S^\top W(A-P).
\]
Since \(\Var(A-P)=W^{-1}\),
\[
  \Var(S)=M_S^\top W M_S=\calI_S.
\]
Therefore
\[
  \Eb\|\calI_S^{-1}S\|_2^2
  =
  \tr(\calI_S^{-1}).
\]
Also,
\[
  \calI_S=D_I^{1/2}\Psi_S^WD_I^{1/2},
  \qquad
  D_I=\diag(I_k),
\]
so
\[
  \lambda_{\min}(\calI_S)\ge\gamma_S^WI_{\min}
\]
and
\[
  \tr(\calI_S^{-1})\le s/(\gamma_S^WI_{\min}).
\]
The local-kernel information calculation is
\[
  I_{\rm SW}
  =
  \sum_e\frac{G_e^2}{P_e(1-P_e)}
  \asymp
  N_{\rm loc}\frac{p_{\rm loc}^2}{p_{\rm loc}}
  =
  N_{\rm loc}p_{\rm loc}.
\]
\end{proof}

\begin{proposition}[Sharp lower bound in the weighted metric]
\label{prop:weighted-lb-final2}
Let \(T\) be a \(d\)-dimensional affine tangent space for the target
coordinates, and let \(\calI_T\) be the restriction of \(\calI_S\) to
\(T\).  Let \((u_j,\lambda_j)_{j=1}^d\) be a Euclidean-orthonormal
eigenbasis of \(\calI_T\).  Suppose the class contains the local
hyperrectangle
\[
  \left\{
    w^0+\sum_{j=1}^d t_ju_j:\ |t_j|\le \alpha\lambda_j^{-1/2}
  \right\}
\]
and the local probability constraint
\[
  \alpha\sqrt d\,
  \max_e
  \frac{
    \{m_{S,e}^\top\calI_T^{-1}m_{S,e}\}^{1/2}
  }{P_e}
  \le c_0
\]
for sufficiently small \(c_0\).  This condition is non-vacuous, for
example, whenever \(\ell_{\max}=o(P_{\min})\), where
\(P_{\min}:=\min_{e:P_e>0}P_e\).  Then
\[
  \inf_{\widehat w}
  \sup_w
  \left(\Eb_w\|\widehat w-w\|_2^2\right)^{1/2}
  \ge
  c\,\{\tr(\calI_T^{-1})\}^{1/2}.
\]
\end{proposition}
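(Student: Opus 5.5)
The plan is a multivariate Assouad argument carried out in the Fisher geometry of the tangent space $T$, not the Euclidean geometry. The eigenbasis $(u_j,\lambda_j)$ of $\calI_T$ diagonalises the local information. Perturbing along $u_j$ by $\alpha\lambda_j^{-1/2}$ therefore costs a bounded Kullback--Leibler amount. Meanwhile it moves $w$ by a Euclidean distance whose squares sum to $\alpha^2\sum_j\lambda_j^{-1}=\alpha^2\tr(\calI_T^{-1})$. This matches the upper bound of Theorem~\ref{thm:weighted-support-upper-final2}.

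The first step builds the hypercube. For $\sigma\in\{\pm1\}^d$ set $w^\sigma=w^0+\sum_j\sigma_j\alpha\lambda_j^{-1/2}u_j$. By the hyperrectangle assumption every $w^\sigma$ lies in the class. The probability vectors are $P_e(w^\sigma)=P^0_e+m_{S,e}^\top w^\sigma$, which is affine in $w$. For each dyad,
\[
|P_e(w^\sigma)-P_e(w^0)|=\Bigl|\sum_j\sigma_j\alpha\lambda_j^{-1/2}m_{S,e}^\top u_j\Bigr|\le\alpha\sqrt d\,\{m_{S,e}^\top\calI_T^{-1}m_{S,e}\}^{1/2}
\]
by Cauchy--Schwarz. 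The local probability constraint then gives $|P_e(w^\sigma)-P_e(w^0)|\le c_0P_e$. All hypotheses therefore sit in a multiplicative band $[(1-c_0)P_e,(1+c_0)P_e]$ around $P_e$. This is exactly what is needed to compare Bernoulli KL with the $\chi^2$ form weighted by $1/\{P_e(1-P_e)\}$, in the spirit of Lemma~\ref{lem:bern-kl-final2}, but dyad-by-dyad rather than with a global $\rho$.

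The second step bounds the one-flip divergence. Take $\sigma,\sigma'$ differing only in coordinate $j$. Then $P(w^\sigma)-P(w^{\sigma'})=2\alpha\lambda_j^{-1/2}M_Su_j$. The band gives
\[
\KL\le C\sum_e\frac{(P_e^\sigma-P_e^{\sigma'})^2}{P_e(1-P_e)}=4C\alpha^2\lambda_j^{-1}u_j^\top M_S^\top WM_Su_j=4C\alpha^2\lambda_j^{-1}u_j^\top\calI_Tu_j=4C\alpha^2 .
\]
Here I use that $u_j\in T$, so the quadratic form of $\calI_S$ restricted to $T$ is $\calI_T$, with eigenvalue $\lambda_j$. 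Choosing $\alpha$ small makes this at most a small constant, so by Pinsker the total variation between one-flip neighbours is at most $1/2$.

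The third step applies Assouad with the Euclidean-orthonormal decoder $\widehat\sigma_j=\mathrm{sign}\langle\widehat w-w^0,u_j\rangle$. Orthonormality of the $u_j$ gives
\[
\|\widehat w-w^\sigma\|_2^2\ge\sum_j\alpha^2\lambda_j^{-1}\mathbf 1\{\widehat\sigma_j\ne\sigma_j\}.
\]
Since the per-coordinate weights $\alpha^2\lambda_j^{-1}$ differ across $j$, I use the weighted form of Assouad, which bounds the sum coordinate by coordinate. It yields $\sup_\sigma\Eb\|\widehat w-w^\sigma\|_2^2\ge\tfrac14\alpha^2\sum_j\lambda_j^{-1}=\tfrac14\alpha^2\tr(\calI_T^{-1})$. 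Taking square roots gives the claim with $c=\alpha/2$.

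The main obstacle I expect is the justification of the KL-to-weighted-$\chi^2$ comparison with $\calI_S$ evaluated at $w^0$ rather than along the path. The weights $W$ in Assumption~\ref{ass:weighted-support-final2} are defined at the true $P$. The multiplicative band from the probability constraint ensures $P_e^{\sigma'}(1-P_e^{\sigma'})\asymp P_e(1-P_e)$ uniformly, including on small-probability local-support dyads, and I would check this carefully there. The final remark, that the constraint is non-vacuous when $\ell_{\max}=o(P_{\min})$, follows from $m^\top\calI_T^{-1}m\le m^\top\calI_S^{-1}m\le\ell_{\max}P_e$.
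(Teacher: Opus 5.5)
Your proposal is correct and follows the paper's proof. The paper uses the same hypercube $w^\sigma=w^0+\alpha\sum_j\sigma_j\lambda_j^{-1/2}u_j$, the same one-flip Kullback--Leibler bound $C\alpha^2\lambda_j^{-1}u_j^\top M_S^\top W M_S u_j=C'\alpha^2$, and Assouad's lemma for squared Euclidean loss through the orthonormality of the $u_j$. Beyond that, you make explicit two steps the paper only asserts: the Cauchy--Schwarz argument that turns the local probability constraint into the multiplicative band, and the non-vacuity remark. For the latter, the inequality $m^\top\mathcal I_T^{-1}m\le m^\top\mathcal I_S^{-1}m$ is justified by the variational form $m^\top\mathcal I_T^{-1}m=\sup_{v}(m^\top v)^2/(v^\top\mathcal I_S v)$, with the supremum over directions $v$ of $T$.
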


\begin{proof}
For \(\sigma\in\{\pm1\}^d\), set
\[
  w^\sigma=w^0+\alpha\sum_{j=1}^d\sigma_j\lambda_j^{-1/2}u_j.
\]
The local probability constraint keeps all probabilities in a fixed
multiplicative band around \(P_{w^0}\).  A flip of coordinate \(j\)
changes the mean vector by
\[
  \Delta_jP=2\alpha\lambda_j^{-1/2}M_Su_j.
\]
The Bernoulli KL divergence for this flip is bounded by
\[
  \KL_j
  \le
  C(2\alpha)^2\lambda_j^{-1}
  u_j^\top M_S^\top W M_Su_j
  =
  C'\alpha^2.
\]
Choose \(\alpha>0\) small enough that the Assouad affinity is bounded
away from zero.  Since the eigenvectors \(u_j\) are Euclidean
orthonormal,
\[
  \|w^\sigma-w^{\sigma^{(j)}}\|_2^2
  =
  4\alpha^2\lambda_j^{-1}.
\]
Assouad's lemma for squared Euclidean loss gives
\[
  \inf_{\widehat w}\sup_\sigma
  \Eb_\sigma\|\widehat w-w^\sigma\|_2^2
  \ge
  c\alpha^2\sum_{j=1}^d\lambda_j^{-1}
  =
  c\alpha^2\tr(\calI_T^{-1}).
\]
Absorbing fixed \(\alpha\) into the constant proves the claim.
\end{proof}

\subsection{Conditioning repair: deflating the degree-aligned spectral mode}
\label{app:deflation}
The genuine-refit study of Section~\ref{sec:numerical} documents a conditioning collapse. When a dot-product geometry agent is fitted by rank-$d$ adjacency spectral embedding on the same graph that carries a Chung--Lu degree agent, the leading embedding eigenvector tracks the degree (Perron) direction, the two fitted kernels nearly coincide, and the fitted Gram-correlation $\widehat\gamma_S$ falls toward zero, so the per-coordinate weights are no longer separately identified. This appendix records a repair, a guarantee for it, and the boundary of what it recovers.

\paragraph{The repair.} Fit the embedding at rank $d{+}1$ in place of $d$. Write $\widehat u_1,\dots,\widehat u_{d+1}$ for the leading eigenvectors of $A$, $\widehat\lambda_1,\dots,\widehat\lambda_{d+1}$ for the corresponding eigenvalues, and $\mathbf s=D^{1/2}\mathbf 1$ for the square-root-degree vector. Identify the degree-aligned mode
\[
j^\star=\arg\max_{1\le k\le d+1}\bigl|\mathrm{corr}(\widehat u_k,\mathbf s)\bigr|,
\]
reassign it to the degree agent as the rank-one kernel $\widehat G_{\deg}=\widehat\lambda_{j^\star}\widehat u_{j^\star}\widehat u_{j^\star}^\top$, and build the geometry kernel from the remaining modes,
\[
\widehat G_{\mathrm{geo}}=\sum_{k\ne j^\star}\widehat\lambda_k\,\widehat u_k\widehat u_k^\top.
\]
Both kernels then enter the cross-fold debiased estimator of Theorem~\ref{thm:debias} without further change.

\begin{proposition}[Conditioning repair by spectral deflation]
\label{prop:deflation}
Let the geometry agent be fitted by rank-$(d{+}1)$ embedding and deflated as above, with the degree-aligned mode reassigned to the degree agent. Then the deflated geometry and degree kernels are Frobenius-orthogonal, $\langle\widehat G_{\deg},\widehat G_{\mathrm{geo}}\rangle_F=0$, so their Gram-correlation vanishes and, in the two-agent case, the deflated transversality equals one before normalisation. Under the latent-geometry separation of Remark~\ref{prop:transversality-geom}, the deflated transversality is bounded below, $\widehat\gamma_S^{\mathrm{def}}\ge\gamma_0>0$ with probability tending to one, and the synthesis coordinates are identified. The cross-fold debiased estimator of Theorem~\ref{thm:debias} applied to the deflated kernels then satisfies its conclusion with $\gamma_S$ replaced by $\widehat\gamma_S^{\mathrm{def}}$, with estimand equal to the least-squares projection onto the deflated span; this projection equals the generative weight under the fidelity condition of Assumption~\ref{ass:fid}.
\end{proposition}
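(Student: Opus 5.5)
The plan has three parts. First, the exact orthogonality is pure algebra. Second, the lower bound on the deflated transversality is a perturbation argument. Third, the rate is a direct invocation of Theorem~\ref{thm:debias} with the deflated kernels as the design.

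For orthogonality, I write $\widehat G_{\deg}=\widehat\lambda_{j^\star}\widehat u_{j^\star}\widehat u_{j^\star}^\top$ and $\widehat G_{\mathrm{geo}}=\sum_{k\ne j^\star}\widehat\lambda_k\widehat u_k\widehat u_k^\top$. The eigenvectors of the symmetric $A$ are orthonormal, so
\[
\langle\widehat G_{\deg},\widehat G_{\mathrm{geo}}\rangle_F=\sum_{k\ne j^\star}\widehat\lambda_{j^\star}\widehat\lambda_k(\widehat u_{j^\star}^\top\widehat u_k)^2=0 .
\]
One caveat concerns the paper's convention that inner products run over the dyad set $\mathcal D_n$ with the diagonal discarded. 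In that convention the identity holds only up to the diagonal correction $\sum_i\widehat G_{\deg,ii}\widehat G_{\mathrm{geo},ii}$. Delocalisation ($\max_i\Pi_{ii}\le\mu r/n$, as in Lemma~\ref{lem:firststage}(a)) bounds this correction by $O(\rho_n^2 n)$, which is $o(n^2\rho_n^2)$ against the kernel norms. In the two-agent case the normalised Gram-correlation matrix is therefore $\mathbf I_2+o(1)$, and the transversality is one up to that vanishing correction.

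For $s\ge3$ with other agents present, I would take the population counterparts of the deflated kernels. These are $\mathbf G_{\deg}^{\mathrm{def}}=\lambda_{j^\star}u_{j^\star}u_{j^\star}^\top$ and $\mathbf G_{\mathrm{geo}}^{\mathrm{def}}$, built from the rank-$(d{+}1)$ eigenpairs of $\Pmat$. I would first show that the label $j^\star$ is selected consistently. This follows from Davis--Kahan: the eigengap of order $n\rho_n$ in (C2) together with $\|\A-\Pmat\|_{\mathrm{op}}=O_P(\sqrt{n\rho_n})$ gives $\sin\Theta=O_P((n\rho_n)^{-1/2})$, and the population correlation of $u_{j^\star}$ with $\mathbf s$ is separated from the others.

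Next I would show that the population deflated Gram-correlation matrix has smallest eigenvalue bounded below. Here I reuse the residual-norm argument of Remark~\ref{prop:transversality-geom}. The geometry and degree blocks are now exactly orthogonal, so only their angles with the SBM direction matter. Condition (iii) gives the angle bound $\vartheta_0$, and the variance condition $\operatorname{Var}(\theta)\ge v_0$ keeps the degree mode from lying in the block span. The empirical matrix $\widehat\Phi^{\mathrm{def}}$ differs from the population one by $O_P(\delta_n)$ in operator norm, by Assumption~\ref{ass:plugin} applied to the spectral projector. Weyl's inequality then gives $\widehat\gamma_S^{\mathrm{def}}\ge\gamma_0$ with probability tending to one.

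Finally, the deflated kernels are spectral smoothers with the form $\widehat\Pi_J\widetilde A\widehat\Pi_J$ for disjoint index sets $J$. Lemma~\ref{lem:firststage}(a) applies to each with projector $\Pi_J$ and linearisation $L_J(V)=V-\Pi_J^\perp V\Pi_J^\perp$. The gap requirement must now hold both between the $d{+}1$ signal modes and the bulk and between $j^\star$ and its neighbours. This verifies Assumption~\ref{ass:linplug}, and Theorem~\ref{thm:debias} then yields the rate with $\gamma_S$ replaced by $\widehat\gamma_S^{\mathrm{def}}$. The estimand is the least-squares projection onto the deflated span, by Proposition~\ref{prop:mis-cor}. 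Under Assumption~\ref{ass:fid} it coincides with the generative weight, because the fitting maps then return the component kernels.

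I expect the main obstacle to be the intra-signal eigengap isolating $j^\star$. The debiasing lemma assumes separation only from the bulk, and near-degenerate signal eigenvalues would make the split between the individual projectors unstable. I would first try to impose a gap of order $n\rho_n$ around $\lambda_{j^\star}$ as an explicit condition.
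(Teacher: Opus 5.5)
Your proposal has the same three-part structure as the paper's proof, and its main line is sound. The three parts are orthogonality from orthonormality of the eigenvectors of $\A$, a perturbation step for $\widehat\gamma_S^{\mathrm{def}}$, and Theorem~\ref{thm:debias} on the deflated design with Assumption~\ref{ass:fid} for the generative reading. Where you depart from the paper, you mostly supply steps it skips.

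\emph{Where you differ.} The paper attributes the only departure from exact orthogonality to renormalisation and clipping, and asserts it is $o(1)$. You instead track two other sources:
\begin{itemize}
\item the dyad convention, where your diagonal correction of relative order $1/n$ is right;
\item sub-fold sampling error, handled through population deflated kernels and Weyl.
\end{itemize}
The second is what the estimator actually needs. Exact orthogonality holds within one eigendecomposition only. It does not hold between the fold-$a$ degree mode and the fold-$b$ geometry kernel that enter $\widehat\Phi_{\mathrm{db}}$, nor over $\mathcal D_2$ alone. Scalar renormalisation does not change correlations. Clipping the residual-mode kernel, which typically has many negative entries, is not obviously a small perturbation, so your unclipped route is the safer one.

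Your consistent selection of $j^\star$ and your check of Assumption~\ref{ass:linplug} also fill a real hole. The paper simply invokes Theorem~\ref{thm:debias}, but a single eigenmode of $\A$ and a residual-mode smoother are not among that theorem's named agents. The intra-signal gap you flag is a condition the paper's argument needs but never states.

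\emph{First fix: the linearisation.} Lemma~\ref{lem:firststage}(a) does not transfer verbatim to a partial projector, and your formula $V-\Pi_J^\perp V\Pi_J^\perp$ is wrong. Take $\Pmat=\sum_{k\le r}\lambda_ku_ku_k^\top$ and $J$ a proper subset of the signal indices. The target of $\widehat\Pi_J\widetilde\A\widehat\Pi_J$ is $\Pi_J\Pmat\Pi_J$. Expanding $\widehat\lambda_j\widehat u_j\widehat u_j^\top$ to first order gives
\[
L_J(V)=\Pi_JV\Pi_J+\Pi^\perp V\Pi_J+\Pi_JV\Pi^\perp+\sum_{j\in J}\sum_{k\le r,\,k\notin J}\frac{\lambda_j}{\lambda_j-\lambda_k}\bigl(u_ku_k^\top Vu_ju_j^\top+u_ju_j^\top Vu_ku_k^\top\bigr),
\]
where $\Pi^\perp$ projects onto the null space of $\Pmat$.
\begin{itemize}
\item This agrees with your formula only if every signal eigenvalue outside $J$ is zero. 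The lemma's identity $\dot\Pi(E)\Pmat\Pi+\Pi\Pmat\dot\Pi(E)=\Pi^\perp E\Pi+\Pi E\Pi^\perp$ relies on $\Pmat$ vanishing off the projected range.
\item The weights $\lambda_j/(\lambda_j-\lambda_k)$ show the intra-signal gap of order $n\rho_n$ is needed for the bounded adjoint in Assumption~\ref{ass:linplug}(i), not only for label stability.
\item Under that gap the weights are $O(1)$. The lemma's remainder argument should then go through with $\lambda_*$ replaced by the minimal gap, so your conclusion survives.
\item Sanity check: when $\Pmat$ has rank $d{+}1$, the weights for $(j,k)$ and $(k,j)$ sum to one, recovering $L_{\mathrm{sp}}$ for $\widehat G_{\deg}+\widehat G_{\mathrm{geo}}$.
\end{itemize}

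\emph{Second fix: the $s\ge3$ step.} This is the thinnest part of your sketch. Remark~\ref{prop:transversality-geom} bounds angles among the generative kernels. Your population deflated kernels are eigen-truncations of $\Pmat$, and linking the two needs its own argument. The bound can fail outright. A block agent implemented as in Theorem~\ref{thm:debias}, a rank-$Q$ spectral smoother on the same sub-fold with $Q=d+1$, has fitted kernel exactly $\widehat G_{\deg}+\widehat G_{\mathrm{geo}}$. The deflated design is then singular whatever the latent geometry.

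The paper's proof does not address $s\ge3$ either. In the two-agent setting of Appendix~\ref{app:deflation}, your argument is complete once the linearisation is corrected.
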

\begin{proof}
The kept modes are eigenvectors of $A$ orthogonal to $\widehat u_{j^\star}$, so for every $k\ne j^\star$
\[
\bigl\langle \widehat\lambda_{j^\star}\widehat u_{j^\star}\widehat u_{j^\star}^\top,\ \widehat\lambda_k\widehat u_k\widehat u_k^\top\bigr\rangle_F=\widehat\lambda_{j^\star}\widehat\lambda_k\,(\widehat u_{j^\star}^\top\widehat u_k)^2=0,
\]
which gives the stated orthogonality and a zero off-diagonal Gram-correlation; in the two-agent case the $2\times2$ Gram-correlation is the identity, so its smallest eigenvalue is one before normalisation. Entrywise renormalisation to a common density and clipping to $[0,1]$ perturb the off-diagonal correlation by $o(1)$ under Remark~\ref{prop:transversality-geom}, which holds the non-degree latent directions a fixed angle from the degree direction, so $\widehat\gamma_S^{\mathrm{def}}$ remains bounded away from zero. Identification of the coordinates and the inferential statement are then Theorem~\ref{thm:debias} applied to the deflated design, whose target is the corresponding least-squares projection; Assumption~\ref{ass:fid} identifies that projection with the generative weight under correct specification.
\end{proof}

\paragraph{Diagnostic.} The alignment $a^\star=\max_k|\mathrm{corr}(\widehat u_k,\mathbf s)|$ of the removed mode is an observable certificate. A large $a^\star$ signals that the leading embedding mode is degree and that the repair is warranted, and the recovered $\widehat\gamma_S^{\mathrm{def}}$ confirms that the geometry and degree kernels have separated. The generative reading of the deflated coefficients is available only when the recovered $\widehat\gamma_S^{\mathrm{def}}$ is bounded away from zero and, in addition, the fidelity condition of Assumption~\ref{ass:fid} holds; the diagnostic certifies the first of these, not the second.

\paragraph{Numerical confirmation.} On a minimal two-agent reproduction of the collapse, a constant-norm geometry agent orthogonal to degree and a heavy-tailed degree agent with generative weights $(0.6,0.4)$ at $n=600$ and $\rho_n=0.07$ over $200$ replications, the repair behaves as Proposition~\ref{prop:deflation} predicts (Table~\ref{tab:deflation}). The fitted geometry-degree Gram-correlation falls from $0.80$ to $0.12$, the transversality $\widehat\gamma_S$ rises from $0.21$ to $0.88$, the root-mean-square weight error falls from $5.23$ to $0.47$, and the coverage of the estimator's own projection target rises from $0.01$ to about $0.80$. Coverage of the \emph{generative} weights stays near zero in both columns. This is the expected boundary: the deflation removes the conditioning obstruction and re-identifies the coordinates, but the fitted kernels remain scale-distorted estimates of the generative kernels, so the deflated coefficients estimate the projection coefficient of Section~\ref{sec:estimands}, which coincides with the generative weight only under fidelity. The repair converts an unusable, ill-conditioned estimator into a stable, well-conditioned one with re-identified coordinates; it does not by itself supply the fidelity that the generative reading additionally requires.

\begin{table}[t]
\centering
\small
\caption{Conditioning repair by spectral deflation on the two-agent reproduction of the genuine-refit collapse, with generative weights $(0.6,0.4)$ at $n=600$ and $\rho_n=0.07$ over $200$ replications. Deflation restores the transversality and the stability of the estimator; the residual generative-weight bias is the projection-fidelity gap of Section~\ref{sec:estimands}, not a conditioning failure.}
\label{tab:deflation}
\begin{tabular}{lcc}
\toprule
quantity & naive rank-$d$ embedding & deflated \\
\midrule
fitted $|\mathrm{corr}(\widehat G_{\mathrm{geo}},\widehat G_{\deg})|$ & $0.80$ & $0.12$\\
transversality $\widehat\gamma_S$ & $0.21$ & $0.88$\\
weight RMSE against generative & $5.23$ & $0.47$\\
coverage of projection target & $0.01$ & $0.80$\\
coverage of generative weights & $0.01$ & $0.00$\\
diagnostic alignment $a^\star$ & n/a & $0.98$\\
\bottomrule
\end{tabular}
\end{table}

\subsection{Grouped identifiability: proof}
\label{app:grouped}
This appendix proves Theorem~\ref{thm:grouped} and records the diagnostic that produces a qualifying
grouping. It generalises the deflation repair of Appendix~\ref{app:deflation}: deflation removes a single
mislabelled spectral mode, whereas grouping handles an intrinsic collinearity among named candidates by
reporting the identifiable composite and conceding the unidentifiable split. Throughout, columns are centred,
within each group standardised, and $N=n^2\rho_n$ is the edge scale.

\begin{theorem}[Grouped identifiability]
\label{thm:grouped}
Let $\widehat H=[\widehat h_1,\dots,\widehat h_K]$ be the centred fitted candidate columns and partition
$\{1,\dots,K\}$ into groups $\mathcal G=\{g_1,\dots,g_G\}$. For each group write $\widetilde H_g$ for its
column-standardised block, with leading and second singular values $\sigma_{g,1}\ge\sigma_{g,2}$ and leading
left-singular direction $u_g$, and form the composite design $U=[u_1,\dots,u_G]$. Suppose the grouping is
within-collinear and between-separated,
\[
    \max_g \sigma_{g,2}/\sigma_{g,1}\le\varepsilon,\qquad
    \gamma_{\mathrm{grp}}:=\lambda_{\min}(\mathrm{corr}(U))\ge\delta>0 .
\]
Then, on the deployed sample under Assumptions~\textup{\ref{ass:transversality}--\ref{ass:norm}} read on the
composite design:
\begin{enumerate}
\item \textup{(Conditioning.)} The grouped design is well-conditioned, $\gamma_{\mathrm{grp}}\ge\delta$, even
when $\widehat\gamma_{\mathrm{full}}=O(\varepsilon)\to0$.
\item \textup{(The split is not identifiable.)} For any within-group contrast $a$ supported on a single group
with $\sum_{k\in g}a_k=0$, the cross-fold estimator obeys $\mathrm{Var}(a^\top\widehat{\bw})\ge
\|a\|_2^2/(C\varepsilon^2 n^2\rho_n)$, which diverges relative to the edge rate as $\varepsilon\to0$.
\item \textup{(The group total is identifiable at the minimax rate.)} The composite coefficients
$\theta_g=\langle u_g,(I-\Pi)\Pmat\rangle/\|u_g\|_2^2$, the total contribution of group $g$ along its shared
direction, are estimated by the cross-fold debiased estimator on $U$ at the rate
$\sqrt{G/\gamma_{\mathrm{grp}}}/(n\sqrt{\rho_n})$ of Theorem~\ref{thm:debias}, and the two-stage interval of
Theorem~\ref{thm:twostage} is valid for $\theta_g$.
\end{enumerate}
\end{theorem}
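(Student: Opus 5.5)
Each part reduces to linear algebra on $\widehat H$ or to an application of Theorem~\ref{thm:debias}, and I would take the three parts in turn.

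\emph{Part 1: conditioning.} This is essentially definitional. The composite columns $u_g$ are unit vectors by construction, so $\mathrm{corr}(U)=U^\top U$ once the columns are centred. Hence $\gamma_{\mathrm{grp}}\ge\delta$ is exactly the stated between-separation hypothesis. To show it is compatible with $\widehat\gamma_{\mathrm{full}}\to0$, I would exhibit the small direction of the full correlation matrix. Take $v_g$ to be the second right-singular vector of $\widetilde H_g$; it satisfies $\|\widetilde H_g v_g\|_2=\sigma_{g,2}\le\varepsilon\sigma_{g,1}$. Since each standardised block has $\sigma_{g,1}^2\le|g|$, the Rayleigh quotient of $\widehat\Phi$ along the zero-padded $v_g$ is at most $\varepsilon^2|g|$. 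This gives $\widehat\gamma_{\mathrm{full}}=O(\varepsilon)$, indeed $O(\varepsilon^2)$ for bounded group sizes.

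\emph{Part 2: the split is not identifiable.} I would lower-bound the variance of a within-group contrast $a^\top\widehat\bw$ by the known-design, Stage-B part of the variance alone. Theorem~\ref{thm:twostage} shows the first-stage contribution $V_{A,k}$ is nonnegative, so dropping it only weakens the bound. The Stage-B variance is $a^\top(\mathbf M^\top\mathbf M)^{-1}\mathbf M^\top\mathbf D\mathbf M(\mathbf M^\top\mathbf M)^{-1}a$. Using the information comparability in Assumption~\ref{ass:norm}, this is bounded below by $c\rho_n\,a^\top(\mathbf M^\top\mathbf M)^{-1}a$. Write $\mathbf M^\top\mathbf M=\mathbf D_n\widehat\Phi\mathbf D_n$ with $\mathbf D_n\asymp n\rho_n$. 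If $a$ has a nontrivial component along the near-null direction $v_g$, then $a^\top\widehat\Phi^{-1}a\gtrsim\|a\|^2/(C\varepsilon^2)$. Combining these gives $\mathrm{Var}\ge\|a\|^2/(C\varepsilon^2n^2\rho_n)$.

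\textbf{Main obstacle.} The delicate step is showing that every zero-sum contrast loads on the small singular directions. For two-member groups this is immediate: the zero-sum contrast is exactly the difference direction, and that is the second singular direction. For larger groups I would instead invoke the near-rank-one structure. Because $\sigma_{g,2}\le\varepsilon\sigma_{g,1}$, all standardised columns within a group are within $O(\varepsilon)$ of $\pm u_g\sigma_{g,1}/\sqrt{|g|}$. It follows that $\widetilde H_g a=O(\varepsilon)\|a\|\sigma_{g,1}$ for zero-sum $a$. This uses same-sign columns, which centring plus positive correlation supplies. With that in hand the Rayleigh argument applies directly, and I would also need the cross-group leakage to be controlled, which $\gamma_{\mathrm{grp}}\ge\delta$ provides.

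\emph{Part 3: the group total.} I would apply Theorem~\ref{thm:debias} verbatim to the $G$-column composite design $U$, built by cross-fold from the fitted blocks. Its transversality constant is $\gamma_{\mathrm{grp}}$ and its scale normalisation follows from standardisation. This yields the rate $\sqrt{G/\gamma_{\mathrm{grp}}}/(n\sqrt{\rho_n})$. Its target is the least-squares projection coefficient $\theta_g$, as in Remark~\ref{rem:T1-misspec}, because the residual of $\Pmat$ orthogonal to $\mathrm{span}(U)$ plays the role of misspecification. Validity of the two-stage interval then follows from Theorem~\ref{thm:twostage} applied on $U$, with its conditions (i)--(v) inherited from Lemma~\ref{lem:plugin}.

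One point needs care. The map from fitted blocks to $u_g$ is a leading singular vector, so its first-stage linearisation must satisfy Assumption~\ref{ass:linplug}. I would obtain this from the eigengap $\sigma_{g,1}-\sigma_{g,2}\ge(1-\varepsilon)\sigma_{g,1}$ by the same resolvent expansion used in Lemma~\ref{lem:firststage}(a).
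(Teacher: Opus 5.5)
Your proposal is correct and follows the paper's own route: Part~1 is the between-separation hypothesis itself; Part~2 uses that the leading right-singular vector of each standardised block is close to the constant direction, so a zero-sum contrast lies, up to $O(\varepsilon)$, in the trailing singular subspace where the eigenvalues are $O(\varepsilon^2)$; and Part~3 applies Theorems~\ref{thm:debias} and~\ref{thm:twostage} to the composite design $U$. Your additions tighten points the paper leaves loose: you give an explicit $O(\varepsilon^2)$ witness for $\widehat\gamma_{\mathrm{full}}$, you use the heteroskedastic sandwich bound via information comparability in place of ``homoskedastic-equivalent scaling'', and you linearise $u_g$ through its eigengap rather than calling it a fixed linear functional. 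Two cautions, though. The positive within-group correlation you invoke is an extra hypothesis that centring does not supply; the paper's ``one-signed loadings'' step tacitly needs it as well. And no cross-group leakage control is needed, because $(\widehat\Phi^{-1})_{gg}\succeq(\widehat\Phi_{gg})^{-1}$ by the Schur complement.
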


\begin{proof}[Proof of Theorem~\ref{thm:grouped}]
\emph{(i) Conditioning.} By construction $\gamma_{\mathrm{grp}}=\lambda_{\min}(\mathrm{corr}(U))\ge\delta$,
which is the grouping criterion itself and is independent of the within-group ratio $\varepsilon$. Any
partition meeting the two displayed conditions qualifies; the conditioning of the composite design is governed
by the between-group separation alone, not by how collinear the candidates are within a group.

\emph{(ii) Non-identifiability of the split.} Fix a group $g$ and let $\widetilde H_g=\sum_j\sigma_{g,j}\,
p_j q_j^\top$ be the singular value decomposition of its standardised block, with $\sigma_{g,1}\ge\sigma_{g,2}
\ge\cdots$ and $\sigma_{g,2}\le\varepsilon\sigma_{g,1}$. A within-group contrast $a$ supported on $g$ with
$\sum_{k\in g}a_k=0$ is orthogonal to the constant loading vector; since the leading right-singular vector
$q_1$ of a block of nearly proportional nonnegative columns has one-signed loadings and is, to $O(\varepsilon)$,
the constant direction, $a$ lies within $O(\varepsilon)$ of $\mathrm{span}(q_2,q_3,\dots)$. Hence $a$ lies in
the trailing singular subspace of the standardised Gram, where the eigenvalues are at most $C\varepsilon^2$, so
$a^\top\widehat G_g^{-1}a\ge\|a\|_2^2/(C\varepsilon^2)$ with probability tending to one. In the
homoskedastic-equivalent scaling the cross-fold estimator has covariance $\widehat V=\sigma^2\widehat
G^{-1}/N\,\{1+o_P(1)\}$, so
\[
    \mathrm{Var}(a^\top\widehat{\bw})=\sigma^2\,a^\top\widehat G^{-1}a/N
    \ \ge\ \frac{\|a\|_2^2}{C'\varepsilon^2 n^2\rho_n},
\]
which diverges relative to $1/N$ as $\varepsilon\to0$. The within-group split therefore carries information of
order $\varepsilon^2 N$ and is not identifiable in the limit.

\emph{(iii) The group total at the minimax rate.} The composite design $U$ satisfies the transversality
assumption with constant $\gamma_{\mathrm{grp}}\ge\delta$ by part~(i). The normalisation and projected-remainder
assumptions are inherited because each $u_g$ is a fixed linear functional, the leading left-singular direction,
of the group's fitted columns, so the fold-one remainder of $U$ is a contraction of the fold-one remainders of
$\widehat H$ and is negligible in projection whenever the latter are. Theorem~\ref{thm:debias} applied to $U$
then gives the rate $\sqrt{G/\gamma_{\mathrm{grp}}}/(n\sqrt{\rho_n})$ for the composite coefficients
$\widehat\theta=(U^\top U)^{-1}U^\top y$, and Theorem~\ref{thm:twostage} applied to $U$ gives the valid
two-stage interval. The population target is the partial coefficient $\theta_g=\langle u_g,(I-\Pi_{U_{-g}})
\Pmat\rangle/\|u_g\|_2^2$, the contribution of $\Pmat$ along the group's shared direction net of the other
groups, which under the within-collinearity condition equals the total of the group's individual contributions
up to the $O(\varepsilon)$ orthogonal residual.
\end{proof}

\begin{remark}[Producing a qualifying grouping]
A grouping meeting the two conditions of Theorem~\ref{thm:grouped} exists whenever the candidates fall into
well-separated near-collinear clusters, and is found without knowing the truth: single-linkage clustering of
the absolute Gram-correlation matrix $|\widehat\Phi|$ at a threshold $1-\delta'$ merges candidates whose
fitted columns correlate above $1-\delta'$ and separates clusters whose composites correlate below it, after
which $\widehat\gamma_{\mathrm{grp}}$ is recomputed and reported. Equivalently, the eigenvector of
$\widehat\Phi$ at its smallest eigenvalue has its mass on the offending collinear set, which names the group to
merge. The procedure is conservative: if no near-collinear cluster is present every candidate is its own group,
the composite design is the original design, and the theorem reduces to Theorem~\ref{thm:debias}.
\end{remark}

\subsection{Operator test after independent degree truncation}

\begin{assumption}[Bulk-overlap survivability after truncation]
\label{ass:bulk-overlap-final2}
Let \(V_\tau\) be the latent degree-truncated vertex set
\[
  V_\tau=\{i:\tau_-\le \theta_i/\bar\theta\le\tau_+\}.
\]
The deterministic thresholds \(\tau_-,\tau_+\) are chosen away from
latent-degree accumulation points, in the boundary-gap sense of
Lemma~\ref{lem:degree-truncation-final2}.  On the induced subnetwork
\(V_\tau\), assume bounded heterogeneity,
\[
  c\le \theta_i/\bar\theta_{V_\tau}\le C,\qquad i\in V_\tau,
\]
and assume the residualized interaction mass survives truncation:
\[
  \Gamma_{2,V_\tau}\,n_{V_\tau}^2\rho_{V_\tau}^3\to\infty
\]
under the noisy-OR alternative, with bounded interaction leverage under
the additive null.  This is a substantive bulk-overlap condition; it
cannot be replaced by degree truncation alone, because hubs may carry the
overlap mass.
\end{assumption}

\begin{lemma}[Empirical degree truncation verifies bounded heterogeneity]
\label{lem:degree-truncation-final2}
Let \(A^{(0)}\) be an independent preliminary dyad fold with sampling
fraction \(f_0\), observed through the IPW adjacency
\[
  \widetilde A^{(0)}_{ij}=f_0^{-1}R^{(0)}_{ij}A_{ij}.
\]
Let
\[
  \widehat d_i^{(0)}=\sum_j\widetilde A^{(0)}_{ij}.
\]
Assume
\[
  \min_{i\in V_\tau}f_0\theta_i\gg\log n,
\]
and a boundary gap:
\[
  \min_i
  \left|
    \theta_i/\bar\theta-\tau_\pm
  \right|
  \ge \eta_n,
  \qquad
  \eta_n\gg
  \max_{i\in V_\tau}
  \sqrt{\frac{\log n}{f_0\theta_i}} .
\]
Define
\[
  \widehat V
  =
  \{i:\tau_-\le \widehat d_i^{(0)}/\bar d^{(0)}\le\tau_+\}.
\]
Then
\[
  \widehat V=V_\tau
\]
with probability tending to one.  In particular, bounded heterogeneity
and the bulk-overlap condition of
Assumption~\ref{ass:bulk-overlap-final2} transfer to \(\widehat V\).
\end{lemma}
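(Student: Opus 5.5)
The proof is a uniform concentration argument for the empirical degrees followed by a deterministic sandwich at the two thresholds. The goal is to show that, with probability tending to one, every ratio $\widehat d_i^{(0)}/\bar d^{(0)}$ lies within $\eta_n/2$ of $\theta_i/\bar\theta$. The boundary gap then forces the empirical and latent threshold decisions to agree at every vertex, so $\widehat V=V_\tau$. All statements are conditional on the latent attributes, and the fold mask $R^{(0)}$ is independent of $A$, as in Lemma~\ref{lem:firststage}.

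\textbf{Step 1: vertexwise Bernstein.} Write $\widehat d_i^{(0)}-d_i=\sum_j E^{(0)}_{ij}$, where $E^{(0)}_{ij}=\widetilde A^{(0)}_{ij}-P_{ij}$ are independent in $j$, centred, bounded by $f_0^{-1}$, and have variance at most $P_{ij}/f_0$. Bernstein's inequality gives
\[
|\widehat d_i^{(0)}-d_i|\le C\Bigl(\sqrt{d_i\log n/f_0}+\log n/f_0\Bigr)
\]
with probability at least $1-n^{-3}$. Under $f_0 d_i\gg\log n$ (identifying $d_i\asymp\theta_i$ in the Chung--Lu scaling), the relative error is $O(\sqrt{\log n/(f_0\theta_i)})$. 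A union bound over the $n$ vertices makes this hold simultaneously.

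\textbf{Step 2: the normaliser.} Summing over all dyads, $\bar d^{(0)}$ has variance of order $\bar d/(f_0 n)$. So $\bar d^{(0)}/\bar d=1+O_P(\sqrt{\log n/(f_0 n\bar d)})$, which is of smaller order than the vertexwise error. Combining Steps 1 and 2, the ratio $\widehat d_i^{(0)}/\bar d^{(0)}$ equals $(\theta_i/\bar\theta)(1+\xi_i)$ with $\max_i|\xi_i|=O(\sqrt{\log n/(f_0\theta_i)})$.

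\textbf{Step 3: the sandwich.} Since the bounded heterogeneity on $V_\tau$ makes $\theta_i/\bar\theta=O(1)$ near the thresholds, the absolute error of the ratio is $o(\eta_n)$ by the boundary-gap hypothesis. A vertex with $\theta_i/\bar\theta\ge\tau_-+\eta_n$ therefore has empirical ratio above $\tau_-$, and symmetrically at $\tau_+$. Vertices outside $[\tau_--\eta_n,\tau_++\eta_n]$ are likewise classified correctly. Hence $\widehat V=V_\tau$ on the good event.

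\textbf{Main obstacle: low-degree vertices outside $V_\tau$.} For these vertices $f_0\theta_i\gg\log n$ is not assumed, and Step 1's relative bound fails. The plan is to use the additive Bernstein bound instead: the absolute error $\sqrt{\theta_i\log n/f_0}+\log n/f_0$, divided by $\bar d$, is still $o(\eta_n\tau_-)$, because $\tau_-\bar\theta\ge\min_{V_\tau}\theta_i$ and the gap condition evaluated at $V_\tau$'s minimum controls this term. High-degree hubs have small relative error, so they are handled by Step 1 directly.

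\textbf{Transfer.} Once $\widehat V=V_\tau$, bounded heterogeneity and the bulk-overlap condition of Assumption~\ref{ass:bulk-overlap-final2} are properties of the deterministic set $V_\tau$, so they hold verbatim on $\widehat V$. Because the preliminary fold is disjoint from the later folds, conditioning on $\widehat V$ does not alter the distribution of the remaining dyads.
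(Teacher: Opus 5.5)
Your proposal follows the paper's proof. Both use a vertexwise Bernstein bound for $\widehat d_i^{(0)}$, concentration of $\bar d^{(0)}$, and the boundary gap to forbid threshold crossings, and the transfer is immediate once $\widehat V=V_\tau$. Your version is more careful than the paper's on one point. The paper states the Bernstein bound only uniformly over $i\in V_\tau$ and leaves implicit why vertices outside $V_\tau$ are excluded. Your split of those vertices into hubs (relative bound) and low-degree vertices (additive bound) is the right way to fill that in.

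Two steps need repair.

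\emph{The comparison inequality.} $\tau_-\bar\theta\ge\min_{V_\tau}\theta_i$ is backwards: by the definition of $V_\tau$, $\tau_-\bar\theta\le\min_{V_\tau}\theta_i$. The comparison you actually need is $\min_{V_\tau}\theta_i\le\tau_+\bar\theta$, valid for $V_\tau\neq\varnothing$. With $\tau_+$ fixed this gives $\sqrt{\log n/(f_0\bar\theta)}\lesssim\sqrt{\log n/(f_0\min_{V_\tau}\theta_i)}=o(\eta_n)$. Hence $\{\sqrt{\theta_i\log n/f_0}+\log n/f_0\}/\bar\theta=o(\eta_n)$ for every $i$ with $\theta_i\le\tau_-\bar\theta$, and the same applies to the normaliser error. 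The same comparison also certifies $f_0\theta_i\gg\log n$ and relative error $o(\eta_n)$ for the hubs, since there $\theta_i>\tau_+\bar\theta\ge\min_{V_\tau}\theta_j$. Your Step 1 needs this for the hubs but does not state it.

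\emph{The identification $d_i\asymp\theta_i$.} This is too weak for the sandwich. A constant-factor discrepancy between $d_i/\bar d$ and $\theta_i/\bar\theta$ would move vertices across $\tau_\pm$. You need $d_i/\bar d=(\theta_i/\bar\theta)\{1+o(\eta_n)\}$ uniformly in $i$. The paper gets this by taking $\theta_i$ to be the latent expected degree itself; it bounds $|\widehat d_i^{(0)}-\theta_i|$ directly. If instead the $\theta_i$ are Chung--Lu propensities, then $d_i=\theta_i(1-\theta_i/S)$, and one needs a no-saturation condition such as $\max_i\theta_i/S=o(\eta_n)$.
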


\begin{proof}
Bernstein's inequality gives, uniformly over \(i\in V_\tau\),
\[
  |\widehat d_i^{(0)}-\theta_i|
  =
  O_p\{\sqrt{\theta_i\log n/f_0}+\log n/f_0\}.
\]
The assumption \(\min_i f_0\theta_i\gg\log n\) yields
\[
  \max_{i\in V_\tau}
  |\widehat d_i^{(0)}-\theta_i|/\theta_i=o_p(\eta_n).
\]
The same bound applies to the empirical average degree.  The boundary
gap prevents any vertex from crossing the truncation threshold with
probability tending to one.  Hence \(\widehat V=V_\tau\) with probability
tending to one, and all deterministic conditions on \(V_\tau\) transfer
to \(\widehat V\).
\end{proof}

\begin{theorem}[Degree-truncated in-scope operator deployment]
\label{thm:truncated-operator-final2}
Use an independent preliminary fold \(R^{(0)}\) to select \(\widehat V\)
as in Lemma~\ref{lem:degree-truncation-final2}.  Discard the preliminary
dyads from the operator test.  On the remaining dyads inside
\(\widehat V\), run the four-fold fitted-layer operator test with IPW
sampling.  If each Stage-A subfold uses fraction \(f\) of the remaining
dyads, its effective sampling fraction relative to the original dyad
population is \(f(1-f_0)\), and the IPW weights and variance constants in
the first-stage lemma are adjusted accordingly.

The null and alternative are the subnetwork hypotheses on the induced
vertex set \(\widehat V\):
\[
  H_{0,\widehat V}: P_{\widehat V}\in
  {\rm span}\{G_{1,\widehat V},G_{2,\widehat V}\},
\]
against the noisy-OR/overlap alternative on the same induced subnetwork.
Assume that, with probability \(1-o(1)\), the retained graph satisfies
the bounded-heterogeneity, spectral first-stage, interaction-leverage,
and bulk-overlap conditions in
Assumption~\ref{ass:bulk-overlap-final2}.  Then
\[
  T_{\widehat V}\Rightarrow N(0,1)
\]
under \(H_{0,\widehat V}\), and the test is consistent under the
alternative whenever
\[
  \Gamma_{2,\widehat V}n_{\widehat V}^2\rho_{\widehat V}^3\to\infty.
\]
\end{theorem}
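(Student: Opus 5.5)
The plan is to reduce Theorem~\ref{thm:truncated-operator-final2} to the fitted-layer operator theorem (Theorem~\ref{prop:fitted-operator}), applied conditionally on the selected vertex set. The reduction uses two facts. First, selection is carried out on an independent preliminary dyad fold and is therefore exact with high probability. Second, once the preliminary dyads are discarded, the remaining dyads inside $\widehat V$ form an independent-edge Bernoulli array that satisfies every hypothesis of Theorem~\ref{prop:fitted-operator}, after the sampling fraction is rescaled.

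\emph{Step 1 (selection is a fixed set).} By Lemma~\ref{lem:degree-truncation-final2}, $\Pb(\widehat V=V_\tau)\to1$. Since $V_\tau$ is deterministic given the latent attributes, I condition on the event $\{\widehat V=V_\tau\}$ and replace $\widehat V$ by $V_\tau$ everywhere. The probability of the complementary event is $o(1)$, so it affects neither the null limit nor the consistency claim, because convergence in distribution and power tending to one are both insensitive to $o(1)$ events.

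\emph{Step 2 (independence after discarding).} The fold indicators are multinomial and independent of $\A$, with $R^{(0)}$ drawn first. Given $\bu$, the dyads not in the preliminary fold are therefore independent of $R^{(0)}$ and of the $A_e$ on preliminary dyads, so they are independent of $\widehat V$. Conditionally on $\widehat V=V_\tau$, the remaining dyads inside $V_\tau$ are independent Bernoulli$(P_e)$. Each is assigned to Stage-A subfold $a,b,c,d$ with marginal probability $f(1-f_0)$ relative to the original dyad population. The IPW adjacency $\widetilde A_e=\{f(1-f_0)\}^{-1}R_eA_e$ is unbiased for $P_e$, has variance at most $\rho/\{f(1-f_0)\}$, and takes values bounded by $\{f(1-f_0)\}^{-1}$. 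Lemma~\ref{lem:firststage} holds verbatim with $f$ replaced by $f(1-f_0)$, which is a fixed positive constant. The exclusive-split cross-covariance correction of Remark~\ref{rem:fold-covariance} is still $O(P_e^2)$.

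\emph{Step 3 (verify hypotheses and invoke).} On $V_\tau$, Assumption~\ref{ass:bulk-overlap-final2} gives bounded heterogeneity. Together with the assumed spectral first-stage conditions, this places the induced problem in the scope of Lemma~\ref{lem:firststage}(a),(b), and hence of Assumption~\ref{ass:linplug} at scale $(n_{V_\tau},\rho_{V_\tau})$. The assumption also supplies interaction leverage and the bulk-overlap divergence $\Gamma_{2,V_\tau}n_{V_\tau}^2\rho_{V_\tau}^3\to\infty$. Theorem~\ref{prop:fitted-operator} applied to the induced subnetwork then gives $T_{V_\tau}\Rightarrow\mathcal N(0,1)$ under $H_{0,V_\tau}$, and consistency whenever the divergence holds. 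By Step~1 the same statements hold for $T_{\widehat V}$.

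The main obstacle is Step~2. One has to show rigorously that conditioning on a data-dependent vertex set does not bias the Stage-B residuals or the Stage-A masks. The argument has to use the fact that $\widehat V$ is measurable with respect to $(R^{(0)},\{A_e:R^{(0)}_e=1\})$ only. I would also check the Hanson--Wright constants of Lemma~\ref{lem:firststage}, since they enter through the factor $1/f$ and must remain bounded after the rescaling by $(1-f_0)$.
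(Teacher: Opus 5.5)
Your proof is correct and follows the paper's skeleton: an independent preliminary fold, fresh independent Bernoulli dyads on the retained set, the proof of Theorem~\ref{prop:fitted-operator} rerun at effective fraction $f(1-f_0)$, and an $o(1)$ deconditioning. Where you differ is in how you neutralise the data-dependent selection.

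The paper never uses $\widehat V=V_\tau$. It conditions on $R^{(0)}$ and on $\widehat V$ itself, and relies on two things only: $\widehat V$ is measurable with respect to the independent preliminary fold, and the retained graph satisfies Assumption~\ref{ass:bulk-overlap-final2} on an assumed probability-$(1-o(1))$ event. It then deconditions via $\sup_t|\Pb(T_{\widehat V}\le t)-\Phi(t)|\le\Pb(\calE_n^c)+\sup_t|\Pb(T_{\widehat V}\le t\mid\calE_n)-\Phi(t)|$. This works for any preliminary-fold-measurable selection, exact or not.

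Your oracle substitution instead relies on the exact-recovery conclusion of Lemma~\ref{lem:degree-truncation-final2}. That in turn needs its degree floor $\min_{i\in V_\tau}f_0\theta_i\gg\log n$ and its boundary gap, which the paper's argument never invokes. In exchange, your statistic lives on a deterministic vertex set and can be analysed under the joint law of masks and graph. There the preliminary fold is just one more exclusive multinomial category. The $\{f(1-f_0)\}^{-1}$ weights are then exactly unbiased, Lemma~\ref{lem:firststage} applies with $f$ replaced by $f(1-f_0)$, and exclusivity is absorbed as in Remark~\ref{rem:fold-covariance}. This is cleaner than conditioning on $R^{(0)}$: under that conditioning a retained dyad enters a subfold with probability $f$, so the $\{f(1-f_0)\}^{-1}$-weighted entries have conditional mean $P_e/(1-f_0)$ on retained dyads and $0$ on preliminary ones.

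To get this benefit, do not condition on $\{\widehat V=V_\tau\}$ in Step~2. Doing so perturbs the mask law and makes your unbiasedness claim only approximate. Instead use $\sup_t|\Pb(T_{\widehat V}\le t)-\Pb(T_{V_\tau}\le t)|\le\Pb(\widehat V\ne V_\tau)$. With that, the ``main obstacle'' you flag disappears, because no conditioning on a data-dependent set is ever needed.
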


\begin{proof}
Condition on the preliminary fold and on \(\widehat V\).  Since
\(\widehat V\) is measurable with respect to an independent preliminary
fold, the remaining dyads inside \(\widehat V\) are still independent
Bernoulli variables conditional on \(\widehat V\).  The preliminary dyads
are not reused, so the four-fold Stage-A/Stage-B scheme is applied to a
fresh product-Bernoulli dyad population with effective sampling fraction
\(f(1-f_0)\).

On the probability-\(1-o(1)\) event in the theorem, the induced graph
satisfies the original bounded-heterogeneity first-stage assumptions, the
residualized interaction-leverage assumption, and the interaction-mass
condition.  Conditional on this event, the fitted-layer operator proof
applies verbatim on the subnetwork: under the additive null, the
residualized interaction score is a Lindeberg sum of independent
mean-zero Bernoulli terms, so
\[
  T_{\widehat V}\Rightarrow N(0,1).
\]
Under noisy-OR, the mean shift is
\[
  \asymp
  \sqrt{
    \Gamma_{2,\widehat V}
    n_{\widehat V}^2\rho_{\widehat V}^3
  },
\]
which diverges by assumption.  Deconditioning uses
\[
  \sup_t|\Pb(T_{\widehat V}\le t)-\Phi(t)|
  \le
  \Pb(\calE_n^c)
  +
  \sup_t|\Pb(T_{\widehat V}\le t\mid \calE_n)-\Phi(t)|
  \to0.
\]
\end{proof}

\begin{remark}
The rigorous empirical deployment is the degree-truncated subnetwork
test.  The untruncated heavy-tailed test should be labelled illustrative
unless one proves a separate heavy-tail first-stage expansion and verifies
that the residualized interaction mass is bulk-supported rather than
hub-supported.
\end{remark}

\subsection{Effective-information extensions: truncated degree and local support}
\label{app:effective-info}

The common-scale theory in the main text assumes
\[
    \|G_k\|_F^2 \asymp n^2\rho_n^2,
    \qquad
    M^\top D M \asymp \rho_n M^\top M,
\]
where \(\rho_n=\max_{i<j}P_{ij}\). This excludes two motivating
network mechanisms: heavy-tailed degree kernels and local kernels supported
on \(O(n)\) dyads. We record here the corresponding effective-information
extension. The target throughout is the population projection coefficient
\(w^\dagger\). Under correct specification and fidelity of the fitting maps,
\(w^\dagger\) is the generative weight vector.

Let \(e=(i,j)\) index dyads, let \(g_k=\operatorname{vec}_{<}(G_k)\), and let
\(M_S=(g_k:k\in S)\). Write
\[
    D=\operatorname{diag}\{P_e(1-P_e):e\in \mathcal D\},
    \qquad
    \Gamma_S=M_S^\top M_S,
    \qquad
    \Omega_S=M_S^\top D M_S,
\]
and define the exact sandwich covariance
\[
    \Sigma_S=\Gamma_S^{-1}\Omega_S\Gamma_S^{-1}.
\]
For a single column \(g_k\), define its unweighted least-squares information
scale
\[
    N_k
    =
    \frac{\|g_k\|_2^4}{g_k^\top D g_k}.
\]
If the columns have different supports or different heteroskedasticity
profiles, \(N_k\) replaces the common edge-count scale \(n^2\rho_n\).

\begin{assumption}[Effective sandwich transversality]
\label{ass:effective-sandwich}
For the active set \(S\), let
\[
    N=\min_{k\in S}N_k.
\]
The active design satisfies
\[
    \operatorname{tr}(\Sigma_S)
    \le
    C\frac{s}{\gamma_S N}
\]
for some \(\gamma_S\in(0,1]\) and a constant \(C<\infty\).
Equivalently, after accounting for both collinearity and heteroskedasticity,
no active mechanism is nearly explained by the others at its own information
scale.
\end{assumption}

\begin{remark}[Why this assumption is needed]
When all active kernels have the common scale of Assumption~\ref{ass:norm},
Assumption~\ref{ass:effective-sandwich} reduces to the usual Gram
transversality condition, and \(N\asymp n^2\rho_n\). For heavy-tailed
or local mechanisms the same simplification is false: the exact object is the
sandwich covariance \(\Sigma_S\), not the ordinary Gram matrix alone.
\end{remark}

\subsubsection{Truncated regularly varying degree kernel}

For an arbitrary sparse network kernel \(P\), define its population degree
profile
\[
    d_i=d_i(P)=\sum_{j\ne i}P_{ij},
    \qquad
    S_q=\sum_{i=1}^n d_i^q,
    \qquad
    S_1=n\bar d_n,
    \qquad
    R_n=\frac{d_{\max,n}}{\bar d_n},
\]
where \(d_{\max,n}=\max_i d_i\). The population degree-product fitting map is
\[
    G^{\deg,\dagger}_{ij}
    =
    \frac{d_i d_j}{S_1},
    \qquad i<j.
\]
This is the population output of fitting a Chung--Lu degree kernel to a graph
with edge-probability matrix \(P\). It equals the true Chung--Lu component
only under a fidelity condition; otherwise it is a candidate-relative fitted
degree kernel.

Assume the no-saturation condition
\[
    \frac{d_{\max,n}^2}{S_1}=o(1),
\]
and the truncated regularly varying moment conditions, for some
\(\tau\in(2,3)\),
\[
    S_2\asymp n\bar d_n^2R_n^{3-\tau},
    \qquad
    S_3\asymp n\bar d_n^3R_n^{4-\tau}.
\]
We also impose the non-dominance condition
\[
    R_n^{\tau-1}=o(n),
\]
which implies \(S_4=o(S_2^2)\) and \(S_6=o(S_3^2)\).

\begin{lemma}[Degree-matching expansion for a truncated degree kernel]
\label{lem:truncated-degree-first-stage}
Let \(\widetilde A\) be the inverse-probability-weighted Stage-A adjacency
matrix formed with fixed fold fraction \(f\in(0,1)\). Define
\[
    \widehat d_i=\sum_{j\ne i}\widetilde A_{ij},
    \qquad
    \widehat S_1=\sum_i\widehat d_i,
    \qquad
    \widehat G^{\deg}_{ij}
    =
    \frac{\widehat d_i\widehat d_j}{\widehat S_1}.
\]
Write
\[
    \varepsilon_i=\widehat d_i-d_i,
    \qquad
    \varepsilon_+=\sum_i\varepsilon_i.
\]
Then
\[
    \max_i|\varepsilon_i|
    =
    O_P\!\left(\sqrt{d_{\max,n}\log n}+\log n\right),
\]
\[
    \|\varepsilon\|_2=O_P(\sqrt{S_1}),
    \qquad
    |\varepsilon_+|=O_P(\sqrt{S_1}),
\]
and
\[
    \widehat G^{\deg}-G^{\deg,\dagger}
    =
    L_{\deg}(\varepsilon)+Q_{\deg},
\]
where
\[
    L_{\deg}(\varepsilon)_{ij}
    =
    \frac{d_i\varepsilon_j+d_j\varepsilon_i}{S_1}
    -
    \frac{d_i d_j}{S_1^2}\varepsilon_+ .
\]
Moreover,
\[
    \|L_{\deg}(\varepsilon)\|_F
    =
    O_P\!\left(\sqrt{\frac{S_2}{S_1}}\right),
    \qquad
    \|G^{\deg,\dagger}\|_F
    \asymp
    \frac{S_2}{S_1},
\]
and
\[
    \frac{\|\widehat G^{\deg}-G^{\deg,\dagger}\|_F}
         {\|G^{\deg,\dagger}\|_F}
    =
    O_P\!\left(\sqrt{\frac{S_1}{S_2}}\right)
    +
    O_P\!\left(\frac{S_1}{S_2}\right).
\]
Consequently,
\[
    \frac{\|\widehat G^{\deg}-G^{\deg,\dagger}\|_F}
         {\|G^{\deg,\dagger}\|_F}
    =
    O_P\!\left(
        \frac{1}{\sqrt{\bar d_n R_n^{3-\tau}}}
    \right),
\]
so the degree-kernel first stage is Frobenius consistent whenever
\(\bar d_nR_n^{3-\tau}\to\infty\).
\end{lemma}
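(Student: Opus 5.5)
The plan mirrors the degree-matching part of Lemma~\ref{lem:firststage}(b), with the bounded-heterogeneity constants replaced by the moments $S_1,S_2,S_3$. The argument runs conditionally on the latent attributes.

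\emph{Step 1: noise in the degrees.} Write $\varepsilon_i=\sum_{j\ne i}E_{ij}$ with $E=\widetilde A-P$. The summands are independent and mean zero, bounded by $f^{-1}$, with variance at most $f^{-1}P_{ij}$. Hence $\operatorname{Var}(\varepsilon_i)\le d_i/f$. Bernstein's inequality plus a union bound over $i$ then gives $\max_i|\varepsilon_i|=O_P(\sqrt{d_{\max,n}\log n}+\log n)$. Summing variances gives $\E\|\varepsilon\|_2^2\le S_1/f$. Since $\varepsilon_+=2\sum_{i<j}E_{ij}$ is a sum of independent terms, $\operatorname{Var}(\varepsilon_+)\le 4S_1/f$. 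Chebyshev's inequality yields the two $O_P(\sqrt{S_1})$ bounds.

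\emph{Step 2: expansion.} Since $S_1=n\bar d_n\to\infty$, we have $|\varepsilon_+|/S_1=O_P(S_1^{-1/2})=o_P(1)$. We therefore expand
\[
1/\widehat S_1=S_1^{-1}-\varepsilon_+S_1^{-2}+\varepsilon_+^2S_1^{-2}\widehat S_1^{-1}
\]
and multiply by $(d_i+\varepsilon_i)(d_j+\varepsilon_j)$. Collecting the terms linear in $\varepsilon$ gives exactly $L_{\deg}$. The remainder $Q_{\deg}$ consists of three pieces:
\begin{itemize}
\item $\varepsilon_i\varepsilon_j/S_1$;
\item cross terms $(d_i\varepsilon_j+d_j\varepsilon_i)\varepsilon_+/S_1^2$ and $\varepsilon_i\varepsilon_j\varepsilon_+/S_1^2$;
\item $d_id_j\varepsilon_+^2/S_1^3$, up to a $1+o_P(1)$ factor.
\end{itemize}

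\emph{Step 3: Frobenius bounds.} The term $d\varepsilon^\top/S_1$ has Frobenius norm $\|d\|_2\|\varepsilon\|_2/S_1=O_P(\sqrt{S_2S_1}/S_1)=O_P(\sqrt{S_2/S_1})$. The term $dd^\top\varepsilon_+/S_1^2$ has norm $S_2|\varepsilon_+|/S_1^2=O_P(S_2/S_1^{3/2})$. This is of smaller order than $\sqrt{S_2/S_1}$ because $S_2/S_1=\bar d_nR_n^{3-\tau}\le d_{\max,n}=o(\sqrt{S_1})$ by no-saturation. So $\|L_{\deg}(\varepsilon)\|_F=O_P(\sqrt{S_2/S_1})$.

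For the target, $\|G^{\deg,\dagger}\|_F^2=(S_2^2-S_4)/(2S_1^2)$ after the diagonal is removed. Non-dominance gives $S_4=o(S_2^2)$, so $\|G^{\deg,\dagger}\|_F\asymp S_2/S_1$.

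For the remainder, $\|\varepsilon\varepsilon^\top\|_F/S_1=\|\varepsilon\|_2^2/S_1=O_P(1)$. Relative to $S_2/S_1$ this is $O_P(S_1/S_2)$, which is the second term in the display. The remaining pieces carry an extra factor $|\varepsilon_+|/S_1=o_P(1)$, so they are dominated by either the linear term or this quadratic term.

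Dividing through gives the relative error $O_P(\sqrt{S_1/S_2})+O_P(S_1/S_2)$. Substituting $S_2/S_1\asymp\bar d_nR_n^{3-\tau}$ then yields the final rate. The quadratic term is the square of the linear one and is dominated once $\bar d_nR_n^{3-\tau}\to\infty$.

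\emph{Main obstacle.} The delicate point is the diagonal and Frobenius bookkeeping under heavy tails. I must show that $\|G^{\deg,\dagger}\|_F\asymp S_2/S_1$ is not eroded by the removed diagonal, where $S_4$ could be comparable to $S_2^2$ if a few hubs dominate. I must also check that $\|\varepsilon\|_2^2$ concentrates at $S_1$ rather than being driven by hub rows. For the first, I will verify directly from the truncated regular-variation moments that $S_4\lesssim n\bar d_n^4R_n^{5-\tau}$. Then $S_4/S_2^2\asymp R_n^{\tau-1}/n\to0$ by non-dominance. For the second, only the second-moment bound $\E\|\varepsilon\|_2^2\le S_1/f$ is needed, so hub rows pose no difficulty there.
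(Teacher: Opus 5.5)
Your proposal is correct and follows essentially the same route as the paper's proof. You use Bernstein with a union bound and second-moment bounds for the degree noise, then the exact expansion of $1/\widehat S_1$, which gives $L_{\deg}$ plus a remainder of order $\|\varepsilon\|_2^2/S_1=O_P(1)$, and finally divide by $\|G^{\deg,\dagger}\|_F\asymp S_2/S_1$. Your explicit handling of the removed diagonal via $S_4\le d_{\max,n}S_3$, giving $S_4/S_2^2\lesssim R_n^{\tau-1}/n\to0$, just spells out a step the paper attributes to the non-dominance condition.
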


\begin{proof}
Conditional on the latent attributes and over the joint randomness of the
fold mask and the graph, \(\widehat d_i\) is a sum of independent bounded
inverse-probability-weighted Bernoulli variables with mean \(d_i\) and
variance \(O(d_i/f)\). Bernstein's inequality gives, uniformly over \(i\),
\[
    |\widehat d_i-d_i|
    =
    O_P\!\left(\sqrt{d_i\log n}+\log n\right),
\]
and hence \(\max_i|\varepsilon_i|=O_P(\sqrt{d_{\max,n}\log n}+\log n)\).
Also \(\mathbb E\|\varepsilon\|_2^2=\sum_i O(d_i)=O(S_1)\), so
\(\|\varepsilon\|_2=O_P(\sqrt{S_1})\). Similarly, since \(\varepsilon_+\) is a
centered sum over dyad errors counted twice,
\(\operatorname{Var}(\varepsilon_+)=O(S_1)\), and therefore
\(|\varepsilon_+|=O_P(\sqrt{S_1})\).

Now expand the nonlinear degree-product map \(T(d)_{ij}=d_i d_j/S_1\). A
first-order Taylor expansion around \(d\) gives
\[
    T(d+\varepsilon)_{ij}-T(d)_{ij}
    =
    \frac{d_i\varepsilon_j+d_j\varepsilon_i}{S_1}
    -
    \frac{d_i d_j}{S_1^2}\varepsilon_+
    +
    Q_{ij}.
\]
The linear numerator term satisfies
\[
    \left\|
        \frac{d\varepsilon^\top+\varepsilon d^\top}{S_1}
    \right\|_F
    \le
    \frac{2\|d\|_2\|\varepsilon\|_2}{S_1}
    =
    O_P\!\left(\sqrt{\frac{S_2}{S_1}}\right).
\]
The denominator-linear term satisfies
\[
    \left\|
        \frac{d d^\top}{S_1^2}\varepsilon_+
    \right\|_F
    =
    \frac{|\varepsilon_+|\,\|d\|_2^2}{S_1^2}
    =
    O_P\!\left(\frac{S_2}{S_1^{3/2}}\right),
\]
which is no larger than the preceding display under \(S_2\le S_1^2\), implied
by no saturation. The quadratic remainder obeys
\[
    \|Q_{\deg}\|_F
    =
    O_P\!\left(
        \frac{\|\varepsilon\|_2^2}{S_1}
        +
        \frac{|\varepsilon_+|\|d\|_2\|\varepsilon\|_2}{S_1^2}
        +
        \frac{\varepsilon_+^2\|d\|_2^2}{S_1^3}
    \right)
    =
    O_P(1).
\]
Since \(\|G^{\deg,\dagger}\|_F\asymp S_2/S_1\), we obtain
\(\|Q_{\deg}\|_F/\|G^{\deg,\dagger}\|_F=O_P(S_1/S_2)\). Combining the linear and
quadratic terms gives
\[
    \frac{\|\widehat G^{\deg}-G^{\deg,\dagger}\|_F}
         {\|G^{\deg,\dagger}\|_F}
    =
    O_P\!\left(\sqrt{\frac{S_1}{S_2}}\right)
    +
    O_P\!\left(\frac{S_1}{S_2}\right).
\]
Finally, \(S_2/S_1\asymp\bar d_nR_n^{3-\tau}\), which yields the displayed
regularly varying rate.
\end{proof}

\begin{theorem}[Effective-information rate for truncated degree kernels]
\label{thm:truncated-degree-rate}
Let the active candidate set include one fitted degree-product kernel
\(G^{\deg,\dagger}\) satisfying Lemma~\ref{lem:truncated-degree-first-stage},
together with diffuse SBM, DC-SBM, or RDPG/GRDPG kernels satisfying the
first-stage assumptions of Theorem~\ref{thm:debias} in the main text. Assume the cross-fold
linear expansions have centered linear parts and projected remainders negligible
at the corresponding score scale. Define
\[
    N_{\deg}
    =
    \frac{
        \left(\sum_{i<j}(G^{\deg,\dagger}_{ij})^2\right)^2
    }{
        \sum_{i<j}(G^{\deg,\dagger}_{ij})^2P_{ij}(1-P_{ij})
    }.
\]
Assume the leverage conditions
\[
    \frac{\max_{i<j}(G^{\deg,\dagger}_{ij})^2}
         {\sum_{i<j}(G^{\deg,\dagger}_{ij})^2}
    \to0,
    \qquad
    \frac{\max_{i<j}(G^{\deg,\dagger}_{ij})^2P_{ij}}
         {\sum_{i<j}(G^{\deg,\dagger}_{ij})^2P_{ij}}
    \to0.
\]
If Assumption~\ref{ass:effective-sandwich} holds with
\(N=\min_{k\in S}N_k\), then the cross-fold debiased estimator satisfies
\[
    \|\widehat w_{\mathrm{db}}-w^\dagger\|_2
    =
    O_P\!\left(
        \sqrt{\frac{s}{\gamma_S N}}
    \right).
\]
If, in addition, the degree-product kernel controls the variance on its
high-leverage dyads, in the sense that
\[
    \sum_{i<j}(G^{\deg,\dagger}_{ij})^2P_{ij}(1-P_{ij})
    \asymp
    \sum_{i<j}(G^{\deg,\dagger}_{ij})^3,
\]
then
\[
    N_{\deg}
    \asymp
    \frac{S_2^4}{S_1S_3^2}
    \asymp
    \frac{n\bar d_n}{R_n^{2(\tau-2)}}.
\]
Equivalently, with average density \(\bar\rho_n=\bar d_n/n\),
\[
    N_{\deg}^{-1/2}
    \asymp
    \frac{R_n^{\tau-2}}{n\sqrt{\bar\rho_n}}.
\]
If the paper's max-density notation is used, and if
\(\rho_{\max,n}=\max_{i<j}P_{ij}\asymp\bar d_nR_n^2/n\), then the same rate can
be written as
\[
    N_{\deg}^{-1/2}
    \asymp
    \frac{R_n^{\tau-1}}{n\sqrt{\rho_{\max,n}}}.
\]
\end{theorem}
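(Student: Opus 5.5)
The argument has two parts: the generic rate $\sqrt{s/(\gamma_S N)}$, obtained by rerunning the cross-fold decomposition of Theorem~\ref{thm:debias-cor} with the sandwich covariance in place of the Gram bound, and a moment calculation that evaluates $N_{\deg}$ for the truncated regularly varying profile.

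\textbf{Rate.} First I would write the exact normal-equation identity of Step~2 in the proof of Theorem~\ref{thm:debias-cor}:
\[
\bar M^\top W_2 a-\widehat\Phi_{\rm db}w^\dagger=M^\top W_2\varepsilon+\bar H^\top W_2\varepsilon-M^\top W_2\bar Hw^\dagger-\sym(H_a^\top W_2H_b)w^\dagger.
\]
The Stage-B score term $\Gamma_S^{-1}M_S^\top\varepsilon$ has covariance exactly $\Sigma_S=\Gamma_S^{-1}\Omega_S\Gamma_S^{-1}$. So Assumption~\ref{ass:effective-sandwich} gives the bound $O_P(\sqrt{s/(\gamma_SN)})$ directly, with no appeal to the comparability $D\asymp\rho_n I$.

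Next I would establish Gram stability, $\norm{\Gamma_S^{-1/2}(\widehat\Phi_{\rm db}-\Gamma_S)\Gamma_S^{-1/2}}_{\op}=o_P(1)$. The linear cross terms are handled by the adjoint trick of Lemma~\ref{lem:plugin}(2); the cross-fold quadratic is handled by the trace bound of Assumption~\ref{ass:linplug}(iii). For the degree column, the Frobenius errors of Lemma~\ref{lem:truncated-degree-first-stage} are relative to $\norm{G^{\deg,\dagger}}_F$, so the normalised perturbation is $O_P(\sqrt{S_1/S_2})=o_P(1)$.

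The first-stage score $M^\top\bar Hw^\dagger$ is a centred linear form $\langle L^*_{w^\dagger}(\cdot),E\rangle$. Its variance is bounded by the same sandwich, up to the constant $C_L^2/f$, as in step~(8) of Lemma~\ref{lem:plugin}. The remaining two terms, $\bar H^\top\varepsilon$ and $\sym(H_a^\top H_b)$, are of smaller order by the assumed negligibility of the projected remainders at the score scale. The two leverage conditions supply Lindeberg/Bernstein control, so the $O_P$ statements hold for the heavy-tailed column, whose entries are not uniformly spread.

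\textbf{Moment calculation.} The Frobenius mass of the degree column is
\[
\sum_{i<j}(G^{\deg,\dagger}_{ij})^2\asymp S_2^2/S_1^2,
\]
using $S_4=o(S_2^2)$ to discard the diagonal correction. Under the variance-control hypothesis, the denominator of $N_{\deg}$ is
\[
\sum_{i<j}(G^{\deg,\dagger}_{ij})^3\asymp S_3^2/S_1^3,
\]
using $S_6=o(S_3^2)$. Hence
\[
N_{\deg}\asymp \frac{S_2^4/S_1^4}{S_3^2/S_1^3}=\frac{S_2^4}{S_1S_3^2}.
\]
Substituting $S_1=n\bar d_n$, $S_2\asymp n\bar d_n^2R_n^{3-\tau}$ and $S_3\asymp n\bar d_n^3R_n^{4-\tau}$ gives
\[
N_{\deg}\asymp \frac{n\bar d_n\,R_n^{4(3-\tau)}}{R_n^{2(4-\tau)}}=\frac{n\bar d_n}{R_n^{2(\tau-2)}}.
\]
Writing $\bar d_n=n\bar\rho_n$ gives the first rewriting, and substituting $\bar\rho_n\asymp\rho_{\max,n}/R_n^2$ gives the $\rho_{\max}$ form.

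\textbf{Main obstacle.} I expect the hardest step to be the Gram stability and first-stage score control under heterogeneous scales. Because the columns no longer share $n\rho_n$, every bound must be written in the $D_I$-normalised metric of Assumption~\ref{ass:effective-sandwich}. The delicate point is checking that the cross-fold quadratic $\langle L_{\deg}(E_a),L_l(E_b)\rangle$, which is of order $\sqrt{S_2}$ times a covariance-operator norm, is $o$ of $\sqrt{\Omega_{kk}}\,\Gamma_{kk}^{1/2}$. I would first try to bound the covariance operator of $L_{\deg}$ by $Cd_{\max}/S_1$, using the rank-two structure of $d\varepsilon^\top$, and check that the non-dominance condition $R_n^{\tau-1}=o(n)$ makes the resulting ratio vanish.
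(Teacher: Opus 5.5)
\textbf{Where your proposal matches the paper.} Your rate skeleton coincides with the paper's proof: the exact normal-equation identity, the Stage-B score with covariance $\Sigma_S$, and the trace bound from Assumption~\ref{ass:effective-sandwich}. Your moment calculation also coincides with it: $S_4=o(S_2^2)$ and $S_6=o(S_3^2)$ to drop diagonals, then $N_{\deg}\asymp S_2^4/(S_1S_3^2)$, then substitution. The paper, however, disposes of every non-Stage-B term in one line. Cross-fold independence gives $\mathbb E[L(E_a)^\top L(E_b)\mid u]=0$, the negligibility hypothesis absorbs the rest, and the proof never bounds the first-stage linear score $M^\top W_2\bar Hw^\dagger$. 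You are right to isolate that term. The bound you give for it, however, does not hold in the heavy-tailed setting, and that is the gap.

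\textbf{Why the step-(8) bound fails here.} You control the term ``as in step~(8) of Lemma~\ref{lem:plugin}''. That step rests on the isotropic bound $\operatorname{Cov}(\operatorname{vec}L)\preceq(C_L^2\rho_n/f)I$, followed by the common-scale comparability that turns $\rho_n\operatorname{tr}(\Gamma_S^{-1})$ into the sandwich. Neither survives a truncated heavy-tailed column.
\begin{itemize}
\item There is no bounded $C_L$. With $E=v\mathbf 1^\top+\mathbf 1v^\top$ and $v\perp\mathbf 1$, one gets $\norm{L_{\deg}(E)}_F/\norm{E}_F\gtrsim\sqrt{nS_2}/S_1\asymp R_n^{(3-\tau)/2}$.
\item The isotropic scale in Assumption~\ref{ass:linplug}(iii) is the maximal density $\rho_{\max,n}\asymp\bar\rho_nR_n^2$. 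On a diffuse coordinate at the average density, $(\Gamma_S^{-1})_{kk}\asymp(n^2\bar\rho_n^2)^{-1}$, so the bound is of order $R_n^2/(n^2\bar\rho_n)$. This exceeds the target $s/(\gamma_SN)\asymp R_n^{2(\tau-2)}/(n^2\bar\rho_n)$ by $R_n^{2(3-\tau)}\to\infty$.
\item The sharper rank-two bound for the degree part is $\norm{\operatorname{Cov}(\operatorname{vec}L_{\deg}(E))}_{\mathrm{op}}\lesssim d_{\max}S_2/(fS_1^2)\asymp\bar\rho_nR_n^{4-\tau}/f$. This, not $Cd_{\max}/S_1$, is what the rank-two structure of $d\varepsilon^\top$ gives. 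It still overshoots by $R_n^{8-3\tau}$ when $\tau<8/3$.
\end{itemize}

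\textbf{What would repair it.} You need a $D$-weighted statement: writing $X$ for the matrix form of $x$,
$\operatorname{Var}\inner{L_k^*(x)}{E}=\sum_e[L_k^*(x)]_e^2\operatorname{Var}(E_e)\lesssim f^{-1}\sum_ex_e^2P_e$.
Apply it to $x=g_k^0=M\Gamma_S^{-1}e_k$, so that the right side is $f^{-1}(\Sigma_S)_{kk}$. For the degree map this follows from Cauchy--Schwarz,
$(Xd)_j^2\le(\sum_lX_{jl}^2P_{jl})(\sum_ld_l^2/P_{jl})\le c^{-1}S_1^2d_j^{-1}\sum_lX_{jl}^2P_{jl}$,
whenever $P_{jl}\ge c\,d_jd_l/S_1$; the $\varepsilon_+$ term is handled the same way. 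For the spectral agents the weighted bound must be assumed.

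\textbf{The remaining terms.} $\bar H^\top W_2\varepsilon$ and $\operatorname{sym}(H_a^\top W_2H_b)$ contain the linear parts, so the projected-remainder hypothesis does not cover them as you assert. The weighted bound gives $\operatorname{Var}\inner{L_{\deg}(E_a)}{L_l(E_b)}\lesssim f^{-1}\mathbb E\norm{L_l(E_b)}_D^2$, with $\norm{Y}_D^2=\sum_eY_e^2P_e$. Whether this is negligible at the score scale depends on how $\bar d_n$ compares with a power of $R_n$. The condition $R_n^{\tau-1}=o(n)$ alone does not settle that, and neither does the paper's mean-zero remark.
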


\begin{proof}
The first-stage expansion is Lemma~\ref{lem:truncated-degree-first-stage}.
Write the two independent Stage-A fitted designs as \(\widehat M_a=M+H_a\) and
\(\widehat M_b=M+H_b\), with \(H_a=L(E_a)+Q_a\) and \(H_b=L(E_b)+Q_b\). The
cross-fold debiased Gram is
\[
    \widehat\Gamma_{\mathrm{db}}
    =
    \tfrac12
    \left(
        \widehat M_a^\top\widehat M_b
        +
        \widehat M_b^\top\widehat M_a
    \right).
\]
Because \(E_a\) and \(E_b\) are independent Stage-A noises, the centered
linear self-product cancels, \(\mathbb E[L(E_a)^\top L(E_b)\mid u]=0\). The
deterministic bias terms and nonlinear terms involving \(Q_a,Q_b\) are
negligible by the projected-remainder condition. Therefore the leading
Stage-B term is the ordinary heteroskedastic least-squares score
\(\Gamma_S^{-1}M_S^\top(A-P)\), whose covariance is
\(\Sigma_S=\Gamma_S^{-1}\Omega_S\Gamma_S^{-1}\). Thus
\(\|\widehat w_{\mathrm{db}}-w^\dagger\|_2=O_P(\sqrt{\operatorname{tr}(\Sigma_S)})\),
and Assumption~\ref{ass:effective-sandwich} gives
\(\sqrt{\operatorname{tr}(\Sigma_S)}\le C\sqrt{s/(\gamma_S N)}\), which
proves the estimator bound.

It remains to compute \(N_{\deg}\). By the non-dominance condition
\(R_n^{\tau-1}=o(n)\),
\[
    \sum_{i<j}(G^{\deg,\dagger}_{ij})^2
    =
    \sum_{i<j}\frac{d_i^2d_j^2}{S_1^2}
    \asymp
    \frac{S_2^2}{S_1^2}.
\]
Under the variance-dominance condition,
\[
    \sum_{i<j}(G^{\deg,\dagger}_{ij})^2P_{ij}(1-P_{ij})
    \asymp
    \sum_{i<j}(G^{\deg,\dagger}_{ij})^3
    \asymp
    \frac{S_3^2}{S_1^3}.
\]
Hence \(N_{\deg}\asymp(S_2^2/S_1^2)^2/(S_3^2/S_1^3)=S_2^4/(S_1S_3^2)\).
Substituting \(S_2\asymp n\bar d_n^2R_n^{3-\tau}\),
\(S_3\asymp n\bar d_n^3R_n^{4-\tau}\), and \(S_1=n\bar d_n\) gives
\[
    N_{\deg}
    \asymp
    \frac{
        n^4\bar d_n^8R_n^{4(3-\tau)}
    }{
        n\bar d_n\cdot n^2\bar d_n^6R_n^{2(4-\tau)}
    }
    =
    n\bar d_n R_n^{4-2\tau}
    =
    \frac{n\bar d_n}{R_n^{2(\tau-2)}}.
\]
The average-density and max-density forms follow from \(\bar\rho_n=\bar d_n/n\)
and \(\rho_{\max,n}\asymp\bar d_nR_n^2/n\).
\end{proof}

\begin{remark}[What Theorem~\ref{thm:truncated-degree-rate} does and does not claim]
Theorem~\ref{thm:truncated-degree-rate} is a rate theorem for the
cross-fold unweighted least-squares synthesis estimator. It does not assert
a minimax lower bound over all possible estimators in the heavy-tailed degree
case. In strongly heteroskedastic heavy-tailed designs, likelihood or
weighted least squares may have a different information scale. The theorem
therefore extends the fitted-kernel debiasing principle to truncated
regularly varying degree kernels, and its minimax status for the residualized projection coefficient is
settled by the matching lower bound of Theorem~\ref{thm:heavy-tail-projection-lb} below; the correctly
specified scalar weight, by contrast, is estimable faster, as that theorem's remark records.
\end{remark}

\begin{theorem}[Heavy-tailed projection lower bound]
\label{thm:heavy-tail-projection-lb}
Let $\mathcal E_n=\{(i,j):1\le i<j\le n\}$ and write dyads as $e\in\mathcal E_n$. Let
$g=(g_e)_{e\in\mathcal E_n}$ be a deterministic degree-product column satisfying $0<g_e\le g_*<1$. Let
$X$ be a fixed matrix of nuisance mechanism columns, and set
\[
    z=(I-\Pi_X)g,\qquad a_n=\|z\|_2^2,\qquad b_n=\sum_{e\in\mathcal E_n} z_e^2 g_e .
\]
Assume $a_n>0$, $b_n>0$, and the no-dominant-dyad condition $\max_{e}|z_e|/\sqrt{b_n}\le\Lambda$ for a
fixed $\Lambda<\infty$. Fix constants $0<c_-<\kappa<c_+<\infty$ with $c_+g_*\le 1-\varepsilon$ for some
$\varepsilon>0$, and define the nuisance class
\[
    \mathcal P_n=\bigl\{P:\ c_-g_e\le P_e\le c_+g_e\ \text{for all }e\in\mathcal E_n\bigr\}.
\]
For $P\in\mathcal P_n$ let $\mathbb P_P$ be the product Bernoulli law with $A_e\sim\mathrm{Bernoulli}(P_e)$
independently, and define the residualized projection coefficient
$\theta(P)=\langle z,P\rangle/\|z\|_2^2$. Then there is a constant $c>0$, depending only on
$(c_-,c_+,\kappa,g_*,\varepsilon,\Lambda)$, with
\[
    \inf_{\widehat\theta}\sup_{P\in\mathcal P_n}\mathbb E_P\{\widehat\theta-\theta(P)\}^2
    \ge c\,\frac{b_n}{a_n^2},
\]
so the effective information for this residualized degree-column coefficient is at most
$N_z=a_n^2/b_n$. Suppose in addition that $g_{ij}=\bar\rho_n h_ih_j$ with $h_i\in[1,R_n]$, that the
truncated heavy-tail pair moments satisfy, for $q=2,3$,
\[
    \sum_{i<j}h_i^q h_j^q\asymp n^2 R_n^{2(q+1-\tau)},\qquad 2<\tau<3,
\]
and the residual nondegeneracy conditions $\|z\|_2^2\asymp\sum_{i<j}g_{ij}^2$ and
$\sum_{i<j}z_{ij}^2 g_{ij}\asymp\sum_{i<j}g_{ij}^3$. Then
\[
    N_z\asymp\frac{(\sum_{i<j}g_{ij}^2)^2}{\sum_{i<j}g_{ij}^3}
    \asymp\frac{n^2\bar\rho_n}{R_n^{2(\tau-2)}}=\frac{n\bar d_n}{R_n^{2(\tau-2)}},
    \qquad \bar d_n=n\bar\rho_n,
\]
and hence $\inf_{\widehat\theta}\sup_{P\in\mathcal P_n}\mathbb E_P\{\widehat\theta-\theta(P)\}^2\ge
c\,R_n^{2(\tau-2)}/(n\bar d_n)$. This lower bound matches the upper bound of
Theorem~\ref{thm:truncated-degree-rate} up to constants, so the cross-fold debiased rate is minimax
sharp for the residualized degree-column projection coefficient over $\mathcal P_n$.
\end{theorem}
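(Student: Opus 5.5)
The plan is a Le Cam two-point argument inside the nuisance class $\mathcal P_n$. The perturbation direction is the residualized column $z$ itself, reweighted by the local Bernoulli variance so that the KL cost is measured by $b_n$ rather than by $\|z\|_2^2\rho$.

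\emph{Step 1: the two hypotheses.} Take the centre $P^0_e=\kappa g_e$, which lies in the interior of $\mathcal P_n$ because $c_-<\kappa<c_+$. Set
\[
P^1_e=\kappa g_e+t\,z_e g_e/\sqrt{b_n}\cdot\sqrt{b_n}/\sqrt{b_n}.
\]
More concretely, use the perturbation $P^1_e=P^0_e+\delta\,z_eg_e$ with $\delta>0$ to be chosen.

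\emph{Step 2: separation in $\theta$.} The functional $\theta$ is linear, so
\[
\theta(P^1)-\theta(P^0)=\delta\langle z,z\odot g\rangle/a_n=\delta\,b_n/a_n .
\]

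\emph{Step 3: membership in $\mathcal P_n$.} Each entry needs $|\delta z_e|\le\min(\kappa-c_-,c_+-\kappa)$. By the no-dominant-dyad condition, $|z_e|\le\Lambda\sqrt{b_n}$. It therefore suffices to take $\delta\le c_0/(\Lambda\sqrt{b_n})$.

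\emph{Step 4: KL bound.} Use Lemma~\ref{lem:bern-kl-final2} in its pointwise form, $\kl(p,q)\le(p-q)^2/\{q(1-q)\}$. Here $q_e=\kappa g_e$ and $1-q_e\ge\varepsilon$, so
\[
\KL\le C\sum_e\delta^2z_e^2g_e^2/g_e=C\delta^2 b_n .
\]
This is the key point: weighting the perturbation by $g_e$ turns the Fisher cost into exactly $b_n$.

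\emph{Step 5: choice of $\delta$.} Take $\delta=c_1/\sqrt{b_n}$ with $c_1$ small. This keeps the KL bounded by a small constant and is compatible with Step 3. The separation is then $c_1\sqrt{b_n}/a_n$. Le Cam's two-point lemma, through Pinsker bounding the total variation away from one, then gives
\[
\inf_{\widehat\theta}\max_{j}\E_{P^j}(\widehat\theta-\theta(P^j))^2\ge c\,b_n/a_n^2 .
\]

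\emph{Step 6: the rate computation.} Substitute $\|z\|_2^2\asymp\sum g^2$ and $\sum z^2g\asymp\sum g^3$. Then
\[
N_z\asymp(\textstyle\sum g_{ij}^2)^2/\sum g_{ij}^3 .
\]
With $g_{ij}=\bar\rho_nh_ih_j$ the pair moments give $\sum g^2\asymp\bar\rho_n^2n^2R_n^{2(3-\tau)}$ and $\sum g^3\asymp\bar\rho_n^3n^2R_n^{2(4-\tau)}$. Hence
\[
N_z\asymp n^2\bar\rho_nR_n^{4(3-\tau)-2(4-\tau)}=n^2\bar\rho_nR_n^{-2(\tau-2)},
\]
which is the claimed order. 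Matching this with Theorem~\ref{thm:truncated-degree-rate} requires identifying $N_z$ with $N_{\deg}$ under the same moment and variance-dominance conditions, and that identification is immediate from the two displayed asymptotics.

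\emph{Main obstacle.} The delicate point is checking that the feasibility constraint of Step 3 and the KL constraint of Step 5 are compatible at the same scale $\delta\asymp b_n^{-1/2}$. This is precisely what the no-dominant-dyad condition buys, and it is why $\Lambda$ enters the constant. A secondary subtlety is that $\theta$ must be evaluated as the residualized coefficient; the perturbation need not lie in the span of $z$. Because $\theta$ is linear in $P$, no nuisance matching is needed.
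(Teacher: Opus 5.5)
Your proposal is correct and follows the paper's proof: a Le Cam two-point argument centred at $P^0=\kappa g$, in which the no-dominant-dyad condition keeps the perturbed law inside $\mathcal P_n$ at the same scale $\delta\asymp b_n^{-1/2}$ that keeps the Bernoulli $\chi^2$/KL cost bounded, followed by the same moment substitution for $N_z$. The only difference is cosmetic. You perturb along $z\odot g$, whereas the paper uses the exactly variance-weighted direction $v\propto z\odot P^0(1-P^0)$; the two agree up to the bounded factor $\kappa(1-\kappa g_e)\in[\kappa\varepsilon,\kappa]$. You should also delete the garbled first display in Step~1.
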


\begin{proof}
Let $P^0_e=\kappa g_e$. By the assumptions on $\kappa$ and $g_*$, $P^0\in\mathcal P_n$ and $P^0_e$ is
bounded away from one uniformly. Set $B_n=\sum_e z_e^2 P^0_e(1-P^0_e)$; since $P^0_e(1-P^0_e)\asymp g_e$,
$B_n\asymp b_n$. Define the least-favourable direction $v_e=P^0_e(1-P^0_e)z_e/B_n$, so that
$\langle z,v\rangle=\sum_e z_e^2 P^0_e(1-P^0_e)/B_n=1$. For a constant $c_0>0$ to be chosen set
$\delta_n=c_0\sqrt{B_n}/a_n$ and define $P^\pm_e=P^0_e\pm(\delta_n/2)a_n v_e$, whose targets are separated
by $\theta(P^+)-\theta(P^-)=\delta_n a_n\langle z,v\rangle/a_n=\delta_n$. Since $P^0_e\le 1-\varepsilon$,
\[
    \frac{|P^\pm_e-P^0_e|}{g_e}\le Cc_0\frac{|z_e|}{\sqrt{B_n}}\le Cc_0\Lambda,
\]
so choosing $c_0$ small (depending only on the fixed margin between $\kappa$ and $(c_-,c_+)$) gives
$c_-g_e\le P^\pm_e\le c_+g_e$, hence $P^\pm\in\mathcal P_n$. Because $P^+$ and $P^-$ are uniformly
comparable to $P^0$, the quadratic Bernoulli Kullback--Leibler bound gives
\[
    \mathrm{KL}(\mathbb P_{P^+}\,\|\,\mathbb P_{P^-})\le C\sum_e\frac{(P^+_e-P^-_e)^2}{P^0_e(1-P^0_e)}
    =C\delta_n^2 a_n^2\frac{1}{B_n^2}\sum_e z_e^2 P^0_e(1-P^0_e)=C\delta_n^2 a_n^2/B_n=Cc_0^2,
\]
using $P^+_e-P^-_e=\delta_n a_n v_e$. Taking $c_0$ small makes the divergence bounded by an absolute
constant, and Le Cam's two-point lemma yields
$\inf_{\widehat\theta}\sup_{P\in\mathcal P_n}\mathbb E_P\{\widehat\theta-\theta(P)\}^2\ge c\delta_n^2
=cB_n/a_n^2\asymp cb_n/a_n^2$. For the heavy-tail scale, for $q=2,3$,
$\sum_{i<j}g_{ij}^q=\bar\rho_n^q\sum_{i<j}h_i^q h_j^q\asymp n^2\bar\rho_n^q R_n^{2(q+1-\tau)}$, so
$\sum g_{ij}^2\asymp n^2\bar\rho_n^2 R_n^{2(3-\tau)}$ and $\sum g_{ij}^3\asymp n^2\bar\rho_n^3
R_n^{2(4-\tau)}$. By residual nondegeneracy,
$N_z=a_n^2/b_n\asymp(\sum g_{ij}^2)^2/\sum g_{ij}^3\asymp n^2\bar\rho_n R_n^{4-2\tau}
=n^2\bar\rho_n/R_n^{2(\tau-2)}$, which is $n\bar d_n/R_n^{2(\tau-2)}$ since $\bar d_n=n\bar\rho_n$.
\end{proof}

\begin{remark}[Why this is not a scalar Chung--Lu lower bound]
Theorem~\ref{thm:heavy-tail-projection-lb} is a lower bound for the residualized projection coefficient
over a nuisance class $P_e\asymp g_e$. It is not a lower bound for the correctly specified one-parameter
model $P_e=w g_e$ with known $g$. In that scalar model the Fisher information is
\[
    I(w)=\sum_e\frac{g_e^2}{wg_e(1-wg_e)}\asymp\sum_e g_e\asymp n\bar d_n,
\]
so $w$ is estimable at the faster rate $(n\bar d_n)^{-1/2}$, faster than the projection rate by the factor
$R_n^{\tau-2}$. A claim that the pure Chung--Lu scalar weight has minimax risk $R_n^{2(\tau-2)}/(n\bar d_n)$
would therefore be false: the slower rate is a property of the projection coefficient under an unknown
nuisance baseline, not of the scalar weight.
\end{remark}

\paragraph{Numerical verification of the rate.}
The effective-information rate is confirmed by simulation. We draw a truncated regularly varying degree
sequence with tail index $\tau=2.5$ and natural cutoff $\theta_{\max}=n^{1/(\tau-1)}$, add a constant-norm
geometry agent separated from degree, generate the graph, and refit the degree kernel by degree matching on
two independent folds before forming the cross-fold debiased estimate. As $n$ grows from $400$ to $2000$ the
effective information $N_{\deg}=(\sum_{i<j}G_{ij}^2)^2/\sum_{i<j}G_{ij}^2P_{ij}(1-P_{ij})$ spans more than two
orders of magnitude, and the empirical standard deviation of the debiased degree coefficient tracks
$N_{\deg}^{-1/2}$ with a fitted log--log slope of $-0.46$ against the predicted $-1/2$
(Figure~\ref{fig:degrate}). The standard deviation isolates the rate from the projection-fidelity bias of
Appendix~\ref{app:deflation}: the coefficient level carries that bias, while its concentration is governed by
the effective sample size $N_{\deg}$, exactly as Theorem~\ref{thm:truncated-degree-rate} states.

\begin{figure}[t]
\centering
\includegraphics[width=0.6\textwidth]{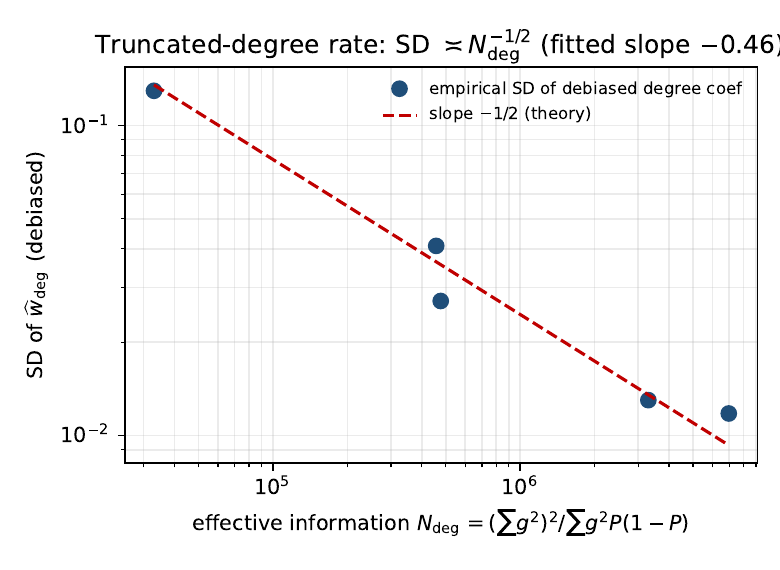}
\caption{Numerical verification of the truncated-degree rate (Theorem~\ref{thm:truncated-degree-rate}). The
empirical standard deviation of the cross-fold debiased degree coefficient against the effective information
$N_{\deg}$, over $n\in\{400,650,1000,1500,2000\}$ at tail index $\tau=2.5$; the dashed line is the predicted
slope $-1/2$ and the fitted slope is $-0.46$.}
\label{fig:degrate}
\end{figure}

\subsubsection{Local-support and triadic mechanisms}

Let \(T_n\subset\{(i,j):i<j\}\) be a known or independently pilot-estimated
local dyad support, and let \(q_n=|T_n|\). For a ring-local or small-world
support of radius \(m_n\), \(q_n\asymp nm_n\). Define the local-support kernel
\[
    G^\triangle_{ij}
    =
    p_{\triangle,n}\mathbf 1\{(i,j)\in T_n\}.
\]
Here \(p_{\triangle,n}\) is a declared population scale for the local kernel,
for example the average edge probability on \(T_n\),
\(p_{\triangle,n}=q_n^{-1}\sum_{(i,j)\in T_n}P_{ij}\). The coefficient of
\(G^\triangle\) is meaningful only relative to this normalisation. If
\(p_{\triangle,n}\) is not fixed by the fitting map, then only the product
\(w_\triangle p_{\triangle,n}\) is directly identified.

\begin{lemma}[Local-support first-stage concentration]
\label{lem:local-support-first-stage}
Assume \(T_n\) is known or estimated on a pilot split independent of Stage B.
Let
\[
    \widehat p_{\triangle,n}
    =
    \frac{1}{q_n}\sum_{(i,j)\in T_n}\widetilde A_{ij},
    \qquad
    \widehat G^\triangle_{ij}
    =
    \widehat p_{\triangle,n}\mathbf 1\{(i,j)\in T_n\}.
\]
Assume \(P_{ij}\asymp p_{\triangle,n}\) for \((i,j)\in T_n\) and
\(q_np_{\triangle,n}\to\infty\). Then
\[
    \widehat p_{\triangle,n}-p_{\triangle,n}
    =
    O_P\!\left(
        \sqrt{\frac{p_{\triangle,n}}{q_n}}
    \right),
    \qquad
    \frac{\|\widehat G^\triangle-G^\triangle\|_F}
         {\|G^\triangle\|_F}
    =
    O_P\!\left(
        \frac{1}{\sqrt{q_np_{\triangle,n}}}
    \right).
\]
\end{lemma}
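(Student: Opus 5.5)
The approach is to treat $\widehat p_{\triangle,n}$ as a normalised sum of independent bounded inverse-probability-weighted Bernoulli variables, compute its mean and variance, and then pass the scalar error to the kernel by exact homogeneity. The first step is to condition on the latent attributes and on the support $T_n$. Because $T_n$ is either known or estimated on a pilot split independent of the Stage-A fold used to form $\widetilde A$, conditioning fixes the index set. The summands $\{\widetilde A_{ij}:(i,j)\in T_n\}$ are then independent over dyads.

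The mean and variance follow from the identity $\widetilde A_{ij}=f^{-1}R_{ij}A_{ij}$ and the moment calculation already recorded in Lemma~\ref{lem:firststage}. Over the joint randomness of the mask and the graph, $\E_u\widetilde A_{ij}=P_{ij}$ and $\operatorname{Var}_u(\widetilde A_{ij})=f^{-1}P_{ij}(1-fP_{ij})\le P_{ij}/f$. This gives
\[
\E_u\widehat p_{\triangle,n}=q_n^{-1}\sum_{(i,j)\in T_n}P_{ij}=p_{\triangle,n},
\]
using the declared normalisation of $p_{\triangle,n}$ as the average edge probability on $T_n$. For the variance,
\[
\operatorname{Var}_u(\widehat p_{\triangle,n})\le q_n^{-2}\sum_{T_n}P_{ij}/f\le C\,p_{\triangle,n}/(fq_n),
\]
where the last bound uses $P_{ij}\asymp p_{\triangle,n}$ on $T_n$. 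Chebyshev's inequality with fixed $f$ then yields $\widehat p_{\triangle,n}-p_{\triangle,n}=O_P(\sqrt{p_{\triangle,n}/q_n})$. Bernstein's inequality gives the same bound with exponential tails, since the summands are bounded by $f^{-1}$ and $q_np_{\triangle,n}\to\infty$ makes the variance term dominate.

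The Frobenius statement is then exact algebra. Both kernels are supported on the same set $T_n$ and differ only by the scalar, so $\widehat G^\triangle-G^\triangle=(\widehat p_{\triangle,n}-p_{\triangle,n})\mathbf 1_{T_n}$. Hence $\|\widehat G^\triangle-G^\triangle\|_F=\sqrt{q_n}\,|\widehat p_{\triangle,n}-p_{\triangle,n}|$ and $\|G^\triangle\|_F=\sqrt{q_n}\,p_{\triangle,n}$. Their ratio is $|\widehat p_{\triangle,n}-p_{\triangle,n}|/p_{\triangle,n}=O_P(1/\sqrt{q_np_{\triangle,n}})$, which tends to zero under the stated condition.

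The only delicate point is the case where $T_n$ is pilot-estimated. There I would check that the pilot dyads are disjoint from the Stage-A mask, or otherwise independent of it, so that conditioning on $T_n$ leaves the summands independent with unchanged moments. If they share dyads, I would discard the pilot dyads and replace $f$ by the effective fraction $f(1-f_0)$, as in Theorem~\ref{thm:truncated-operator-final2}. The rate is unchanged because the fractions are fixed.
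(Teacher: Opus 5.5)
Your proposal is correct and follows essentially the paper's argument: a Chebyshev bound for the average of $q_n$ independent inverse-probability-weighted Bernoulli summands with variance $O(p_{\triangle,n}/f)$, followed by the exact identity $\|\widehat G^\triangle-G^\triangle\|_F/\|G^\triangle\|_F=|\widehat p_{\triangle,n}-p_{\triangle,n}|/p_{\triangle,n}$. The paper's proof leaves implicit two points that you make explicit. First, unbiasedness requires $p_{\triangle,n}$ to be the average edge probability on $T_n$; a merely comparable declared scale would leave a deterministic bias. Second, $T_n$ must be independent of the Stage-A mask and edges, not only of Stage B.
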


\begin{proof}
The estimator \(\widehat p_{\triangle,n}\) is an average of \(q_n\)
independent bounded inverse-probability-weighted Bernoulli variables with
variance \(O(p_{\triangle,n}/f)\), where \(f\in(0,1)\) is the fixed Stage-A
fold fraction. Hence \(\operatorname{Var}(\widehat p_{\triangle,n})=O(p_{\triangle,n}/q_n)\),
and Chebyshev's inequality gives \(\widehat p_{\triangle,n}-p_{\triangle,n}=O_P(\sqrt{p_{\triangle,n}/q_n})\).
Moreover \(\|\widehat G^\triangle-G^\triangle\|_F=\sqrt{q_n}\,|\widehat p_{\triangle,n}-p_{\triangle,n}|\),
while \(\|G^\triangle\|_F=\sqrt{q_n}\,p_{\triangle,n}\). Dividing the two
displays gives the stated relative rate.
\end{proof}

\begin{theorem}[Support-adapted rate for a local network mechanism]
\label{thm:local-support-rate}
Let the active candidate set contain diffuse mechanisms satisfying the
first-stage assumptions of Theorem~\ref{thm:debias} in the main text, together with one
local-support mechanism \(G^\triangle\) supported on \(q_n\) dyads with edge
scale \(p_{\triangle,n}\). Assume \(T_n\) is known or pilot-estimated
independently of Stage B, and assume Lemma~\ref{lem:local-support-first-stage}
holds. Define
\[
    N_\triangle
    =
    \frac{
        \left(\sum_{i<j}(G^\triangle_{ij})^2\right)^2
    }{
        \sum_{i<j}(G^\triangle_{ij})^2P_{ij}(1-P_{ij})
    }.
\]
If \(P_{ij}\asymp p_{\triangle,n}\) on \(T_n\), then
\(N_\triangle\asymp q_np_{\triangle,n}\). Under
Assumption~\ref{ass:effective-sandwich},
\[
    \|\widehat w_{\mathrm{db}}-w^\dagger\|_2
    =
    O_P\!\left(
        \sqrt{\frac{s}{\gamma_S N}}
    \right),
    \qquad
    N=\min_{k\in S}N_k.
\]
In particular, the local coordinate satisfies
\[
    |\widehat w_\triangle-w^\dagger_\triangle|
    =
    O_P\!\left(
        \frac{1}{\sqrt{\gamma_S q_np_{\triangle,n}}}
    \right).
\]
For a ring-local support with \(q_n\asymp nm_n\),
\[
    |\widehat w_\triangle-w^\dagger_\triangle|
    =
    O_P\!\left(
        \frac{1}{\sqrt{\gamma_S nm_np_{\triangle,n}}}
    \right).
\]
If \(m_n=O(1)\) and \(p_{\triangle,n}\asymp1\), this becomes \(O_P(n^{-1/2})\).
Thus a local \(O(n)\)-support mechanism is estimable, but generally not at the
diffuse edge-count rate \(1/(n\sqrt{\rho_n})\).
\end{theorem}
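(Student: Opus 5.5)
The proof will follow the template of Theorem~\ref{thm:truncated-degree-rate}, with the local-support first stage of Lemma~\ref{lem:local-support-first-stage} replacing the degree-matching expansion. It has two parts: a direct computation of the effective information $N_\triangle$, and an estimator bound obtained by reducing the cross-fold debiased error to the heteroskedastic Stage-B score controlled by Assumption~\ref{ass:effective-sandwich}.

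\emph{Step one: the information scale.} The kernel $G^\triangle$ vanishes off $T_n$ and equals $p_{\triangle,n}$ on $T_n$. Hence
\[
\sum_{i<j}(G^\triangle_{ij})^2=q_np_{\triangle,n}^2 .
\]
Since $P_{ij}\asymp p_{\triangle,n}$ on $T_n$ and $p_{\triangle,n}\le1-\varepsilon$ is implicit in the sparse band, the denominator is
\[
\sum_{T_n}p_{\triangle,n}^2P_{ij}(1-P_{ij})\asymp q_np_{\triangle,n}^3 .
\]
Therefore $N_\triangle\asymp q_n^2p_{\triangle,n}^4/(q_np_{\triangle,n}^3)=q_np_{\triangle,n}$. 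This part is routine.

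\emph{Step two: reduction to the Stage-B score.} Write $\widehat M_g=M+H_g$ for $g\in\{a,b\}$, and start from the exact normal-equation identity of Step~2 in the proof of Theorem~\ref{thm:debias-cor}. This leaves four terms: the Stage-B score $M^\top\varepsilon$, the cross term $\bar H^\top\varepsilon$, the first-stage linear term $M^\top\bar Hw^\dagger$, and the cross-fold product $\operatorname{sym}(H_a^\top H_b)w^\dagger$.

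The diffuse columns carry their first-stage bounds from Lemma~\ref{lem:firststage}. For the local column the error is a scalar times an indicator: $H_{g,\triangle}=(\widehat p^{\,g}_{\triangle,n}-p_{\triangle,n})\mathbf 1_{T_n}$. Its linear part is therefore exact, with no remainder. Its projections onto any column are controlled by Lemma~\ref{lem:local-support-first-stage}, since $|\widehat p-p|=O_P(\sqrt{p/q_n})$. Because $T_n$ is independent of Stage B and the two sub-fold estimates use disjoint dyads, the product $(\widehat p^{\,a}-p)(\widehat p^{\,b}-p)$ has mean zero up to the $O(P_e^2)$ exclusive-split covariance of Remark~\ref{rem:fold-covariance}. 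The self-product is therefore removed as in the diffuse case.

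With these inputs, the leading term is $\Gamma_S^{-1}M_S^\top(A-P)$, whose covariance is $\Sigma_S$. Assumption~\ref{ass:effective-sandwich} then gives the overall rate $\sqrt{s/(\gamma_SN)}$ with $N=\min_kN_k$. The coordinatewise statement for $w_\triangle$ follows by taking $e_\triangle^\top\Sigma_Se_\triangle\lesssim 1/(\gamma_SN_\triangle)$, which uses the per-coordinate information $N_\triangle$ rather than the minimum. The ring-local specialisation is then just the substitution $q_n\asymp nm_n$.

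\emph{Main obstacle.} The hard step is showing that the first-stage linear term $M^\top\bar Hw^\dagger$ is of the same or smaller order after inversion, \emph{at the heterogeneous scales}. A single Gram inverse no longer has one scale $n^2\rho_n^2$. The local column has norm $\sqrt{q_n}p_{\triangle,n}$, far below the diffuse columns, so the naive operator-norm bounds in Lemma~\ref{lem:plugin} would lose a factor equal to the scale ratio.

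I would first rescale the design by $D_n=\operatorname{diag}(\|m_k\|)$ and work in the correlation metric where $\gamma_S$ lives. In that metric, the local-column first-stage contribution to coordinate $\triangle$ has order
\[
\|m_\triangle\|^{-1}\sqrt{q_n}\,\sqrt{p/q_n}\;\asymp\;(q_np)^{-1/2}\;\asymp\;N_\triangle^{-1/2}.
\]
This is exactly the Stage-B order, which is why a two-stage variance is needed but the rate is unaffected. The diffuse first-stage errors leak into the $\triangle$ coordinate only through inner products $\langle\mathbf 1_{T_n},\mathcal L_k(E)\rangle$. These are $O_P(\sqrt{q_n\rho_n/n})$ by the entrywise spreading in Assumption~\ref{ass:linplug}(iii), which I expect is negligible once divided by $\|m_\triangle\|\sqrt{\gamma_S}$. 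Checking that last comparison uniformly in $p_{\triangle,n}$ versus $\rho_n$ is where I expect the argument to need an explicit side condition.
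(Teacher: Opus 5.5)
Your proposal is essentially the paper's proof. The paper computes $N_\triangle\asymp q_np_{\triangle,n}$ as in your Step one, reduces the cross-fold debiased error to the Stage-B score with covariance $\Sigma_S$ via fold independence and cancellation of the same-fold self-product, applies Assumption~\ref{ass:effective-sandwich}, and gets the local-coordinate rate by substituting $N=N_\triangle$; this implicitly treats the local column as the information bottleneck, as when $m_n=O(1)$, and the paper simply absorbs the heterogeneous-scale first-stage terms you flag as the main obstacle into the projected-remainder hypothesis.

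Two of your extra claims need repair, although the argument does not depend on them. First, Assumption~\ref{ass:effective-sandwich} is a trace bound in $\min_kN_k$, so it does not by itself give $e_\triangle^\top\Sigma_Se_\triangle\lesssim 1/(\gamma_SN_\triangle)$. For that you would combine $\Sigma_S\preceq(\max_eP_e)\,\Gamma_S^{-1}$, a lower bound on $\lambda_{\min}(\Phi_S)$, and $\max_eP_e\lesssim p_{\triangle,n}$. Second, entrywise spreading cannot bound $\langle\mathbf 1_{T_n},\mathcal L_k(\mathbf E)\rangle$, because the entries are correlated. The covariance-operator bound of Assumption~\ref{ass:linplug}(iii) gives $O_P(\sqrt{q_n\rho_n/f})$ instead, hence a leak of order $N_\triangle^{-1/2}\sqrt{\rho_n/p_{\triangle,n}}$ into the local coordinate. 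That stays within the rate under the side condition $\rho_n\lesssim p_{\triangle,n}$ you anticipated.
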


\begin{proof}
For the local column, \(\sum_{i<j}(G^\triangle_{ij})^2=q_np_{\triangle,n}^2\).
Since \(P_{ij}\asymp p_{\triangle,n}\) on \(T_n\),
\(\sum_{i<j}(G^\triangle_{ij})^2P_{ij}(1-P_{ij})\asymp q_np_{\triangle,n}^3\).
Therefore
\[
    N_\triangle
    \asymp
    \frac{q_n^2p_{\triangle,n}^4}{q_np_{\triangle,n}^3}
    =
    q_np_{\triangle,n}.
\]
The first-stage error is controlled by
Lemma~\ref{lem:local-support-first-stage}. Because the local support and
scale are known or estimated independently of Stage B, the Stage-B regression
noise is independent of the fitted local column. Cross-fold Gram debiasing
removes the same-fold self-product of the first-stage error, and the remaining
terms are controlled by the projected-remainder condition. Hence the leading
term has covariance \(\Sigma_S\), and Assumption~\ref{ass:effective-sandwich}
gives \(\|\widehat w_{\mathrm{db}}-w^\dagger\|_2=O_P(\sqrt{s/(\gamma_SN)})\).
The displayed local-coordinate rate follows by substituting
\(N_\triangle\asymp q_np_{\triangle,n}\).
\end{proof}

\begin{proposition}[One-dimensional lower bound for a known local support]
\label{prop:local-lower-bound}
Consider the one-dimensional local-support model
\[
    P^{(u)}_{ij}
    =
    P^0_{ij}+uG^\triangle_{ij},
    \qquad
    G^\triangle_{ij}=p_{\triangle,n}\mathbf 1\{(i,j)\in T_n\},
\]
where \(P^0_{ij}\asymp p_{\triangle,n}\) on \(T_n\), and where \(u\) ranges
over a fixed neighbourhood of zero such that all probabilities remain in
\([0,1]\). Then every estimator \(\widetilde u\) satisfies
\[
    \inf_{\widetilde u}\sup_{u}
    \mathbb E_u|\widetilde u-u|
    \ge
    c\frac{1}{\sqrt{q_np_{\triangle,n}}}
\]
for a numerical constant \(c>0\). Thus the support-adapted rate in
Theorem~\ref{thm:local-support-rate} is sharp for the local coordinate.
\end{proposition}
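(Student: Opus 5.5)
The plan is a Le Cam two-point argument in the scalar local-support family, mirroring the proof of Proposition~\ref{prop:weighted-lb-final2} with $d=1$ and the residualised construction of Theorem~\ref{thm:heavy-tail-projection-lb}. First fix the base point $u_0=0$, so $P^{(0)}=P^0$ with $P^0_{ij}\asymp p_{\triangle,n}$ on $T_n$. Then take the alternative $u_1=\delta_n$, with $\delta_n=c_0/\sqrt{q_np_{\triangle,n}}$ for a small constant $c_0$. Since $q_np_{\triangle,n}\to\infty$ (implicit in the regime of Lemma~\ref{lem:local-support-first-stage}), $\delta_n\to0$. So $u_1$ eventually lies in the fixed neighbourhood of zero. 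Moreover $P^{(u_1)}_{ij}=P^0_{ij}(1+O(\delta_n))$ on $T_n$, and the two laws agree off $T_n$. Both probability vectors therefore sit in a common multiplicative band $[c\,p_{\triangle,n},C\,p_{\triangle,n}]$ on the support, bounded away from one.

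Next I bound the divergence. The two product Bernoulli laws differ only on the $q_n$ dyads of $T_n$. I apply the Bernoulli KL bound of Lemma~\ref{lem:bern-kl-final2} restricted to $T_n$, using the pointwise form $\kl(p,q)\le (p-q)^2/\{q(1-q)\}$ from its proof; this avoids requiring the band on all dyads. This gives
\[
\KL(\Pb_{u_1},\Pb_{0})\le \sum_{(i,j)\in T_n}\frac{\delta_n^2p_{\triangle,n}^2}{P^0_{ij}(1-P^0_{ij})}\le C\,\delta_n^2\,q_n\,p_{\triangle,n}=C c_0^2 .
\]
This is exactly the information $N_\triangle\asymp q_np_{\triangle,n}$ computed in Theorem~\ref{thm:local-support-rate}.

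To conclude, choose $c_0$ small so that the KL divergence is at most $1/2$. Pinsker then gives $\TV(\Pb_{u_1},\Pb_0)\le 1/2$. Le Cam's two-point lemma then gives
\[
\inf_{\widetilde u}\max_{u\in\{0,\delta_n\}}\Eb_u|\widetilde u-u|\ge \tfrac{\delta_n}{4}\bigl(1-\TV\bigr)\ge c/\sqrt{q_np_{\triangle,n}} .
\]
The supremum over the neighbourhood dominates the maximum over the two points, which proves the bound. Sharpness follows by comparison with the upper bound of Theorem~\ref{thm:local-support-rate} at fixed $\gamma_S$.

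The only delicate step is the admissibility check. The perturbation must keep probabilities inside $[0,1]$ and comparable to $p_{\triangle,n}$, so that the quadratic KL bound applies. If $p_{\triangle,n}\asymp1$, I need $P^0_{ij}$ bounded away from one on $T_n$, and I will read that into the hypothesis $P^0\asymp p_{\triangle,n}$ together with the neighbourhood condition. If instead $1-P^0_{ij}$ is allowed to be small, I would replace the denominator by $P^0(1-P^0)$ and restrict $u$ to the side moving away from the boundary. That changes only constants.
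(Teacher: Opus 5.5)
Your proposal is correct and follows the paper's proof: both use a Le Cam two-point argument with separation $\delta\asymp(q_np_{\triangle,n})^{-1/2}$. The divergence is controlled by the quadratic Bernoulli KL bound on the $q_n$ dyads of $T_n$, which gives $C\delta^2q_np_{\triangle,n}$. The only difference is that the paper places the two points at $\pm\delta/2$ rather than at $\{0,\delta\}$, which is immaterial.
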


\begin{proof}
Take two alternatives \(u_+=\delta/2\) and \(u_-=-\delta/2\). Their mean
matrices differ by \(P^{(u_+)}-P^{(u_-)}=\delta G^\triangle\). For Bernoulli
product measures, using \(P^0_{ij}\asymp p_{\triangle,n}\) on \(T_n\), the
Kullback--Leibler divergence satisfies
\[
    \operatorname{KL}\!\left(
        \mathbb P_{u_+},\mathbb P_{u_-}
    \right)
    \le
    C\delta^2
    \sum_{(i,j)\in T_n}
    \frac{(G^\triangle_{ij})^2}{p_{\triangle,n}}
    =
    C\delta^2
    q_np_{\triangle,n}.
\]
Choose \(\delta=c_0(q_np_{\triangle,n})^{-1/2}\) with \(c_0>0\) small enough
that the divergence is bounded by a fixed constant. Le Cam's two-point lemma
gives
\[
    \inf_{\widetilde u}\sup_{u\in\{u_+,u_-\}}
    \mathbb E_u|\widetilde u-u|
    \ge
    c\delta
    =
    c(q_np_{\triangle,n})^{-1/2}.
\]
\end{proof}

\begin{remark}[Same-graph triadic scores]
Theorem~\ref{thm:local-support-rate} covers known or independently
pilot-estimated local supports. It does not yet cover Adamic--Adar, Jaccard,
or other same-graph triadic scores computed from the same edges used in Stage
B. Those scores are random functions of \(A\), and require an additional
support-estimation expansion analogous to Assumption~\ref{ass:linplug}.
\end{remark}

\subsubsection{General effective-information rate}

\begin{theorem}[Effective-information rate for fitted network mechanisms]
\label{thm:effective-information-fitted}
Let the active candidate set contain a fixed number \(s\) of fitted network
mechanisms. The set may include:
\begin{enumerate}
    \item diffuse SBM, DC-SBM, and RDPG/GRDPG kernels satisfying the
    spectral first-stage assumptions of Theorem~\ref{thm:debias} in the main text;
    \item a truncated regularly varying degree-product kernel satisfying
    Lemma~\ref{lem:truncated-degree-first-stage} and
    Theorem~\ref{thm:truncated-degree-rate};
    \item a known or independently pilot-estimated local-support kernel
    satisfying Lemma~\ref{lem:local-support-first-stage} and
    Theorem~\ref{thm:local-support-rate}.
\end{enumerate}
For each active mechanism define
\[
    N_k
    =
    \frac{\|g_k\|_2^4}{g_k^\top D g_k},
    \qquad
    g_k=\operatorname{vec}_{<}(G_k),
    \qquad
    D=\operatorname{diag}\{P_e(1-P_e)\},
\]
and let \(N=\min_{k\in S}N_k\). Assume the fitted kernels admit
independent cross-fold expansions
\[
    \widehat G^{(a)}_k-G_k=L_k(E_a)+Q_{k,a},
    \qquad
    \widehat G^{(b)}_k-G_k=L_k(E_b)+Q_{k,b},
\]
where the linear parts are centered and independent across the two Stage-A
sub-folds, and where the projected remainders are negligible at the relevant
score scale. Assume also the effective sandwich transversality condition
\(\operatorname{tr}(\Sigma_S)\le Cs/(\gamma_SN)\). Then the cross-fold
debiased estimator satisfies
\[
    \|\widehat w_{\mathrm{db}}-w^\dagger\|_2
    =
    O_P\!\left(
        \sqrt{\frac{s}{\gamma_SN}}
    \right).
\]
For the named mechanisms, \(N_k\asymp n^2\rho_n\) for diffuse common-scale
kernels,
\[
    N_k
    \asymp
    \frac{n\bar d_n}{R_n^{2(\tau-2)}}
    =
    \frac{n^2\bar\rho_n}{R_n^{2(\tau-2)}}
\]
for truncated regularly varying degree kernels under the variance-dominance
condition of Theorem~\ref{thm:truncated-degree-rate}, and
\(N_k\asymp q_np_{\triangle,n}\) for known-support local mechanisms.
\end{theorem}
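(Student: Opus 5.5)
The plan is to reduce Theorem~\ref{thm:effective-information-fitted} to the same exact normal-equation identity used in the proof of Theorem~\ref{thm:debias-cor}, and to replace the common-scale bookkeeping by the exact sandwich covariance $\Sigma_S$. Write $\widehat M_a=M+H_a$ and $\widehat M_b=M+H_b$, with $H_t=L(E_t)+Q_t$, and set $\Gamma_S=M_S^\top M_S$. The boxed identity of Step~2 in that proof is purely algebraic and holds verbatim:
\[
\bar M^\top A^{(2)}-\widehat\Phi_{\rm db}w^\dagger=M^\top\varepsilon+\bar H^\top\varepsilon-M^\top\bar Hw^\dagger-\operatorname{sym}(H_a^\top H_b)w^\dagger .
\]
The theorem therefore follows once two things hold: $\widehat\Phi_{\rm db}^{-1}$ behaves like $\Gamma_S^{-1}$, and each of the four terms, after inversion, is $O_P(\sqrt{\operatorname{tr}\Sigma_S})$ or smaller. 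Combined with the hypothesis $\operatorname{tr}\Sigma_S\le Cs/(\gamma_S N)$, this gives the stated rate.

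The steps run in the following order. First, \emph{column rescaling}: I would work in the normalised design $\bar g_k=g_k/\|g_k\|_2$. With this rescaling, heterogeneous scales (degree columns of norm $S_2/S_1$, local columns of norm $\sqrt{q_n}p_{\triangle,n}$) all enter on a unit scale, and I would state Gram stability as $\|D_n^{-1}(\widehat\Phi_{\rm db}-\Gamma_S)D_n^{-1}\|_{\rm op}=o_P(\gamma_S)$ with $D_n=\operatorname{diag}\|g_k\|$. This reduces to bounding, per column pair, $\langle g_k,H_{t,l}\rangle/(\|g_k\|\|g_l\|)$ and $\langle H_{a,k},H_{b,l}\rangle/(\|g_k\|\|g_l\|)$. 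Both are relative first-stage errors, supplied by Lemma~\ref{lem:firststage} for the diffuse agents, by Lemma~\ref{lem:truncated-degree-first-stage} (relative error $\sqrt{S_1/S_2}\to0$) for the degree kernel, and by Lemma~\ref{lem:local-support-first-stage} (relative error $(q_np_{\triangle,n})^{-1/2}$) for the local kernel. Second, the \emph{Stage-B score}: $\Gamma_S^{-1}M^\top\varepsilon$ has covariance exactly $\Sigma_S$, so it is $O_P(\sqrt{\operatorname{tr}\Sigma_S})$, and Weyl's inequality transfers this to $\widehat\Phi_{\rm db}^{-1}$. Third, the term $\bar H^\top\varepsilon$ is conditionally centred given Stage~A. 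Its $k$th coordinate has variance $\bar h_k^\top D\bar h_k\le \|\bar h_k\|^2\max_e D_e$ on the column's support, which is a vanishing fraction of $g_k^\top Dg_k$ by the relative-error bounds. After inversion it is therefore $o_P$ of the score.

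Fourth comes the \emph{first-stage linear term} $\Gamma_S^{-1}M^\top L_{w^\dagger}(\bar E)$, which is of the same order as the score; this is why it is kept rather than discarded. It is a centred linear form in the Stage-A noise, with covariance bounded through the adjoint bounds $\|L_k^*\|_{F\to F}\le C$. I would bound it by $C\operatorname{tr}(\Gamma_S^{-1}M^\top\widetilde D M\Gamma_S^{-1})$, where $\widetilde D$ is the Stage-A variance, comparable to $f^{-1}D$, and so obtain $O_P(\sqrt{\operatorname{tr}\Sigma_S})$ up to the constant $C_L/\sqrt f$, exactly as in step~(8) of Lemma~\ref{lem:plugin}. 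Fifth, the cross term $\operatorname{sym}(H_a^\top H_b)w^\dagger$ has mean zero because the two sub-fold linear parts are independent. Its fluctuation is bounded by the trace inequality $\operatorname{tr}(C_kC_l)\le\|C_k\|_{\rm op}\operatorname{tr}C_l$. The projected remainders are negligible by hypothesis.

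The main obstacle is the fourth step for the degree kernel. Its linear part $L_{\rm deg}(\varepsilon)$ loads on hub rows, so the generic bound $\|{\rm Cov}\|_{\rm op}\lesssim\rho_n$ is too crude: it would reintroduce $\rho_{\max}$ instead of the effective scale $N_{\deg}$. What I would try first is to compute the first-stage contribution directly through the adjoint, $\langle L_{\rm deg}^*(\Gamma_S^{-1}\text{-row}),E\rangle$. I would then show that its variance is dominated by $\sum_e(\text{row})_e^2P_e$ using the variance-dominance condition $\sum G^2P\asymp\sum G^3$ together with the leverage conditions of Theorem~\ref{thm:truncated-degree-rate}. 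This would place the first-stage variance within a constant of the Stage-B sandwich, which is precisely what absorbing it into $\operatorname{tr}\Sigma_S$ requires. Finally, the displayed values of $N_k$ for the three mechanism types are read off from Theorems~\ref{thm:truncated-degree-rate} and~\ref{thm:local-support-rate} together with the common-scale calculation $\|g_k\|^4/(g_k^\top Dg_k)\asymp n^4\rho_n^4/(n^2\rho_n^3)$.
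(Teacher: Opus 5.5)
Your outline follows the paper's proof: the exact cross-fold normal-equation identity, the Stage-B score with covariance $\Sigma_S$, the effective sandwich hypothesis, and $N_k$ read off from Theorems~\ref{thm:truncated-degree-rate} and~\ref{thm:local-support-rate}. It is more careful than that proof in two places.

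\emph{Rescaled Gram stability.} Your normalised statement is the right form when column norms differ by diverging factors. The paper's $\widehat\Gamma_{\rm db}=\Gamma_S+o_P(\|\Gamma_S\|_{\rm op})$ does not control the inverse, because $\lambda_{\min}(\Gamma_S)\le\min_k\|g_k\|_2^2$.

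\emph{The first-stage term.} You keep $-\Gamma_S^{-1}M^\top L_{w^\dagger}(\bar E)$. The paper's proof drops it by writing the leading expansion as $\Gamma_S^{-1}M_S^\top(A-P)$ alone, even though Step~3 of the proof of Theorem~\ref{thm:debias-cor} says this term is of the same order as the Stage-B score.

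Your Step~4 does not yet bound that term, and this is a genuine gap. An $F\to F$ adjoint bound only controls $\|L_l^*(x)\|_2$. Step~(8) of Lemma~\ref{lem:plugin} turns it into $\operatorname{Cov}\{M^\top L_{w^\dagger}(\bar E)\}\preceq C\,C_L^2f^{-1}(\max_eP_e)\,M^\top M$. That is comparable to $M^\top DM$ only under the common-scale comparability this theorem abandons.

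For the degree map even the $F\to F$ bound fails. Lemma~\ref{lem:truncated-degree-first-stage} gives only Frobenius rates, and evaluating at $E=J/n$ shows $\|L_{\deg}\|_{F\to F}\gtrsim nS_2/S_1^2\asymp R_n^{3-\tau}$. In the common-scale theorem this term is controlled by hypothesis, Assumption~\ref{ass:T1}(c). The present statement has no effective-scale analogue, and centring, cross-fold independence, negligible remainders and Stage-B sandwich transversality do not supply one.

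What you need is a $D$-weighted adjoint bound: $\|L_l^*(x)\|_D\le C\|x\|_D$ for $x$ in the column span, where $\|x\|_D^2=x^\top Dx$. This gives $\operatorname{Cov}\{M^\top L_{w^\dagger}(\bar E)\}\preceq C'f^{-1}M^\top DM$, and hence your trace bound. It must be assumed or checked pair by pair.

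The pair you single out as the main obstacle is actually benign. Write $\langle g_{\deg},L_{\deg}(E)\rangle=\sum_{i<j}\phi_{ij}E_{ij}$ with $\phi_{ij}\approx S_2(d_i+d_j-S_2/S_1)/S_1^2$. Using $\sum_jP_{ij}=d_i$, its variance is $\lesssim f^{-1}S_2^2(S_3+S_2^2/S_1)/S_1^4$. Under variance dominance, $g_{\deg}^\top Dg_{\deg}\asymp S_3^2/S_1^3$, so this variance is $O(R_n^{-(\tau-2)})$ times $g_{\deg}^\top Dg_{\deg}$.

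The local kernel is also benign. Let $\bar p_\triangle$ be the average of the two sub-fold estimates of $p_{\triangle,n}$. Then $L_\triangle(\bar E)=(\bar p_\triangle-p_{\triangle,n})\mathbf 1_{T_n}$ is a multiple of its own column. Its contribution is therefore exactly $\{(\bar p_\triangle-p_{\triangle,n})/p_{\triangle,n}\}e_\triangle$, with variance $\asymp 1/(fq_np_{\triangle,n})$.

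What nothing in the statement controls are the cross pairs $\langle g_k,L_l(\bar E)\rangle$, $k\ne l$, under a heterogeneous $D$. Examples are $L_{\rm sp}^*(g_{\deg})$, or $L_{\deg}^*$ applied to a diffuse column. Their propagation through the off-diagonal of $\Gamma_S^{-1}$ is also uncontrolled.

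Your Step~3 has the same defect. You need $\|\bar h_k\|_2^2\max_eD_e=o(g_k^\top Dg_k)$. That requires the squared relative error to beat the ratio of $\max_eP_e$ to the $D$-weighted average of $P$ on $g_k$, which the relative Frobenius rates alone do not give. Compute $\sum_e\operatorname{Var}(\bar h_{k,e})D_e$ directly instead.
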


\begin{proof}
Condition on the latent attributes and on the Stage-A fold assignments. The
Stage-B dyad errors are independent of the fitted Stage-A design. Let
\(\widehat M_a=M+H_a\) and \(\widehat M_b=M+H_b\), with \(H_a=L(E_a)+Q_a\) and
\(H_b=L(E_b)+Q_b\). The cross-fold debiased Gram is
\[
    \widehat\Gamma_{\mathrm{db}}
    =
    \tfrac12
    \left(
        \widehat M_a^\top\widehat M_b
        +
        \widehat M_b^\top\widehat M_a
    \right)
    =
    M^\top M
    +
    \operatorname{sym}\{M^\top H_a+M^\top H_b\}
    +
    \tfrac12(H_a^\top H_b+H_b^\top H_a).
\]
The single-fold attenuation term \(H^\top H\) is absent. Moreover
\(\mathbb E\{L(E_a)^\top L(E_b)\mid u\}=0\) because the two Stage-A linear
parts are independent and centered. The remaining deterministic biases and
nonlinear terms involving \(Q_a,Q_b\) are negligible by the projected-remainder
assumption. Hence \(\widehat\Gamma_{\mathrm{db}}=\Gamma_S+o_P(\|\Gamma_S\|_{\mathrm{op}})\)
on the active set.

The leading Stage-B score is \(M_S^\top(A-P)\), with covariance
\(\Omega_S=M_S^\top D M_S\). Therefore the leading expansion is
\[
    \widehat w_{\mathrm{db}}-w^\dagger
    =
    \Gamma_S^{-1}M_S^\top(A-P)+o_P\!\left(
        \sqrt{\operatorname{tr}(\Sigma_S)}
    \right),
    \qquad
    \Sigma_S=\Gamma_S^{-1}\Omega_S\Gamma_S^{-1},
\]
so \(\|\widehat w_{\mathrm{db}}-w^\dagger\|_2=O_P(\sqrt{\operatorname{tr}(\Sigma_S)})\).
The effective sandwich transversality condition gives
\(\sqrt{\operatorname{tr}(\Sigma_S)}\le C\sqrt{s/(\gamma_SN)}\), which
proves the rate.

The three mechanism-specific information calculations are as follows. For
diffuse common-scale mechanisms, Assumption~\ref{ass:norm} gives
\(\|g_k\|_2^2\asymp n^2\rho_n^2\) and \(g_k^\top Dg_k\asymp n^2\rho_n^3\), so
\(N_k\asymp n^2\rho_n\). For truncated degree kernels, the calculation is
Theorem~\ref{thm:truncated-degree-rate}. For local supports, the calculation
is Theorem~\ref{thm:local-support-rate}.
\end{proof}

\begin{remark}[Correct scope of Theorem~\ref{thm:effective-information-fitted}]
Theorem~\ref{thm:effective-information-fitted} is an upper-bound theorem for
the cross-fold debiased least-squares synthesis estimator. The diffuse
common-scale special case inherits the sharp minimax statement from the main
paper. The known-support local coordinate has the matching lower bound of
Proposition~\ref{prop:local-lower-bound}. The truncated heavy-tailed degree
extension gives the sandwich rate of the unweighted synthesis estimator; a
full minimax lower bound over all estimators for that class is not asserted
here.
\end{remark}

\section{Additional numerical studies}
\label{app:numerical}
This appendix collects numerical studies that validate background theory or illustrate the method on controlled or secondary data, demoted from the main text to keep the empirical section focused on the results that validate the two headline theorems.

\subsection{A simulated example: region-specific mechanisms}

As a controlled illustration in which the generating mechanism is known, and which therefore checks the machinery in a way the real-data sections cannot, we build a synthetic graph on $n=450$ vertices in three blocks of $150$ joined by sparse Erd\H{o}s--R\'enyi bridges (inter-block density about $0.2\%$, against matched within-block densities of about $8\%$). The three blocks carry deliberately different mechanisms: one Erd\H{o}s--R\'enyi, one small-world (Watts--Strogatz), and one scale-free (Barab\'asi--Albert). We fit a block agent for the $3\times3$ structure and, within each block, a debiased Chung--Lu degree agent $G^{\deg}_{ij}\propto d_id_j$ and a triadic agent $G^{\mathrm{tri}}_{ij}=(A^2)_{ij}$. Spectral clustering recovers the three blocks exactly (adjusted Rand index $1.00$), within the regime of Remark~\ref{cor:recovery}, and the region-restricted agents then recover the per-block mechanism: the standardised degree association dominates in the scale-free block, the triadic association in the small-world block, and both are within sampling noise of zero in the Erd\H{o}s--R\'enyi block (Figure~\ref{fig:regional}).

The debiasing of Theorem~\ref{thm:debias} is what makes the Erd\H{o}s--R\'enyi block legible. Because a vertex's degree mechanically includes its own incident edges, a naive single-fold degree agent reports a spurious positive degree association in that block (standardised $+0.11$), which on its own would mislabel it as scale-free; the leave-$(i,j)$-out correction removes this self-product term and returns the association to zero ($-0.01$), while leaving the genuine degree signal of the scale-free block essentially intact ($+0.23$, Figure~\ref{fig:regional}). The example is thus a check against a known truth on the three counts the framework promises: the regions are recovered, the correct mechanism is attributed to each, and the debiasing step is what makes that attribution correct.

\begin{figure}[t]
\centering
\includegraphics[width=\textwidth]{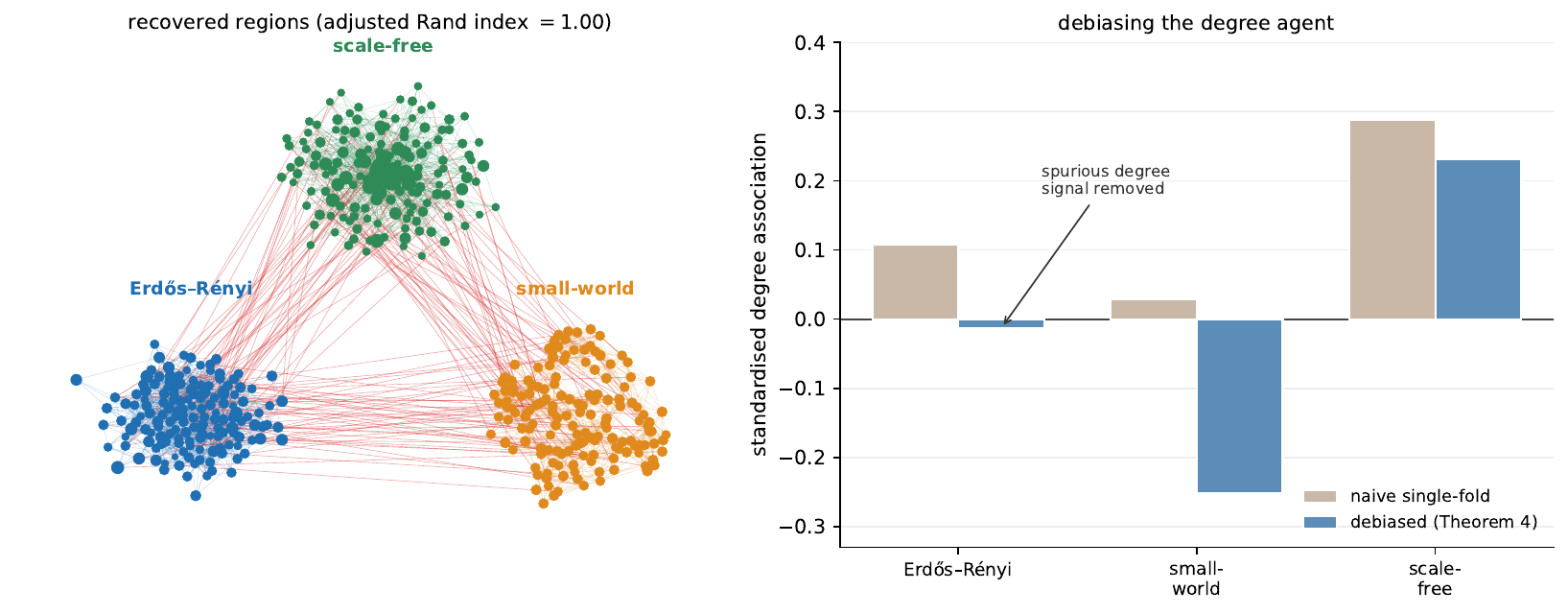}
\caption{Controlled illustration with a known truth: a graph on $n=450$ vertices in three $150$-vertex blocks joined by sparse bridges, with one Erd\H{o}s--R\'enyi, one small-world, and one scale-free block. \emph{Left:} the graph with each recovered region in its own colour (spectral clustering matches the planted blocks exactly, adjusted Rand index $1.00$) and the sparse bridges in red; node size is proportional to degree, so the scale-free hubs stand out. \emph{Right:} the per-region standardised degree association under a naive single-fold fit and under the debiased estimator of Theorem~\ref{thm:debias}. The naive fit reports a spurious positive degree association in the Erd\H{o}s--R\'enyi block, which alone would mislabel it as scale-free; debiasing returns that association to zero while preserving the genuine degree signal of the scale-free block.}
\label{fig:regional}
\end{figure}

\subsection{Predictive comparison: full results}
The following calibration, gap-decomposition, and Pareto results accompany the compressed predictive comparison of Section~\ref{sec:numerical}.

\begin{figure}[t]
\centering
\includegraphics[width=0.72\textwidth]{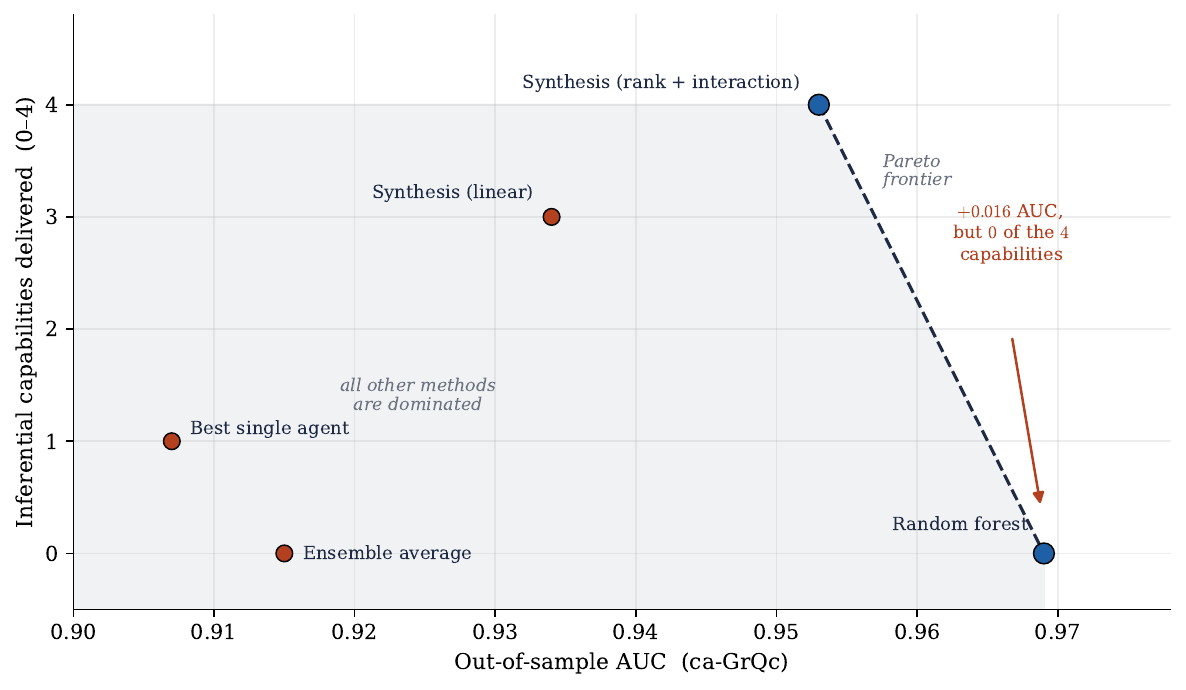}
\caption{Out-of-sample AUC against inferential content on \textsf{ca-GrQc}. Inferential content counts the capabilities a method delivers: signed per-mechanism coefficient, confidence interval, support recovery, and operator test. The synthesis family is Pareto-optimal at maximal inferential content; the random forest's additional $0.016$ AUC costs all four capabilities.}
\label{fig:pareto}
\end{figure}

\begin{figure}[t]
\centering
\includegraphics[width=0.86\textwidth]{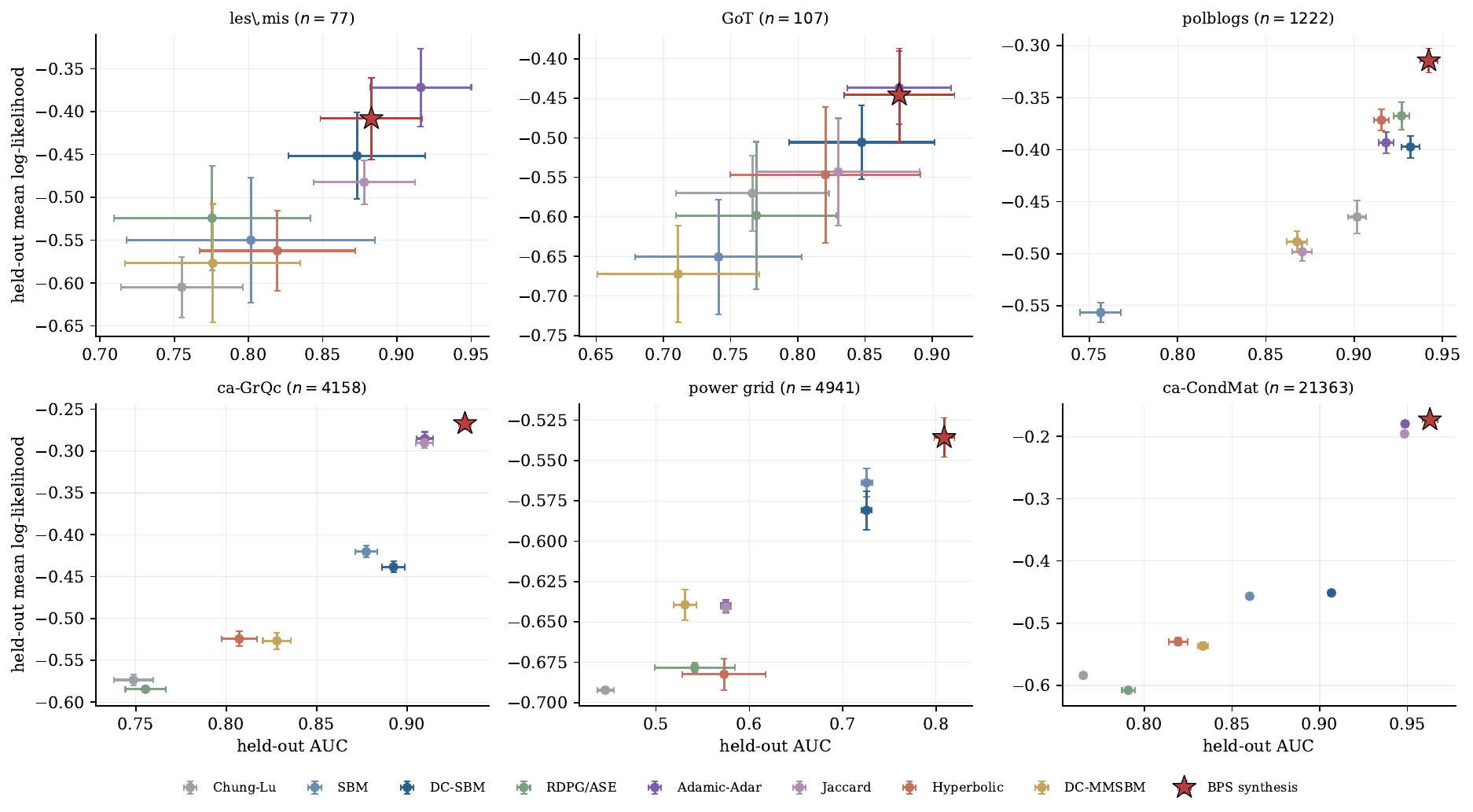}
\caption{Link prediction as a ranking--calibration Pareto plot, one panel per dataset: held-out AUC against balanced log score, with fold standard deviations.}
\label{fig:linkpred}
\end{figure}

\begin{table}[t]
\centering
\small
\caption{Decomposing the gap to the random forest on the two networks with enough held-out dyads to fit the augmented model. A rank-transformed synthesis with sparse interactions, retaining one signed coefficient per mechanism, recovers $40$--$56\%$ of the forest gap; the residual is the forest's non-additive flexibility.}
\label{tab:gap}
\begin{tabular}{lccccc}
\toprule
Network & Linear synthesis & Rank $+$ interaction & RF stacking & Gap to RF & Gap closed\\
\midrule
\textsf{polblogs} & $0.945$ & $0.947$ & $0.951$ & $0.006$ & $40\%$\\
\textsf{ca-GrQc} & $0.934$ & $0.953$ & $0.969$ & $0.035$ & $56\%$\\
\bottomrule
\end{tabular}
\end{table}

\begin{table}[t]
\centering
\small
\caption{Calibration at the natural edge prevalence (means over folds). Expected calibration error and Brier score for the synthesis and a random forest on the same candidate scores, both prior-corrected from the balanced training set; lower is better. The synthesis is the better-calibrated forecaster on four of six networks by expected calibration error.}
\label{tab:calib}
\begin{tabular}{lcccc}
\toprule
& \multicolumn{2}{c}{Expected calibration error} & \multicolumn{2}{c}{Brier score}\\
\cmidrule(lr){2-3}\cmidrule(lr){4-5}
Network & Synthesis & Forest & Synthesis & Forest\\
\midrule
\textsf{les\,mis} & $\mathbf{0.026}$ & $0.031$ & $0.039$ & $\mathbf{0.038}$\\
\textsf{GoT} & $\mathbf{0.022}$ & $0.027$ & $\mathbf{0.042}$ & $0.045$\\
\textsf{polblogs} & $0.015$ & $\mathbf{0.014}$ & $0.028$ & $0.028$\\
\textsf{ca-GrQc} & $\mathbf{0.010}$ & $0.012$ & $\mathbf{0.010}$ & $0.012$\\
\textsf{power} & $0.015$ & $\mathbf{0.014}$ & $0.015$ & $0.015$\\
\textsf{ca-CondMat} & $\mathbf{0.035}$ & $0.038$ & $\mathbf{0.032}$ & $0.037$\\
\bottomrule
\end{tabular}
\end{table}

\begin{table}[t]
\centering
\small
\caption{Held-out AUC by combiner (mean over ten folds): random-forest stacking is most accurate, and among interpretable combiners the synthesis is best on all six networks.}
\label{tab:combiner-auc}
\begin{tabular}{lcccc}
\toprule
Dataset & Calibrated synthesis & RF stacking & Ensemble avg & best single\\
\midrule
\textsf{les\,mis} & 0.886 & 0.910 & 0.885 & \textbf{0.916} (AA)\\
\textsf{GoT} & 0.873 & \textbf{0.895} & 0.868 & 0.875 (AA)\\
\textsf{polblogs} & 0.942 & \textbf{0.950} & 0.939 & 0.932 (DC-SBM)\\
\textsf{ca-GrQc} & 0.935 & \textbf{0.969} & 0.910 & 0.910 (AA)\\
\textsf{power} & 0.807 & \textbf{0.884} & 0.700 & 0.726 (SBM)\\
\textsf{ca-CondMat} & 0.961 & \textbf{0.981} & 0.950 & 0.949 (AA)\\
\textsf{ER null} & 0.502 & 0.504 & 0.502 & \textbf{0.505} (RDPG)\\
\bottomrule
\end{tabular}
\end{table}

\subsection{Further numerical checks}
These secondary studies corroborate the main theorems beyond the three flagship validations reported in the
main text (interval coverage across the transversality range, the operator detection boundary, and the
first-stage attenuation and its removal).

These studies support the headline results and are reported for completeness.

\paragraph{Two-stage coverage under the linearised first-stage error (Theorem~\ref{thm:twostage}).}
The coverage statement of Theorem~\ref{thm:twostage} concerns the regime its hypotheses describe, in
which the fitted-kernel error enters through its embedding linearisation. We simulate from a known
three-kernel design with weights $(0.5,0.3,0.2)$ and transversality $\gamma_S\approx0.16$, split the
dyads into a Stage-B regression fold and two independent Stage-A sub-folds, model the Stage-A error
as a node-correlated perturbation at the spectral-embedding scale $\delta_n=\sqrt{r/(n\rho_n)}$, and
estimate the first-stage variance by the Gaussian multiplier bootstrap of
Theorem~\ref{thm:twostage} with $B=400$ resamples, measuring coverage coordinatewise. At $n=600$ the
conditional Stage-B sandwich covers the three generative coordinates at $(0.925,0.925,0.95)$,
undercovering the two coefficients whose shared first-stage error is largest; adding the propagated
first-stage variance through the two-stage construction raises the coordinatewise coverage to
$(0.95,0.975,1.0)$ at an $8\%$ increase in interval width. The first-stage contribution shrinks with
$n$, since $\delta_n\to0$, so the gap between the sandwich and the two-stage interval narrows at
larger sample sizes while the sandwich alone stays anticonservative on the dominant coordinate. This
validates the correction in the regime Theorem~\ref{thm:twostage} addresses: the conditional
sandwich undercovers and the added first-stage variance restores nominal coverage of the generative
weight.

\paragraph{Behaviour under a genuine kernel refit.}
Refitting the kernels inside every replication, rather than injecting their linearised error, exposes
a separate and more demanding phenomenon. We simulate from the same three-kernel mixture and, in each
replication, refit the candidate kernels on the Stage-A sub-folds by their own estimators, the
regularised block model by spectral clustering with block means, the dot-product kernel by
rank-two adjacency spectral embedding, and the degree kernel by degree matching. The naive
single-fold intervals cover the generative weight with empirical probability at most $0.08$ across
$n\in\{600,1200\}$, the attenuation bias again exceeding the interval width. The fitted Gram, however,
is far more ill-conditioned than the population Gram: the transversality falls from
$\gamma_S\approx0.16$ in the population kernels to $\widehat\gamma_S\approx0.06$ to $0.07$ in the
fitted kernels, because the rank-two embedding nearly reproduces the degree kernel, the two sharing
their leading mode. The per-coordinate generative weights are then not separately identified from a
single graph, and the well-posed target is the population projection of
Section~\ref{sec:estimands}, the deployable single-graph object being the calibration coefficient of
Proposition~\ref{thm:clt}. This is the weak-identifiability regime of Proposition~\ref{thm:joint}
realised by the fitting maps themselves, and it is the finite-sample face of the concentration onto
one effective mechanism seen in the real-data weights of Table~\ref{tab:weights}: cross-fold
debiasing removes the systematic attenuation, but it cannot manufacture conditioning that overlapping
fitted kernels do not possess. This conditioning collapse is, however, reparable: deflating the
degree-aligned spectral mode before Stage B and reassigning it to the degree agent restores the
transversality and re-identifies the coordinates (Appendix~\ref{app:deflation}), at the cost that the
geometry kernel becomes the residual-mode kernel; the generative reading of the deflated coefficients
continues to require fidelity.

\paragraph{Structured negative controls.}
Three controls probe the procedure where no genuine combination is present. Under an
Erd\H{o}s--R\'enyi null with no latent structure, the cross-fold debiased selection rule flags the
three candidate kernels at rates $(0.06,0.00,0.00)$, at the per-coordinate level rather than above
it, so the procedure does not report mechanisms the graph does not contain. A redundant-agent stress
test that appends a near-duplicate of the degree kernel collapses the fitted transversality to
$\widehat\gamma_S\approx0$ and sends the two duplicated coordinates to large offsetting values, the
exact failure that the conditioning $\gamma_S$ is defined to detect. A degree-preserving
randomisation and single-mechanism truths behave consistently with the conditioning finding above:
once the spectral and degree kernels nearly coincide, individual coordinates are not separately
recovered, only their projection and predictive combination, so these controls reproduce the
projection regime rather than clean per-coordinate selection.

\paragraph{Operator test under fitted layers.}
The four-fold construction of Theorem~\ref{prop:fitted-operator} is validated at $n=500$ with
two rank-one layers of controlled overlap, $80$ replications per configuration, and first-stage error
injected through the embedding linearisation, the idealisation in which the main-effect columns,
supplied by the disjoint sub-folds, sit at their consistent limits (limitation (vi) of
Section~\ref{sec:discussion}). Across edge-overlap information
$n^2\rho_n^3\in\{50,150,500,1500,3900\}$ the empirical size at nominal $0.05$ stays within binomial
variation, between $0.00$ and $0.09$; a count of zero rejections in $80$ replications is itself
within sampling variability of the nominal level. The power against the noisy-OR alternative rises
from $0.06$ to $0.60$ as the information crosses the threshold. The threshold location matches the
known-layer study of Figure~\ref{fig:estsim}(b); the constant does not: with known layers the power at
comparable information is $0.998$, so fitting the layers costs a constant factor in power even
though the detectability boundary is unchanged.

\paragraph{Real-data deployment on \textsf{polblogs} \citep{adamic2005political}.}
Table~\ref{tab:opfeas} identifies \textsf{polblogs} as the one network on which the operator
question is informationally feasible, and we deploy the four-fold protocol of
Theorem~\ref{prop:fitted-operator} there, on the largest connected component ($n=1222$), with a
two-layer candidate set: a degree kernel fitted by degree matching and a two-block community kernel fitted by
regularised spectral clustering with block means. Both layers are fitted on each of four disjoint
Stage-A sub-folds of $15\%$ of dyads each, sub-folds $(a,b)$ supplying the interaction column and
$(c,d)$ the main-effect columns and the null fit, with the remaining $40\%$ held out for Stage B
and kernels clipped at $0.99$; the fold accounting is $4\times15+40=100$ percent of dyads. The cross-fold debiased weights are
$\widehat w_{\mathrm{deg}}=0.69$ (standard error $0.02$) and $\widehat w_{\mathrm{com}}=0.68$
(standard error $0.03$; both are Stage-B sandwich standard errors, which by
Theorem~\ref{thm:twostage} omit the Stage-A share and undercover, the two-stage correction
widening them), both layers contributing strongly; these unconstrained projection
coefficients are not proportions, and their sum exceeding one reflects the overlap of the two
kernels on the same edges. The cross-fold score statistic on the residualised interaction column is
$T=12.4$, rejecting the additive two-layer synthesis at any conventional level. Because residual
misspecification of a two-layer candidate set also loads on the interaction direction, we read this as
evidence against additivity of the degree and community layers on this network rather than as a
confirmation of noisy-OR composition. On the scale of Figure~\ref{fig:estsim}(b) the network's edge-overlap
information $n^2\rho_n^3\approx16.8$ sits at the low-power end, where the simulated power is close to
size, so $T=12.4$ is read as suggestive and not as a powered rejection. Two scope caveats are stated openly: \textsf{polblogs} is
heavy tailed, outside the bounded heterogeneity of (C3) under which Lemma~\ref{lem:firststage}
verifies the first stage, and the sandwich is the reported variance, so the deployment illustrates
the construction while the size and power guarantees are those of the simulation.

\medskip
We next record the supporting structural simulations on synthetic graphs with known ground truth.

\paragraph{Stacked-latent recovery (Remark~\ref{rem:stack}).}
We generate stacked-latent RDPGs with a two-block $\X$-part ($d_1=2$) and a Dirichlet $\Y$-part
($d_2=2$), weights $(w_1,w_2)=(0.6,0.4)$, for $n\in\{250,\dots,4000\}$, embed by ASE into dimension
$4$, align to the truth by orthogonal Procrustes, and record the worst-row ($\Otoinf$) error.
Figure~\ref{fig:ase} shows the error decreasing with $n$ in lockstep with the theoretical
$\sqrt{\log n/(n\rho_n)}$ rate, consistent with the recovery claim of Remark~\ref{rem:stack}.

\paragraph{Kesten--Stigum threshold (regularised spectral recovery).}
We generate two-block SBMs at fixed mean degree $12$ and vary the signal $a-b$, computing the ARI of
regularised spectral clustering against the planted labels. Figure~\ref{fig:ks} shows recovery
switching on sharply as the SNR crosses $1$: ARI is statistically indistinguishable from $0$ for
$\mathrm{SNR}<1$ and climbs steeply above it (ARI $0.53$ at $\mathrm{SNR}=2.1$, $0.95$ at
$\mathrm{SNR}=5.3$), matching the regularised-spectral theory
\citep{le2017concentration,mossel2015reconstruction}.

\begin{figure}[t]
\centering
\begin{minipage}{0.48\textwidth}\centering
\includegraphics[width=\linewidth]{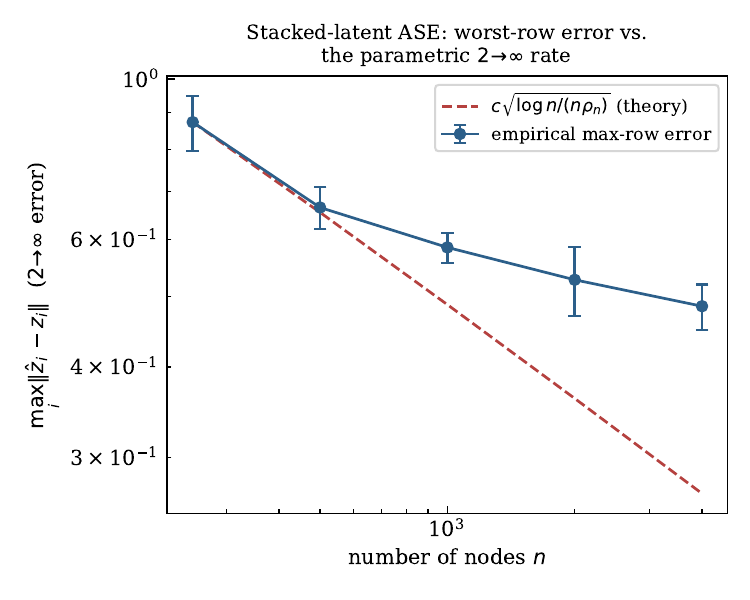}
\caption{Stacked-latent ASE attains the parametric rate: the empirical worst-row error tracks the theoretical $\sqrt{\log n/(n\rho_n)}$ curve, confirming Remark~\ref{rem:stack}.}
\label{fig:ase}
\end{minipage}\hfill
\begin{minipage}{0.48\textwidth}\centering
\includegraphics[width=\linewidth]{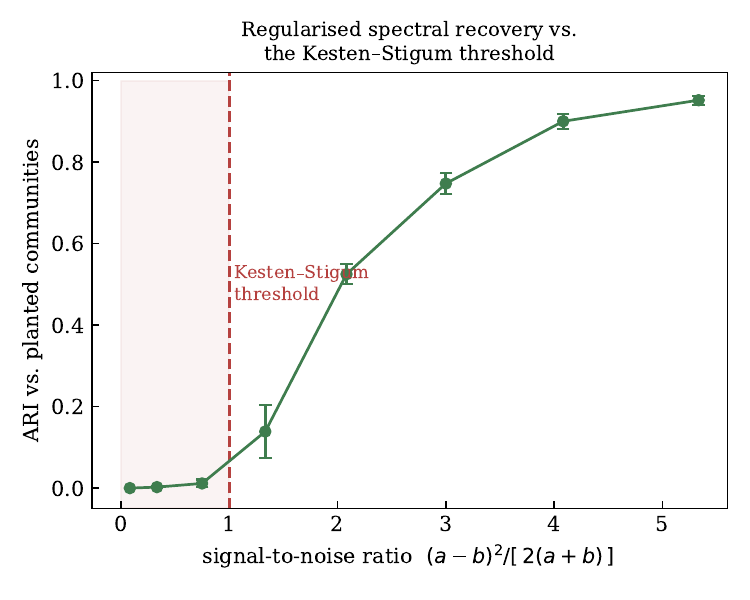}
\caption{Regularised spectral recovery activates at the Kesten--Stigum threshold: the ARI is near zero below $\mathrm{SNR}=1$ and rises sharply above it.}
\label{fig:ks}
\end{minipage}
\end{figure}

\section{Specialisation to canonical network models and additional remarks}
\label{app:netspec}

\subsection{Specialisation to canonical network models}
Recall the canonical synthesis $\Pmat=w_1\mathbf G_{\mathrm{SBM}}+w_2\mathbf G_{\mathrm{RDPG}}+w_3\mathbf
G_{\mathrm{CL}}$ of Section~\ref{sec:netspec}: a $Q$-block assortative SBM agent, a rank-$d$ dot-product agent,
and a rank-one Chung--Lu agent at a common density scale $\rho_n=n^{-\alpha}$, with $s=3$ active agents,
$r_{\max}=\max\{Q,d\}$, and transversality $\gamma_S=\lambda_{\min}(\Phi_S)$. The following corollaries record
what each general estimation result says for this candidate set; they are stated and proved here, and the
network-specific headline rates are summarised in Section~\ref{sec:netspec}.

\begin{corollary}[Estimation and inference for the canonical synthesis]
\label{cor:net-rate}
Under Assumptions~\ref{ass:transversality}--\ref{ass:norm}, for the three-agent synthesis the following
hold, with $\rho_n=n^{-\alpha}$ and $N=\Theta(n^2\rho_n)$.
\begin{enumerate}
\item \emph{(Minimax rate.)} The cross-fitted debiased estimator of $(w_1,w_2,w_3)$ satisfies, uniformly over
the class, $\E\,\|\widehat\bw-\bw\|_2\le C\sqrt{3/\gamma_S}\,/(n\sqrt{\rho_n})=C\sqrt{3/\gamma_S}\,
n^{-(1-\alpha/2)}$, and no estimator improves on it (Proposition~\ref{thm:joint}, Theorem~\ref{thm:debias});
the exponent ranges from $n^{-1+o(1)}$ in the dense regime to $n^{-1/2+o(1)}$ at the sparse boundary, the
inverse square root of the edge count up to the conditioning factor $\sqrt{3/\gamma_S}$, which is
$\approx3.9$ for Example~\ref{ex:gammaS}.
\item \emph{(Attenuation and its removal.)} Estimating the kernels by spectral clustering with block means
(SBM, relative Frobenius error $O_P(\sqrt{Q/(n\rho_n)})$), adjacency spectral embedding (RDPG,
$O_P(\sqrt{d/(n\rho_n)})$), and degree matching (Chung--Lu, $O_P(\sqrt{1/(n\rho_n)})$), the naive single-fold
plug-in carries attenuation bias of order $r_{\max}/(n\rho_n)=\max\{Q,d\}\,n^{-(1-\alpha)}$, exceeding the
part-(i) edge rate by the diverging factor $r_{\max}\sqrt{\gamma_S/\rho_n}=\max\{Q,d\}\sqrt{\gamma_S}\,
n^{\alpha/2}$; the cross-fold debiased estimator removes it and attains the edge rate
(Theorem~\ref{thm:debias}, Corollary~\ref{cor:debias-lb}).
\item \emph{(Normality and intervals.)} The held-out calibration estimator obeys
$\sqrt{N}\,(\widehat\bw-\bw^\star)\xrightarrow{d}\mathcal N(0,\Sigma)$ with $\Sigma=\mathbf J^{-1}\mathbf V
\mathbf J^{-1}$ the sandwich built from the three Gram matrices, and the multiplier bootstrap is consistent
(Proposition~\ref{thm:clt}); each coordinate interval has half-width $z_{1-\eta/2}\sqrt{\Sigma_{kk}/N}\asymp
n^{-(1-\alpha/2)}$, the worst-conditioned coordinate carrying variance of order $1/(\gamma_S N)$, and the
two-stage correction of Theorem~\ref{thm:twostage} inflates each width by $1+O(\sqrt{r_{\max}/(n\rho_n)})\to1$.
\end{enumerate}
\end{corollary}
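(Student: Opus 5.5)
The corollary is a specialisation, so the plan is to verify the hypotheses of the general results for the three named agents and then substitute the canonical scalings $s=3$, $r_{\max}=\max\{Q,d\}$, $\rho_n=n^{-\alpha}$, $N=\Theta(n^2\rho_n)$.

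The hypothesis check comes first. Transversality $\gamma_S\ge\gamma_0$ is supplied by Remark~\ref{prop:transversality-geom} under its separation conditions (i)--(iii). The first-stage expansions of Assumption~\ref{ass:linplug} are supplied by Lemma~\ref{lem:firststage}. Part (a) covers the block and dot-product agents, which are exact-rank spectral smoothers with eigengap of order $n\rho_n$ and delocalisation. Part (b) covers degree matching under the bounded heterogeneity of (C3). The relative Frobenius errors $\sqrt{Q/(n\rho_n)}$, $\sqrt{d/(n\rho_n)}$ and $\sqrt{1/(n\rho_n)}$ come from the bounds $\E\|L(E)\|_F^2\le Crn\rho_n/f$ and $\|\mathbf G_k\|_F\asymp n\rho_n$. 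With these in hand, (C1)--(C4) of Theorem~\ref{thm:debias} hold.

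Part (i) then follows directly. The upper bound is Theorem~\ref{thm:debias}, and the known-design lower bound of Proposition~\ref{thm:joint}(b) transfers to the estimated-design problem. The rate $\sqrt{3/\gamma_S}/(n\sqrt{\rho_n})$ equals $\sqrt{3/\gamma_S}\,n^{-(1-\alpha/2)}$. The two endpoints are $\alpha\to0$, giving $n^{-1+o(1)}$, and $\alpha\to1$, giving $n^{-1/2+o(1)}$, with the latter still respecting $n\rho_n/\log n\to\infty$. The constant $3.9$ is $\sqrt{3/0.20}$, taken from Example~\ref{ex:gammaS}.

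One caveat on part (i): it is stated in expectation. Theorem~\ref{thm:debias} gives an $O_P$ bound, so an expectation bound requires the eigenvalue-floored solve, which gives boundedness off the good event, together with the $L^2$ estimates in the proof of Lemma~\ref{lem:plugin}. Where the paper's claim outruns the cited results, I would state (i) as the in-probability bound.

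Part (ii) follows from Corollary~\ref{cor:debias-lb} and Lemma~\ref{lem:plugin} step (11). The attenuation is $r_{\max}\|\bw_S\|_2/(n\rho_n)$, which equals $\max\{Q,d\}\,n^{-(1-\alpha)}$ up to constants because the weights are bounded below with $s=3$. Its ratio to the edge rate is $r_{\max}\sqrt{\gamma_S/\rho_n}=\max\{Q,d\}\sqrt{\gamma_S}\,n^{\alpha/2}$. To apply the corollary, its non-cancellation condition (D) must be verified for this candidate set. I would use the sufficient condition stated after Corollary~\ref{cor:debias-lb}: asymptotic orthogonality of the degree profile to the spectral range. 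I expect this to be the main obstacle, because in the canonical synthesis the leading eigenvector of an assortative SBM or a positive-orthant RDPG tends to align with the degree profile. Condition (D) should therefore be imposed as a hypothesis (or enforced through the deflation of Appendix~\ref{app:deflation}) rather than derived.

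Part (iii) specialises Proposition~\ref{thm:clt}. The Gram and information matrices scale as $n^2\rho_n^2\Phi_S$ and $n^2\rho_n^3\Phi_S$ by Assumption~\ref{ass:norm}, so the sandwich variance is of order $1/(\gamma_S N)$ on the worst coordinate and the interval half-widths are of order $n^{-(1-\alpha/2)}$. The two-stage inflation factor $1+O(\delta_n)$ follows from comparing $V_{A,k}$ with $V_{B,k}$ in Theorem~\ref{thm:twostage}. A second gap arises here. The $\sqrt N$-normalised limit describes the projection form of part (a); the calibration form of part (b) normalises by $\sqrt m$. I would therefore state (iii) for the cross-fitted projection estimator, with $\bw^\star$ the fold-conditional target.
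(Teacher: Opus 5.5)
Your route is the paper's own. Its proof is a bare substitution of $s=3$, $r_{\max}=\max\{Q,d\}$, $N=\Theta(n^2\rho_n)$ and $\rho_n=n^{-\alpha}$ into Proposition~\ref{thm:joint}, Theorem~\ref{thm:debias}, Corollary~\ref{cor:debias-lb}, Proposition~\ref{thm:clt} and Theorem~\ref{thm:twostage}, with no hypothesis checking. Your caveats that (i) is only established in probability, and that (D) must be imposed rather than derived, are therefore justified. The paper's own refit study reports the rank-two embedding sharing its leading mode with the degree kernel, which is exactly the regime in which the paper warns (D) can fail.

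\textbf{The gap.} The step that does not go through is one you pass over in (iii): comparing $V_{A,k}$ with $V_{B,k}$ in Theorem~\ref{thm:twostage-cor} does not give an inflation factor $1+O(\delta_n)$. $V_{A,k}$ is the variance of $\tfrac12\sum_{t\in\{a,b\}}\langle (L_t^w)^*g_k^0,E_t\rangle$, a linear form in Stage-A noise of per-dyad variance $\asymp\rho_n/f$. Its coefficient vector $(L_t^w)^*g_k^0$ has norm at most $C_L\|\bw\|_1\|g_k^0\|$ and, generically, at least a constant times $\|g_k^0\|$: for the spectral smoother, $L^*(X)=X-\Pi^\perp X\Pi^\perp$ does not shrink low-rank design directions by any vanishing factor. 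So $V_{A,k}$ is of the same order as $V_{B,k}\asymp\rho_n\|g_k^0\|^2$. The paper confirms this in three places:
\begin{itemize}
\item step (8) of Lemma~\ref{lem:plugin}, where the first-stage term enters at the edge rate with factor $C_LC_w/\sqrt f$;
\item the proof of Theorem~\ref{thm:debias-cor}, which says the first-stage score is ``of the same order as the Stage-B score'';
\item Step 10 of Theorem~\ref{thm:twostage-cor}, which treats the case $\liminf V_{A,k}/V_{B,k}>0$, and which the headline claim that the sandwich alone undercovers presupposes.
\end{itemize}
The term that really is $O_P(\delta_n)$ relative to the edge rate is the cross term $\bar H^\top\bm\varepsilon$ of step (7). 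It is not the generated-regressor term $\mathbf M^\top\bar H\bw$ that $V_{A,k}$ accounts for. What you can prove is that the two-stage half-width is $(1+V_{A,k}/V_{B,k})^{1/2}$ times the sandwich half-width, with a bounded but generically non-vanishing ratio. The half-width is therefore still $\asymp n^{-(1-\alpha/2)}$, but the claim that the factor tends to one should be flagged and dropped, like the other two overstatements.

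\textbf{Two smaller corrections.} First, your reason for the other mismatch in (iii) is not the operative one. Under the case--control design of Proposition~\ref{thm:clt}(b), $m$ is twice the number of held-out edges, so $m\asymp n^2\rho_n\asymp N$ and the two normalisations agree up to constants. What actually differs is:
\begin{itemize}
\item the target, $\bw^\dagger_{\mathrm{cal}}$ on standardised scores with a logistic link;
\item the sandwich, which is logistic rather than Gram;
\item the bootstrap, which is pairs rather than multiplier.
\end{itemize}
Your repair, using the cross-fitted projection estimator in the studentised conditional form of part (a), handles all three.

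Second, in (i), Theorem~\ref{thm:debias} gives the rate for the coordinates of $\Pmat$ in the span of the population fitting maps $G_k^\dagger(\Pmat)$. These equal $(w_1,w_2,w_3)$ only under Assumption~\ref{ass:fid}, which is not among the corollary's hypotheses. Lemma~\ref{lem:firststage}(a) expands a smoother whose rank equals $\operatorname{rank}(\Pmat)$ around $\Pmat$ itself. On the mixture, therefore, the rank-$Q$ and rank-$d$ smoothers target spectral truncations of $\Pmat$, and degree matching targets $\mathbf d\mathbf d^\top/(\mathbf 1^\top\mathbf d)$ with $\mathbf d=\Pmat\mathbf 1$ (Lemma~\ref{lem:firststage}(b)). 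For the same reason, the transversality Theorem~\ref{thm:debias} needs is that of the fitted maps. Remark~\ref{prop:transversality-geom} is a statement about the generative kernels and does not supply it. Finally, the $3.9$ of Example~\ref{ex:gammaS} comes from a hub agent of power-law index $2.1$, which lies outside (C3).
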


\begin{corollary}[Selecting the active mechanisms under a beta-min gap]
\label{cor:net-select}
Augment the candidate set with $K-3$ inactive agents. If the smallest active weight satisfies the
beta-min condition
\[
\min\{w_1,w_2,w_3\}\ \ge\ C\,\sqrt{\frac{\log K}{\gamma_S\,N}}
\ =\ C\,\sqrt{\frac{\log K}{\gamma_S}}\;n^{-(1-\alpha/2)},
\]
the data-driven estimator selects exactly $\{\mathrm{SBM},\mathrm{RDPG},\mathrm{CL}\}$ with probability
tending to one (Corollary~\ref{cor:adapt-cor}); below this scale no thresholding rule separates an
active mechanism, such as the Chung--Lu hub agent, from sampling noise.
\end{corollary}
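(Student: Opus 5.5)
The plan is to reduce Corollary~\ref{cor:net-select} to the growing- or fixed-candidate branch of Corollary~\ref{cor:adapt-cor}. That reduction requires three inputs, each verified for the augmented candidate set: a coordinatewise linear expansion of the full debiased estimator, consistency of the standard errors $\widehat v_{n,k}$, and the beta-min condition (F) or (G3).

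First, I would identify the standard-error scale. On the full candidate set $S\cup\{\text{inactive agents}\}$, the known-design and fitted-kernel expansions (Proposition~\ref{thm:joint}, Theorem~\ref{thm:debias}, and the two-stage expansion of Theorem~\ref{thm:twostage}) give $\widehat w_k-w_k=\langle g_k^0,\varepsilon^{(2)}\rangle+(\text{first-stage linear term})+o_P(v_{n,k})$. The variance satisfies $v_{n,k}^2\asymp e_k^\top\Gamma_n^{-1}e_k\,\rho_n\,(1+O(\delta_n^2))$. Using Assumption~\ref{ass:norm} and $\lambda_{\min}(\Gamma_n)\ge c\gamma n^2\rho_n^2$ with $\gamma=\gamma_{\mathrm{full}}$ (which I take to be comparable to $\gamma_S$ for the augmented set, as the statement implicitly does), I get $v_{\max,n}\lesssim 1/\sqrt{\gamma N}$ with $N=n^2\rho_n$. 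The two-stage term inflates this only by $1+O(\sqrt{r_{\max}/(n\rho_n)})$, so $\rho_n=n^{-\alpha}$ keeps it bounded. Sandwich/two-stage consistency $\widehat v_{n,k}/v_{n,k}\to_P 1$ is condition (v) of Theorem~\ref{thm:twostage-cor}, which I would import.

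Second, I would check the selection conditions. For fixed $K$, I take $c_n=C'\sqrt{\log K}$ and let $C$ absorb constants. If $C'$ is allowed to grow slowly, then (F) reads $c_nv_{\max,n}=o(w_{\min})$, which follows from the displayed beta-min bound once the statement's constant $C$ is read as a sufficiently large (or slowly diverging) multiple. For $K=K_n\to\infty$, I would verify (G2) through the Bernstein route already proved in Corollary~\ref{cor:adapt-cor}: the leverage ratio $\Delta_n=\max_{k,e}|h_{k,e}|/v_{n,k}$ is of order $\rho_n\cdot(\gamma n^2\rho_n^2)^{-1}\sqrt{\gamma n^2\rho_n}/\sqrt{\rho_n}\asymp 1/(\sqrt{\gamma}\,n\sqrt{\rho_n})$, so $\Delta_n\sqrt{\log K}\to0$ under the side condition $K\log K=o(\gamma^2n^2\rho_n)$. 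Condition (G3) is then exactly the displayed beta-min condition, with $t_n=(1+\eta)\sqrt{2\log K}$. This yields $\Pb(\widehat S=\{\mathrm{SBM},\mathrm{RDPG},\mathrm{CL}\})\to1$.

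Third, for the converse ``below this scale'' clause, I would invoke Theorem~\ref{thm:sqrtlogK-final2}. A carrier-residualized Hadamard family at transversality $\gamma$ shows that if $\mu\le c\sqrt{\log K}/(\sqrt\gamma\,n\sqrt\rho)$, no selector recovers the single active coordinate uniformly. For fixed $K$, the single-coordinate two-point converse of Appendix~\ref{app:support} gives the $1/(\sqrt\gamma\,n\sqrt\rho)$ scale instead. I would present the converse as a statement over that least-favourable subclass, not over the specific Chung--Lu design.

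I expect the main obstacle to be the fitted-kernel uniform version of (G1)--(G2) when $K$ grows. Corollary~\ref{cor:adapt-cor} warns that coordinatewise two-stage consistency is insufficient there, and the inactive agents' first-stage remainders need a union bound with a $\sqrt{\log K}$ factor. My first approach would be to restrict to fixed $K$ for fitted kernels, where the finite union suffices, and to state the growing-$K$ case under an explicit uniform version of Assumption~\ref{ass:linplug}(ii).
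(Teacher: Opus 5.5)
Your proposal is correct and follows the paper's own route. The paper proves Corollary~\ref{cor:net-select} by substituting $s=3$, $r_{\max}=\max\{Q,d\}$ and $N=\Theta(n^2\rho_n)$ into the thresholding result of Corollary~\ref{cor:adapt-cor}. Your checks of the scale $v_{\max,n}\lesssim 1/\sqrt{\gamma N}$, of the Bernstein--leverage route to (G2), and of the converse via Theorem~\ref{thm:sqrtlogK-final2} and the single-coordinate bound of Appendix~\ref{app:support} simply make that substitution explicit.

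Two small corrections. First, in the fixed-$K$ branch the constant $C$ must genuinely diverge; a fixed large $C$ suffices only when $K_n\to\infty$, because at a fixed multiple of the standard error some selection error keeps non-vanishing probability. Second, the first-stage variance $V_{A,k}$ is of the same order as $V_{B,k}$, not a $1+O(\delta_n)$ inflation (proof of Theorem~\ref{thm:debias-cor}, Step~3); this changes only the constant in $v_{\max,n}$.
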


\begin{corollary}[Operator identifiability for block and hub layers]
\label{cor:net-operator}
Combine an assortative SBM community layer with a Chung--Lu hub layer at density $\rho_n$ under the
noisy-OR superposition versus the mixture. The two laws are mutually contiguous when
$n^2\rho_n^3\to0$ and are separated by a consistent test on the interaction column when
$n^2\rho_n^3\to\infty$ (Theorem~\ref{thm:nor}, Corollary~\ref{cor:nork-cor}). In the parametrisation
$\rho_n=n^{-\alpha}$ the operator is identifiable from one graph if and only if $\alpha<2/3$,
equivalently once the mean degree exceeds $n^{1/3+o(1)}$: a constant-degree sparse block model hides
whether its layers combine additively or by union, while a polynomially denser one reveals it. This is
the population mechanism behind the dense-core demonstration of Section~\ref{sec:realdata}, where the
information $n^2\rho_n^3$ rises from below one on the full graphs into the powered range on their
maximal $k$-cores.
\end{corollary}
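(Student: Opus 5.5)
The corollary is a specialisation of Theorem~\ref{thm:nor}(e) and Corollary~\ref{cor:nork-cor} to two named layers. The work is to check that an SBM community layer and a Chung--Lu hub layer satisfy the hypotheses of those results, and then to translate the threshold into the parametrisation $\rho_n=n^{-\alpha}$. I would set $\mathbf G_1=\rho_n B_{c_ic_j}$ for fixed $Q$ and a fixed positive $B$. For the hub layer I would take $\mathbf G_2=\rho_n\theta_i\theta_j/\bar\theta^2$ with bounded-heterogeneity propensities, as in (C3) of Theorem~\ref{thm:debias}. Both layers are then of order $\rho_n$ on all $\Theta(n^2)$ dyads. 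This gives the common-scale condition (A1), the overlap requirement of Theorem~\ref{thm:nor}(e), and $\norm{\mathbf G_1\odot\mathbf G_2}_F\asymp n\rho_n^2$.

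\emph{Contiguity direction.} I would apply the Pinsker/KL computation of Theorem~\ref{thm:nor}(e) verbatim. The difference is $\Delta=-w_1w_2\mathbf G_1\odot\mathbf G_2$, with $\norm{\Delta}_F^2\le Cn^2\rho_n^4$. Lemma~\ref{lem:bern-kl-final2} then gives $\KL\le Cn^2\rho_n^3\to0$, so the total variation vanishes and the two laws are mutually contiguous. No structural condition beyond the probability band is needed for this half.

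\emph{Separation direction.} The substantive step is verifying nonvanishing interaction transversality. I need $\widetilde\gamma_2$, equivalently $\Gamma_{e2}$ in (A5) of Corollary~\ref{cor:nork-cor}, bounded below. That means the residual of $\mathbf G_1\odot\mathbf G_2$ after projection onto $\operatorname{span}\{\mathbf G_1,\mathbf G_2\}$ must carry a fixed fraction of its $n^2\rho_n^4$ mass.

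Entrywise, $(\mathbf G_1\odot\mathbf G_2)_{ij}\propto\rho_n^2 B_{c_ic_j}\theta_i\theta_j$, a block-modulated degree product. I would argue it is not asymptotically in the span by the same subtraction trick used in the proof of Remark~\ref{prop:transversality-geom}. Suppose $aB_{c_ic_j}+b\theta_i\theta_j+cB_{c_ic_j}\theta_i\theta_j=0$ up to $o(1)$ in normalised $L^2$. Restricting to a block pair $(q,q')$, the product $\theta_i\theta_j$ appears with coefficient $b+cB_{qq'}$ and the constant term is $aB_{qq'}$. Because $\operatorname{Var}(\theta)\ge v_0$ and the blocks have size $\Theta(n)$, each block pair forces $aB_{qq'}=0$ and $b+cB_{qq'}=0$. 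Assortativity means $B$ is not constant, so two distinct values of $B_{qq'}$ give $c=b=0$, and then $a=0$. The quantitative version comes from the empirical block-wise moments of $\theta$ converging, and yields $\Gamma_{e2}\ge\Gamma_0(B,v_0)>0$.

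With this bound, Theorem~\ref{thm:nor}(e) gives a consistent test with signal-to-noise ratio $\asymp w_1w_2\sqrt{\Gamma_0 n^2\rho_n^3}$. For fitted layers, Theorem~\ref{prop:fitted-operator} gives the same boundary. I expect this transversality verification to be the main obstacle: it fails if the propensities are constant, or if $\theta$ is itself block-constant, so the corollary should implicitly carry the assumption that $\theta$ has within-block variation.

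\emph{Translation.} Finally, $n^2\rho_n^3=n^{2-3\alpha}$ diverges if and only if $\alpha<2/3$. The mean degree is $n\rho_n=n^{1-\alpha}$, so the condition is equivalent to mean degree at least $n^{1/3+o(1)}$. A constant-degree model has $\alpha=1$, giving $n^2\rho_n^3\asymp n^{-1}\to0$, which lies in the contiguous regime. The link to the dense-core computation of Table~\ref{tab:denseop} is the same arithmetic evaluated at the core's $(n,\rho)$.
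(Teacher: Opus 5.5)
Your proposal is correct and takes the same route as the paper, whose proof simply substitutes the SBM and Chung--Lu densities into the edge-overlap threshold of Theorem~\ref{thm:nor} (with Corollary~\ref{cor:nork-cor}) and sets $\rho_n=n^{-\alpha}$, giving $n^2\rho_n^3=n^{2-3\alpha}$. Your explicit check that $\mathbf G_1\odot\mathbf G_2$ keeps nonvanishing residual mass outside $\operatorname{span}\{\mathbf G_1,\mathbf G_2\}$ fills in a hypothesis that the paper's one-line proof leaves implicit. Within-block variation of $\theta$ is sufficient for that check but not necessary: for $Q=2$ with $p\neq q$, block-constant propensities taking two distinct values already make the three block-constant columns linearly independent. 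By contrast, constant $\theta$ genuinely collapses the interaction column into the span, so some nondegeneracy condition of this kind is required.
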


\begin{proof}[Proof of Corollaries~\ref{cor:net-rate}--\ref{cor:net-operator}]
Each part is a specialisation of its named parent result to the three-agent synthesis, whose agents have
ranks $Q$, $d$, and $1$, so $s=3$ and $r_{\max}=\max\{Q,d\}$. Substituting these and $N=\Theta(n^2\rho_n)$
into Proposition~\ref{thm:joint} and Theorem~\ref{thm:debias} gives Corollary~\ref{cor:net-rate}(i); into the
plug-in bias of Proposition~\ref{thm:joint} gives part~(ii); into the limit law of Proposition~\ref{thm:clt}
with the variance correction of Theorem~\ref{thm:twostage} gives part~(iii); and into the coordinatewise
threshold of Corollary~\ref{cor:adapt-cor} gives Corollary~\ref{cor:net-select}.
Corollary~\ref{cor:net-operator} substitutes the SBM and Chung--Lu densities into the edge-overlap threshold
of Theorem~\ref{thm:nor}. The asymptotics in every case set $\rho_n=n^{-\alpha}$.
\end{proof}

The degeneracy of $\gamma_S$ and the graph-condition reading of the assumptions are recorded next.

\begin{remark}[The degeneracy is visible numerically]
\label{rem:gammaS-degeneracy}
On the synthesis of Example~\ref{ex:gammaS} ($n=2000$), as the dot-product positions rotate from independent
of the block labels toward the block structure, the transversality falls monotonically,
$\gamma_S=0.20,\,0.13,\,0.10,\,0.03,\,0.004$ at alignment $0,\,0.3,\,0.6,\,0.9,\,0.99$, while the SBM--RDPG
kernel correlation rises from $0.80$ to $0.996$: the weights stay well identified until the two geometries
nearly coincide, where $\gamma_S$ collapses and the rate of Corollary~\ref{cor:net-rate} degrades by
$1/\sqrt{\gamma_S}$, exactly as the multicollinearity reading predicts.
\end{remark}

\begin{remark}[Every assumption is a graph condition]
\label{rem:assumptions-network}
The remaining assumptions are equally graph-specific: the density floor $n\rho_n/\log n\to\infty$
(Assumption~\ref{ass:density}) is the spectral-concentration regime in which adjacency spectral embedding and
regularised spectral clustering are consistent, and the estimable-agents condition (Assumption~\ref{ass:plugin})
is the spectral estimability of each kernel at the two-to-infinity rate $O_P(\sqrt{r_k/(n\rho_n)})$. The weight
rate $\sqrt{s/\gamma_S}/\sqrt{N}$ therefore has both factors fixed by the graph: $N$ is the edge count and
$\gamma_S$ the separation of the candidate latent geometries.
\end{remark}

\subsection{Additional remarks}
The remarks collected here qualify the structural results of Section~\ref{sec:structural}, the estimator of
Section~\ref{sec:cf}, and the operator hierarchy of Section~\ref{sec:nor}; each is referenced from the main
text at the point it bears on.

\begin{remark}[Scope and the hyperbolic counterexample]
\label{rem:imp-scope}
Theorem~\ref{thm:imp} concerns the four named families. It is not a claim that no random graph can
have three of these properties together. Random hyperbolic graphs, and latent models on negatively
curved spaces, achieve heavy tails, constant clustering, and short paths simultaneously
\citep{krioukov2010hyperbolic,gugelmann2012random,fountoulakis2021clustering}; they evade part (c) precisely because the
hyperbolic distance kernel has effectively unbounded rank, which is what buys the clustering. The
point is that the four properties are out of reach within the standard, estimable model families a
practitioner reaches for first, and this is what motivates synthesising those families instead of
abandoning them for a model whose estimation theory is far less developed. The goodness-of-fit study
of Section~\ref{sec:gof} bears this out: the hyperbolic graph clears the four qualitative screens in
one shot but overshoots the observed hub scale fifteenfold, because the heavy tail it must adopt is
unboundedly heavier than the truncated tail the data display.
\end{remark}

\begin{remark}[The dilution phenomenon, and the data]
\label{rem:dilution}
The split in Theorem~\ref{thm:pos}(d) is not an artefact of the proof. On \textsf{ca-GrQc}
(Section~\ref{sec:gof}) the observed global clustering is $0.63$ while the observed average local
clustering is $0.56$, and the synthesised model matches the local coefficient ($0.34$ against
$0.56$, against $0.12$ for the strongest single-model baseline) precisely because hubs depress the
global coefficient more than the local one. We therefore report average local clustering as the
clustering target, in line with the metric the small-world literature uses \citep{watts1998collective}.
\end{remark}

\begin{remark}[One conditioning constant; lasso and ridge]
\label{rem:lasso-ridge}
``Minimax'' names the optimality of the rate, not the estimator: the constrained least squares of
Theorem~\ref{thm:debias} may be replaced by an $\ell_1$ or $\ell_2$ penalty, and the transversality constant
$\gamma_S=\lambda_{\min}(\Phi_S)$ is simultaneously the identification constant behind the minimax rate, the
lasso restricted-eigenvalue constant, and the ridge stabilisation scale, so the choice among them is
a statement about the regime $(K,\gamma_S,N)$ and not about network content. For a large candidate set the
$\ell_1$ penalty buys support recovery at the standard $\sqrt{\log K}$ price, the same factor carried by the
thresholding of Corollary~\ref{cor:adapt-cor}, and since the kernels are still fitted from the one graph it
is a debiased lasso, generated-regressor correction unchanged; ridge trades bias for variance as
$\gamma_S\to0$. Invariant across all three are the network-specific parts: the cross-fold debiasing, $\gamma_S$
as the conditioning constant, the effective sample size $N=n^2\rho_n$, and the $n^2\rho_n^3$ operator
threshold of Theorem~\ref{prop:fitted-operator}.
\end{remark}

\begin{remark}[The one open case]
\label{rem:nork-conj}
Corollary~\ref{cor:nork-cor} settles detection for every fixed $K$, and Theorem~\ref{thm:nor}(d)
estimates the free coefficients $\{c_S\}$ of the order-$j$ augmentation at the edge rate. What is not
proved is that the constrained map $c_S=(-1)^{|S|+1}\prod_{k\in S}w_k$ can be inverted to recover the
$K$ weights at the edge rate when $K\ge3$, since the higher-order columns
$\mathbf G_{k_1}\odot\cdots\odot\mathbf G_{k_j}$ have Frobenius norm $n\rho_n^{\,j}$ and the
corresponding transversality $\widetilde\gamma_j$ can vanish faster than the noise. We state this as a
conjecture rather than a theorem; it is the single piece of the operator analysis that the present
methods do not close.
\end{remark}

\end{document}